\DeclareFontFamily{U}{rsfs}{} \DeclareFontShape{U}{rsfs}{n}{it}{<->
rsfs10}{} \DeclareSymbolFont{mscr}{U}{rsfs}{n}{it}
\DeclareSymbolFontAlphabet{\scr}{mscr}
\def\mathscr{\scr}
\begin{document}
\def\e#1\e{\begin{equation}#1\end{equation}}
\def\ea#1\ea{\begin{align}#1\end{align}}
\def\eq#1{{\rm(\ref{#1})}}
\theoremstyle{plain}
\newtheorem{thm}{Theorem}[section]
\newtheorem{lem}[thm]{Lemma}
\newtheorem{prop}[thm]{Proposition}
\newtheorem{cor}[thm]{Corollary}
\newtheorem{princ}[thm]{Principle}
\theoremstyle{definition}
\newtheorem{dfn}[thm]{Definition}
\newtheorem{altdfn}[thm]{Alternative Definition}
\newtheorem{ex}[thm]{Example}
\newtheorem{rem}[thm]{Remark}
\numberwithin{figure}{section}
\numberwithin{equation}{section}
\def\dim{\mathop{\rm dim}\nolimits}
\def\codim{\mathop{\rm codim}\nolimits}
\def\vdim{\mathop{\rm vdim}\nolimits}
\def\depth{\mathop{\rm depth}\nolimits}
\def\Im{\mathop{\rm Im}\nolimits}
\def\Ker{\mathop{\rm Ker}}
\def\Coker{\mathop{\rm Coker}}
\def\Ho{\mathop{\rm Ho}}
\def\GL{\mathop{\rm GL}}
\def\Stab{\mathop{\rm Stab}\nolimits}
\def\supp{\mathop{\rm supp}}
\def\rank{\mathop{\rm rank}\nolimits}
\def\Sym{\mathop{\rm Sym}\nolimits}
\def\Hom{\mathop{\rm Hom}\nolimits}
\def\bHom{\mathop{\bf Hom}\nolimits}
\def\cHom{\mathop{\mathcal{H}om}\nolimits}
\def\cIso{\mathop{\mathcal{I}so}\nolimits}
\def\bcHom{\mathop{\bs{\mathcal{H}om}}\nolimits}
\def\bcEqu{\mathop{\bs{\mathcal{E}qu}}\nolimits}
\def\id{{\mathop{\rm id}\nolimits}}
\def\Obj{{\rm Obj}}
\def\CSch{{\mathop{\bf C^{\bs\iy}Sch}}}
\def\muKur{{\mathop{\bs\mu\bf Kur}\nolimits}}
\def\muKurb{{\mathop{\bs\mu\bf Kur^b}\nolimits}}
\def\muKurc{{\mathop{\bs\mu\bf Kur^c}\nolimits}}
\def\muKurcin{{\mathop{\bs\mu\bf Kur^c_{in}}\nolimits}}
\def\muKurcsi{{\mathop{\bs\mu\bf Kur^c_{si}}\nolimits}}
\def\muKurcis{{\mathop{\bs\mu\bf Kur^c_{is}}\nolimits}}
\def\muKurcst{{\mathop{\bs\mu\bf Kur^c_{st}}\nolimits}}
\def\muKurgc{{\mathop{\bs\mu\bf Kur^{gc}}}}
\def\muKurgcin{{\mathop{\bs\mu\bf Kur^{gc}_{in}}}}
\def\muKurgcsi{{\mathop{\bs\mu\bf Kur^{gc}_{si}}}}
\def\cmuKurc{{\mathop{\bs\mu\bf\check Kur^c}\nolimits}}
\def\cmuKurcin{{\mathop{\bs\mu\bf\check Kur^c_{in}}\nolimits}}
\def\cmuKurcsi{{\mathop{\bs\mu\bf\check Kur^c_{si}}\nolimits}}
\def\cmuKurcis{{\mathop{\bs\mu\bf\check Kur^c_{is}}\nolimits}}
\def\cmuKurcst{{\mathop{\bs\mu\bf\check Kur^c_{st}}\nolimits}}
\def\cmuKurgc{{\mathop{\bs\mu\bf\check Kur^{gc}}}}
\def\cmuKurgcin{{\mathop{\bs\mu\bf\check Kur^{gc}_{in}}}}
\def\cmuKurgcsi{{\mathop{\bs\mu\bf\check Kur^{gc}_{si}}}}
\def\mKur{{\mathop{\bf mKur}\nolimits}}
\def\mKurb{{\mathop{\bf mKur^b}\nolimits}}
\def\mKurc{{\mathop{\bf mKur^c}\nolimits}}
\def\mKurcin{{\mathop{\bf mKur^c_{in}}\nolimits}}
\def\mKurcsi{{\mathop{\bf mKur^c_{si}}\nolimits}}
\def\mKurcis{{\mathop{\bf mKur^c_{is}}\nolimits}}
\def\mKurcst{{\mathop{\bf mKur^c_{st}}\nolimits}}
\def\mKurgc{{\mathop{\bf mKur^{gc}}}}
\def\mKurgcin{{\mathop{\bf mKur^{gc}_{in}}}}
\def\mKurgcsi{{\mathop{\bf mKur^{gc}_{si}}}}
\def\cmKurc{{\mathop{\bf m\check{K}ur^c}\nolimits}}
\def\cmKurcin{{\mathop{\bf m\check{K}ur^c_{in}}\nolimits}}
\def\cmKurcsi{{\mathop{\bf m\check{K}ur^c_{si}}\nolimits}}
\def\cmKurcis{{\mathop{\bf m\check{K}ur^c_{is}}\nolimits}}
\def\cmKurcst{{\mathop{\bf m\check{K}ur^c_{st}}\nolimits}}
\def\cmKurgc{{\mathop{\bf m\check{K}ur^{gc}}}}
\def\cmKurgcin{{\mathop{\bf m\check{K}ur^{gc}_{in}}}}
\def\cmKurgcsi{{\mathop{\bf m\check{K}ur^{gc}_{si}}}}
\def\Kur{{\mathop{\bf Kur}\nolimits}}
\def\Kurb{{\mathop{\bf Kur^b}\nolimits}}
\def\Kurc{{\mathop{\bf Kur^c}\nolimits}}
\def\Kurcin{{\mathop{\bf Kur^c_{in}}\nolimits}}
\def\Kurcsi{{\mathop{\bf Kur^c_{si}}\nolimits}}
\def\Kurcis{{\mathop{\bf Kur^c_{is}}\nolimits}}
\def\Kurcst{{\mathop{\bf Kur^c_{st}}\nolimits}}
\def\Kurgc{{\mathop{\bf Kur^{gc}}}}
\def\Kurgcin{{\mathop{\bf Kur^{gc}_{in}}}}
\def\Kurgcsi{{\mathop{\bf Kur^{gc}_{si}}}}
\def\cKurcin{{\mathop{\bf \check{K}ur^c_{in}}\nolimits}}
\def\cKurcsi{{\mathop{\bf \check{K}ur^c_{si}}\nolimits}}
\def\cKurcis{{\mathop{\bf \check{K}ur^c_{is}}\nolimits}}
\def\cKurcst{{\mathop{\bf \check{K}ur^c_{st}}\nolimits}}
\def\cKurc{{\mathop{\bf \check{K}ur^c}\nolimits}}
\def\cKurgc{{\mathop{\bf \check{K}ur^{gc}}\nolimits}}
\def\cKurgcin{{\mathop{\bf \check{K}ur^{gc}_{in}}\nolimits}}
\def\cKurgcsi{{\mathop{\bf \check{K}ur^{gc}_{si}}\nolimits}}
\def\KurtrG{{\mathop{\bf Kur_{trG}}\nolimits}}
\def\KurtrGc{{\mathop{\bf Kur_{trG}^c}\nolimits}}
\def\KurtrGgc{{\mathop{\bf Kur_{trG}^{gc}}\nolimits}}
\def\GKur{{\mathop{\bf GKur}\nolimits}}
\def\GKurb{{\mathop{\bf GKur^b}\nolimits}}
\def\GKurc{{\mathop{\bf GKur^c}\nolimits}}
\def\GmuKur{{\mathop{\bf G\bs\mu Kur}\nolimits}}
\def\GmuKurb{{\mathop{\bf G\bs\mu Kur^b}\nolimits}}
\def\GmuKurc{{\mathop{\bf G\bs\mu Kur^c}\nolimits}}
\def\Man{{\mathop{\bf Man}}}
\def\Manb{{\mathop{\bf Man^b}}}
\def\Manc{{\mathop{\bf Man^c}}}
\def\Mangc{{\mathop{\bf Man^{gc}}}}
\def\Mancst{{\mathop{\bf Man^c_{st}}}}
\def\Mancsi{{\mathop{\bf Man^c_{si}}}}
\def\Mancis{{\mathop{\bf Man^c_{is}}}}
\def\Mancin{{\mathop{\bf Man^c_{in}}}}
\def\Mangcin{{\mathop{\bf Man^{gc}_{in}}}}
\def\Mangcsi{{\mathop{\bf Man^{gc}_{si}}}}
\def\cManc{{\mathop{\bf\check{M}an^c}}}
\def\cMancst{{\mathop{\bf\check{M}an^c_{st}}}}
\def\cMancis{{\mathop{\bf\check{M}an^c_{is}}}}
\def\cMancsi{{\mathop{\bf\check{M}an^c_{si}}}}
\def\cMancin{{\mathop{\bf\check{M}an^c_{in}}}}
\def\cMangc{{\mathop{\bf\check{M}an^{gc}}}}
\def\cMangcin{{\mathop{\bf\check{M}an^{gc}_{in}}}}
\def\cMangcsi{{\mathop{\bf\check{M}an^{gc}_{si}}}}
\def\dMan{{\mathop{\bf dMan}}}
\def\dOrb{{\mathop{\bf dOrb}}}
\def\Orb{{\mathop{\bf Orb}}}
\def\Orbb{{\mathop{\bf Orb^b}}}
\def\Orbc{{\mathop{\bf Orb^c}}}
\def\OrbKur{{\mathop{\bf Orb_{\rm Kur}}}}
\def\Mon{{\mathop{\bf Mon}}}
\def\ul{\underline}
\def\bs{\boldsymbol}
\def\ge{\geqslant}
\def\le{\leqslant\nobreak}
\def\pr{\prec}
\def\O{{\mathcal O}}
\def\R{{\mathbin{\mathbb R}}}
\def\Z{{\mathbin{\mathbb Z}}}
\def\Q{{\mathbin{\mathbb Q}}}
\def\N{{\mathbin{\mathbb N}}}
\def\C{{\mathbin{\mathbb C}}}
\def\CP{{\mathbin{\mathbb{CP}}}}
\def\RP{{\mathbin{\mathbb{RP}}}}
\def\fC{{\mathbin{\mathfrak C}\kern.05em}}
\def\fD{{\mathbin{\mathfrak D}}}
\def\fE{{\mathbin{\mathfrak E}}}
\def\fF{{\mathbin{\mathfrak F}}}
\def\cA{{\mathbin{\cal A}}}
\def\cB{{\mathbin{\cal B}}}
\def\cC{{\mathbin{\cal C}}}
\def\cD{{\mathbin{\cal D}}}
\def\cE{{\mathbin{\cal E}}}
\def\cF{{\mathbin{\cal F}}}
\def\cG{{\mathbin{\cal G}}}
\def\cH{{\mathbin{\cal H}}}
\def\cI{{\mathbin{\cal I}}}
\def\cJ{{\mathbin{\cal J}}}
\def\cK{{\mathbin{\cal K}}}
\def\cL{{\mathbin{\cal L}}}
\def\cM{{\mathbin{\cal M}}}
\def\cN{{\mathbin{\cal N}}}
\def\cP{{\mathbin{\cal P}}}
\def\cQ{{\mathbin{\cal Q}}}
\def\cR{{\mathbin{\cal R}}}
\def\cS{{\mathbin{\cal S}}}
\def\cT{{\mathbin{\cal T}\kern -0.1em}}
\def\cW{{\mathbin{\cal W}}}
\def\fV{{\mathbin{\mathfrak V}}}
\def\fW{{\mathbin{\mathfrak W}}}
\def\fX{{\mathbin{\mathfrak X}}}
\def\fY{{\mathbin{\mathfrak Y}}}
\def\fZ{{\mathbin{\mathfrak Z}}}
\def\fe{{\mathfrak e}}
\def\ff{{\mathfrak f}}
\def\fg{{\mathfrak g}}
\def\fh{{\mathfrak h}}
\def\oM{{\mathbin{\smash{\,\,\overline{\!\!\mathcal M\!}\,}}}}
\def\bcE{{\mathbin{\bs{\cal E}}}}
\def\ur{{\underline{r\kern -0.15em}\kern 0.15em}{}}
\def\uU{{{\underline{U\kern -0.25em}\kern 0.2em}{}}{}}
\def\uX{{{\underline{X\!}\,}{}}{}}
\def\cU{{\cal U}}
\def\cV{{\cal V}}
\def\cW{{\cal W}}
\def\bA{{\bs A}}
\def\bB{{\bs B}}
\def\bC{{\bs C}}
\def\bD{{\bs D}}
\def\bE{{\bs E}}
\def\bF{{\bs F}}
\def\bG{{\bs G}}
\def\bH{{\bs H}}
\def\bM{{\bs M}}
\def\bN{{\bs N}}
\def\bO{{\bs O}}
\def\bP{{\bs P}}
\def\bQ{{\bs Q}}
\def\bS{{\bs S}}
\def\bT{{\bs T}}
\def\bU{{\bs U}}
\def\bV{{\bs V}}
\def\bW{{\bs W}\kern -0.1em}
\def\bX{{\bs X}}
\def\bY{{\bs Y}\kern -0.1em}
\def\bZ{{\bs Z}}
\def\al{\alpha}
\def\be{\beta}
\def\ga{\gamma}
\def\de{\delta}
\def\io{\iota}
\def\ep{\epsilon}
\def\la{\lambda}
\def\ka{\kappa}
\def\th{\theta}
\def\ze{\zeta}
\def\up{\upsilon}
\def\vp{\varphi}
\def\si{\sigma}
\def\om{\omega}
\def\De{\Delta}
\def\La{\Lambda}
\def\Om{\Omega}
\def\Io{{\rm I}}
\def\Ka{{\rm K}}
\def\Mu{{\rm M}}
\def\Nu{{\rm N}}
\def\Tau{{\rm T}}
\def\Up{\Upsilon}
\def\Al{{\rm A}}
\def\Be{{\rm B}}
\def\Ga{\Gamma}
\def\Si{\Sigma}
\def\Th{\Theta}
\def\utau{{\underline{\tau\kern -0.2em}\kern 0.2em}{}}
\def\uchi{{\underline{\chi\kern -0.1em}\kern 0.1em}{}}
\def\ueta{{\underline{\eta\kern -0.1em}\kern 0.1em}{}}
\def\pd{\partial}
\def\ts{\textstyle}
\def\st{\scriptstyle}
\def\sst{\scriptscriptstyle}
\def\w{\wedge}
\def\sm{\setminus}
\def\lt{\ltimes}
\def\bu{\bullet}
\def\sh{\sharp}
\def\op{\oplus}
\def\od{\odot}
\def\op{\oplus}
\def\ot{\otimes}
\def\ov{\overline}
\def\bigop{\bigoplus}
\def\bigot{\bigotimes}
\def\iy{\infty}
\def\es{\emptyset}
\def\ra{\rightarrow}
\def\rra{\rightrightarrows}
\def\Ra{\Rightarrow}
\def\RRa{\Rrightarrow}
\def\Longra{\Longrightarrow}
\def\ab{\allowbreak}
\def\longra{\longrightarrow}
\def\hookra{\hookrightarrow}
\def\dashra{\dashrightarrow}
\def\ha{{\ts\frac{1}{2}}}
\def\t{\times}
\def\ci{\circ}
\def\ti{\tilde}
\def\d{{\rm d}}
\def\md#1{\vert #1 \vert}
\def\bmd#1{\big\vert #1 \big\vert}
\def\an#1{\langle #1 \rangle}
\def\ban#1{\bigl\langle #1 \bigr\rangle}
\title{A new definition of Kuranishi space}
\author{Dominic Joyce}
\date{Preliminary version, October 2015}
\maketitle

\begin{abstract} `Kuranishi spaces' were introduced in the work of Fukaya, Oh, Ohta and Ono \cite{Fuka,FOOO1,FOOO2,FOOO3, FOOO4,FOOO5,FOOO6,FOOO7,FOOO8,FuOn} in symplectic geometry, as the geometric structure on moduli spaces of $J$-holomorphic curves. Finding a satisfactory definition of Kuranishi space has been the subject of recent debate~\cite{FOOO6,McWe1,McWe2,McWe3,Yang1,Yang2,Yang3}.

We propose three new definitions of Kuranishi space: a simple `manifold' version, $\mu$-{\it Kuranishi spaces}, which form an ordinary category $\muKur$; a more complicated `manifold' version, {\it m-Kuranishi spaces}, which form a weak 2-category $\mKur$; and an `orbifold' version, {\it Kuranishi spaces}, which form a weak 2-category $\Kur$. These are related by an equivalence of categories $\muKur\simeq\Ho(\mKur)$, where $\Ho(\mKur)$ is the homotopy category of $\mKur$, and by a full and faithful embedding $\mKur\hookra\Kur$.

Most previous definitions of Kuranishi space, and related notions of `good coordinate system', `Kuranishi atlas', or `Kuranishi structure', on a topological space $X$, can be used to define a Kuranishi space $\bX$ in our sense, uniquely up to equivalence.

This book is surveyed in~\cite{Joyc11}.
\end{abstract}

\setcounter{tocdepth}{2}
\tableofcontents

\section{Introduction}
\label{ku1}

Kuranishi spaces were introduced in the work of Fukaya, Oh, Ohta and Ono \cite{Fuka,FOOO1,FOOO2,FOOO3,FOOO4,FOOO5,FOOO6,FOOO7,FOOO8,FuOn},
as the geometric structure on moduli spaces of $J$-holomorphic curves, which was to be used to define virtual cycles and virtual chains for such moduli spaces, for applications in symplectic geometry such as Gromov--Witten invariants, Lagrangian Floer cohomology, and Symplectic Field Theory. An alternative theory, philosophically rather different, but which does essentially the same job, is the polyfolds of Hofer, Wysocki and Zehnder~\cite{Hofe,HWZ1,HWZ2,HWZ3,HWZ4,HWZ5,HWZ6}.

Something which has consistently been a problem with Kuranishi spaces, since their introduction by Fukaya and Ono \cite[\S 5]{FuOn} in 1999, has been to find a satisfactory definition, preferably as a category (or higher category) of geometric spaces, with a well-behaved notion of morphism, and good functorial properties. The definition used by Fukaya et al.\ has changed several times as their work has evolved \cite{Fuka,FOOO1,FOOO2,FOOO3,FOOO4,FOOO5, FOOO6,FOOO7,FOOO8,FuOn}, and others including McDuff and Wehrheim \cite{McWe1,McWe2,McWe3}, Yang \cite{Yang1,Yang2,Yang3}, and the author \cite{Joyc1,Joyc2}, have proposed their own variations. For a review of previous definitions of Kuranishi space in the literature, see Appendix~\ref{kuA}.

The purpose of this book is to propose a new definition of Kuranishi space, which we hope will become accepted as final, replacing previous definitions. In fact, we give three variations:
\begin{itemize}
\setlength{\itemsep}{0pt}
\setlength{\parsep}{0pt}
\item[(i)] a simple `manifold' version, `$\mu$-Kuranishi spaces', with trivial isotropy groups, which form an ordinary category $\muKur$;
\item[(ii)] a more complicated `manifold' version, `m-Kuranishi spaces', with trivial isotropy groups, which form a weak 2-category $\mKur$; and
\item[(iii)] the full `orbifold' version, `Kuranishi spaces', with finite isotropy groups, which form a weak 2-category $\Kur$.
\end{itemize}
These are related by an equivalence of categories $\muKur\simeq\Ho(\mKur)$, where $\Ho(\mKur)$ is the homotopy category of $\mKur$, and by a full and faithful embedding $\mKur\hookra\Kur$. Here $\mu$-Kuranishi spaces and m-Kuranishi spaces are kinds of `derived manifolds', and Kuranishi spaces a kind of `derived orbifolds'. Kuranishi spaces are the most relevant to symplectic geometry.

The main technical innovation in our definition is our treatment of {\it coordinate changes\/} between Kuranishi neighbourhoods on a topological space $X$. In the `$\mu$-' version in \S\ref{ku2}, $\mu$-coordinate changes $\Phi_{ij}:(V_i,E_i,s_i,\psi_i)\ra (V_j,E_j,s_j,\psi_j)$ are {\it germs of equivalence classes\/} $[V_{ij},\phi_{ij},\hat\phi_{ij}]$ of triples $(V_{ij},\phi_{ij},\hat\phi_{ij})$, where $(V_{ij},\phi_{ij},\hat\phi_{ij})$ is a generalized Fukaya--Oh--Ohta--Ono-style coordinate change, and the equivalence relation is not obvious. In the full version in \S\ref{ku4}, our coordinate changes $\Phi_{ij}=(P_{ij},\pi_{ij},\phi_{ij},\hat\phi_{ij}):(V_i,E_i,\Ga_i,s_i,\psi_i)\ra (V_j,E_j,\Ga_j,s_j,\psi_j)$ are generalized Fukaya--Oh--Ohta--Ono-style coordinate changes, but we also introduce 2-{\it morphisms between coordinate changes}, involving germs of equivalence classes, and making Kuranishi neighbourhoods on $X$ into a 2-category.

One special feature of these new notions of coordinate change, is that in the \hbox{`$\mu$-'} version, {\it $\mu$-coordinate changes\/ $(V_i,E_i,s_i,\psi_i)\ra (V_j,E_j,s_j,\psi_j)$ form a sheaf on\/} $\Im\psi_i\cap\Im\psi_j$, as in \S\ref{ku22}, and in the full version, {\it coordinate changes\/ $(V_i,E_i,\ab\Ga_i,\ab s_i,\ab\psi_i)\ab\ra (V_j,E_j,\Ga_j,s_j,\psi_j)$ form a stack on\/} $\Im\psi_i\cap\Im\psi_j$, as in \S\ref{ku42}. These sheaf/stack properties are very useful --- for example, they are essential in defining composition of (1-)morphisms between ($\mu$-)Kuranishi spaces. Also, ($\mu$-)coordinate changes are exactly the morphisms between ($\mu$-)Kuranishi neighbourhoods which are invertible, or invertible up to 2-isomorphism.

In \S\ref{ku48} we show that a topological space $X$ equipped with a `Kuranishi structure' or a `weak good coordinate system' in the sense of Fukaya, Oh, Ohta and Ono \cite{FOOO1,FOOO2}, or with a `weak Kuranishi atlas' in the sense of McDuff and Wehrheim \cite{McWe2,McWe3}, or with a `Kuranishi structure' in the sense of Yang \cite{Yang1,Yang2,Yang3}, or with a `polyfold Fredholm structure' in the sense of Hofer, Wysocki and Zehnder \cite{Hofe,HWZ1,HWZ2,HWZ3,HWZ4,HWZ5,HWZ6} 
can all be made into a Kuranishi space $\bX$ in our sense, uniquely up to equivalence in the 2-category~$\Kur$. 

This implies that more-or-less all of the definitions of `Kuranishi space' and related notions of `good coordinate system', `Kuranishi atlas', or `Kuranishi structure' in \cite{Fuka,FOOO1,FOOO2,FOOO3,FOOO4,FOOO5,FOOO6,FOOO7,FOOO8,McWe2,McWe3,Yang1,Yang2,Yang3} can be transformed to Kuranishi spaces in our sense. Thus, the proofs of the existence of Kuranishi structures, polyfold structure, etc.\ on moduli spaces of $J$-holomorphic curves in Fukaya--Oh--Ohta--Ono \cite{Fuka,FOOO1,FOOO2,FOOO3,FOOO4,FOOO5,FOOO6}, or McDuff and Wehrheim \cite{McWe2,McWe3} or Hofer--Wysocki--Zehnder \cite{Hofe,HWZ1,HWZ2,HWZ3,HWZ4,HWZ5,HWZ6}, also make these moduli spaces into Kuranishi spaces in our sense.

For applications in symplectic geometry, it will be an advantage that our Kuranishi spaces form a well-behaved 2-category. Here are three examples:
\begin{itemize}
\setlength{\itemsep}{0pt}
\setlength{\parsep}{0pt}
\item[(a)] Defining a Kuranishi structure on a moduli space $\cM$ involves making many arbitrary choices, and it is helpful to know how different choices are related. In our theory, different choices of Kuranishi structure on $\cM$ should yield equivalent Kuranishi spaces $\bs\cM,\bs\cM'$ in the 2-category~$\Kur$.

Equivalence of Kuranishi spaces should be compared with McDuff and Wehrheim's `commensurate' Kuranishi atlases, as in \S\ref{kuA3}, and Dingyu Yang's `R-equivalent' Kuranishi structures, as in~\S\ref{kuA4}.
\item[(b)] Fukaya, Oh, Ohta and Ono use finite group actions on Kuranishi spaces in \cite{Fuka}, \cite[\S 7]{FOOO4}, and $T^n$-actions on Kuranishi spaces in \cite{FOOO2,FOOO3}. In our theory it is easy to define and study actions of Lie groups on Kuranishi spaces, in a more flexible way than in~\cite{FOOO2,FOOO3}.

\item[(c)] Fukaya \cite[\S 3, \S 5]{Fuka} (see also \cite[\S 4.2]{FOOO5}) works with a forgetful morphism $\mathfrak{forget}:\bs\cM_{l,1}(\be)\ra\bs\cM_{l,0}(\be)$ of $J$-holomorphic curve moduli spaces, which is clearly intended to be some kind of morphism of Kuranishi spaces, without defining the concept. Our theory sets this on a rigorous footing.
\end{itemize}

In ($\mu$-)Kuranishi neighbourhoods $(V_i,E_i,s_i,\psi_i)$ or $(V_i,E_i,\Ga_i,s_i,\psi_i)$, we can take the $V_i$ to be manifolds {\it without boundary}, or {\it with boundary}, or {\it with corners}, and so define ($\mu$- or m-)Kuranishi spaces $\bX$ without or with boundary, or with corners. The boundary and corners cases introduce extra complications. So for simplicity, in \S\ref{ku2} and \S\ref{ku4} we first define ($\mu$- or m-)Kuranishi spaces without boundary, and then explain in \S\ref{ku3} and \S\ref{ku5} how to modify the picture to include boundary and corners. When we just say `manifold' or `($\mu$-)Kuranishi space', we generally mean manifolds and ($\mu$-)Kuranishi spaces without boundary. 

This book will concentrate on the definitions of the (2-)categories of \hbox{($\mu$-)}\ab Kuranishi spaces, and their relation to other notions of Kuranishi space and similar ideas in the literature. In a sequel \cite{Joyc12}, we will study the `differential geometry' of ($\mu$-)Kuranishi spaces: submersions, immersions, embeddings, d-transverse fibre products, orientations, bordism, good coordinate systems, orbifold strata, blow-ups, and so on --- a toolkit of ideas, definitions and results for working with Kuranishi spaces. 

Our definitions of ($\mu$-)Kuranishi space are based on the author's theory of `d-manifolds' and `d-orbifolds' \cite{Joyc6,Joyc7,Joyc8}, which are `derived' smooth manifolds and orbifolds, where `derived' is in the sense of derived algebraic geometry. In the sequel \cite{Joyc12}, amongst other results we will prove:

\begin{thm} There is an equivalence of categories
\begin{equation*}
\smash{\muKur\simeq\Ho(\dMan),}
\end{equation*}
where $\muKur$ is the category of\/ $\mu$-Kuranishi spaces in {\rm\S\ref{ku2},} and\/ $\dMan$ is the strict\/ $2$-category of d-manifolds $\dMan$ from the author {\rm\cite{Joyc6,Joyc7,Joyc8},} and\/ $\Ho(\dMan)$ is its homotopy category. There are also equivalences of weak\/ $2$-categories
\begin{equation*}
\smash{\mKur\simeq\dMan,\qquad \Kur\simeq \dOrb,}
\end{equation*}
where $\mKur$ is the weak\/ $2$-category of m-Kuranishi spaces in {\rm\S\ref{ku47},} and\/ $\Kur$ the weak\/ $2$-category of m-Kuranishi spaces in {\rm\S\ref{ku4},} and\/ $\dOrb$ the strict\/ $2$-category of d-orbifolds $\dOrb$ from\/~{\rm\cite{Joyc6,Joyc7,Joyc8}}.
\label{ku1thm}
\end{thm}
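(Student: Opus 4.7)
The plan is to establish the three equivalences by constructing explicit 2-functors in both directions and showing they are mutually quasi-inverse. The third statement, $\muKur\simeq\Ho(\dMan)$, will not require a separate argument: it follows by combining $\mKur\simeq\dMan$ with the equivalence $\muKur\simeq\Ho(\mKur)$ announced earlier in the introduction, since passing to homotopy categories respects equivalences of 2-categories. So the real work is in the two 2-categorical equivalences $\mKur\simeq\dMan$ and $\Kur\simeq\dOrb$, and I would treat them in parallel, focusing on the manifold case first and then upgrading to orbifolds.

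For the functor $F:\mKur\to\dMan$, I would use the standard model construction $\bS_{V,E,s}$ from \cite{Joyc6,Joyc7,Joyc8}, which assigns to a triple (manifold $V$, vector bundle $E\to V$, section $s$) a d-manifold whose classical truncation is $s^{-1}(0)$. Given an m-Kuranishi space $\bX$ with atlas $\{(V_i,E_i,s_i,\psi_i)\}$ and coordinate changes $\Phi_{ij}$, each neighbourhood yields a local d-manifold $\bS_{V_i,E_i,s_i}$, and a coordinate change $\Phi_{ij}$ translates, via the standard model functoriality, into a 1-morphism of d-manifolds between the relevant open subobjects. The 2-morphisms between coordinate changes in $\mKur$ translate similarly into 2-morphisms of d-manifolds. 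The crucial input is the sheaf/stack property of (m-)coordinate changes from \S\ref{ku22} and \S\ref{ku42}: it gives precisely the descent data needed for gluing. Combining this with the fact that $\dMan$ satisfies descent for open covers (proved in \cite{Joyc6,Joyc7,Joyc8}), the local d-manifolds $\bS_{V_i,E_i,s_i}$ glue to a global d-manifold $F(\bX)$, well-defined up to canonical equivalence.

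For the inverse $G:\dMan\to\mKur$, the starting point is the structure theorem for d-manifolds: every d-manifold $\bY$ admits, near each point, an equivalence with a standard model $\bS_{V,E,s}$. Choosing a covering family of such charts produces Kuranishi neighbourhoods on the underlying topological space of $\bY$, and transition equivalences between overlapping standard models induce m-coordinate changes in our sense. The 2-morphisms between such transition equivalences in $\dMan$ produce exactly the germs of equivalence classes appearing in our notion of 2-morphism between coordinate changes. Verifying that the collection satisfies the cocycle conditions for an m-Kuranishi structure uses the stack property of $\dMan$ together with the descent data coming from the d-manifold itself.

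The main obstacle, and the step requiring most care, is showing that $F\circ G$ and $G\circ F$ are naturally 2-isomorphic to the identity 2-functors. The difficulty is bookkeeping: one must verify that the two layers of 2-morphism data --- those between coordinate changes inside an m-Kuranishi space, and those between 1-morphisms of d-manifolds --- match coherently after passage through standard models, and that different choices of standard model cover for a given d-manifold produce equivalent m-Kuranishi spaces. Here the stack property of coordinate changes from \S\ref{ku22} and \S\ref{ku42} is again essential, because it is precisely what makes the comparison independent of choices. The orbifold case $\Kur\simeq\dOrb$ proceeds along identical lines, replacing $\bS_{V,E,s}$ by the orbifold standard model $[\bS_{V,E,s}/\Ga]$ associated to a finite group action, and using the 2-stack rather than 1-stack property of coordinate changes; the extra bookkeeping of isotropy groups is absorbed cleanly into the 2-stack formalism, so no genuinely new idea is needed beyond the manifold case.
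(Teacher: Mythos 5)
You should first be aware that this paper does not actually prove Theorem \ref{ku1thm}: it is stated in the introduction as a result to be established in the sequel \cite{Joyc12}, so there is no in-paper proof to compare your proposal against. Judged on its own terms, your outline follows what is clearly the intended route: reduce the first equivalence to $\muKur\simeq\Ho(\mKur)$ (which \emph{is} proved here, Theorem \ref{ku4thm8}(a), \S\ref{ku73}) together with $\mKur\simeq\dMan$; build $F:\mKur\ra\dMan$ from standard models $\bS_{V_i,E_i,s_i}$ glued along the coordinate changes; and build a quasi-inverse using the structure theorem that every d-manifold is locally equivalent to a standard model. Two points of the gluing step deserve to be made precise: the d-manifold gluing theorems of \cite{Joyc6,Joyc7,Joyc8} are not bare ``descent for open covers'' but require 2-morphisms on double overlaps satisfying a compatibility on triple overlaps, and it is exactly the data $\La_{ijk}$ with Definition \ref{ku4def12}(h) (equivalently, the coherence built into an m-Kuranishi structure) that supplies this; and the paper's definitions of 1- and 2-morphisms of (m-)Kuranishi neighbourhoods were engineered to match standard-model 1- and 2-morphisms in $\dMan$, which is why the dictionary you describe exists at all.

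The genuine gap in your sketch is at the level of morphisms. An equivalence of weak 2-categories requires (weak) 2-functors defined on 1- and 2-morphisms, and the substantive check is that each $F_{\bX,\bY}:\bHom(\bX,\bY)\ra\bHom(F(\bX),F(\bY))$ is an equivalence of categories together with essential surjectivity on objects; your proposal constructs $F$ and $G$ essentially only on objects. A 1-morphism $\bs f:\bX\ra\bY$ in $\mKur$ is a family $\bs f_{ij}$ with coherence 2-morphisms $\bs F_{ii'}^j,\bs F_i^{jj'}$, and one must glue the corresponding standard-model 1-morphisms of d-manifolds using this data and the stack property (Theorems \ref{ku2thm1} and \ref{ku4thm1}), and conversely decompose an arbitrary d-manifold 1-morphism into such local data; this is where most of the labour lies and it is not addressed. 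Relatedly, your claim that the orbifold case needs ``no genuinely new idea'' is too quick: 1-morphisms of Kuranishi neighbourhoods in \S\ref{ku41} are Hilsum--Skandalis-style quadruples $(P_{ij},\pi_{ij},\phi_{ij},\hat\phi_{ij})$, and matching these, and their 2-morphisms, to 1- and 2-morphisms in the 2-category $\dOrb$ of $C^\iy$ Deligne--Mumford stacks involves genuinely more bookkeeping (isotropy, representability, the bibundle formalism) than replacing $\bS_{V,E,s}$ by a quotient $[\bS_{V,E,s}/\Ga]$. None of this suggests your strategy would fail, but as written it is an object-level sketch of a theorem whose content is largely morphism-level, and the proof is in any case deferred by the author to \cite{Joyc12}.
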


Thus, for many purposes, Kuranishi spaces and d-orbifolds are interchangeable. The full theory of d-manifolds and d-orbifolds \cite{Joyc6,Joyc7,Joyc8} requires a significant amount of background in conventional algebraic geometry, derived algebraic geometry, and $C^\iy$-algebraic geometry. Symplectic geometers interested in Kuranishi spaces tend not to be familiar with these, and have (so far) mostly been unwilling to study them in order to learn about d-manifolds and d-orbifolds.

One of the main goals of this book is to provide an alternative to d-manifolds and d-orbifolds suited for symplectic geometers, that is as far as possible self-contained, has minimal prerequisites other than a good understanding of ordinary differential geometry, and uses little heavy machinery from algebraic geometry or derived algebraic geometry.

Therefore d-manifolds and d-orbifolds will only be mentioned very occasionally below, and no technical details about d-manifolds or d-orbifolds will be given. Readers who would like to know more are encouraged to begin with the survey paper~\cite{Joyc6}.

In \cite{Joyc10} the author defines `M-homology', a new homology theory $MH_*(Y;R)$ of a manifold $Y$ and a commutative ring $R$, canonically isomorphic to ordinary homology $H_*(Y;R)$. The chains $MC_k(Y;R)$ for $MH_*(Y;R)$ are $R$-modules generated by quadruples $[V,n,s,t]$ for $V$ an oriented manifold with corners (or something similar) with $\dim V=n+k$ and $s:V\ra\R^n$, $t:V\ra Y$ smooth maps with $s$ proper near 0 in $\R^n$. In future work \cite{Joyc12} the author will define virtual chains for Kuranishi spaces in M-homology, for applications in symplectic geometry. 

This book is surveyed in \cite{Joyc11}, concentrating on the definition of the 2-category $\Kur$ of Kuranishi spaces without boundary.
\medskip

\noindent{\it Acknowledgements.} I would like to thank Lino Amorim, Kenji Fukaya, Helmut Hofer, Dusa McDuff, and Dingyu Yang for helpful conversations, and a referee for many useful comments. I also thank the Simons Center for Geometry and Physics, Stony Brook, where this project began during a visit in May 2014. This research was supported by EPSRC grants EP/H035303/1 and EP/J016950/1.

\section{\texorpdfstring{The category of $\mu$-Kuranishi spaces}{The category of \textmu-Kuranishi spaces}}
\label{ku2}

We now define the category $\muKur$ of {\it $\mu$-Kuranishi spaces}. Here the `$\mu$-' stands for `manifold' (as opposed to orbifold). It means that our $\mu$-Kuranishi spaces have trivial isotropy groups, and we do not include finite groups $\Ga$ in Kuranishi neighbourhoods $(V,E,\Ga,s,\psi)$.

In this section all manifolds and $\mu$-Kuranishi spaces will be {\it without boundary}. We explain how to extend it to include boundaries and corners in~\S\ref{ku3}.

The proofs of Theorems \ref{ku2thm1} and \ref{ku2thm2} are deferred until \S\ref{ku61} and~\S\ref{ku62}.

\subsection{\texorpdfstring{$\mu$-Kuranishi neighbourhoods and their morphisms}{\textmu-Kuranishi neighbourhoods and their morphisms}}
\label{ku21}

We will use following `$O(s)$' and `$O(s^2)$' notation very often:

\begin{dfn} Let $V$ be a manifold, $E\ra V$ a vector bundle, and $s\in C^\iy(E)$ a smooth section.
\begin{itemize}
\setlength{\itemsep}{0pt}
\setlength{\parsep}{0pt}
\item[(i)] If $F\ra V$ is another vector bundle and $t_1,t_2\in C^\iy(F)$ are smooth sections, we write $t_1=t_2+O(s)$ if there exists $\al\in C^\iy(E^*\ot
F)$ such that $t_1=t_2+\al\cdot s$ in $C^\iy(F)$, where the
contraction $\al\cdot s$ is formed using the natural pairing of
vector bundles $(E^*\ot F)\t E\ra F$ over $V$.
\item[(ii)] We write $t_1=t_2+O(s^2)$ if there exists $\al\in
C^\iy(E^*\ot E^*\ot F)$ such that $t_1=t_2+\al\cdot(s\ot s)$ in
$C^\iy(F)$, where $\al\cdot(s\ot s)$ uses the pairing~$(E^*\ot
E^*\ot F)\t(E\ot E)\ra F$.
\end{itemize}

Now let $W$ be another manifold, and $f,g:V\ra W$ be smooth maps. 
\begin{itemize}
\setlength{\itemsep}{0pt}
\setlength{\parsep}{0pt}
\item[(iii)] We write $f=g+O(s)$ if whenever $h:W\ra\R$ is a smooth map, there
exists $\al\in C^\iy(E^*)$ such that $h\ci f=h\ci g+\al\cdot s$.
\item[(iv)] We write $f=g+O(s^2)$ if whenever $h:W\ra\R$ is a smooth
map, there exists $\al\in C^\iy(E^*\ot E^*)$ such that $h\ci f=h\ci
g+\al\cdot (s\ot s)$.
\item[(v)] If $\La\in C^\iy\bigl(E^*\ot f^*(TW)\bigr)$, we write $f=g+\La\cdot s+O(s^2)$ if whenever $h:W\ra\R$ is a smooth map, there exists $\al\in C^\iy(E^*\ot E^*)$ such that $h\ci f=h\ci g+\La\cdot(s\ot f^*(\d h))+\al\cdot (s\ot s)$. Here $s\ot f^*(\d h)$ lies in $C^\iy\bigl(E\ot f^*(T^*W)\bigr)$, and so has an obvious pairing with $\La$.
\end{itemize}

Next suppose $f,g:V\ra W$ with $f=g+O(s)$, and $F\ra V$, $G\ra W$ are vector bundles, and $t_1\in C^\iy(F\ot f^*(G))$, $t_2\in C^\iy(F\ot g^*(G))$. We wish to compare $t_1,t_2$, even though they are sections of different vector bundles.
\begin{itemize}
\setlength{\itemsep}{0pt}
\setlength{\parsep}{0pt}
\item[(vi)] We write $t_1=t_2+O(s)$ if for all $\be\in C^\iy(G^*)$ we have $t_1\cdot f^*(\be)=t_2\cdot g^*(\be)+O(s)$ in sections of $F\ra V$, as in (i).
\end{itemize}
Given any $t_1\in C^\iy(F\ot f^*(G))$, there exists $t_2\in C^\iy(F\ot g^*(G))$ with $t_1=t_2+O(s)$ in the sense of (vi), and if $t_2'$ is an alternative choice then $t_2'=t_2+O(s)$ in the sense of (i). The moral is that if $f=g+O(s)$ and we are interested in smooth sections $t$ of $F\ot f^*(G)$ up to $O(s)$, then we can treat the vector bundles $F\ot f^*(G)$ and $F\ot g^*(G)$ as essentially the same.

If instead $f,g:V\ra W$ with $f=g+O(s^2)$ and $F,G,t_1,t_2$ are as above\begin{itemize}
\setlength{\itemsep}{0pt}
\setlength{\parsep}{0pt}
\item[(vii)] We write $t_1=t_2+O(s^2)$ if for all $\be\in C^\iy(G^*)$ we have $t_1\cdot f^*(\be)=t_2\cdot g^*(\be)+O(s^2)$ in sections of $F\ra V$, as in (ii).
\end{itemize}
Given any $t_1\in C^\iy(F\ot f^*(G))$, there exists $t_2\in C^\iy(F\ot g^*(G))$ with $t_1=t_2+O(s^2)$ in the sense of (vii), and if $t_2'$ is an alternative choice then $t_2'=t_2+O(s^2)$ in the sense of~(ii). 

By a partition of unity argument, (i)--(vii) are local conditions on $V$. That is, if they hold on an open neighbourhood of each point of $s^{-1}(0)\subseteq V$, then they hold on~$V$.
\label{ku2def1}
\end{dfn}

We define $\mu$-Kuranishi neighbourhoods:

\begin{dfn} Let $X$ be a topological space. A {\it $\mu$-Kuranishi neighbourhood\/} on $X$ is a quadruple $(V,E,s,\psi)$ such that:
\begin{itemize}
\setlength{\itemsep}{0pt}
\setlength{\parsep}{0pt}
\item[(a)] $V$ is a smooth manifold, without boundary. We allow~$V=\es$.
\item[(b)] $\pi:E\ra V$ is a real vector bundle over $V$, called the {\it obstruction bundle}.
\item[(c)] $s:V\ra E$ is a smooth section of $E$, called the {\it Kuranishi section}.
\item[(d)] $\psi$ is a homeomorphism from $s^{-1}(0)$ to an open subset $\Im\psi$ in $X$, where $\Im\psi=\bigl\{\psi(x):x\in s^{-1}(0)\bigr\}$ is the image of $\psi$, and is called the {\it footprint\/} of~$(V,E,s,\psi)$.
\end{itemize}
If $S\subseteq X$ is open, by a {\it $\mu$-Kuranishi neighbourhood over\/} $S$, we mean a $\mu$-Kuranishi neighbourhood $(V,E,s,\psi)$ on $X$ with $S\subseteq\Im\psi\subseteq X$.
\label{ku2def2}
\end{dfn}

The next definition, of morphisms between $\mu$-Kuranishi neighbourhoods, is crucial for our programme. We comment on it in Remark~\ref{ku2rem1}.

\begin{dfn} Let $X,Y$ be topological spaces, $f:X\ra Y$ a continuous map, $(V_i,E_i,s_i,\psi_i)$, $(V_j,E_j,s_j,\psi_j)$ be $\mu$-Kuranishi neighbourhoods on $X,Y$ respectively, and $S\subseteq\Im\psi_i\cap f^{-1}(\Im\psi_j)\subseteq X$ be an open set. Consider triples $(V_{ij},\phi_{ij},\hat\phi_{ij})$ satisfying:
\begin{itemize}
\setlength{\itemsep}{0pt}
\setlength{\parsep}{0pt}
\item[(a)] $V_{ij}$ is an open neighbourhood of $\psi_i^{-1}(S)$ in $V_i$. We do not require that $V_{ij}\cap s_i^{-1}(0)=\psi_i^{-1}(S)$, only that~$\psi_i^{-1}(S)\subseteq V_{ij}\cap s_i^{-1}(0)\subseteq V_{ij}$.
\item[(b)] $\phi_{ij}:V_{ij}\ra V_j$ is a smooth map.
\item[(c)] $\hat\phi_{ij}:E_i\vert_{V_{ij}}\ra\phi_{ij}^*(E_j)$ is a morphism of vector bundles on $V_{ij}$.
\item[(d)] $\hat\phi_{ij}(s_i\vert_{V_{ij}})=\phi_{ij}^*(s_j)+O(s_i^2)$, in the sense of Definition~\ref{ku2def1}.
\item[(e)] $f\ci\psi_i=\psi_j\ci\phi_{ij}$ on $s_i^{-1}(0)\cap V_{ij}$.
\end{itemize}

Define a binary relation $\sim$ on such triples $(V_{ij},\phi_{ij},\hat\phi_{ij})$ by $(V_{ij},\phi_{ij},\hat\phi_{ij})\sim(V_{ij}',\phi_{ij}',\hat\phi_{ij}')$ if there exist an open neighbourhood $\dot V_{ij}$ of $\psi_i^{-1}(S)$ in $V_{ij}\cap V_{ij}'$ and a smooth morphism $\La:E_i\vert_{\dot V_{ij}}\ra \phi_{ij}^*(TV_j)\vert_{\dot V_{ij}}$ of vector bundles on $\dot V_{ij}$ satisfying
\e
\phi_{ij}'=\phi_{ij}+\La\cdot s_i+O(s_i^2)\;\>\text{and}\;\> \hat\phi_{ij}'=\hat\phi_{ij}+\La\cdot \phi_{ij}^*(\d s_j)+O(s_i)\;\> \text{on $\dot V_{ij}$,}
\label{ku2eq1}
\e
in the notation of Definition \ref{ku2def1}. Here $\d s_j$ is a shorthand for the derivative $\nabla s_j$ in $C^\iy(T^*V_j\ot E_j)$ with respect to any connection $\nabla$ on $E_j$. If $\ti\nabla$ is an alternative choice then $\ti\nabla s_j=\nabla s_j+O(s_j)$, so (d) implies that $\phi_{ij}^*(\nabla s_j)-\phi_{ij}^*(\ti\nabla s_j)=O(s_i)$. Thus $\phi_{ij}^*(\nabla s_j)+O(s_i)$ is independent of the choice of $\nabla$, so $\La\cdot\phi_{ij}^*(\nabla s_j)+O(s_i)$ is also independent of the choice of $\nabla$, and we write it as $\La\cdot\phi_{ij}^*(\d s_j)+O(s_i)$.

We will show that $\sim$ is an equivalence relation:
\begin{itemize}
\setlength{\itemsep}{0pt}
\setlength{\parsep}{0pt}
\item[(i)] To see that $(V_{ij},\phi_{ij},\hat\phi_{ij})\sim(V_{ij},\phi_{ij},\hat\phi_{ij})$, take $\dot V_{ij}=V_{ij}$ and $\La=0$.
\item[(ii)] Suppose $(V_{ij},\phi_{ij},\hat\phi_{ij})\sim(V_{ij}',\phi_{ij}',\hat\phi_{ij}')$, with $\dot V_{ij},\La$ as above. Then $\phi_{ij}'=\phi_{ij}+O(s_i)$, so the discussion after Definition \ref{ku2def1}(vi) shows that there exists $\ti\La:E_i\vert_{\dot V_{ij}}\ra\phi_{ij}^{\prime *}(TV_j)\vert_{\dot V_{ij}}$ with $\ti\La=-\La+O(s_i)$, and \eq{ku2eq1} gives
\begin{equation*}
\phi_{ij}=\phi_{ij}'+\ti\La\cdot s_i+O(s_i^2)\;\>\text{and}\;\> \hat\phi_{ij}=\hat\phi_{ij}'+\ti\La\cdot \phi_{ij}^{\prime *}(\d s_j)+O(s_i)\;\> \text{on $\dot V_{ij}$.}
\end{equation*}
Hence $(V_{ij}',\phi_{ij}',\hat\phi_{ij}')\sim(V_{ij},\phi_{ij},\hat\phi_{ij})$.
\item[(iii)] Suppose $(V_{ij},\phi_{ij},\hat\phi_{ij})\sim(V_{ij}',\phi_{ij}',\hat\phi_{ij}')$, with $\dot V_{ij},\La$ as above, and $(V_{ij}',\ab\phi_{ij}',\ab\hat\phi_{ij}')\ab\sim(V_{ij}'',\phi_{ij}'',\hat\phi_{ij}'')$ with $\ddot V_{ij},\La'$, so that
\e
\phi_{ij}''\!=\!\phi_{ij}'\!+\!\La'\cdot s_i\!+\!O(s_i^2)\;\>\text{and}\;\> \hat\phi_{ij}''\!=\!\hat\phi_{ij}'\!+\!\La'\cdot \phi_{ij}^{\prime *}(\d s_j)\!+\!O(s_i)\;\> \text{on $\ddot V_{ij}$.}
\label{ku2eq2}
\e
Set $\dddot V_{ij}=\dot V_{ij}\cap\ddot V_{ij}$. The discussion after Definition \ref{ku2def1}(vi) shows there exists $\ti\La'\in C^\iy\bigl((E_i^*\ot\phi_{ij}^*(TV_j))\vert_{\dddot V_{ij}}\bigr)$ with $\ti\La'=\La'\vert_{\dddot V_{ij}}+O(s_i)$. Define $\La''=\La\vert_{\dddot V_{ij}}+\ti\La'\vert_{\dddot V_{ij}}\in C^\iy\bigl((E_i^*\ot\phi_{ij}^*(TV_j))\vert_{\dddot V_{ij}}\bigr)$. Then \eq{ku2eq1}--\eq{ku2eq2} yield
\begin{equation*}
\phi_{ij}''\!=\!\phi_{ij}\!+\!\La''\cdot s_i\!+\!O(s_i^2)\;\>\text{and}\;\> \hat\phi_{ij}''\!=\!\hat\phi_{ij}\!+\!\La''\cdot \phi_{ij}^*(\d s_j)\!+\!O(s_i)\;\> \text{on $\dddot V_{ij}$,}
\end{equation*}
so $(V_{ij},\phi_{ij},\hat\phi_{ij})\sim(V_{ij}'',\phi_{ij}'',\hat\phi_{ij}'')$.
\end{itemize}
Thus $\sim$ is an equivalence relation. We may also write $\sim_S$ for $\sim$ if we want to make the choice of open set $S$ clear.

We write $[V_{ij},\phi_{ij},\hat\phi_{ij}]$ for the $\sim$-equivalence class of a triple $(V_{ij},\phi_{ij},\hat\phi_{ij})$, and we call $\Phi_{ij}=[V_{ij},\phi_{ij},\hat\phi_{ij}]:(V_i,E_i,s_i,\psi_i)\ra (V_j,E_j,s_j,\psi_j)$ a {\it morphism of\/ $\mu$-Kuranishi neighbourhoods over\/} $(S,f)$, or just a {\it morphism over\/} $(S,f)$. Write $\Hom_{S,f}\bigl((V_i,E_i,s_i,\psi_i),(V_j,E_j,s_j,\psi_j)\bigr)$ for the set of such~$\Phi_{ij}$.

If $Y=X$ and $f=\id_X$ then we call $\Phi_{ij}$ a {\it morphism of\/ $\mu$-Kuranishi neighbourhoods over\/} $S$, or just a {\it morphism over\/} $S$, and we write $\Hom_S\bigl((V_i,E_i,s_i,\psi_i),\ab(V_j,E_j,s_j,\psi_j)\bigr)$ for the set of such~$\Phi_{ij}$.
\label{ku2def3}
\end{dfn}

Next we define composition of morphisms, and identity morphisms.

\begin{dfn} Let $X,Y,Z$ be topological spaces, $f:X\ra Y$, $g:Y\ra Z$ be continuous maps, $(V_i,E_i,s_i,\psi_i),(V_j,E_j,s_j,\psi_j),(V_k,E_k,s_k,\psi_k)$ be $\mu$-Kuranishi neighbourhoods on $X,Y,Z$, and $T\subseteq \Im\psi_j\cap g^{-1}(\Im\psi_k)\ab\subseteq Y$ and $S\subseteq\Im\psi_i\cap f^{-1}(T)\subseteq X$ be open. Suppose $\Phi_{ij}=[V_{ij},\phi_{ij},\hat\phi_{ij}]:(V_i,E_i,s_i,\psi_i)\ra (V_j,\ab E_j,\ab s_j,\ab\psi_j)$ is a morphism of $\mu$-Kuranishi neighbourhoods over $(S,f)$, and $\Phi_{jk}=[V_{jk},\phi_{jk},\hat\phi_{jk}]:(V_j,E_j,s_j,\ab\psi_j)\ab\ra (V_k,E_k,s_k,\psi_k)$ is a morphism of $\mu$-Kuranishi neighbourhoods over $(T,g)$. Choose representatives $(V_{ij},\phi_{ij},\hat\phi_{ij}),(V_{jk},\phi_{jk},\hat\phi_{jk})$ in the $\sim$-equivalence classes $[V_{ij},\phi_{ij},\hat\phi_{ij}],[V_{jk},\phi_{jk},\hat\phi_{jk}]$. Define $V_{ik}=\phi_{ij}^{-1}(V_{jk})\ab\subseteq V_{ij}\subseteq V_i$, and $\phi_{ik}:V_{ij}\ra V_k$ by $\phi_{ik}=\phi_{jk}\ci\phi_{ij}\vert_{V_{ik}}$, and $\hat\phi_{ik}:E_i\vert_{V_{ik}}\ra\phi_{ik}^*(E_k)$ by $\hat\phi_{ik}=\phi_{ij}\vert_{V_{ik}}^*(\hat\phi_{jk})\ci\hat\phi_{ij}\vert_{V_{ik}}$. 

Using $\hat\phi_{ij}(s_i\vert_{V_{ij}})=\phi_{ij}^*(s_j)+O(s_i^2)$ and $\hat\phi_{jk}(s_j\vert_{V_{jk}})=\phi_{jk}^*(s_k)+O(s_j^2)$ from Definition \ref{ku2def3}(d) we see that $\hat\phi_{ik}(s_i\vert_{V_{ik}})=\phi_{ik}^*(s_k)+O(s_i^2)$. Also $\psi_i=\psi_j\ci\phi_{ij}$ on $s_i^{-1}(0)\cap V_{ij}$ and $\psi_j=\psi_k\ci\phi_{jk}$ on $s_j^{-1}(0)\cap V_{jk}$ from Definition \ref{ku2def3}(e) give $\psi_i=\psi_k\ci\phi_{ik}$ on $s_i^{-1}(0)\cap V_{ik}$. Thus $(V_{ik},\phi_{ik},\hat\phi_{ik})$ satisfies Definition \ref{ku2def3}(a)--(e), and $\Phi_{ik}:=[V_{ik},\phi_{ik},\hat\phi_{ik}]:(V_i,E_i,s_i,\psi_i)\ra (V_k,E_k,s_k,\psi_k)$ is a morphism of $\mu$-Kuranishi neighbourhoods over~$(S,g\ci f)$.

To see $\Phi_{ik}$ is independent of the choice of $(V_{ij},\phi_{ij},\hat\phi_{ij})$, $(V_{jk},\phi_{jk},\hat\phi_{jk})$, let $(V_{ij}',\phi_{ij}',\hat\phi_{ij}')$, $(V_{jk}',\phi_{jk}',\hat\phi_{jk}')$ be alternative choices, giving a triple $(V_{ik}',\phi_{ik}',\hat\phi_{ik}')$. As $(V_{ij}',\phi_{ij}',\hat\phi_{ij}')\sim (V_{ij},\phi_{ij},\hat\phi_{ij})$ and $(V_{jk}',\phi_{jk}',\hat\phi_{jk}')\sim (V_{jk},\phi_{jk},\hat\phi_{jk})$ we have open neighbourhoods $\dot V_{ij}$ of $\psi_i^{-1}(S)$ in $V_{ij}\cap V_{ij}'$ and $\dot V_{jk}$ of $\psi_j^{-1}(S)$ in $V_{jk}\cap V_{jk}'$ and morphisms $\La_{ij}:E_i\vert_{\dot V_{ij}}\ra \phi_{ij}^*(TV_j)\vert_{\dot V_{ij}}$, $\La_{jk}:E_j\vert_{\dot V_{jk}}\ra \phi_{jk}^*(TV_k)\vert_{\dot V_{jk}}$ satisfying
\e
\begin{aligned}
\phi_{ij}'&=\!\phi_{ij}\!+\!\La_{ij}\cdot s_i\!+\!O(s_i^2), & \hat\phi_{ij}'&=\!\hat\phi_{ij}\!+\!\La_{ij}\cdot \phi_{ij}^*(\d s_j)\!+\!O(s_i)\; \text{on $\dot V_{ij}$,}\\
\phi_{jk}'&=\!\phi_{jk}\!+\!\La_{jk}\cdot s_j\!+\!O(s_j^2), & \hat\phi_{jk}'&=\hat\phi_{jk}\!+\!\La_{jk}\cdot \phi_{jk}^*(\d s_k)\!+\!O(s_j)\; \text{on $\dot V_{jk}$.}
\end{aligned}
\label{ku2eq3}
\e

Set $\dot V_{ik}=\dot V_{ij}\cap\phi_{ij}^{-1}(\dot V_{jk})$, and define $\La_{ik}:E_i\vert_{\dot V_{ik}}\ra \phi_{ik}^*(TV_k)\vert_{\dot V_{ik}}$ by 
\e
\La_{ik}=\phi_{ij}^*(\d\phi_{jk})\ci\La_{ij}\vert_{\dot V_{ik}}+
\phi_{ij}^*(\La_{jk})\ci\hat\phi_{ij}\vert_{\dot V_{ik}}.
\label{ku2eq4}
\e
Then on $\dot V_{ik}$ we have
\e
\begin{split}
\phi_{ik}'&=\phi_{jk}'\ci\phi_{ij}'=\bigl[\phi_{jk}+\La_{jk}\cdot s_j+O(s_j^2)\bigr]\ci \bigl[\phi_{ij}+\La_{ij}\cdot s_i+O(s_i^2)\bigr]\\
&=\phi_{jk}\ci\phi_{ij}+\phi_{ij}^*(\d\phi_{jk})\ci\La_{ij}\cdot s_i+
\phi_{ij}^*(\La_{jk})\ci\phi_{ij}^*(s_j)\\
&\qquad +O(s_i^2)+O(\phi_{ij}^*(s_j^2))+O(s_i\cdot \phi_{ij}^*(s_j))\\
&=\phi_{ik}+\phi_{ij}^*(\d\phi_{jk})\ci\La_{ij}\cdot s_i+\phi_{ij}^*(\La_{jk})(\hat\phi_{ij}(s_i)+O(s_i^2))+O(s_i^2) \\
&=\phi_{ik}+\La_{ik}\cdot s_i+O(s_i^2),
\end{split}
\label{ku2eq5}
\e
using \eq{ku2eq3}--\eq{ku2eq4} and Definition \ref{ku2def3}(d). Similarly, on $\dot V_{ik}$ we have
\e
\begin{split}
\hat\phi_{ik}'&=\phi_{ij}'^*(\hat\phi_{jk}')\ci\hat\phi_{ij}'=\bigl(\phi_{ij}+O(s_i)\bigr){}^*\bigl[\hat\phi_{jk}+\La_{jk}\cdot \phi_{jk}^*(\d s_k)+O(s_j)\bigr]\\
&\quad\ci\bigl[\hat\phi_{ij}+\La_{ij}\cdot \phi_{ij}^*(\d s_j)+O(s_i)\bigr]\\
&=\phi_{ij}^*(\hat\phi_{jk})\ci\hat\phi_{ij}+\phi_{ij}^*(\hat\phi_{jk})\ci\La_{ij}\cdot \phi_{ij}^*(\d s_j)\\
&\quad+\phi_{ij}^*(\La_{jk})\cdot \phi_{ij}^*\ci\phi_{jk}^*(\d s_k)+O(s_i)+O(\phi_{ij}^*(s_j))\\
&=\hat\phi_{ik}+\La_{ij}\cdot\phi_{ij}^*(\hat\phi_{jk}\ci\d s_j)+\phi_{ij}^*(\La_{jk})\cdot \phi_{ik}^*(\d s_k)+O(s_i)\\
&=\hat\phi_{ik}\!+\!\La_{ij}\cdot\phi_{ij}^*\bigl(\phi_{jk}^*(\d s_k)\!\ci\!\d\phi_{jk}\!+\!O(s_j)\bigr)\!+\!\phi_{ij}^*(\La_{jk})\cdot \phi_{ik}^*(\d s_k)\!+\!O(s_i)\\
&=\hat\phi_{ik}+(\phi_{ij}^*(\d\phi_{jk})\ci\La_{ij})\cdot 
\phi_{ik}^*(\d s_k)+\phi_{ij}^*(\La_{jk})\cdot\phi_{ik}^*(\d s_k)+O(s_i)\\
&=\hat\phi_{ik}+\La_{ik}\cdot\phi_{ik}^*(\d s_k)+O(s_i),
\end{split}
\label{ku2eq6}
\e
using \eq{ku2eq3}--\eq{ku2eq4} and Definition \ref{ku2def3}(d) and its derivative. 

Equations \eq{ku2eq5}--\eq{ku2eq6} imply that $(V_{ik},\phi_{ik},\hat\phi_{ik})\sim(V_{ik}',\phi_{ik}',\hat\phi_{ik}')$, so the $\sim$-equivalence class $\Phi_{ik}=[V_{ik},\phi_{ik},\hat\phi_{ik}]$ is independent of choices, and depends only on $\Phi_{ij},\Phi_{jk}$. We write $[V_{jk},\phi_{jk},\hat\phi_{jk}]\ci [V_{ij},\phi_{ij},\hat\phi_{ij}]=[V_{ik},\phi_{ik},\hat\phi_{ik}]$, or $\Phi_{jk}\ci\Phi_{ij}=\Phi_{ik}$, and call it the {\it composition\/} of the morphisms $\Phi_{ij}=[V_{ij},\phi_{ij},\hat\phi_{ij}]$, $\Phi_{jk}=[V_{jk},\phi_{jk},\hat\phi_{jk}]$. It is easy to see that composition is associative.

If $(V_i,E_i,s_i,\psi_i)$ is a $\mu$-Kuranishi neighbourhood on $X$, and $S\subseteq\Im\psi_i$ is open, the {\it identity morphism\/} $\id_{(V_i,E_i,s_i,\psi_i)}$ is $[V_i,\id_{V_i},\id_{E_i}]$, considered as a morphism $(V_i,E_i,s_i,\psi_i)\ra(V_i,E_i,s_i,\psi_i)$ over $S$. Identity morphisms act as identities under composition of morphisms. 
\label{ku2def4}
\end{dfn}

We have now defined all the structures of a category: objects ($\mu$-Kuranishi neighbourhoods), morphisms, composition, and identity morphisms. However, our morphisms are defined over an open set $S\subseteq X$, which is not part of the usual category structure. There are two ways to make a genuine category: either (a) work on a fixed topological space $X$ and open $S\subseteq X$ with morphisms over $f=\id_X:X\ra X$ only; or (b) allow $X,f$ to vary but require $S=X$ throughout.

\begin{dfn} Let $X$ be a topological space and $S\subseteq X$ be open. Define the {\it category $\muKur_S(X)$ of $\mu$-Kuranishi neighbourhoods over\/} $S$ to have objects $\mu$-Kuranishi neighbourhoods $(V_i,E_i,s_i,\psi_i)$ on $X$ with $S\subseteq\Im\psi_i$, morphisms $\Phi_{ij}:(V_i,E_i,s_i,\psi_i)\ra (V_j,E_j,s_j,\psi_j)$ of $\mu$-Kuranishi neighbourhoods over $S$, and composition and identities as above.

Define the {\it category $\GmuKur$ of global\/ $\mu$-Kuranishi neighbourhoods\/} as follows. Objects $\bigl(X,(V,E,s,\psi)\bigr)$ in $\GmuKur$ are pairs of a topological space $X$ and a $\mu$-Kuranishi neighbourhood $(V,E,s,\psi)$ on $X$ with $\Im\psi=X$. Morphisms $(f,\Phi):\bigl(X,(V,E,s,\psi)\bigr)\ra\bigl(Y,(W,F,t,\chi)\bigr)$ in $\GmuKur$ are pairs of a continuous map $f:X\ra Y$ and a morphism $\Phi:(V,\ab E,\ab s,\ab\psi)\ab\ra(W,F,t,\chi)$ of $\mu$-Kuranishi neighbourhoods over $(X,f)$. Composition and identities are as above.
\label{ku2def5}
\end{dfn}

\begin{dfn} Let $\Phi_{ij}:(V_i,E_i,s_i,\psi_i)\ra(V_j,E_j,s_j,\psi_j)$ be a morphism of $\mu$-Kuranishi neighbourhoods on $X$ over $S$, where $S\subseteq\Im\psi_i\cap\Im\psi_j\subseteq X$ is open. We call $\Phi_{ij}$ a {\it $\mu$-coordinate change over\/} $S$ if it is invertible in $\muKur_S(X)$, that is, if there exists a morphism $\Phi_{ji}:(V_j,E_j,s_j,\psi_j)\ra (V_i,E_i,s_i,\psi_i)$ over $S$ satisfying $\Phi_{ji}\ci\Phi_{ij}=\id_{(V_i,E_i,s_i,\psi_i)}$ and $\Phi_{ij}\ci\Phi_{ji}=\id_{(V_j,E_j,s_j,\psi_j)}$.
\label{ku2def6}
\end{dfn}

\begin{dfn} Let $X,Y$ be topological spaces, $f:X\ra Y$ a continuous map, $(V_i,E_i,s_i,\psi_i)$, $(V_j,E_j,s_j,\psi_j)$ be $\mu$-Kuranishi neighbourhoods on $X,Y$ respectively, and $T\subseteq S\subseteq\Im\psi_i\cap f^{-1}(\Im\psi_j)\subseteq X$ be open. Then if $(V_{ij},\phi_{ij},\hat\phi_{ij})$ satisfies Definition \ref{ku2def3}(a)--(e) for $S$, it also satisfies them with $T$ in place of $S$, and if $(V_{ij},\phi_{ij},\hat\phi_{ij})\sim_S(V_{ij}',\phi_{ij}',\hat\phi_{ij}')$ then $(V_{ij},\phi_{ij},\hat\phi_{ij})\sim_T(V_{ij}',\phi_{ij}',\hat\phi_{ij}')$. Thus, we can define a map from morphisms of $\mu$-Kuranishi neighbourhoods over $(S,f)$ to morphisms of $\mu$-Kuranishi neighbourhoods over $(T,f)$, by mapping the $\sim_S$-equivalence class $\Phi_{ij}=[V_{ij},\phi_{ij},\hat\phi_{ij}]$ of $(V_{ij},\phi_{ij},\hat\phi_{ij})$, to the $\sim_T$-equivalence class of $(V_{ij},\phi_{ij},\hat\phi_{ij})$, which we write as $[V_{ij},\phi_{ij},\hat\phi_{ij}]\vert_T$ or $\Phi_{ij}\vert_T$, and call the {\it restriction of\/ $[V_{ij},\phi_{ij},\hat\phi_{ij}]$ to\/}~$T$.

Restriction is compatible with composition and identities, so for $f=\id_X$ it defines a {\it restriction functor\/} $\vert_T:\muKur_S(X)\ra\muKur_T(X)$ mapping $(V_i,\ab E_i,\ab s_i,\ab\psi_i)\ab\mapsto(V_i,E_i,s_i,\psi_i)$ on objects, and $\Phi_{ij}\mapsto\Phi_{ij}\vert_T$ on morphisms. If $\Phi_{ij}$ is a $\mu$-coordinate change on $S$, then $\Phi_{ij}\vert_T$ is a $\mu$-coordinate change on~$T$.
\label{ku2def7}
\end{dfn}

\begin{dfn} So far we have always discussed morphisms of $\mu$-Kuranishi neighbourhoods, and $\mu$-coordinate changes, {\it over a specified open set\/} $S\subseteq X$, or over $(S,f)$. We now make the convention that {\it when we do not specify a domain\/ $S$ for a morphism or $\mu$-coordinate change, the domain should be as large as possible}. For example, if we say that $\Phi_{ij}:(V_i,E_i,s_i,\psi_i)\ra (V_j,E_j,s_j,\psi_j)$ is a {\it morphism of $\mu$-Kuranishi neighbourhoods\/} ({\it over\/} $f$), {\it or a $\mu$-coordinate change}, without specifying $S$, we mean that $S=\Im\psi_i\cap\Im\psi_j$ (or~$S=\Im\psi_i\cap f^{-1}(\Im\psi_j)$).

Similarly, if we write a formula involving several morphisms or $\mu$-coordinate changes (possibly defined on different domains), without specifying the domain $S$, we make the convention {\it that the domain where the formula holds should be as large as possible}. That is, the domain $S$ is taken to be the intersection of the domains of each morphism in the formula, and we implicitly restrict each morphism in the formula to $S$ as in Definition \ref{ku2def7}, to make it make sense.

For example, if we say that $\Phi_{ij}:(V_i,E_i,s_i,\psi_i)\ra (V_j,E_j,s_j,\psi_j)$, $\Phi_{jk}:(V_j,E_j,s_j,\psi_j)\ra (V_k,E_k,s_k,\psi_k)$ and $\Phi_{ik}:(V_i,E_i,\ab s_i,\ab\psi_i)\ab\ra (V_k,E_k,s_k,\psi_k)$ are morphisms of $\mu$-Kuranishi neighbourhoods on $X$, and
\e
\Phi_{ik}=\Phi_{jk}\ci\Phi_{ij},
\label{ku2eq7}
\e 
we mean that $\Phi_{ij}$ is defined over $\Im\psi_i\cap\Im\psi_j$, and $\Phi_{jk}$ over $\Im\psi_j\cap\Im\psi_k$, and $\Phi_{ik}$ over $\Im\psi_i\cap\Im\psi_k$, and \eq{ku2eq7} holds over $\Im\psi_i\cap\Im\psi_j\cap\Im\psi_k$, that is, \eq{ku2eq7} is equivalent to
\begin{equation*}
\Phi_{ik}\vert_{\Im\psi_i\cap\Im\psi_j\cap\Im\psi_k}=
\Phi_{jk}\vert_{\Im\psi_i\cap\Im\psi_j\cap\Im\psi_k}\ci \Phi_{ij}\vert_{\Im\psi_i\cap\Im\psi_j\cap\Im\psi_k}.
\end{equation*}
Note in particular the potentially confusing point that \eq{ku2eq7} {\it does not determine\/ $\Phi_{ik}$ on\/ $\Im\psi_i\cap\Im\psi_k,$ but only on\/} $\Im\psi_i\cap\Im\psi_j\cap\Im\psi_k$.
\label{ku2def8}
\end{dfn}

\begin{rem}{\bf(i)} In our definition of $\mu$-Kuranishi spaces in \S\ref{ku23}, $\mu$-coordinate changes in Definition \ref{ku2def6} will be our substitute for coordinate changes between Kuranishi neighbourhoods in the Fukaya--Oh--Ohta--Ono theory of Kuranishi spaces \cite{Fuka,FOOO1,FOOO2,FOOO3,FOOO4,FOOO5,FOOO6,FOOO7,FOOO8,FuOn}, summarized in \S\ref{kuA1}. Morphisms over $f$ will be used in \S\ref{ku23} to define morphisms of Kuranishi spaces.
\smallskip

\noindent{\bf(ii)} In the definition of $\sim$ in Definition \ref{ku2def3}, restricting to an arbitrarily small neighbourhood $\dot V_{ij}$ of $\psi_i^{-1}(S)$ means that we are taking {\it germs of coordinate changes about\/ $\psi_i^{-1}(S)$ in\/} $V_i$. Fukaya--Ono's first definition of Kuranishi space \cite[\S 5]{FuOn} involved germs, though they were abandoned later. The difference is that Fukaya and Ono took germs of Kuranishi neighbourhoods at points, whereas we take germs of coordinate changes at larger subsets $\psi_i^{-1}(S)$ in~$V_i$.
\smallskip

\noindent{\bf(iii)} As those familiar with \cite{Fuka,FOOO1,FOOO2, FOOO3,FOOO4,FOOO5,FOOO6,FOOO7,FOOO8,FuOn,McWe1,McWe2,McWe3,Yang1,Yang2,Yang3} will know, in current theories of Kuranishi spaces, a lot of energy is spent worrying about the domains $V_{ij}$ of coordinate changes $\Phi_{ij}=(V_{ij},h_{ij},\phi_{ij},\hat\phi_{ij})$. Taking germs makes the domains $V_{ij}$ irrelevant in our theory, for better or for worse: in effect the domain of $[V_{ij},\phi_{ij},\hat\phi_{ij}]$ is~$\psi_i^{-1}(S)\subseteq V_i$.
\label{ku2rem1}
\end{rem}

\subsection{\texorpdfstring{Properties of morphisms and $\mu$-coordinate changes}{Properties of morphisms and \textmu-coordinate changes}}
\label{ku22}

We define sheaves on a topological space, following Hartshorne
\cite[\S II.1]{Hart}.

\begin{dfn} Let $X$ be a topological space. A {\it presheaf of sets\/} $\cE$ on $X$ consists of the data of a set $\cE(S)$ for every open set $S\subseteq X$, and a map $\rho_{ST}:\cE(S)\ra\cE(T)$ called the {\it restriction map\/} for every inclusion $T\subseteq S\subseteq X$ of open sets, satisfying the conditions that
\begin{itemize}
\setlength{\itemsep}{0pt}
\setlength{\parsep}{0pt}
\item[(i)] $\rho_{SS}=\id_{\cE(S)}:\cE(S)\ra\cE(S)$ for all open
$S\subseteq X$; and
\item[(ii)] $\rho_{SU}=\rho_{TU}\ci\rho_{ST}:\cE(S)\ra\cE(U)$ for all
open~$U\subseteq T\subseteq S\subseteq X$.
\end{itemize}

A presheaf of sets $\cE$ on $X$ is called a {\it sheaf\/} if it also satisfies
\begin{itemize}
\setlength{\itemsep}{0pt}
\setlength{\parsep}{0pt}
\item[(iii)] If $S\subseteq X$ is open, $\{T_i:i\in I\}$ is an open
cover of $S$, and $s,t\in\cE(S)$ have $\rho_{ST_i}(s)=\rho_{ST_i}(t)$ in
$\cE(T_i)$ for all $i\in I$, then $s=t$ in $\cE(S)$; and
\item[(iv)] If $S\subseteq X$ is open, $\{T_i:i\in I\}$ is an open cover of
$S$, and we are given elements $s_i\in\cE(T_i)$ for all $i\in I$
such that $\rho_{T_i(T_i\cap T_j)}(s_i)=\rho_{T_j(T_i\cap
T_j)}(s_j)$ in $\cE(T_i\cap T_j)$ for all $i,j\in I$, then there
exists $s\in\cE(S)$ with $\rho_{ST_i}(s)=s_i$ for all $i\in I$.
This $s$ is unique by~(iii).
\end{itemize}

Suppose $\cE,\cF$ are presheaves or sheaves of sets on $X$. A {\it morphism\/} $\phi:\cE\ra\cF$ consists of a map $\phi(S):\cE(S)\ra\cF(S)$ for all open $S\subseteq
X$, such that the following diagram commutes for all open $T\subseteq S\subseteq X$
\begin{equation*}
\xymatrix@C=95pt@R=15pt{
*+[r]{\cE(S)} \ar[r]_{\phi(S)} \ar[d]^{\rho_{ST}} & *+[l]{\cF(S)}
\ar[d]_{\rho_{ST}'} \\ *+[r]{\cE(T)} \ar[r]^{\phi(T)} & *+[l]{\cF(T),\!} }
\end{equation*}
where $\rho_{ST}$ is the restriction map for $\cE$, and $\rho_{ST}'$
the restriction map for~$\cF$.
\label{ku2def9}
\end{dfn}

Our first theorem, proved in \S\ref{ku61}, is an important property of morphisms of $\mu$-Kuranishi neighbourhoods on topological spaces. We will use it in \S\ref{ku23} to construct compositions of morphisms of $\mu$-Kuranishi spaces, and in \S\ref{ku24} to prove good behaviour of $\mu$-Kuranishi neighbourhoods on $\mu$-Kuranishi spaces. The first part of (a) is a special case of (b) for~$f=\id_X:X\ra X$.

\begin{thm}{\bf(a)} Let\/ $(V_i,E_i,s_i,\psi_i),$ $(V_j,E_j,s_j,\psi_j)$ be $\mu$-Kuranishi neighbourhoods on a topological space $X$. For each open $S\subseteq\Im\psi_i\cap\Im\psi_j,$ set
\begin{equation*}
\cHom\bigl((V_i,E_i,s_i,\psi_i),\kern -.1em(V_j,E_j,s_j,\psi_j)\bigr)(S)\!=\!\Hom_S\bigl((V_i,E_i,s_i,\psi_i),\kern -.1em(V_j,E_j,s_j,\psi_j)\bigr),
\end{equation*}
and for all open $T\subseteq S\subseteq \Im\psi_i\cap\Im\psi_j$ define
\begin{align*}
\rho_{ST}:\,&\cHom\bigl((V_i,E_i,s_i,\psi_i),(V_j,E_j,s_j,\psi_j)\bigr)(S)\longra\\
&\cHom\bigl((V_i,E_i,s_i,\psi_i),(V_j,E_j,s_j,\psi_j)\bigr)(T)
\quad\text{by}\quad
\rho_{ST}:\Phi_{ij}\longmapsto\Phi_{ij}\vert_T.
\end{align*}
Then $\cHom\bigl((V_i,E_i,s_i,\psi_i),(V_j,E_j,s_j,\psi_j)\bigr)$ is a sheaf of sets on $\Im\psi_i\cap\Im\psi_j$. 

Similarly, $\mu$-coordinate changes from $(V_i,E_i,s_i,\psi_i)$ to $(V_j,E_j,s_j,\psi_j)$ form a sheaf of sets\/ $\cIso\bigl((V_i,E_i,s_i,\psi_i),(V_j,E_j,s_j,\psi_j)\bigr)$ on $\Im\psi_i\cap\Im\psi_j,$ a subsheaf of\/~$\cHom\bigl((V_i,E_i,s_i,\psi_i),(V_j,E_j,s_j,\psi_j)\bigr)$.
\smallskip

\noindent{\bf(b)} Let\/ $f:X\!\ra\! Y$ be a continuous map of topological spaces, and\/ $(U_i,D_i,r_i,\chi_i),$ $(V_j,E_j,s_j,\psi_j)$ be $\mu$-Kuranishi neighbourhoods on $X,Y,$ respectively. Then morphisms from $(U_i,D_i,r_i,\chi_i)$ to $(V_j,E_j,s_j,\psi_j)$ over $f$ form a sheaf\/ $\cHom_f\bigl((U_i,\ab D_i,\ab r_i,\ab\chi_i),\ab (V_j,E_j,s_j,\psi_j)\bigr)$ over\/~$\Im\chi_i\cap f^{-1}(\Im\psi_j)\subseteq X$.
\label{ku2thm1}
\end{thm}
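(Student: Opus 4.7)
I outline a proof of Theorem \ref{ku2thm1}. The presheaf axioms (i),(ii) of Definition \ref{ku2def9} are immediate from the functoriality of restriction established in Definition \ref{ku2def7}, so the content is the two sheaf axioms (iii),(iv). I focus on part (a); part (b) is handled identically, since the continuous map $f$ enters only through the condition \ref{ku2def3}(e), which is preserved under any of the local constructions below. I treat the $\cHom$ claim first, then deduce the $\cIso$ claim as a corollary.

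For the locality axiom (iii), suppose $\Phi_{ij}=[V_{ij},\phi_{ij},\hat\phi_{ij}]$ and $\Phi_{ij}'=[V_{ij}',\phi_{ij}',\hat\phi_{ij}']$ are morphisms over $S$ with $\Phi_{ij}\vert_{T_k}=\Phi_{ij}'\vert_{T_k}$ for each $k\in I$. After passing to a common representative (replacing both by their restriction to $V_{ij}\cap V_{ij}'$), each equality over $T_k$ supplies an open neighbourhood $\dot V^k$ of $\psi_i^{-1}(T_k)$ in $V_{ij}\cap V_{ij}'$ and $\La_k\in C^\iy\bigl((E_i^*\ot \phi_{ij}^*(TV_j))\vert_{\dot V^k}\bigr)$ satisfying \eq{ku2eq1} on $\dot V^k$. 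Shrinking the $\dot V^k$ so $\{\dot V^k\}$ is locally finite in some open $\dot V\supseteq\psi_i^{-1}(S)$, pick a smooth partition of unity $\{\eta_k\}$ subordinate to $\{\dot V^k\}$ and set $\La:=\sum_k\eta_k\La_k$. Testing the identity $\phi_{ij}'=\phi_{ij}+\La\cdot s_i+O(s_i^2)$ against an arbitrary $h\in C^\iy(V_j,\R)$ reduces it, via Definition \ref{ku2def1}(v), to summing the local identities $h\ci\phi_{ij}'=h\ci\phi_{ij}+\La_k\cdot(s_i\ot\phi_{ij}^*(\d h))+\al_k\cdot(s_i\ot s_i)$ weighted by $\eta_k$; since $\sum_k\eta_k=1$ on $\dot V$, this yields the required identity with error $\sum_k\eta_k\al_k\cdot(s_i\ot s_i)=O(s_i^2)$. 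The analogous direct sum gives the $\hat\phi$ identity, using locality of the $O(s_i)$ condition (end of Definition \ref{ku2def1}). Hence $\Phi_{ij}=\Phi_{ij}'$ as morphisms over $S$.

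The gluing axiom (iv) is the main obstacle. Given morphisms $\Phi^k$ over $T_k$ with $\Phi^k\vert_{T_k\cap T_l}=\Phi^l\vert_{T_k\cap T_l}$, pick representatives $(V^k,\phi^k,\hat\phi^k)$ and, on overlaps, $\dot V^{kl}$ and $\La^{kl}$ witnessing \eq{ku2eq1}. I must assemble a single triple $(V_{ij},\phi_{ij},\hat\phi_{ij})$ on some open neighbourhood of $\psi_i^{-1}(S)$. The bundle part is straightforward: after shrinking so that the $V^k$ form a locally finite cover with partition of unity $\{\eta_k\}$, use the remark after Definition \ref{ku2def1}(vi) to replace each $\hat\phi^k:E_i\vert_{V^k}\ra(\phi^k)^*(E_j)$ by a section $\ti\hat\phi^k$ of a fixed target bundle (say $(\phi^1)^*(E_j)$, locally) agreeing with $\hat\phi^k$ modulo $O(s_i)$, and set $\hat\phi_{ij}:=\sum_k\eta_k\ti\hat\phi^k$. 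The map part is harder, since $V_j$ is not linear: the $\phi^k$ cannot simply be averaged. To handle this, fix an auxiliary Riemannian metric on $V_j$; then for each $x\in\psi_i^{-1}(S)$, Definition \ref{ku2def3}(e) gives $\phi^k(x)=\psi_j^{-1}(\psi_i(x))$ independent of $k$, so a small neighbourhood $U_x$ of $x$ in $V_i$ maps under every relevant $\phi^k$ into a common geodesically convex ball $B_x$ in $V_j$. On $B_x$ the weighted Riemannian barycentre is a well-defined smooth operation, and I define $\phi_{ij}$ on $U_x$ as the $\eta_k$-weighted barycentre of $\{\phi^k\}$. Smooth dependence on local choices, plus the fact that $\phi^k=\phi^l+\La^{kl}\cdot s_i+O(s_i^2)$ on overlaps, imply that $\phi_{ij}$ is smooth, well-defined independent of the convex neighbourhoods chosen, and satisfies $\phi_{ij}=\phi^k+(\text{first-order corr.})\cdot s_i+O(s_i^2)$ on each $\dot V^k$; the first-order correction is read off from the differential of the barycentre map. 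A test-function computation as in (iii) then shows $(V_{ij},\phi_{ij},\hat\phi_{ij})\sim_{T_k}(V^k,\phi^k,\hat\phi^k)$, so $[V_{ij},\phi_{ij},\hat\phi_{ij}]\vert_{T_k}=\Phi^k$.

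Finally, $\cIso\bigl((V_i,E_i,s_i,\psi_i),(V_j,E_j,s_j,\psi_j)\bigr)$ is a subpresheaf because restriction preserves invertibility (Definition \ref{ku2def7}). It is a subsheaf because if $\Phi_{ij}\in\cHom(\cdots)(S)$ has invertible restrictions $\Phi_{ij}\vert_{T_k}$ with inverses $\Psi^k\in\cHom\bigl((V_j,E_j,s_j,\psi_j),(V_i,E_i,s_i,\psi_i)\bigr)(T_k)$, uniqueness of inverses in each category $\muKur_{T_k\cap T_l}(X)$ gives $\Psi^k\vert_{T_k\cap T_l}=\Psi^l\vert_{T_k\cap T_l}$, so the already-proved sheaf property for $\cHom$ produces $\Psi\in\cHom(\cdots)(S)$ restricting to each $\Psi^k$; the identities $\Psi\ci\Phi_{ij}=\id$ and $\Phi_{ij}\ci\Psi=\id$ hold locally on the cover $\{T_k\}$ and so hold on $S$ by the uniqueness (iii) for the endomorphism sheaves, proving $\Phi_{ij}$ is a $\mu$-coordinate change over $S$. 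The main technical obstacle throughout is the Riemannian averaging step for (iv), which uses only standard facts about convex neighbourhoods and the smooth dependence of the centre of mass.
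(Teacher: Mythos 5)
Your proposal is correct and follows essentially the same route as the paper: partition-of-unity averaging of the $\La$'s for the locality axiom, transporting and averaging the bundle maps $\hat\phi^k$ up to $O(s_i)$ for gluing, a nonlinear averaging of the maps $\phi^k$ into $V_j$ for the hard part of gluing, and the same formal argument (glue the inverses, then use locality to check the composites are identities) for the $\cIso$ subsheaf. Your Riemannian-barycentre construction is precisely the paper's second proof (``Method (B)'', via exponential maps and vanishing weighted sums of logarithms) of its key averaging lemma, including the second-order control near the diagonal and the symmetry/degeneracy properties needed to get smoothness where the set of active indices of the partition of unity changes.
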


\begin{rem} We will refer to Theorem \ref{ku2thm1} as {\it the sheaf property of morphisms of\/ $\mu$-Kuranishi neighbourhoods}. Sheaves are basic objects in algebraic geometry. Informally, something forms a sheaf if it can be defined locally and glued. 

For example, if $X,Y$ are manifolds then smooth maps $f:X\ra Y$ form a sheaf on $X$, because an arbitrary map $f:X\ra Y$ is smooth if and only if it is a smooth on an open neighbourhood of each point in $X$. One uses the sheaf property of smooth maps all the time in differential geometry without noticing. 

It is clear that $\cHom\bigl((V_i,E_i,s_i,\psi_i),(V_j,E_j,s_j,\psi_j)\bigr)$, $\cHom_f\bigl((U_i,D_i,r_i,\chi_i),\ab (V_j,E_j,s_j,\psi_j)\bigr)$ are presheaves, the problem is to prove Definition~\ref{ku2def9}(iii),(iv).

\label{ku2rem2}
\end{rem}

The next theorem, proved in \S\ref{ku62}, characterizes $\mu$-coordinate changes:

\begin{thm} Let\/ $\Phi_{ij}=[V_{ij},\phi_{ij},\hat\phi_{ij}]:(V_i,E_i,s_i,\psi_i)\ra (V_j,E_j,s_j,\psi_j)$ be a morphism of $\mu$-Kuranishi neighbourhoods on a topological space $X$ over an open subset\/ $S\subseteq X$. Let\/ $x\in S,$ and set\/ $v_i=\psi_i^{-1}(x)\in V_i$ and\/ $v_j=\psi_j^{-1}(x)\in V_j$. Consider the sequence of finite-dimensional real vector spaces:
\e
\begin{gathered}
\smash{\xymatrix@C=18pt{ 0 \ar[r] & T_{v_i}V_i \ar[rrr]^(0.39){\d s_i\vert_{v_i}\op\d\phi_{ij}\vert_{v_i}} &&& E_i\vert_{v_i} \!\op\!T_{v_j}V_j 
\ar[rrr]^(0.56){-\hat\phi_{ij}\vert_{v_i}\op \d s_j\vert_{v_j}} &&& E_j\vert_{v_j} \ar[r] & 0. }}
\end{gathered}
\label{ku2eq8}
\e
Here $\d s_i\vert_{v_i}$ means $\nabla s_i\vert_{v_i}$ for any connection $\nabla$ on $E_i\ra V_i,$ and\/ $\d s_i\vert_{v_i}$ is independent of the choice of\/ $\nabla$ as $s_i(v_i)=0,$ and similarly for $\d s_j\vert_{v_j}$. Also $\d\phi_{ij}\vert_{v_i}:T_{v_i}V_i\ra T_{v_j}V_j$ and\/ $\hat\phi_{ij}\vert_{v_i}:E_i\vert_{v_i}\ra E_j\vert_{v_j}$ are independent of the choice of representative $(V_{ij},\phi_{ij},\hat\phi_{ij})$ for the $\sim$-equivalence class $[V_{ij},\phi_{ij},\hat\phi_{ij}]$. Thus \eq{ku2eq8} is well defined, and Definition\/ {\rm\ref{ku2def3}(d)} implies that\/ \eq{ku2eq8} is a complex.

Then $\Phi_{ij}$ is a $\mu$-coordinate change over $S$ in the sense of Definition\/ {\rm\ref{ku2def6}} if and only if\/ \eq{ku2eq8} is exact for all\/~$x\in S$.
\label{ku2thm2}
\end{thm}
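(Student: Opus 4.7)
My plan is to treat the two implications separately. The ``only if'' direction reduces to linear algebra at the tangent level, deriving exactness of \eq{ku2eq8} from chain-homotopy data produced by the inverse $\Phi_{ji}$. The ``if'' direction requires an implicit function theorem normal form followed by a gluing argument using Theorem~\ref{ku2thm1}.

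For the ``only if'' direction, fix $x\in S$ with $v_i=\psi_i^{-1}(x)$, $v_j=\psi_j^{-1}(x)$, and let $\Phi_{ji}=[V_{ji},\phi_{ji},\hat\phi_{ji}]$ be an inverse to $\Phi_{ij}$ over $S$. The $\sim$-equivalences $\Phi_{ji}\ci\Phi_{ij}\sim\id_{(V_i,E_i,s_i,\psi_i)}$ and $\Phi_{ij}\ci\Phi_{ji}\sim\id_{(V_j,E_j,s_j,\psi_j)}$ supply bundle morphisms $\La,\La'$ as in \eq{ku2eq1}. Evaluating and differentiating the four resulting equations at $v_i,v_j$ (using $s_i(v_i)=0=s_j(v_j)$) produces
\begin{equation*}
d\phi_{ji}|_{v_j}\ci d\phi_{ij}|_{v_i}=\id-\La|_{v_i}\ci ds_i|_{v_i},\quad \hat\phi_{ji}|_{v_j}\ci\hat\phi_{ij}|_{v_i}=\id-ds_i|_{v_i}\ci\La|_{v_i},
\end{equation*}
together with the $(i,j)$-swapped versions involving $\La'$. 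These identities exhibit $\Phi_{ji}$, with $\La$ and $\La'$ as homotopies, as a two-sided chain-homotopy inverse to $(d\phi_{ij}|_{v_i},\hat\phi_{ij}|_{v_i})$ between the two-term tangent complexes $[T_{v_i}V_i\xrightarrow{ds_i|_{v_i}}E_i|_{v_i}]$ and $[T_{v_j}V_j\xrightarrow{ds_j|_{v_j}}E_j|_{v_j}]$. Exactness of \eq{ku2eq8}, which is the mapping cone of this chain map, then follows from the standard fact that a chain-homotopy equivalence of complexes of vector spaces is a quasi-isomorphism.

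For the ``if'' direction, fix $x_0\in S$ with $v_i^0=\psi_i^{-1}(x_0)$, $v_j^0=\psi_j^{-1}(x_0)$. Exactness of \eq{ku2eq8} at $x_0$ gives the dimension identity $\dim V_i-\rank E_i=\dim V_j-\rank E_j$ together with splittings $T_{v_j^0}V_j=d\phi_{ij}|_{v_i^0}(T_{v_i^0}V_i)\op K$ and $E_j|_{v_j^0}=\hat\phi_{ij}|_{v_i^0}(E_i|_{v_i^0})\op ds_j|_{v_j^0}(K)$, with $ds_j|_{v_j^0}|_K$ an isomorphism onto the second summand. Pick a smooth transversal slice $\io:\R^c\hookra V_j$ through $v_j^0$ with image tangent to $K$, and extend $\phi_{ij}$ to a smooth map $F:V_i\t\R^c\ra V_j$ near $(v_i^0,0)$ with $F(v,0)=\phi_{ij}(v)$ and $\pd_y F(v_i^0,0)$ realising the identification $\R^c\cong K$ via $\io$. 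By the splitting, $dF_{(v_i^0,0)}$ is an isomorphism, so by the inverse function theorem $F$ is a local diffeomorphism after shrinking. Pulling $(V_j,E_j,s_j)$ back through $F^{-1}$ and using the $E$-side splitting together with exactness of \eq{ku2eq8} to relate the extension of $s_j$ in the new $\R^c$ direction to $ds_j|_{v_j^0}|_K$, one brings $(V_j,E_j,s_j)$ into the normal form $(V_i\t\R^c,E_i\op\R^c,(s_i,\si))$ with $\si^{-1}(0)=\{0\}$ near $0\in\R^c$, in which $(\phi_{ij},\hat\phi_{ij})$ becomes the obvious inclusion. The candidate local inverse $(V_{ji},\phi_{ji},\hat\phi_{ji})$ is then given by the first-factor projections pre-composed with $F^{-1}$, and Definition~\ref{ku2def3}(a)--(e) together with $\Phi_{ji}\ci\Phi_{ij}\sim\id$, $\Phi_{ij}\ci\Phi_{ji}\sim\id$ can be checked directly in normal form. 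Local inverses on overlapping neighbourhoods agree up to $\sim$ by the categorical uniqueness argument $\Psi^1\sim\Psi^1\ci(\Phi_{ij}\ci\Psi^2)\sim(\Psi^1\ci\Phi_{ij})\ci\Psi^2\sim\Psi^2$, and Theorem~\ref{ku2thm1}(a) applied to the subsheaf $\cIso\bigl((V_j,E_j,s_j,\psi_j),(V_i,E_i,s_i,\psi_i)\bigr)$ glues them into a global inverse $\Phi_{ji}$ over $S$.

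The main obstacle is the normal form step in the ``if'' direction. One must verify that the $V$-side inverse function theorem can be extended to straighten $s_j$ and $\hat\phi_{ij}$ simultaneously, respecting the $O(s_i^2)$ and $O(s_i)$ tolerances built into the $\sim$-equivalence relation, and that the resulting $\sim$-equivalence class of $\Phi_{ji}$ is independent of the auxiliary choices (transversal slice, splitting, extension $F$). Once the normal form is in hand, both the verification of the inverse property and the gluing step are formal consequences of Definition~\ref{ku2def3} and Theorem~\ref{ku2thm1}.
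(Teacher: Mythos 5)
Your ``only if'' argument is essentially the paper's: from a two-sided inverse $\Phi_{ji}$ one extracts the homotopies $\La$ (and your $\La'$, the paper's $\Mu$) from the $\sim$-equivalences $\Phi_{ji}\ci\Phi_{ij}\sim\id$ and $\Phi_{ij}\ci\Phi_{ji}\sim\id$, differentiates at $v_i,v_j$ using $s_i(v_i)=s_j(v_j)=0$, and reads off contraction identities that force exactness of \eq{ku2eq8} by elementary linear algebra; that part is sound and matches \S\ref{ku621}.

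The ``if'' direction has a genuine gap at the normal-form step. You set $c=\dim K$ for a complement $T_{v_j^0}V_j=\d\phi_{ij}\vert_{v_i^0}(T_{v_i^0}V_i)\op K$, extend $\phi_{ij}$ to $F:V_i\t\R^c\ra V_j$, and claim $\d F_{(v_i^0,0)}$ is an isomorphism. But exactness of \eq{ku2eq8} only makes $\d s_i\vert_{v_i}\op\d\phi_{ij}\vert_{v_i}$ injective; it does \emph{not} make $\d\phi_{ij}\vert_{v_i}$ injective, so $\d F_{(v_i^0,0)}$ is merely surjective, with kernel of dimension $\dim\Ker\d\phi_{ij}\vert_{v_i^0}$, and $F$ cannot be a local diffeomorphism whenever that kernel is nonzero (then $\dim V_i+c>\dim V_j$). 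A concrete counterexample is $\Phi_{21}=[V_{21},\phi_{21},\hat\phi_{21}]$ in Example \ref{ku2ex1}: it is a $\mu$-coordinate change, so \eq{ku2eq8} is exact for it, yet $\phi_{21}$ is the projection $V_1\t\R^n\ra V_1$ and $\d\phi_{21}$ has $n$-dimensional kernel, so no normal form in which $(\phi_{21},\hat\phi_{21})$ becomes an inclusion $V_1\hookra V_1\t\R^c$ can exist. The paper's proof in \S\ref{ku622} avoids this by first \emph{reducing the source}: it chooses a subbundle $F_i\subseteq E_i$ near $v_i$ mapping isomorphically to the cokernel of $\d s_i\vert_{v_i}$, splits $E_i=F_i\op G_i$ and $s_i=t_i\op u_i$, and cuts $V_i$ down to the submanifold $W_i=u_i^{-1}(0)$; exactness of \eq{ku2eq8} then shows $\phi_{ij}\vert_{W_i}$ \emph{is} an embedding near $v_i$ onto a submanifold $W_j\subseteq V_j$, and the local inverse $(V_{ji}^x,\phi_{ji}^x,\hat\phi_{ji}^x)$ together with the homotopies $\La,\Mu$ witnessing invertibility are constructed by hand from splittings and connections along $W_i\cong W_j$, with the Definition \ref{ku2def3}(d) check done modulo the ideal of functions vanishing to second order on $W_j$. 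Your last step --- uniqueness of local inverses up to $\sim$ and gluing over $S$ via the sheaf $\cIso$ from Theorem \ref{ku2thm1}(a) --- is exactly the paper's and is fine, but it cannot repair the local construction; to salvage your route you would need to pass to a reduced local model of $(V_i,E_i,s_i)$ (the paper's $W_i$) before attempting any product normal form for $V_j$.
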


The theorem is useful as it is often easier to check exactness of \eq{ku2eq8} for each $x\in S$ than to construct a two-sided inverse $\Phi_{ji}$ for $\Phi_{ij}$.

We now explain the relation of our $\mu$-coordinate changes to coordinate changes between Kuranishi neighbourhoods in the work of Fukaya--Oh--Ohta--Ono \cite{Fuka,FOOO1,FOOO2,FOOO3,FOOO4,FOOO5,FOOO6, FOOO7,FOOO8,FuOn} (often shortened to `FOOO'), as described in \S\ref{kuA1}. The next definition is the `$\mu$-' version of `FOOO coordinate changes' in Definition \ref{kuAdef2} below, taken from Fukaya et al.\ \cite[\S 4]{FOOO6}, with the finite groups $\Ga_i,\Ga_j$ omitted.

\begin{dfn} Let $(V_i,E_i,s_i,\psi_i),$ $(V_j,E_j,s_j,\psi_j)$ be $\mu$-Kuranishi neighbourhoods on a topological space $X$, and $S\subseteq\Im\psi_i\cap\Im\psi_j$ be open. Define a {\it FOOO $\mu$-coordinate change from\/ $(V_i,E_i,s_i,\psi_i)$ to\/ $(V_j,E_j,s_j,\psi_j)$ over\/} $S$ to be a triple $(V_{ij},\phi_{ij},\hat\phi_{ij})$ satisfying:
\begin{itemize}
\setlength{\itemsep}{0pt}
\setlength{\parsep}{0pt}
\item[(a)] $V_{ij}$ is an open neighbourhood of $\psi_i^{-1}(S)$ in $V_i$.
\item[(b)] $\phi_{ij}:V_{ij}\hookra V_j$ is a smooth embedding of manifolds.
\item[(c)] $\hat\phi_{ij}:E_i\vert_{V_{ij}}\hookra\phi_{ij}^*(E_j)$ is an embedding of vector bundles on~$V_{ij}$.
\item[(d)] $\hat\phi_{ij}(s_i\vert_{V_{ij}})=\phi_{ij}^*(s_j)$, in the sense of Definition~\ref{ku2def1}.
\item[(e)] $\psi_i=\psi_j\ci\phi_{ij}$ on $s_i^{-1}(0)\cap V_{ij}$.
\item[(f)] Let $x\in S,$ and set $v_i=\psi_i^{-1}(x)\in V_i$ and $v_j=\psi_j^{-1}(x)\in V_j$. Then we have a commutative diagram
\e
\begin{gathered}
\xymatrix@C=25pt@R=13pt{ 0 \ar[r] & T_{v_i}V_i \ar[rr]_{\d\phi_{ij}\vert_{v_i}} \ar[d]^{\d s_i\vert_{v_i}} && T_{v_j}V_j \ar[r] \ar[d]^{\d s_j\vert_{v_j}} & N_{ij}\vert_{v_i} \ar[r] \ar@{.>}[d]^{\d_{\rm fibre}s_j\vert_{v_i}} & 0 \\
0 \ar[r] & E_i\vert_{v_i} \ar[rr]^{\hat\phi_{ij}\vert_{v_i}} && E_j\vert_{v_j} \ar[r] & F_{ij}\vert_{v_i} \ar[r] & 0 
}
\end{gathered}
\label{ku2eq9}
\e
with exact rows, where $N_{ij}\ra V_{ij}$ is the normal bundle of $V_{ij}$ in $V_j$, and $F_{ij}=\phi_{ij}^*(E_j)/\hat\phi_{ij}(E_i\vert_{V_{ij}})$ the quotient bundle. We require that the induced $\d_{\rm fibre}s_j\vert_{v_i}$ in \eq{ku2eq9} should be an isomorphism for all~$x\in S$.
\end{itemize}
\label{ku2def10}
\end{dfn}

Note that FOOO $\mu$-coordinate changes are triples $(V_{ij},\phi_{ij},\hat\phi_{ij})$, not $\sim$-equivalence classes $[V_{ij},\phi_{ij},\hat\phi_{ij}]$ as in~\S\ref{ku21}.

\begin{lem} In Definition\/ {\rm\ref{ku2def10},} if\/ $(V_{ij},\phi_{ij},\hat\phi_{ij})$ is a FOOO $\mu$-coordinate change from $(V_i,E_i,s_i,\psi_i)$ to\/ $(V_j,E_j,s_j,\psi_j)$ over\/ $S,$ then the $\sim$-equivalence class $\Phi_{ij}=[V_{ij},\phi_{ij},\hat\phi_{ij}]$ from Definition\/ {\rm\ref{ku2def3}} is a $\mu$-coordinate change from $(V_i,\ab E_i,\ab s_i,\ab\psi_i)$ to\/ $(V_j,E_j,s_j,\psi_j)$ over\/ $S,$ in the sense of Definition\/~{\rm\ref{ku2def6}}.
\label{ku2lem1}
\end{lem}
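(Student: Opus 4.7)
The plan is to deduce the lemma from Theorem \ref{ku2thm2}, which characterizes $\mu$-coordinate changes as exactly those morphisms for which the sequence \eqref{ku2eq8} is exact at every point of $S$. There are essentially two steps: first check that the triple $(V_{ij},\phi_{ij},\hat\phi_{ij})$ is a morphism in the sense of Definition \ref{ku2def3}, and second use the diagram \eqref{ku2eq9} to check pointwise exactness of \eqref{ku2eq8}.

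First I would verify Definition \ref{ku2def3}(a)--(e) with $f=\id_X$. Parts (a), (b), (c), (e) are literally the same in Definition \ref{ku2def10} (noting (b), (c) give smooth embeddings, in particular smooth maps / vector bundle morphisms), while Definition \ref{ku2def10}(d) gives $\hat\phi_{ij}(s_i\vert_{V_{ij}})=\phi_{ij}^*(s_j)$ exactly, which trivially implies the $O(s_i^2)$ version needed in \ref{ku2def3}(d). So $\Phi_{ij}=[V_{ij},\phi_{ij},\hat\phi_{ij}]$ is a well-defined morphism of $\mu$-Kuranishi neighbourhoods over $S$.

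For the second step, fix $x\in S$ and set $v_i=\psi_i^{-1}(x)$, $v_j=\psi_j^{-1}(x)$. I need to show \eqref{ku2eq8} is exact. Write $\alpha=\d s_i\vert_{v_i}\op\d\phi_{ij}\vert_{v_i}$ and $\beta=-\hat\phi_{ij}\vert_{v_i}\op\d s_j\vert_{v_j}$. Injectivity of $\alpha$ follows from injectivity of $\d\phi_{ij}\vert_{v_i}$ (since $\phi_{ij}$ is an embedding, by Definition \ref{ku2def10}(b)). For $\beta\ci\alpha=0$: differentiating $\hat\phi_{ij}(s_i)=\phi_{ij}^*(s_j)$ at $v_i$, where $s_i(v_i)=0$, gives $\hat\phi_{ij}\vert_{v_i}\ci\d s_i\vert_{v_i}=\d s_j\vert_{v_j}\ci\d\phi_{ij}\vert_{v_i}$, so $\beta(\alpha(u))=0$. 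The remaining exactness claims (surjectivity of $\beta$ and $\Ker\beta\subseteq\Im\alpha$) are a short diagram chase on \eqref{ku2eq9}, exploiting that the rows are exact and $\d_{\rm fibre}s_j\vert_{v_i}$ is an isomorphism. For surjectivity: given $e'\in E_j\vert_{v_j}$, project to $F_{ij}\vert_{v_i}$, pull back through $(\d_{\rm fibre}s_j\vert_{v_i})^{-1}$ to get $\bar w\in N_{ij}\vert_{v_i}$, lift to $w\in T_{v_j}V_j$; then $\d s_j(w)-e'$ lies in $\hat\phi_{ij}(E_i\vert_{v_i})$ by commutativity of the right square and exactness of the bottom row, so $\d s_j(w)-e'=\hat\phi_{ij}(e)$ for some $e\in E_i\vert_{v_i}$, giving $\beta(e,w)=e'$. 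For $\Ker\beta\subseteq\Im\alpha$: if $\hat\phi_{ij}(e)=\d s_j(w)$, then projecting to $F_{ij}\vert_{v_i}$ kills the left side, and since $\d_{\rm fibre}s_j\vert_{v_i}$ is injective the image $\bar w$ of $w$ in $N_{ij}\vert_{v_i}$ vanishes, so exactness of the top row yields $w=\d\phi_{ij}(u)$ for a unique $u\in T_{v_i}V_i$; then $\hat\phi_{ij}(e)=\d s_j(\d\phi_{ij}(u))=\hat\phi_{ij}(\d s_i(u))$, and injectivity of $\hat\phi_{ij}\vert_{v_i}$ (Definition \ref{ku2def10}(c)) gives $e=\d s_i(u)$, so $(e,w)=\alpha(u)$.

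The only mildly subtle point is this diagram chase, which is a standard consequence of the snake lemma applied to \eqref{ku2eq9} (with the isomorphism hypothesis forcing the cohomology of the total complex to vanish), but I would prefer to write it out by hand as above for clarity. With exactness of \eqref{ku2eq8} at every $x\in S$ established, Theorem \ref{ku2thm2} immediately yields that $\Phi_{ij}$ is a $\mu$-coordinate change over $S$, completing the proof.
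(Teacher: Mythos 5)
Your proposal is correct and follows essentially the same route as the paper's proof: check Definition \ref{ku2def10}(a)--(e) imply Definition \ref{ku2def3}(a)--(e), deduce exactness of \eqref{ku2eq8} from $\d_{\rm fibre}s_j\vert_{v_i}$ being an isomorphism in \eqref{ku2eq9}, and apply Theorem \ref{ku2thm2}. The only difference is that you write out the diagram chase which the paper leaves implicit (and which is correct as you state it).
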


\begin{proof} Definition \ref{ku2def10}(a)--(e) imply Definition \ref{ku2def3}(a)--(e), so the $\sim$-equivalence class $[V_{ij},\phi_{ij},\hat\phi_{ij}]$ in Definition \ref{ku2def3} is well-defined. That $\d_{\rm fibre}s_j\vert_{v_i}$ is an isomorphism in Definition \ref{ku2def10}(f) implies that \eq{ku2eq8} is exact for all $x\in S$, so Theorem \ref{ku2thm2} shows that $\Phi_{ij}=[V_{ij},\phi_{ij},\hat\phi_{ij}]$ is a $\mu$-coordinate change.
\end{proof}

Thus, FOOO $\mu$-coordinate changes yield examples of our $\mu$-coordinate cha\-nges. However, our $\mu$-coordinate changes are more general. For example, FOOO $\mu$-coordinate changes from $(V_i,E_i,s_i,\psi_i)$ to $(V_j,E_j,s_j,\psi_j)$ over $S\ne\es$ can exist only if $\dim V_i\le\dim V_j$, and so usually only in one direction. Our next example shows explicitly how FOOO $\mu$-coordinate changes can be invertible in our theory when $\dim V_i<\dim V_j$, although they are not invertible in~\cite{Fuka,FOOO1,FOOO2,FOOO3,FOOO4,FOOO5,FOOO6,FOOO7,FOOO8,FuOn}. 

\begin{ex} Let $V_1\subseteq\R^m$ be open, $f_1,\ldots,f_k:V_1\ra\R$ be smooth, and
\begin{equation*}
X=\bigl\{(x_1,\ldots,x_m)\in\R^m:f_1(x_1,\ldots,x_m)=\cdots=f_k(x_1,\ldots,x_m)=0\bigr\},
\end{equation*}
as a topological subspace of $\R^m$. Let $E_1\ra V_1$ be the trivial vector bundle $\R^k\t V_1\ra V_1$, and $s_1:V_1\ra E_1$ be the smooth section $s_1=(f_1,\ldots,f_k)$, so that $X=s_1^{-1}(0)\subseteq V_1$. Define $\psi_1=\id_X:s_1^{-1}(0)\ra X$. Then $(V_1,E_1,s_1,\psi_1)$ is a $\mu$-Kuranishi neighbourhood on $X$, with~$\Im\psi_1=X$.

Let $n$ be a positive integer, and define a second $\mu$-Kuranishi neighbourhood $(V_2,E_2,s_2,\psi_2)$ on $X$ by $V_2=V_1\t\R^n$, $E_2\ra V_2$ is the trivial vector bundle $\R^{k+n}\t V_2\ra V_2$, and $s_2:V_2\ra E_2$ is the smooth section
\begin{equation*}
s_2(x_1,\ldots,x_m,y_1,\ldots,y_n)=\bigl(f_1(x_1,\ldots,x_m),\ldots,f_k(x_1,\ldots,x_m),y_1,\ldots,y_n\bigr)
\end{equation*}
so that $s_2^{-1}(0)=X\t\{(0,\ldots,0)\}\subset V_1\t\R^n$. Define $\psi_2:s_2^{-1}(0)\ra X$ by $\psi_2:(x_1,\ldots,x_m,0,\ldots,0)=(x_1,\ldots,x_m)$, so that~$\Im\psi_2=X$.

Let $V_{12}$ be any open neighbourhood of $X$ in $V_1$. Define embeddings $\phi_{12}:V_{12}\hookra V_2$ by $\phi_{12}:(x_1,\ldots,x_m)\mapsto(x_1,\ldots,x_m,0,\ldots,0)$, and $\hat\phi_{12}:E_1\vert_{V_{12}}\hookra\phi_{12}^*(E_2)$ by $\hat\phi_{12}:(e_1,\ldots,e_k)\mapsto (e_1,\ldots,e_k,0,\ldots,0)$. It is now easy to check that $(V_{12},\phi_{12},\hat\phi_{12})$ is a FOOO $\mu$-coordinate change from $(V_1,E_1,s_1,\psi_1)$ to $(V_2,E_2,s_2,\psi_2)$ over $S=X$, as in Definition \ref{ku2def10}. Also $(V_{12},\phi_{12},\hat\phi_{12})$ satisfies Definition \ref{ku2def3}(a)--(e), so $\Phi_{12}=[V_{12},\phi_{12},\hat\phi_{12}]:(V_1,E_1,s_1,\psi_1)\ra (V_2,E_2,s_2,\psi_2)$ is a morphism of $\mu$-Kuranishi neighbourhoods on $X$ over $S=X$. Note that in Definition \ref{ku2def3}(d), $\hat\phi_{12}(s_1\vert_{V_{12}})=\phi_{12}^*(s_2)$ holds exactly, not just up to~$O(s_1^2)$.

Let $V_{21}$ be any open neighbourhood of $X\t\{(0,\ldots,0)\}$ in $V_2$. Define $\phi_{21}:V_{21}\ra V_1$ by $\phi_{21}:(x_1,\ldots,x_m,y_1,\ldots,y_n)\mapsto(x_1,\ldots,x_m)$, and $\hat\phi_{21}:E_2\vert_{V_{21}}\ra\phi_{21}^*(E_1)$ by $\hat\phi_{21}:(e_1,\ldots,e_k,f_1,\ldots,f_n)\mapsto(e_1,\ldots,e_k)$. Then $(V_{21},\phi_{21},\hat\phi_{21})$ is {\it not\/} a FOOO $\mu$-coordinate change, as $\phi_{21},\hat\phi_{21}$ are not embeddings. Nonetheless, $(V_{21},\phi_{21},\hat\phi_{21})$ satisfies Definition \ref{ku2def3}(a)--(e), where again part (d) holds exactly rather than up to $O(s_2^2)$, so $\Phi_{21}=[V_{21},\phi_{21},\hat\phi_{21}]:(V_2,E_2,s_2,\psi_2)\ra (V_1,E_1,s_1,\psi_1)$ is a morphism over $S=X$, as in~\S\ref{ku21}.

We claim that $\Phi_{12}$ and $\Phi_{21}$ are inverse, that is, that
\ea
[V_{21},\phi_{21},\hat\phi_{21}]\ci[V_{12},\phi_{12},\hat\phi_{12}]&=[V_1,\id_{V_1},\id_{E_1}],
\label{ku2eq10}\\
[V_{12},\phi_{12},\hat\phi_{12}]\ci[V_{21},\phi_{21},\hat\phi_{21}]&=[V_2,\id_{V_2},\id_{E_2}],
\label{ku2eq11}
\ea
hold in morphisms of $\mu$-Kuranishi neighbourhoods over $S=X$. By Definition \ref{ku2def4}, the left hand side of \eq{ku2eq10} is $[V_{11},\phi_{11},\hat\phi_{11}]$, where $V_{11}=\phi_{12}^{-1}(V_{21})\subseteq V_{12}\subseteq V_1$, and $\phi_{11}=\phi_{21}\ci\phi_{12}\vert_{V_{11}}=\id_{V_{11}}$, and $\hat\phi_{11}=\phi_{12}\vert_{V_{11}}^*(\hat\phi_{21})\ci\hat\phi_{12}\vert_{V_{11}}=\id_{E_1\vert_{V_{11}}}$. So using $\dot V_{11}=V_{11}\subseteq V_{11}\cap V_1$ and $\La=0$ in Definition \ref{ku2def2}, we see that $(V_{11},\phi_{11},\hat\phi_{11})\sim(V_1,\id_{V_1},\id_{E_1})$, proving~\eq{ku2eq10}.

Similarly, the left hand side of \eq{ku2eq11} is $[V_{22},\phi_{22},\hat\phi_{22}]$, where $V_{22}=\phi_{21}^{-1}(V_{12})\ab\subseteq V_{21}\subseteq V_2$, but now $\phi_{22},\hat\phi_{22}$ are not identities, rather we have
\begin{align*}
\phi_{22}&:(x_1,\ldots,x_m,y_1,\ldots,y_n)\longmapsto(x_1,\ldots,x_m,0,\ldots,0),\\
\hat\phi_{22}&:(e_1,\ldots,e_k,f_1,\ldots,f_n)\longmapsto(e_1,\ldots,e_k,0,\ldots,0).
\end{align*}

Define a vector bundle morphism $\La:E_2\vert_{V_{22}}\ra TV_2\vert_{V_{22}}=\id_{V_{22}}^*(TV_2)$ by
\e
\La:(e_1,\ldots,e_k,f_1,\ldots,f_n)\longmapsto \ts-f_1\frac{\pd}{\pd y_1}-\cdots-f_n\frac{\pd}{\pd y_n}.
\label{ku2eq12}
\e
Then we find that on $\dot V_{22}=V_{22}$ we have
\begin{equation*}
\phi_{22}=\id_{V_2}+\La\cdot s_2+O(s_2^2)\;\>\text{and}\;\> \hat\phi_{22}=\id_{E_2}+\La\cdot \id_{V_2}^*(\d s_2)+O(s_2),
\end{equation*}
so $(V_2,\id_{V_2},\id_{E_2})\sim(V_{22},\phi_{22},\hat\phi_{22})$ in the notation of Definition \ref{ku2def3}, giving $[V_{22},\phi_{22},\hat\phi_{22}]=[V_2,\id_{V_2},\id_{E_2}]$, which proves equation~\eq{ku2eq11}.
\label{ku2ex1}
\end{ex}

\begin{rem} One thing that makes the Kuranishi space theory of \cite{Fuka,FOOO1, FOOO2,FOOO3,FOOO4,FOOO5,FOOO6,FOOO7,FOOO8,FuOn} tricky is that coordinate changes go in only one direction. In our version, $\mu$-coordinate changes are invertible, which makes life easier in many ways.

In Example \ref{ku2ex1}, we invert the morphism $\Phi_{12}=[V_{12},\phi_{12},\hat\phi_{12}]$ associated to the FOOO $\mu$-coordinate change $(V_{12},\phi_{12},\hat\phi_{12})$ by using a nonzero $\La$ in \eq{ku2eq12}. This gives the first justification for including $\La$ in the definition of $\sim$ in Definition \ref{ku2def3}; without them, FOOO $\mu$-coordinate changes would not be invertible.

One can prove that every FOOO $\mu$-coordinate change $(V_{ij},\phi_{ij},\hat\phi_{ij})$ in Definition \ref{ku2def10} is modelled locally on $(V_{12},\phi_{12},\hat\phi_{12})$ in Example \ref{ku2ex1} near any point $x\in\psi_i^{-1}(S)$. So Example \ref{ku2ex1} shows that any morphism $\Phi_{ij}=[V_{ij},\phi_{ij},\hat\phi_{ij}]$ coming from a FOOO $\mu$-coordinate change $(V_{ij},\phi_{ij},\hat\phi_{ij})$ is locally invertible. Lemma \ref{ku2lem1} says that $\Phi_{ij}$ is globally invertible.
\label{ku2rem3}
\end{rem}

\subsection{\texorpdfstring{The definition of $\mu$-Kuranishi space}{The definition of \textmu-Kuranishi space}}
\label{ku23}

We can now define $\mu$-Kuranishi spaces (without boundary):

\begin{dfn} Let $X$ be a Hausdorff, second countable topological space (not necessarily compact), and $n\in\Z$. A {\it $\mu$-Kuranishi structure\/ $\cK$ on $X$ of virtual dimension\/} $n$ is data $\cK=\bigl(I,(V_i,E_i,s_i,\psi_i)_{i\in I},\Phi_{ij,\;i,j\in I}\bigr)$, where:
\begin{itemize}
\setlength{\itemsep}{0pt}
\setlength{\parsep}{0pt}
\item[(a)] $I$ is an indexing set (not necessarily finite).
\item[(b)] $(V_i,E_i,s_i,\psi_i)$ is a $\mu$-Kuranishi neighbourhood on $X$ for each $i\in I$, with $\dim V_i-\rank E_i=n$.
\item[(c)] $\Phi_{ij}=[V_{ij},\phi_{ij},\hat\phi_{ij}]:(V_i,E_i,s_i,\psi_i)\ra(V_j,E_j,s_j,\psi_j)$ is a $\mu$-coordinate change for all $i,j\in I$ (as in Definition \ref{ku2def8}, defined on $S=\Im\psi_i\cap\Im\psi_j$).
\item[(d)] $\bigcup_{i\in I}\Im\psi_i=X$. 
\item[(e)] $\Phi_{ii}=\id_{(V_i,E_i,s_i,\psi_i)}$ for all $i\in I$.
\item[(f)] $\Phi_{jk}\ci\Phi_{ij}=\Phi_{ik}$ for all $i,j,k\in I$ (as in Definition \ref{ku2def8}, this holds on $S=\Im\psi_i\cap\Im\psi_j\cap\Im\psi_k$).
\end{itemize}
We call $\bX=(X,\cK)$ a {\it $\mu$-Kuranishi space}, of {\it virtual dimension\/}~$\vdim\bX=n$.

When we write $x\in\bX$, we mean that $x\in X$.
\label{ku2def11}
\end{dfn}

Definition \ref{ku2def11} is what we will mean by $\mu$-Kuranishi space in the rest of the book. Here is an alternative definition, which we will not use, but which we include to facilitate comparison with the previous definitions of Kuranishi spaces and good coordinate systems described in Appendix~\ref{kuA}.

\begin{altdfn} Let $X$ be a Hausdorff, second countable topological space, and $n\in\Z$. A {\it $\mu$-Kuranishi structure\/ $\cK'$ on $X$ of virtual dimension\/} $n$ is data $\cK'=\bigl((I,\pr),(V_i,E_i,s_i,\psi_i)_{i\in I},\Phi_{ij,\; i\pr j\in I}\bigr)$, where:
\begin{itemize}
\setlength{\itemsep}{0pt}
\setlength{\parsep}{0pt}
\item[(a)] $I$ is an indexing set (not necessarily finite), and $\pr$ a partial order on $I$.
\item[(b)] $(V_i,E_i,s_i,\psi_i)$ is a $\mu$-Kuranishi neighbourhood on $X$ for each $i\in I$, with $\dim V_i-\rank E_i=n$.
\item[(c)] $\Phi_{ij}:(V_i,E_i,s_i,\psi_i)\ra(V_j,E_j,s_j,\psi_j)$ is a $\mu$-coordinate change for all $i,j\in I$ with $i\pr j$. 
\item[(d)] $\bigcup_{i\in I}\Im\psi_i=X$. 
\item[(e)] If $i\ne j\in I$ with $\Im\psi_i\cap\Im\psi_j\ne\es$ then $i\pr j$ or $j\pr i$.
\item[(f)] $\Phi_{jk}\ci\Phi_{ij}=\Phi_{ik}$ for all $i,j,k\!\in\! I$ with $i\!\pr\! j\!\pr\! k$.
\end{itemize}
We call $\bX'=(X,\cK')$ a {\it $\mu$-Kuranishi space}, of {\it virtual dimension\/}~$\vdim\bX'=n$.

\label{ku2def12}
\end{altdfn}

Definitions \ref{ku2def11} and \ref{ku2def12} are nearly equivalent. Here is how to relate them:
\begin{itemize}
\setlength{\itemsep}{0pt}
\setlength{\parsep}{0pt}
\item[(A)] If $\bX=(X,\cK)$ is as in Definition \ref{ku2def11}, choose a partial order $\pr$ on $I$, such that if $i\ne j\in I$ with $\Im\psi_i\cap\Im\psi_j\ne\es$ then $i\pr j$ or $j\pr\nobreak i$. For example, any total order $\pr$ on $I$ will do. Then $\cK'=\bigl((I,\pr\nobreak),\ab(V_i,E_i,s_i,\psi_i)_{i\in I},\ab\Phi_{ij,\; i\pr j\in I}\bigr)$ satisfies Definition \ref{ku2def12}. Note that $\cK'$ only remembers $\Phi_{ij}$ when $i\pr j$, which is less than half of the data $\Phi_{ij}$ for all $i,j\in I$ in~$\cK$.
\item[(B)] If $\bX'=(X,\cK')$ is as in Definition \ref{ku2def12}, define $\Phi_{ij}$ for all $i,j\in I$ as follows:
\begin{itemize}
\setlength{\itemsep}{0pt}
\setlength{\parsep}{0pt}
\item[(i)] If $i\pr j$ then $\Phi_{ij}$ is as in $\cK'$.
\item[(ii)] If $j\pr i$ then $\Phi_{ij}=\Phi_{ji}^{-1}$, for $\Phi_{ji}$ as in $\cK'$.
\item[(iii)] If $i\ne j$ with $i\not\pr j$ and $j\not\pr i$ then $\Im\psi_i\cap\Im\psi_j=\es$, and we set $\Phi_{ij}=[\es,\es,\es]$.
\item[(iv)] If $i=j$ then $\Phi_{ii}=\id_{(V_i,E_i,s_i,\psi_i)}$.
\end{itemize}
Then $\cK=\bigl(I,(V_i,E_i,s_i,\psi_i)_{i\in I},\Phi_{ij,\; i,j\in I}\bigr)$ satisfies Definition~\ref{ku2def11}.
\end{itemize}

Starting with $\bX=(X,\cK)$ and doing (A) and then (B) recovers the same $\bX=(X,\cK)$. Starting with $\bX'=(X,\cK')$ and doing (B) and then (A) recovers the same $\bX'=(X,\cK')$ if we choose the same partial order $\pr$. Thus, a Kuranishi space $\bX=(X,\cK')$ in the sense of Definition \ref{ku2def12} is equivalent to $\bX=(X,\cK)$ in Definition \ref{ku2def11} plus a suitable choice of partial order $\pr$ on~$I$.

Readers are advised to compare Definition \ref{ku2def12} with `FOOO weak good coordinate systems' in Definition \ref{kuAdef7}. Because in Definition \ref{ku2def12} the $\mu$-coordinate changes $\Phi_{ij}$ go in only one direction, it would be reasonable to suppose that $\Phi_{ij}=[V_{ij},\phi_{ij},\hat\phi_{ij}]$ is represented by a FOOO $\mu$-coordinate change $(V_{ij},\phi_{ij},\hat\phi_{ij})$ for all $i\pr j$ in $I$, and then the comparison with FOOO Kuranishi spaces and FOOO weak good coordinate systems in Appendix \ref{kuA} becomes closer.

\begin{ex} Let $V$ be a manifold, $E\ra V$ a vector bundle, and $s:V\ra E$ a smooth section. Set $X=s^{-1}(0)$, as a closed subset of $V$ with the induced topology. Then $X$ is Hausdorff and second countable, as $V$ is. Define a $\mu$-Kuranishi structure $\cK=\bigl(\{0\},(V_0,E_0,s_0, \psi_0),\Phi_{00}\bigr)$ on $X$ with indexing set $I=\{0\}$, one $\mu$-Kuranishi neighbourhood $(V_0,E_0,s_0,\psi_0)$ with $V_0=V$, $E_0=E$, $s_0=s$ and $\psi_0=\id_X$, and one $\mu$-coordinate change $\Phi_{00}=\id_{(V_0,E_0,s_0,\psi_0)}$. Then $\bX=(X,\cK)$ is a $\mu$-Kuranishi space, with $\vdim\bX=\dim V-\rank E$. We write~$\bS_{V,E,s}=\bX$.
\label{ku2ex2}
\end{ex}

When we are discussing several $\mu$-Kuranishi spaces at once, we need good notation to distinguish $\mu$-Kuranishi neighbourhoods and $\mu$-coordinate changes on the different spaces. One choice we will use often for $\mu$-Kuranishi spaces $\bW,\bX,\bY,\bZ$, based on consecutive letters in the Latin and Greek alphabets, is
\ea
{}\!\!\!\bW&\!=\!(W,\cH),\; \cH\!=\!\bigl(H,\kern -.2em(T_h,\kern -.2em C_h,\kern -.2em q_h,\kern -.2em\vp_h)_{h\in H},\Si_{hh'}\!=\![T_{hh'},\kern .02em\si_{hh'},\kern -.2em\hat\si_{hh'}]_{h,h'\in H}\bigr),\!\!{}
\label{ku2eq13}\\
{}\!\!\!\bX&\!=\!(X,\cI),\; \cI\!=\!\bigl(I,(U_i,D_i,r_i,\chi_i)_{i\in I},\Tau_{ii'}\!=\![U_{ii'},\tau_{ii'},\hat\tau_{ii'}]_{i,i'\in I}\bigr),\!\!{}
\label{ku2eq14}\\
{}\!\!\!\bY&\!=\!(Y,\cJ),\; \cJ\!=\!\bigl(J,(V_j,E_j,s_j,\psi_j)_{j\in J},\Up_{jj'}\!=\![V_{jj'},\up_{jj'},\hat\up_{jj'}]_{j,j'\in J}\bigr),\!\!{}
\label{ku2eq15}\\
{}\!\!\!\bZ&\!=\!(Z,\cK),\; \cK\!=\!\bigl(K,(W_k,F_k,t_k,\om_k)_{k\in K},\!\Phi_{kk'}\!=\![W_{kk'},\phi_{kk'},\hat\phi_{kk'}]_{k,k'\in K}\bigr).\!\!{}
\label{ku2eq16}
\ea

\begin{dfn} Let $\bX=(X,\cI)$ and $\bY=(Y,\cJ)$ be $\mu$-Kuranishi spaces, with notation \eq{ku2eq14}--\eq{ku2eq15}. A {\it morphism\/} $\bs f:\bX\ra\bY$ is $\bs f=\bigl(f,\bs f_{ij,\;i\in I,\; j\in J}\bigr)$, where $f:X\ra Y$ is a continuous map, and $\bs f_{ij}=[U_{ij},f_{ij},\hat f_{ij}]:(U_i,D_i,r_i,\chi_i)\ra (V_j,E_j,s_j,\psi_j)$ is a morphism of $\mu$-Kuranishi neighbourhoods over $f$ for all $i\in I$, $j\in J$ (defined over $S=\Im\chi_i\cap f^{-1}(\Im\psi_j)$, by the convention in Definition \ref{ku2def8}), satisfying the conditions:
\begin{itemize}
\setlength{\itemsep}{0pt}
\setlength{\parsep}{0pt}
\item[(a)] If $i,i'\in I$ and $j\in J$ then in morphisms over $f$ we have
\e
\bs f_{i'j}\ci\Tau_{ii'}=\bs f_{ij},
\label{ku2eq17}
\e
where by our usual convention, \eq{ku2eq17} holds over $S=\Im\chi_i\cap \Im\chi_{i'}\cap f^{-1}(\Im\psi_j)$, and each term in \eq{ku2eq17} is implicitly restricted to $S$. In particular, \eq{ku2eq17} {\it does not determine\/} $\bs f_{ij}$, but only its restriction~$\bs f_{ij}\vert_S$.
\item[(b)] If $i\in I$ and $j,j'\in J$ then interpreted as for \eq{ku2eq17}, we have
\e
\Up_{jj'}\ci \bs f_{ij}=\bs f_{ij'}.
\label{ku2eq18}
\e
\end{itemize}

If $x\in\bX$ (i.e. $x\in X$), we will write $\bs f(x)=f(x)\in\bY$.

When $\bY=\bX$, so that $J=I$, define $\bs\id_\bX=\bigl(\id_X,\Tau_{ij,\;i,j\in I}\bigr)$. Then Definition \ref{ku2def11}(f) implies that (a),(b) hold, so $\bs\id_\bX:\bX\ra\bX$ is a morphism of $\mu$-Kuranishi spaces, which we call the {\it identity morphism}.
\label{ku2def13}
\end{dfn}

In the next theorem, we use the sheaf property of morphisms of $\mu$-Kuranishi neighbourhoods in Theorem \ref{ku2thm1} to construct compositions $\bs g\ci\bs f:\bX\ra\bZ$ of morphisms of $\mu$-Kuranishi spaces $\bs f:\bX\ra\bY$, $\bs g:\bY\ra\bZ$, and hence show that $\mu$-Kuranishi spaces form a category~$\muKur$.

\begin{thm}{\bf(a)} Let\/ $\bX=(X,\cI),\bY=(Y,\cJ),\bZ=(Z,\cK)$ be $\mu$-Kuranishi spaces with notation {\rm\eq{ku2eq14}--\eq{ku2eq16},} and\/ $\bs f:\bX\ra\bY,$ $\bs g:\bY\ra\bZ$ be morphisms, where $\bs f=\bigl(f,\bs f_{ij,\; i\in I,\; j\in J}\bigr),$ $\bs g=\bigl(g,\bs g_{jk,\; j\in J,\; k\in K}\bigr)$. Then there exists a unique morphism $\bs h:\bX\ra\bZ,$ where $\bs h=\bigl(h,\bs h_{ik,\; i\in I,\; k\in K}\bigr)$ such that\/ $h=g\ci f:X\ra Z,$ and for all\/ $i\in I,$ $j\in J,$ $k\in K$ we have
\e
\bs h_{ik}=\bs g_{jk}\ci\bs f_{ij},
\label{ku2eq19}
\e
where by our usual convention \eq{ku2eq19} holds over\/ $\Im\chi_i\cap f^{-1}(\Im\psi_j)\cap h^{-1}(\Im\om_k),$ and so may not determine $\bs h_{ik}$ over\/~$\Im\chi_i\cap h^{-1}(\Im\om_k)$.

We write $\bs g\ci\bs f=\bs h,$ so that\/ $\bs g\ci\bs f:\bX\ra\bZ$ is a morphism of $\mu$-Kuranishi spaces, and call\/ $\bs g\ci\bs f$ the \begin{bfseries}composition\end{bfseries} of\/~$\bs f,\bs g$.
\smallskip

\noindent{\bf(b)} Composition of morphisms is associative, that is, if\/ $\bs e:\bW\ra\bZ$ is another morphism of $\mu$-Kuranishi spaces then $(\bs g\ci\bs f)\ci\bs e=\bs g\ci(\bs f\ci\bs e)$.
\smallskip

\noindent{\bf(c)} Composition is compatible with identities, that is, $\bs f\ci\bs\id_\bX=\bs\id_\bY\ci\bs f=\bs f$ for all morphisms of $\mu$-Kuranishi spaces\/~$\bs f:\bX\ra\bY$.
\smallskip

Thus $\mu$-Kuranishi spaces form a category, which we write as $\muKur$.
\label{ku2thm3}
\end{thm}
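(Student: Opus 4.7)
The plan is to use the sheaf property Theorem~\ref{ku2thm1}(b) to glue the local compositions $\bs g_{jk}\ci\bs f_{ij}$ into a well-defined morphism $\bs h:\bX\ra\bZ$. For part~(a), fix $i\in I$ and $k\in K$, set $h=g\ci f$, and let $S_{ik}=\Im\chi_i\cap h^{-1}(\Im\om_k)\subseteq X$. For each $j\in J$, Definition~\ref{ku2def4} produces a composition $\bs g_{jk}\ci\bs f_{ij}$ as a morphism of $\mu$-Kuranishi neighbourhoods over $(S_{ijk},h)$, where $S_{ijk}=\Im\chi_i\cap f^{-1}(\Im\psi_j)\cap h^{-1}(\Im\om_k)$. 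Because $\bigcup_{j\in J}\Im\psi_j=Y$, the family $\{S_{ijk}:j\in J\}$ is an open cover of $S_{ik}$.

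The main technical step is pairwise compatibility on overlaps. For $j,j'\in J$, Definition~\ref{ku2def13}(a) applied to $\bs g$ yields $\bs g_{j'k}\ci\Up_{jj'}=\bs g_{jk}$, while Definition~\ref{ku2def13}(b) applied to $\bs f$ yields $\Up_{jj'}\ci\bs f_{ij}=\bs f_{ij'}$. Combining these with the associativity of neighbourhood composition from Definition~\ref{ku2def4} gives $\bs g_{jk}\ci\bs f_{ij}=\bs g_{j'k}\ci\bs f_{ij'}$ on $S_{ijk}\cap S_{ij'k}$. Theorem~\ref{ku2thm1}(b) then supplies a unique morphism $\bs h_{ik}$ over $(S_{ik},h)$ whose restriction to each $S_{ijk}$ is $\bs g_{jk}\ci\bs f_{ij}$, which is \eq{ku2eq19}. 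Uniqueness of the whole $\bs h$ follows analogously: any other candidate $\bs h'$ satisfies $\bs h'_{ik}\vert_{S_{ijk}}=\bs h_{ik}\vert_{S_{ijk}}$ for each $j$, hence $\bs h'_{ik}=\bs h_{ik}$ by the separation axiom of the sheaf.

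To finish~(a), I verify that $\bs h=(h,\bs h_{ik})$ satisfies Definition~\ref{ku2def13}(a),(b). For~(a), restricting both $\bs h_{i'k}\ci\Tau_{ii'}$ and $\bs h_{ik}$ to $S_{ijk}\cap S_{i'jk}$ reduces the desired equality to $\bs g_{jk}\ci\bs f_{i'j}\ci\Tau_{ii'}=\bs g_{jk}\ci\bs f_{ij}$, which is immediate from Definition~\ref{ku2def13}(a) for $\bs f$; condition~(b) is analogous using \eq{ku2eq18} for $\bs g$. In each case the sheaf property promotes these local equalities to the required global ones. Parts~(b) and~(c) are then routine: associativity $(\bs g\ci\bs f)\ci\bs e=\bs g\ci(\bs f\ci\bs e)$ holds because, at each index triple, both sides restrict on a suitable open cover to the same triple neighbourhood composition, which is associative by Definition~\ref{ku2def4}; and for identities, $(\bs f\ci\bs\id_\bX)_{ij}$ locally equals $\bs f_{i'j}\ci\Tau_{ii'}=\bs f_{ij}$ by Definition~\ref{ku2def13}(a) for $\bs f$, with a symmetric argument for $\bs\id_\bY\ci\bs f$.

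The only real obstacle is the cocycle-style compatibility on overlaps carried out in the second paragraph; once that is in place, the sheaf property does all the remaining work, and the conditions \eq{ku2eq17}--\eq{ku2eq18} on morphisms of $\mu$-Kuranishi spaces are precisely what ensures these compatibilities. This is the reason Theorem~\ref{ku2thm1} was set up in the first place.
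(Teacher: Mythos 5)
Your proposal is correct and follows essentially the same route as the paper: define $\bs h_{ik}$ locally as $\bs g_{jk}\ci\bs f_{ij}$ on the cover $\{\Im\chi_i\cap f^{-1}(\Im\psi_j)\cap h^{-1}(\Im\om_k):j\in J\}$, check the overlap compatibility via \eq{ku2eq17} for $\bs g$ and \eq{ku2eq18} for $\bs f$, glue with Theorem~\ref{ku2thm1}(b), and then verify Definition~\ref{ku2def13}(a),(b), associativity and identities by the same restrict-then-apply-the-sheaf-property pattern. No gaps.
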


\begin{proof} For (a), define $h=g\ci f:X\ra Z$. Let $i\in I$ and $k\in K$, and set 
$S=\Im\chi_i\cap h^{-1}(\Im\om_k)$, so that $S$ is open in $X$. We want to define a morphism $\bs h_{ik}:(U_i,D_i,r_i,\chi_i)\ra (W_k,F_k,t_k,\om_k)$ of $\mu$-Kuranishi neighbourhoods over $(S,h)$. Equation \eq{ku2eq19} means that for each $j\in J$ we must have 
\e
\bs h_{ik}\vert_{S\cap f^{-1}(\Im\psi_j)}
=\bs g_{jk}\ci\bs f_{ij}\vert_{S\cap f^{-1}(\Im\psi_j)}.
\label{ku2eq20}
\e
Since $\{\Im\psi_j:j\in J\}$ is an open cover of $Y$ and $f$ is continuous, $\bigl\{S\cap f^{-1}(\Im\psi_j):j\in J\bigr\}$ is an open cover of $S$. For all $j,j'\in J$ we have
\ea
&\bs g_{jk}\ci\bs f_{ij}\vert_{S\cap f^{-1}(\Im\psi_j)\cap f^{-1}(\Im\psi_{j'})}
=\bs g_{j'k}\ci\Up_{jj'}\ci\bs f_{ij}\vert_{S\cap f^{-1}(\Im\psi_j)\cap f^{-1}(\Im\psi_{j'})}
\nonumber\\
&=\bs g_{j'k}\ci\bs f_{ij'}\vert_{S\cap f^{-1}(\Im\psi_j)\cap f^{-1}(\Im\psi_{j'})},
\label{ku2eq21}
\ea
using \eq{ku2eq17} for $\bs g$ in the first step, and \eq{ku2eq18} for $\bs f$ in the second.

Now the right hand side of \eq{ku2eq20} prescribes values for a morphism over $h$ on the sets of an open cover $\bigl\{S\cap f^{-1}(\Im\psi_j):j\in J\bigr\}$ of $S$. Equation \eq{ku2eq21} shows that these values agree on overlaps $\bigl(S\cap f^{-1}(\Im\psi_j)\bigr)\cap\bigl(S\cap f^{-1}(\Im\psi_{j'})\bigr)$. Therefore the sheaf property Theorem \ref{ku2thm1}(b) shows that there is a unique morphism $\bs h_{ik}$ over $(S,h)$ satisfying \eq{ku2eq20} for all~$j\in J$.

We have now defined $\bs h=\bigl(h,\bs h_{ik,\; i\in I,\; k\in K}\bigr)$. To show $\bs h:\bX\ra\bZ$ is a morphism, we must verify Definition \ref{ku2def13}(a),(b). For (a), suppose $i,i'\in I$, $j\in J$ and $k\in K$. Then we have
\begin{align*}
\bs h_{i'k}&\ci\Tau_{ii'}\vert_{\Im\chi_i\cap\Im\chi_{i'}\cap f^{-1}(\Im\psi_j)\cap h^{-1}(\Im\om_k)}=\bs g_{jk}\ci\bs f_{i'j}\ci\Tau_{ii'}\vert_{\cdots}\\
&=\bs g_{jk}\ci\bs f_{ij}\vert_{\cdots}=\bs h_{ij}\vert_{\Im\chi_i\cap\Im\chi_{i'}\cap f^{-1}(\Im\psi_j)\cap h^{-1}(\Im\om_k)},
\end{align*}
using \eq{ku2eq20} with $i'$ in place of $i$ in the first step, \eq{ku2eq17} for $\bs f$ in the second, and \eq{ku2eq20} in the third. This proves the restriction of \eq{ku2eq17} for $\bs h,i,i',k$ to $\Im\chi_i\cap\Im\chi_{i'}\cap f^{-1}(\Im\psi_j)\cap h^{-1}(\Im\om_k)$, for each~$j\in J$.

Since the $\Im\chi_i\cap\Im\chi_{i'}\cap f^{-1}(\Im\psi_j)\cap h^{-1}(\Im\om_k)$ for $j\in J$ form an open cover of $\Im\chi_i\cap\Im\chi_{i'}\cap h^{-1}(\Im\om_k)$, the sheaf property Theorem \ref{ku2thm1}(b) implies that \eq{ku2eq17} holds for $\bs h,i,i',k$ on the correct domain $\Im\chi_i\cap\Im\chi_{i'}\cap h^{-1}(\Im\om_k)$, yielding Definition \ref{ku2def13}(a) for $\bs h$. Definition \ref{ku2def13}(b) follows by a similar argument, involving \eq{ku2eq18} for $\bs g$. Hence $\bs h:\bX\ra\bZ$ is a morphism, proving part~(a).

For (b), in notation \eq{ku2eq13}--\eq{ku2eq16}, if $h\in H$, $i\in I$, $j\in J$, $k\in K$ we find that
\begin{align*}
[(\bs g&\ci\bs f)\ci\bs e]_{h,k}\vert_{\Im\vp_h\cap e^{-1}(\Im\chi_i)\cap (f\ci e)^{-1}(\Im\psi_j)\cap (g\ci f\ci e)^{-1}(\Im\om_k)}=\bs g_{jk}\ci\bs f_{ij}\ci\bs e_{hi}\\
&=[\bs g\ci(\bs f\ci\bs e)]_{h,k}\vert_{\Im\vp_h\cap e^{-1}(\Im\chi_i)\cap (f\ci e)^{-1}(\Im\psi_j)\cap (g\ci f\ci e)^{-1}(\Im\om_k)},
\end{align*}
where the middle step makes sense without brackets by associativity of composition of morphisms of $\mu$-Kuranishi neighbourhoods over maps. Since $\Im\vp_h\cap e^{-1}(\Im\chi_i)\cap (f\ci e)^{-1}(\Im\psi_j)\cap (g\ci f\ci e)^{-1}(\Im\om_k)$ for all $i\in I$ and $j\in J$ form an open cover of $\Im\vp_h\cap(g\ci f\ci e)^{-1}(\Im\om_k)$, the sheaf property Theorem \ref{ku2thm1}(b) implies that $\bigl[(\bs g\ci\bs f)\ci\bs e\bigr]{}_{h,k}=\bigl[\bs g\ci(\bs f\ci\bs e)\bigr]{}_{h,k}$ over the correct domain $\Im\vp_h\cap(g\ci f\ci e)^{-1}(\Im\om_k)$, so that $(\bs g\ci\bs f)\ci\bs e=\bs g\ci(\bs f\ci\bs e)$, proving (b). 

For (c), let $i,i'\in I$ and $j\in J$. Then we have
\begin{equation*}
[\bs f\ci\bs\id_\bX]_{i,j}\vert_{\Im\chi_i\cap\Im\chi_{i'}\cap f^{-1}(\Im\psi_j)}=\bs f_{i'j}\ci\Tau_{ii'}=\bs f_{ij}\vert_{\Im\chi_i\cap\Im\chi_{i'}\cap f^{-1}(\Im\psi_j)},
\end{equation*}
using \eq{ku2eq19} and the definition of $\bs\id_\bX$ in the first step, and \eq{ku2eq17} in the second. Since $\Im\chi_i\cap\Im\chi_{i'}\cap f^{-1}(\Im\psi_j)$ for all $i'\in I$ form an open cover of $\Im\chi_i\cap f^{-1}(\Im\psi_j)$, the sheaf property implies that $[\bs f\ci\bs\id_\bX]_{i,j}=\bs f_{ij}$ on the correct domain $\Im\chi_i\cap f^{-1}(\Im\psi_j)$, so $\bs f\ci\bs\id_\bX=\bs f$. We prove $\bs\id_\bY\ci\bs f=\bs f$ in the same way. This completes the proof.
\end{proof}

We can regard manifolds as examples of $\mu$-Kuranishi spaces:

\begin{dfn} We will define a functor $F_\Man^\muKur:\Man\ra\muKur$. On objects, if $X$ is a manifold, define a $\mu$-Kuranishi space $F_\Man^\muKur(X)=\bX=(X,\cK)$ with topological space $X$ and $\mu$-Kuranishi structure $\cK=\bigl(\{0\},(V_0,E_0,s_0,\psi_0),\ab\Phi_{00}\bigr)$, with indexing set $I=\{0\}$, one $\mu$-Kuranishi neighbourhood $(V_0,E_0,s_0,\psi_0)$ with $V_0=X$, $E_0\ra V_0$ the zero vector bundle, $s_0=0$, and $\psi_0=\id_X$, and one $\mu$-coordinate change $\Phi_{00}=\id_{(V_0,E_0,s_0,\psi_0)}$. 

On morphisms, if $f:X\ra Y$ is a smooth map of manifolds and $\bX=F_\Man^\muKur(X)$, $\bY=F_\Man^\muKur(Y)$, define a morphism $F_\Man^\muKur(f)=\bs f:\bX\ra\bY$ by $\bs f=(f,\bs f_{00})$, where $\bs f_{00}=[V_{00},f_{00},\hat f_{00}]$ with $V_{00}=X$, $f_{00}=f$, and $\hat f_{00}$ is the zero map on zero vector bundles.

It is now easy to check that $F_\Man^\muKur$ is a functor, which is full and faithful (surjective and injective on morphisms), and thus embeds $\Man$ as a full subcategory of $\muKur$. So we can identify $\Man$ with its image in $\muKur$, and regard manifolds as special examples of $\mu$-Kuranishi spaces, and $\mu$-Kuranishi spaces as generalizations of manifolds. We will usually write manifolds as $X,Y,Z,\ldots,$ rather than in bold $\bX,\bY,\bZ,\ldots,$ even when we regard them as $\mu$-Kuranishi spaces, so we write $F_\Man^\muKur(X)$ as $X$ rather than~$\bX$.

We say that a $\mu$-Kuranishi space $\bX$ {\it is a manifold\/} if $\bX\cong F_\Man^\muKur(X')$ in $\muKur$, for some manifold~$X'$.
\label{ku2def14}
\end{dfn}

\begin{ex} Let $\bX=(X,\cK)$ be a $\mu$-Kuranishi space, with $\cK=\bigl(I,(V_i,\ab E_i,\ab s_i,\ab\psi_i)_{i\in I},\Phi_{ij,\; i,j\in I}\bigr)$, and $Y$ a manifold, regarded as a $\mu$-Kuranishi space. We will work out the definition of morphism $\bs f:\bX\ra Y$ explicitly.

Since $Y$ has only one $\mu$-Kuranishi neighbourhood $(Y,0,0,\id_Y)$, a morphism $\bs f:\bX\ra Y$ is $\bs f=(f,\bs f_{i0,\; i\in I})$, where $f:X\ra Y$ is continuous, and $\bs f_{i0}=[V_{i0},f_{i0},0]$ for $i\in I$ is a $\sim$-equivalence class of triples $(V_{i0},f_{i0},0)$ with $V_{i0}$ an open neighbourhood of $s_i^{-1}(0)$ in $V_i$ and $f_{i0}:V_{i0}\ra Y$ a smooth map such that $f\ci\psi_i=f_{i0}\vert_{s_i^{-1}(0)}$, and $0:E_i\vert_{V_{i0}}\ra 0$ is the map to the zero vector bundle.

In this special case, the definition of $\sim$ in Definitions \ref{ku2def3} simplifies: the second equation of \eq{ku2eq1} is trivial as both sides are zero, and in the first equation, the $\La\cdot s_i$ term exactly cancels the $O(s_i)$ part of $f_{i0}'-f_{i0}$. Thus, two such triples $(V_{i0},f_{i0},0)$, $(V_{i0}',f_{i0}',0)$ have $(V_{i0},f_{i0},0)\sim(V_{i0}',f_{i0}',0)$ if there exists an open neighbourhood $\dot V_{i0}$ of $s_i^{-1}(0)$ in $V_{i0}\cap V_{i0}'$ with 
\begin{equation*}
f_{i0}'\vert_{\dot V_{i0}}=f_{i0}\vert_{\dot V_{i0}}+O(s_i).
\end{equation*}

If $i,j\in I$, the compatibility condition on $\bs f_{i0},\bs f_{j0}$ is
\begin{equation*}
f_{i0}\vert_{V_{i0}\cap \phi_{ij}^{-1}(V_{j0})}=f_{j0}\ci\phi_{ij}\vert_{V_{i0}\cap \phi_{ij}^{-1}(V_{j0})}+O(s_i).
\end{equation*}

\label{ku2ex3}
\end{ex}

\begin{rem} Although they do not define morphisms between Kuranishi spaces, Fukaya, Oh, Ohta and Ono \cite[Def.~6.6]{FuOn}, \cite[Def.~A1.13]{FOOO1}, \cite[Def.~4.6]{FOOO6} do define ({\it strongly\/}) {\it smooth maps\/} $\bs f:\bX\ra Y$ from a Kuranishi space $\bX$ to a manifold $Y$. Their definition is like Example \ref{ku2ex3}, apart from the $O(s_i)$ terms.
\label{ku2rem4}
\end{rem}

\begin{ex} Let $\bX,\bY$ be $\mu$-Kuranishi spaces, with notation \eq{ku2eq14}--\eq{ku2eq15}. We will define a {\it product $\mu$-Kuranishi space\/} $\bX\t\bY=(X\t Y,\cK)$, with
\begin{equation*}
\cK=\bigl(I\t J,(W_{(i,j)},F_{(i,j)},t_{(i,j)},\om_{(i,j)})_{(i,j)\in I\t J},\;\Phi_{(i,j)(i',j'),\; (i,j),(i',j')\in I\t J}\bigr),
\end{equation*}
where for all $(i,j)\in I\t J$ we have $W_{(i,j)}=U_i\t V_j$, $F_{(i,j)}=\pi_{U_i}^*(D_i)\op \pi_{V_j}^*(E_j)$, and $t_{(i,j)}=\pi_{U_i}^*(r_i)\op \pi_{V_j}^*(s_j)$ so that $t_{(i,j)}^{-1}(0)=r_i^{-1}(0)\t s_j^{-1}(0)$, and $\om_{(i,j)}=\chi_i\t\psi_j:r_i^{-1}(0)\t s_j^{-1}(0)\ra X\t Y$. Also, for all $(i,j),(i',j')\in I\t J$, if $(U_{ii'},\tau_{ii'},\hat\tau_{ii'})$ represents $\Tau_{ii'}$ and $(V_{jj'},\up_{jj'},\hat\up_{jj'})$ represents $\Up_{jj'}$ in the $\mu$-Kuranishi structures on $\bX,\bY$, we have $\Phi_{(i,j)(i',j')}=\bigl[U_{ii'}\t V_{jj'},\tau_{ii'}\t\up_{jj'},\pi_{U_{ii'}}^*(\hat\tau_{ii'})\op\pi_{V_{jj'}}^*(\hat\up_{jj'})\bigr]$.

One can check that $\cK$ is a $\mu$-Kuranishi structure on $X\t Y$ with dimension $\vdim(\bX\t\bY)=\vdim\bX+\vdim\bY$, independent of choices. There are natural projection morphisms $\bs\pi_\bX:\bX\t\bY\ra\bX$ and $\bs\pi_\bY:\bX\t\bY\ra\bY$, with
\begin{equation*}
\bs\pi_\bX=\bigl(\pi_X,(\bs\pi_\bX)_{(i,j)i',\; (i,j)\in I\t J,\; i'\in I}\bigr),
\end{equation*}
where $(\bs\pi_\bX)_{(i,j)i'}=\bigl[U_{ii'}\t V_j,\tau_{ii'}\ci\pi_{U_{ii'}},\pi_{U_{ii'}}^*(\hat\tau_{ii'})\op 0\bigr]$ in the notation above, and $\bs\pi_\bY$ is similar. Then $\bX\t\bY,\bs\pi_\bX,\bs\pi_\bY$ are a product in $\muKur$ in the sense of category theory. That is, given any morphisms $\bs f:\bW\ra\bX$, $\bs g:\bW\ra\bY$ in $\muKur$, there is a unique morphism $\bs h:\bW\ra\bX\t\bY$ with $\bs\pi_\bX\ci\bs h=\bs f$ and $\bs\pi_\bY\ci\bs h=\bs g$. We write $\bs h=(\bs f,\bs g)$, and call it the {\it direct product}.

Products of $\mu$-Kuranishi spaces are associative and commutative, up to canonical isomorphism, so we generally omit brackets in products like~$\bX\t\bY\t\bZ$.

\label{ku2ex4}
\end{ex}

\begin{ex} Now that we have a category $\muKur$ of $\mu$-Kuranishi spaces including manifolds as a subcategory, we can define the {\it action of a Lie group $G$ on a $\mu$-Kuranishi space\/} $\bX$ in an essentially trivial way: such an action is a morphism of $\mu$-Kuranishi spaces $\bs\mu:G\t\bX\ra\bX$ satisfying the usual relations
\begin{align*}
\bs\mu\ci(\bs\pi_1\t(\bs\mu\ci\bs\pi_{23}))&=\bs\mu\ci((\mu_G\ci\bs\pi_{12})\t\bs\pi_3):G\t G\t\bX\longra\bX,\\
\bs\mu\ci (1\t\bs\id_\bX)&=\bs\id_\bX,
\end{align*}
where $\bs\pi_1:G\t G\t\bX\ra G$ is projection to the first factor, $\bs\pi_{23}:G\t G\t\bX\ra G\t X$ projection to the second and third factors, and so on, $\mu_G:G\t G\ra G$ is group multiplication, and $1\in G$ the identity, regarded as a smooth map $*\ra G$. Here products of $\mu$-Kuranishi spaces $G\t\bX$, $G\t G\t\bX$ are as in Example~\ref{ku2ex4}.

Fukaya, Oh, Ohta and Ono use finite group actions on Kuranishi spaces in \cite{Fuka}, \cite[\S 7]{FOOO4}, and $T^n$-actions on Kuranishi spaces in~\cite{FOOO2,FOOO3}.
\label{ku2ex5}
\end{ex}

\subsection{\texorpdfstring{$\mu$-Kuranishi neighbourhoods on $\mu$-Kuranishi spaces}{\textmu-Kuranishi neighbourhoods on \textmu-Kuranishi spaces}}
\label{ku24}

At the beginning of differential geometry, one defines manifolds $X$ and smooth maps $f:X\ra Y$ in terms of an atlas $\bigl\{(U_i,\vp_i):i\in I\bigr\}$ of charts on $X$, and transition functions $\vp_{ij}=\vp_j^{-1}\ci\vp_i\vert_{\vp_i^{-1}(\Im\vp_j)}$ between charts $(U_i,\vp_i),(U_j,\vp_j)$. However, one quickly comes to regard actually choosing an atlas on $X$ or working explicitly with atlases as unnatural and inelegant, so we generally suppress them, working with `local coordinates' on $X$ if we really need to reduce things to~$\R^n$.

We now wish to advocate a similar philosophy for working with $\mu$-Kuranishi spaces $\bX=(X,\cK)$, in which, like atlases, actually choosing or working explicitly with $\mu$-Kuranishi structures $\cK=\bigl(I,(V_i,E_i,s_i,\psi_i)_{i\in I},\Phi_{ij,\; i,j\in I}\bigr)$ is regarded as inelegant and to be avoided where possible, and $\bX$ is understood to exist as a geometric space independently of any choices of $I,(V_i,E_i,s_i,\psi_i),\ldots.$ Our analogue of `local coordinates' will be `$\mu$-Kuranishi neighbourhoods on $\mu$-Kuranishi spaces'.

\begin{dfn} Suppose $\bX=(X,\cK)$ is a $\mu$-Kuranishi space, where $\cK=\bigl(I,\ab(V_i,\ab E_i,\ab s_i,\ab\psi_i)_{i\in I},\Phi_{ij,\; i,j\in I}\bigr)$. A {\it $\mu$-Kuranishi neighbourhood on\/} $\bX$ is data $(V_a,E_a,s_a,\psi_a)$ and $\Phi_{ai,\; i\in I}$, where $(V_a,E_a,s_a,\psi_a)$ is a $\mu$-Kuranishi neighbourhood on the topological space $X$ in the sense of Definition \ref{ku2def2}, and $\Phi_{ai}:(V_a,E_a,s_a,\psi_a)\ra(V_i,E_i,s_i,\psi_i)$ is a $\mu$-coordinate change for each $i\in I$ (over $S=\Im\psi_a\cap\Im\psi_i$, as usual), such that for all $i,j\in I$ we have
\e
\Phi_{ij}\ci\Phi_{ai}=\Phi_{aj},
\label{ku2eq22}
\e
where \eq{ku2eq22} holds over $S=\Im\psi_a\cap\Im\psi_i\cap\Im\psi_j$ by our usual convention.

Here the subscript `$a$' in $(V_a,E_a,s_a,\psi_a)$ is just a label used to distinguish $\mu$-Kuranishi neighbourhoods, generally not in $I$. If we omit $a$ we will write `$*$' in place of `$a$' in $\Phi_{ai}$, giving~$\Phi_{*i}:(V,E,s,\psi)\ra(V_i,E_i,s_i,\psi_i)$.

We will usually just say $(V_a,E_a,s_a,\psi_a)$ or $(V,E,s,\psi)$ {\it is a $\mu$-Kuranishi neighbourhood on\/} $\bX$, leaving the data $\Phi_{ai,\; i\in I}$ or $\Phi_{*i,\; i\in I}$ implicit. We call such a $(V,E,s,\psi)$ a {\it global\/} $\mu$-Kuranishi neighbourhood on $\bX$ if~$\Im\psi=X$. 
\label{ku2def15}
\end{dfn}

The next theorem can be proved using the sheaf property Theorem \ref{ku2thm1} by very similar methods to Theorem \ref{ku2thm3}, noting that \eq{ku2eq23}--\eq{ku2eq24} imply that
\begin{equation*}
\Phi_{ab}\vert_{\Im\psi_a\cap\Im\psi_b\cap\Im\psi_i}=\Phi_{bi}^{-1}\ci\Phi_{ai},\quad \bs f_{ab}\vert_{\begin{subarray}{l}\Im\psi_a\cap\Im\psi_i\cap\\
f^{-1}(\Im\psi_b\cap\Im\psi_j)\end{subarray}}\!\!=\Phi_{bj}^{-1}\ci\bs f_{ij}\ci \Tau_{bi},
\end{equation*}
so we leave the proof as an exercise for the reader.

\begin{thm}{\bf(a)} Let\/ $\bX=(X,\cK)$ be a $\mu$-Kuranishi space, where $\cK=\bigl(I,\ab(V_i,\ab E_i,\ab s_i,\ab\psi_i)_{i\in I},\Phi_{ij,\; i,j\in I}\bigr),$ and\/ $(V_a,E_a,s_a,\psi_a),(V_b,E_b,s_b,\psi_b)$ be $\mu$-Kuranishi neighbourhoods on $\bX,$ in the sense of Definition\/ {\rm\ref{ku2def15}}. Then there is a unique $\mu$-coordinate change $\Phi_{ab}:(V_a,E_a,s_a,\psi_a)\ra(V_b,E_b,s_b,\psi_b)$ in the sense of Definition\/ {\rm\ref{ku2def6}} such that for all\/ $i\in I$ we have
\e
\Phi_{bi}\ci\Phi_{ab}=\Phi_{ai},
\label{ku2eq23}
\e
which holds on $\Im\psi_a\cap\Im\psi_b\cap\Im\psi_i$ by our usual convention. We will call\/ $\Phi_{ab}$ the \begin{bfseries}$\mu$-coordinate change between the $\mu$-Kuranishi neighbourhoods $(V_a,E_a,s_a,\psi_a),(V_b,E_b,s_b,\psi_b)$ on the $\mu$-Kuranishi space\end{bfseries}~$\bX$.
\smallskip

\noindent{\bf(b)} Let\/ $\bs f:\bX\ra\bY$ be a morphism of $\mu$-Kuranishi spaces, with notation {\rm\eq{ku2eq14}--\eq{ku2eq15},} and let\/ $(U_a,D_a,r_a,\chi_a),(V_b,E_b,s_b,\psi_b)$ be $\mu$-Kuranishi neighbourhoods on $\bX,\bY$ respectively, in the sense of Definition\/ {\rm\ref{ku2def15}}. Then there is a unique morphism $\bs f_{ab}:(U_a,D_a,r_a,\chi_a)\ra(V_b,E_b,s_b,\psi_b)$ of $\mu$-Kuranishi neighbourhoods over $f,$ such that for all\/ $i\in I$ and\/ $j\in J$ we have
\e
\Phi_{bj}\ci\bs f_{ab}=\bs f_{ij}\ci\Tau_{bi},
\label{ku2eq24}
\e
which holds on $\Im\psi_a\cap\Im\psi_i\cap f^{-1}(\Im\psi_b\cap\Im\psi_j)$ by our usual convention. We will call\/ $\bs f_{ab}$ the \begin{bfseries}morphism of $\mu$-Kuranishi neighbourhoods $(V_a,\ab E_a,\ab s_a,\ab\psi_a),\ab(V_b,E_b,s_b,\psi_b)$ over\end{bfseries}~$\bs f:\bX\ra\bY$.
\label{ku2thm4}
\end{thm}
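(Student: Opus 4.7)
The plan is to follow the template of Theorem~\ref{ku2thm3}: construct $\Phi_{ab}$ (resp.\ $\bs f_{ab}$) on an open cover of its intended domain using the data already at hand on $\bX$ and $\bY$, check that the local pieces agree on overlaps, and glue them by the sheaf property Theorem~\ref{ku2thm1}. Uniqueness is automatic, because \eqref{ku2eq23}, resp.\ \eqref{ku2eq24}, forces the value of any candidate on each piece of the cover and the separation axiom of a sheaf does the rest.

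For part~(a), let $S_i=\Im\psi_a\cap\Im\psi_b\cap\Im\psi_i$ for $i\in I$; by Definition~\ref{ku2def11}(d) these cover $\Im\psi_a\cap\Im\psi_b$. Since $\Phi_{bi}$ is a $\mu$-coordinate change on $\Im\psi_b\cap\Im\psi_i$, hence invertible there, I set $\Phi_{ab}^{(i)}:=\Phi_{bi}^{-1}\ci\Phi_{ai}$ on $S_i$. On the overlap $S_i\cap S_j$, the relations $\Phi_{ij}\ci\Phi_{ai}=\Phi_{aj}$ from $\cK$ and $\Phi_{ij}\ci\Phi_{bi}=\Phi_{bj}$ (which is \eqref{ku2eq22} for $(V_b,E_b,s_b,\psi_b)$) give $\Phi_{bj}^{-1}\ci\Phi_{ij}=\Phi_{bi}^{-1}$, and thus
\[
\Phi_{ab}^{(j)}=\Phi_{bj}^{-1}\ci\Phi_{aj}=\Phi_{bj}^{-1}\ci\Phi_{ij}\ci\Phi_{ai}=\Phi_{bi}^{-1}\ci\Phi_{ai}=\Phi_{ab}^{(i)}.
\]
Theorem~\ref{ku2thm1}(a) then yields a unique morphism $\Phi_{ab}$ over $\Im\psi_a\cap\Im\psi_b$ restricting to $\Phi_{ab}^{(i)}$ on each $S_i$; precomposing with $\Phi_{bi}$ recovers \eqref{ku2eq23} on $S_i$, and the separation axiom of the sheaf promotes this to the full intersection $\Im\psi_a\cap\Im\psi_b\cap\Im\psi_i$. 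That $\Phi_{ab}$ is actually a $\mu$-coordinate change follows from Theorem~\ref{ku2thm2}: exactness of \eqref{ku2eq8} at a point $x\in\Im\psi_a\cap\Im\psi_b$ is a local condition, and on $S_i$ (for any $i$ with $x\in\Im\psi_i$) it agrees with the complex for $\Phi_{bi}^{-1}\ci\Phi_{ai}$, which is a composition of $\mu$-coordinate changes and so is itself invertible in $\muKur_{S_i}(X)$.

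Part~(b) follows the same recipe, now with the cover $\{S_{ij}:i\in I,\,j\in J\}$ of $\Im\chi_a\cap f^{-1}(\Im\psi_b)$, where $S_{ij}=\Im\chi_a\cap\Im\chi_i\cap f^{-1}(\Im\psi_b\cap\Im\psi_j)$. On $S_{ij}$ I set $\bs f_{ab}^{(i,j)}:=\Phi_{bj}^{-1}\ci\bs f_{ij}\ci\Tau_{ai}$, using invertibility of $\Phi_{bj}$. Agreement on an overlap $S_{ij}\cap S_{i'j'}$ is checked by changing one index at a time: the $\bX$-side cocycle $\Tau_{ai'}=\Tau_{ii'}\ci\Tau_{ai}$ together with \eqref{ku2eq17} gives $\bs f_{i'j'}\ci\Tau_{ai'}=\bs f_{ij'}\ci\Tau_{ai}$, while the $\bY$-side cocycle $\Phi_{bj'}=\Up_{jj'}\ci\Phi_{bj}$ together with \eqref{ku2eq18} gives $\Phi_{bj'}^{-1}\ci\bs f_{ij'}=\Phi_{bj}^{-1}\ci\bs f_{ij}$, and combining these yields $\bs f_{ab}^{(i',j')}=\bs f_{ab}^{(i,j)}$. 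Theorem~\ref{ku2thm1}(b) then glues these into a unique $\bs f_{ab}$ satisfying \eqref{ku2eq24}. The main obstacle is just bookkeeping --- keeping track of the (often long) intersections over which each equation is valid, and repeatedly invoking the separation axiom so that all cocycles live on a common domain --- and no ingredient beyond Theorems~\ref{ku2thm1} and~\ref{ku2thm2} is needed.
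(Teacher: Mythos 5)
Your proof is correct and is exactly the argument the paper intends: the paper leaves Theorem \ref{ku2thm4} as an exercise with the hint that \eq{ku2eq23}--\eq{ku2eq24} force $\Phi_{ab}=\Phi_{bi}^{-1}\ci\Phi_{ai}$ and $\bs f_{ab}=\Phi_{bj}^{-1}\ci\bs f_{ij}\ci\Tau_{ai}$ locally, to be glued by the sheaf property of Theorem \ref{ku2thm1} as in the proof of Theorem \ref{ku2thm3}, which is precisely your construction, overlap check, gluing, and uniqueness-by-separation (and your use of Theorem \ref{ku2thm2} to see the glued $\Phi_{ab}$ is a $\mu$-coordinate change is fine, as is using the subsheaf $\cIso$ directly). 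Note only that the paper's $\Tau_{bi}$ in \eq{ku2eq24} is evidently a typo for the $\Tau_{ai}$ you use.
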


\begin{rem} Note that we make the (potentially confusing) distinction between {\it $\mu$-Kuranishi neighbourhoods\/ $(V_i,E_i,s_i,\psi_i)$ on a topological space\/} $X$, as in Definition \ref{ku2def2}, and {\it $\mu$-Kuranishi neighbourhoods\/ $(V_a,E_a,s_a,\psi_a)$ on a $\mu$-Kur\-an\-ishi space\/} $\bX=(X,\cK)$, which are as in Definition \ref{ku2def15}, and come equipped with the extra implicit data $\Phi_{ai,\; i\in I}$ giving the compatibility with the $\mu$-Kur\-an\-ishi structure $\cK$ on $X$.

We also distinguish between {\it $\mu$-coordinate changes\/ $\Phi_{ij}:(V_i,E_i,s_i,\psi_i)\ra(V_j,E_j,s_j,\psi_j)$ between $\mu$-Kuranishi neighbourhoods on a topological space\/} $X$, which are as in Definition \ref{ku2def2} and for which there may be many choices or none, and {\it $\mu$-coordinate changes\/ $\Phi_{ab}:(V_a,E_a,s_a,\psi_a)\ra(V_b,E_b,s_b,\psi_b)$ between $\mu$-Kuranishi neighbourhoods on a $\mu$-Kuranishi space\/} $\bX$, which are as in Theorem \ref{ku2thm4}(a), and are unique.

Similarly, we distinguish between {\it morphisms $\bs f_{ij}:(U_i,D_i,r_i,\chi_i)\ra(V_j,\ab E_j,\ab s_j,\ab\psi_j)$ of $\mu$-Kuranishi neighbourhoods over a continuous map of topological spaces\/} $f:X\ra Y$, which are as in Definition \ref{ku2def3} and for which there may be many choices or none, and {\it morphisms $\bs f_{ab}:(U_a,D_a,r_a,\chi_a)\ra(V_b,E_b,s_b,\psi_b)$ of $\mu$-Kuranishi neighbourhoods over a morphism of $\mu$-Kuranishi spaces\/} $\bs f:\bX\ra\bY$, which are as in Theorem \ref{ku2thm4}(b), and are unique.
\label{ku2rem5}
\end{rem}

\begin{prop} Let\/ $\bX=(X,\cK)$ be a $\mu$-Kuranishi space, and\/ $\bigl\{(V_a,\ab E_a,\ab s_a,\ab\psi_a):a\in A\bigr\}$ a family of $\mu$-Kuranishi neighbourhoods on $\bX$ with\/ $X=\bigcup_{a\in A}\Im\psi_a$. For all\/ $a,b\in A,$ let\/ $\Phi_{ab}:(V_a,E_a,s_a,\psi_a)\ra(V_b,E_b,s_b,\psi_b)$ be the $\mu$-coordinate change from Theorem\/ {\rm\ref{ku2thm4}(a)}. Then\/ $\cK'\!=\!\bigl(A,(V_a,E_a,s_a,\psi_a)_{a\in A},\ab\Phi_{ab,\; a,b\in A}\bigr)$ is a $\mu$-Kuranishi structure on $X,$ and\/ $\bX'=(X,\cK')$ is canonically isomorphic to\/ $\bX$ in\/~$\muKur$.
\label{ku2prop}
\end{prop}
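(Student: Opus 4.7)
The plan is to prove the proposition in two stages: first verify that $\cK'$ satisfies Definition \ref{ku2def11}(a)--(f), then exhibit an explicit canonical isomorphism $\bs f:\bX\ra\bX'$ with inverse built from the given $\Phi_{ai}$. Throughout, the main tools will be Theorem \ref{ku2thm4}(a) (existence and \emph{uniqueness} of $\Phi_{ab}$) and the sheaf property Theorem \ref{ku2thm1}.

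For the first stage, conditions (a) and (d) of Definition \ref{ku2def11} are given directly by hypothesis, and (c) is supplied by Theorem \ref{ku2thm4}(a). For (b), I need each $V_a$ to satisfy $\dim V_a-\rank E_a=n$; this follows by applying Theorem \ref{ku2thm2} to the $\mu$-coordinate change $\Phi_{ai}$ between $(V_a,E_a,s_a,\psi_a)$ and any $(V_i,E_i,s_i,\psi_i)$ whose footprint meets $\Im\psi_a$ --- exactness of \eqref{ku2eq8} forces equality of virtual dimensions, and $\bX$ has virtual dimension $n$. Condition (e) $\Phi_{aa}=\id$ and (f) $\Phi_{bc}\ci\Phi_{ab}=\Phi_{ac}$ both reduce to the uniqueness clause of Theorem \ref{ku2thm4}(a): for (e), the identity $\id_{(V_a,E_a,s_a,\psi_a)}$ trivially satisfies the defining relation $\Phi_{ai}\ci(\cdot)=\Phi_{ai}$, so it must equal $\Phi_{aa}$; for (f), we compute $\Phi_{ci}\ci(\Phi_{bc}\ci\Phi_{ab})=\Phi_{bi}\ci\Phi_{ab}=\Phi_{ai}$ using \eqref{ku2eq23} twice, so by uniqueness $\Phi_{bc}\ci\Phi_{ab}=\Phi_{ac}$.

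For the second stage, I define $\bs f=(\id_X,\bs f_{ia,\,i\in I,\,a\in A}):\bX\ra\bX'$ by $\bs f_{ia}=\Phi_{ai}^{-1}$ (which exists by invertibility of the $\mu$-coordinate change $\Phi_{ai}$), and symmetrically $\bs g=(\id_X,\Phi_{ai,\,i\in I,\,a\in A}):\bX'\ra\bX$. The compatibility conditions Definition \ref{ku2def13}(a),(b) for $\bs f$ translate, after composing with $\Phi_{ai'}$ or $\Phi_{bi}$, into exactly \eqref{ku2eq22} and \eqref{ku2eq23} for the data of $(V_a,E_a,s_a,\psi_a)$ as a $\mu$-Kuranishi neighbourhood on $\bX$, so both hold. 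The analogous check for $\bs g$ is identical.

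To conclude $\bs g\ci\bs f=\bs\id_\bX$ and $\bs f\ci\bs g=\bs\id_{\bX'}$, I would apply the construction of composition from Theorem \ref{ku2thm3}(a). For each $i,i'\in I$ and each $a\in A$ with $\Im\psi_a\cap\Im\psi_i\cap\Im\psi_{i'}\ne\es$, the formula $(\bs g\ci\bs f)_{ii'}=\bs g_{ai'}\ci\bs f_{ia}=\Phi_{ai'}\ci\Phi_{ai}^{-1}$ equals $\Phi_{ii'}$ on that overlap, by \eqref{ku2eq22} rewritten as $\Phi_{ii'}=\Phi_{ai'}\ci\Phi_{ai}^{-1}$. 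Since $\{\Im\psi_a:a\in A\}$ covers $X$, the sheaf property (Theorem \ref{ku2thm1}(a)) glues these local identifications into the global equality $(\bs g\ci\bs f)_{ii'}=\Phi_{ii'}$ on $\Im\psi_i\cap\Im\psi_{i'}$, so $\bs g\ci\bs f=\bs\id_\bX$; the other composition is symmetric. The main technical point, as in the proof of Theorem \ref{ku2thm3}, is careful bookkeeping of domains together with the invocation of Theorem \ref{ku2thm1} to glue, but no new analytic input is needed beyond what has already been established.
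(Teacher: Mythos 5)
Your proposal is correct and follows essentially the same route as the paper's proof: the structure axioms (e),(f) for $\cK'$ are deduced from the uniqueness clause of Theorem \ref{ku2thm4}(a), and the canonical isomorphism is built from the implicit data $\Phi_{ai}$ and their inverses, with the compositions identified with $\bs\id_\bX,\bs\id_{\bX'}$ via \eqref{ku2eq22}--\eqref{ku2eq23} and the sheaf property. Your extra verification of the dimension condition (b) via Theorem \ref{ku2thm2}, and the explicit gluing argument for $\bs g\ci\bs f=\bs\id_\bX$, merely spell out steps the paper leaves as ``immediate'' or ``easy to check''.
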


\begin{proof} Write $\cK=\bigl(I,(V_i,E_i,s_i,\psi_i)_{i\in I},\Phi_{ij,\; i,j\in I}\bigr)$. Let $\Phi_{ab}$ for all $a,b\in A$ be as in Theorem \ref{ku2thm4}(a). We claim that $\cK'=\bigl(A,(V_a,E_a,s_a,\psi_a)_{a\in A},\ab\Phi_{ab,\; a,b\in A}\bigr)$ is a $\mu$-Kuranishi structure on $X$. Definition \ref{ku2def11}(a)--(d) are immediate. For (e), note that $\Phi_{aa},\id_{(V_a,E_a,s_a,\psi_a)}:(V_a,E_a,s_a,\psi_a)\ra (V_a,E_a,s_a,\psi_a)$ both satisfy the conditions of Theorem \ref{ku2thm4}(a) with $a=b$, so by uniqueness we have $\Phi_{aa}=\id_{(V_a,E_a,s_a,\psi_a)}$. Similarly, for $a,b,c\in A$ we can show that $\Phi_{bc}\ci\Phi_{ab}$ and $\Phi_{ac}$ are $\mu$-coordinate changes $(V_a,E_a,s_a,\psi_a)\ra (V_c,E_c,s_c,\psi_c)$ over $\Im\psi_a\cap\Im\psi_b\cap\Im\psi_c$ satisfying the conditions of Theorem \ref{ku2thm4}(a), so uniqueness gives $\Phi_{bc}\ci\Phi_{ab}=\Phi_{ac}$, proving (f). Hence $\cK'$ is a $\mu$-Kuranishi structure.

To show $\bX,\bX'$ are canonically isomorphic, note that each $(V_a,E_a,s_a,\psi_a)$ comes equipped with implicit extra data $\Phi_{ai,\; i\in I}$. Define morphisms $\bs f:\bX\ra\bX'$ and $\bs g:\bX'\ra\bX$ by $\bs f=\bigl(\id_X,\Phi_{ai,\; a\in A,\; i\in I}\bigr)$ and $\bs g=\bigl(\id_X,\Phi^{-1}_{ai,\; i\in I,\; a\in A}\bigr)$. It is easy to check that $\bs f,\bs g$ are morphisms in $\muKur$ with $\bs g\ci\bs f=\bs\id_\bX$ and $\bs f\ci\bs g=\bs\id_{\bX'}$. So $\bs f,\bs g$ are canonical isomorphisms.
\end{proof}

As the $\mu$-Kuranishi neighbourhoods $(V_i,E_i,s_i,\psi_i)$ in the $\mu$-Kuranishi structure on $\bX$ are $\mu$-Kuranishi neighbourhoods on $\bX$, we deduce:

\begin{cor} Let\/ $\bX=(X,\cK)$ be a $\mu$-Kuranishi space with\/ $\cK=\bigl(I,(V_i,\ab E_i,\ab s_i,\ab\psi_i)_{i\in I},\Phi_{ij,\; i,j\in I}\bigr)$. Suppose $J\subseteq I$ with $\bigcup_{j\in J}\Im\psi_j=X$. Then $\cK'=\bigl(J,(V_i,E_i,s_i,\psi_i)_{i\in J},\Phi_{ij,\; i,j\in J}\bigr)$ is a $\mu$-Kuranishi structure on $X,$ and\/ $\bX'=(X,\cK')$ is canonically isomorphic to $\bX$ in $\muKur$.
\label{ku2cor}
\end{cor}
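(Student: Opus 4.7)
The plan is to deduce this directly from Proposition \ref{ku2prop}, which is the real technical content; the corollary amounts to observing that the restricted data is already in the form produced by that proposition.

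First I would verify that each $(V_i,E_i,s_i,\psi_i)$ appearing in $\cK$ is a $\mu$-Kuranishi neighbourhood on $\bX$ in the sense of Definition \ref{ku2def15}, with the implicit extra data being $\Phi_{ik,\; k\in I}$. Indeed $(V_i,E_i,s_i,\psi_i)$ is a $\mu$-Kuranishi neighbourhood on the topological space $X$, each $\Phi_{ik}$ is a $\mu$-coordinate change by Definition \ref{ku2def11}(c), and the cocycle condition \eqref{ku2eq22} for these $\Phi_{ik}$ is precisely Definition \ref{ku2def11}(f) for the structure $\cK$. In particular this applies to each $i\in J\subseteq I$.

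Next I would apply Proposition \ref{ku2prop} to the subfamily $A=J$ and the $\mu$-Kuranishi neighbourhoods $(V_j,E_j,s_j,\psi_j)$ on $\bX$ for $j\in J$, which cover $X$ by hypothesis. The proposition provides unique $\mu$-coordinate changes $\Phi_{ij}'$ from Theorem \ref{ku2thm4}(a) and assembles them into a $\mu$-Kuranishi structure $\cK''=\bigl(J,(V_i,E_i,s_i,\psi_i)_{i\in J},\Phi_{ij,\; i,j\in J}'\bigr)$ on $X$ with $(X,\cK'')$ canonically isomorphic to $\bX$.

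The final step, which is the only thing requiring a small check, is to identify $\Phi_{ij}'$ with the original $\Phi_{ij}$ for $i,j\in J$, so that $\cK''=\cK'$. By the uniqueness clause of Theorem \ref{ku2thm4}(a), $\Phi_{ij}'$ is the unique $\mu$-coordinate change satisfying $\Phi_{jk}\ci\Phi_{ij}'=\Phi_{ik}$ for all $k\in I$; but the original $\Phi_{ij}$ from $\cK$ satisfies this equation by Definition \ref{ku2def11}(f), so $\Phi_{ij}'=\Phi_{ij}$. Hence $\cK'$ is a $\mu$-Kuranishi structure and $\bX'=(X,\cK')$ is canonically isomorphic to $\bX$, as claimed. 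There is no real obstacle here beyond this bookkeeping matching of uniqueness; the substance lives in Proposition \ref{ku2prop} and Theorem \ref{ku2thm4}.
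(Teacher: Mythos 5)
Your proof is correct and follows the paper's own route: the paper deduces the corollary directly from Proposition \ref{ku2prop}, observing that the neighbourhoods $(V_i,E_i,s_i,\psi_i)$ in $\cK$ are themselves $\mu$-Kuranishi neighbourhoods on $\bX$ in the sense of Definition \ref{ku2def15}. Your explicit identification of the coordinate changes from Theorem \ref{ku2thm4}(a) with the original $\Phi_{ij}$ via the uniqueness clause is exactly the bookkeeping the paper leaves implicit.
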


Thus, adding or subtracting extra $\mu$-Kuranishi neighbourhoods to the $\mu$-Kuranishi structure of $\bX$ leaves $\bX$ unchanged up to canonical isomorphism.

We can now state our:
\smallskip

\noindent{\bf Philosophy for working with $\mu$-Kuranishi spaces:} {\it a good way to think about the `real' geometric structure on $\mu$-Kuranishi spaces is as follows:
\begin{itemize}
\setlength{\itemsep}{0pt}
\setlength{\parsep}{0pt}
\item[{\rm(i)}] Every $\mu$-Kuranishi space $\bX$ has an underlying topological space $X,$ and a large collection of `$\mu$-Kuranishi neighbourhoods' $(V_a,E_a,s_a,\psi_a)$ on $\bX,$ which are $\mu$-Kuranishi neighbourhoods on $X$ in the sense of\/ {\rm\S\ref{ku21},} but with an additional compatibility with the $\mu$-Kuranishi structure on~$\bX$.

We think of\/ $(V_a,E_a,s_a,\psi_a)$ as a choice of `local coordinates' on $\bX$.
\item[{\rm(ii)}] For $\mu$-Kuranishi neighbourhoods $(V_a,E_a,s_a,\psi_a),\!(V_b,E_b,s_b,\psi_b)$ on $\bX,$ we get a natural $\mu$-coordinate change $\Phi_{ab}\!:\!(V_a,E_a,s_a,\psi_a)\!\ra\!(V_b,E_b,s_b,\psi_b)$.
\item[{\rm(iii)}] A morphism of $\mu$-Kuranishi spaces $\bs f:\bX\ra\bY$ has an underlying continuous map $f:X\ra Y$. If\/ $(U_a,D_a,r_a,\chi_a),(V_b,E_b,s_b,\psi_b)$ are $\mu$-Kuranishi neighbourhoods on $\bX,\bY,$ there is a natural morphism $\bs f_{ab}:\ab(U_a,\ab D_a,\ab r_a,\ab\chi_a)\ra(V_b,E_b,s_b,\psi_b)$ over $f$.
\item[{\rm(iv)}] The natural $\mu$-coordinate changes and morphisms in {\rm(ii),(iii)} behave in the obvious functorial ways under compositions and identities.
\item[{\rm(v)}] The family of $\mu$-Kuranishi neighbourhoods on $\bX$ is closed under several natural constructions. For example:
\begin{itemize}
\setlength{\itemsep}{0pt}
\setlength{\parsep}{0pt}
\item[{\rm(a)}] If\/ $(V,E,s,\psi)$ is a $\mu$-Kuranishi neighbourhood on $\bX$ and\/ $V'\subseteq V$ is open then $\bigl(V',E\vert_{V'},s\vert_{V'},\psi\vert_{V'\cap s^{-1}(0)}\bigr)$ is a $\mu$-Kuranishi neighbourhood on $\bX$.
\item[{\rm(b)}] If\/ $(V,E,s,\psi)$ is a $\mu$-Kuranishi neighbourhood on $\bX$ and\/ $\pi:F\ra V$ is a vector bundle then $\bigl(F,\pi^*(E)\op\pi^*(F),\pi^*(s)\op\id_F,\psi\ci\pi\vert_{\cdots}\bigr)$ is a $\mu$-Kuranishi neighbourhood on $\bX$.
\end{itemize}
\item[{\rm(vi)}] The collection of all $\mu$-Kuranishi neighbourhoods $(V_a,E_a,s_a,\psi_a)$ on $\bX$ is `much larger' than a particular atlas $\bigl\{(V_i,E_i,s_i,\psi_i):i\in I\bigr\}$.

There are so many $\mu$-Kuranishi neighbourhoods on $\bX$ that we can often choose them to satisfy extra conditions. Two examples of this\/~{\rm\cite{Joyc12}:} 
\begin{itemize}
\setlength{\itemsep}{0pt}
\setlength{\parsep}{0pt}
\item[{\rm(a)}] For any $\bs f:\bX\ra\bY,$ we can choose families of $\mu$-Kuranishi neighbourhoods $(U_a,D_a,r_a,\chi_a)_{a\in A}$ which cover $\bX$ and\/ $(V_b,E_b,s_b,\psi_b)_{b\in B}$ which cover $\bY,$ such that\/ $\bs f_{ab}=[U_{ab},f_{ab},\hat f_{ab}]$ in {\rm(iii)} is represented by $(U_{ab},\ab f_{ab},\ab\hat f_{ab})$ with\/ $f_{ab}:U_{ab}\ra V_b$ a submersion, for all\/~$a,b$.
\item[{\rm(b)}] If\/ $\bX$ is a compact $\mu$-Kuranishi space, there is a $\mu$-Kuranishi neighbourhood $(V,E,s,\psi)$ on $\bX$ with\/~$\Im\psi=X$.
\end{itemize}
\end{itemize}}

\subsection{Tangent spaces and obstruction spaces}
\label{ku25}

A manifold $X$ has tangent spaces $T_xX$ for $x\in X$, which behave functorially under smooth maps of manifolds $f:X\ra Y$. Similarly, for a $\mu$-Kuranishi space $\bX$ we will define a {\it tangent space\/} $T_x\bX$ and {\it obstruction space\/} $O_x\bX$, which behave functorially under morphisms $\bs f:\bX\ra\bY$.

\begin{dfn} Let $\bX=(X,\cK)$ be a $\mu$-Kuranishi space, with $\cK=\bigl(I,(V_i,\ab E_i,\ab s_i,\ab\psi_i)_{i\in I},\Phi_{ij,\; i,j\in I}\bigr)$, and let $x\in\bX$.

For each $i\in I$ with $x\in\Im\psi_i$, set $v_i=\psi_i^{-1}(x)$, and define finite-dimensional real vector spaces $K_i^x,C_i^x$ by the exact sequence
\e
\xymatrix@C=20pt{ 0 \ar[r] & K_i^x \ar[r] & T_{v_i}V_i \ar[rr]^{\d s_i\vert_{v_i}} && E_i\vert_{v_i} \ar[r] & C_i^x \ar[r] & 0, }
\label{ku2eq25}
\e
where $\d s_i\vert_{v_i}$ is as in \eq{ku2eq8}, so that $K_i^x,C_i^x$ are the kernel and cokernel of $\d s_i\vert_{v_i}$. Also $\dim K_i^x-\dim C_i^x=\dim T_{v_i}V_i-\dim E_i\vert_{v_i}=\dim V_i-\rank E_i=\vdim\bX$.

For $i,j\in I$ with $x\in\Im\psi_i\cap\Im\psi_j$, let $(V_{ij},\phi_{ij},\hat\phi_{ij})$ represent the $\sim$-equivalence class $\Phi_{ij}\!=\![V_{ij},\phi_{ij},\hat\phi_{ij}]$. Then we form a diagram with exact rows
\e
\begin{gathered}
\xymatrix@C=23pt@R=15pt{ 0 \ar[r] & K_i^x \ar[r] \ar@{.>}[d]^{\ka^x_{V_{ij},\phi_{ij},\hat\phi_{ij}}}_\cong & T_{v_i}V_i \ar[rr]_{\d s_i\vert_{v_i}} \ar[d]^{\d\phi_{ij}\vert_{v_i}} && E_i\vert_{v_i} \ar[d]^{\hat\phi_{ij}\vert_{v_i}} \ar[r] & C_i^x \ar@{.>}[d]^{\ga^x_{V_{ij},\phi_{ij},\hat\phi_{ij}}}_\cong \ar[r] & 0 \\
0 \ar[r] & K_j^x \ar[r] & T_{v_j}V_j \ar[rr]^{\d s_j\vert_{v_j}} && E_j\vert_{v_j} \ar[r] & C_j^x \ar[r] & 0.\!\! }
\end{gathered}
\label{ku2eq26}
\e
From Definition \ref{ku2def3}(d) and $s_i(v_i)=0$, the central square of \eq{ku2eq26} commutes, and so by exactness there exist unique linear maps
$\ka^x_{V_{ij},\phi_{ij},\hat\phi_{ij}},\ga^x_{V_{ij},\phi_{ij},\hat\phi_{ij}}$ making the diagram commute. As $\Phi_{ij}=[V_{ij},\phi_{ij},\hat\phi_{ij}]$ is a $\mu$-coordinate change, Theorem \ref{ku2thm2} says \eq{ku2eq8} is exact, so $\ka^x_{V_{ij},\phi_{ij},\hat\phi_{ij}},\ga^x_{V_{ij},\phi_{ij},\hat\phi_{ij}}$ are isomorphisms.

Suppose $(V_{ij}',\phi_{ij}',\hat\phi_{ij}')$ is another representative, so that $(V_{ij}',\phi_{ij}',\hat\phi_{ij}')\sim(V_{ij},\phi_{ij},\hat\phi_{ij})$, and there exist $\dot V_{ij}$ and $\La$ satisfying \eq{ku2eq1}. Restricting the derivative of the first equation and the second equation of \eq{ku2eq1} to $v_i$ gives
\e
\d\phi_{ij}'\vert_{v_i}=\d\phi_{ij}\vert_{v_i}+\La\vert_{v_i}\ci \d s_i\vert_{v_i}\quad\text{and}\quad \hat\phi_{ij}'\vert_{v_i}=\hat\phi_{ij}\vert_{v_i}+\d s_j\vert_{v_j}\ci\La\vert_{v_i}.
\label{ku2eq27}
\e
Combining \eq{ku2eq27} with \eq{ku2eq26} for $(V_{ij},\phi_{ij},\hat\phi_{ij})$ and $(V_{ij}',\phi_{ij}',\hat\phi_{ij}')$, we can show that
$\ka^x_{V_{ij}',\phi_{ij}',\hat\phi_{ij}'}=\ka^x_{V_{ij},\phi_{ij},\hat\phi_{ij}}$ and $\ga^x_{V_{ij}',\phi_{ij}',\hat\phi_{ij}'}=\ga^x_{V_{ij},\phi_{ij},\hat\phi_{ij}}$. Thus $\ka^x_{V_{ij},\phi_{ij},\hat\phi_{ij}},\ab\ga^x_{V_{ij},\phi_{ij},\hat\phi_{ij}}$ depend only on the $\sim$-equivalence class $\Phi_{ij}$, so we write $\ka_{ij}^x=\ka^x_{V_{ij},\phi_{ij},\hat\phi_{ij}}$ and~$\ga_{ij}^x=\ga^x_{V_{ij},\phi_{ij},\hat\phi_{ij}}$.

If $i,j,k\in I$ with $x\in\Im\psi_i\cap\Im\psi_j\cap\Im\psi_k$ then from Definition \ref{ku2def11}(f) and \eq{ku2eq26} for $i,j$, $j,k$ and $i,k$ we can deduce that $\ka_{ik}^x=\ka_{jk}^x\ci\ka_{ij}^x:K_i^x\ra K_k^x$ and $\ga_{ik}^x=\ga_{jk}^x\ci\ga_{ij}^x:C_i^x\ra C_k^x$. To summarize: for any $i\in I$ with $x\in\Im\psi_i$ we get vector spaces $K_i^x,C_i^x$ with 
$\dim K_i^x-\dim C_i^x=\vdim\bX$. For two such $i,j$ we get canonical isomorphisms $\ka_{ij}^x:K_i^x\ra K_j^x$, $\ga_{ij}^x:C_i^x\ra C_j^x$, where three such $i,j,k$ satisfy $\ka_{ik}^x=\ka_{jk}^x\ci\ka_{ij}^x$ and~$\ga_{ik}^x=\ga_{jk}^x\ci\ga_{ij}^x$.

Define the {\it tangent space\/} $T_x\bX$ and {\it obstruction space\/} $O_x\bX$ of $\bX$ at $x$ by
\e
T_x\bX=\ts\coprod_{i\in I:x\in\Im\psi_i}K_i^x/\approx\quad\text{and}\quad
O_x\bX=\ts\coprod_{i\in I:x\in\Im\psi_i}C_i^x/\asymp,
\label{ku2eq28}
\e
where $\approx$ is the equivalence relation $k_i\approx k_j$ if $k_i\in K_i^x$ and $k_j\in K_j^x$ with $\ka_{ij}^x(k_i)=k_j$, and $\asymp$ the equivalence relation $c_i\asymp c_j$ if $c_i\in C_i^x$ and $c_j\in C_j^x$ with $\ga_{ij}^x(c_i)=c_j$. Then $T_x\bX,O_x\bX$ are finite-dimensional real vector spaces with canonical isomorphisms $T_x\bX\cong K_i^x$ and $O_x\bX\cong C_i^x$ for each $i\in I$ with $x\in\Im\psi_i$; the fuss above is just to make the definition of $T_x\bX,O_x\bX$ independent of the choice of $i$. We have
\e
\dim T_x\bX-\dim O_x\bX=\vdim\bX.
\label{ku2eq29}
\e

In contrast to manifolds, $\dim T_x\bX,\dim O_x\bX$ may not be locally constant functions of $x\in\bX$, but only upper semicontinuous, so $T_x\bX,O_x\bX$ are not fibres of vector bundles on~$\bX$.

The dual vector spaces of $T_x\bX,O_x\bX$ will be called the {\it cotangent space}, written $T_x^*\bX$, and the {\it coobstruction space}, written~$O_x^*\bX$.

By \eq{ku2eq25}, for any $i\in I$ with $x\in\Im\psi_i$ we have a canonical exact sequence
\e
\xymatrix@C=20pt{ 0 \ar[r] & T_x\bX \ar[r] & T_{v_i}V_i \ar[rr]^{\d s_i\vert_{v_i}} && E_i\vert_{v_i} \ar[r] & O_x\bX \ar[r] & 0. }
\label{ku2eq30}
\e
More generally, the argument above shows that if $(V_a,E_a,s_a,\psi_a)$ is any $\mu$-Kuranishi neighbourhood on $\bX$ in the sense of \S\ref{ku24} with $x\in\Im\psi_a$, we have a canonical exact sequence analogous to~\eq{ku2eq30}.

Now let $\bs f:\bX\ra\bY$ be a morphism of $\mu$-Kuranishi spaces, with notation \eq{ku2eq14}--\eq{ku2eq15}, and let $x\in\bX$ with $\bs f(x)=y\in\bY$, so we have $T_x\bX,O_x\bX$, $T_y\bY,O_y\bY$. Suppose $i\in I$ with $x\in\Im\chi_i$ and $j\in J$ with $y\in\Im\psi_j$, so we have a morphism $\bs f_{ij}=[U_{ij},f_{ij},\hat f_{ij}]$ in $\bs f$. Choose a representative $(U_{ij},f_{ij},\hat f_{ij})$. As for \eq{ku2eq26}, consider the diagram 
\e
\begin{gathered}
\xymatrix@C=23pt@R=15pt{ 0 \ar[r] & T_x\bX \ar[r] \ar@{.>}[d]^{T_x\bs f} & T_{u_i}U_i \ar[rr]_{\d r_i\vert_{u_i}} \ar[d]^{\d f_{ij}\vert_{u_i}} && D_i\vert_{u_i} \ar[d]^{\hat f_{ij}\vert_{u_i}} \ar[r] & O_x\bX \ar@{.>}[d]^{O_x\bs f} \ar[r] & 0 \\
0 \ar[r] & T_y\bY \ar[r] & T_{v_j}V_j \ar[rr]^{\d s_j\vert_{v_j}} && E_j\vert_{v_j} \ar[r] & O_y\bY \ar[r] & 0,\!\! }
\end{gathered}
\label{ku2eq31}
\e
where the rows are \eq{ku2eq30} for $\bX,x,i$ and $\bY,y,j$ and so are exact. The central square commutes, so there are unique linear maps $T_x\bs f:T_x\bX\ra T_y\bY$ and $O_x\bs f:O_x\bX\ra O_y\bY$ making \eq{ku2eq31} commute.

A similar argument to that for $\ka_{ij}^x,\ga_{ij}^x$ above shows that these $T_x\bs f,O_x\bs f$ are independent of the choice of representative $(U_{ij},f_{ij},\hat f_{ij})$, and so depend only on $\bs f_{ij}$, and are also independent of the choices of $i\in I$ and $j\in J$, so that $T_x\bs f,O_x\bs f$ depend only on $x\in\bX$ and~$\bs f:\bX\ra\bY$.

If $(U_a,D_a,r_a,\chi_a)$ and $(V_b,E_b,s_b,\psi_b)$ are any $\mu$-Kuranishi neighbourhoods on $\bX,\bY$ respectively in the sense of \S\ref{ku24} with $x\in\Im\psi_a$, $y\in\Im\psi_b$, and $\bs f_{ab}=[U_{ab},f_{ab},\hat f_{ab}]$ is the morphism of $\mu$-Kuranishi neighbourhoods over $\bs f$ given by Theorem \ref{ku2thm4}(b), then setting $u_a=\chi_a^{-1}(x)$, $v_b=\psi_b^{-1}(y)$, then the argument of \eq{ku2eq31} shows that the following commutes, with exact rows:
\begin{equation*}
\xymatrix@C=23pt@R=15pt{ 0 \ar[r] & T_x\bX \ar[r] \ar[d]^{T_x\bs f} & T_{u_a}U_a \ar[rr]_{\d r_a\vert_{u_a}} \ar[d]^{\d f_{ab}\vert_{u_a}} && D_a\vert_{u_a} \ar[d]^{\hat f_{ab}\vert_{u_a}} \ar[r] & O_x\bX \ar[d]^{O_x\bs f} \ar[r] & 0 \\
0 \ar[r] & T_y\bY \ar[r] & T_{v_b}V_b \ar[rr]^{\d s_b\vert_{v_b}} && E_b\vert_{v_b} \ar[r] & O_y\bY \ar[r] & 0.\!\! }
\end{equation*}

If $\bs g:\bY\ra\bZ$ is another morphism of $\mu$-Kuranishi spaces and $\bs g(y)=z\in\bZ$ then combining \eq{ku2eq31} for $\bs f,\bs g$ and $\bs g\ci\bs f$, it is easy to see that
\begin{align*}
T_x(\bs g\ci\bs f)&=T_y\bs g\ci T_x\bs f:T_x\bX\longra T_z\bZ,\\
O_x(\bs g\ci\bs f)&=O_y\bs g\ci O_x\bs f:O_x\bX\longra O_z\bZ.
\end{align*}
Also $T_x\bs\id_\bX=\id_{T_x\bX}:T_x\bX\ra T_x\bX$, $O_x\bs\id_\bX=\id_{O_x\bX}:O_x\bX\ra O_x\bX$. So tangent and obstruction spaces are functorial.
\label{ku2def16}
\end{dfn}

\begin{rem} In applications, tangent and obstruction spaces will often have the following interpretation. Suppose a $\mu$-Kuranishi space $\bX$ is the moduli space of solutions of a nonlinear elliptic equation on a compact manifold, written as $\bX\cong\Phi^{-1}(0)$ for $\Phi:\cV\ra\cE$ a Fredholm section of a Banach vector bundle $\cE\ra\cV$ over a Banach manifold $\cV$. Then $\d_x\Phi:T_x\cV\ra\cE_x$ is a linear Fredholm map of Banach spaces for $x\in\bX$, and $T_x\bX\cong\Ker(\d_x\Phi)$, $O_x\bX\cong\Coker(\d_x\Phi)$.
\label{ku2rem6}
\end{rem}

We finish by quoting a few definitions and results from the sequel \cite{Joyc12}, to show tangent and obstruction spaces are very useful when working with $\mu$-Kuranishi spaces. There are two natural notions of submersion, immersion and embedding of $\mu$-Kuranishi spaces, a weak and a strong:

\begin{dfn}[\!\!\cite{Joyc12}] Let $\bs f:\bX\ra\bY$ be a morphism of $\mu$-Kuranishi spaces. 

\smallskip

\noindent{\bf(a)} Call $\bs f$ a {\it w-submersion\/} if $O_x\bs f:O_x\bX\ra O_{\bs f(x)}\bY$ is surjective for all~$x\in\bX$.

\smallskip

\noindent{\bf(b)} Call $\bs f$ a {\it submersion\/} if $T_x\bs f:T_x\bX\ra T_{\bs f(x)}\bY$ is surjective and $O_x\bs f:O_x\bX\ra O_{\bs f(x)}\bY$ is an isomorphism for all $x\in\bX$.
\smallskip

\noindent{\bf(c)} Call $\bs f$ a {\it w-immersion\/} if $T_x\bs f:T_x\bX\ra T_{\bs f(x)}\bY$ is injective for all $x\in\bX$, and a {\it w-embedding\/} if $\bs f$ is a w-immersion and $f:X\ra f(X)$ is a homeomorphism.
\smallskip

\noindent{\bf(d)} Call $\bs f$ an {\it immersion\/} if $T_x\bs f:T_x\bX\ra T_{\bs f(x)}\bY$ is injective and $O_x\bs f:O_x\bX\ra O_{\bs f(x)}\bY$ is surjective for all $x\in\bX$, and an {\it embedding\/} if $\bs f$ is an immersion and $f:X\ra f(X)$ is a homeomorphism.
\label{ku2def17}
\end{dfn}

The natural notion of transverse morphisms of $\mu$-Kuranishi spaces involves obstruction spaces rather than tangent spaces:

\begin{dfn}[\!\!\cite{Joyc12}] Let $\bs g:\bX\ra\bZ$ and $\bs h:\bY\ra\bZ$ be morphisms of $\mu$-Kuranishi spaces. Call $\bs g,\bs h$ {\it d-transverse\/} if for all $x\in\bX$ and $y\in\bY$ with $\bs g(x)=\bs h(y)=z\in\bZ$, the map $O_x\bs g\op O_y\bs h:O_x\bX\op O_y\bY\ra O_z\bZ$ is surjective.

If $\bs g$ or $\bs h$ is a w-submersion, then $\bs g,\bs h$ are automatically d-transverse.
\label{ku2def18}
\end{dfn}

Here are some sample results:

\begin{ex}[\cite{Joyc12}] {\bf(i)} A $\mu$-Kuranishi space $\bX$ is a manifold, in the sense of Definition \ref{ku2def14}, if and only if $O_x\bX=0$ for all~$x\in\bX$.
\smallskip

\noindent{\bf(ii)} A morphism $\bs f:\bX\ra\bY$ in $\muKur$ is \'etale (a local isomorphism) if and only if $T_x\bs f:T_x\bX\ra T_{\bs f(x)}\bY$ and $O_x\bs f:O_x\bX\ra O_{\bs f(x)}\bY$ are isomorphisms for all $x\in\bX$. If also $f:X\ra Y$ is a bijection, then $\bs f$ is an isomorphism.
\smallskip

\noindent{\bf(iii)} Let $\bX$ be a $\mu$-Kuranishi space, $Y$ a manifold, and $\bs f:\bX\hookra Y$ an embedding. Then there exists an open neighbourhood $V$ of $f(X)$ in $Y,$ a vector bundle $E\ra V,$ and a smooth section $s:V\ra E$ with $s^{-1}(0)=f(X)\subseteq V\subseteq Y,$ so we may define $\psi=f^{-1}:s^{-1}(0)\ra X,$ and then $(V,E,s,\psi)$ may be given the structure of a global $\mu$-Kuranishi neighbourhood on $\bX,$ as in \S\ref{ku24}. Thus $\bX\cong\bS_{V,E,s}$ in $\muKur$, for $\bS_{V,E,s}$ as in Example~\ref{ku2ex2}.
\smallskip

\noindent{\bf(iv)} Let $\bs g:\bX\ra\bZ$ and $\bs h:\bY\ra\bZ$ be d-transverse morphisms of $\mu$-Kuranishi spaces. Then we can construct a `fibre product' $\bW=\bX\t_{\bs g,\bZ,\bs h}\bY$ in $\muKur$, with topological space $W=\bigl\{(x,y)\in X\t Y:g(x)=h(y)\in Z\bigr\}$ and virtual dimension $\vdim\bW=\vdim\bX+\vdim\bY-\vdim\bZ$. If $\bs g$ is a submersion and $\bY$ is a manifold, then $\bW$ is a manifold.

Note that this is {\it not\/} a fibre product in the sense of category theory, that is, $\bW$ does not satisfy a universal property in $\muKur$. Section \ref{ku4} constructs a weak 2-category of Kuranishi spaces $\Kur$, and \S\ref{ku47} defines a full 2-subcategory $\mKur\subset\Kur$ of `m-Kuranishi spaces', and shows that $\muKur\simeq\Ho(\mKur)$, where $\Ho(\mKur)$ is the homotopy category of the 2-category $\mKur$. As in \cite{Joyc12}, $\bW$ is in fact a genuine fibre product in the 2-category $\mKur$, and satisfies a universal property involving 2-morphisms there, but this universal property does not descend to $\Ho(\mKur)\simeq\muKur$. Thus, $\muKur$ does not have as good categorical properties as~$\mKur$.
\label{ku2ex6}
\end{ex}

\section[\texorpdfstring{$\mu$-Kuranishi spaces with boundary and corners}{\textmu-Kuranishi spaces with boundary and corners}]{$\mu$-Kuranishi spaces with corners}
\label{ku3}

We now extend \S\ref{ku2} from $\mu$-Kuranishi spaces without boundary, to $\mu$-Kuranishi spaces with boundary or corners. To do this we need a good theory of manifolds with corners. So we begin in \S\ref{ku31}--\S\ref{ku34} with background on manifolds with (g-)corners, boundaries, corners, and (b-)tangent bundles. Then \S\ref{ku35}--\S\ref{ku38} generalize \S\ref{ku31}--\S\ref{ku34} to $\mu$-Kuranishi spaces with corners.

There are several interesting, non-equivalent ways to define a category of manifolds with corners. Our definitions of ($\mu$-)Kuranishi spaces with corners in \S\ref{ku35}--\S\ref{ku38} and \S\ref{ku5} are not very sensitive to the details of these: given any category $\mathop{\bf\ti Man^c}$ of manifolds with corners satisfying a list of basic properties, we get a corresponding category $\mathop{\bf\mu\ti Kur^c}$ of $\mu$-Kuranishi spaces with corners, and a corresponding weak 2-category $\mathop{\bf\ti Kur^c}$ of Kuranishi spaces with corners.

Sections \ref{ku31} discusses categories of manifolds with corners. Our primary definition, written $\Manc$, has morphisms Melrose's `b-maps' \cite{KoMe,Melr1,Melr2,Melr3}, which we just call `smooth maps'. We chose these as they are more general than most other definitions, but preserve a lot of interesting structure. We give subcategories $\Mancst,\Mancin,\Mancis,\Mancsi\subset\Manc$ with restrictions on morphisms. 

Section \ref{ku34} explains the author's notion \cite{Joyc9} of {\it manifolds with generalized corners}, or {\it g-corners}, which form a category $\Mangc$ with $\Manc\subset\Mangc$ as a full subcategory, with subcategories $\Mangcin,\Mangcsi\subset\Mangc$. Each of these categories of manifolds with corners yields a category of $\mu$-Kuranishi spaces $\muKurc$, $\muKurcst$, $\muKurcin$, $\muKurcis$, $\muKurcsi$, $\muKurgc$, $\muKurgcin$, $\muKurgcsi$ in \S\ref{ku35}--\S\ref{ku38}, and similarly for 2-categories of Kuranishi spaces in \S\ref{ku5}. The notation of \S\ref{ku31}--\S\ref{ku34} follows~\cite{Joyc9}.

\subsection{Several categories of manifolds with corners}
\label{ku31}

We now define our main category $\Manc$ of manifolds with corners. The relation of our definitions to others in the literature is explained in Remark~\ref{ku3rem1}.

\begin{dfn} Use the notation $\R^m_k=[0,\iy)^k\t\R^{m-k}$
for $0\le k\le m$, and write points of $\R^m_k$ as $u=(u_1,\ldots,u_m)$ for $u_1,\ldots,u_k\in[0,\iy)$, $u_{k+1},\ldots,u_m\in\R$. Let $U\subseteq\R^m_k$ and $V\subseteq \R^n_l$ be open, and $f=(f_1,\ldots,f_n):U\ra V$ be a continuous map, which implies that $f_j=f_j(u_1,\ldots,u_m)$ maps $U\ra[0,\iy)$ for $j=1,\ldots,l$ and $U\ra\R$ for $j=l+1,\ldots,n$. Then we say:
\begin{itemize}
\setlength{\itemsep}{0pt}
\setlength{\parsep}{0pt}
\item[(a)] $f$ is {\it weakly smooth\/} if all derivatives $\frac{\pd^{a_1+\cdots+a_m}}{\pd u_1^{a_1}\cdots\pd u_m^{a_m}}f_j(u_1,\ldots,u_m):U\ra\R$ exist and are continuous in for all $j=1,\ldots,m$ and $a_1,\ldots,a_m\ge 0$, including one-sided derivatives where $u_i=0$ for $i=1,\ldots,k$.

By Seeley's Extension Theorem, this is equivalent to requiring $f_j$ to extend to a smooth function $f_j':U'\ra\R$ on open neighbourhood $U'$ of $U$ in~$\R^m$.
\item[(b)] $f$ is {\it smooth\/} if it is weakly smooth and every $u=(u_1,\ldots,u_m)\in U$ has an open neighbourhood $\ti U$ in $U$ such that for each $j=1,\ldots,l$, either:
\begin{itemize}
\setlength{\itemsep}{0pt}
\setlength{\parsep}{0pt}
\item[(i)] we may uniquely write $f_j(\ti u_1,\ldots,\ti u_m)=F_j(\ti u_1,\ldots,\ti u_m)\cdot\ti u_1^{a_{1,j}}\cdots\ti u_k^{a_{k,j}}$ for all $(\ti u_1,\ldots,\ti u_m)\in\ti U$, where $F_j:\ti U\ra(0,\iy)$ is weakly smooth and $a_{1,j},\ldots,a_{k,j}\in\N=\{0,1,2,\ldots\}$, with $a_{i,j}=0$ if $u_i\ne 0$; or 
\item[(ii)] $f_j\vert_{\smash{\ti U}}=0$.
\end{itemize}
\item[(c)] $f$ is {\it interior\/} if it is smooth, and case (b)(ii) does not occur.
\item[(d)] $f$ is {\it b-normal\/} if it is interior, and in case (b)(i), for each $i=1,\ldots,k$ we have $a_{i,j}>0$ for at most one $j=1,\ldots,l$.
\item[(e)] $f$ is {\it strongly smooth\/} if it is smooth, and in case (b)(i), for each $j=1,\ldots,l$ we have $a_{i,j}=1$ for at most one $i=1,\ldots,k$, and $a_{i,j}=0$ otherwise. 
\item[(f)] $f$ is {\it simple\/} if it is interior, and in case (b)(i), for each $i=1,\ldots,k$ with $u_i=0$ we have $a_{i,j}=1$ for exactly one $j=1,\ldots,l$ and $a_{i,j}=0$ otherwise, and for all $j=1,\ldots,l$ we have $a_{i,j}=1$ for at most one $i=1,\ldots,k$. 

Simple maps are strongly smooth and b-normal. ($\mu$-)coordinate changes for ($\mu$-)Kuranishi neighbourhoods with corners involve simple maps.

\item[(g)] $f$ is a {\it diffeomorphism\/} if it is a bijection and both $f:U\ra V$ and $f^{-1}:V\ra U$ are weakly smooth.

This implies that $f,f^{-1}$ are also smooth, interior, b-normal, strongly smooth, and simple. Hence, all the different definitions of smooth maps of manifolds with corners we discuss yield the same notion of diffeomorphism. 

\end{itemize}

All seven of these classes of maps $f:U\ra V$ include identities, and are closed under compositions from $f:U\ra V$, $g:V\ra W$ to $g\ci f:U\ra W$. Thus, each of them makes the open subsets $U\subseteq\R^m_k$ for all $m,k$ into a category. Also, all seven classes form sheaves on the domain.
\label{ku3def1}
\end{dfn}

\begin{dfn} Let $X$ be a second countable Hausdorff topological space. An {\it
$m$-dimensional chart on\/} $X$ is a pair $(U,\phi)$, where
$U\subseteq\R^m_k$ is open for some $0\le k\le m$, and $\phi:U\ra X$ is a
homeomorphism with an open set~$\phi(U)\subseteq X$.

Let $(U,\phi),(V,\psi)$ be $m$-dimensional charts on $X$. We call
$(U,\phi)$ and $(V,\psi)$ {\it compatible\/} if
$\psi^{-1}\ci\phi:\phi^{-1}\bigl(\phi(U)\cap\psi(V)\bigr)\ra
\psi^{-1}\bigl(\phi(U)\cap\psi(V)\bigr)$ is a diffeomorphism between open subsets of $\R^m_k,\R^m_l$, in the sense of Definition \ref{ku3def1}(g).

An $m$-{\it dimensional atlas\/} for $X$ is a system
$\{(U_a,\phi_a):a\in A\}$ of pairwise compatible $m$-dimensional
charts on $X$ with $X=\bigcup_{a\in A}\phi_a(U_a)$. We call such an
atlas {\it maximal\/} if it is not a proper subset of any other
atlas. Any atlas $\{(U_a,\phi_a):a\in A\}$ is contained in a unique
maximal atlas, the set of all charts $(U,\phi)$ of this type on $X$
which are compatible with $(U_a,\phi_a)$ for all~$a\in A$.

An $m$-{\it dimensional manifold with corners\/} is a second
countable Hausdorff topological space $X$ equipped with a maximal
$m$-dimensional atlas. Usually we refer to $X$ as the manifold,
leaving the atlas implicit, and by a {\it chart\/ $(U,\phi)$ on\/}
$X$, we mean an element of the maximal atlas.

Now let $X,Y$ be manifolds with corners of dimensions $m,n$, and $f:X\ra Y$ a continuous map. We call $f$ {\it weakly smooth}, or {\it smooth}, or {\it interior}, or {\it b-normal}, or {\it strongly smooth}, or {\it simple}, if whenever $(U,\phi),(V,\psi)$ are charts on $X,Y$ with $U\subseteq\R^m_k$, $V\subseteq\R^n_l$ open, then
\e
\psi^{-1}\ci f\ci\phi:(f\ci\phi)^{-1}(\psi(V))\longra V
\label{ku3eq1}
\e
is weakly smooth, or smooth, or interior, or b-normal, or strongly smooth, or simple, respectively, as maps between open subsets of $\R^m_k,\R^n_l$ in the sense of Definition \ref{ku3def1}. It is sufficient to check this on any collections of charts $(U_a,\phi_a)_{a\in A}$ covering $X$ and $(V_b,\psi_b)_{b\in B}$ covering~$Y$.

We call $f:X\ra Y$ a {\it diffeomorphism\/} if $f$ is a bijection and $f:X\ra Y$, $f^{-1}:Y\ra X$ are weakly smooth. This implies $f,f^{-1}$ are also smooth, interior, strongly smooth, and simple.

These seven classes of (a) weakly smooth maps, (b) smooth maps, (c) interior maps, (d) b-normal maps, (e) strongly smooth maps, (f) simple maps, and (g) diffeomorphisms, of manifolds with corners, all contain identities and are closed under composition, so each makes manifolds with corners into a category. 

In this book, we work with smooth maps of manifolds with corners (as we have defined them), and we write $\Manc$ for the category with objects manifolds with corners $X,Y,$ and morphisms smooth maps $f:X\ra Y$ in the sense above. 

We will also write $\Mancin,\Mancst,\Mancis,\Mancsi$ for the subcategories of $\Manc$ with morphisms interior maps, and strongly smooth maps, and interior strongly smooth maps, and simple maps, respectively.

Write $\cManc$ for the category whose objects are disjoint unions $\coprod_{m=0}^\iy X_m$, where $X_m$ is a manifold with corners of dimension $m$, allowing $X_m=\es$, and whose morphisms are continuous maps $f:\coprod_{m=0}^\iy X_m\ra\coprod_{n=0}^\iy Y_n$, such that
$f\vert_{X_m\cap f^{-1}(Y_n)}:\bigl(X_m\cap f^{-1}(Y_n)\bigr)\ra
Y_n$ is a smooth map of manifolds with corners for all $m,n\ge 0$.
Objects of $\cManc$ will be called {\it manifolds with corners of
mixed dimension}. We regard $\Manc$ as a full subcategory of~$\cManc$.

Alternatively, we can regard $\cManc$ as the category defined exactly as for $\Manc$ above, except that in defining atlases $\{(U_a,\phi_a):a\in A\}$ on $X$, we omit the condition that all charts $(U_a,\phi_a)$ in the atlas must have the same dimension~$\dim U_a=m$.

We will also write $\cMancin,\cMancst,\cMancis,\cMancsi$ for the subcategories of $\cManc$ with the same objects, and morphisms interior, or strongly smooth, or interior strongly smooth, or simple maps, respectively.
\label{ku3def2}
\end{dfn}

\begin{rem} Some references on manifolds with corners are Cerf \cite{Cerf}, Douady \cite{Doua}, Gillam and Molcho \cite[\S 6.7]{GiMo}, Kottke and Melrose \cite{KoMe}, Margalef-Roig and Outerelo Dominguez \cite{MaOu}, Melrose \cite{Melr1,Melr2,Melr3}, Monthubert \cite{Mont}, and the author \cite{Joyc3}, \cite[\S 5]{Joyc8}. Just as objects, without considering morphisms, most authors define manifolds with corners as in Definition \ref{ku3def2}. However, Melrose \cite{KoMe,Melr1,Melr2,Melr3} and authors who follow him impose an extra condition: in \S\ref{ku32} we will define the boundary $\pd X$ of a manifold with corners $X$, with an immersion $i_X:\pd X\ra X$. Melrose requires that $i_X\vert_C:C\ra X$ should be injective for each connected component $C$ of $\pd X$ (such $X$ are sometimes called {\it manifolds with faces\/}).

There is no general agreement in the literature on how to define smooth maps, or morphisms, of manifolds with corners: 
\begin{itemize}
\setlength{\itemsep}{0pt}
\setlength{\parsep}{0pt}
\item[(i)] Our notion of `smooth map' in Definitions \ref{ku3def1} and \ref{ku3def2} is due to Melrose \cite[\S 1.12]{Melr3}, \cite[\S 1]{Melr1}, \cite[\S 1]{KoMe}, who calls them {\it b-maps}. 

Our notation of `interior maps' and `b-normal maps' is also due to Melrose.
\item[(ii)] Monthubert's {\it morphisms of manifolds with corners\/} \cite[Def.~2.8]{Mont} coincide with our strongly smooth b-normal maps. 
\item[(iii)] The author \cite{Joyc3} defined and studied `strongly smooth maps' above (which were just called `smooth maps' in \cite{Joyc3}). 

Strongly smooth maps were also used to define 1-morphisms of d-manifolds with corners in the 2012 version of \cite{Joyc8}. However, the final version of \cite{Joyc8} will have a different definition using smooth maps (i.e.~Melrose's b-maps).
\item[(iv)] Gillam and Molcho's {\it morphisms of manifolds with corners\/} \cite[\S 6.7]{GiMo} coincide with our `interior maps'.
\item[(v)] Most other authors, such as Cerf \cite[\S I.1.2]{Cerf}, define smooth maps of manifolds with corners to be weakly smooth maps, in our notation.
\end{itemize}
\label{ku3rem1}
\end{rem}

\subsection{Boundaries and corners of manifolds with corners}
\label{ku32}

The material of this section broadly follows the author \cite{Joyc3}, \cite[\S 2.2 \& \S 3.4]{Joyc9}.

\begin{dfn} Let $U\subseteq\R^n_l$ be open. For each
$u=(u_1,\ldots,u_n)$ in $U$, define the {\it depth\/} $\depth_Uu$ of
$u$ in $U$ to be the number of $u_1,\ldots,u_l$ which are zero. That
is, $\depth_Uu$ is the number of boundary faces of $U$
containing~$u$.

Let $X$ be an $n$-manifold with corners. For $x\in X$, choose a
chart $(U,\phi)$ on the manifold $X$ with $\phi(u)=x$ for $u\in U$,
and define the {\it depth\/} $\depth_Xx$ of $x$ in $X$ by
$\depth_Xx=\depth_Uu$. This is independent of the choice of
$(U,\phi)$. For each $l=0,\ldots,n$, define the {\it depth\/ $l$
stratum\/} of $X$ to be
\begin{equation*}
S^l(X)=\bigl\{x\in X:\depth_Xx=l\bigr\}.
\end{equation*}
Then $X=\coprod_{l=0}^nS^l(X)$ and $\overline{S^l(X)}=
\bigcup_{k=l}^n S^k(X)$. The {\it interior\/} of $X$ is
$X^\ci=S^0(X)$. Each $S^l(X)$ has the structure of an
$(n-l)$-manifold without boundary.

\label{ku3def3}
\end{dfn}

\begin{dfn} Let $X$ be an $n$-manifold with corners, $x\in X$, and $k=0,1,\ldots,n$. A {\it local $k$-corner component\/ $\ga$ of\/ $X$ at\/} $x$ is a local choice of connected component of $S^k(X)$ near $x$. That is, for
each sufficiently small open neighbourhood $V$ of $x$ in $X$, $\ga$
gives a choice of connected component $W$ of $V\cap S^k(X)$ with
$x\in\overline W$, and any two such choices $V,W$ and $V',W'$ must
be compatible in that~$x\in\overline{(W\cap W')}$.

Let $\depth_Xx=l$. Choose a chart $(U,\phi)$ on $X$ with $(0,\ldots,0)\in U\subseteq\R^n_l$ open and $\phi(0,\ldots,0)=x$. Then we have
\e
\begin{split}
S^k(U)=\coprod_{1\le a_1<a_2<\cdots<a_k\le l}\begin{aligned}[t]
\bigl\{(u_1&,\ldots,u_n)\in U:u_{a_i}=0,\;\> i=1,\ldots,k,\\
&u_j\ne 0,\;\> j\in \{1,\ldots,l\}\sm\{a_1,\ldots,a_k\}\bigr\}.
\end{aligned}
\end{split}
\label{ku3eq2}
\e
For each choice of $a_1,\ldots,a_k$, the subset on the right hand of \eq{ku3eq2} contains $(0,\ldots,0)$ in its closure in $U$, and its intersection with a small ball about $(0,\ldots,0)$ is connected. Thus this subset determines a local $k$-corner component of $U$ at $(0,\ldots,0)$, and hence a local $k$-corner component of $X$ at~$x$. 

Equation \eq{ku3eq2} implies that all local $k$-corner components of $U$ at $(0,\ldots,0)$ and $X$ at $x$ are of this form. Therefore, local $k$-corner components of $U\subseteq\R^n_l$ at $(0,\ldots,0)$ are in 1-1 correspondence with subsets $\{a_1,\ldots,a_k\}\subseteq\{1,\ldots,l\}$ of size $k$, and there are $\binom{\depth_Xx}{k}$ distinct local $k$-corner components of $X$ at~$x$.

When $k=1$, we also call local 1-corner components {\it local boundary components of\/ $X$ at\/} $x$. There are $\depth_Xx$ distinct local boundary components of $X$ at $x$. By considering the local model $\R^n_l$, it is easy to see that there is a natural 1-1 correspondence between local $k$-corner components $\ga$ of $X$ at $x$, and (unordered) sets $\{\be_1,\ldots,\be_k\}$ of $k$ distinct local boundary components $\be_1,\ldots,\be_k$ of $X$ at $x$, such that if $V$ is a sufficiently small open neighbourhood of $x$ in $X$ and $\be_1,\ldots,\be_k$ and $\ga$ give connected components $W_1,\ldots,W_k$ of $V\cap S^1(X)$ and $W'$ of $V\cap S^k(X)$, then $W'\subseteq\bigcap_{i=1}^k\kern .1em\overline{\kern -.1em W}_{\!i}$.

As sets, define the {\it boundary\/} $\pd X$ and {\it $k$-corners\/} $C_k(X)$ for $k=0,1,\ldots,n$ by
\ea
\pd X&=\bigl\{(x,\be):\text{$x\in X$, $\be$ is a local boundary
component of $X$ at $x$}\bigr\},
\label{ku3eq3}\\
C_k(X)&=\bigl\{(x,\ga):\text{$x\in X$, $\ga$ is a local $k$-corner 
component of $X$ at $x$}\bigr\},
\label{ku3eq4}
\ea
so that $\pd X=C_1(X)$. The 1-1 correspondence above shows that
\e
\begin{aligned}
C_k(X)\cong\bigl\{(x,\,&\{\be_1,\ldots,\be_k\}):\text{$x\in X,$
$\be_1,\ldots,\be_k$ are distinct}\\
&\text{local boundary components for $X$ at $x$}\bigr\}.
\end{aligned}
\label{ku3eq5}
\e
Since each $x\in X$ has a unique 0-boundary component, we have~$C_0(X)\cong X$.

If $(U,\phi)$ is a chart on $X$ with $U\subseteq\R^n_l$ open, then
for each $i=1,\ldots,l$ we can define a chart $(U_i,\phi_i)$ on $\pd
X$ by
\e
\begin{split}
&U_i\!=\!\bigl\{(v_1,\ldots,v_{n-1})\!\in\!\R^{n-1}_{l-1}:
(v_1,\ldots,v_{i-1},0,v_i,\ldots,v_{n-1})\!\in\!
U\subseteq\R^n_l\bigr\},\\
&\phi_i:(v_1,\ldots,v_{n-1})\longmapsto\bigl(\phi
(v_1,\ldots,v_{i-1},0,v_i,\ldots,v_{n-1}),\phi_*(\{u_i=0\})\bigr).
\end{split}
\label{ku3eq6}
\e
Similarly, if $0\le k\le l$, then for each $1\le a_1<\cdots<a_k\le l$ we can define a chart $(U_{\{a_1,\ldots,a_k\}},\phi_{\{a_1,\ldots,a_k\}})$ on $C_k(X)$ by
\e
\begin{split}
&U_{\{a_1,\ldots,a_k\}}\!=\!\bigl\{(v_1,\ldots,v_{n-k})\!\in\!\R^{n-k}_{l-k}:
(v_1,\ldots,v_{a_1-1},0,v_{a_1},\ldots,v_{a_2-2},0,\\
&\quad v_{a_2-1},\ldots,v_{a_3-3},0,v_{a_3-2},\ldots,v_{a_k-k},0,v_{a_k-k+1},\ldots,v_{n-k})\!\in\!
U\subseteq\R^n_l\bigr\},\\
&\phi_{\{a_1,\ldots,a_k\}}:(v_1,\ldots,v_{n-k})\longmapsto\bigl( 
\phi(v_1,\ldots,v_{a_1-1},0,v_{a_1},\ldots,v_{a_2-2},0,\\
&\quad v_{a_2-1},\ldots,v_{a_3-3},0,v_{a_3-2},\ldots,v_{a_k-k},0,v_{a_k-k+1},\ldots,v_{n-k}),\\
&\qquad\qquad\qquad\qquad \phi_*(\{u_{a_1}=\cdots=u_{a_k}=0\})\bigr).
\end{split}
\label{ku3eq7}
\e
The families of all such charts on $\pd X$ and $C_k(X)$ are pairwise compatible, and define atlases on $\pd X$ and $C_k(X)$. The corresponding maximal atlases make $\pd X$ into an $(n-1)$-manifold with corners and $C_k(X)$ into an $(n-k)$-manifold with corners, with $\pd X=C_1(X)$ and $C_0(X)\cong X$ as manifolds with corners.

Define the {\it corners\/ $C(X)$ of\/} $X$ by
\e
C(X)=\ts\coprod_{k=0}^{\dim X}C_k(X),
\label{ku3eq8}
\e
considered as an object of $\cManc$ in Definition \ref{ku3def2}, a manifold with corners of mixed dimension.

Define maps $i_X:\pd X\ra X$, $i_X:C_k(X)\ra X$ and $i_X:C(X)\ra X$ by $i_X:(x,\be)\mapsto x$ and $i_X:(x,\ga)\mapsto x$. Since these are locally modelled on the inclusion maps $U_i\hookra U$ and $U_{\{a_1,\ldots,a_k\}}\hookra U$ for $U_i,U_{\{a_1,\ldots,a_k\}}$ as in \eq{ku3eq6}--\eq{ku3eq7}, these maps $i_X$ are smooth (in fact strongly smooth), but not interior.

Note that these maps $i_X$ {\it may not be injective}, since the preimage of $x\in X$ is $\depth_Xx$ points in $\pd X$ and $\binom{\depth_Xx}{k}$ points in $C_k(X)$. So we cannot regard $\pd X$ and $C_k(X)$ as subsets of $X$.

Since $\pd X$ is a manifold with corners, we can iterate the process, and define manifolds with corners $\pd^2X=\pd(\pd X),\pd^3X,\ldots,\pd^nX$. To relate these to the corners $C_k(X)$, note that by considering local models $U\subseteq\R^n_l$, it is easy to see that there is a natural 1-1 correspondence
\e
\begin{split}
&\bigl\{\text{local boundary components of $\pd X$ at $(x,\be)$}\bigr\}\cong\\
&\bigl\{\text{local boundary components $\be'$ of $X$ at $x$ with $\be'\ne\be$}\bigr\}.
\end{split}
\label{ku3eq9}
\e
Using this and induction, we can show that there is a natural identification
\e
\begin{split}
\pd^kX\cong\bigl\{(x,\be_1,\ldots,\be_k):\,&\text{$x\in X,$
$\be_1,\ldots,\be_k$ are distinct}\\
&\text{local boundary components for $X$ at $x$}\bigr\},
\end{split}
\label{ku3eq10}
\e
where under the identifications \eq{ku3eq10}, the map $i_{\pd^{k-1}X}:\pd^kX\ra\pd^{k-1}X$ maps $(x,\be_1,\ldots,\be_k)\mapsto(x,\be_1,\ldots,\be_{k-1})$. From \eq{ku3eq10}, we see that there is a natural, free action of the symmetric group $S_k$ on $\pd^kX$, by permutation of $\be_1,\ldots,\be_k$. The action is by diffeomorphisms, so the quotient $\pd^kX/S_k$ is also a manifold with corners. Dividing by $S_k$ turns the ordered $k$-tuple $\be_1,\ldots,\be_k$ into an unordered set $\{\be_1,\ldots,\be_k\}$. So from \eq{ku3eq5}, we see that there is a natural identification
\e
C_k(X)\cong \pd^kX/S_k,
\label{ku3eq11}
\e
which is a diffeomorphism.

We call $X$ a {\it manifold without boundary\/} if $\pd X=\es$, and
a {\it manifold with boundary\/} if $\pd^2X=\es$. We write $\Man$ and $\Manb$ for the full subcategories of $\Manc$ with objects manifolds without boundary, and manifolds with boundary, so that $\Man\subset\Manb\subset\Manc$. This definition of $\Man$ is equivalent to the usual definition of the category of manifolds.
\label{ku3def4}
\end{dfn}

Next we consider how smooth maps $f:X\ra Y$ of manifolds with corners act on boundaries $\pd X,\pd Y$ and corners $C_k(X),C_l(Y)$. The following lemma is easy to prove from Definition \ref{ku3def1}(b). The analogue is {\it false\/} for weakly smooth maps, so the rest of the section does not work in the weakly smooth case.

\begin{lem} Let\/ $f:X\ra Y$ be a smooth map of manifolds with corners. Then $f$ \begin{bfseries}is compatible with the depth stratifications\end{bfseries} $X=\coprod_{k\ge 0}S^k(X),$ $Y=\coprod_{l\ge 0}S^l(Y)$ in Definition\/ {\rm\ref{ku3def3},} in the sense that if\/ $\es\ne W\subseteq S^k(X)$ is a connected subset for some $k\ge 0,$ then $f(W)\subseteq S^l(Y)$ for some unique $l\ge 0$.
\label{ku3lem1}
\end{lem}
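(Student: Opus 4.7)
The strategy is to show that $\depth_Y\ci f:X\ra\Z_{\ge 0}$ is locally constant on each stratum $S^k(X)$; since $W$ is connected, this gives a unique $l$ with $f(W)\subseteq S^l(Y)$.

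First, I would work locally. Pick $x\in S^k(X)$, choose charts $(U,\phi)$ on $X$ with $U\subseteq\R^m_{k'}$ open and $\phi(u)=x$, and $(V,\psi)$ on $Y$ with $V\subseteq\R^n_{l'}$ open and $\psi(v)=f(x)$. By definition $\depth_X x=k$ means exactly $k$ of the first $k'$ coordinates of $u$ vanish; after relabelling assume $u_{i_1}=\cdots=u_{i_k}=0$ and $u_i\ne 0$ for $i\in\{1,\ldots,k'\}\sm\{i_1,\ldots,i_k\}$. Shrinking $U$, I may assume $u'_i\ne 0$ for all $u'\in U$ and all such $i$. The component of $S^k(X)\cap\phi(U)$ through $x$ then corresponds, in $U$, to the slice $N=\bigl\{u'\in U:u'_{i_1}=\cdots=u'_{i_k}=0\bigr\}$, which is an open neighbourhood of $x$ in~$S^k(X)$.

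Next, I would invoke Definition \ref{ku3def1}(b) for the local representative $\psi^{-1}\ci f\ci\phi=(f_1,\ldots,f_n)$. After further shrinking $U$ around $u$, for each $j\in\{1,\ldots,l'\}$ either (b)(ii) holds and $f_j\equiv 0$ on $U$, or (b)(i) gives $f_j(u')=F_j(u')\cdot (u'_1)^{a_{1,j}}\cdots(u'_{k'})^{a_{k',j}}$ on $U$, with $F_j>0$ and $a_{i,j}=0$ whenever $u_i\ne 0$, so in particular $a_{i,j}=0$ for $i\in\{1,\ldots,k'\}\sm\{i_1,\ldots,i_k\}$. The key observation is then: for any $u'\in N$, we have $(u'_{i_\al})^{a_{i_\al,j}}=0$ if $a_{i_\al,j}>0$ and $=1$ otherwise, so $f_j(u')=0$ in case (b)(i) precisely when some $a_{i_\al,j}>0$ ($\al=1,\ldots,k$), and $f_j(u')>0$ otherwise; in case (b)(ii), $f_j(u')=0$ identically. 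Crucially, whether $f_j$ vanishes on $N$ does \emph{not} depend on the point $u'\in N$.

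Hence the set $J=\bigl\{j\in\{1,\ldots,l'\}:f_j\equiv 0\text{ on }N\bigr\}$ is the same for every $u'\in N$, so $\depth_Y(f(\phi(u')))=\vert J\vert$ is constant on $N$. This shows $\depth_Y\ci f$ is locally constant on $S^k(X)$, hence constant on each connected component of $S^k(X)$, and therefore constant on any connected subset $W\subseteq S^k(X)$, giving the required unique $l$ with $f(W)\subseteq S^l(Y)$.

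The one point requiring mild care is that a given chart $(U,\phi)$ on $X$ need not intersect a chart covering all of $f(U)$; this is dealt with by shrinking $U$ around $u$ so that $f(U)\subseteq\psi(V)$, which is possible by continuity of $f$. The main (mild) obstacle is keeping track of the indexing: distinguishing the intrinsic depth stratum index $k$ from the chart-specific integer $k'$, and matching the exponent conditions $a_{i,j}=0$ for $u_i\ne 0$ with the slice $N$. Everything else is a direct unwinding of Definition \ref{ku3def1}(b).
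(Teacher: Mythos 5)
Your proof is correct, and it is exactly the argument the paper has in mind: the paper gives no proof, saying only that the lemma "is easy to prove from Definition \ref{ku3def1}(b)", and your chart computation with the local form $f_j=F_j\cdot u_1^{a_{1,j}}\cdots u_{k'}^{a_{k',j}}$ (or $f_j\equiv 0$) is precisely that unwinding, showing $\depth_Y\ci f$ is locally constant on each stratum and hence constant on connected subsets.
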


It is {\it not\/} true that general smooth $f:X\ra Y$ induce maps $\pd f:\pd X\ra\pd Y$ or $C_k(f):C_k(Y)\ra C_k(Y)$ (although this does hold for {\it simple\/} maps, as in Proposition \ref{ku3prop1}). For example, if $f:X\ra Y$ is the inclusion $[0,\iy)\hookra\R$ then no map $\pd f:\pd X\ra\pd Y$ exists, as $\pd X\ne\es$ and $\pd Y=\es$. So boundaries and $k$-corners do not give functors on $\Manc$. However, if we work in the enlarged category $\cManc$ of Definition \ref{ku3def2} and consider the full corners $C(X)=\coprod_{k\ge 0}C_k(X)$ in \eq{ku3eq8}, we can define a functor.

\begin{dfn} Let $f:X\ra Y$ be a smooth map of manifolds with corners, and suppose $\ga$ is a local $k$-corner component of $X$ at $x\in X$. For each sufficiently small open neighbourhood $V$ of $x$ in $X$, $\ga$ gives a choice of connected component $W$ of $V\cap S^k(X)$ with $x\in\overline W$, so by Lemma \ref{ku3lem1} $f(W)\subseteq S^l(Y)$ for some $l\ge 0$. As $f$ is continuous, $f(W)$ is connected, and $f(x)\in\ov{f(W)}$. Thus there is a unique $l$-corner component $f_*(\ga)$ of $Y$ at $f(x)$, such that if $\ti V$ is a sufficiently small open neighbourhood of $f(x)$ in $Y$, then the connected component $\ti W$ of $\ti V\cap S^l(Y)$ given by $f_*(\ga)$ has $\ti W\cap f(W)\ne\es$. This $f_*(\ga)$ is independent of the choice of sufficiently small $V,\ti V$, so is well-defined.

Define a map $C(f):C(X)\ra C(Y)$ by $C(f):(x,\ga)\mapsto (f(x),f_*(\ga))$. Given charts $(U,\phi)$ on $X$ and $(V,\psi)$ on $Y$, so that \eq{ku3eq1} gives a smooth map $\psi^{-1}\ci f\ci\phi$, then in the charts $(U_{\{a_1,\ldots,a_k\}},\phi_{\{a_1,\ldots,a_k\}})$ on $C_k(X)$ and $(V_{\{b_1,\ldots,b_l\}},\psi_{\{b_1,\ldots,b_l\}})$ on $C_l(Y)$ defined from $(U,\phi)$ and $(V,\psi)$ in \eq{ku3eq7}, we see that 
\begin{align*}
\psi_{\{b_1,\ldots,b_l\}}^{-1}\ci C(f)\ci\phi_{\{a_1,\ldots,a_k\}}:(f\ci\phi_{\{a_1,\ldots,a_k\}})^{-1}(\psi_{\{b_1,\ldots,b_l\}}(V_{\{b_1,\ldots,b_l\}}))&\\
\longra V_{\{b_1,\ldots,b_l\}}&
\end{align*}
is just the restriction of $\psi^{-1}\ci f\ci\phi$ to a map from a codimension $k$ boundary face of $U$ to a codimension $l$ boundary face of $V$, and so is clearly smooth in the sense of Definition \ref{ku3def1}. Since such charts $(U_{\{a_1,\ldots,a_k\}},\phi_{\{a_1,\ldots,a_k\}})$ and $(V_{\{b_1,\ldots,b_l\}},\psi_{\{b_1,\ldots,b_l\}})$ cover $C_k(X)$ and $C_l(Y)$, it follows that $C(f)$ is smooth (that is, $C(f)$ is a morphism in $\cManc$).

If $g:Y\ra Z$ is another smooth map of manifolds with corners, and $\ga$ is a local $k$-corner component of $X$ at $x$, it is easy to see that $(g\ci f)_*(\ga)=g_*\ci f_*(\ga)$ in local $m$-corner components of $Z$ at $g\ci f(x)$. Therefore $C(g\ci f)=C(g)\ci C(f):C(X)\ra C(Z)$. Clearly $C(\id_X)=\id_{C(X)}:C(X)\ra C(X)$. Hence $C:\Manc\ra\cManc$ is a functor, which we call the {\it corner functor}.
\label{ku3def5}
\end{dfn}

Here is a property of simple maps acting on local $k$-corner components:

\begin{lem} Let\/ $f:X\ra Y$ be a simple map of manifolds with corners, and\/ $x\in X$ with\/ $f(x)=y\in Y$. Then for each $k\ge 0,$ mapping $\ga\mapsto\ga'=f_*(\ga)$ gives a $1$-$1$ correspondence between local\/ $k$-corner components $\ga$ of\/ $X$ at\/ $x$ and local\/ $k$-corner components $\ga'$ of\/ $Y$ at\/ $y,$ for the same $k$.
\label{ku3lem2}
\end{lem}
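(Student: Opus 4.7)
The statement is purely local at $x \in X$ and $y = f(x) \in Y$, and the $k$-corner correspondence reduces (via the natural bijection \eqref{ku3eq5}) to the $k=1$ case of local boundary components. So the strategy is: (1) pass to local charts, (2) use the simple condition in Definition \ref{ku3def1}(f) to read off the induced map $f_*$ on local boundary components as a bijection between two index sets, and (3) invoke \eqref{ku3eq5} to promote this to a bijection on local $k$-corner components for every $k$.

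\textbf{Step 1 (charts).} Choose a chart $(U,\phi)$ on $X$ with $U\subseteq\R^m_p$ open and $\phi(0,\ldots,0)=x$, where $p=\depth_Xx$, and a chart $(V,\psi)$ on $Y$ with $V\subseteq\R^n_q$ open and $\psi(0,\ldots,0)=y$, where $q=\depth_Yy$. Let $g=\psi^{-1}\ci f\ci\phi$ be the local representative, which is simple in the sense of Definition \ref{ku3def1}(f). By shrinking $U$, I may assume the decompositions $g_j=F_j\cdot u_1^{a_{1,j}}\cdots u_p^{a_{p,j}}$ with $F_j>0$ hold on all of $U$ for each $j=1,\ldots,q$, and that $a_{i,j}=0$ whenever $u_i(0)\ne 0$, i.e.\ for $i>p$ (since $0 \in U$ has depth $p$).

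\textbf{Step 2 (combinatorics of indices).} The simple hypothesis says that for each $i\in\{1,\ldots,p\}$ the exponent $a_{i,j}$ equals $1$ for exactly one $j\in\{1,\ldots,q\}$ (call it $j(i)$), and $0$ otherwise; and for each $j$ at most one $i$ has $a_{i,j}=1$. Since $f(x)=y$ forces $g_j(0)=0$ for $j=1,\ldots,q$, and the only way $g_j$ can vanish at $0$ (given $F_j>0$) is that some $a_{i,j}>0$, the map $i\mapsto j(i)$ is surjective $\{1,\ldots,p\}\to\{1,\ldots,q\}$. Combined with injectivity this is a bijection, forcing $p=q$. Now the local boundary components of $X$ at $x$ are in natural bijection with $\{1,\ldots,p\}$ (the coordinate hyperplanes $\{u_i=0\}$ in $U$), and similarly for $Y$ at $y$ with $\{1,\ldots,q\}$. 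A direct calculation using $g_j=F_j\cdot u_{i(j)}$ (where $i(j)=j^{-1}$) shows that the stratum $\{u_i=0,\ u_{i'}>0\text{ for }i'\ne i,\ i'\le p\}$ is mapped by $g$ into $\{v_{j(i)}=0,\ v_{j'}>0\text{ for }j'\ne j(i),\ j'\le q\}$. By Definition \ref{ku3def5}, this means $f_*$ sends the local boundary component $\be_i$ of $X$ at $x$ to $\be'_{j(i)}$ of $Y$ at $y$, which is a bijection because $i\mapsto j(i)$ is.

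\textbf{Step 3 ($k$-corner components).} By the identification \eqref{ku3eq5}, local $k$-corner components of $X$ at $x$ are in 1-1 correspondence with $k$-element subsets $\{\be_{i_1},\ldots,\be_{i_k}\}$ of the local boundary components at $x$, and likewise at $y$. By construction of $f_*$ (and the compatibility $f_*\{\be_1,\ldots,\be_k\}=\{f_*\be_1,\ldots,f_*\be_k\}$ arising from the local model), the induced map on $k$-corner components is just the image under $\be_i\mapsto\be'_{j(i)}$ applied to size-$k$ subsets. This is a bijection because $i\mapsto j(i)$ is.

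\textbf{Main obstacle.} The main potential pitfall is the verification, in Step 2, that $f_*$ acting on local boundary components is really given by the index bijection $i\mapsto j(i)$ rather than some more exotic assignment: one must check that the $S^1$-stratum of $U$ adjacent to $\{u_i=0\}$ maps into the $S^1$-stratum of $V$ adjacent to $\{v_{j(i)}=0\}$ and no other, which relies both on the fact that the other exponents $a_{i',j(i)}$ vanish for $i'\ne i$ (the injectivity half of simpleness) and that $F_{j(i)}>0$. Once this compatibility is unwound, the rest is bookkeeping with the identification \eqref{ku3eq5}.
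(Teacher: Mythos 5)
Your proof is correct, and it is exactly the argument the paper has in mind: the lemma is stated without proof as an easy consequence of the local models of Definition \ref{ku3def1}, and your chart computation (the exponent matrix of a simple map at a point of depth $p$ is a permutation matrix onto the boundary coordinates at the image, so $f_*$ permutes the sets $\{u_i=0\}$, hence acts bijectively on size-$k$ subsets via \eqref{ku3eq5}) is precisely the intended verification. The only cosmetic points are that the exponents $a_{i,j}$ are defined only for the boundary coordinates $i\le p$ (so the "$i>p$" remark is vacuous) and that excluding case (b)(ii) uses that simple maps are by definition interior, both of which you effectively use anyway.
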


The following properties of the corner functor are easy to check using the local models in Definition~\ref{ku3def1}.

\begin{prop} Let\/ $f:X\ra Y$ be a smooth map of manifolds with corners. 
\begin{itemize}
\setlength{\itemsep}{0pt}
\setlength{\parsep}{0pt}
\item[{\bf(a)}] $C(f):C(X)\ra C(Y)$ is an interior map of manifolds with corners of mixed dimension, so $C$ is a functor $C:\Manc\ra\cMancin$.
\item[{\bf(b)}] $f$ is interior if and only if\/ $C(f)$ maps $C_0(X)\ra C_0(Y)$.
\item[{\bf(c)}] $f$ is b-normal if and only if\/ $C(f)$ maps $C_k(X)\ra \coprod_{l=0}^kC_l(Y)$ for all\/~$k$.
\item[{\bf(d)}] If\/ $f$ is simple then $C(f)$ maps $C_k(X)\ra C_k(Y)$ for all\/ $k\ge 0,$ and\/ $C_k(f):=C(f)\vert_{C_k(X)}:C_k(X)\ra C_k(Y)$ is also a simple map.
\end{itemize}

Part\/ {\bf(d)} implies that, writing $\Mancsi$ for the category with objects manifolds with corners and morphisms simple maps, we have a \begin{bfseries}boundary functor\end{bfseries} $\pd:\Mancsi\ra\Mancsi$ mapping $X\mapsto\pd X$ on objects and\/ $f\mapsto \pd f:=C(f)\vert_{C_1(X)}:\pd X\ra\pd Y$ on (simple) morphisms $f:X\ra Y,$ and for all\/ $k\ge 0$ a \begin{bfseries}$k$-corner functor\end{bfseries} $C_k:\Mancsi\ra\Mancsi$ mapping $X\mapsto C_k(X)$ on objects and\/ $f\mapsto C_k(f):=C(f)\vert_{C_k(X)}:C_k(X)\ra C_k(Y)$ on morphisms.
\label{ku3prop1}
\end{prop}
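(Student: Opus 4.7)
\textbf{Proof proposal for Proposition \ref{ku3prop1}.} The whole proposition is local in nature, so the strategy is to reduce everything to the chart-level expressions given by Definition \ref{ku3def1}(b) and the induced corner charts \eqref{ku3eq7}. Fix a smooth $f:X\to Y$, a point $x\in X$ with $f(x)=y\in Y$, and charts $(U,\phi),(V,\psi)$ around $x,y$ with $U\subseteq\R^m_{l_U}$, $V\subseteq\R^n_{l_V}$ and $\phi^{-1}(x)=0$, $\psi^{-1}(y)=0$. Write $\ti f=\psi^{-1}\ci f\ci\phi$, so that near $0$ each $\ti f_j$ ($j=1,\ldots,l_V$) is either identically zero (case (b)(ii)) or of the form $F_j\cdot u_1^{a_{1,j}}\cdots u_{l_U}^{a_{l_U,j}}$ with $F_j>0$ (case (b)(i)). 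The central observation I will use throughout is that if $\ga$ is the local $k$-corner component of $U$ at $0$ indexed by $\{a_1,\ldots,a_k\}\subseteq\{1,\ldots,l_U\}$, then $f_*(\ga)$ is precisely the local $l$-corner component of $V$ at $0$ indexed by
\[ B(\ga):=\bigl\{j\in\{1,\ldots,l_V\}:\text{$\ti f_j\equiv 0$ on $\{u_{a_1}=\cdots=u_{a_k}=0\}$}\bigr\}, \]
which, in case (b)(i), equals $\{j:\exists\, s\in\{1,\ldots,k\}\text{ with }a_{a_s,j}>0\}\cup\{j:\text{case (b)(ii) holds for }\ti f_j\}$.

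For (a), I take the corner charts $(U_{\{a_1,\ldots,a_k\}},\phi_{\{a_1,\ldots,a_k\}})$ on $C_k(X)$ and $(V_{B(\ga)},\psi_{B(\ga)})$ on $C_{|B(\ga)|}(Y)$ from \eqref{ku3eq7}, and write out $\psi_{B(\ga)}^{-1}\ci C(f)\ci\phi_{\{a_1,\ldots,a_k\}}$ componentwise: its $j$-th coordinate (for $j\notin B(\ga)$) is $\ti f_j$ restricted to $u_{a_1}=\cdots=u_{a_k}=0$. By definition of $B(\ga)$, for such $j$ case (b)(ii) does not occur and $a_{a_s,j}=0$ for all $s$, so the restriction of $\ti f_j$ retains the form $F_j\prod_{i\notin\{a_1,\ldots,a_k\}}u_i^{a_{i,j}}$ with $F_j>0$. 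Thus the expression of $C(f)$ in these charts satisfies Definition \ref{ku3def1}(b)(i) with no (b)(ii) component, proving that $C(f)$ is interior, so $C$ factors through $\cMancin$.

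For (b), the unique $0$-corner component $\ga_0$ at $0$ corresponds to $\emptyset\subseteq\{1,\ldots,l_U\}$, and $B(\ga_0)=\{j:\text{case (b)(ii) holds for }\ti f_j\}$. Hence $f_*(\ga_0)$ is a $0$-corner component of $Y$ at $f(x)$ (i.e.\ $C(f)$ sends $C_0(X)$ into $C_0(Y)$) if and only if case (b)(ii) never occurs in any chart for $f$, which is precisely the condition that $f$ be interior. For (c), I use the same formula for $B(\ga)$: if $f$ is interior and b-normal, then each index $a_s$ contributes $a_{a_s,j}>0$ for at most one $j$, so $|B(\ga)|\le k$; conversely, non-b-normality gives some $i$ with $a_{i,j_1},a_{i,j_2}>0$ for $j_1\ne j_2$, whence the $1$-corner component $\{i\}$ is sent to an $l$-corner component with $l\ge 2$, and non-interiority is caught already by part (b) at $k=0$.

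For (d), if $f$ is simple then in case (b)(i) at $u=0$ the assignment $i\mapsto j(i)$ with $a_{i,j(i)}=1$ defines an injective map $\{1,\ldots,l_U\}\to\{1,\ldots,l_V\}$, so $B(\{a_1,\ldots,a_k\})=\{j(a_1),\ldots,j(a_k)\}$, a set of size exactly $k$. Thus $f_*$ preserves the integer $k$ and induces the bijection of Lemma \ref{ku3lem2}, and $C(f)$ restricts to maps $C_k(f):C_k(X)\to C_k(Y)$. To see $C_k(f)$ is simple, I expand it in the corner charts \eqref{ku3eq7}: for $j\notin\{j(a_1),\ldots,j(a_k)\}$ the formula $\ti f_j|_{u_{a_i}=0}=F_j\prod_{i\notin\{a_1,\ldots,a_k\}}u_i^{a_{i,j}}$ inherits the simple-map exponent conditions from those of $\ti f$, since restricting to a face only removes the factors $u_{a_1},\ldots,u_{a_k}$ and the corresponding indices $j(a_1),\ldots,j(a_k)$ from the target. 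The final functor statements then follow from the fact that composing two simple (respectively, smooth) maps yields a simple (resp.\ smooth) map, together with the functoriality of $C$ from Definition \ref{ku3def5}.

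The only genuine bookkeeping obstacle is keeping track of the reindexing between $\{1,\ldots,l_U\}\setminus\{a_1,\ldots,a_k\}$ and the boundary coordinates of $U_{\{a_1,\ldots,a_k\}}$ (and likewise for $V$) when verifying the simple-map condition in (d); everything else is a transparent consequence of the decomposition in Definition \ref{ku3def1}(b).
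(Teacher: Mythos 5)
Your proof is correct, and it follows exactly the route the paper intends: the paper gives no detailed argument, merely remarking that the properties "are easy to check using the local models in Definition \ref{ku3def1}", and your chart-level computation with the exponent data $a_{i,j}$ and the corner charts \eqref{ku3eq7} is precisely that check, carried out in full. The identification of $f_*(\ga)$ with the component indexed by your set $B(\ga)$ is the right key observation and all four parts, together with the functoriality statements, follow from it as you describe.
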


\begin{rem} We can think of a smooth map $f:X\ra Y$ of manifolds with corners as comprising both discrete and continuous data. Here by {\it discrete data\/} we mean the following. Let $F:(-\ep,\ep)\t X\ra Y$ be a smooth map for $\ep>0$, and define $f_t:X\ra Y$ by $f_t(x)=F(t,x)$ for $t\in(-\ep,\ep)$ and $x\in X$, so that $f_t$ is smooth and depends smoothly on $t\in(-\ep,\ep)$. Then discrete data is properties of smooth maps $f:X\ra Y$ which is constant in all such families $f_t$, $t\in(-\ep,\ep)$. 

For example, each connected component of $S^k(X)$ is mapped by $f$ to some connected component of some $S^l(Y)$, and these image components in $S^l(Y)$ are discrete data. Also, choosing charts $(U,\phi)$, $(V,\psi)$ on $X,Y$, which of Definition \ref{ku3def1}(b)(i) or (ii) hold for $\psi^{-1}\ci f\ci\phi$ in \eq{ku3eq1}, and the values of $a_{i,j}$ in Definition \ref{ku3def1}(b)(i), are discrete data.

Note that discrete data need not be constant in {\it continuous\/} families $f_t$, $t\in(-\ep,\ep)$ of smooth maps $f_t:X\ra Y$, but only those where $F:(-\ep,\ep)\t X\ra Y$ mapping $F:(t,x)\mapsto f_t(x)$ is smooth. For example, if $X=*$, $Y=[0,\iy)$ and $f_t:*\mapsto t^2$, then $f_0$ maps $S^0(X)\ra S^1(Y)$, but $f_t$ for $t\ne 0$ maps $S^0(X)\ra S^0(Y)$, so $f_0$ and $f_t$ for $t\ne 0$ have different discrete data. This is allowed because $F:(-\ep,\ep)\ra[0,\iy)$, $F:t\mapsto t^2$ is not smooth in the sense of~\S\ref{ku31}.

Prescribing discrete data is both an open and closed condition in the space of all smooth maps $f:X\ra Y$, in a suitable topology. That $f$ be interior, or b-normal, or strongly smooth, or simple, in \S\ref{ku31}, are all discrete conditions on $f$ (though being a diffeomorphism is not). Discrete data is essential in defining the corner functor $C:\Manc\ra\cManc$ above, and similar ideas.

Discrete data for $f:X\ra Y$ is constant along any connected component of $S^k(X)$. So, for example, if $f$ is interior/b-normal/strongly smooth/simple near $x\in S^k(X)\subseteq X$, then $f$ is also interior/b-normal/strongly smooth/simple near $x'$ in $X$ for any $x'$ in the same connected component of $S^k(X)$ as~$x$.

Weakly smooth maps do not contain any nontrivial local discrete data.
\label{ku3rem2}
\end{rem}

\subsection{Tangent bundles and b-tangent bundles}
\label{ku33}

Manifolds with corners $X$ have two notions of tangent bundle with
functorial properties, the ({\it ordinary\/}) {\it tangent bundle\/}
$TX$, the obvious generalization of tangent bundles of manifolds
without boundary, and the {\it b-tangent bundle\/} ${}^bTX$
introduced by Melrose \cite[\S 2.2]{Melr2}, \cite[\S I.10]{Melr3},
\cite[\S 2]{Melr1}. Taking duals gives two notions of cotangent
bundle $T^*X,{}^bT^*X$. 

\begin{dfn} Let $X$ be an $m$-manifold with corners. The {\it tangent
bundle\/} $\pi:TX\ra X$ of $X$ is a natural (unique up to canonical
isomorphism) rank $m$ vector bundle on $X$. Here are two equivalent ways to characterize~$TX$:
\begin{itemize}
\setlength{\itemsep}{0pt}
\setlength{\parsep}{0pt}
\item[(a)] {\bf In coordinate charts:} let $(U,\phi)$ be a chart 
on $X$, with $U\subseteq\R^m_k$ open. Then over $\phi(U)$, $TX$ is the trivial vector bundle with basis of sections $\frac{\pd}{\pd u_1},\ldots,\frac{\pd}{\pd u_m}$, for $(u_1,\ldots,u_m)$ the coordinates on $U$. There is a corresponding chart $(TU,T\phi)$ on $TX$, where $TU=U\t\R^m\subseteq\R^{2m}_k$, such that $(u_1,\ldots,u_m,q_1,\ldots,q_m)\in TU$ represents the vector $q_1\frac{\pd}{\pd u_1}+\cdots+q_m\frac{\pd}{\pd u_m}$ over $(u_1,\ldots,u_m)\in U$ or $\phi(u_1,\ldots,u_m)\in X$. Under change of coordinates $(u_1,\ldots,u_m)\rightsquigarrow(\ti u_1,\ldots,\ti u_m)$ from $(U,\phi)$ to $(\ti U,\ti\phi)$, the corresponding change $(u_1,\ldots,u_m,q_1,\ldots,q_m)\rightsquigarrow(\ti u_1,\ldots,\ti u_m,\ti q_1,\ldots,\ti q_m)$ from $(TU,T\phi)$ to $(T\ti U,T\ti\phi)$ is determined by $\frac{\pd}{\pd u_i}=\sum_{j=1}^m\frac{\pd\ti u_j}{\pd u_i}(u_1,\ldots,u_m)\cdot\frac{\pd}{\pd\ti u_j}$, so that~$\ti q_j=\sum_{i=1}^m\frac{\pd\ti u_j}{\pd u_i}(u_1,\ldots,u_m)q_i$.
\item[(b)] {\bf Intrinsically:} for each $x\in X$, there is a
natural isomorphism
\begin{align*}
T_xX\cong\bigl\{v:\,&\text{$v$ is a linear map $C^\iy(X)\ra\R$
satisfying}\\
&\text{$v(fg)=v(f)g(x)+f(x)v(g)$ for all $f,g\in
C^\iy(X)$}\bigr\},
\end{align*}
where $C^\iy(X)$ is the $\R$-algebra of smooth functions $f:X\ra\R$. Also there is a natural isomorphism of $C^\iy(X)$-modules
\begin{align*}
\Ga(TX)\cong\bigl\{v:\,&\text{$v$ is a linear map $C^\iy(X)\ra
C^\iy(X)$
satisfying}\\
&\text{$v(fg)=v(f)\cdot g+f\cdot v(g)$ for all $f,g\in
C^\iy(X)$}\bigr\}.
\end{align*}
Elements of $\Ga(TX)$ are called {\it vector fields}.
\end{itemize}

Now suppose $f:X\ra Y$ is a weakly smooth map of manifolds with corners. We
will define a natural weakly smooth map $Tf:TX\ra TY$, which is smooth if $f$ is smooth, such that the following commutes:
\begin{equation*}
\xymatrix@C=80pt@R=15pt{ *+[r]{TX} \ar[d]^\pi \ar[r]_{Tf} &
*+[l]{TY} \ar[d]_\pi \\ *+[r]{X} \ar[r]^f & *+[l]{Y.} }
\end{equation*}
Let $(U,\phi)$ and $(V,\psi)$ be coordinate charts on $X,Y$ with $U\subseteq\R^m_k$, $V\subseteq\R^n_l$, with coordinates $(u_1,\ldots,u_m)\in U$ and $(v_1,\ldots,v_n)\in V$, and let $(TU,T\phi)$, $(TV,T\psi)$ be the corresponding charts on $TX,TY$, with coordinates $(u_1,\ab\ldots,\ab u_m,\ab q_1,\ab\ldots,\ab q_m)\in TU$ and $(v_1,\ldots,v_n,r_1,\ldots,r_n)\in TV$. Then \eq{ku3eq1} defines a map $\psi^{-1}\ci f\ci\phi$ between open subsets of $U,V$. Write $\psi^{-1}\ci f\ci\phi=(f_1,\ldots,f_n)$, for $f_j=f_j(u_1,\ldots,u_m)$. Then the corresponding $T\psi^{-1}\ci Tf\ci T\phi$ maps
\begin{align*}
&T\psi^{-1}\ci Tf\ci T\phi:(u_1,\ldots,u_m,q_1,\ldots,q_m)\longmapsto
\bigl(f_1(u_1,\ldots,u_m),\ldots,\\
&f_n(u_1,\ldots,u_m),\ts\sum_{i=1}^m\frac{\pd f_1}{\pd u_i}(u_1,\ldots,u_m)q_i,\ldots,\ts\sum_{i=1}^m\frac{\pd f_n}{\pd u_i}(u_1,\ldots,u_m)q_i\bigr).
\end{align*}

If $g:Y\ra Z$ is smooth then $T(g\ci f)=Tg\ci Tf:TX\ra
TZ$, and $T(\id_X)=\id_{TX}:TX\ra TX$. Thus, the assignment
$X\mapsto TX$, $f\mapsto Tf$ is a functor, the {\it tangent
functor\/} $T:\Manc\ra\Manc$. It restricts to $T:\Mancin\ra\Mancin$. We can also regard $Tf$ as a vector bundle morphism $\d f:TX\ra f^*(TY)$ on~$X$.

The {\it cotangent bundle\/} $T^*X$ of a manifold with corners $X$
is the dual vector bundle of $TX$. Cotangent bundles $T^*X$ are not
functorial in the same way, though we do have vector bundle morphisms $(\d f)^*:f^*(T^*Y)\ra T^*X$ on~$X$.
\label{ku3def6}
\end{dfn}

Here is the parallel definition for b-(co)tangent bundles:

\begin{dfn} Let $X$ be an $m$-manifold with corners. The {\it b-tangent
bundle\/} ${}^bTX\ra X$ of $X$ is a natural (unique up to canonical
isomorphism) rank $m$ vector bundle on $X$. It has a natural morphism $I_X:{}^bTX\ra TX$, which is an isomorphism over the interior $X^\ci$, but not over the boundary strata $S^k(X)$ for $k\ge 1$. Here are two equivalent ways to characterize~${}^bTX,I_X$:
\begin{itemize}
\setlength{\itemsep}{0pt}
\setlength{\parsep}{0pt}
\item[(a)] {\bf In coordinate charts:} let $(U,\phi)$ be a chart 
on $X$, with $U\subseteq\R^m_k$ open. Then over $\phi(U)$, ${}^bTX$ has basis of sections $u_1\frac{\pd}{\pd u_1},\ldots,u_k\frac{\pd}{\pd u_k},\frac{\pd}{\pd u_{k+1}},\ab\ldots,\ab
\frac{\pd}{\pd u_m}$, for $(u_1,\ldots,u_m)$ the coordinates on $U$. There is a corresponding chart $({}^bTU,{}^bT\phi)$ on ${}^bTX$, where ${}^bTU=U\t\R^m\subseteq\R^{2m}_k$, such that $(u_1,\ldots,\ab u_m,\ab s_1,\ab\ldots,s_m)\in {}^bTU$ represents the vector 
\begin{equation*}
\ts s_1u_1\frac{\pd}{\pd u_1}+\cdots+s_ku_k\frac{\pd}{\pd u_k}+s_{k+1}\frac{\pd}{\pd u_{k+1}}+\cdots+s_m\frac{\pd}{\pd u_m}
\end{equation*}
over $(u_1,\ldots,u_m)$ in $U$ or $\phi(u_1,\ldots,u_m)$ in $X$. Under change of coordinates $(u_1,\ldots,u_m)\rightsquigarrow(\ti u_1,\ldots,\ti u_m)$ from $(U,\phi)$ to $(\ti U,\ti\phi)$, the corresponding change $(u_1,\ab\ldots,\ab u_m,\ab s_1,\ab\ldots,\ab s_m)\ab\rightsquigarrow(\ti u_1,\ldots,\ab\ti u_m,\ab\ti s_1,\ab\ldots,\ti s_m)$ from $({}^bTU,{}^bT\phi)$ to $({}^bT\ti U,{}^bT\ti\phi)$ is 
\begin{equation*}
\ti s_j=\begin{cases} \sum_{i=1}^k\ti u_j^{-1}u_i\frac{\pd\ti u_j}{\pd u_i}\,s_i+\sum_{i=k+1}^m\ti u_j^{-1}\frac{\pd\ti u_j}{\pd u_i}\,s_i, & j\le k, \\[4pt]
\sum_{i=1}^ku_i\frac{\pd\ti u_j}{\pd u_i}\,s_i+\sum_{i=k+1}^m\frac{\pd\ti u_j}{\pd u_i}\,s_i, & j>k. \end{cases}
\end{equation*}

The morphism $I_X:{}^bTX\ra TX$ acts in coordinate charts $({}^bTU,{}^bT\phi)$, $(TU,T\phi)$ by
\begin{align*}
&(u_1,\ldots,u_m,s_1,\ldots,s_m)\longmapsto (u_1,\ldots,u_m,q_1,\ldots,q_m)\\
&\qquad\qquad =(u_1,\ldots,u_m,u_1s_1,\ldots,u_ks_k,s_{k+1},\ldots,s_m).
\end{align*}
\item[(b)] {\bf Intrinsically:} there is a natural isomorphism
of $C^\iy(X)$-modules
\e
\begin{split}
\Ga({}^bTX)\cong\bigl\{v\in \Ga(TX): \text{$v\vert_{S^k(X)}$ is
tangent to $S^k(X)$ for all $k$}\bigr\}.
\end{split}
\label{ku3eq12}
\e
Elements of $\Ga({}^bTX)$ are called {\it b-vector fields}.

The morphism $I_X:{}^bTX\ra TX$ induces
$(I_X)_*:\Ga({}^bTX)\ra\Ga(TX)$, which under the isomorphism
\eq{ku3eq12} corresponds to the inclusion of the right hand side
of \eq{ku3eq12} in~$\Ga(TX)$.
\end{itemize}

In Definition \ref{ku3def6}, we defined $Tf:TX\ra TY$ for any smooth
(or even weakly smooth) map $f:X\ra Y$. As in \cite[\S 2]{Melr1},
\cite[\S 1]{KoMe} the analogue for b-tangent bundles works only for
{\it interior\/} maps $f:X\ra Y$. So let $f:X\ra Y$ be an interior
map of manifolds with corners. We will define a natural interior map
${}^bTf:{}^bTX\ra{}^bTY$ so that the following commutes:
\begin{equation*}
\xymatrix@C=30pt@R=15pt{ {{}^bTX} \ar[ddr]_(0.6)\pi \ar[dr]^{I_X}
\ar[rrr]_{{}^bTf} &&&
{{}^bTY} \ar[dr]^{I_Y} \ar[ddr]_(0.6)\pi \\
& {TX} \ar[d]^\pi \ar[rrr]^{Tf} &&&
{TY} \ar[d]^\pi \\
& {X} \ar[rrr]^f &&& {Y.\!} }
\end{equation*}

Let $(U,\phi)$ and $(V,\psi)$ be coordinate charts on $X,Y$ with $U\subseteq\R^m_k$, $V\subseteq\R^n_l$, with coordinates $(u_1,\ldots,u_m)\in U$ and $(v_1,\ldots,v_n)\in V$, and let $({}^bTU,{}^bT\phi)$, $({}^bTV,{}^bT\psi)$ be the corresponding charts on $TX,TY$, with coordinates $(u_1,\ab\ldots,\ab u_m,\ab s_1,\ab\ldots,\ab s_m)\in TU$ and $(v_1,\ldots,v_n,t_1,\ldots,t_n)\in TV$. Then \eq{ku3eq1} defines a map $\psi^{-1}\ci f\ci\phi$ between open subsets of $U,V$. Write $\psi^{-1}\ci f\ci\phi=(f_1,\ldots,f_n)$, for $f_j=f_j(u_1,\ldots,u_m)$. Then the corresponding ${}^bT\psi^{-1}\ci {}^bTf\ci {}^bT\phi$ maps
\e
\begin{split}
&{}^bT\psi^{-1}\!\ci\! {}^bTf\!\ci\! {}^bT\phi:(u_1,\ldots,u_m,s_1,\ldots,s_m)\!\longmapsto\! (v_1,\ldots,v_n,t_1,\ldots,t_n),\!\!\!\!\!\!\!\!{}
\\
&\text{where}\quad v_j=f_j(u_1,\ldots,u_m),\quad j=1\ldots,n,\\
&\text{and}\quad t_j=
\begin{cases} \sum_{i=1}^kf_j^{-1}u_i\frac{\pd f_j}{\pd u_i}\,s_i+\sum_{i=k+1}^mf_j^{-1}\frac{\pd f_j}{\pd u_i}\,s_i, & j\le l, \\[4pt]
\sum_{i=1}^ku_i\frac{\pd f_j}{\pd u_i}\,s_i+\sum_{i=k+1}^m\frac{\pd f_j}{\pd u_i}\,s_i, & j>l. \end{cases}
\end{split}
\label{ku3eq13}
\e

Since $f$ is interior, the functions $f_j^{-1}u_i\frac{\pd f_j}{\pd u_i}$ for $i\le k$, $j\le l$ and $f_j^{-1}\frac{\pd f_j}{\pd u_i}$ for $i>k$, $j\le l$ occurring in \eq{ku3eq13} extend uniquely to
smooth functions of $(u_1,\ldots,u_m)$ where $f_j=0$, which by Definition \ref{ku3def1}(b)(i) is only where $u_i=0$ for certain $i=1,\ldots,k$. If $f$ is not interior, we could have $f_j(u_1,\ldots,u_m)=0$ for all $(u_1,\ldots,u_m)$, and then there are no natural values for $f_j^{-1}u_i\frac{\pd f_j}{\pd u_i}$, $f_j^{-1}\frac{\pd f_j}{\pd u_i}$ (just setting them zero is not functorial under change of coordinates), so we cannot define~${}^bTf$.

If $g:Y\ra Z$ is another interior map then ${}^bT(g\ci f)={}^bTg\ci
{}^bTf:{}^bTX\ra {}^bTZ$, and ${}^bT(\id_X)=\id_{{}^bTX}:{}^bTX\ra
{}^bTX$. Thus, writing $\Mancin$ for the subcategory of $\Manc$ with morphisms interior maps, the assignment $X\mapsto {}^bTX$, $f\mapsto {}^bTf$ is a functor, the {\it b-tangent functor\/} ${}^bT:\Mancin\ra\Mancin$. The maps $I_X:{}^bTX\ra TX$ give a natural transformation $I:{}^bT\ra T$ of functors on~$\Mancin$.

We can also regard ${}^bTf$ as a vector bundle morphism ${}^b\d
f:{}^bTX\ra f^*({}^bTY)$ on $X$. The {\it b-cotangent bundle\/} ${}^bT^*X$ of $X$ is the dual vector bundle of ${}^bTX$. B-cotangent bundles ${}^bT^*X$ are not functorial in the same way, though we do have vector bundle morphisms $({}^b\d f)^*:f^*({}^bT^*Y)\ra {}^bT^*X$ for interior~$f$.
\label{ku3def7}
\end{dfn}

\begin{dfn} The following notation is defined in \cite[\S 2.4]{Joyc9}. Let $X$ be an $m$-manifold with corners, and $x\in S^k(X)$. Then we can choose local coordinates $(x_1,\ldots,x_m)\in\R^m_k$ on $X$ near $x$, with $x=(0,\ldots,0)$, giving an isomorphism
\e
{}^bT_xX\cong \ts\bigl\langle x_1\frac{\pd}{\pd x_1},\ldots,x_k\frac{\pd}{\pd x_k},\frac{\pd}{\pd x_{k+1}},\ldots,\frac{\pd}{\pd x_m}\bigr\rangle.
\label{ku3eq14}
\e
Define $\ti M_xX\subseteq {}^bT_xX$ to be the subset of ${}^bT_xX$ identified by \eq{ku3eq14} with
\begin{equation*}
\ts\bigl\{a_1x_1\frac{\pd}{\pd x_1}+\cdots+a_kx_k\frac{\pd}{\pd x_k}:a_1,\ldots,a_k\in\N\bigr\}\cong\N^k.
\end{equation*}
Then $\ti M_xX$ is a toric monoid in the sense of \S\ref{ku341}, a submonoid of ${}^bT_xX$. If $f:X\ra Y$ is an interior map of manifolds with corners and $x\in X$ with $f(x)=y\in Y$, then ${}^bT_xf:{}^bT_xX\ra{}^bT_yY$ maps $\ti M_xX\ra\ti M_yY$, and we write $\ti M_xf:={}^bT_xf\vert_{\ti M_xX}:\ti M_xX\ra\ti M_yY$, a monoid morphism. Then $f$ is simple in the sense of \S\ref{ku31} if and only if $\ti M_xf$ is an isomorphism for all $x\in X$.
\label{ku3def8}
\end{dfn}

As emphasized by Melrose \cite{KoMe,Melr1,Melr2,Melr3}, we can state a meta-mathematical:

\begin{princ} When working with manifolds with corners, it is very often better to use b-tangent bundles ${}^bTX$ rather than tangent bundles $TX$.
\label{ku3princ}
\end{princ}

For example, if $X$ is a (compact, say) manifold without boundary then we have an infinite-dimensional Lie group $\mathop{\rm Diff}(X)$ of diffeomorphisms of $X$, whose Lie algebra (in some sense) is $C^\iy(TX)$. If $X$ has corners, then we should interpret the Lie algebra of $\mathop{\rm Diff}(X)$ as $C^\iy({}^bTX)$, since diffeomorphisms of $X$ preserve the stratification $X=\coprod_{k\ge 0}S^k(X)$, so infinitesimal diffeomorphisms should be vector fields tangent to each stratum $S^k(X)$, as in \eq{ku3eq12}. We cannot interpret $C^\iy(TX)$ as the Lie algebra of a Lie group in a meaningful way. 

\subsection{Manifolds with generalized corners}
\label{ku34}

The author \cite{Joyc9} studied `manifolds with generalized corners', or `manifolds with g-corners' for short, a generalization of manifolds with corners. We give a brief outline, for more details see~\cite{Joyc9}.

\subsubsection{The definition of manifolds with g-corners}
\label{ku341}

Manifolds with corners are locally modelled upon $\R^m_k=[0,\iy)^k\t\R^{m-k}$. Manifolds with g-corners have a more general set of local model spaces $X_P$ for $P$ a weakly toric monoid, as in \cite[\S 3.1--\S 3.2]{Joyc9}, which we now define.

\begin{dfn} A ({\it commutative\/}) {\it monoid\/} $(P,+,0)$ is a set $P$ with a commutative, associative operation $+:P\t P\ra P$ and an identity element $0\in P$. Monoids are like abelian groups, but without inverses. They form a category $\Mon$. Some examples of monoids are the natural numbers $\N=\{0,1,2,\ldots\}$, the integers $\Z$, any abelian group $G$, and $[0,\iy)=\bigl([0,\iy),\cdot,1\bigr)$.

A monoid $P$ is called {\it weakly toric\/} if for some $m,k\ge 0$ and $c_i^j\in\Z$ for $i=1,\ldots,m$, $j=1,\ldots,k$ we have
\e
P\cong P'=\bigl\{(l_1,\ldots,l_m)\in\Z^m:c_1^jl_1+\cdots+c_m^jl_m\ge 0,\;\> j=1,\ldots,k\bigr\}.
\label{ku3eq15}
\e
The {\it rank\/} of a weakly toric monoid $P$ is $\rank P=\dim_\R(P\ot_\N\R)$. 

For example, $\N^k\t\Z^{m-k}$ is a weakly toric monoid of rank $m$ for $0\le k\le m$, as it is the submonoid of $(l_1,\ldots,l_m)$ in $\Z^m$ satisfying $l_j\ge 0$ for~$j=0,\ldots,k$.

A weakly toric monoid $P$ is called {\it toric\/} if $P'$ in \eq{ku3eq15} has $P'\cap -P'=\{0\}$, where the intersection is in $\Z^m$. For example, $\N^k$ is a toric monoid for~$k\ge 0$.

Let $P$ be a weakly toric monoid. Define $X_P$ to be the set of monoid morphisms $x:P\ra[0,\iy)$, where $\bigl([0,\iy),\cdot,1\bigr)$ is the monoid $[0,\iy)$ with operation multiplication and identity 1. Define the {\it interior\/} $X_P^\ci\subset X_P$ of $X_P$ to be the subset of $x$ with~$x(P)\subseteq(0,\iy)\subset[0,\iy)$.

For each $p\in P$, define a function $\la_p:X_P\ra[0,\iy)$ by $\la_p(x)=x(p)$. Then $\la_{p+q}=\la_p\cdot\la_q$ for $p,q\in P$, and~$\la_0=1$.

Define a topology on $X_P$ to be the weakest topology such that $\la_p:X_P\ra[0,\iy)$ is continuous for all $p\in P$. This makes $X_P$ into a locally compact, Hausdorff topological space, and $X_P^\ci$ is open in $X_P$. If $U\subseteq X_P$ is an open set, define the {\it interior\/} $U^\ci$ of $U$ to be~$U^\ci=U\cap X_P^\ci$.

Choose generators $p_1,\ldots,p_m$ for $P,$ and a generating set of relations for $p_1,\ldots,p_m$ of the form
\begin{equation*}
a_1^jp_1+\cdots+a_m^jp_m=b_1^jp_1+\cdots+b_m^jp_m\quad\text{in $P$ for $j=1,\ldots,k,$}
\end{equation*}
where $a_i^j,b_i^j\in\N$ for $i=1,\ldots,m$ and $j=1,\ldots,k$. Then $\la_{p_1}\t\cdots\t\la_{p_m}:X_P\ra[0,\iy)^m$ is a homeomorphism from $X_P$ to
\begin{equation*}
X_P'=\bigl\{(x_1,\ldots,x_m)\in[0,\iy)^m:x_1^{a_1^j}\cdots x_m^{a_m^j}=x_1^{b_1^j}\cdots x_m^{b_m^j},\; j=1,\ldots,k\bigr\},
\end{equation*}
regarding $X_P'$ as a closed subset of\/ $[0,\iy)^m$ with the induced topology.

Let $U\subseteq X_P$ be open, and $U'=(\la_{p_1}\t\cdots\t\la_{p_m})(U)$ be the corresponding open subset of $X_P'$. We say that a continuous function $f:U\ra\R$ or $f:U\ra[0,\iy)$ is {\it smooth\/} if there exists an open neighbourhood $W$ of $U'$ in $[0,\iy)^m$ and a smooth function $g:W\ra\R$ or $g:W\ra[0,\iy)$ in the sense of manifolds with (ordinary) corners, such that $f=g\ci(\la_{p_1}\t\cdots\t\la_{p_m})$. This definition turns out to be independent of the choice of generators~$p_1,\ldots,p_m$.

Now let $Q$ be another weakly toric monoid, $V\subseteq X_Q$ be open, and $f:U\ra V$ be continuous. We say that $f$ is {\it smooth\/} if $\la_q\ci f:U\ra[0,\iy)$ is smooth in the sense above for all $q\in Q$. We call a smooth map $f:U\ra V$ {\it interior\/} if $f(U^\ci)\subseteq V^\ci$, and a {\it diffeomorphism\/} if $f$ has a smooth inverse~$f^{-1}:V\ra U$.
\label{ku3def9}
\end{dfn}

\begin{ex} Let $P$ be the weakly toric monoid $\N^k\t\Z^{m-k}$ for $0\le k\le m$. Define a map $\Phi:\R^m_k\ra X_P$ by $\Phi(x_1,\ldots,x_m)=\rho$, where $\rho:(l_1,\ldots,l_m)=x_1^{l_1}\cdots x_k^{l_k}e^{l_{k+1}x_{k+1}+\cdots+l_mx_m}$ for all $(l_1,\ldots,l_m)\in P$. It is easy to show \cite[\S 3.2]{Joyc9} that $\Phi$ is a bijection, and a homeomorphism of topological spaces. Furthermore, under this identification $\R^m_k\cong X_P$, the notions of smooth maps $f:U\ra V$ for open $U\subseteq\R^m_k$, $V\subseteq\R^n_l$ in Definition \ref{ku3def1}, and smooth maps $f:U\ra V$ for open $U\subseteq X_P$, $V\subseteq X_Q$ with $P=\N^k\t\Z^{m-k}$, $Q=\N^l\t\Z^{n-l}$ in Definition \ref{ku3def9}, agree. Thus the spaces $X_P$ above generalize the $\R^m_k=[0,\iy)^k\t\R^{m-k}$ in~\S\ref{ku31}.
\label{ku3ex1}
\end{ex}

\begin{dfn} Let $X$ be a second countable Hausdorff topological space. An {\it $m$-dimensional generalized chart}, or {\it g-chart}, on $X$ is a triple $(P,U,\phi)$, where $P$ is a weakly toric monoid with $\rank P=m$, and $P$ is a submonoid of $\Z^n$ for some $n\ge 0$, and $U\subseteq X_P$ is open, for $X_P$ as above, and $\phi:U\ra X$ is a homeomorphism with an open set $\phi(U)$ in~$X$.

Let $(P,U,\phi),(Q,V,\psi)$ be $m$-dimensional g-charts on $X$. We call
$(P,U,\phi)$ and $(Q,V,\psi)$ {\it compatible\/} if
$\psi^{-1}\ci\phi:\phi^{-1}\bigl(\phi(U)\cap\psi(V)\bigr)\ra
\psi^{-1}\bigl(\phi(U)\cap\psi(V)\bigr)$ is a diffeomorphism between
open subsets of $X_P,X_Q$, in the sense of Definition~\ref{ku3def9}.

An $m$-{\it dimensional generalized atlas}, or {\it g-atlas}, on $X$ is a family $\{(P^i,U^i,\phi^i)\!:i\!\in\! I\}$ of pairwise compatible $m$-dimensional g-charts on $X$ with $X\!=\!\bigcup_{i\in I}\phi^i(U^i)$. We call such a g-atlas {\it maximal\/} if it is not a proper subset of any other g-atlas. Any g-atlas $\{(P^i,U^i,\phi^i):i\in I\}$ is contained in a unique maximal g-atlas, the family of all g-charts $(P,U,\phi)$ on $X$ compatible with $(P^i,U^i,\phi^i)$ for all~$i\in I$.

An $m$-{\it dimensional manifold with generalized corners}, or {\it g-corners}, is a second countable Hausdorff topological space $X$ with a maximal $m$-dimensional g-atlas. Usually we refer to $X$ as the manifold, leaving the g-atlas implicit. By a {\it g-chart\/ $(P,U,\phi)$ on\/} $X$, we mean an element of the maximal g-atlas. Write~$\dim X=m$.

Let $X,Y$ be manifolds with g-corners, and $f:X\ra Y$ a continuous map of the underlying topological spaces. We say that $f:X\ra Y$ is {\it smooth\/} (or {\it interior\/}) if for all g-charts $(P,U,\phi)$ on $X$ and $(Q,V,\psi)$ on $Y$, the map
\begin{equation*}
\psi^{-1}\ci f\ci\phi: (f\ci\phi)^{-1}(\psi(V))\longra V
\end{equation*}
is a smooth (or interior) map between the open subsets $(f\ci\phi)^{-1}(\psi(V))\subseteq U\subseteq X_P$ and $V\subseteq X_Q$, in the sense of Definition \ref{ku3def10}. Then, manifolds with g-corners and smooth maps form a category $\Mangc$, and manifolds with g-corners and interior maps form a subcategory~$\Mangcin\subset\Mangc$.

As for $\cManc$ in \S\ref{ku31}, write $\cMangc$ for the category whose objects are disjoint unions $\coprod_{m=0}^\iy X_m$, where $X_m$ is a manifold with g-corners of dimension $m$, and whose morphisms are continuous maps $f:\coprod_{m=0}^\iy X_m\ra\coprod_{n=0}^\iy Y_n$, such that $f\vert_{X_m\cap f^{-1}(Y_n)}:X_m\cap f^{-1}(Y_n)\ra Y_n$ is smooth for all $m,n\ge 0$, and write $\cMangcin\subset\cMangc$ for the subcategory with the same objects, and morphisms $f$ with $f\vert_{X_m\cap f^{-1}(Y_n)}$ interior for all $m,n\ge 0$.

Now let $X$ be an $m$-manifold with (ordinary) corners, in the sense of \S\ref{ku31}, and $(U,\phi)$ be a chart on $X$ with $U\subseteq\R^m_k$ open. As in Example \ref{ku3ex1} we identify $\R^m_k\cong X_P$ for $P=\N^k\t\Z^{m-k}$, and then $(\N^k\t\Z^{m-k},U,\phi)$ is a g-chart on $X$. As in \cite[\S 3.3]{Joyc9}, we can uniquely make $X$ into a manifold with g-corners, which we temporarily write as $\hat X$, such that if $(U,\phi)$ is a chart on $X$ with $U\subseteq\R^m_k$ then $(\N^k\t\Z^{m-k},U,\phi)$ is a g-chart on~$\hat X$. 

If $Y$ is another manifold with corners and $\hat Y$ the corresponding manifold with g-corners, then $f:X\ra Y$ is smooth (or interior) in $\Manc$ if and only if $f:\hat X\ra\hat Y$ is smooth (or interior) in $\Mangc$. Thus, identifying $X$ with $\hat X$ we can regard manifolds with corners as examples of manifolds with g-corners, and $\Manc\subset\Mangc$, $\cManc\subset\cMangc$ as full subcategories. 
\label{ku3def10}
\end{dfn}

\begin{ex} Here is the simplest example of a manifold with g-corners which is not a manifold with corners, taken from \cite[Ex.~3.24]{Joyc9}. Define 
\begin{equation*}
P=\bigl\{(a,b,c)\in \Z^3:a\ge 0,\; b\ge 0,\; a+b\ge c\ge 0\bigr\}.
\end{equation*}
Then $P$ is a weakly toric monoid with rank 3. Write $p_1=(1,0,0)$, $p_2=(0,1,1)$, $p_3=(0,1,0)$, and $p_4=(1,0,1)$. Then $p_1,p_2,p_3,p_4$ are generators for $P$, subject to the single relation $p_1+p_2=p_3+p_4$. As in Definition \ref{ku3def9} we have
\e
X_P\cong X_P'=\bigl\{(x_1,x_2,x_3,x_4)\in[0,\iy)^4:x_1x_2=x_3x_4\bigr\}.
\label{ku3eq16}
\e

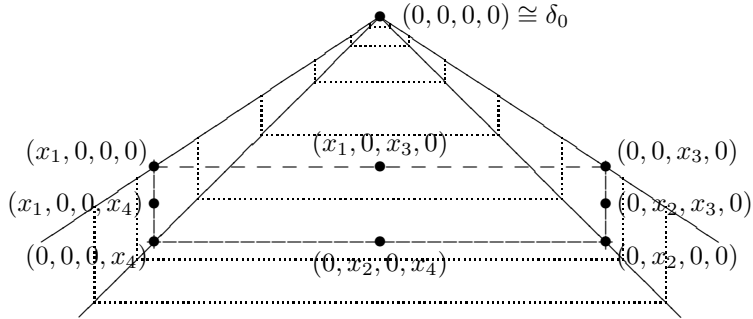
\begin{figure}[htb]
\centerline{$\splinetolerance{.8pt}
\begin{xy}
0;<1mm,0mm>:
,(0,0)*{\bu}
,(14,0)*{(0,0,0,0)\cong\de_0}
,(-30,-20)*{\bu}
,(-39,-18)*{(x_1,0,0,0)}
,(30,-20)*{\bu}
,(39.5,-18)*{(0,0,x_3,0)}
,(-30,-30)*{\bu}
,(-39,-32)*{(0,0,0,x_4)}
,(30,-30)*{\bu}
,(39.5,-32)*{(0,x_2,0,0)}
,(0,-30)*{\bu}
,(0,-33)*{(0,x_2,0,x_4)}
,(0,-20)*{\bu}
,(0,-17)*{(x_1,0,x_3,0)}
,(-30,-25)*{\bu}
,(-40.5,-25)*{(x_1,0,0,x_4)}
,(30,-25)*{\bu}
,(40.5,-25)*{(0,x_2,x_3,0)}
,(0,0);(-45,-30)**\crv{}
?(.8444)="aaa"
?(.85)="bbb"
?(.75)="ccc"
?(.65)="ddd"
?(.55)="eee"
?(.45)="fff"
?(.35)="ggg"
?(.25)="hhh"
?(.15)="iii"
?(.05)="jjj"
,(0,0);(45,-30)**\crv{}
?(.8444)="aaaa"
?(.85)="bbbb"
?(.75)="cccc"
?(.65)="dddd"
?(.55)="eeee"
?(.45)="ffff"
?(.35)="gggg"
?(.25)="hhhh"
?(.15)="iiii"
?(.05)="jjjj"
,(0,0);(-40,-40)**\crv{}
?(.95)="a"
?(.85)="b"
?(.75)="c"
?(.65)="d"
?(.55)="e"
?(.45)="f"
?(.35)="g"
?(.25)="h"
?(.15)="i"
?(.05)="j"
,(0,0);(40,-40)**\crv{}
?(.95)="aa"
?(.85)="bb"
?(.75)="cc"
?(.65)="dd"
?(.55)="ee"
?(.45)="ff"
?(.35)="gg"
?(.25)="hh"
?(.15)="ii"
?(.05)="jj"
,"a";"aa"**@{.}
,"b";"bb"**@{.}
,"c";"cc"**@{.}
,"d";"dd"**@{.}
,"e";"ee"**@{.}
,"f";"ff"**@{.}
,"g";"gg"**@{.}
,"h";"hh"**@{.}
,"i";"ii"**@{.}
,"j";"jj"**@{.}
,"a";"aaa"**@{.}
,"b";"bbb"**@{.}
,"c";"ccc"**@{.}
,"d";"ddd"**@{.}
,"e";"eee"**@{.}
,"f";"fff"**@{.}
,"g";"ggg"**@{.}
,"h";"hhh"**@{.}
,"i";"iii"**@{.}
,"j";"jjj"**@{.}
,"aa";"aaaa"**@{.}
,"bb";"bbbb"**@{.}
,"cc";"cccc"**@{.}
,"dd";"dddd"**@{.}
,"ee";"eeee"**@{.}
,"ff";"ffff"**@{.}
,"gg";"gggg"**@{.}
,"hh";"hhhh"**@{.}
,"ii";"iiii"**@{.}
,"jj";"jjjj"**@{.}
,(-30,-20);(30,-20)**@{--}
,(-30,-20);(-30,-30)**\crv{}
,(-30,-30);(30,-30)**\crv{}
,(30,-30);(30,-20)**\crv{}
\end{xy}$}
\caption{3-manifold with g-corners $X_P'\cong X_P$ in \eq{ku3eq16}}
\label{ku3fig1}
\end{figure}

We sketch $X_P'$ in Figure \ref{ku3fig1}. We can visualize $X_P\cong X_P'$ as a 3-dimensional infinite pyramid on a square base. Using the ideas of \S\ref{ku32}, $X_P'$ has one vertex $(0,0,0,0)$ corresponding to $\de_0\in X_P$ mapping $\de_0:P\ra\bigl([0,\iy),\cdot\bigr)$ with $\de_0(p)=1$ if $p=(0,0,0)$ and $\de_0(p)=0$ otherwise, four 1-dimensional edges of points $(x_1,0,0,0),(0,x_2,0,0), (0,0,x_3,0),(0,0,0,x_4)$, four 2-dimensional faces of points $(x_1,0,x_3,0)$, $(x_1,0,0,x_4)$, $(0,x_2,x_3,0)$, $(0,x_2,0,x_4)$, and an interior $X_P^{\prime\ci}\ab\cong\R^3$ of points $(x_1,x_2,x_3,x_4)$. Then $X_P\sm\{\de_0\}$ is a 3-manifold with corners, but $X_P$ is not a manifold with corners near $\de_0$, as we can see from the non-simplicial face structure.
\label{ku3ex2}
\end{ex}

\begin{rem} Every weakly toric monoid $P$ satisfies $P\cong Q\t\Z^l$ for a unique toric monoid $Q$, and then $X_P\cong X_Q\t X_{\Z^l}\cong X_Q\t\R^l$. Thus, an alternative way to describe the local models for manifolds with g-corners is $X_Q\t\R^l$ for toric monoids $Q$ and $l\ge 0$, where $X_{\N^k}\cong[0,\iy)^k$.

If $Q$ is a toric monoid there is a natural point $\de_0\in X_Q$, the {\it vertex\/} of $X_Q$, acting by $\de_0:Q\ra[0,\iy)$ with $\de_0(q)=1$ if $q=0$ and $\de_0(q)=0$ if $q\in Q\sm\{0\}$.

If $X$ is a manifold with g-corners and $x\in X$ then there is a toric monoid $Q$, unique up to isomorphism, such that $X$ near $x$ is locally modelled on $X_Q\t\R^l$ near $(\de_0,0)$, where~$\rank Q+l=\dim X$.

\label{ku3rem3}
\end{rem}

\subsubsection{Boundaries and corners of manifolds with g-corners}
\label{ku342}

As in \cite[\S 3.4]{Joyc9}, almost all of \S\ref{ku32} extends to manifolds with g-corners: the depth stratification $X=\coprod_{l=0}^{\dim X}S^l(X)$, where $S^l(X)$ is a manifold without boundary of dimension $\dim X-l$ with $S^0(X)=X^\ci$, local $k$-corner components $\ga$, the boundary $\pd X$ and $k$-corners $C_k(X)$ as manifolds with g-corners, the corners $C(X)=\coprod_{k=0}^{\dim X}C_k(X)$ in $\cMangc$, the corner functor $C:\Mangc\ra\cMangc$, and Lemmas \ref{ku3lem1}, \ref{ku3lem2} and Proposition \ref{ku3prop1} (for simple maps in $\Mangc$ defined as in \S\ref{ku343}) all extend to manifolds with g-corners in a straightforward way.

The main difference is that equations \eq{ku3eq5} and \eq{ku3eq9}--\eq{ku3eq11} are false for general manifolds with g-corners $X$. For $k=2$, we replace \eq{ku3eq5} and \eq{ku3eq10} by
\begin{align*}
C_2(X)\cong\bigl\{&(x,\{\be_1,\be_2\}):\text{$x\!\in\! X,$
$\be_1,\be_2$ are distinct local boundary}\\
&\text{components of $X$ at $x$ intersecting in codimension $2$}\bigr\},\\
\pd^2X\cong\bigl\{&(x,\be_1,\be_2):\text{$x\!\in\! X,$
$\be_1,\be_2$ are distinct local boundary}\\
&\text{components of $X$ at $x$ intersecting in codimension $2$}\bigr\},
\end{align*}
and $S_2\cong\Z_2$ acts freely on $\pd^2X$ exchanging $\be_1,\be_2$, with $C_2(X)\cong\pd^2X/S_2$ as in \eq{ku3eq11}. For $k>2$, $S_k$ may not act on $\pd^kX$, so \eq{ku3eq11} does not make sense. There is a natural, surjective, \'etale map $\pd^kX\ra C_k(X)$ for all~$k$.

\subsubsection{B-tangent bundles of manifolds with g-corners}
\label{ku343}

In \cite[\S 3.5]{Joyc9} we discuss how to extend \S\ref{ku33} to manifolds with g-corners.

One big difference between manifolds with corners and with g-corners, is that {\it manifolds with g-corners\/ $X$ do not have well-behaved tangent bundles\/} $TX$. One can define tangent spaces $T_xX$ for $x\in X$, which behave in a functorial way. However, $\dim T_xX$ is not constant, but only upper semicontinuous in $x$, so that $TX=\coprod_{x\in X}T_xX$ is neither a manifold, nor a vector bundle, but only something like a coherent sheaf in algebraic geometry. For instance, in Example \ref{ku3ex2}, $\dim T_xX_P$ is 3 if $x\ne\de_0$, and 4 if~$x=\de_0$.

However, b-tangent bundles ${}^bTX$ and b-cotangent bundles ${}^bT^*X$ are well-defined for manifolds with g-corners $X$ (though the definition is more complicated than Definition \ref{ku3def7}), and have the same functorial properties. For the model spaces $X_P$, we have~${}^bTX_P\cong X_P\t\Hom_\Mon(P,\R)$.

This is another instance of Principle \ref{ku3princ}: if we can write some piece of mathematics about manifolds with corners $X$ using b-(co)tangent bundles ${}^bTX,{}^bT^*X$ rather than (co)tangent bundles $TX,T^*X$, then it is likely to extend immediately to manifolds with g-corners.

Here is the analogue of Definition \ref{ku3def8}, from \cite[\S 3.6]{Joyc9}. Let $X$ be a manifold with g-corners, and $x\in X$. Then as in Remark \ref{ku3rem3}, $X$ near $x$ is locally modelled on $X_Q\t\R^l$ near $(\de_0,0)$ for a toric monoid $Q$, giving an isomorphism
\e
{}^bT_xX\cong \Hom_\Mon(Q,\R)\t\R^l.
\label{ku3eq17}
\e
Define $\ti M_xX$ to be the subset of ${}^bT_xX$ identified with $\Hom_\Mon(Q,\N)\t\{0\}$ by \eq{ku3eq17}. Then $\ti M_xX$ is a toric monoid isomorphic to the dual monoid $Q^\vee$, a submonoid of ${}^bT_xX$, and $\rank\ti M_xX=k$ if $x\in S^k(X)\subseteq X$. Also $X$ is a manifold with (ordinary) corners if and only if $\ti M_xX\cong\N^k$ for all~$x\in X$.

If $f:X\ra Y$ is an interior map of manifolds with g-corners and $x\in X$ with $f(x)=y\in Y$, then ${}^bT_xf:{}^bT_xX\ra{}^bT_yY$ maps $\ti M_xX\ra\ti M_yY$, and we write $\ti M_xf:={}^bT_xf\vert_{\ti M_xX}:\ti M_xX\ra\ti M_yY$, a monoid morphism. We define $f$ to be {\it simple\/} if $\ti M_xf$ is an isomorphism for all $x\in X$. Write $\Mangcsi$ for the subcategory of $\Mangc$ with morphisms simple maps.

We can also define {\it b-normal\/} interior maps of manifolds with g-corners in terms of properties of the $\ti M_xf$. By \cite[\S 4.1]{Joyc9}, $f$ is b-normal if and only if\/ $C(f)$ maps $C_k(X)\ra \coprod_{l=0}^kC_l(Y)$ for all $k$, as in Proposition~\ref{ku3prop1}(c).

We now have a diagram of categories of manifolds with (g-)corners:
\e
\begin{gathered}
\xymatrix@C=50pt@R=15pt{
\Mancsi  \ar[rr]_\subset \ar[d]^\subset &&
\Mangcsi \ar[d]^\subset\\
\Mancis \ar[r]_\subset \ar[d]^\subset & \Mancin \ar[r]_\subset \ar[d]^\subset & \Mangcin \ar[d]^\subset   \\
\Mancst \ar[r]^\subset & \Manc \ar[r]^\subset & \Mangc.\! }
\end{gathered}
\label{ku3eq18}
\e

One reason for considering manifolds with g-corners is that they are specially well behaved under fibre products, as the next result shows \cite[\S 4.3]{Joyc9}:

\begin{thm} Let\/ $g:X\ra Z$ and\/ $h:Y\ra Z$ be interior maps of manifolds with g-corners. Call\/ $g,h$ \begin{bfseries}b-transverse\end{bfseries} if\/ ${}^bT_xg\op {}^bT_yh:{}^bT_xX\op {}^bT_yY\ra{}^bT_zZ$ is a surjective linear map for all\/ $x\in X$ and\/ $y\in Y$ with\/ $g(x)=h(y)=z\in Z$. 

If\/ $g,h$ are b-transverse then the fibre product\/ $X\t_{g,Z,h}Y$ exists in~$\Mangcin$.
\label{ku3thm1}
\end{thm}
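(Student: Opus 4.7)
The plan is to follow the standard three-step construction of a fibre product: first assemble the underlying topological space together with local g-chart data, then verify that the local charts are compatible and give a manifold with g-corners structure, and finally check the universal property in $\Mangcin$.

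For the underlying space, set $W=\bigl\{(x,y)\in X\t Y:g(x)=h(y)\bigr\}$ with the subspace topology from $X\t Y$. Continuity of $g,h$ makes $W$ closed in $X\t Y$, so $W$ is Hausdorff and second countable, and the projections $\pi_X:W\ra X$, $\pi_Y:W\ra Y$ are continuous. To put a g-corner structure on $W$, fix $(x,y)\in W$ with $g(x)=h(y)=z$ and choose g-charts so that $X$ near $x$ looks like $X_P\t U$, $Y$ near $y$ looks like $X_Q\t V$, and $Z$ near $z$ looks like $X_R\t W'$ for toric monoids $P,Q,R$ and open $U\subseteq\R^l$, $V\subseteq\R^m$, $W'\subseteq\R^n$. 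Because $g,h$ are \emph{interior}, their pullbacks of the generating functions $\la_r$ on $X_R$ are nowhere identically zero, so by the local normal form for interior maps (Definition \ref{ku3def1}(b)(i) in the g-corner setting) they are encoded by monoid morphisms $\mu:R\ra P$ and $\nu:R\ra Q$ together with smooth maps $\bar g,\bar h$ on the Euclidean factors (plus smooth positive prefactors).

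The key geometric input is to use b-transversality to identify the local model of $W$. Under the identification ${}^bT_xX\cong\Hom_\Mon(P,\R)\t\R^l$ of Definition \ref{ku3def8}/\S\ref{ku343}, b-transversality becomes surjectivity of the sum map
\[
\Hom_\Mon(\mu,\R)\op\Hom_\Mon(\nu,\R)\op(\d\bar g\op-\d\bar h):\bigl(\Hom_\Mon(P,\R)\t\R^l\bigr)\op\bigl(\Hom_\Mon(Q,\R)\t\R^m\bigr)\longra\Hom_\Mon(R,\R)\t\R^n.
\]
Surjectivity of the Euclidean component yields a standard fibre product manifold structure on the smooth factors by the implicit function theorem. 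Surjectivity of the monoidal component implies that the monoid pushout $S=P\amalg_R Q$ is weakly toric of the correct rank. One then proposes $X_S\t(U\t_{W'}V)$ as the local model for $W$ near $(x,y)$, equipped with the obvious projections to $X_P\t U$ and $X_Q\t V$; checking this is a g-chart amounts to showing that the set-theoretic fibre product matches the monoid pushout locally via $\la_p\cdot\la_q\mapsto$ value on $\mu(r)\sim\nu(r)$.

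Compatibility of different local charts follows because any two local fibre product constructions at $(x,y)$ differ by diffeomorphisms of the ambient g-charts, and the pushout $S$ and the Euclidean fibre product are canonical up to canonical isomorphism, so transition maps are g-diffeomorphisms. The projections $\bs\pi_X,\bs\pi_Y$ are interior by construction. For the universal property, given interior maps $\bs e:V\ra X$, $\bs f:V\ra Y$ in $\Mangcin$ with $g\ci\bs e=h\ci\bs f$, the map $v\mapsto(\bs e(v),\bs f(v))$ is continuous and, checked in local coordinates near any $v\in V$, factors through the pushout monoid and the Euclidean fibre product; smoothness and interiority follow from smoothness and interiority of $\bs e,\bs f$, and uniqueness is automatic from the injectivity of $(\bs\pi_X,\bs\pi_Y)$.

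The main obstacle is the monoidal step: establishing that b-transversality implies the pushout $S=P\amalg_R Q$ of toric monoids along $\mu,\nu$ is weakly toric and has the expected rank $\rank P+\rank Q-\rank R$ (after accounting for Euclidean factors), and that $X_S$ really is the local topological fibre product. This is where the generalized, rather than ordinary, corner category is essential—the pushout is typically not a product of copies of $\N$ even when $P,Q,R$ are, which is why a manifold-with-g-corners structure (not with ordinary corners) appears naturally, as in Example \ref{ku3ex2}. The detailed monoid calculation and its linear-algebraic equivalence to b-transversality is the heart of the argument, and is carried out in \cite{Joyc9}.
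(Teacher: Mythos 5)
The first step of your construction already fails: under b-transversality alone, the underlying topological space of the fibre product in $\Mangcin$ is in general \emph{not} $\bigl\{(x,y)\in X\t Y:g(x)=h(y)\bigr\}$. Take $X=[0,\iy)\t\R$, $Y=[0,\iy)$, $Z=[0,\iy)^2$, with $g(x_1,x_2)=(x_1,e^{x_2}x_1)$ and $h(y)=(y,y)$. Both maps are interior, and they are b-transverse everywhere: at a point with $x_1=0$ one finds ${}^bT_xg$ sends $x_1\frac{\pd}{\pd x_1}\mapsto z_1\frac{\pd}{\pd z_1}+z_2\frac{\pd}{\pd z_2}$ and $\frac{\pd}{\pd x_2}\mapsto z_2\frac{\pd}{\pd z_2}$, which is already surjective. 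But $\{g=h\}=\{x_1=y=0\}\cup\{x_2=0,\ x_1=y\}$ is a line and a half-line meeting in a point, which is not a manifold with g-corners of any dimension. The theorem is nevertheless true here: $[0,\iy)$ with projections $w\mapsto(w,0)$ and $w\mapsto w$ is a fibre product in $\Mangcin$, because for any interior $e:V\ra X$, $f:V\ra Y$ with $g\ci e=h\ci f$ the first components give $e_1=f$, and $e_1>0$ on the dense subset $V^\ci$ forces $e_2\equiv 0$, so interior test maps never see the branch $\{x_1=0\}$. Thus the point set of the fibre product must itself be cut out by an extra monoid condition at each $(x,y)$; this is exactly what is done in \cite[\S 4.3]{Joyc9} (the naive point set is only correct under the stronger `c-transversality' of that reference). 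Note also that the present paper gives no proof of Theorem \ref{ku3thm1} at all — it quotes it from \cite[\S 4.3]{Joyc9} — so that reference, not anything in this text, is the proof you are implicitly reconstructing.

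The same example breaks your translation of b-transversality and your proposed local model. At $x=(0,0)$, $y=0$, $z=(0,0)$ the monomial data is $\mu=\nu:\N^2\ra\N$, $(r_1,r_2)\mapsto r_1+r_2$, and $Z$ has no Euclidean factor; your displayed block-diagonal map then has image only the diagonal of $\Hom_\Mon(\N^2,\R)\cong\R^2$ and is not surjective, even though $g,h$ are b-transverse. The reason is that the b-derivative at such a point is only block-triangular: positive prefactors such as $e^{x_2}$ contribute nonzero derivatives from Euclidean source directions into the monoid directions of the target, while ordinary smooth functions have vanishing b-derivative along monoid directions. Hence b-transversality gives surjectivity of the Euclidean block but \emph{not} of the monoid block, so the two cannot be treated separately by the implicit function theorem plus a pure monoid computation. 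Concretely, your model $X_{P\amalg_RQ}\t(U\t_{W'}V)$ is wrong: here $P\amalg_RQ\cong\N$, so your model is $[0,\iy)\t\R$ of dimension $2$, whereas the fibre product near this point is $[0,\iy)$, of the expected dimension $2+1-2=1$. (Your picture is right for purely monomial model maps, e.g.\ $x_1x_2=y_1y_2$ producing Example \ref{ku3ex2}, but not once prefactors and Euclidean variables couple to the monoid part; there is also the secondary issue that a pushout of weakly toric monoids need not be integral or saturated.) So the `detailed monoid calculation' cannot simply be quoted from \cite{Joyc9} after your reductions: the argument there has to identify the correct point set of the fibre product and its local g-charts by handling the monoid morphisms, the positive prefactors and the Euclidean variables jointly, and that interaction is the actual heart of the theorem.
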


The analogue is false for manifolds with (ordinary) corners, unless we impose complicated extra restrictions on $g,h$ over~$\pd^jX,\pd^kY,\pd^lZ$. 

\subsection{\texorpdfstring{$\mu$-Kuranishi spaces with boundary and with corners}{\textmu-Kuranishi spaces with boundary and with corners}}
\label{ku35}

In \S\ref{ku2} we defined $\mu$-Kuranishi spaces without boundary. We now explain how to modify the picture to define {\it $\mu$-Kuranishi spaces with boundary\/} and {\it with corners}. We use the definitions and notation of \S\ref{ku31}--\S\ref{ku33} throughout.

\begin{dfn} Definition \ref{ku2def1} defined our `$O(s)$' and `$O(s^2)$' notation. Parts (i),(ii),(vi),(vii) need no changes when $V,W$ are manifolds with corners. However, if $f,g:V\ra W$ are smooth maps of manifolds with corners, then parts (iii)--(v) defining when $f=g+O(s)$, or $f=g+O(s^2)$, or $f=g+\La\cdot s+O(s^2)$, do need alteration, and we replace them with (iii$)'$--(v$)'$, as follows.

Let $V,W$ be manifolds with corners, $E\ra V$ a vector bundle, $s\in C^\iy(E)$ a smooth section, and $f:V\ra W$ a smooth map. Then:
\begin{itemize}
\setlength{\itemsep}{0pt}
\setlength{\parsep}{0pt}
\item[(iii$)'$] We write $f=g+O(s)$ if:
\begin{itemize}
\setlength{\itemsep}{0pt}
\setlength{\parsep}{0pt}
\item[(A)] Whenever $h:W\ra\R$ is a smooth map, there exists $\al\in C^\iy(E^*)$ such that $h\ci f=h\ci g+\al\cdot s$.
\item[(B)] Suppose $v\in V$ with $s(v)=0$. Then (A) implies that $f(v)=g(v)$ in $W$. Suppose $W'$ is an open neighbourhood of $f(v)$ in $W$, and $H:W'\ra[0,\iy)$ is a smooth map. Then there exists an open neighbourhood $V'$ of $v$ in $f^{-1}(W')\cap g^{-1}(W')$ and $\al\in C^\iy(E^*\vert_{V'})$ such that $H\ci f\vert_{V'}=e^{\al\cdot s}H\ci g\vert_{V'}:V'\ra[0,\iy)$.
\end{itemize}
\item[(iv$)'$] We write $f=g+O(s^2)$ if the obvious analogues of (iii$)'$(A),(B) hold, with $\al\in C^\iy(E^*\ot E^*)$ and $\al\in C^\iy(E^*\ot E^*\vert_{V'})$.
\item[(v$)'$] If $\La\in C^\iy\bigl(E^*\ot f^*({}^bTW)\bigr)$, we write $f=g+\La\cdot s+O(s^2)$ if:
\begin{itemize}
\setlength{\itemsep}{0pt}
\setlength{\parsep}{0pt}
\item[(A)] Whenever $h:W\ra\R$ is a smooth map, there exists $\al\in C^\iy(E^*\ot E^*)$ such that $h\ci f=h\ci g+\La\cdot(s\ot f^*(\d h))+\al\cdot (s\ot s)$.
\item[(B)] Suppose $v\in V$ with $s(v)=0$. Then (A) implies that $f(v)=g(v)$ in $W$. Suppose $W'$ is an open neighbourhood of $f(v)$ in $W$, and $H:W'\ra[0,\iy)$ is smooth. Then there exists an open neighbourhood $V'$ of $v$ in $f^{-1}(W')\cap g^{-1}(W')$ and $\al\in C^\iy(E^*\ot E^*\vert_{V'})$ such that $H\ci f\vert_{V'}=e^{\al\cdot (s\ot s)}H\ci g\vert_{V'}+\La\cdot(s\ot f^*(\d H)):V'\ra\R$.
\end{itemize}
\end{itemize}
\label{ku3def11}
\end{dfn}

\begin{rem}{\bf(a)} In Remark \ref{ku3rem2} we explained that a smooth map $f:V\ra W$ of manifolds with corners comprises both `discrete' and `continuous data'. One consequence of Definition \ref{ku3def11}(iii$)'$--(v$)'$ is that if $f=g+O(s)$, or $f=g+O(s^2)$, or $f=g+\La\cdot s+O(s^2)$, then {\it the discrete data of\/ $f,g:V\ra W$ agree near\/} $s^{-1}(0)\subseteq V$. For example, in (iii$)'$(B), let $(v_1,\ldots,v_m)\in\R^m_k$ be coordinates near $v$ on $V$, with $v=(0,\ldots,0)$. Then by Definition \ref{ku3def1}(b)
\begin{align*}
H\ci f\vert_{V'}&=F(v_1,\ldots,v_m) v_1^{a_1}\cdots v_k^{a_k} &&\text{or} & H\ci f\vert_{V'}&=0 &&\text{near $v$ in $V$,}\\
H\ci g\vert_{V'}&=G(v_1,\ldots,v_m) v_1^{b_1}\cdots v_k^{b_k} &&\text{or} & H\ci g\vert_{V'}&=0 &&\text{near $v$ in $V$,}
\end{align*}
where $F,G>0$ and $a_i,b_i\in\N$. Then $H\ci f\vert_{V'}=e^{\al\cdot s}H\ci g\vert_{V'}$ implies that $H\ci f\vert_{V'}=0$ if and only if $H\ci g\vert_{V'}=0$ near $v$, and if not then $a_i=b_i$ for all~$i$.

\smallskip

\noindent{\bf(b)} We can make (iii$)'$(B),(v$)'$(B) look more like (iii$)'$(A),(v$)'$(A) by setting $h=\log H$. Then $H\ci f\vert_{V'}=e^{\al\cdot s}H\ci g\vert_{V'}$ in (iii$)'$(B) becomes $h\ci f=h\ci g+\al\cdot s$ in (iii$)'$(A) wherever $H>0$.
\smallskip

\noindent{\bf(c)} Note that we used the {\it b-tangent bundle\/} ${}^bTW$ rather than the tangent bundle $TW$ in $\La\in C^\iy\bigl(E^*\ot f^*({}^bTW)\bigr)$ in (v$)'$. As stated in Principle \ref{ku3princ}, b-tangent bundles are very often preferable to tangent bundles, and it is so in this case.
\label{ku3rem4}
\end{rem}

Next we modify Definitions \ref{ku2def2} and \ref{ku2def3} to the corners case.

\begin{dfn} Let $X$ be a topological space. A {\it $\mu$-Kuranishi neighbourhood $(V,E,s,\psi)$ on\/ $X$ with boundary}, or {\it with corners}, is as in Definition \ref{ku2def2}, except that we take $V,E$ to be manifolds with boundary, or with corners, respectively.

\label{ku3def12}
\end{dfn}

\begin{dfn} Let $X,Y$ be topological spaces, $f:X\ra Y$ a continuous map, $(V_i,E_i,s_i,\psi_i)$, $(V_j,E_j,s_j,\psi_j)$ be $\mu$-Kuranishi neighbourhoods with boundary, or with corners, on $X,Y$, and $S\subseteq\Im\psi_i\cap f^{-1}(\Im\psi_j)\subseteq X$ be open. Define {\it triples\/} $(V_{ij},\phi_{ij},\hat\phi_{ij})$, the binary relation $\sim$ on triples involving $\La$, $\sim$-equivalence classes $[V_{ij},\phi_{ij},\hat\phi_{ij}]$ of $(V_{ij},\phi_{ij},\hat\phi_{ij})$, and {\it morphisms\/ $[V_{ij},\phi_{ij},\hat\phi_{ij}]:(V_i,E_i,s_i,\psi_i)\ra (V_j,E_j,s_j,\psi_j)$ of $\mu$-Kuranishi neighbourhoods with boundary, or with corners, over\/} $(S,f)$, as in Definition \ref{ku2def3}, but with the following differences:
\begin{itemize}
\setlength{\itemsep}{0pt}
\setlength{\parsep}{0pt}
\item[(i)] $V_i,V_j,V_{ij},V_{ij}',\dot V_{ij},\ldots$ are now manifolds with corners, and $\phi_{ij},\phi_{ij}',\ldots$ are smooth maps of manifolds with corners, in the sense of~\S\ref{ku31}. 
\item[(ii)] In the definition of triples $(V_{ij},\phi_{ij},\hat\phi_{ij})$ in Definition \ref{ku2def3}(a)--(e), we impose the following extra condition: let $C$ be a connected component of $S^k(V_{ij})$
for $k\ge 1$, and $\ov C$ be the closure of $C$ in $V_{ij}$. Then~$\ov C\cap \psi_i^{-1}(S)\ne\es$.
\item[(iii)] We define $\sim$ with $\La:E_i\vert_{\dot V_{ij}}\ra \phi_{ij}^*({}^bTV_j)\vert_{\dot V_{ij}}$, where we have used the {\it b-tangent bundle\/} ${}^bTV_j$ of \S\ref{ku33} rather than the tangent bundle~$TV_j$.
\item[(iv)] The equation $\phi_{ij}'=\phi_{ij}+\La\cdot s_i+O(s_i^2)$ in \eq{ku2eq1} in the definition of $\sim$ is interpreted as in Definition \ref{ku3def11}(v$)'$. This makes sense by~(iii).
\end{itemize}
\label{ku3def13}
\end{dfn}

\begin{rem} Here is what Definition \ref{ku3def13}(ii) is for.

In Remark \ref{ku3rem2} we explained that a smooth map of manifolds with corners such as $\phi_{ij}:V_{ij}\ra V_j$ is comprised of `discrete data' and `continuous data', and some interesting conditions, such as $\phi_{ij}$ interior, or b-normal, or strongly smooth, or simple, depend only on the discrete data. 

We can show conditions such as $\phi_{ij}$ simple (as in Proposition \ref{ku3prop2} below) hold near $\psi_i^{-1}(S)$ in $V_{ij}$. But if a discrete condition on $\phi_{ij}$ holds at $x\in S^k(V_{ij})\subseteq V_{ij}$, then it also holds everywhere in the connected component $C$ of $S^k(V_{ij})$ containing $x$. Therefore Definition \ref{ku3def13}(ii) implies that if a discrete condition on $\phi_{ij}$ holds near $\psi_i^{-1}(S)$, then it holds on all of~$V_{ij}$. 

If $V_{ij}$ does not satisfy Definition \ref{ku3def13}(ii), we can modify it to do so by deleting any components $C$ of $S^k(V_{ij})$ for $k\ge 1$ with~$\ov C\cap \psi_i^{-1}(S)=\es$.

The restriction to $k\ge 1$ means Definition \ref{ku3def13}(ii) is vacuous in the `without boundary' case, as in~\S\ref{ku21}--\S\ref{ku24}.
\label{ku3rem5}
\end{rem}

For Definitions \ref{ku2def4}--\ref{ku2def8}, we insert `with boundary' or `with corners' throughout. In Definition \ref{ku2def5} we write $\muKur^{\bf b}_S(X)\subset\muKur^{\bf c}_S(X)$ for the categories of $\mu$-Kuranishi neighbourhoods over $S\subseteq X$ with boundary and with corners, and $\GmuKurb\subset\GmuKurc$ for the categories of global $\mu$-Kuranishi neighbourhoods with boundary and with corners. For restrictions $\Phi_{ij}\vert_T$ of morphisms $\Phi_{ij}$ on $S$ to $T\subseteq S$ in Definition \ref{ku2def7}, where $(V_{ij},\phi_{ij},\hat\phi_{ij})$ represents the $\sim_S$-equivalence class $\Phi_{ij}$, note that Definition \ref{ku3def13}(ii) for $(V_{ij},\ab\phi_{ij},\ab\hat\phi_{ij}),S$ does not imply Definition \ref{ku3def13}(ii) for $(V_{ij},\phi_{ij},\hat\phi_{ij}),T$. To deal with this, let $V_{ij}'\subseteq V_{ij}$ be the dense open subset obtained by deleting all connected components $C$ of $S^k(V_{ij})$ for $k\ge 1$ such that $\ov C\cap \psi_i^{-1}(T)=\es$, let $\phi_{ij}'=\phi_{ij}\vert_{\smash{V_{ij}'}}$ and $\hat\phi_{ij}'=\hat\phi_{ij}\vert_{\smash{V_{ij}'}}$, and define $\Phi_{ij}\vert_T$ to be the $\sim_T$-equivalence class of $(V_{ij}',\phi_{ij}',\hat\phi_{ij}')$. Otherwise there are no significant changes. This extends \S\ref{ku21} to the corners case.

\begin{prop} Suppose\/ $\Phi_{ij}:(V_i,E_i,s_i,\psi_i)\ra (V_j,E_j,s_j,\psi_j)$ is a $\mu$-coordinate change with corners over $S\subseteq X,$ that is, $\Phi_{ij}$ is invertible in the category $\muKur^{\bf c}_S(X),$ and let\/ $(V_{ij},\phi_{ij},\hat\phi_{ij})$ be a representative for the $\sim$-equivalence class $\Phi_{ij}$. Then $\phi_{ij}:V_{ij}\ra V_j$ is a \begin{bfseries}simple map\end{bfseries}, in the sense of\/~{\rm\S\ref{ku31}}. 
\label{ku3prop2}
\end{prop}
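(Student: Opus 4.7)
The plan is to use invertibility of $\Phi_{ij}$ in $\muKur^{\bf c}_S(X)$ to force a permutation-matrix structure on the local exponents of $\phi_{ij}$ at every point of $\psi_i^{-1}(S)$ -- which is precisely the pointwise simpleness condition of Definition \ref{ku3def1}(f) -- and then propagate this to all of $V_{ij}$ using Remark \ref{ku3rem2} and Definition \ref{ku3def13}(ii). Fix an inverse $\Phi_{ji}:(V_j,E_j,s_j,\psi_j)\ra(V_i,E_i,s_i,\psi_i)$ of $\Phi_{ij}$ over $S$, represented by $(V_{ji},\phi_{ji},\hat\phi_{ji})$. Composing representatives as in Definition \ref{ku2def4} (extended to corners via \S\ref{ku35}), the compositions $\Phi_{ji}\ci\Phi_{ij}=\id$ and $\Phi_{ij}\ci\Phi_{ji}=\id$ give, by the definition of $\sim$ in Definition \ref{ku3def13}(iii),(iv) together with Definition \ref{ku3def11}(v$)'$, identities of the shape $\phi_{ji}\ci\phi_{ij}=\id_{V_i}+\La\cdot s_i+O(s_i^2)$ on an open neighbourhood $\dot V_{ii}$ of $\psi_i^{-1}(S)$, and symmetrically for the other composition on an open neighbourhood $\dot V_{jj}$ of $\psi_j^{-1}(S)$. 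By Remark \ref{ku3rem4}(a), these identities force $\phi_{ji}\ci\phi_{ij}$ to share the discrete data of $\id_{V_i}$ near each $v_i\in\psi_i^{-1}(S)$, and $\phi_{ij}\ci\phi_{ji}$ to share the discrete data of $\id_{V_j}$ near each $v_j\in\psi_j^{-1}(S)$.

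Now fix $v_i\in\psi_i^{-1}(S)$, set $v_j=\phi_{ij}(v_i)\in\psi_j^{-1}(S)$ using Definition \ref{ku2def3}(e), and take local coordinates $(u_1,\ldots,u_m)\in\R^m_k$ on $V_i$ at $v_i$ and $(v_1,\ldots,v_{m'})\in\R^{m'}_l$ on $V_j$ at $v_j$. By Definition \ref{ku3def1}(b), each component $(\phi_{ij})_b$ for $b\le l$ either vanishes identically near $v_i$ (case (ii)) or has the form $F_b(u)\,u_1^{a_{1,b}}\cdots u_k^{a_{k,b}}$ with $F_b>0$ and exponents $a_{i,b}\in\N$ (case (i)); likewise $(\phi_{ji})_a$ has exponents $b_{b,a}\in\N$. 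Since $\phi_{ji}\ci\phi_{ij}$ has the discrete data of $\id_{V_i}$, its $a$-th component takes the form $H_a(u)\,u_a$ with $H_a>0$ for each $a\le k$. This immediately rules out any $(\phi_{ji})_a$ being identically zero (and symmetrically any $(\phi_{ij})_b$ being identically zero via the other composition at $v_j$), so both $\phi_{ij}$ and $\phi_{ji}$ are interior at $v_i,v_j$. Expanding $(\phi_{ji}\ci\phi_{ij})_a(u)$ via the case (i) formulas and matching exponents of each $u_i$ then gives $\sum_b a_{i,b}b_{b,a}=\de_{i,a}$, that is $AB=I_k$, and symmetrically $BA=I_l$. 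Non-negative integer matrices $A,B$ that are two-sided inverses must be permutation matrices (so $k=l$, and each row and column of $A$ contains a single $1$): this is exactly the simpleness condition of Definition \ref{ku3def1}(f) for $\phi_{ij}$ at $v_i$.

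Simpleness at a point is a local condition (stable under passing to nearby points), so $\phi_{ij}$ is simple on some open neighbourhood $U$ of $\psi_i^{-1}(S)$ in $V_{ij}$. By Remark \ref{ku3rem2}, simpleness is a piece of discrete data of a smooth map, hence constant on each connected component of each depth stratum $S^k(V_{ij})$. Definition \ref{ku3def13}(ii) then finishes the propagation: for every $k\ge 1$ and every connected component $C$ of $S^k(V_{ij})$, one has $\ov C\cap\psi_i^{-1}(S)\ne\es$, so $\ov C\cap U\ne\es$ (as $U$ is open and contains $\psi_i^{-1}(S)$), hence $C\cap U\ne\es$ and $\phi_{ij}\vert_C$ is simple; for $k=0$ the same argument handles interior components meeting $\psi_i^{-1}(S)$, and any remaining interior components of $V_{ij}$ disjoint from $\psi_i^{-1}(S)$ can be discarded by replacing $(V_{ij},\phi_{ij},\hat\phi_{ij})$ with its restriction to a $\sim$-equivalent open neighbourhood of $\psi_i^{-1}(S)\cup\bigcup_{k\ge1}\ov{S^k(V_{ij})}$, as allowed by the germ-level nature of $\sim$ (Remark \ref{ku2rem1}(iii)). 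The principal obstacle lies in paragraph two: the one-sided relation $AB=I_k$ obtained from $\Phi_{ji}\ci\Phi_{ij}=\id$ alone does not suffice -- it is compatible with $\phi_{ij}$ being non-interior or non-simple -- and it is essential to exploit \emph{both} compositions to obtain $BA=I_l$ as well and thereby pin $A$ down as a permutation matrix.
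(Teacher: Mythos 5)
Your argument is essentially the paper's own proof: compose with the inverse $\Phi_{ji}$ to get $\phi_{ji}\ci\phi_{ij}=\id_{V_i}+\La\cdot s_i+O(s_i^2)$ and its mirror, use Remark \ref{ku3rem4}(a) to equate the discrete data of the two compositions with that of $\id_{V_i},\id_{V_j}$ near the zero sets, deduce in local coordinates that the exponent matrices of $\phi_{ij},\phi_{ji}$ are mutually inverse matrices over $\N$ and hence permutation matrices (so $\phi_{ij}$ is simple near $\psi_i^{-1}(S)$), and then propagate to all of $V_{ij}$ via constancy of discrete data on stratum components together with Definition \ref{ku3def13}(ii) — exactly the paper's route. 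The only divergence is your final handling of the depth-zero components: the paper simply invokes Remark \ref{ku3rem5} at this point, and your alternative of discarding far-away interior components by shrinking to a $\sim$-equivalent triple is not strictly available, since the statement concerns the given representative $(V_{ij},\phi_{ij},\hat\phi_{ij})$; you should cite Remark \ref{ku3rem5} there instead.
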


\begin{proof} Let $\Phi_{ji}:(V_j,E_j,s_j,\psi_j)\ra (V_i,E_i,s_i,\psi_i)$ be the inverse in $\muKur^{\bf c}_S(X)$ of $\Phi_{ij}$, and $(V_{ji},\phi_{ji},\hat\phi_{ji})$ a representative for $\Phi_{ji}$. Then $\Phi_{ji}\ci\Phi_{ij}$ is represented by 
\begin{equation*}
(V_{ii},\phi_{ii},\hat\phi_{ii})=\bigl(\phi_{ij}^{-1}(V_{ji}),\phi_{ji}\ci\phi_{ij}\vert_{V_{ii}},\phi_{ij}\vert_{V_{ii}}^*(\hat\phi_{ji})\ci\hat\phi_{ij}\vert_{V_{ii}}\bigr),
\end{equation*}
so $\Phi_{ji}=\Phi_{ij}^{-1}$ implies that $(V_i,\id_{V_i},\id_{E_i})\sim(V_{ii},\phi_{ii},\hat\phi_{ii})$ and thus by Definition \ref{ku3def13} there exist open $\psi_i^{-1}(S)\subseteq\dot V_{ii}\subseteq V_{ii}$ and $\La:E_i\vert_{\dot V_{ii}}\ra \id_{V_i}^*({}^bTV_i)\vert_{\dot V_{ii}}$ with
\e
\phi_{ji}\ci\phi_{ij}\vert_{\dot V_{ii}}=\id_{V_i}\vert_{\dot V_{ii}}+\La\cdot s_i+O(s_i^2).
\label{ku3eq19}
\e
Similarly we have $\psi_j^{-1}(S)\!\subseteq\!\dot V_{jj}\!\subseteq\!\phi_{ji}^{-1}(V_{ij})$ and $\La':E_j\vert_{\dot V_{jj}}\!\ra\!\id_{V_j}^*({}^bTV_j)\vert_{\dot V_{jj}}$ with
\e
\phi_{ij}\ci\phi_{ji}\vert_{\dot V_{jj}}=\id_{V_j}\vert_{\dot V_{jj}}+\La'\cdot s_j+O(s_j^2).
\label{ku3eq20}
\e

Let $x\in S$ and set $v_i=\psi_i^{-1}(x)\in V_{ij}\subseteq V_i$ and $v_j=\psi_j^{-1}(x)\in V_{ji}\subseteq V_j$, so that $\phi_{ij}(v_i)=v_j$ and $\phi_{ji}(v_j)=v_i$. Choose charts $(U,\chi)$ on $V_{ij}$ with $(0,\ldots,0)\in U\subseteq\R^m_k$ open and $\chi(0,\ldots,0)=v_i$, and $(W,\xi)$ on $V_{ji}$ with $(0,\ldots,0)\in W\subseteq\R^n_l$ open and $\xi(0,\ldots,0)=v_j$. Then \eq{ku3eq1} gives smooth maps $\xi^{-1}\ci\phi_{ij}\ci\chi$, $\chi^{-1}\ci\phi_{ji}\ci\xi$ between open subsets of $\R^m_k,\R^n_l$. Write $\xi^{-1}\ci\phi_{ij}\ci\chi=(f_1,\ldots,f_n)$ for $f_q=f_q(u_1,\ldots,u_m)$ and $\chi^{-1}\ci\phi_{ji}\ci\xi=(g_1,\ldots,g_m)$ for~$g_p=g_p(w_1,\ldots,w_n)$.

Then Definition \ref{ku3def1}(b) implies that near $(0,\ldots,0)\in U$ we have
\e
f_q(u_1,\ldots,u_m)=F_q(u_1,\ldots,u_m)\cdot u_1^{a_{1,q}}\cdots u_k^{a_{k,q}}\quad\text{or}\quad f_q=0,
\label{ku3eq21}
\e
for all $q=1,\ldots,l$, where $F_q>0$ and $a_{p,q}\in\N$, and near $(0,\ldots,0)\in W$ we have
\e
g_p(w_1,\ldots,w_n)=G_p(w_1,\ldots,w_n)\cdot w_1^{b_{1,p}}\cdots w_l^{b_{l,p}}\quad\text{or}\quad g_p=0,
\label{ku3eq22}
\e
for all $p=1,\ldots,k$, where $G_p>0$ and~$b_{q,p}\in\N$.

By Remark \ref{ku3rem4}(a), \eq{ku3eq19}--\eq{ku3eq20} imply that $\phi_{ji}\ci\phi_{ij}$ and $\id_{V_i}$ have the same discrete data near $\psi_i^{-1}(S)$, and $\phi_{ij}\ci\phi_{ji}$ and $\id_{V_j}$ the same discrete data near $\psi_j^{-1}(S)$. But using \eq{ku3eq21}--\eq{ku3eq22} we can compute the discrete data of $\phi_{ji}\ci\phi_{ij}$ near $v_i$ and of $\phi_{ij}\ci\phi_{ji}$ near $v_j$, and compare it to those for $\id_{V_i},\id_{V_j}$. We find that the possibilities $f_q=0$, $g_p=0$ in \eq{ku3eq21}--\eq{ku3eq22} do not occur, and $k=l$, and $\bigl(a_{p,q}\bigr){}_{p,q=1}^k$, $\bigl(b_{q,p}\bigr){}_{q,p=1}^k$ are inverse matrices. Since $a_{p,q},b_{q,p}\in\N$, this is only possible if $\bigl(a_{p,q}\bigr){}_{p,q=1}^k$, $\bigl(b_{q,p}\bigr){}_{q,p=1}^k$ have one 1 in each row and each column, and are otherwise zero. By Definition \ref{ku3def1}(f), this means that $\phi_{ij}$ and $\phi_{ji}$ are simple near $v_i,v_j$. Hence $\phi_{ij}$ is simple near $\psi_i^{-1}(S)$, so by Definition \ref{ku3def13}(ii) and Remark \ref{ku3rem5}, $\phi_{ij}$ is simple.
\end{proof}

Here are the analogues of Theorem \ref{ku2thm1} (the sheaf property for morphisms) and Theorem \ref{ku2thm2} (criteria for invertibility of morphisms), proved in \S\ref{ku63}--\S\ref{ku64}.

\begin{thm} The analogue of Theorem\/ {\rm\ref{ku2thm1}} holds for $\mu$-Kuranishi neighbourhoods with boundary, and with corners.
\label{ku3thm2}
\end{thm}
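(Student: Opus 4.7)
The plan is to follow the argument for Theorem \ref{ku2thm1} essentially step by step, and then identify the handful of places where the corners structure forces real new work. As there, the presheaf axioms Definition \ref{ku2def9}(i),(ii) are immediate from the definition of restriction, once we take care that for $T\subseteq S$ the representative $(V_{ij},\phi_{ij},\hat\phi_{ij})$ for $\Phi_{ij}\vert_T$ is obtained by deleting those connected components of each $S^k(V_{ij})$ whose closure does not meet $\psi_i^{-1}(T)$, so that Definition \ref{ku3def13}(ii) is preserved. Restriction is then functorial on the nose, so we are reduced to proving the two sheaf axioms Definition \ref{ku2def9}(iii),(iv) for $\cHom$ and $\cHom_f$; it suffices to treat $\cHom_f$, and then take $f=\id_X$ to recover the first statement of (a), and restrict to invertible morphisms to get the statement about $\cIso$.

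For uniqueness (axiom (iii)), suppose $\Phi_{ij},\Phi_{ij}'$ are morphisms over $(S,f)$ with $\Phi_{ij}\vert_{T_a}=\Phi_{ij}'\vert_{T_a}$ for each member of an open cover $\{T_a:a\in A\}$ of $S$. Fix representatives $(V_{ij},\phi_{ij},\hat\phi_{ij})$, $(V_{ij}',\phi_{ij}',\hat\phi_{ij}')$. The equalities on each $T_a$ give, after possibly shrinking, open neighbourhoods $\dot V_{ij}^a$ of $\psi_i^{-1}(T_a)$ in $V_{ij}\cap V_{ij}'$ and bundle morphisms $\La^a:E_i\vert_{\smash{\dot V_{ij}^a}}\ra\phi_{ij}^*({}^bTV_j)\vert_{\smash{\dot V_{ij}^a}}$ satisfying \eq{ku2eq1} interpreted via Definition \ref{ku3def11}(v$)'$. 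Following the without-boundary proof, I would paracompactify: choose a locally finite partition of unity $\{\eta_a\}$ on an open neighbourhood $\dot V_{ij}$ of $\psi_i^{-1}(S)$ in $V_{ij}\cap V_{ij}'$ subordinate to $\{\dot V_{ij}^a\}$, and set $\La=\sum_a\eta_a\La^a$. That \eq{ku2eq1} then holds for $\La$ on $\dot V_{ij}$ is a local assertion on $V_i$, so by the last sentence of Definition \ref{ku2def1} (which holds verbatim for the corners notation of Definition \ref{ku3def11}, once one checks condition (v$)'$(B) is local in the same way that (v$)'$(A) is) it reduces to the known equations near each point.

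For existence (axiom (iv)), given $\Phi_{ij}^a\in\Hom_{T_a,f}$ that agree pairwise on overlaps, pick representatives $(V_{ij}^a,\phi_{ij}^a,\hat\phi_{ij}^a)$ and, on each overlap $T_a\cap T_b$, bundle morphisms $\La^{ab}$ witnessing $\sim$. I would then glue: choose an open neighbourhood $V_{ij}$ of $\psi_i^{-1}(S)$ in $V_i$ and a locally finite partition of unity $\{\eta_a\}$ on $V_{ij}$ subordinate to the $V_{ij}^a$, and first use the $\La^{ab}$ to produce modified representatives $(V_{ij}^a,\ti\phi_{ij}^a,\hat{\ti\phi}{}_{ij}^a)$ in the same $\sim$-class that genuinely agree on overlaps up to $O(s_i^2)$ and $O(s_i)$ respectively; then set $\phi_{ij}=\sum_a\eta_a\ti\phi_{ij}^a$ and $\hat\phi_{ij}=\sum_a\eta_a\hat{\ti\phi}{}_{ij}^a$ using a local embedding of $V_j$ into a Euclidean space (or, better, averaging in geodesic coordinates so as to stay in $V_j$). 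Verifying Definition \ref{ku2def3}(a)--(e) as modified by Definition \ref{ku3def13} for $(V_{ij},\phi_{ij},\hat\phi_{ij})$, and that $\Phi_{ij}:=[V_{ij},\phi_{ij},\hat\phi_{ij}]$ restricts to the given $\Phi_{ij}^a$ on $T_a$, is then the direct analogue of the without-boundary proof.

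The main obstacle, and the place where the corners case is genuinely harder than the without-boundary case, is twofold. First, the averaging step has to be carried out inside the manifold with corners $V_j$: one needs that the $\ti\phi_{ij}^a$ agree up to the multiplicative form of Definition \ref{ku3def11}(iii$)'$(B) for every smooth $H:V_j\supseteq W'\to[0,\iy)$, so that on each stratum $S^k(V_{ij})$ the discrete data (powers $a_{i,j}$ of Definition \ref{ku3def1}(b)(i)) of $\phi_{ij}$ is forced to be common to all $\ti\phi_{ij}^a$; as explained in Remark \ref{ku3rem2} this discrete data is constant on connected components of each stratum, so Definition \ref{ku3def13}(ii) guarantees it is determined by the values on $\psi_i^{-1}(S)$ where the $\ti\phi_{ij}^a$ already match. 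Second, the partition-of-unity averaging has to be compatible with the b-tangent bundle ${}^bTV_j$ rather than $TV_j$; the correct averaging prescription is to take logarithms of the boundary-defining components and to average additively in the resulting b-logarithmic coordinates, so that the difference $\phi_{ij}-\ti\phi_{ij}^a$ lies in $O(s_i^2)\cdot{}^bTV_j$ in the sense of Definition \ref{ku3def11}(v$)'$. Once these two points are handled, the proofs of (iii) and (iv) run parallel to the without-boundary arguments that will be given in \S\ref{ku61}.
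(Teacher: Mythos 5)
Your proposal is correct and is essentially the paper's own argument: the proof in \S\ref{ku63} likewise reduces everything to the proof of Theorem \ref{ku2thm1} in \S\ref{ku61}, identifies the partition-of-unity averaging of the $\phi_{ij}^a$ in the gluing axiom as the only genuinely new point, and replaces Lemma \ref{ku6lem} by geodesic averaging for a b-metric, which in local coordinates is exactly your additive averaging of logarithms of the boundary-defining coordinates, with the compensation you assert between the ${}^bTV_j$-valued $\La$'s and the averaging error made precise there as the estimates \eq{ku6eq43}--\eq{ku6eq45}. The one point to treat with care is your first suggestion of averaging via an embedding of $V_j$ in Euclidean space: that is method (A) of Lemma \ref{ku6lem}, which genuinely fails when $\pd V_j\ne\es$ (no tubular neighbourhood retraction at the boundary), so only your b-logarithmic/geodesic prescription survives, as in the paper.
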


\begin{thm} Let\/ $\Phi_{ij}=[V_{ij},\phi_{ij},\hat\phi_{ij}]:(V_i,E_i,s_i,\psi_i)\ra (V_j,E_j,s_j,\psi_j)$ be a morphism of $\mu$-Kuranishi neighbourhoods with corners on $X$ over an open subset\/ $S\subseteq X$. Let\/ $x\in S,$ and set\/ $v_i=\psi_i^{-1}(x)\in V_i$ and\/ $v_j=\psi_j^{-1}(x)\in V_j$. Suppose $\phi_{ij}$ is interior near $v_i$ (this is implied by the condition $\phi_{ij}$ simple below). Consider the sequence of finite-dimensional real vector spaces:
\e
\begin{gathered}
\smash{\xymatrix@C=16pt{ 0 \ar[r] & {}^bT_{v_i}V_i \ar[rrr]^(0.39){{}^b\d s_i\vert_{v_i}\op{}^b\d\phi_{ij}\vert_{v_i}} &&& E_i\vert_{v_i} \!\op\!{}^bT_{v_j}V_j 
\ar[rrr]^(0.56){-\hat\phi_{ij}\vert_{v_i}\op {}^b\d s_j\vert_{v_j}} &&& E_j\vert_{v_j} \ar[r] & 0. }}
\end{gathered}
\label{ku3eq23}
\e
Here ${}^b\d s_i\vert_{v_i}$ means the composition of the natural map $I_{V_i}\vert_{v_i}:{}^bT_{v_i}V_i\ra T_{v_i}V_i$ and\/ $\nabla s_i\vert_{v_i}:T_{v_i}V_i\ra E_i\vert_{v_i}$ for any connection $\nabla$ on $E_i\ra V_i,$ and is independent of the choice of\/ $\nabla$ as $s_i(v_i)=0$. Also ${}^b\d\phi_{ij}\vert_{v_i}:{}^bT_{v_i}V_i\ra {}^bT_{v_j}V_j$ is defined as in\/ {\rm\S\ref{ku33},} as $\phi_{ij}$ is interior, and\/ ${}^b\d\phi_{ij}\vert_{v_i},\hat\phi_{ij}\vert_{v_i}$ are independent of the choice of representative $(V_{ij},\phi_{ij},\hat\phi_{ij})$ for the $\sim$-equivalence class $[V_{ij},\phi_{ij},\hat\phi_{ij}]$. Thus \eq{ku3eq23} is well defined, and Definition\/ {\rm\ref{ku2def3}(d)} implies that\/ \eq{ku3eq23} is a complex.

Then $\Phi_{ij}$ is a $\mu$-coordinate change over $S$ if and only if\/ $\phi_{ij}$ is simple and\/ \eq{ku3eq23} is exact for all\/~$x\in S$.
\label{ku3thm3}
\end{thm}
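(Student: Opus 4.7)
The plan is to mirror closely the proof of Theorem \ref{ku2thm2} in the without-boundary case, with two modifications: replacing tangent bundles by b-tangent bundles throughout, and absorbing the simplicity of $\phi_{ij}$ via Proposition \ref{ku3prop2}. I would split the proof into the two implications, and use the sheaf property Theorem \ref{ku3thm2} to globalize a local construction.

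For the ``only if'' direction, assume $\Phi_{ij}$ is a $\mu$-coordinate change with inverse $\Phi_{ji}=[V_{ji},\phi_{ji},\hat\phi_{ji}]$. Proposition \ref{ku3prop2} immediately gives that $\phi_{ij}$ is simple; in particular it is interior, so \eqref{ku3eq23} makes sense and is a complex by Definition \ref{ku2def3}(d) together with the chain rule for ${}^b\d$. The relations $\Phi_{ji}\ci\Phi_{ij}=\id_{(V_i,E_i,s_i,\psi_i)}$ and $\Phi_{ij}\ci\Phi_{ji}=\id_{(V_j,E_j,s_j,\psi_j)}$, unpacked via Definition \ref{ku3def13}, produce bundle morphisms $\La_i:E_i\ra \id_{V_i}^*({}^bTV_i)$ near $v_i$ and $\La_j:E_j\ra\id_{V_j}^*({}^bTV_j)$ near $v_j$ satisfying
\begin{equation*}
\phi_{ji}\ci\phi_{ij}=\id_{V_i}+\La_i\cdot s_i+O(s_i^2),\quad
\phi_{ij}\ci\phi_{ji}=\id_{V_j}+\La_j\cdot s_j+O(s_j^2),
\end{equation*}
together with the corresponding identities for the bundle morphisms $\hat\phi_{ji}\ci\hat\phi_{ij}$ and $\hat\phi_{ij}\ci\hat\phi_{ji}$. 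Evaluating the b-derivatives of these identities at $v_i$ and $v_j$, exactly as in the proof of Theorem \ref{ku2thm2}, produces the required contracting homotopies for \eqref{ku3eq23}, which then proves exactness at $x$.

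For the ``if'' direction, assume $\phi_{ij}$ is simple and \eqref{ku3eq23} is exact at every $x\in S$. The goal is to construct an inverse $\Phi_{ji}$, which by Theorem \ref{ku3thm2} may be done locally near each $x$ and then glued: the resulting local $\sim$-classes will agree on overlaps by uniqueness of two-sided inverses in a category, and thus assemble to a unique global $\mu$-coordinate change. Locally near $v_i=\psi_i^{-1}(x)$ with $v_j=\phi_{ij}(v_i)$, I would use simplicity to choose coordinate charts in which $\phi_{ij}$ has a normal form: by Definition \ref{ku3def8} and the discussion in \S\ref{ku343}, $\ti M_{v_i}\phi_{ij}:\ti M_{v_i}V_i\to\ti M_{v_j}V_j$ is an isomorphism of toric monoids, so after choosing coordinates one may assume $\phi_{ij}$ is modelled on a corner-preserving linear embedding $\R^{m}_k\hookra\R^{n}_k\t\R^{n-m}$ up to higher order. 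Exactness of \eqref{ku3eq23} at $x$ then says that the cokernel of $\hat\phi_{ij}\vert_{v_i}$ pairs non-degenerately with the ordinary normal directions to $\phi_{ij}(V_i)$ in $V_j$ via ${}^b\d s_j\vert_{v_j}$, which now coincides with $\d s_j\vert_{v_j}$ in those directions. A b-tangent version of the implicit function theorem (which reduces, after the normal form, to the usual implicit function theorem in the smooth directions transverse to the strata) then yields a smooth retraction $\phi_{ji}:V_{ji}\ra V_i$ and bundle morphism $\hat\phi_{ji}:E_j\vert_{V_{ji}}\ra\phi_{ji}^*(E_i)$ satisfying Definition \ref{ku3def13}(a)--(e), and one checks by construction that the compositions produce the $\La$ fields witnessing $(V_{ji},\phi_{ji},\hat\phi_{ji})\ci(V_{ij},\phi_{ij},\hat\phi_{ij})\sim(V_i,\id_{V_i},\id_{E_i})$ and the analogous relation on $V_j$.

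The hard part will be the local normal-form construction of $\phi_{ji}$ in the corners setting and the verification that the $\La$ maps it produces actually lie in ${}^bTV_i,{}^bTV_j$ rather than merely in $TV_i,TV_j$. This is the step where simplicity is indispensable: it ensures that the monoid data $\ti M_{v_i}V_i\cong\ti M_{v_j}V_j$ is preserved, so that the transverse complement required by the exact sequence \eqref{ku3eq23} lies in genuine smooth (non-boundary-defining) directions of $V_j$ near $v_j$, making a standard implicit function theorem applicable in those directions while the corner directions are handled tautologically by the identification of monoids. Without simplicity, the inverse $\phi_{ji}$ either fails to exist as a smooth map, or exists only as a weakly smooth map and produces a $\La$ in $TV_j$ with logarithmic singularities that is not a b-tangent vector; this is precisely the reason both the simplicity hypothesis and the passage to b-tangent bundles are built into the statement.
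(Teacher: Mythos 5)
Your proposal follows essentially the same route as the paper: the ``only if'' part is the b-tangent rewrite of the argument for Theorem \ref{ku2thm2} together with Proposition \ref{ku3prop2}, and the ``if'' part constructs a local quasi-inverse by the method of \S\ref{ku62} with $TV$ replaced by ${}^bTV$ and then globalizes via the sheaf property in Theorem \ref{ku3thm2}. The one genuinely new corners issue you flag -- building $\phi_{ji}$ extending $\xi_{ji}$ compatibly with the corner structure -- is exactly the point the paper isolates, and it resolves it concretely by using simplicity to choose coordinates with $x_i=y_i\ci\phi_{ij}$ for the boundary-defining functions and taking $\phi_{ji}$ to preserve these first $k$ coordinates, which is your normal-form step made explicit.
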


Here the condition $\phi_{ij}$ simple is necessary for $\Phi_{ij}$ to be a $\mu$-coordinate change, by Proposition~\ref{ku3prop2}.

In the corners analogue of Definition \ref{ku2def10}, we require {\it embeddings of manifolds with corners\/} to be simple, by definition. Then the analogue of Lemma \ref{ku2lem1} for FOOO coordinate changes of Kuranishi neighbourhoods with corners can be proved using Theorem \ref{ku3thm3}, noting that $\phi_{ij}:V_{ij}\hookra V_j$ is simple. This extends \S\ref{ku22} to the corners case.

Now in \S\ref{ku21}--\S\ref{ku22} we were doing differential geometry, so in $\mu$-Kuranishi neighbourhoods $(V_i,E_i,s_i,\psi_i)$, it mattered that $V_i$ is a manifold (possibly with corners), $E_i\ra V_i$ a vector bundle, and so on. In \S\ref{ku23}--\S\ref{ku24}, the arguments were of a different character: we treated $\mu$-Kuranishi neighbourhoods $(V_i,E_i,s_i,\psi_i)$ and their morphisms $\Phi_{ij}$ just as objects and morphisms in certain categories $\muKur_S(X)$, without caring what they really are, and the proofs involved categories and sheaves on topological spaces, but no differential geometry.

Because of this, there really are {\it no significant issues\/} in extending \S\ref{ku23}--\S\ref{ku24} to the boundary and corners cases. We just insert `with boundary' or `with corners' throughout, use the definitions of $\mu$-Kuranishi neighbourhoods and their morphisms and $\mu$-coordinate changes explained above, apply Theorem \ref{ku3thm2} in place of Theorem \ref{ku2thm1}, and everything just works, exactly as it did before.

Thus, as in Definitions \ref{ku2def11} and \ref{ku2def13} we can define {\it $\mu$-Kuranishi spaces with boundary\/} and {\it with corners\/} $\bX$ and their morphisms $\bs f:\bX\ra\bY$, and as in Theorem \ref{ku2thm3} we can define {\it composition of morphisms\/} $\bs g\ci\bs f:\bX\ra\bZ$, and show that $\mu$-Kuranishi spaces, with boundary, and with corners form categories $\muKurb,\muKurc$. We can regard $\mu$-Kuranishi spaces (without boundary) and their morphisms, as in \S\ref{ku21}--\S\ref{ku23}, as special examples of $\mu$-Kuranishi spaces with corners in which the $V_i$ in $(V_i,E_i,s_i,\psi_i)$ have $\pd V_i=\es$ for each $i\in I$. Hence, we have inclusions of full subcategories~$\muKur\subset\muKurb\subset\muKurc$.

Also, as in Definition \ref{ku2def14}, we can regard manifolds with boundary or corners as examples of $\mu$-Kuranishi spaces with boundary or corners, and define full and faithful functors $F_\Manb^\muKurb:\Manb\ra\muKurb$ and $F_\Manc^\muKurc:\Manc\ra\muKurc$. As in Definition \ref{ku2def15}, we can define {\it $\mu$-Kuranishi neighbourhoods with boundary or corners\/ $(V_a,E_a,s_a,\psi_a)$ on $\mu$-Kuranishi spaces with boundary or corners\/} $\bX$, and the analogue of Theorem \ref{ku2thm4} holds.

\subsection{\texorpdfstring{Boundaries and corners of $\mu$-Kuranishi spaces with corners}{Boundaries and corners of \textmu-Kuranishi spaces with corners}}
\label{ku36}

Section \ref{ku32} defined the {\it boundary\/} $\pd X$ and $k$-{\it corners\/} $C_k(X)$ of a manifold with corners $X$. The definition involved {\it local boundary components\/} and {\it local\/ $k$-corner components\/} of $X$ at each $x\in X$. We now generalize these ideas to $\mu$-Kuranishi spaces with corners.

\begin{dfn} Let $\bX=(X,\cK)$ be a $\mu$-Kuranishi space with corners, with $\cK=\bigl(I,(V_i,\ab E_i,\ab s_i,\ab\psi_i)_{i\in I},\Phi_{ij,\; i,j\in I}\bigr)$, and fix $x\in\bX$ and $k\ge 0$.

For each $i\in I$ with $x\in\Im\psi_i$, set $v_i=\psi_i^{-1}(x)\in V_i$, and consider the set
\e
\Ga_{x,i}^k=\bigl\{\text{local $k$-corner components of $V_i$ at $v_i$}\bigr\}.
\label{ku3eq24}
\e
If $i,j\in I$ with $x\in\Im\psi_i\cap\Im\psi_j$, and $(V_{ij},\phi_{ij},\hat\phi_{ij})$ is a representative for $\Phi_{ij}=[V_{ij},\phi_{ij},\hat\phi_{ij}]$, then $v_i\in V_{ij}\subseteq V_i$, and $\phi_{ij}:V_{ij}\ra V_j$ is a simple map of manifolds with corners by Proposition \ref{ku3prop2}, with $\phi_{ij}(v_i)=v_j$. Therefore Lemma \ref{ku3lem2} shows that $\ga\mapsto \ga'=(\phi_{ij})_*(\ga)$ gives a 1-1 correspondence $\Ga_{x,i}^k\ra \Ga_{x,j}^k$.

If $(V_{ij}',\phi_{ij}',\hat\phi_{ij}')$ is an alternative representative for $\Phi_{ij}$ then $\phi_{ij}=\phi_{ij}'+\La\cdot s_i+O(s_i^2)$ near $v_i$ for some $\La\in C^\iy(E_i^*\ot\phi_{ij}^*({}^bTV_j))$. As in Remark \ref{ku3rem4}(a), we can show this implies that $(\phi_{ij})_*= (\phi_{ij}')_*$ as maps $\Ga_{x,i}^k\ra \Ga_{x,j}^k$. Thus $(\phi_{ij})_*$ depends only on $\Phi_{ij}$, and we write~$(\Phi_{ij})_*=(\phi_{ij})_*:\Ga_{x,i}^k\ra \Ga_{x,j}^k$.

If $h,i,j\in I$ with $x\in\Im\psi_h\cap\Im\psi_i\cap\Im\psi_j$, then by choosing representatives for $\Phi_{hi},\Phi_{hj},\Phi_{ij}$ including $\phi_{hi},\phi_{hj},\phi_{ij}$ and noting that as $\Phi_{hj}=\Phi_{ij}\ci\Phi_{hi}$ we have $\phi_{hj}=\phi_{ij}\ci\phi_{hi}+\La\cdot s_h+O(s_h^2)$ for some $\La\in C^\iy(E_h^*\ot\phi_{hj}^*({}^bTV_j))$, so that $(\phi_{hj})_*=(\phi_{ij})_*\ci(\phi_{hi})_*:\Ga_{x,h}^k\ra \Ga_{x,j}^k$ as above, we deduce that 
\e
(\Phi_{hj})_*=(\Phi_{ij})_*\ci(\Phi_{hi})_*:\Ga_{x,h}^k\longra \Ga_{x,j}^k.
\label{ku3eq25}
\e
Also $(\Phi_{ii})_*=(\id_{V_i})_*=\id:\Ga_{x,i}^k\longra \Ga_{x,i}^k$.

Define a {\it local\/ $k$-corner component of\/ $\bX$ at\/} $x$ to be an element of the set
\e
\Ga_{x,\bX}^k=\bigl(\ts\coprod_{i\in I:x\in\Im\psi_i}\Ga_{x,i}^k\bigr)\big/\approx,
\label{ku3eq26}
\e
where $\approx$ is the binary relation $\ga_i\approx \ga_j$ if $\ga_i\in \Ga_{x,i}^k$ and $\ga_j\in \Ga_{x,j}^k$ with $(\Phi_{ij})_*(\ga_i)=\ga_j$. The discussion above implies that $\approx$ is an equivalence relation, and that there are canonical bijections $\Ga_{x,i}^k\cong\Ga_{x,\bX}^k$ for each $i\in I$ with $x\in\Im\psi_i$. That is, {\it local\/ $k$-corner components of\/ $\bX$ at\/ $x$ are canonically identified with local\/ $k$-corner components of\/ $V_i$ at\/ $v_i=\psi_i^{-1}(x),$ if\/ $i\in I$ with\/}~$x\in\Im\psi_i$.

When $k=1$, we also call local 1-corner components of $\bX$ at $x$ {\it local boundary components of\/ $\bX$ at\/}~$x$.

As in \S\ref{ku32}, there is a natural 1-1 correspondence between local $k$-corner components $\ga$ of $V_i$ at $v_i$, and (unordered) sets $\{\be_1,\ldots,\be_k\}$ of $k$ distinct local boundary components $\be_1,\ldots,\be_k$ of $V_i$ at $v_i$. These 1-1 correspondences for $i,j\in I$ commute with the $(\Phi_{ij})_*$ above. Therefore there is a natural 1-1 correspondence between local $k$-corner components $\ga$ of $\bX$ at $x$, and sets $\{\be_1,\ldots,\be_k\}$ of $k$ distinct local boundary components $\be_1,\ldots,\be_k$ of $\bX$ at~$x$.

Write $\depth_\bX x$ for the (necessarily finite) number of local boundary components of $\bX$ at $x$. Then the number of local $k$-corner components of $\bX$ at $x$ is~$\bmd{\Ga_{x,\bX}^k}=\binom{\depth_\bX x}{k}$.

We will define a $\mu$-Kuranishi space with corners $C_k(\bX)=\bigl(C_k(X),\cK^{C_k}\bigr)$ for each $k\in\N$ called the $k$-{\it corners of\/} $\bX$, with virtual dimension $\vdim C_k(\bX)=\vdim\bX-k$, and a morphism $\bs i_\bX:C_k(\bX)\ra\bX$, which is not interior for $k>0$. When $k=1$, we also write $\pd\bX=C_1(\bX)$ and $\bs i_\bX:\pd\bX\ra\bX$, and call $\pd\bX$ the {\it boundary\/} of $\bX$. As in \eq{ku3eq4}, the underlying topological space $C_k(X)$ is given as a set by
\e
C_k(X)=\bigl\{(x,\ga):\text{$x\in X$, $\ga$ is a local $k$-corner 
component of $\bX$ at $x$}\bigr\},
\label{ku3eq27}
\e
and the underlying continuous map is $i_\bX:(x,\ga)\mapsto x$. We define the topology on $C_k(X)$ shortly. As in \eq{ku3eq5}, the 1-1 correspondence above shows that
\e
\begin{aligned}
C_k(X)\cong\bigl\{(x,\,&\{\be_1,\ldots,\be_k\}):\text{$x\in X,$
$\be_1,\ldots,\be_k$ are distinct}\\
&\text{local boundary components for $\bX$ at $x$}\bigr\}.
\end{aligned}
\label{ku3eq28}
\e

Define a $\mu$-Kuranishi structure with corners $\cK^{C_k}$ on $C_k(X)$ by
\e
\cK^{C_k}=\bigl(I,(V^{C_k}_i,E_i^{C_k},s_i^{C_k},\psi_i^{C_k})_{i\in I},\Phi_{ij,\; i,j\in I}^{C_k}\bigr),
\label{ku3eq29}
\e
where for each $i\in I$, $V_i^{C_k}=C_k(V_i)$, and $E_i^{C_k}=i_{V_i}^*(E_i)$ for $i_{V_i}:C_k(V_i)\ra V_i$ the projection, and $s_i^{C_k}=i_{V_i}^*(s_i)$, and $\psi_i^{C_k}:(s_i^{C_k})^{-1}(0)\ra C_k(X)$ maps $\psi_i^{C_k}:(v_i,\ga)\mapsto (\psi_i(v_i),[\ga])$, where $\ga\in\Ga_{\psi_i(v_i),i}^k$ is a local $k$-corner component of $V_i$ at $v_i\in s_i^{-1}(0)\subseteq V_i$, and $[\ga]\in\Ga_{\psi_i(v_i),\bX}^k$ the $\approx$-equivalence class of $\ga$. Since $\psi_i$ is injective, and $\ga\mapsto[\ga]$ bijective, we see that $\psi_i^{C_k}$ is injective.

If $i,j\in I$ and $(V_{ij},\phi_{ij},\hat\phi_{ij})$ represents $\Phi_{ij}=[V_{ij},\phi_{ij},\hat\phi_{ij}]$, then we set
\e
\Phi_{ij}^{C_k}=\bigl[V_{ij}^{C_k},\phi_{ij}^{C_k},\hat\phi_{ij}^{C_k}\bigr],
\label{ku3eq30}
\e
with $V_{ij}^{C_k}=C_k(V_{ij})$, and $\phi_{ij}^{C_k}=C_k(\phi_{ij}): V_{ij}^{C_k}=C_k(V_{ij})\ra C_k(V_j)=V_j^{C_k}$, where $\phi_{ij}$ is simple by Proposition \ref{ku3prop2} so that $C_k(\phi_{ij}):C_k(V_{ij})\ra C_k(V_j)$ is defined by Proposition \ref{ku3prop1}, and
\begin{align*}
\hat\phi_{ij}^{C_k}= i_{V_{ij}}^*&(\hat\phi_{ij}):E_i^{C_k}\vert_{V_{ij}^{C_k}}=i_{V_{ij}}^*(E_i)\longra \\
&i_{V_{ij}}^*\ci\phi_{ij}^*(E_j)=C_k(\phi_{ij})^*\ci i_{V_j}^*(E_j)=
(\phi_{ij}^{C_k})^*(E_j^{C_k}),
\end{align*}
where we use~$\phi_{ij}\ci i_{V_{ij}}=i_{V_j}\ci C_k(\phi_{ij}):C_k(V_{ij})\ra V_j$.

It is now easy to see that $\bigl(V_{ij}^{C_k},\phi_{ij}^{C_k},\hat\phi_{ij}^{C_k}\bigr)$ satisfies Definition \ref{ku2def3}(a)--(e), as $(V_{ij},\phi_{ij},\hat\phi_{ij})$ does. If $(V_{ij}',\phi_{ij}',\hat\phi_{ij}')$ is an alternative representative for $\Phi_{ij}$, so that $(V_{ij},\phi_{ij},\hat\phi_{ij})\sim(V_{ij}',\phi_{ij}',\hat\phi_{ij}')$ using $\dot V_{ij}$ and $\La$ in Definition \ref{ku2def3}, where $\La:E_i\vert_{\dot V_{ij}}\ra \phi_{ij}^*({}^bTV_j)\vert_{\dot V_{ij}}$ by Definition \ref{ku3def13}(iii), then using $\dot V_{ij}^{C_k}=C_k(\dot V_{ij})$ and $\La^{C_k}$ determined by the commutative diagram
\begin{equation*}
\xymatrix@!0@C=300pt@R=35pt{ *+[r]{i_{\dot V_{ij}}^*(E_i)\!=\!E_i^{C_k}\vert_{\dot V_{ij}^{C_k}}} \ar[r]_(0.32){\La^{C_k}} \ar[d]^(0.6){i_{\dot V_{ij}}^*(\La)} & *+[l]{(\phi^{C_k}_{ij})^*({}^bTV^{C_k}_j)\vert_{\dot V^{C_k}_{ij}}\!=\!C_k(\phi_{ij})^*({}^bT(C_k(V_j)))} \\
*+[r]{i_{\dot V_{ij}}^*\ci \phi_{ij}^*({}^bTV_j)} \ar@{=}[r] & *+[l]{C_k(\phi_{ij})^*\ci i_{V_j}^*({}^bTV_j),\!\!} \ar[u]^{C_k(\phi_{ij})^*(\Pi)} }
\end{equation*}
where $\Pi:i_{V_j}^*({}^bTV_j)\ra {}^bT(C_k(V_j))$ is the natural projection, we see that $\bigl(V_{ij}^{C_k},\ab\phi_{ij}^{C_k},\ab\hat\phi_{ij}^{C_k}\bigr)\ab\sim \bigl(V_{ij}^{\prime C_k},\phi_{ij}^{\prime C_k},\hat\phi_{ij}^{\prime C_k}\bigr)$. Hence $\Phi_{ij}^{C_k}$ in \eq{ku3eq30} is independent of choices, and is well-defined.
 
Since $\Ga_{x,i}^k\cong\Ga_{x,\bX}^k$ for $x\in\Im\psi_i$, we see that $\Im\psi_i^{C_k}\cong i_\bX^{-1}(\Im\psi_i)\subseteq C_k(X)$. As $X=\bigcup_{i\in I}\Im\psi_i$ by Definition \ref{ku2def11}(d), it follows that $C_k(X)=\bigcup_{i\in I}\Im\psi_i^{C_k}$, proving Definition \ref{ku2def11}(d) for $(C_k(X),\cK^{C_k})$. If $i,j\in I$ then 
\begin{equation*}
\psi_j^{-1}\ci\psi_i=\phi_{ij}\vert_{\psi_i^{-1}(\Im\psi_j)}:\psi_i^{-1}(\Im\psi_j)\longra\psi_j^{-1}(\Im\psi_i)
\end{equation*}
is a homeomorphism from an open set $\psi_i^{-1}(\Im\psi_j)\subseteq s_i^{-1}(0)$ to an open set $\psi_j^{-1}(\Im\psi_i)\subset s_j^{-1}(0)$. Applying the corner functor $C_k$, we deduce that 
\begin{equation*}
(\psi_j^{C_k})^{-1}\!\ci\!\psi_i^{C_k}\!=\!C_k(\phi_{ij})\vert_{(\psi_i^{C_k})^{-1}(\Im\psi_j^{C_k})}:(\psi_i^{C_k})^{-1}(\Im\psi_j^{C_k})\!\longra\!(\psi_j^{C_k})^{-1}(\Im\psi_i^{C_k})
\end{equation*}
is a homeomorphism from an open set $(\psi_i^{C_k})^{-1}(\Im\psi_j^{C_k})\subseteq (s_i^{C_k})^{-1}(0)$ to an open set $(\psi_j^{C_k})^{-1}(\Im\psi_i^{C_k})\subset (s_j^{C_k})^{-1}(0)$. Also $\psi_i^{C_k}$ is injective, as above. From all this, it follows that there is a unique topology on $C_k(X)$ such that $\psi_i^{C_k}$ is a homeomorphism with an open set in $C_k(X)$ for all~$i\in I$.

We have now defined all the structures in $C_k(\bX)=\bigl(C_k(X),\cK^{C_k}\bigr)$. We must verify Definition \ref{ku2def11}(a)--(f). Parts (a),(c),(d) are already proved. For (b), as $\dim V_i-\rank E_i=\vdim\bX$ and $\dim C_k(V_i)=\dim V_i-k$ we have $\dim V_i^{C_k}-\rank E_i^{C_k}=\vdim\bX-k$ for all $i\in I$. Part (e) follows from $C_k(\id_{V_i})=\id_{C_k(V_i)}$, and (f) by applying the corner functor $C_k$ to Definition \ref{ku2def11}(f) for $\bX$. Hence $C_k(\bX)$ is a $\mu$-Kuranishi space with corners with $\vdim C_k(\bX)=\vdim\bX-k$, and so the boundary $\pd\bX=C_1(\bX)$ is a $\mu$-Kuranishi space with corners.

Define a morphism $\bs i_\bX:C_k(\bX)\ra\bX$ by $\bs i_\bX=\bigl(i_\bX,(\bs i_\bX)_{ij,\;i\in I,\; j\in I}\bigr)$, where $i_\bX:(x,\ga)\mapsto x$ as above, and for all $i,j\in I$, if $(V_{ij},\phi_{ij},\hat\phi_{ij})$ represents $\Phi_{ij}=[V_{ij},\phi_{ij},\hat\phi_{ij}]$, then $(\bs i_\bX)_{ij}=\bigl[C_k(V_{ij}),\phi_{ij}\ci i_{V_{ij}}, i_{V_{ij}}^*(\hat\phi_{ij})\bigr]$, where $i_{V_{ij}}:C_k(V_{ij})\ra V_{ij}$. It is easy to show this is well-defined.

We can now take boundaries repeatedly to get $\mu$-Kuranishi spaces with corners $\pd\bX,\pd^2\bX,\ldots,\pd^k\bX,\ldots.$ As for manifolds with corners in \eq{ku3eq9}--\eq{ku3eq11}, we can show that there is a natural identification of sets
\e
\begin{split}
\pd^kX\cong\bigl\{(x,\be_1,\ldots,\be_k):\,&\text{$x\in X,$
$\be_1,\ldots,\be_k$ are distinct}\\
&\text{local boundary components for $\bX$ at $x$}\bigr\},
\end{split}
\label{ku3eq31}
\e
and there is a natural, free action of the symmetric group $S_k$ on $\pd^k\bX$ by isomorphisms of $\mu$-Kuranishi spaces, which on topological spaces acts by permutation of $\be_1,\ldots,\be_k$ in \eq{ku3eq31}, and as one would expect from \eq{ku3eq28} and \eq{ku3eq31}, there is a natural isomorphism of $\mu$-Kuranishi spaces $C_k(\bX)\cong \pd^k\bX/S_k$. Here the quotient $\pd^k\bX/S_k$ exists in $\muKur$ as $S_k$ acts freely by isomorphisms, so the $\mu$-Kuranishi structure can be pushed down locally from $\pd^k\bX$ to $\pd^k\bX/S_k$, and then glued using the sheaf property Theorem~\ref{ku3thm2}.
\label{ku3def14}
\end{dfn}

As for the categories $\Manc\subset\cManc$ in Definition \ref{ku3def2}, we enlarge~$\muKurc$:

\begin{dfn} Define a {\it $\mu$-Kuranishi space with corners $\bX=(X,\cK)$ of mixed dimension\/} with $\cK=\bigl(I,(V_i,E_i,s_i,\psi_i)_{i\in I},\Phi_{ij,\;i,j\in I}\bigr)$ as for $\mu$-Kuranishi spaces with corners in \S\ref{ku23} and \S\ref{ku35}, except that we omit the condition $\dim V_i-\rank E_i=n$ in Definition \ref{ku2def11}(b), allowing $\dim V_i-\rank E_i$ to take any value.

Write $I_m=\{i\in I:\dim V_i-\rank E_i=m\}$ for $m\in\Z$, so that $I=
\coprod_{m\in\Z}I_m$, and $X_m=\bigcup_{i\in I_m}\Im\psi_i$, so that $X_m$ is open and closed in $X$ with $X\!=\!\coprod_{m\in\Z}X_m$, allowing $I_m\!=\!X_m\!=\!\es$. Set $\cK_m\!=\!\bigl(I_m,(V_i,E_i,s_i,\psi_i)_{i\in I_m},\Phi_{ij,\;i,j\in I_m}\bigr)$ for $m\in\Z$. It is then easy to see that $\bX_m=(X_m,\cK_m)$ is a $\mu$-Kuranishi space with corners with $\vdim\bX_m=m$ for $m\in\Z$, and $\bX=\coprod_{m\in\Z}\bX_m$.

Define {\it morphisms $\bs f:\bX\ra\bY$ of $\mu$-Kuranishi spaces with corners of mixed dimension\/} as for $\mu$-Kuranishi spaces with corners in \S\ref{ku23} and \S\ref{ku35}. Then $\mu$-Kuranishi spaces with corners of mixed dimension form a category $\cmuKurc$. Writing $\bX=\coprod_{m\in\Z}\bX_m$ and $\bY=\coprod_{n\in\Z}\bY_n$ with $\vdim\bX_m=m$, $\vdim\bY_n=n$, then $\bs f\vert_{\bX_{mn}}:\bX_{mn}\ra\bY_n$ is a morphism in $\muKurc$ for all $m,n\in\Z$, where $\bX_{mn}:=\bX_m\cap\bs f^{-1}(\bY_n)$ is open and closed in~$\bX_m\subseteq\bX$.

Alternatively, we can regard $\cmuKurc$ as the category with objects $\bX=\coprod_{m\in\Z}\bX_m$ for $\bX_m\in\Kurc$ with $\vdim\bX_m=m$, allowing $\bX_m=\es$, and with morphisms $\bs f:\coprod_{m\in\Z}\bX_m\ra\coprod_{n\in\Z}\bY_n$ consisting of decompositions $\bX_m=\coprod_{n\in\Z}\bX_{mn}$ for $m\in\Z$ with $\bX_{mn}\subseteq\bX_m$ open and closed, and morphisms $\bs f_{mn}:\bX_{mn}\ra\bY_n$ in $\muKurc$ for all $m,n\in\Z$.

Clearly $\muKurc\subset\cmuKurc$ is a full subcategory.
\label{ku3def15}
\end{dfn}

In \S\ref{ku32} we defined the corner functor $C:\Manc\ra\cManc$. Here is the analogue for $\mu$-Kuranishi spaces with corners.

\begin{dfn} We will define a functor $C:\muKurc\ra\cmuKurc$, called the {\it corner functor}. On objects $\bX\!=\!(X,\cK)$ with $\cK\!=\!\bigl(I,(V_i,E_i,s_i,\psi_i)_{i\in I},\Phi_{ij,\;i,j\in I}\bigr)$ in $\muKurc$, define
\begin{equation*}
C(\bX)=\ts\coprod_{k=0}^\iy C_k(\bX),
\end{equation*}
regarded as an object in $\cmuKurc$. Write the indexing set for the $\mu$-Kuranishi structure on $C(\bX)$ as $I\t\N$, where $(i,k)\in I\t\N$ corresponds to the $\mu$-Kuranishi neighbourhood $(V^{C_k}_i,E_i^{C_k},s_i^{C_k},\psi_i^{C_k})$ on $C_k(X)$ in \eq{ku3eq29}.

Let $\bs f:\bX\ra\bY$ be a morphism in $\muKurc$. Use notation \eq{ku2eq14}--\eq{ku2eq15} for $\bX,\bY$, write $\bs f=\bigl(f,\bs f_{ij,\;i\in I,\; j\in J}\bigr)$ as in \S\ref{ku23}, and choose representatives $(U_{ij},f_{ij},\hat f_{ij})$ for $\bs f_{ij}:(U_i,D_i,r_i,\chi_i)\ra (V_j,E_j,s_j,\psi_j)$ for $i\in I$ and $j\in J$.

Let $x\in\bX$ with $y=\bs f(x)\in\bY$. Suppose $\ga$ is a local $k$-corner 
component of $\bX$ at $x$ for $k\ge 0$, as in Definition \ref{ku3def14}, so that $(x,\ga)\in C_k(X)\subseteq C(X)$. Choose $i\in I$, $j\in J$ with $x\in\Im\chi_i\subseteq X$ and $y\in\Im \psi_j\subseteq Y$, and set $u_i=\chi_i^{-1}(x)\in U_{ij}\subseteq U_i$ and $v_j=\psi_j^{-1}(y)\in V_j$. Then $\ga=[\ga_i]$ for some unique local $k$-corner component $\ga_i$ of $U_{ij}$ at $u_i$ by \eq{ku3eq26}, where $[\ga_i]$ is the $\approx$-equivalence class of~$\ga_i$.

Since $f_{ij}:U_{ij}\ra V_j$ is a smooth map in $\Manc$ with $f_{ij}(u_i)=v_j$, Definition \ref{ku3def5} defines a local $l$-corner component $(f_{ij})_*(\ga_i)$ of $V_j$ at $v_j$ for some $l\ge 0$, so $[(f_{ij})_*(\ga_i)]$ is a local $l$-corner component of $\bY$ at $y$ by \eq{ku3eq26}. Using Definition \ref{ku2def13}(a),(b) and the definition of $\approx$ in \eq{ku3eq26}, we can show that $[(f_{ij})_*(\ga_i)]$ is independent of the choice of $i,j$, so $\bs f_*(\ga):=[(f_{ij})_*(\ga_i)]$ is well-defined. 

Define $C(\bs f)=\bigl(C(f),C(\bs f)_{(i,k)(j,l),\;(i,k)\in I\t\N,\; (j,l)\in J\t\N}\bigr)$, where $C(f):C(X)\ab\ra C(Y)$ maps $C(f):(x,\ga)\mapsto (f(x),\bs f_*(\ga))$, and for all $(i,k)\in I\t\N$, $(j,l)\in J\t\N$ we have $C(\bs f)_{(i,k)(j,l)}=\bigl[U_{ij}^{C_{k,l}},f_{ij}^{C_{k,l}},\hat f_{ij}^{C_{k,l}}\bigr]$, where in a similar way to \eq{ku3eq30}, if $(U_{ij},f_{ij},\hat f_{ij})$ is a representative for $\bs f_{ij}$ then $U_{ij}^{C_{k,l}}=C_k(U_{ij})\cap C(f_{ij})^{-1}(C_l(V_j))\subseteq C_k(U_{ij})$, and $f_{ij}^{C_{k,l}}=C(f_{ij})\vert_{U_{ij}^{C_{k,l}}}:U_{ij}^{C_{k,l}}\ra C_l(V_j)$, and $\hat f_{ij}^{C_{k,l}}=i_{U_{ij}}\vert_{U_{ij}^{C_{k,l}}}^*(\hat f_{ij})$. It is now straightforward to check using functoriality of $C$ on $\Manc$ that $C(\bs f)_{(i,k)(j,l)}$ is independent of the choice of $(U_{ij},f_{ij},\hat f_{ij})$, and $C(\bs f):C(\bX)\ra C(\bY)$ is a morphism in $\cmuKurc$. It is also easy to show that if $\bs g:\bY\ra\bZ$ is another morphism in $\muKurc$ then $C(\bs g\ci\bs f)=C(\bs g)\ci C(\bs f)$, and $C(\bs\id_\bX)=\bs\id_{C(\bX)}$. 
So $C:\muKurc\ra\cmuKurc$ is a functor.
\label{ku3def16}
\end{dfn}

Definition \ref{ku3def2} defines when a smooth map $f:X\ra Y$ of manifolds with corners is interior, or b-normal, or strongly smooth, or simple. We can extend these to $\mu$-Kuranishi spaces with corners.

\begin{dfn} Let $\bs f:\bX\ra\bY$ be a morphism of $\mu$-Kuranishi spaces with corners, use notation \eq{ku2eq14}--\eq{ku2eq15} for $\bX,\bY$ and $\bs f=\bigl(f,\bs f_{ij,\;i\in I,\; j\in J}\bigr)$, and let $(U_{ij},f_{ij},\hat f_{ij})$ be a representative of the $\sim$-equivalence class $\bs f_{ij}$ for each $i\in I$ and $j\in J$, so that $U_{ij}$ is an open neighbourhood of $(f\ci\chi_i)^{-1}(\Im\psi_j)$ in $U_i$, and $f_{ij}:U_{ij}\ra V_j$ is a smooth map of manifolds with corners.

We say that $\bs f$ is {\it interior}, or {\it b-normal}, or {\it strongly smooth}, or {\it simple}, if for all $i\in I$ and $j\in J$, the smooth map $f_{ij}:U_{ij}\ra V_j$ is interior, \ldots, simple, respectively. This is independent of the choice of representatives~$(U_{ij},f_{ij},\hat f_{ij})$.

Note that as interior, b-normal, strongly smooth, and simple are {\it discrete\/} conditions on smooth maps of manifolds with corners in the sense of Remark \ref{ku3rem2}, Remarks \ref{ku3rem4}(a), \ref{ku3rem5} and Definition \ref{ku3def13}(ii) for $(U_{ij},f_{ij},\hat f_{ij})$ imply that $f_{ij}$ is interior, \ldots, simple on all of $U_{ij}$ if and only if $f_{ij}$ is interior, \ldots, simple near $(f\ci\chi_i)^{-1}(\Im\psi_j)$ in $U_{ij}$, respectively. 

Also, if $i,i'\in I$, $j,j'\in J$ and $x\in\Im\chi_i\cap\Im\chi_{i'}\cap f^{-1}(\Im\psi_j\cap\Im\psi_{j'})\subseteq X$ then as $f_{i'j'}=\up_{jj'}\ci f_{ij}\ci\tau_{i'i}+O(s_{i'})$ near $\chi_i^{-1}(x)$ with $\tau_{i'i},\up_{jj'}$ simple, one can show that $f_{ij}$ is interior, \ldots, simple near $\chi_i^{-1}(x)$ if and only if $f_{i'j'}$ is interior, \ldots, simple near $\chi_{i'}^{-1}(x)$, respectively. That is, these are really local conditions near each point $x\in\bX$, and are independent of the choice of $i\in I$, $j\in J$ with $x\in\Im\psi_i\cap f^{-1}(\Im\psi_j)$ that we check the conditions for~$\bs f_{ij}=[U_{ij},f_{ij},\hat f_{ij}]$.

The classes of interior, b-normal, strongly smooth, and simple morphisms of $\mu$-Kuranishi spaces with corners are each closed under composition and contain identities, as this holds for the corresponding classes of morphisms of manifolds with corners. Thus, each class defines subcategories of $\muKurc,\cmuKurc$. Write $\muKurcst,\muKurcin,\muKurcis,\muKurcsi$ for the subcategories of $\muKurc$ with strongly smooth morphisms, interior morphisms, interior strongly smooth morphisms, and simple morphisms, respectively. Write $\cmuKurcst$, $\cmuKurcin$, $\cmuKurcis$, $\cmuKurcsi$ for the corresponding subcategories of~$\cmuKurc$.
\label{ku3def17}
\end{dfn}

Proposition \ref{ku3prop1} now implies its analogue for $\mu$-Kuranishi spaces:

\begin{prop} Let\/ $\bs f:\bX\ra\bY$ be a morphism in $\muKurc$. 
\begin{itemize}
\setlength{\itemsep}{0pt}
\setlength{\parsep}{0pt}
\item[{\bf(a)}] $C(\bs f):C(\bX)\ra C(\bY)$ is an interior morphism of $\mu$-Kuranishi spaces with corners of mixed dimension, so $C$ is a functor $C:\muKurc\ra\cmuKurcin$.
\item[{\bf(b)}] $\bs f$ is interior if and only if\/ $C(\bs f)$ maps $C_0(\bX)\ra C_0(\bY)$.
\item[{\bf(c)}] $\bs f$ is b-normal if and only if\/ $C(\bs f)$ maps $C_k(\bX)\ra \coprod_{l=0}^kC_l(\bY)$ for all\/~$k$.
\item[{\bf(d)}] If\/ $\bs f$ is simple then $C(\bs f)$ maps $C_k(\bX)\ra C_k(\bY)$ for all\/ $k\ge 0,$ and\/ $C_k(\bs f):=C(\bs f)\vert_{C_k(\bX)}:C_k(\bX)\ra C_k(\bY)$ is also a simple morphism.
\end{itemize}

Part\/ {\bf(d)} implies that we have a \begin{bfseries}boundary functor\end{bfseries} $\pd:\muKurcsi\!\ra\!\muKurcsi$ mapping $\bX\mapsto\pd\bX$ on objects and\/ $\bs f\mapsto\pd\bs f:=C(\bs f)\vert_{C_1(\bX)}:\pd\bX\ra\pd\bY$ on (simple) morphisms $\bs f:\bX\ra\bY,$ and for all\/ $k\ge 0$ a \begin{bfseries}$k$-corner functor\end{bfseries} $C_k:\muKurcsi\ra\muKurcsi$ mapping $\bX\mapsto C_k(\bX)$ on objects and\/ $\bs f\mapsto C_k(\bs f):=C(\bs f)\vert_{C_k(\bX)}:C_k(\bX)\ra C_k(\bY)$ on morphisms.
\label{ku3prop3}
\end{prop}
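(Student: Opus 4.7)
The plan is to reduce each part to the manifold-with-corners version in Proposition \ref{ku3prop1} by using the explicit local description of the corner functor $C$ given in Definition \ref{ku3def16}. Recall that for a representative $(U_{ij},f_{ij},\hat f_{ij})$ of $\bs f_{ij}$, the component $C(\bs f)_{(i,k)(j,l)}$ is represented by the triple $\bigl(U_{ij}^{C_{k,l}},f_{ij}^{C_{k,l}},\hat f_{ij}^{C_{k,l}}\bigr)$, where $f_{ij}^{C_{k,l}}$ is simply the restriction of the classical corner map $C(f_{ij})$ to $U_{ij}^{C_{k,l}}=C_k(U_{ij})\cap C(f_{ij})^{-1}(C_l(V_j))$. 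Since being interior, b-normal, strongly smooth, or simple is a local condition on a morphism of $\mu$-Kuranishi spaces with corners (Definition \ref{ku3def17}), it can be verified on these local representatives, where Proposition \ref{ku3prop1} applies directly.

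For (a), each $f_{ij}^{C_{k,l}}$ is interior as a smooth map of manifolds with corners, since by Proposition \ref{ku3prop1}(a) the map $C(f_{ij}):C(U_{ij})\to C(V_j)$ is interior, and this is preserved by restriction to an open subset of the domain and codomain. For (b)--(d), I would prove both directions simultaneously by tracing the image-of-corners condition through the local representatives. In one direction, Proposition \ref{ku3prop1}(b)--(d) applied to each $f_{ij}$ converts the manifold-level image constraint $C(f_{ij}):C_k(U_{ij})\to C_l(V_j)$ (for appropriate $l$) into the desired constraint on $C(\bs f)$, after noting that $C_k(X)$ is covered by the images of the $\psi_i^{C_k}$ and similarly for $C_l(Y)$. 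In the converse direction, the image constraint on $C(\bs f)$ forces the analogous image constraint on $C(f_{ij})$ at every point of $(f\ci\chi_i)^{-1}(\Im\psi_j)\subseteq U_{ij}$, so Proposition \ref{ku3prop1} yields the relevant property of $f_{ij}$ near that set.

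Here the main technical subtlety is to pass from \emph{near} $(f\ci\chi_i)^{-1}(\Im\psi_j)$ to \emph{on all of} $U_{ij}$. This is where Definition \ref{ku3def13}(ii) enters: being interior, b-normal, strongly smooth, or simple is discrete data on $f_{ij}$ in the sense of Remark \ref{ku3rem2}, hence constant on each connected component $C$ of $S^k(U_{ij})$. Since Definition \ref{ku3def13}(ii) requires $\ov C\cap (f\ci\chi_i)^{-1}(\Im\psi_j)\ne\es$ for every such $C$, a discrete property holding near $(f\ci\chi_i)^{-1}(\Im\psi_j)$ automatically holds on all of $U_{ij}$, as explained in Remark \ref{ku3rem5}.

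For the final statement, (d) together with Proposition \ref{ku3prop1}(d) shows that if $\bs f$ is simple then each $C_k(f_{ij})$ is simple, hence $C_k(\bs f):C_k(\bX)\to C_k(\bY)$ is a well-defined simple morphism in $\muKurcsi$. Functoriality of $C_k$ and $\pd=C_1$ on $\muKurcsi$ follows from the functoriality of $C:\muKurc\to\cmuKurcin$ established in Definition \ref{ku3def16}, combined with the observation from (d) that, restricted to the subcategory $\muKurcsi$, the functor $C$ splits as $\coprod_{k\ge 0}C_k$ with each $C_k$ preserving virtual dimension drop by $k$. I expect the discrete-data argument in the previous paragraph to be the main obstacle, since it is the only place where the definitional subtlety of $\mu$-Kuranishi neighbourhoods with corners (specifically Definition \ref{ku3def13}(ii)) plays an essential role beyond what is already handled by the manifold-level proposition.
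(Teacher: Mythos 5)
Your proposal is correct and follows essentially the same route as the paper, which derives Proposition \ref{ku3prop3} directly from Proposition \ref{ku3prop1} applied to the local representatives $(U_{ij},f_{ij},\hat f_{ij})$, with the passage from ``near $(f\ci\chi_i)^{-1}(\Im\psi_j)$'' to ``all of $U_{ij}$'' handled exactly by the discrete-data observations of Remarks \ref{ku3rem2} and \ref{ku3rem5} and Definition \ref{ku3def13}(ii), as already noted in Definition \ref{ku3def17}. Your fleshing-out of the converse directions and of functoriality via Definition \ref{ku3def16} is consistent with the paper's intended argument.
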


\subsection{(B-)tangent spaces and (b-)obstruction spaces}
\label{ku37}

In \S\ref{ku25} we defined tangent spaces $T_x\bX$ and obstruction spaces $O_x\bX$ for $\mu$-Kuranishi spaces. As in \S\ref{ku33}, manifolds with corners $X$ have two notions of tangent bundle $TX$ and ${}^bTX$. So $\mu$-Kuranishi spaces with corners also have two kinds of tangent space $T_x\bX,{}^bT_x\bX$ and obstruction space~$O_x\bX,{}^bO_x\bX$.

\begin{dfn} Let $\bX=(X,\cK)$ be a $\mu$-Kuranishi space with corners, with $\cK=\bigl(I,(V_i,\ab E_i,\ab s_i,\ab\psi_i)_{i\in I},\Phi_{ij,\; i,j\in I}\bigr)$, and let $x\in\bX$. Then the $V_i$ and $V_{ij}\subseteq V_i$ in $\Phi_{ij}=[V_{ij},\phi_{ij},\hat\phi_{ij}]$ are manifolds with corners, and so have tangent bundles $TV_i,TV_{ij}=TV_i\vert_{V_{ij}}$ and b-tangent bundles ${}^bTV_i,{}^bTV_{ij}={}^bTV_i\vert_{V_{ij}}$ as in~\S\ref{ku33}.

Define the {\it tangent space\/} $T_x\bX$ and {\it obstruction space\/} $O_x\bX$ of $\bX$ at $x$ exactly as in Definition \ref{ku2def16}, using tangent bundles $TV_i,TV_j$, with one small change: in the part where we show that $\ka_{ij}^x=\ka^x_{V_{ij},\phi_{ij},\hat\phi_{ij}}$ and $\ga_{ij}^x=\ga^x_{V_{ij},\phi_{ij},\hat\phi_{ij}}$ are independent of the choice of representatives $(V_{ij},\phi_{ij},\hat\phi_{ij})$ for $\Phi_{ij}$, we note that two such representatives $(V_{ij},\phi_{ij},\hat\phi_{ij}),(V_{ij}',\phi_{ij}',\hat\phi_{ij}')$ are related by \eq{ku2eq27} for some $\dot V_{ij},\La$. As in Definition \ref{ku3def13}(iii), in the corners case $\La$ maps $E_i\vert_{\dot V_{ij}}\ra \phi_{ij}^*({}^bTV_j)\vert_{\dot V_{ij}}$. So we replace \eq{ku2eq27} by
\begin{align*}
\d\phi_{ij}'\vert_{v_i}&=\d\phi_{ij}\vert_{v_i}+(\phi_{ij}^*(I_{V_j})\ci\La)\vert_{v_i}\ci \d s_i\vert_{v_i}\;\>\text{and}\\ 
\hat\phi_{ij}'\vert_{v_i}&=\hat\phi_{ij}\vert_{v_i}+\d s_j\vert_{v_j}\ci(\phi_{ij}^*(I_{V_j})\ci\La)\vert_{v_i},
\end{align*}
where $I_{V_j}:{}^bTV_j\ra TV_j$ is as in Definition~\ref{ku3def7}.

As in Definition \ref{ku2def16}, we have 
\e
\dim T_x\bX-\dim O_x\bX=\vdim\bX,
\label{ku3eq32}
\e
and if $(V_a,E_a,s_a,\psi_a)$ is any $\mu$-Kuranishi neighbourhood on $\bX$ in the sense of \S\ref{ku24} and \S\ref{ku35} with $x\in\Im\psi_a$, we have a canonical exact sequence 
\e
\xymatrix@C=20pt{ 0 \ar[r] & T_x\bX \ar[r] & T_{v_a}V_a \ar[rr]^{\d s_a\vert_{v_a}} && E_a\vert_{v_a} \ar[r] & O_x\bX \ar[r] & 0, }
\label{ku3eq33}
\e
where $v_a=\psi_a^{-1}(x)$. The dual vector spaces of $T_x\bX,O_x\bX$ are called the {\it cotangent space\/} $T_x^*\bX$ and {\it coobstruction space\/} $O_x^*\bX$. 

If $\bs f:\bX\ra\bY$ is a morphism of $\mu$-Kuranishi spaces with corners and $x\in\bX$ with $y=\bs f(x)\in\bY$, as in Definition \ref{ku2def16} we define canonical, functorial linear maps $T_x\bs f:T_x\bX\ra T_y\bY$ and $O_x\bs f:O_x\bX\ra O_y\bY$, such that if $(U_a,D_a,r_a,\chi_a)$ and $(V_b,E_b,s_b,\psi_b)$ are $\mu$-Kuranishi neighbourhoods on $\bX,\bY$ respectively with $x\in\Im\psi_a$, $y\in\Im\psi_b$, and $\bs f_{ab}=[U_{ab},f_{ab},\hat f_{ab}]$ is the morphism of $\mu$-Kuranishi neighbourhoods over $\bs f$ given by Theorem \ref{ku2thm4}(b), then setting $u_a=\chi_a^{-1}(x)$, $v_b=\psi_b^{-1}(y)$, the following commutes, with rows \eq{ku3eq33} for $\bX,x$ and~$\bY,y$:
\e
\begin{gathered}
\xymatrix@C=23pt@R=15pt{ 0 \ar[r] & T_x\bX \ar[r] \ar[d]^{T_x\bs f} & T_{u_a}U_a \ar[rr]_{\d r_a\vert_{u_a}} \ar[d]^{\d f_{ab}\vert_{u_a}} && D_a\vert_{u_a} \ar[d]^{\hat f_{ab}\vert_{u_a}} \ar[r] & O_x\bX \ar[d]^{O_x\bs f} \ar[r] & 0 \\
0 \ar[r] & T_y\bY \ar[r] & T_{v_b}V_b \ar[rr]^{\d s_b\vert_{v_b}} && E_b\vert_{v_b} \ar[r] & O_y\bY \ar[r] & 0.\!\! }
\end{gathered}
\label{ku3eq34}
\e

There is also an analogue of the whole of this story for the b-tangent bundles ${}^bTV_i,{}^bTV_j$. Here, instead of \eq{ku2eq25} we start with the exact sequence 
\begin{equation*}
\xymatrix@C=20pt{ 0 \ar[r] & {}^bK_i^x \ar[r] & {}^bT_{v_i}V_i \ar[rr]^{{}^b\d s_i\vert_{v_i}} && {}^bE_i\vert_{v_i} \ar[r] & {}^bC_i^x \ar[r] & 0, }
\end{equation*}
where ${}^b\d s_i\vert_{v_i}=\d s_i\vert_{v_i}\ci I_{V_i}\vert_{v_i}$, and instead of \eq{ku2eq26} we have
\begin{equation*}
\xymatrix@C=23pt@R=15pt{ 0 \ar[r] & {}^bK_i^x \ar[r] \ar@{.>}[d]^{{}^b\ka^x_{V_{ij},\phi_{ij},\hat\phi_{ij}}}_\cong & {}^bT_{v_i}V_i \ar[rr]_(0.6){{}^b\d s_i\vert_{v_i}} \ar[d]^{{}^b\d\phi_{ij}\vert_{v_i}} && E_i\vert_{v_i} \ar[d]^{\hat\phi_{ij}\vert_{v_i}} \ar[r] & {}^bC_i^x \ar@{.>}[d]^{{}^b\ga^x_{V_{ij},\phi_{ij},\hat\phi_{ij}}}_\cong \ar[r] & 0 \\
0 \ar[r] & {}^bK_j^x \ar[r] & {}^bT_{v_j}V_j \ar[rr]^(0.6){{}^b\d s_j\vert_{v_j}} && E_j\vert_{v_j} \ar[r] & {}^bC_j^x \ar[r] & 0,\!\! }
\end{equation*}
where ${}^b\d\phi_{ij}:{}^bTV_{ij}\ra \phi_{ij}^*({}^bTV_j)$ is defined as in \S\ref{ku33} since $\phi_{ij}$ is interior (as it is simple). The rest of Definition \ref{ku2def16} generalizes with the obvious changes.

Thus, we define the {\it b-tangent space\/} ${}^bT_x\bX$ and {\it b-obstruction space\/} ${}^bO_x\bX$ of $\bX$ at $x$, which have
\e
\dim {}^bT_x\bX-\dim {}^bO_x\bX=\vdim\bX.
\label{ku3eq35}
\e
The dual vector spaces are the {\it b-cotangent space\/} ${}^bT_x^*\bX$ and {\it b-coobstruction space\/} ${}^bO_x^*\bX$. If $(V_a,E_a,s_a,\psi_a)$ is any $\mu$-Kuranishi neighbourhood on $\bX$ with $x\in\Im\psi_a$ and $v_a=\psi_a^{-1}(x)$, we have a canonical exact sequence 
\e
\xymatrix@C=20pt{ 0 \ar[r] & {}^bT_x\bX \ar[r] & {}^bT_{v_a}V_a \ar[rr]^{{}^b\d s_a\vert_{v_a}} && E_a\vert_{v_a} \ar[r] & {}^bO_x\bX \ar[r] & 0. }
\label{ku3eq36}
\e

The morphisms $I_{V_i}:{}^bTV_i\ra TV_i$ in Definition \ref{ku3def7} induce canonical linear maps $I^T_{\bX,x}:{}^bT_x\bX\ra T_x\bX$ and $I^O_{\bX,x}:{}^bO_x\bX\ra O_x\bX$, such that if $(V_a,E_a,s_a,\psi_a)$ is any $\mu$-Kuranishi neighbourhood on $\bX$ in the sense of \S\ref{ku24} with $x\in\Im\psi_a$ and $v_a=\psi_a^{-1}(x)$, we have a canonical commutative diagram
\e
\begin{gathered}
\xymatrix@C=20pt@R=13pt{ 0 \ar[r] & {}^bT_x\bX \ar[d]^{I^T_{\bX,x}} \ar[r] & {}^bT_{v_a}V_a \ar[rr]_{{}^b\d s_a\vert_{v_a}} \ar[d]^{I_{V_a}\vert_{v_a}} && E_a\vert_{v_a} \ar[d]^{\id} \ar[r] & {}^bO_x\bX \ar[d]^{I^O_{\bX,x}} \ar[r] & 0
\\
0 \ar[r] & T_x\bX \ar[r] & T_{v_a}V_a \ar[rr]^{\d s_a\vert_{v_a}} && E_a\vert_{v_a} \ar[r] & O_x\bX \ar[r] & 0,\!\!{} }
\end{gathered}
\label{ku3eq37}
\e
where the rows are \eq{ku3eq33} and \eq{ku3eq36}.

To define analogues of $T_x\bs f:T_x\bX\ra T_y\bY$ and $O_x\bs f:O_x\bX\ra O_y\bY$ for ${}^bT_x\bX,{}^bT_y\bY,{}^bO_x\bX,{}^bO_y\bY$, we must replace $\d f_{ij}\vert_{u_i}$ in \eq{ku2eq31} by ${}^b\d f_{ij}\vert_{u_i}$, where as in \S\ref{ku33} ${}^bf_{ij}:{}^bTV_{ij}\ra f_{ij}^*({}^bTV_j)$ is defined only if $f_{ij}$ is an interior map. Therefore we must restrict to $\bs f:\bX\ra\bY$ interior, in the sense of Definition \ref{ku3def17}. If $\bs f:\bX\ra\bY$ is an interior morphism of $\mu$-Kuranishi spaces with corners and $x\in\bX$ with $y=\bs f(x)\in\bY$, we define canonical, functorial linear maps ${}^bT_x\bs f:{}^bT_x\bX\ra {}^bT_y\bY$ and ${}^bO_x\bs f:{}^bO_x\bX\ra {}^bO_y\bY$, such that if $(U_a,D_a,r_a,\chi_a)$ and $(V_b,E_b,s_b,\psi_b)$ are $\mu$-Kuranishi neighbourhoods on $\bX,\bY$ respectively with $x\in\Im\psi_a$, $y\in\Im\psi_b$, and $\bs f_{ab}=[U_{ab},f_{ab},\hat f_{ab}]$ is the morphism of $\mu$-Kuranishi neighbourhoods over $\bs f$ given by Theorem \ref{ku2thm4}(b), then setting $u_a=\chi_a^{-1}(x)$, $v_b=\psi_b^{-1}(y)$, the following commutes, with rows \eq{ku3eq36} for $\bX,x$ and~$\bY,y$:
\e
\begin{gathered}
\xymatrix@C=23pt@R=15pt{ 0 \ar[r] & {}^bT_x\bX \ar[r] \ar[d]^{{}^bT_x\bs f} & {}^bT_{u_a}U_a \ar[rr]_(0.6){{}^b\d r_a\vert_{u_a}} \ar[d]^{{}^b\d f_{ab}\vert_{u_a}} && D_a\vert_{u_a} \ar[d]^{\hat f_{ab}\vert_{u_a}} \ar[r] & {}^bO_x\bX \ar[d]^{{}^bO_x\bs f} \ar[r] & 0 \\
0 \ar[r] & {}^bT_y\bY \ar[r] & {}^bT_{v_b}V_b \ar[rr]^(0.6){{}^b\d s_b\vert_{v_b}} && E_b\vert_{v_b} \ar[r] & {}^bO_y\bY \ar[r] & 0.\!\! }
\end{gathered}
\label{ku3eq38}
\e

Combining \eq{ku3eq34}, \eq{ku3eq37} for $\bX,x,\bY,y$, and \eq{ku3eq38}, we see that $T_x\bs f\ci I_{\bX,x}^T=I_{\bY,y}^T\ci{}^bT_x\bs f:{}^bT_x\bX\ra T_y\bY$ and~$O_x\bs f\ci I_{\bX,x}^O=I_{\bY,y}^O\ci{}^bO_x\bs f:{}^bO_x\bX\ra O_y\bY$.

\label{ku3def18}
\end{dfn}

As in \S\ref{ku25}, these $T_x\bX,O_x\bX,T_x\bs f,{}^bT_x\bX,{}^bO_x\bX,{}^bT_x\bs f$ are very useful for defining interesting properties of morphisms of $\mu$-Kuranishi spaces with corners. See \cite{Joyc12} for more details.

\subsection{\texorpdfstring{$\mu$-Kuranishi spaces with generalized corners}{\textmu-Kuranishi spaces with generalized corners}}
\label{ku38}

In \S\ref{ku34} we discussed the author's theory of {\it manifolds with generalized corners}, or {\it g-corners\/} \cite{Joyc9}, and we saw that most of \S\ref{ku31}--\S\ref{ku33} extends to manifolds with g-corners, including all the essentials for $\mu$-Kuranishi spaces with corners.

Therefore we can go through \S\ref{ku35}--\S\ref{ku37} replacing manifolds with corners by manifolds with g-corners throughout. Mostly this just works, but there are a few places where the extension to g-corners fails because of a corresponding failure for manifolds with (g-)corners explained in \S\ref{ku34}, and other places where we prove results using local coordinates in $[0,\iy)^k\t\R^{m-k}$ on manifolds with corners, and the same results hold for g-corners, but need changes to the proofs. 

In the following we give the new notation we use for $\mu$-Kuranishi spaces with g-corners, and draw attention to the parts of \S\ref{ku35}--\S\ref{ku37} and \S\ref{ku63}--\S\ref{ku64} that either do not extend to g-corners, or need changes to their proofs.
\begin{itemize}
\setlength{\itemsep}{0pt}
\setlength{\parsep}{0pt}
\item[(a)] In Definition \ref{ku3def12} we define {\it $\mu$-Kuranishi neighbourhoods $(V,E,s,\psi)$ on\/ $X$ with g-corners\/} as before, but taking $V,E$ to be manifolds with g-corners.

{\it Morphisms\/} and {\it $\mu$-coordinate changes\/} of $\mu$-Kuranishi neighbourhoods with g-corners over $S\subseteq X$, and the category $\muKur_S^{\rm gc}(X)$, are as in \S\ref{ku35}, but with g-corners.
\item[(b)] Proposition \ref{ku3prop2} holds for $\mu$-Kuranishi neighbourhoods with g-corners, but the proof requires modification. We use the definition of simple maps of manifolds with g-corners in \S\ref{ku343}. Rather than showing that $\bigl(a_{p,q}\bigr)$, $\bigl(b_{q,p}\bigr)$ are inverse matrices, we show $\ti M_{v_i}\phi_{ij}:\ti M_{v_i}V_i\ra \ti M_{v_j}V_j$ and $\ti M_{v_j}\phi_{ji}:\ti M_{v_j}V_j\ra\ti M_{v_i}V_i$ are inverse monoid morphisms, using the notation of \S\ref{ku343}, so that $\ti M_{v_i}\phi_{ij}$ is an isomorphism, and $\phi_{ij}$ is simple near~$v_i$.
\item[(c)] Theorem \ref{ku3thm2} holds for $\mu$-Kuranishi neighbourhoods with g-corners. Its proof in \S\ref{ku63} can be adapted to the g-corners case, using `b-metrics' as in \cite[\S 4.5.3]{Joyc9}, although the explicit coordinate expressions \eq{ku6eq43}--\eq{ku6eq45} need to be rewritten.
\item[(d)] Theorem \ref{ku3thm3} holds for $\mu$-Kuranishi neighbourhoods with g-corners, and the proof in \S\ref{ku64} can be rewritten for the g-corners case, using ideas in \cite[\S 4.2 \& \S 5.1]{Joyc9} similar to the proofs of~\cite[Th.~4.10 \& Cor.~4.14]{Joyc9}.
\item[(e)] As in the end of \S\ref{ku35}, we define a category $\muKurgc$ of {\it $\mu$-Kuranishi spaces with g-corners\/} $\bX$. Regarding $\Manc\subset\Mangc$ as a full subcategory as in \S\ref{ku341}, we see that Kuranishi spaces with corners are special examples of Kuranishi spaces with g-corners, and $\muKurc\subset\muKurgc$ is a full subcategory. We define a full and faithful functor $F_\Mangc^\muKurgc:\Mangc\hookra\muKurgc$, so we can also regard manifolds with g-corners as examples of Kuranishi spaces with g-corners. As in \S\ref{ku24}, we define {\it $\mu$-Kuranishi neighbourhoods with g-corners\/ $(V_a,E_a,s_a,\psi_a)$ on $\mu$-Kuranishi spaces with g-corners\/} $\bX$, and the analogue of Theorem \ref{ku2thm4} holds.
\item[(f)] Almost everything in \S\ref{ku36} extends to $\mu$-Kuranishi spaces with g-corners, with only trivial modifications. The exception is equation \eq{ku3eq28}, which is false for $\mu$-Kuranishi spaces with corners, since the analogue for manifolds with g-corners is false as in~\S\ref{ku342}.

As in Definition \ref{ku3def15}, we define {\it $\mu$-Kuranishi spaces with g-corners\/ $\bX$ of mixed dimension}, which form a category $\cmuKurgc$. The {\it corner functor\/} $C:\muKurgc\ra\cmuKurgc$ is then defined as in Definition \ref{ku3def16}. Following Definition \ref{ku3def17}, we define {\it interior}, {\it b-normal\/} and {\it simple\/} morphisms of $\mu$-Kuranishi spaces with g-corners, and write $\muKurgcin,\cmuKurgcin$ and $\muKurgcsi,\cmuKurgcsi$ for the subcategories of $\muKurgc,\cmuKurgc$ with interior, and simple, morphisms. Proposition \ref{ku3prop3} holds for g-corners.
\item[(g)] In \S\ref{ku37}, {\it we do not define tangent spaces\/ $T_x\bX$ and obstruction spaces\/ $O_x\bX$ for $\mu$-Kuranishi spaces with g-corners}, since as in \S\ref{ku343} we do not define tangent bundles $TX$ for manifolds with g-corners, but only b-tangent bundles ${}^bTX$. The material on {\it b-tangent spaces\/} ${}^bT_x\bX$ and {\it b-obstruction spaces\/} ${}^bO_x\bX$ in \S\ref{ku37}, and the linear maps ${}^bT_x\bs f:{}^bT_x\bX\ra{}^bT_y\bY$, ${}^bO_x\bs f:{}^bO_x\bX\ra{}^bO_y\bY$ for interior $\bs f:\bX\ra\bY$, extends immediately to $\mu$-Kuranishi spaces with g-corners~$\bX,\bY$.
\end{itemize}

Corresponding to the diagram of categories of manifolds with (g-)corners \eq{ku3eq18}, we have a diagram of categories of $\mu$-Kuranishi spaces with (g-)corners: 
\begin{equation*}
\xymatrix@C=50pt@R=15pt{
\muKurcsi  \ar[rr]_\subset \ar[d]^\subset &&
\muKurgcsi \ar[d]^\subset\\
\muKurcis \ar[r]_\subset \ar[d]^\subset & \muKurcin \ar[r]_\subset \ar[d]^\subset & \muKurgcin \ar[d]^\subset   \\
\muKurcst \ar[r]^\subset & \muKurc \ar[r]^\subset & \muKurgc.\! }
\end{equation*}

\section{The weak 2-category of Kuranishi spaces}
\label{ku4}

We now generalize \S\ref{ku2} from the `manifold' ($\mu$-) version to the `orbifold' version of Kuranishi spaces, including finite groups $\Ga$ in our Kuranishi neighbourhoods $(V,E,\Ga,s,\psi)$ on $X$, so that $X$ is locally modelled on $s^{-1}(0)/\Ga$. At the same time, we pass from categories to (weak) 2-categories, so that instead of morphisms of $\mu$-Kuranishi neighbourhoods we have 1- and 2-morphisms of Kuranishi neighbourhoods, and our Kuranishi spaces form a weak 2-category $\Kur$. 

The proofs of Theorems \ref{ku4thm1}, \ref{ku4thm2}, \ref{ku4thm8} and \ref{ku4thm9} are deferred until \S\ref{ku71}--\S\ref{ku74}. For an introduction to 2-categories, see Appendix~\ref{kuB}.

\subsection[Kuranishi neighbourhoods, 1-morphisms, and 2-morphisms]{Kuranishi neighbourhoods and their 1-morphisms \\ and 2-morphisms}
\label{ku41}

Here is our notion of Kuranishi neighbourhood, similar to those used in \cite{Fuka,FOOO1,FOOO2,FOOO3,FOOO4, FOOO5,FOOO6,FOOO7,FOOO8,McWe3,Yang1,Yang2,Yang3}, as explained in Appendix \ref{kuA}. The differences are in whether $\Ga$ must act effectively on $V$, and whether the vector bundle $E$ is assumed to be trivial.

\begin{dfn} Let $X$ be a topological space. A {\it Kuranishi neighbourhood\/} on $X$ is a quintuple $(V,E,\Ga,s,\psi)$ such that:
\begin{itemize}
\setlength{\itemsep}{0pt}
\setlength{\parsep}{0pt}
\item[(a)] $V$ is a smooth manifold (without boundary, in this section).
\item[(b)] $\pi:E\ra V$ is a real vector bundle over $V$, called the {\it obstruction bundle}.
\item[(c)] $\Ga$ is a finite group with a smooth action on $V$ (not necessarily effective), and a compatible action on $E$ preserving the vector bundle structure.
\item[(d)] $s:V\ra E$ is a $\Ga$-equivariant smooth section of $E$, called the {\it Kuranishi section}.
\item[(e)] $\psi$ is a homeomorphism from $s^{-1}(0)/\Ga$ to an open subset $\Im\psi=\bigl\{\psi(\Ga v):v\in s^{-1}(0)\bigr\}$ in $X$, called the {\it footprint\/} of~$(V,E,\Ga,s,\psi)$.
\end{itemize}
We will write $\bar\psi:s^{-1}(0)\ra\Im\psi\subseteq X$ for the composition of $\psi$ with the projection $s^{-1}(0)\ra s^{-1}(0)/\Ga$.
\label{ku4def1}
\end{dfn}

The next two crucial definitions, of 1- and 2-morphisms of Kuranishi neighbourhoods, correspond in \S\ref{ku2} to the first and second halves of Definition \ref{ku2def3}: 1-morphisms correspond to triples $(V_{ij},\phi_{ij},\hat\phi_{ij})$ in \S\ref{ku2}, and 2-morphisms to equivalences $(V_{ij},\phi_{ij},\hat\phi_{ij})\sim(V_{ij}',\phi_{ij}',\hat\phi_{ij}')$ in \S\ref{ku2}, so that morphisms $\Phi_{ij}=[V_{ij},\phi_{ij},\hat\phi_{ij}]$ in \S\ref{ku2} correspond to 2-isomorphism classes of 1-morphisms below.

\begin{dfn} Let $X,Y$ be topological spaces, $f:X\ra Y$ a continuous map, $(V_i,E_i,\Ga_i,s_i,\psi_i)$, $(V_j,E_j,\Ga_j,s_j,\psi_j)$ be Kuranishi neighbourhoods on $X,Y$ respectively, and $S\subseteq\Im\psi_i\cap f^{-1}(\Im\psi_j)\subseteq X$ be an open set. A 1-{\it morphism $\Phi_{ij}=(P_{ij},\pi_{ij},\phi_{ij},\hat\phi_{ij}):(V_i,E_i,\Ga_i,s_i,\psi_i)\ra (V_j,E_j,\Ga_j,s_j,\psi_j)$ of Kuranishi neighbourhoods over\/} $(S,f)$ is a quadruple $(P_{ij},\pi_{ij},\phi_{ij},\hat\phi_{ij})$ satisfying 
\begin{itemize}
\setlength{\itemsep}{0pt}
\setlength{\parsep}{0pt}
\item[(a)] $P_{ij}$ is a manifold, with commuting smooth actions of $\Ga_i,\Ga_j$ (that is, with a smooth action of $\Ga_i\t\Ga_j$), with the $\Ga_j$-action free.
\item[(b)] $\pi_{ij}:P_{ij}\ra V_i$ is a smooth map which is $\Ga_i$-equivariant, $\Ga_j$-invariant, and \'etale (a local diffeomorphism). The image $V_{ij}:=\pi_{ij}(P_{ij})$ is a $\Ga_i$-invariant open neighbourhood of $\bar\psi_i^{-1}(S)$ in $V_i$, and the fibres $\pi_{ij}^{-1}(v)$ of $\pi_{ij}$ for $v\in V_{ij}$ are $\Ga_j$-orbits, so that $\pi_{ij}:P_{ij}\ra V_{ij}$ is a principal $\Ga_j$-bundle.

We do not require $\bar\psi_i^{-1}(S)=V_{ij}\cap s_i^{-1}(0)$, only that~$\bar\psi_i^{-1}(S)\subseteq V_{ij}\cap s_i^{-1}(0)$.
\item[(c)] $\phi_{ij}:P_{ij}\ra V_j$ is a $\Ga_i$-invariant and $\Ga_j$-equivariant smooth map, that is, $\phi_{ij}(\ga_i\cdot p)=\phi_{ij}(p)$, $\phi_{ij}(\ga_j\cdot p)=\ga_j\cdot\phi_{ij}(p)$ for all $\ga_i\in\Ga_i$, $\ga_j\in\Ga_j$, $p\in P_{ij}$.
\item[(d)] $\hat\phi_{ij}:\pi_{ij}^*(E_i)\ra\phi_{ij}^*(E_j)$ is a $\Ga_i$- and $\Ga_j$-equivariant morphism of vector bundles on $P_{ij}$, where the $\Ga_i,\Ga_j$-actions are induced by the given $\Ga_i$-action and the trivial $\Ga_j$-action on $E_i$, and vice versa for $E_j$.
\item[(e)] $\hat\phi_{ij}(\pi_{ij}^*(s_i))=\phi_{ij}^*(s_j)+O(\pi_{ij}^*(s_i)^2)$, in the sense of Definition~\ref{ku2def1}.
\item[(f)] $f\ci\bar\psi_i\ci\pi_{ij}=\bar\psi_j\ci\phi_{ij}$ on~$\pi_{ij}^{-1}(s_i^{-1}(0))\subseteq P_{ij}$.
\end{itemize}

If $Y=X$ and $f=\id_X$ then we call $\Phi_{ij}$ a 1-{\it morphism of Kuranishi neighbourhoods over\/} $S$, or just a 1-{\it morphism over\/}~$S$.
\label{ku4def2}
\end{dfn}

\begin{dfn} Suppose $X,Y$ are topological spaces, $f:X\ra Y$ is a continuous map, $(V_i,E_i,\Ga_i,s_i,\psi_i)$, $(V_j,E_j,\Ga_j,s_j,\psi_j)$ are Kuranishi neighbourhoods on $X,Y$ respectively, $S\subseteq\Im\psi_i\cap f^{-1}(\Im\psi_j)\subseteq X$ is open, and $\Phi_{ij},\Phi_{ij}':(V_i,E_i,\Ga_i,s_i,\psi_i)\ra (V_j,\ab E_j,\ab\Ga_j,\ab s_j,\ab\psi_j)$ are two 1-morphisms over $(S,f)$, with $\Phi_{ij}=(P_{ij},\pi_{ij},\phi_{ij},\hat\phi_{ij})$ and~$\Phi_{ij}'=(P_{ij}',\pi_{ij}',\phi_{ij}',\hat\phi_{ij}')$.

Consider triples $(\dot P_{ij},\la_{ij},\hat\la_{ij})$ satisfying:
\begin{itemize}
\setlength{\itemsep}{0pt}
\setlength{\parsep}{0pt}
\item[(a)] $\dot P_{ij}$ is a $\Ga_i$- and $\Ga_j$-invariant open neighbourhood of $\pi_{ij}^{-1}(\bar\psi_i^{-1}(S))$ in $P_{ij}$.
\item[(b)] $\la_{ij}:\dot P_{ij}\ra P_{ij}'$ is a $\Ga_i$- and $\Ga_j$-equivariant smooth map with $\pi_{ij}'\ci\la_{ij}=\pi_{ij}\vert_{\dot P_{ij}}$. This implies that $\la_{ij}$ is an isomorphism of principal $\Ga_j$-bundles over $\dot V_{ij}:=\pi_{ij}(\dot P_{ij})$, so $\la_{ij}$ is a diffeomorphism with a $\Ga_i$- and $\Ga_j$-invariant open set $\la_{ij}(\dot P_{ij})$ in~$P_{ij}'$.
\item[(c)] $\hat\la_{ij}:\pi_{ij}^*(E_i)\vert_{\dot P_{ij}}\ra \phi_{ij}^*(TV_j)\vert_{\dot P_{ij}}$ is a $\Ga_i$- and $\Ga_j$-invariant smooth morphism of vector bundles on $\dot P_{ij}$, satisfying
\e
\begin{split}
\phi_{ij}'\ci\la_{ij}&=\phi_{ij}\vert_{\dot P_{ij}}+\hat\la_{ij}\cdot \pi_{ij}^*(s_i)+O\bigl(\pi_{ij}^*(s_i)^2\bigr)\;\>\text{and}\\ 
\la_{ij}^*(\hat\phi_{ij}')&=\hat\phi_{ij}\vert_{\dot P_{ij}}+\hat\la_{ij}\cdot \phi_{ij}^*(\d s_j)+O\bigl(\pi_{ij}^*(s_i)\bigr)\;\> \text{on $\dot P_{ij}$,}
\end{split}
\label{ku4eq1}
\e
in the notation of Definition \ref{ku2def1}, interpreting $\d s_j$ as in \eq{ku2eq1}.
\end{itemize}

Define a binary relation $\approx$ on such triples by $(\dot P_{ij},\la_{ij},\hat\la_{ij})\approx(\dot P_{ij}',\la_{ij}',\hat\la_{ij}')$ if there exists an open neighbourhood $\ddot P_{ij}$ of $\pi_{ij}^{-1}(\bar\psi_i^{-1}(S))$ in $\dot P_{ij}\cap \dot P_{ij}'$ with
\e
\la_{ij}\vert_{\ddot P_{ij}}=\la_{ij}'\vert_{\ddot P_{ij}}\quad\text{and}\quad \hat\la_{ij}\vert_{\ddot P_{ij}}=\hat\la_{ij}'\vert_{\ddot P_{ij}}+O\bigl(\pi_{ij}^*(s_i)\bigr)\quad\text{on $\ddot P_{ij}$.}
\label{ku4eq2}
\e
Then $\approx$ is an equivalence relation. We also write $\approx_S$ for $\approx$ if we wish to stress the open set $S$. Write $[\dot P_{ij},\la_{ij},\hat\la_{ij}]$ for the $\approx$-equivalence class of $(\dot P_{ij},\la_{ij},\hat\la_{ij})$. We say that $[\dot P_{ij},\la_{ij},\hat\la_{ij}]:\Phi_{ij}\Ra\Phi_{ij}'$ is a 2-{\it morphism of\/ $1$-morphisms of Kuranishi neighbourhoods on\/ $X$ over\/} $(S,f)$, or just a 2-{\it morphism over\/} $(S,f)$. We often write~$\La_{ij}=[\dot P_{ij},\la_{ij},\hat\la_{ij}]$.

If $Y=X$ and $f=\id_X$ then we call $\La_{ij}$ a 2-{\it morphism of Kuranishi neighbourhoods over\/} $S$, or just a 2-{\it morphism over\/}~$S$.
\label{ku4def3}
\end{dfn}

\begin{rem} If $\Ga_i=\Ga_j=\{1\}$ then morphisms $[V_{ij},\phi_{ij},\hat\phi_{ij}]:(V_i,E_i,s_i,\psi_i)\ra (V_j,E_j,s_j,\psi_j)$ in Definition \ref{ku2def3} correspond naturally to 2-isomorphism classes of 1-morphisms $\Phi_{ij}\!=\!(V_{ij},\id_{V_{ij}},\phi_{ij},\hat\phi_{ij}):(V_i,E_i,\{1\},s_i,\psi_i)\!\ra\!(V_j,E_j,\{1\},s_j,\psi_j)$ in Definitions \ref{ku4def2} and~\ref{ku4def3}.
\label{ku4rem1}
\end{rem}

In Definition \ref{ku2def5} we defined a category $\muKur_S(X)$ of $\mu$-Kuranishi neighbourhoods over $S\subseteq X$. The analogue in this section will be a weak 2-category $\Kur_S(X)$ of Kuranishi neighbourhoods over $S\subseteq X$. We have defined the 1-morphisms and 2-morphisms. Next we define the remaining structures of a weak 2-category: composition of 1-morphisms, horizontal and vertical composition of 2-morphisms, identity 1- and 2-morphisms, and coherence 2-isomorphisms $\al_{g,f,e},\be_f,\ga_f$ as in \eq{kuBeq6} and~\eq{kuBeq9}.

\begin{dfn} Let $X,Y,Z$ be topological spaces, $f:X\ra Y$, $g:Y\ra Z$ be continuous maps, $(V_i,E_i,\Ga_i,s_i,\psi_i),(V_j,E_j,\Ga_j,s_j,\psi_j),(V_k,E_k,\Ga_k,s_k,\psi_k)$ be Kuranishi neighbourhoods on $X,Y,Z$ respectively, and $T\subseteq \Im\psi_j\cap g^{-1}(\Im\psi_k)\ab\subseteq Y$ and $S\subseteq\Im\psi_i\cap f^{-1}(T)\subseteq X$ be open. Suppose $\Phi_{ij}=(P_{ij},\pi_{ij},\phi_{ij},\hat\phi_{ij}):(V_i,E_i,\Ga_i,s_i,\psi_i)\ra (V_j,E_j,\Ga_j,s_j,\psi_j)$ is a 1-morphism of Kuranishi neighbourhoods over $(S,f)$, and $\Phi_{jk}=(P_{jk},\pi_{jk},\phi_{jk},\hat\phi_{jk}):(V_j,E_j,\Ga_j,s_j,\psi_j)\ra (V_k,E_k,\ab\Ga_k,\ab s_k,\ab\psi_k)$ is a 1-morphism of Kuranishi neighbourhoods over~$(T,g)$.

Consider the diagram of manifolds and smooth maps:
\begin{equation*}
\xymatrix@C=30pt@R=6pt{ 
&& P_{ij}\t_{V_j}P_{jk} \ar@(ul,ur)[]^(0.7){\Ga_i\t\Ga_j\t\Ga_k} \ar[dl]_(0.65){\pi_{P_{ij}}} \ar[dr]^(0.65){\pi_{P_{jk}}} \\
& P_{ij} \ar@(ul,l)[]_(0.5){\Ga_i\t\Ga_j} \ar[dl]^{\pi_{ij}} \ar[dr]_{\phi_{ij}} && P_{jk} \ar@(ur,r)[]^(0.5){\Ga_j\t\Ga_k} \ar[dl]^{\pi_{jk}} \ar[dr]_{\phi_{jk}} \\
V_i \ar@(ul,l)[]_(0.6){\Ga_i} && V_j \ar@(ul,ur)[]^(0.7){\Ga_j} && V_k. \ar@(ur,r)[]^(0.6){\Ga_k} }
\end{equation*}
Here the fibre product $P_{ij}\t_{V_j}P_{jk}$ is transverse, and so exists, as $\pi_{jk}$ is \'etale. We have shown the actions of various combinations of $\Ga_i,\Ga_j,\Ga_k$ on each space. In fact $\Ga_i\t\Ga_j\t\Ga_k$ acts on the whole diagram, with all maps equivariant, but we have omitted the trivial actions (for instance, $\Ga_j,\Ga_k$ act trivially on~$V_i$).

As $\Ga_j$ acts freely on $P_{ij}$, it also acts freely on $P_{ij}\t_{V_j}P_{jk}$, so $P_{ik}:=(P_{ij}\t_{V_j}P_{jk})/\Ga_j$ is a manifold, with projection $\Pi:P_{ij}\t_{V_j}P_{jk}\ra P_{ik}$. The commuting actions of $\Ga_i,\Ga_k$ on $P_{ij}\t_{V_j}P_{jk}$ descend to commuting actions of $\Ga_i,\Ga_k$ on $P_{ik}$ such that $\Pi$ is $\Ga_i$- and $\Ga_k$-equivariant. As $\pi_{ij}\ci\pi_{P_{ij}}:P_{ij}\t_{V_j}P_{jk}\ra V_i$ and $\phi_{jk}\ci\pi_{P_{jk}}:P_{ij}\t_{V_j}P_{jk}\ra V_k$ are $\Ga_j$-invariant, they factor through $\Pi$, so there are unique smooth maps $\pi_{ik}:P_{ik}\ra V_i$ and $\phi_{ik}:P_{ik}\ra V_k$ such that $\pi_{ij}\ci\pi_{P_{ij}}=\pi_{ik}\ci\Pi$ and~$\phi_{jk}\ci\pi_{P_{jk}}=\phi_{ik}\ci\Pi$. 

Consider the diagram of vector bundles on $P_{ij}\t_{V_j}P_{jk}$:
\begin{equation*}
\xymatrix@C=15pt@R=13pt{
*+[r]{\Pi^*\ci\pi_{ik}^*(E_i)} \ar@{.>}[rrrrrrr]_{\Pi^*(\hat\phi_{ik})} \ar@{=}[d] &&&&&&& *+[l]{\Pi^*\ci \phi_{ik}^*(E_k)} \ar@{=}[d] \\
*+[r]{\pi_{P_{ij}}^*\!\ci\!\pi_{ij}^*(E_i)} \ar[rrr]^(0.6){\pi_{P_{ij}}^*(\hat\phi_{ij})} 
&&& \pi_{P_{ij}}^*\!\ci\!\phi_{ij}^*(E_j) \ar@{=}[r] &
\pi_{P_{jk}}^*\!\ci\!\pi_{jk}^*(E_j) \ar[rrr]^(0.4){\pi_{P_{jk}}^*(\hat\phi_{jk})} &&&
*+[l]{\pi_{P_{jk}}^*\!\ci\!\phi_{jk}^*(E_k).\!\!} }
\end{equation*}
There is a unique morphism on the top line making the diagram commute. As $\hat\phi_{ij},\hat\phi_{jk}$ are $\Ga_j$-equivariant, this is $\Ga_j$-equivariant, so it is the pullback under $\Pi^*$ of a unique morphism $\hat\phi_{ik}:\pi_{ik}^*(E_i)\ra \phi_{ik}^*(E_k)$, as shown. It is now easy to check that $(P_{ik},\pi_{ik},\phi_{ik},\hat\phi_{ik})$ satisfies Definition \ref{ku4def2}(a)--(f), and is a 1-morphism $\Phi_{ik}=(P_{ik},\pi_{ik},\phi_{ik},\hat\phi_{ik}):(V_i,E_i,\Ga_i,s_i,\psi_i)\ra(V_k,E_k,\Ga_k,s_k,\psi_k)$ over $(S,g\ci f)$. We write $\Phi_{jk}\ci\Phi_{ij}=\Phi_{ik}$, and call it the {\it composition of\/ $1$-morphisms}. 

If we have three such 1-morphisms $\Phi_{ij},\Phi_{jk},\Phi_{kl}$, define
\e
\la_{ijkl}:\bigl[P_{ij}\t_{V_j}\bigl((P_{jk}\t_{V_k}P_{kl})/\Ga_k\bigr)\bigr]/\Ga_j\ra\bigl[\bigl((P_{ij}\t_{V_j}P_{jk})/\Ga_j\bigr)\t_{V_k}P_{kl}\bigr]/\Ga_k
\label{ku4eq3}
\e
to be the natural identification. Then we have a 2-isomorphism
\e
\begin{split}
\bs\al_{\Phi_{kl},\Phi_{jk},\Phi_{ij}}:=\bigl[[P_{ij}\t_{V_j}((P_{jk}&\t_{V_k}P_{kl})/\Ga_k)]/\Ga_j,\la_{ijkl},0\bigr]:\\
&(\Phi_{kl}\ci\Phi_{jk})\ci\Phi_{ij}\Longra\Phi_{kl}\ci(\Phi_{jk}\ci\Phi_{ij}).
\end{split}
\label{ku4eq4}
\e
That is, composition of $1$-morphisms is associative up to canonical 2-iso\-mor\-ph\-ism, as for weak 2-categories in~\S\ref{kuB1}.

Let $(V_i,E_i,\Ga_i,s_i,\psi_i)$ be a Kuranishi neighbourhood on $X$, and $S\subseteq\Im\psi_i$ be open. We will define the {\it identity\/ $1$-morphism\/}
\e
\id_{(V_i,E_i,\Ga_i,s_i,\psi_i)}\!=\!(P_{ii},\pi_{ii},\phi_{ii},\hat\phi_{ii})\!:\!(V_i,E_i,\Ga_i,s_i,\psi_i)\!\ra\! (V_i,E_i,\Ga_i,s_i,\psi_i).
\label{ku4eq5}
\e
Since $P_{ii}$ must have two different actions of $\Ga_i$, for clarity we write $\Ga_i^1=\Ga_i^2=\Ga_i$, where $\Ga_i^1$ and $\Ga_i^2$ mean the copies of $\Ga_i$ acting on the domain and target of the 1-morphism in \eq{ku4eq5}, respectively.
 
Define $P_{ii}=V_i\t\Ga_i$, and let $\Ga_i^1$ act on $P_{ii}$ by $\ga^1:(v,\ga)\mapsto (\ga^1\cdot v,\ga(\ga^1)^{-1})$ and $\Ga_i^2$ act on $P_{ii}$ by $\ga^2:(v,\ga)\mapsto(v,\ga^2\ga)$. Define $\pi_{ii},\phi_{ii}:P_{ii}\ra V_i$ by $\pi_{ii}:(v,\ga)\mapsto v$ and $\phi_{ii}:(v,\ga)\mapsto\ga\cdot v$. Then $\pi_{ii}$ is $\Ga_i^1$-equivariant and $\Ga_i^2$-invariant, and is a $\Ga_i^2$-principal bundle, and $\phi_{ii}$ is $\Ga_i^1$-invariant and $\Ga_i^2$-equivariant.

At $(v,\ga)\in P_{ii}$, the morphism $\hat\phi_{ii}:\pi_{ii}^*(E_i)\ra\phi_{ii}^*(E_i)$ must map $E_i\vert_v\ra E_i\vert_{\ga\cdot v}$. We have such a map, the lift of the $\ga$-action on $V_i$ to $E_i$. So we define $\hat\phi_{ii}$ on $V_i\t\{\ga\}\subseteq P_{ii}$ to be the lift to $E_i$ of the $\ga$-action on $V_i$, for each $\ga\in\Ga$. It is now easy to check that $(P_{ii},\pi_{ii},\phi_{ii},\hat\phi_{ii})$ satisfies Definition \ref{ku4def2}(a)--(f), so \eq{ku4eq5} is a 1-morphism over $S$.

For $\Phi_{ij}:(V_i,E_i,\Ga_i,s_i,\psi_i)\ra (V_j,E_j,\Ga_j,s_j,\psi_j)$ as above, define 
\begin{align*}
\mu_{ij}&:((V_i\t\Ga_i)\t_{V_i}P_{ij})/\Ga_i\longra P_{ij},\\
\nu_{ij}&:(P_{ij}\t_{V_j}(V_j\t\Ga_j))/\Ga_j\longra P_{ij},
\end{align*}
to be the natural identifications. Then we have 2-isomorphisms
\e
\begin{split}
\bs\be_{\Phi_{ij}}:=\bigl[((V_i\t\Ga_i)\t_{V_i}P_{ij})/\Ga_i,\mu_{ij},0\bigr]&:\Phi_{ij}\ci\id_{(V_i,E_i,\Ga_i,s_i,\psi_i)}\Longra\Phi_{ij},\\
\bs\ga_{\Phi_{ij}}:=\bigl[(P_{ij}\t_{V_j}(V_j\t\Ga_j))/\Ga_j,\nu_{ij},0\bigr]&:\id_{(V_j,E_j,\Ga_j,s_j,\psi_j)}\ci\Phi_{ij}\Longra\Phi_{ij},
\end{split}
\label{ku4eq6}
\e
so identity 1-morphisms behave as they should up to canonical 2-isomorphism, as for weak 2-categories in~\S\ref{kuB1}.
\label{ku4def4}
\end{dfn}

\begin{dfn} Let $X,Y$ be topological spaces, $f:X\ra Y$ be continuous, $(V_i,E_i,\Ga_i,s_i,\psi_i)$, $(V_j,E_j,\Ga_j,s_j,\psi_j)$ be Kuranishi neighbourhoods on $X,Y$, $S\subseteq \Im\psi_i\cap f^{-1}(\Im\psi_j)\subseteq X$ be open, and $\Phi_{ij},\Phi_{ij}',\Phi_{ij}'':(V_i,\ab E_i,\ab\Ga_i,\ab s_i,\ab\psi_i)\ra (V_j,E_j,\Ga_j,s_j,\psi_j)$ be 1-morphisms over $(S,f)$ with $\Phi_{ij}=(P_{ij},\pi_{ij},\phi_{ij},\hat\phi_{ij})$, $\Phi_{ij}'=(P_{ij}',\pi_{ij}',\phi_{ij}',\hat\phi_{ij}')$, $\Phi_{ij}''=(P_{ij}'',\pi_{ij}'',\phi_{ij}'',\hat\phi_{ij}'')$. Suppose $\La_{ij}=[\dot P_{ij},\la_{ij},\hat\la_{ij}]:\Phi_{ij}\Ra\Phi_{ij}'$ and $\La_{ij}'=[\dot P_{ij}',\la_{ij}',\hat\la_{ij}']:\Phi_{ij}'\Ra\Phi_{ij}''$ are 2-morphisms over $(S,f)$. We will define the {\it vertical composition of\/ $2$-morphisms over\/} $(S,f)$, written
\begin{equation*}
\La_{ij}'\od\La_{ij}=[\dot P_{ij}',\la_{ij}',\hat\la_{ij}']\od [\dot P_{ij},\la_{ij},\hat\la_{ij}]:\Phi_{ij}\Longra\Phi_{ij}''.
\end{equation*}

Choose representatives $(\dot P_{ij},\la_{ij},\hat\la_{ij}),(\dot P_{ij}',\la_{ij}',\hat\la_{ij}')$ in the $\approx$-equivalence cla\-sses $[\dot P_{ij},\la_{ij},\hat\la_{ij}],[\dot P_{ij}',\la_{ij}',\hat\la_{ij}']$. Define $\dot P_{ij}''=\la_{ij}^{-1}(\dot P_{ij})\subseteq \dot P_{ij}\subseteq P_{ij}$, and $\la_{ij}''=\la_{ij}'\ci\la_{ij}\vert_{\dot P_{ij}''}$. Consider the morphism of vector bundles
\begin{equation*}
\la_{ij}^*(\hat\la_{ij}'):\pi_{ij}^*(E_i)\vert_{\dot P_{ij}''}\!=\!\la_{ij}^*\ci\pi_{ij}^{\prime *}(E_i)\vert_{\dot P_{ij}''}\!\longra\!\la_{ij}^*\ci\phi_{ij}^{\prime *}(TV_j)\!=\!(\phi_{ij}'\ci\la_{ij})^*(TV_j)\vert_{\dot P_{ij}''}.
\end{equation*}
Since $\phi_{ij}'\ci\la_{ij}\vert_{\dot P_{ij}''}=\phi_{ij}\vert_{\dot P_{ij}''}+O(\pi_{ij}^*(s_i))$ by \eq{ku4eq1}, the discussion after Definition \ref{ku2def1}(vi) shows that there exists $\check\la'_{ij}:\pi_{ij}^*(E_i)\vert_{\dot P_{ij}''}\ra \phi_{ij}^*(TV_j)\vert_{\dot P_{ij}''}$ with
\e
\check\la_{ij}'=\la_{ij}\vert_{\dot P_{ij}''}^*(\hat\la_{ij}')+O(\pi_{ij}^*(s_i)),
\label{ku4eq7}
\e
as in Definition \ref{ku2def1}(vi), and $\check\la_{ij}'$ is unique up to $O(\pi_{ij}^*(s_i))$. By averaging over the $\Ga_i\t\Ga_j$-action we can suppose $\check\la_{ij}'$ is $\Ga_i$- and $\Ga_j$-equivariant, as $\hat\la_{ij}'$ is.

Define $\hat\la_{ij}'':\pi_{ij}^*(E_i)\vert_{\dot P_{ij}''}\ra \phi_{ij}^*(TV_j)\vert_{\dot P_{ij}''}$ by $\hat\la_{ij}''=\hat\la_{ij}\vert_{\dot P_{ij}''}+\check\la_{ij}'$. It is now easy to see that $(\dot P_{ij}'',\la_{ij}'',\hat\la_{ij}'')$ satisfies Definition \ref{ku4def3}(a)--(c) for $\Phi_{ij},\Phi_{ij}''$, using \eq{ku4eq1} for $\hat\la_{ij},\hat\la_{ij}'$ and \eq{ku4eq7} to prove \eq{ku4eq1} for $\hat\la_{ij}''$. Hence $\La_{ij}''=[\dot P_{ij}'',\la_{ij}'',\hat\la_{ij}'']:\Phi_{ij}\Ra\Phi_{ij}''$ is a 2-morphism over $(S,f)$. It is independent of choices. We define $[\dot P_{ij}',\la_{ij}',\hat\la_{ij}']\od [\dot P_{ij},\la_{ij},\hat\la_{ij}]=[\dot P_{ij}'',\la_{ij}'',\hat\la_{ij}'']$, or~$\La_{ij}'\od\La_{ij}=\La_{ij}''$.

For vertical composition of three 2-morphisms, at the level of representatives $(\dot P_{ij},\la_{ij},\hat\la_{ij})$, the definitions of $\dot P_{ij}'',\la_{ij}''$ above are strictly associative, and the definition of $\hat\la_{ij}''$ is associative up to $O(\pi_{ij}^*(s_i))$. So passing to $\approx$-equivalence classes, vertical composition of 2-morphisms is associative.

For a 1-morphism $\Phi_{ij}=(P_{ij},\pi_{ij},\phi_{ij},\hat\phi_{ij})$, define the {\it identity\/ $2$-morphism\/}
\begin{equation*}
\id_{\Phi_{ij}}=[P_{ij},\id_{P_{ij}},0]:\Phi_{ij}\Longra\Phi_{ij}.
\end{equation*}
Clearly, it acts as the identity under vertical composition of 2-morphisms.

Let $\La_{ij}:\Phi_{ij}\Ra\Phi_{ij}'$ be a 2-morphism over $(S,f)$, and choose a representative $(\dot P_{ij},\la_{ij},\hat\la_{ij})$ for $\La_{ij}=[\dot P_{ij},\la_{ij},\hat\la_{ij}]$. Define $\dot P_{ij}'=\la_{ij}(\dot P_{ij})$, so that $\dot P_{ij}'\subseteq P_{ij}'$ is open and $\la_{ij}:\dot P_{ij}\ra\dot P_{ij}'$ is a diffeomorphism. Set $\la_{ij}'=\la_{ij}^{-1}:\dot P_{ij}'\ra\dot P_{ij}\subseteq P_{ij}$. Then $\dot P_{ij}'$ is $\Ga_i$- and $\Ga_j$-invariant, and $\la_{ij}'$ is $\Ga_i$- and $\Ga_j$-equivariant. 

Now $\phi_{ij}'=\phi_{ij}\ci\la_{ij}'+O(\pi_{ij}^{\prime *}(s_i))$, so the discussion after Definition \ref{ku2def1}(vi) shows that there exists $\hat\la_{ij}':\pi_{ij}^{\prime *}(E_i)\vert_{\dot P_{ij}'}\ra\phi_{ij}^{\prime *}(TV_j)\vert_{\dot P_{ij}'}$ with $\hat\la_{ij}'=-\la_{ij}^{\prime *}(\hat\la_{ij})+O(\pi_{ij}^{\prime *}(s_i))$, as in Definition \ref{ku2def1}(vi). Since $\hat\la_{ij}$ is $\Ga_i,\Ga_j$-equivariant, by averaging $\hat\la_{ij}'$ over the $\Ga_i\t\Ga_j$-action we can suppose $\hat\la_{ij}'$ is $\Ga_i,\Ga_j$-equivariant. It is not difficult to show that $(\dot P_{ij}',\la_{ij}',\hat\la_{ij}')$ satisfies Definition \ref{ku4def3}(a)--(c), so that $\La_{ij}'=[\dot P_{ij}',\la_{ij}',\hat\la_{ij}']:\Phi_{ij}'\Ra\Phi_{ij}$ is a 2-morphism over $(S,f)$. One can check that $\La_{ij}'$ is a two-sided inverse $\La_{ij}^{-1}$ for $\La_{ij}$ under vertical composition. Thus, {\it all\/ $2$-morphisms over $(S,f)$ are invertible under vertical composition, that is, they are $2$-isomorphisms}.

If $f:X\ra Y$ is continuous, $(V_i,E_i,\Ga_i,s_i,\psi_i),(V_j,E_j,\Ga_j,s_j,\psi_j)$ are Kuranishi neighbourhoods on $X,Y$, and $S\subseteq\Im\psi_i\cap f^{-1}(\Im\psi_j)\subseteq X$ is open, write $\bHom_{S,f}\bigl((V_i,E_i,\Ga_i,s_i,\psi_i),(V_j,E_j,\Ga_j,s_j,\psi_j)\bigr)$ for the groupoid with objects 1-morphisms $\Phi_{ij}:(V_i,E_i,\Ga_i,s_i,\psi_i)\ab\ra (V_j,\ab E_j,\ab\Ga_j,\ab s_j,\ab\psi_j)$ over $(S,f)$, and morphisms 2-morphisms $\La_{ij}:\Phi_{ij}\Ra\Phi_{ij}'$ over~$(S,f)$.

If $X\!=\!Y$ and $f\!=\!\id_X$, we write $\bHom_S\bigl((V_i,E_i,\Ga_i,s_i,\psi_i),(V_j,E_j,\Ga_j,s_j,\psi_j)\bigr)$ in place of $\bHom_{S,f}\bigl((V_i,E_i,\Ga_i,s_i,\psi_i),(V_j,E_j,\Ga_j,s_j,\psi_j)\bigr)$.
\label{ku4def5}
\end{dfn}

\begin{dfn} Let $X,Y,Z$ be topological spaces, $f:X\ra Y$, $g:Y\ra Z$ be continuous maps, $(V_i,E_i,\Ga_i,s_i,\psi_i),(V_j,E_j,\Ga_j,s_j,\psi_j),(V_k,E_k,\Ga_k,s_k,\psi_k)$ be Kuranishi neighbourhoods on $X,Y,Z$, and $T\subseteq \Im\psi_j\cap g^{-1}(\Im\psi_k)\ab\subseteq Y$ and $S\subseteq\Im\psi_i\cap f^{-1}(T)\subseteq X$ be open. Suppose $\Phi_{ij},\Phi_{ij}':(V_i,E_i,\Ga_i,s_i,\psi_i)\ra (V_j,E_j,\Ga_j,s_j,\psi_j)$ are 1-morphisms of Kuranishi neighbourhoods over $(S,f)$, and $\La_{ij}:\Phi_{ij}\Ra\Phi_{ij}'$ is a 2-morphism over $(S,f)$, and $\Phi_{jk},\Phi_{jk}':(V_j,E_j,\Ga_j,\ab s_j,\ab\psi_j)\ab\ra (V_k,E_k,\ab\Ga_k,\ab s_k,\ab\psi_k)$ are 1-morphisms of Kuranishi neighbourhoods over $(T,g)$, and $\La_{jk}:\Phi_{jk}\Ra\Phi_{jk}'$ is a 2-morphism over~$(T,g)$.

We will define the {\it horizontal composition of\/ $2$-morphisms}, written
\e
\La_{jk}*\La_{ij}:\Phi_{jk}\ci\Phi_{ij}\Longra\Phi_{jk}'\ci\Phi_{ij}'\qquad\text{over $(S,g\ci f)$.}
\label{ku4eq8}
\e
Use our usual notation for $\Phi_{ij},\ldots,\La_{jk}$, and write $(P_{ik},\pi_{ik},\phi_{ik},\hat\phi_{ik})=\Phi_{jk}\ci\Phi_{ij}$, $(P_{ik}',\pi_{ik}',\phi_{ik}',\hat\phi_{ik}')=\Phi_{jk}'\ci\Phi_{ij}'$, as in Definition \ref{ku4def4}. Choose representatives $(\dot P_{ij},\la_{ij},\hat\la_{ij})$, $(\dot P_{jk},\la_{jk},\hat\la_{jk})$ for $\La_{ij}=[\dot P_{ij},\la_{ij},\hat\la_{ij}]$ and~$\La_{jk}=[\dot P_{jk},\la_{jk},\hat\la_{jk}]$. 

Then $P_{ik}=(P_{ij}\t_{V_j}P_{jk})/\Ga_j$, and $\dot P_{ij}\subseteq P_{ij}$, $\dot P_{jk}\subseteq P_{jk}$ are open and $\Ga_j$-invariant, so $\dot P_{ij}\t_{V_j}\dot P_{jk}$ is open and $\Ga_j$-invariant in $P_{ij}\t_{V_j}P_{jk}$. Define $\dot P_{ik}=(\dot P_{ij}\t_{V_j}\dot P_{jk})/\Ga_j$, as an open subset of $P_{ik}$. It is $\Ga_i$- and $\Ga_k$-invariant, as $\dot P_{ij}$, $\dot P_{jk}$ are $\Ga_i$- and $\Ga_k$-invariant, respectively.

The maps $\la_{ij}:\dot P_{ij}\ra P_{ij}'$, $\la_{jk}:\dot P_{jk}\ra P_{jk}'$ satisfy $\phi_{ij}'\ci\la_{ij}=\phi_{ij}\vert_{\dot P_{ij}}:\dot P_{ij}\ra V_j$ and $\pi_{jk}'\ci\la_{jk}=\pi_{jk}\vert_{\dot P_{jk}}:\dot P_{jk}\ra V_j$. Hence by properties of fibre products they induce a unique smooth map $\ti\la_{ik}:\dot P_{ij}\t_{\phi_{ij},V_j,\pi_{jk}}\dot P_{jk}\ra P_{ij}'\t_{\phi_{ij}',V_j,\pi_{jk}'}P_{jk}'$ with $\pi_{P_{ij}'}\ci\ti\la_{ik}=\la_{ij}\ci\pi_{\dot P_{ij}}$ and $\pi_{P_{jk}'}\ci\ti\la_{ik}=\la_{jk}\ci\pi_{\dot P_{jk}}$. As everything is $\Ga_j$-equivariant, $\ti\la_{ik}$ descends to the quotients by $\Ga_j$. Thus we obtain a unique smooth map
\begin{equation*}
\la_{ik}:\dot P_{ik}=(\dot P_{ij}\t_{V_j}\dot P_{jk})/\Ga_j\longra 
(P_{ij}'\t_{V_j}P_{jk}')/\Ga_j=P_{ik}'
\end{equation*}
with $\la_{ik}\ci\Pi=\Pi'\ci\ti\la_{ik}$, for $\Pi:\dot P_{ij}\t_{V_j}\dot P_{jk}\ra (\dot P_{ij}\t_{V_j}\dot P_{jk})/\Ga_j$, $\Pi':P_{ij}'\t_{V_j}P_{jk}'\ra (P_{ij}'\t_{V_j}P_{jk}')/\Ga_j$ the projections.

Define a morphism of vector bundles on $\dot P_{ij}\t_{V_j}\dot P_{jk}$
\begin{gather*}
\check\la_{ik}:\Pi^*\!\ci\!\pi_{ik}^*(E_i)\!=\!(\pi_{ij}\!\ci\!\pi_{\dot P_{ij}})^*(E_i)\!\longra\! (\phi_{jk}\!\ci\!\pi_{\dot P_{jk}})^*(TV_k)\!=\!\Pi^*\!\ci\!\phi_{ik}^*(TV_k)\\
\text{by}\;\>
\check\la_{ik}=\pi_{\dot P_{jk}}^*(\d\phi_{jk}\ci (\d\pi_{jk})^{-1})\ci\pi_{\dot P_{ij}}^*(\hat\la_{ij})+
\pi_{\dot P_{jk}}^*(\hat\la_{jk})\ci \pi_{\dot P_{ij}}^*(\hat\phi_{ij}),
\end{gather*}
where the morphisms are given in the diagram
\begin{equation*}
\xymatrix@C=80pt@R=2pt{
*+[r]{(\pi_{ij}\!\ci\!\pi_{\dot P_{ij}})^*(E_i)} 
\ar[dd]^{\pi_{\dot P_{ij}}^*(\hat\la_{ij})}
\ar[r]_(0.6){\pi_{\dot P_{ij}}^*(\hat\phi_{ij})} & (\phi_{ij}\!\ci\!\pi_{\dot P_{ij}})^*(E_j) \ar@{=}[r] & *+[l]{(\pi_{jk}\!\ci\!\pi_{\dot P_{jk}})^*(E_j)} \ar[ddd]_{\pi_{\dot P_{jk}}^*(\hat\la_{jk})} 
\\ \\
*+[r]{(\phi_{ij}\!\ci\!\pi_{\dot P_{ij}})^*(TV_j)} \ar@{=}[d]
\\
*+[r]{(\pi_{jk}\!\ci\!\pi_{\dot P_{jk}})^*(TV_j)} 
\ar@<1ex>[r]^(0.65){\pi_{\dot P_{jk}}^*((\d\pi_{jk})^{-1})} & \pi_{\dot P_{jk}}^*(T\dot P_{jk}) \ar[r]^(0.35){\pi_{\dot P_{jk}}^*(\d\phi_{jk})} 
\ar[l]^(0.35){\pi_{\dot P_{jk}}^*(\d\pi_{jk})} & 
*+[l]{(\phi_{jk}\!\ci\!\pi_{\dot P_{jk}})^*(TV_k).\!\!} }
\end{equation*}
Here $\d\pi_{jk}:T\dot P_{jk}\ra \pi_{jk}^*(TV_j)$ is invertible as $\pi_{jk}$ is \'etale. As all the ingredients are $\Ga_i,\Ga_j,\Ga_k$-invariant or equivariant, $\check\la_{ik}$ is $\Ga_j$-invariant, and so descends to $\dot P_{ik}=(\dot P_{ij}\t_{V_j}\dot P_{jk})/\Ga_j$. That is, there is a unique morphism $\hat\la_{ik}:\pi_{ik}\vert_{\dot P_{ik}}^*(E_i)\ra \phi_{ik}\vert_{\dot P_{ik}}^*(TV_k)$ of vector bundles on $\dot P_{ik}$ with $\Pi^*(\hat\la_{ik})=\check\la_{ik}$. As $\check\la_{ik}$ is $\Ga_i$- and $\Ga_k$-equivariant, so is $\hat\la_{ik}$.

One can now check that $(\dot P_{ik},\la_{ik},\hat\la_{ik})$ satisfies Definition \ref{ku4def3}(a)--(c), where \eq{ku4eq1} for $\hat\la_{ik}$ follows from adding the pullbacks to $\dot P_{ij}\t_{V_j}\dot P_{jk}$ of \eq{ku4eq1} for $\hat\la_{ij},\hat\la_{jk}$, so $\La_{ik}=[\dot P_{ik},\la_{ik},\hat\la_{ik}]$ is a 2-morphism as in \eq{ku4eq8}, which is independent of choices of $(\dot P_{ij},\la_{ij},\hat\la_{ij})$, $(\dot P_{jk},\la_{jk},\hat\la_{jk})$. We define $\La_{jk}*\La_{ij}=\La_{ik}$ in~\eq{ku4eq8}.
\label{ku4def6}
\end{dfn}

We have now defined all the structures of a weak 2-category: objects (Kuranishi neighbourhoods), 1- and 2-morphisms, their three kinds of composition, two kinds of identities, and the coherence 2-isomorphisms \eq{ku4eq4}, \eq{ku4eq6} for associativity and identities. However, our 1- and 2-morphisms are defined over an open set $S\subseteq X$, which is not part of the usual 2-category structure. There are two ways to make a genuine weak 2-category: either (a) work on a fixed topological space $X$ and open $S\subseteq X$ with 1- and 2-morphisms over $f=\id_X:X\ra X$ only; or (b) allow $X,f$ to vary but require $S=X$ throughout.

\begin{dfn} Let $X$ be a topological space and $S\subseteq X$ be open. Define the {\it weak\/ $2$-category $\Kur_S(X)$ of Kuranishi neighbourhoods over\/} $S$ to have objects Kuranishi neighbourhoods $(V_i,E_i,\Ga_i,s_i,\psi_i)$ on $X$ with $S\subseteq\Im\psi_i$, and 1- and 2-morphisms $\Phi_{ij},\La_{ij}$ to be 1- and 2-morphisms of Kuranishi neighbourhoods over $S$ (i.e. with $f=\id_X:X\ra X$), and compositions etc. are as above. 

Define the {\it weak\/ $2$-category $\GKur$ of global Kuranishi neighbourhoods\/} by:
\begin{itemize}
\setlength{\itemsep}{0pt}
\setlength{\parsep}{0pt}
\item Objects $\bigl(X,(V,E,\Ga,s,\psi)\bigr)$ in $\GKur$ are pairs of a topological space $X$ and a Kuranishi neighbourhood $(V,E,\Ga,s,\psi)$ on $X$ with~$\Im\psi=X$.
\item 1-morphisms $(f,\Phi):\bigl(X,(V,E,\Ga,s,\psi)\bigr)\ra\bigl(Y,(W,F,\De,t,\chi)\bigr)$ in $\GKur$ are pairs of a continuous map $f:X\ra Y$ and a 1-morphism $\Phi:(V,\ab E,\ab\Ga,\ab s,\ab\psi)\ab\ra(W,F,\De,t,\chi)$ of Kuranishi neighbourhoods over $(X,f)$.
\item For 1-morphisms $(f,\Phi),(g,\Psi):\bigl(X,(V,E,\Ga,s,\psi)\bigr)\ra\bigl(Y,(W,F,\De,t,\chi)\bigr),$ the 2-morphisms $\La:(f,\Phi)\Ra(g,\Psi)$ in $\GKur$ exist only if $f=g,$ and are 2-morphisms $\La:\Phi\Ra\Psi$ of Kuranishi neighbourhoods over~$(X,f)$.
\end{itemize}
Compositions, identities, etc. in $\GKur$ are as above. 

For both $\Kur_S(X)$ and $\GKur$, verifying the remaining 2-category axioms in \S\ref{kuB1} that we have not already proved above is tedious but straightforward.

All 2-morphisms in $\Kur_S(X)$, $\GKur$ are 2-isomorphisms, that is, $\Kur_S(X)$, $\GKur$ are $(2,1)$-categories.
\label{ku4def7}
\end{dfn}

The next three definitions are analogues of Definitions \ref{ku2def6}--\ref{ku2def8}.

\begin{dfn} Recall from Appendix \ref{kuB} that an {\it equivalence\/} in a 2-category $\bs\cC$ is a 1-morphism $f:A\ra B$ in $\bs\cC$ such that there exist a 1-morphism $g:B\ra A$ (called a {\it quasi-inverse\/}) and 2-isomorphisms $\eta:g\ci f\Ra\id_A$ and $\ze:f\ci g\Ra\id_B$. A 1-morphism $f:A\ra B$ is an equivalence if and only if $[f]:A\ra B$ is an isomorphism (is invertible) in the homotopy category~$\Ho(\bs\cC)$.

A 1-morphism $\Phi_{ij}:(V_i,E_i,\Ga_i,s_i,\psi_i)\ra (V_j,E_j,\Ga_j,s_j,\psi_j)$ on $X$ over $S$ is a {\it coordinate change over\/} $S$ if $\Phi_{ij}$ is an equivalence in the 2-category~$\Kur_S(X)$.
\label{ku4def8}
\end{dfn}

\begin{dfn} Let $T\subseteq S\subseteq X$ be open. Define the {\it restriction\/ $2$-functor\/} $\vert_T:\Kur_S(X)\ra\Kur_T(X)$ to map objects $(V_i,E_i,\Ga_i,s_i,\psi_i)$ to exactly the same objects, and 1-morphisms $\Phi_{ij}$ to exactly the same 1-morphisms but regarded as 1-morphisms over $T$, and 2-morphisms $\La_{ij}=[\dot P_{ij},\la_{ij},\hat\la_{ij}]$ over $S$ to $\La_{ij}\vert_T=[\dot P_{ij},\la_{ij},\hat\la_{ij}]\vert_T$, where $[\dot P_{ij},\la_{ij},\hat\la_{ij}]\vert_T$ is the $\approx_T$-equivalence class of any representative $(\dot P_{ij},\ab\la_{ij},\ab\hat\la_{ij})$ for the $\approx_S$-equivalence class~$[\dot P_{ij},\la_{ij},\hat\la_{ij}]$. 

Then $\vert_T:\Kur_S(X)\ra\Kur_T(X)$ commutes with all the structure, so it is actually a `strict 2-functor of weak 2-categories', but as this is not a very well-behaved notion, we regard $\vert_T$ as a weak 2-functor as in \S\ref{kuB2} for which the additional 2-isomorphisms $F_{g,f}$ are identities.

If $U\subseteq T\subseteq S\subseteq X$ are open then $\vert_U\ci\vert_T=\vert_U:\Kur_S(X)\ra\Kur_U(X)$.

Also $\vert_S$ gives a functor $\vert_T:\bHom_S\bigl((V_i,E_i,\Ga_i,s_i,\psi_i),(V_j,E_j,\Ga_j,s_j,\psi_j)\bigr)\ra \bHom_T\bigl((V_i,E_i,\Ga_i,s_i,\psi_i),(V_j,E_j,\Ga_j,s_j,\psi_j)\bigr)$ when $T\subseteq S\subseteq \Im\psi_i\cap\Im\psi_j$, in the notation of Definition~\ref{ku4def5}.
\label{ku4def9}
\end{dfn}

\begin{dfn} So far we have discussed 1- and 2-morphisms of Kuranishi neighbourhoods, and coordinate changes, {\it over a specified open set\/} $S\subseteq X$, or over $(S,f)$. We now make the convention that {\it when we do not specify a domain\/ $S$ for a $1$-morphism, $2$-morphism, or coordinate change, the domain should be as large as possible}. For example, if we say that $\Phi_{ij}:(V_i,E_i,\Ga_i,s_i,\psi_i)\ra (V_j,E_j,\Ga_j,s_j,\psi_j)$ is a 1-morphism (or a 1-morphism over $f:X\ra Y$) without specifying $S$, we mean that $S=\Im\psi_i\cap\Im\psi_j$ (or~$S=\Im\psi_i\cap f^{-1}(\Im\psi_j)$).

Similarly, if we write a formula involving several 2-morphisms (possibly defined on different domains), without specifying the domain $S$, we make the convention {\it that the domain where the formula holds should be as large as possible}. That is, the domain $S$ is taken to be the intersection of the domains of each 2-morphism in the formula, and we implicitly restrict each morphism in the formula to $S$ as in Definition \ref{ku4def9}, to make it make sense.
\label{ku4def10}
\end{dfn}

\begin{rem}{\bf(i)} Our coordinate changes in Definition \ref{ku4def8} are closely related to coordinate changes between Kuranishi neighbourhoods in the theory of Fukaya, Oh, Ohta and Ono \cite{Fuka, FOOO1,FOOO2,FOOO3,FOOO4,FOOO5,FOOO6,FOOO7,FOOO8,FuOn}, as described in Appendix \ref{kuA}. We explain the connection in \S\ref{kuA1}. One of the most important innovations in our theory is to introduce the notion of 2-morphism between coordinate changes.
\smallskip

\noindent{\bf(ii)} Our 1-morphisms of Kuranishi neighbourhoods involve $\smash{V_i \,{\buildrel\pi_{ij}\over\longleftarrow}\, P_{ij}\,{\buildrel\phi_{ij}\over\longra}\,V_j}$ with $\pi_{ij}$ a $\Ga_i$-equivariant principal $\Ga_j$-bundle, and $\phi_{ij}$ $\Ga_i$-invariant and $\Ga_j$-equivariant. As in \S\ref{ku45}, this is a known way of writing 1-morphisms of orbifolds $[V_i/\Ga_i]\ra[V_j/\Ga_j]$, called {\it Hilsum--Skandalis morphisms}. So the data $P_{ij},\pi_{ij},\phi_{ij}$ in $\Phi_{ij}=(P_{ij},\pi_{ij},\phi_{ij},\hat\phi_{ij})$ is very natural from the orbifold point of view.

\smallskip

\noindent{\bf(iii)} In the definition of 2-morphisms $\La_{ij}=[\dot P_{ij},\la_{ij},\hat\la_{ij}]$ in Definition \ref{ku4def3}, by restricting to arbitrarily small open neighbourhoods $\dot P_{ij}$ of $\pi_{ij}^{-1}(\bar\psi_i^{-1}(S))$ in $P_{ij}$ and then taking equivalence classes, we are in effect taking {\it germs\/} about $\bar\psi_i^{-1}(S)$ in $V_i$, or germs about $\pi_{ij}^{-1}(\bar\psi_i^{-1}(S))$ in $P_{ij}$. Fukaya--Ono's first definition of Kuranishi space \cite[\S 5]{FuOn} involved germs of Kuranishi neighbourhoods at points. We take germs at larger subsets $\bar\psi_i^{-1}(S)$ in 2-morphisms.
\label{ku4rem2}
\end{rem}

\subsection{Properties of 1- and 2-morphisms}
\label{ku42}

We now generalize \S\ref{ku22} to the orbifold/2-category case. The 2-category analogue of sheaves in Definition \ref{ku2def9} are {\it stacks on a topological space\/}:

\begin{dfn} Let $X$ be a topological space. A {\it prestack\/} (or {\it prestack in groupoids}, or 2-{\it presheaf\/}) $\bcE$ on $X$, consists of the data of a groupoid $\bcE(S)$ for every open set $S\subseteq X$, and a functor $\rho_{ST}:\bcE(S)\ra\bcE(T)$ called the {\it restriction map\/} for every inclusion $T\subseteq S\subseteq X$ of open sets, and a natural isomorphism of functors $\eta_{STU}:\rho_{TU}\ci\rho_{ST}\Ra \rho_{SU}$ for all inclusions $U\subseteq T\subseteq S\subseteq X$ of open sets, satisfying the conditions that:
\begin{itemize}
\setlength{\itemsep}{0pt}
\setlength{\parsep}{0pt}
\item[(i)] $\rho_{SS}=\id_{\bcE(S)}:\bcE(S)\ra\bcE(S)$ for all open
$S\subseteq X$, and $\eta_{SST}=\eta_{STT}=\id_{\rho_{ST}}$ for all open $T\subseteq S\subseteq X$; and
\item[(ii)] $\eta_{SUV}\od(\id_{\rho_{UV}}*\eta_{STU})=\eta_{STV}\od(\eta_{TUV}*\id_{\rho_{ST}}):\rho_{UV}\ci\rho_{TU}\ci\rho_{ST}\Longra\rho_{SV}$ for all open~$V\subseteq U\subseteq T\subseteq S\subseteq X$.
\end{itemize}

A prestack $\bcE$ on $X$ is called a {\it stack\/} (or {\it stack in groupoids}, or 2-{\it sheaf\/}) on $X$ if whenever $S\subseteq X$ is open and $\{T_i:i\in I\}$ is an open cover of $S$, then:
\begin{itemize}
\setlength{\itemsep}{0pt}
\setlength{\parsep}{0pt}
\item[(iii)] If $\al,\be:A\ra B$ are morphisms in $\bcE(S)$ and $\rho_{ST_i}(\al)=\rho_{ST_i}(\be):\rho_{ST_i}(A)\ra \rho_{ST_i}(B)$ in $\bcE(T_i)$ for all $i\in I$, then $\al=\be$.
\item[(iv)] If $A,B$ are objects of $\bcE(S)$ and $\al_i:\rho_{ST_i}(A)\ra \rho_{ST_i}(B)$ are morphisms in $\bcE(T_i)$ for all $i\in I$ with
\begin{align*}
&\eta_{ST_i(T_i\cap T_j)}(B)\ci\rho_{T_i(T_i\cap T_j)}(\al_i)\ci\eta_{ST_i(T_i\cap T_j)}(A)^{-1}\\
&\quad=\eta_{ST_j(T_i\cap T_j)}(B)\ci\rho_{T_j(T_i\cap T_j)}(\al_j)\ci\eta_{ST_j(T_i\cap T_j)}(A)^{-1}
\end{align*}
in $\bcE(T_i\cap T_j)$ for all $i,j\in I$, then there exists $\al:A\ra B$ in $\bcE(S)$ (necessarily unique by (iii)) with $\rho_{ST_i}(\al)=\al_i$ for all~$i\in I$.
\item[(v)] If $A_i\in\bcE(T_i)$ for $i\in I$ and $\al_{ij}:\rho_{T_i(T_i\cap T_j)}(A_i)\ra \rho_{T_j(T_i\cap T_j)}(A_j)$ are morphisms in $\bcE(T_i\cap T_j)$ for all $i,j\in I$ satisfying
\end{itemize}
\begin{align*}
& \\[-27pt]
&\eta_{T_k(T_j\cap T_k)(T_i\cap T_j\cap T_k)}(A_k)\!\ci\!\rho_{(T_j\cap T_k)(T_i\cap T_j\cap T_k)}(\al_{jk})\!\ci\!\eta_{T_j(T_j\cap T_k)(T_i\cap T_j\cap T_k)}(A_j)^{-1}\\
&\ci\eta_{T_j(T_i\cap T_j)(T_i\cap T_j\cap T_k)}(A_j)\!\ci\! \rho_{(T_i\cap T_j)(T_i\cap T_j\cap T_k)}(\al_{ij})\!\ci\!\eta_{T_i(T_i\cap T_j)(T_i\cap T_j\cap T_k)}(A_i)^{-1}\\
&=\!\eta_{T_k(T_i\cap T_k)(T_i\cap T_j\cap T_k)}(A_k)\!\ci\!\rho_{(T_i\cap T_k)(T_i\cap T_j\cap T_k)}(\al_{ik})\!\ci\!\eta_{T_i(T_i\cap T_k)(T_i\cap T_j\cap T_k)}(A_i)^{-1}
\end{align*}
\begin{itemize}
\setlength{\itemsep}{0pt}
\setlength{\parsep}{0pt}
\item[]for all $i,j,k\in I$, then there exist an object $A$ in $\bcE(S)$ and morphisms $\be_i:A_i\ra\rho_{ST_i}(A)$ for $i\in I$ such that for all $i,j\in I$ we have
\begin{equation*}
\eta_{ST_i(T_i\cap T_j)}(A)\ci\rho_{T_i(T_i\cap T_j)}(\be_i)=\eta_{ST_j(T_i\cap T_j)}(A)\ci\rho_{T_j(T_i\cap T_j)}(\be_j)\ci\al_{ij}.
\end{equation*}

If $\ti A,\ti\be_i$ are alternative choices then (iii),(iv) imply there is a unique isomorphism $\ga:A\ra\ti A$ in $\bcE(S)$ with $\rho_{ST_i}(\ga)=\ti\be_i\ci\be_i^{-1}$ for all~$i\in I$.
\end{itemize}
\label{ku4def11}
\end{dfn}

In the examples of stacks on topological spaces that will be important to us, we will have $\rho_{TU}\ci\rho_{ST}=\rho_{SU}$ and $\eta_{STU}=\id_{\rho_{SU}}$ for all open $U\subseteq T\subseteq S\subseteq X$. So (ii) is automatic, and all the $\eta_{\cdots}(\cdots)$ terms in (iv),(v) can be omitted.

Here is the analogue of Theorem \ref{ku2thm1}, proved in \S\ref{ku71}, which is very important in our theory. We will call Theorem \ref{ku4thm1} the {\it stack property}. We will use it in \S\ref{ku43} to construct compositions of 1- and 2-morphisms of Kuranishi spaces, and in \S\ref{ku44} to prove good behaviour of Kuranishi neighbourhoods on Kuranishi spaces. The first part of (a) is a special case of (b) when~$f=\id_X:X\ra X$.

\begin{thm}{\bf(a)} Let\/ $(V_i,E_i,\Ga_i,s_i,\psi_i),(V_j,E_j,\Ga_j,s_j,\psi_j)$ be Kuranishi nei\-ghbourhoods on a topological space $X$. For each open $S\subseteq\Im\psi_i\cap\Im\psi_j,$ set
\begin{align*}
\bcHom\bigl((V_i&,E_i,\Ga_i,s_i,\psi_i),(V_j,E_j,\Ga_j,s_j,\psi_j)\bigr)(S)\\
&=\bHom_S\bigl((V_i,E_i,\Ga_i,s_i,\psi_i),(V_j,E_j,\Ga_j,s_j,\psi_j)\bigr),
\end{align*}
as in Definition\/ {\rm\ref{ku4def5},} for all open $T\subseteq S\subseteq \Im\psi_i\cap\Im\psi_j$ define a functor
\begin{align*}
\rho_{ST}:\,&\bcHom\bigl((V_i,E_i,\Ga_i,s_i,\psi_i),(V_j,E_j,\Ga_j,s_j,\psi_j)\bigr)(S)\longra\\
&\bcHom\bigl((V_i,E_i,\Ga_i,s_i,\psi_i),(V_j,E_j,\Ga_j,s_j,\psi_j)\bigr)(T)\end{align*}
by $\rho_{ST}=\vert_T,$ as in Definition\/ {\rm\ref{ku4def9},} and for all open $U\subseteq T\subseteq S\subseteq \Im\psi_i\cap\Im\psi_j$ take the obvious isomorphism $\eta_{STU}=\id_{\rho_{SU}}:\rho_{TU}\ci\rho_{ST}\Ra \rho_{SU}$.
Then $\bcHom\bigl((V_i,E_i,\Ga_i,s_i,\psi_i),(V_j,E_j,\Ga_j,s_j,\psi_j)\bigr)$ is a stack on $\Im\psi_i\cap\Im\psi_j$. 

Coordinate changes $(V_i,E_i,\Ga_i,s_i,\psi_i)\!\ra\!(V_j,E_j,\Ga_j,s_j,\psi_j)$ also form a stack\/ $\bcEqu\bigl((V_i,E_i,\Ga_i,s_i,\psi_i),(V_j,E_j,\Ga_j,s_j,\psi_j)\bigr)$ on $\Im\psi_i\cap\Im\psi_j,$ which is a substack of\/~$\bcHom\bigl((V_i,E_i,\Ga_i,s_i,\psi_i),(V_j,E_j,\Ga_j,s_j,\psi_j)\bigr)$.
\smallskip

\noindent{\bf(b)} Let\/ $f:X\!\ra\! Y$ be continuous, and\/ $(U_i,D_i,\Be_i,r_i,\chi_i),(V_j,E_j,\ab\Ga_j,\ab s_j,\ab\psi_j)$ be Kuranishi neighbourhoods on $X,Y,$ respectively. Then $1$- and\/ $2$-morphisms from $(U_i,D_i,\Be_i,r_i,\chi_i)$ to $(V_j,E_j,\Ga_j,s_j,\psi_j)$ over $f$ form a stack\/ $\bcHom_f\bigl((U_i,\ab D_i,\ab\Be_i,\ab r_i,\ab\chi_i),\ab (V_j,E_j,\Ga_j,s_j,\psi_j)\bigr)$ on\/~$\Im\chi_i\cap f^{-1}(\Im\psi_j)\subseteq X$.
\label{ku4thm1}
\end{thm}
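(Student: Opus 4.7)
The plan is to verify Definition~\ref{ku4def11}(iii)--(v) for each of the prestacks in question; since restriction along inclusions of opens is strict, each $\eta_{STU}$ is an identity and axioms (i),(ii) are immediate. I would establish (a) in detail for $\bcHom\bigl((V_i,E_i,\Ga_i,s_i,\psi_i),(V_j,E_j,\Ga_j,s_j,\psi_j)\bigr)$; part (b) then differs only cosmetically by carrying the map $f$ passively throughout, and the substack claim for $\bcEqu$ reduces to the fact that being an equivalence is a local condition.

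For the local-identity axiom (iii), let $\La,\La':\Phi\Ra\Phi'$ over $S$ have $\La\vert_{T_i}=\La'\vert_{T_i}$ on an open cover $\{T_i\}$ of $S$. Choosing representatives $(\dot P,\la,\hat\la),(\dot P',\la',\hat\la')$, each equality furnishes an open neighbourhood $\ddot P_i$ of $\pi^{-1}(\bar\psi^{-1}(T_i))$ in $\dot P\cap\dot P'$ on which $\la=\la'$ and $\hat\la=\hat\la'+O(\pi^*(s))$. The union $\ddot P=\bigcup_i\ddot P_i$ is an open neighbourhood of $\pi^{-1}(\bar\psi^{-1}(S))$ on which both equalities hold, the second because $O(\pi^*(s))$ is local in the sense of the last paragraph of Definition~\ref{ku2def1}; hence $\La=\La'$. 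For the gluing axiom (iv) for 2-morphisms, given compatible $\La_i=[\dot P_i,\la_i,\hat\la_i]$, compatibility over $T_i\cap T_j$ furnishes, after shrinking, strict agreement $\la_i=\la_j$ on a neighbourhood of $\pi^{-1}(\bar\psi^{-1}(T_i\cap T_j))$ and $\hat\la_i=\hat\la_j+O(\pi^*(s))$. The $\la_i$ therefore glue uniquely to a global $\la$, and to glue the $\hat\la_i$ I would take a $\Ga_i\t\Ga_j$-equivariant partition of unity $\{\rho_k\}$ on a common neighbourhood of $\pi^{-1}(\bar\psi^{-1}(S))$ subordinate to $\{\pi^{-1}(\bar\psi^{-1}(T_k))\}$ (obtained by averaging an ordinary partition of unity over the finite groups) and set $\hat\la=\sum_k\rho_k\hat\la_k$; on any $T_k$ the difference $\hat\la-\hat\la_k=\sum_i\rho_i(\hat\la_i-\hat\la_k)$ is $O(\pi^*(s))$ termwise, giving $[\ddot P,\la,\hat\la]$ with the required restrictions.

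The substantive step is the gluing axiom (v) for 1-morphisms, where the genuinely 2-categorical content sits. Suppose we are given $\Phi_i=(P_i,\pi_i,\phi_i,\hat\phi_i)$ over $T_i$ and 2-isomorphisms $\al_{ij}=[\dot P_{ij}^{(i)},\la_{ij},\hat\la_{ij}]:\Phi_i\vert_{T_i\cap T_j}\Ra\Phi_j\vert_{T_i\cap T_j}$ satisfying the cocycle condition on triple overlaps; unwinding vertical composition via Definition~\ref{ku4def5} yields $\la_{jk}\ci\la_{ij}=\la_{ik}$ on some neighbourhood of $\pi_i^{-1}(\bar\psi_i^{-1}(T_i\cap T_j\cap T_k))$, and a corresponding cocycle identity for the $\hat\la_{ij}$ modulo $O(\pi_i^*(s_i))$. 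Crucially, by Definition~\ref{ku4def3}(b) each $\la_{ij}$ intertwines the projections $\pi_i,\pi_j$ \emph{strictly}, so after a common shrinking the $\la_{ij}$ are genuine $\Ga_i\t\Ga_j$-equivariant transition isomorphisms between principal $\Ga_j$-bundles. Standard descent for principal bundles then produces a principal $\Ga_j$-bundle $\pi:P\ra V_{ab}$ over $V_{ab}=\bigcup_i\pi_i(\dot P_i)\subseteq V_i$ together with canonical isomorphisms $\mu_i:P\vert_{\pi_i(\dot P_i)}\xrightarrow{\sim}\dot P_i$ intertwining the $\la_{ij}$. The data $\phi_i\ci\mu_i$ and $\hat\phi_i\ci\mu_i$ are compatible only up to $O(\pi^*(s_i))$ across overlaps, so the global $\phi$ and $\hat\phi$ are assembled by the same equivariant partition-of-unity averaging as in (iv), producing $\Phi=(P,\pi,\phi,\hat\phi)$ with $\Phi\vert_{T_i}\cong\Phi_i$ via $\mu_i$, and uniqueness up to canonical 2-isomorphism from (iii),(iv).

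The main obstacle will be executing the principal-bundle descent in (v) carefully at the level of germs: the transition data $\la_{ij}$ are not maps between fixed manifolds but equivalence classes under $\approx$, so one must first pass to a common refinement on which all pairwise $\la_{ij}$ are defined on overlapping opens and all triple cocycle identities hold strictly, while keeping the construction $\Ga_i$- and $\Ga_j$-equivariant and defined on a neighbourhood of the full preimage $\bar\psi_i^{-1}(S)$ (not merely pointwise). For the substack claim, once (a) is in hand one locally picks quasi-inverses $\Psi_i$ to each $\Phi\vert_{T_i}$, promotes the unit/counit 2-isomorphisms to canonical 2-isomorphisms $\Psi_j\vert_{T_i\cap T_j}\Ra\Psi_i\vert_{T_i\cap T_j}$ satisfying the cocycle condition on triple overlaps, and reapplies the already-established stack property of $\bcHom$ to glue them into a global quasi-inverse; hence $\bcEqu$ is a substack of $\bcHom$. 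Part (b) then goes through verbatim with $\id_X$ replaced by $f:X\ra Y$, the continuity of $f$ ensuring only that open covers of $\Im\chi_i\cap f^{-1}(\Im\psi_j)$ make sense.
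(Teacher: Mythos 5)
Your overall route---verifying Definition \ref{ku4def11}(iii)--(v) directly, with partition-of-unity arguments for the linear data, descent for principal $\Ga_j$-bundles to build the glued $P$, and gluing locally chosen quasi-inverses via the already-established stack property to handle $\bcEqu$, with (b) done by carrying $f$ along---is the same as the paper's, and your treatment of (iii), your flagging of the need to shrink because transition data only agree near $\pi^{-1}(\bar\psi_i^{-1}(\cdot))$, and your sketch for $\bcEqu$ are fine in outline. But there is a genuine gap at the heart of axiom (v): you propose to assemble the glued map $\phi:P\ra V_j$ ``by the same equivariant partition-of-unity averaging as in (iv)''. In (iv) the only objects averaged are the vector-bundle morphisms $\hat\la_k$, which are sections of one fixed bundle and so can be summed against a partition of unity; the maps $\la_k$ are glued exactly, never averaged. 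In (v), by contrast, the local maps $\phi_a\ci\mu_a$ take values in the manifold $V_j$ and agree only up to $O(\pi^*(s_i))$ on overlaps, so the expression $\sum_a\eta^a\,(\phi_a\ci\mu_a)$ has no meaning---this is precisely the point made after \eq{ku6eq5}. The paper must construct auxiliary data (Lemma \ref{ku6lem}: open sets $W_n\subseteq V_j^n\t\De_{n-1}$ and smooth maps $\Xi_n:W_n\ra V_j$, built from a tubular neighbourhood of an embedding $V_j\hookra\R^N$ or from Riemannian exponential maps, and chosen $\Ga_j$-equivariantly), shrink the domain so the relevant tuples lie in $W_n$, and then use the quantitative estimate \eq{ku6eq6} to show the resulting $\phi$ differs from each $\phi_a\ci\la_a$ by $\hat\la^a\cdot\pi^*(s_i)+O(\pi^*(s_i)^2)$; that estimate is what produces the comparison 2-morphisms $\La^a:\Phi\vert_{T^a}\Ra\Phi^a$, and one also needs the corrected morphisms $\hat\phi^{\prime a}$ (unique up to $O(\pi^*(s_i))$) because $\phi^*(E_j)$ and $(\phi_a\ci\la_a)^*(E_j)$ are different bundles. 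Without some such device the glued 1-morphism is simply not constructed, and this is the main technical content of the theorem (already in the $\mu$-Kuranishi case, Theorem \ref{ku2thm1}).

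Two smaller points. In (iv) you assert the $\la_k$ ``glue uniquely to a global $\la$'', but they agree only on neighbourhoods of $\pi^{-1}(\bar\psi_i^{-1}(T_k\cap T_l))$, not on the full overlaps of their domains, so literal gluing fails; the paper restricts to the open locus \eq{ku7eq3} of points where the $\la_k$ agree on all supports of the partition of unity (openness coming from the fact that the agreement locus of two lifts through the principal-bundle projection is open and closed), and the analogous device \eq{ku7eq10} is what makes your ``standard descent'' step legitimate in (v). And for the substack claim, to get the cocycle condition for the transition 2-morphisms between the local quasi-inverses you should choose unit/counit data satisfying the triangle identities (Proposition \ref{kuBprop}); this is how the paper verifies the analogue of your cocycle before reapplying the stack property of $\bcHom$.
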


\begin{rem} Setting $\Ga_i=\Be_i=\Ga_j=\{1\}$ and taking 2-isomorphism classes in Theorem \ref{ku4thm1} does not imply Theorem \ref{ku2thm1}, since taking isomorphism classes in a stack only gives a presheaf of sets, not a sheaf. In fact, if $\Ga_j\ne\{1\}$ then taking 2-isomorphism classes in Theorem \ref{ku4thm1} generally yields a presheaf but not a sheaf. This is why the `$\mu$-Kuranishi' construction in \S\ref{ku2}--\S\ref{ku3} does not work in the orbifold case.
\label{ku4rem3}
\end{rem}

Here is the analogue of Theorem \ref{ku2thm2}, proved in \S\ref{ku72}.

\begin{thm} Let\/ $\Phi_{ij}=(P_{ij},\pi_{ij},\phi_{ij},\hat\phi_{ij}):(V_i,E_i,\Ga_i,s_i,\psi_i)\ra (V_j,E_j,\ab\Ga_j,\ab s_j,\ab\psi_j)$ be a $1$-morphism of Kuranishi neighbourhoods over\/ $S\subseteq X$. Let\/ $p\in\pi_{ij}^{-1}(\bar\psi_i^{-1}(S))\subseteq P_{ij},$ and set\/ $v_i=\pi_{ij}(p)\in V_i$ and\/ $v_j=\phi_{ij}(p)\in V_j$. As in\/ {\rm\eq{ku2eq8},} consider the sequence of real vector spaces:
\e
\xymatrix@C=12.5pt{ 0 \ar[r] & T_{v_i}V_i \ar[rrrrr]^(0.43){\d s_i\vert_{v_i}\op(\d\phi_{ij}\vert_p\ci\d\pi_{ij}\vert_p^{-1})} &&&&& E_i\vert_{v_i} \!\op\!T_{v_j}V_j 
\ar[rrr]^(0.56){-\hat\phi_{ij}\vert_p\op \d s_j\vert_{v_j}} &&& E_j\vert_{v_j} \ar[r] & 0. }\!\!\!\!{}
\label{ku4eq9}
\e
Here $\d\pi_{ij}\vert_p:T_pP_{ij}\ra T_{v_i}V_i$ is invertible as $\pi_{ij}$ is \'etale. Definition\/ {\rm\ref{ku4def2}(e)} implies that\/ \eq{ku4eq9} is a complex. Also consider the morphism of finite groups
\e
\begin{split}
&\rho_p:\bigl\{(\ga_i,\ga_j)\in\Ga_i\t\Ga_j:(\ga_i,\ga_j)\cdot p=p\bigr\}
\longra\bigl\{\ga_j\in\Ga_j:\ga_j\cdot v_j=v_j\bigr\},\\
&\rho_p:(\ga_i,\ga_j)\longmapsto \ga_j.
\end{split}
\label{ku4eq10}
\e

Then $\Phi_{ij}$ is a coordinate change over $S,$ in the sense of Definition\/ {\rm\ref{ku4def8},} if and only if\/ \eq{ku4eq9} is exact and\/ \eq{ku4eq10} is an isomorphism for all\/~$p\in\pi_{ij}^{-1}(\bar\psi_i^{-1}(S))$.

\label{ku4thm2}
\end{thm}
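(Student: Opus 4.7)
The plan is to prove the two directions separately, with the harder work concentrated in constructing the quasi-inverse locally and then gluing via the stack property.

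\textbf{``Only if'' direction.} Suppose $\Phi_{ij}$ is a coordinate change over $S$, with quasi-inverse $\Phi_{ji}=(P_{ji},\pi_{ji},\phi_{ji},\hat\phi_{ji})$ and 2-isomorphisms $\eta:\Phi_{ji}\ci\Phi_{ij}\Ra\id_{(V_i,E_i,\Ga_i,s_i,\psi_i)}$ and $\ze:\Phi_{ij}\ci\Phi_{ji}\Ra\id_{(V_j,E_j,\Ga_j,s_j,\psi_j)}$. Choose representatives of $\eta,\ze$ as in Definition~\ref{ku4def3} and linearize all the maps at the point $p$ and its image. The diffeomorphism components of the representatives give linear maps going back from $T_{v_j}V_j$ to $T_{v_i}V_i$ whose composites with $\d\phi_{ij}|_p\ci\d\pi_{ij}|_p^{-1}$ agree with the identity modulo $\d s_i|_{v_i}$ (and symmetrically on the $V_j$ side, modulo $\d s_j|_{v_j}$), while the $\hat\la$ components of \eqref{ku4eq1} provide the required chain homotopies. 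A standard diagram chase then shows \eqref{ku4eq9} is exact. Restricting $\eta$ and $\ze$ to the stabilisers at $p$ produces a two-sided inverse to $\rho_p$, so $\rho_p$ is an isomorphism.

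\textbf{``If'' direction.} Fix $v_i\in\bar\psi_i^{-1}(S)$, $p\in\pi_{ij}^{-1}(v_i)$ and set $v_j=\phi_{ij}(p)$, $\Ga_i'=\Stab_{\Ga_i}(v_i)$, $\Ga_j'=\Stab_{\Ga_j}(v_j)$. Because $\Ga_j$ acts freely on $P_{ij}$, the projection $(\ga_i,\ga_j)\mapsto\ga_i$ identifies $\Stab_{\Ga_i\t\Ga_j}(p)$ with $\Ga_i'$; composing with $\rho_p$ yields a canonical isomorphism $\si_p:\Ga_i'\to\Ga_j'$. Choose $\Ga_i'$-invariant and $\Ga_j'$-invariant slices $W_i\subseteq V_i$ at $v_i$ and $W_j\subseteq V_j$ at $v_j$ for the $\Ga_i$- and $\Ga_j$-actions, and a slice through $p$ in $P_{ij}$ of the form $W_i\cdot(\{1\}\t\{1\})$, so that $\pi_{ij}$ restricts to a $\Ga_i'$-equivariant diffeomorphism onto $W_i$ and $\phi_{ij}$ restricts to a smooth $\si_p$-equivariant map $\phi:W_i\to W_j$ with bundle morphism $\hat\phi:E_i|_{W_i}\to\phi^*(E_j)$. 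Then $(W_i,E_i|_{W_i},s_i|_{W_i},\bar\psi_i\vert_{\ldots})$ and $(W_j,E_j|_{W_j},s_j|_{W_j},\bar\psi_j\vert_{\ldots})$ are $\mu$-Kuranishi neighbourhoods on open subsets of $S$, and $(W_i,\phi,\hat\phi)$ represents a $\mu$-morphism between them. The complex \eqref{ku4eq9} at $p$ coincides with the complex \eqref{ku2eq8} of this $\mu$-morphism, so by hypothesis it is exact, and Theorem \ref{ku2thm2} shows that our $\mu$-morphism is a $\mu$-coordinate change. Invert it to obtain a representative $(W_{ji},\phi',\hat\phi')$ with $\phi':W_j\to W_i$; averaging over $\Ga_j'\cong\Ga_i'$ via $\si_p$ we may assume $(\phi',\hat\phi')$ is strictly $\si_p^{-1}$-equivariant.

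\textbf{Promoting and gluing.} Next I would assemble this local inverse into a 1-morphism $\Phi^a_{ji}$ over a small open $S_a\subseteq S$ containing $\bar\psi_i(v_i)$, by setting $P^a_{ji}=\bigl((\Ga_j\cdot W_j)\t\Ga_i\bigr)/\Ga_j'$ with the obvious commuting $\Ga_j$- and $\Ga_i$-actions (so that $\pi^a_{ji}$ becomes a $\Ga_i$-principal bundle to $\Ga_j\cdot W_j$), and extending $\phi',\hat\phi'$ equivariantly. Definition~\ref{ku4def2}(a)--(f) is then straightforward to verify. That $\Phi^a_{ji}$ is a quasi-inverse to $\Phi_{ij}$ over $S_a$ reduces, on slices, to the corresponding $\mu$-statement already provided by Theorem \ref{ku2thm2}; equivariant extension delivers the required 2-isomorphisms $\eta_a,\ze_a$. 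Cover $S$ by such $S_a$, producing local coordinate changes $\Phi^a_{ji}$. On overlaps $S_a\cap S_b$, both $\Phi^a_{ji}|_{S_a\cap S_b}$ and $\Phi^b_{ji}|_{S_a\cap S_b}$ are quasi-inverses to the same equivalence, hence are connected by a canonical 2-isomorphism; these 2-isomorphisms satisfy the cocycle condition on triple overlaps by uniqueness. Apply the stack property Theorem~\ref{ku4thm1}(a) to the stack $\bcEqu\bigl((V_i,E_i,\Ga_i,s_i,\psi_i),(V_j,E_j,\Ga_j,s_j,\psi_j)\bigr)$ to glue the $\Phi^a_{ji}$ to a global coordinate change $\Phi_{ji}$ over $S$, and likewise glue the $\eta_a,\ze_a$ to global 2-isomorphisms witnessing that $\Phi_{ij}$ is an equivalence.

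\textbf{Main obstacle.} The delicate part is the promotion step: realising the $\mu$-level inverse on slices as an honest $\Ga$-equivariant 1-morphism with a $\Ga_i$-principal bundle $P^a_{ji}$. The hypothesis that $\rho_p$ is an isomorphism is used exactly here---it dictates the matching between $\Ga_i$- and $\Ga_j$-actions on $P^a_{ji}$, and without it there is no way to reconcile the two stabiliser groups and hence no candidate quasi-inverse. Once this local equivariant inverse exists, the gluing in the final step is essentially formal, being a direct application of the stack property already established in Theorem \ref{ku4thm1}.
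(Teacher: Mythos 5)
Your proposal follows essentially the same route as the paper's proof in \S\ref{ku72}: the ``only if'' direction linearizes the quasi-inverse data at $p$ and reads off exactness of \eq{ku4eq9} by the $\mu$-Kuranishi linear algebra while identifying the stabilisers via the unit/counit 2-morphisms, and the ``if'' direction reduces on equivariant slices to the $\mu$-case of Theorem \ref{ku2thm2}, induces the local inverse up to a genuine 1-morphism using the isomorphism \eq{ku4eq10} to match the $\Ga_i$- and $\Ga_j$-actions, and globalizes with the stack property of $\bcEqu$. The one soft point is your claim that averaging makes $(\phi',\hat\phi')$ strictly equivariant --- a manifold-valued map cannot be averaged linearly --- which the paper avoids by carrying $\De_i,\De_j,\rho,\si$-equivariance through the whole construction of \S\ref{ku622}, and at the end it simply notes that $\Phi_{ij}\vert_{S^x}$ is a coordinate change on each $S^x$ and invokes the substack property of $\bcEqu$ (proved in \S\ref{ku712}) rather than gluing the quasi-inverses and their 2-morphisms by hand as you do.
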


\subsection{The definition of Kuranishi space}
\label{ku43}

We can now at last give one of the main definitions of the book:

\begin{dfn} Let $X$ be a Hausdorff, second countable topological space (not necessarily compact), and $n\in\Z$. A {\it Kuranishi structure\/ $\cK$ on $X$ of virtual dimension\/} $n$ is data $\cK=\bigl(I,(V_i,E_i,\Ga_i,s_i,\psi_i)_{i\in I}$, $\Phi_{ij,\;i,j\in I}$, $\La_{ijk,\; i,j,k\in I}\bigr)$, where:
\begin{itemize}
\setlength{\itemsep}{0pt}
\setlength{\parsep}{0pt}
\item[(a)] $I$ is an indexing set (not necessarily finite).
\item[(b)] $(V_i,E_i,\Ga_i,s_i,\psi_i)$ is a Kuranishi neighbourhood on $X$ for each $i\in I$, with~$\dim V_i-\rank E_i=n$.
\item[(c)] $\Phi_{ij}=(P_{ij},\pi_{ij},\phi_{ij},\hat\phi_{ij}):(V_i,E_i,\ab\Ga_i,\ab s_i,\ab\psi_i)\ab\ra (V_j,E_j,\Ga_j,s_j,\psi_j)$ is a coordinate change for all $i,j\in I$ (as usual, defined over $S=\Im\psi_i\cap\Im\psi_j$).
\item[(d)] $\La_{ijk}=[\dot P_{ijk},\la_{ijk},\hat\la_{ijk}]:\Phi_{jk}\ci\Phi_{ij}\Ra\Phi_{ik}$ is a 2-morphism for all $i,j,k\in I$ (as usual, defined over $S=\Im\psi_i\cap\Im\psi_j\cap\Im\psi_k$).
\item[(e)] $\bigcup_{i\in I}\Im\psi_i=X$. 
\item[(f)] $\Phi_{ii}=\id_{(V_i,E_i,\Ga_i,s_i,\psi_i)}$ for all $i\in I$.
\item[(g)] $\La_{iij}=\bs\be_{\Phi_{ij}}$ and $\La_{ijj}=\bs\ga_{\Phi_{ij}}$ for all $i,j\in I$, for $\bs\be_{\Phi_{ij}},\bs\ga_{\Phi_{ij}}$ as in \eq{ku4eq6}.
\item[(h)] The following diagram of 2-morphisms over $S=\Im\psi_i\ab\cap\ab\Im\psi_j\ab\cap\ab\Im\psi_k\ab\cap\ab\Im\psi_l$ commutes for all $i,j,k,l\in I$, for $\bs\al_{\Phi_{kl},\Phi_{jk},\Phi_{ij}}$ as in \eq{ku4eq4}:
\begin{equation*}
\xymatrix@C=90pt@R=15pt{
*+[r]{(\Phi_{kl}\ci\Phi_{jk})\ci\Phi_{ij}} \ar@{=>}[d]^{\bs\al_{\Phi_{kl},\Phi_{jk},\Phi_{ij}}}
\ar@{=>}[rr]_(0.53){\La_{jkl}*\id_{\Phi_{ij}}} && *+[l]{\Phi_{jl}\ci\Phi_{ij}} \ar@{=>}[d]_{\La_{ijl}}  
\\
*+[r]{\Phi_{kl}\ci(\Phi_{jk}\ci\Phi_{ij})} \ar@{=>}[r]^(0.65){\id_{\Phi_{kl}}*\La_{ijk}}
& \Phi_{kl}\ci\Phi_{ik} \ar@{=>}[r]^{\La_{ikl} } & *+[l]{\Phi_{il}.\!} }
\end{equation*}
\end{itemize}
We call $\bX=(X,\cK)$ a {\it Kuranishi space}, of {\it virtual dimension\/}~$\vdim\bX=n$.

When we write $x\in\bX$, we mean that $x\in X$.
\label{ku4def12}
\end{dfn}

\begin{rem} Our basic assumption on the topological space $X$ of a Kuranishi space $\bX=(X,\cK)$ is that $X$ should be {\it Hausdorff and second countable}, following the usual topological assumptions on manifolds, and the definitions of d-manifolds and d-orbifolds \cite{Joyc6,Joyc7,Joyc8}. Here is how this relates to other conditions.

Since $X$ can be covered by open sets $\Im\psi_i\cong s_i^{-1}(0)/\Ga$, it is automatically {\it locally compact}, {\it locally second countable}, and {\it regular}. Hausdorff, second countable, and locally compact imply {\it paracompact}. Hausdorff, second countable, and regular imply {\it metrizable}. Compact and locally second countable, imply second countable. Metrizable implies Hausdorff.

Thus, if $\bX=(X,\cK)$ is a Kuranishi space in our sense, then $X$ is also Hausdorff, second countable, locally compact, regular, paracompact, and metrizable. Paracompactness is very useful.

The usual topological assumption in previous papers on Kuranishi spaces \cite{FOOO1,FOOO6,FuOn,McWe1,McWe2,McWe3,Yang1,Yang2,Yang3} is that $X$ is {\it compact and metrizable}. Since $X$ is automatically locally second countable as it can be covered by Kuranishi neighbourhoods, this implies that $X$ is Hausdorff and second countable.
\label{ku4rem4}
\end{rem}

Here is the analogue of Example~\ref{ku2ex2}.

\begin{ex} Let $V$ be a manifold, $E\ra V$ a vector bundle, $\Ga$ a finite group with a smooth action on $V$ and a compatible action on $E$ preserving the vector bundle structure, and $s:V\ra E$ a $\Ga$-equivariant smooth section. Set $X=s^{-1}(0)/\Ga$, with the quotient topology induced from the closed subset $s^{-1}(0)\subseteq V$. Then $X$ is Hausdorff and second countable, as $V$ is and $\Ga$ is finite. 

Define a Kuranishi structure $\cK=\bigl(\{0\},(V_0,E_0,\Ga_0,s_0,\psi_0),\Phi_{00},\La_{000}\bigr)$ on $X$ with indexing set $I=\{0\}$, one Kuranishi neighbourhood $(V_0,E_0,\Ga_0,s_0,\psi_0)$ with $V_0=V$, $E_0=E$, $\Ga_0=\Ga$, $s_0=s$ and $\psi_0=\id_X$, one coordinate change $\Phi_{00}=\id_{(V_0,E_0,\Ga_0,s_0,\psi_0)}$, and one 2-morphism $\La_{000}=\id_{\Phi_{00}}$. Then $\bX=(X,\cK)$ is a Kuranishi space, with $\vdim\bX=\dim V-\rank E$. We write~$\bS_{V,E,\Ga,s}=\bX$.
\label{ku4ex1}
\end{ex}

We will need notation to distinguish Kuranishi neighbourhoods, coordinate changes, and 2-morphisms on different Kuranishi spaces. As for \eq{ku2eq13}--\eq{ku2eq16}, we will often use the following notation for Kuranishi spaces $\bW,\bX,\bY,\bZ$:
\ea
\bW&=(W,\cH),& \cH&=\bigl(H,(T_h,C_h,\Al_i,q_h,\vp_h)_{h\in H},\; \Si_{hh'}=(O_{hh'},\pi_{hh'},\si_{hh'},
\nonumber\\
&& {}\hskip -60pt \hat\si_{hh'})_{h,h'\in H}&,\; \Io_{hh'h''}=[\dot O_{hh'h''},\io_{hh'h''},\hat\io_{hh'h''}]_{h,h',h''\in H}\bigr),
\label{ku4eq11}\\
\bX&=(X,\cI),& \cI&=\bigl(I,(U_i,D_i,\Be_i,r_i,\chi_i)_{i\in I},\;
\Tau_{ii'}=(P_{ii'},\pi_{ii'},\tau_{ii'},
\nonumber\\
&&{}\hskip -60pt \hat\tau_{ii'})_{i,i'\in I}&,\; \Ka_{ii'i''}=[\dot P_{ii'i''},\ka_{ii'i''},\hat\ka_{ii'i''}]_{i,i',i''\in I}\bigr),
\label{ku4eq12}\\
\bY&=(Y,\cJ),& \cJ&=\bigl(J,(V_j,E_j,\Ga_j,s_j,\psi_j)_{j\in J},\; \Up_{jj'}=(Q_{jj'},\pi_{jj'},\up_{jj'},
\nonumber\\
&&{}\hskip -60pt \hat\up_{jj'})_{j,j'\in J}&,\; \La_{jj'j''}=[\dot Q_{jj'j''},\la_{jj'j''},\hat\la_{jj'j''}]_{j,j',j''\in J}\bigr),
\label{ku4eq13}\\
\bZ&=(Z,\cK),& \cK&=\bigl(K,(W_k,F_k,\De_k,t_k,\om_k)_{k\in K},\; \Phi_{kk'}=(R_{kk'},\pi_{kk'},\phi_{kk'},
\nonumber\\
&&{}\hskip -60pt \hat\phi_{kk'})_{k,k'\in K}&,\; \Mu_{kk'k''}=[\dot R_{kk'k''},\mu_{kk'k''},\hat\mu_{kk'k''}]_{k,k',k''\in K}\bigr).
\label{ku4eq14}
\ea

The rest of the section will make Kuranishi spaces into a weak 2-category, as in \S\ref{kuB1}. We first define 1- and 2-morphisms of Kuranishi spaces. Note a possible confusion: we will be defining 1-{\it morphisms of Kuranishi spaces\/} $\bs f,\bs g:\bX\ra\bY$ and 2-{\it morphisms of Kuranishi spaces\/} $\bs\eta:\bs f\Ra\bs g$, but these will be built out of 1-{\it morphisms of Kuranishi neighbourhoods\/} $\bs f_{ij},\bs g_{ij}:(U_i,D_i,\Be_i,r_i,\chi_i)\ra (V_j,E_j,\Ga_j,s_j,\psi_j)$ and 2-{\it morphisms of Kuranishi neighbourhoods\/} $\bs\eta_{ij}:\bs f_{ij}\Ra\bs g_{ij}$ in the sense of \S\ref{ku41}, so `1-morphism' and `2-morphism' can mean two things.

\begin{dfn} Let $\bX=(X,\cI)$ and $\bY=(Y,\cJ)$ be Kuranishi spaces, with notation \eq{ku4eq12}--\eq{ku4eq13}. A 1-{\it morphism of Kuranishi spaces\/} $\bs f:\bX\ra\bY$ is data
\e
\bs f=\bigl(f,\bs f_{ij,\;i\in I,\; j\in J},\; \bs F_{ii',\;i,i'\in I}^{j,\; j\in J},\; \bs F_{i,\;i\in I}^{jj',\; j,j'\in J}\bigr),
\label{ku4eq15}
\e
satisfying the conditions:
\begin{itemize}
\setlength{\itemsep}{0pt}
\setlength{\parsep}{0pt}
\item[(a)] $f:X\ra Y$ is a continuous map.
\item[(b)] $\bs f_{ij}=(P_{ij},\pi_{ij},f_{ij},\hat f_{ij}):(U_i,D_i,\Be_i,r_i,\chi_i)\ra (V_j,E_j,\Ga_j,s_j,\psi_j)$ is a 1-morphism of Kuranishi neighbourhoods over $f$ for all $i\in I$, $j\in J$ (defined over $S=\Im\chi_i\cap f^{-1}(\Im\psi_j)$, as usual).
\item[(c)] $\bs F_{ii'}^j=[\dot P_{ii'}^j,F_{ii'}^j,\hat F_{ii'}^j]:\bs f_{i'j}\ci\Tau_{ii'}\Ra \bs f_{ij}$ is a 2-morphism over $f$ for all $i,i'\in I$ and $j\in J$ (defined over $S=\Im\chi_i\cap\Im\chi_{i'}\cap f^{-1}(\Im\psi_j)$).
\item[(d)] $\bs F_i^{jj'}=[\dot P_i^{jj'},F_i^{jj'},\hat F_i^{jj'}]:\Up_{jj'}\ci\bs f_{ij}\Ra \bs f_{ij'}$ is a 2-morphism over $f$ for all $i\in I$ and $j,j'\in J$ (defined over $S=\Im\chi_i\cap f^{-1}(\Im\psi_j\cap\Im\psi_{j'})$).
\item[(e)] $\bs F_{ii}^j=\bs\be_{\bs f_{ij}}$ and $\bs F_i^{jj}=\bs\ga_{\bs f_{ij}}$ for all $i\in I$, $j\in J$, for $\bs\be_{\bs f_{ij}},\bs\ga_{\bs f_{ij}}$ as in~\eq{ku4eq6}.
\item[(f)] The following commutes for all $i,i',i''\in I$ and $j\in J$:
\begin{equation*}
\xymatrix@C=91pt@R=15pt{
*+[r]{(\bs f_{i''j}\ci\Tau_{i'i''})\ci\Tau_{ii'}} \ar@{=>}[d]^{\bs\al_{\bs f_{i''j},\Tau_{i'i''},\Tau_{ii'}}}
\ar@{=>}[rr]_(0.53){\bs F_{i'i''}^j*\id_{\Tau_{ii'}}} && *+[l]{\bs f_{i'j}\ci\Tau_{ii'}} \ar@{=>}[d]_{\bs F_{ii'}^j}  
\\
*+[r]{\bs f_{i''j}\ci(\Tau_{i'i''}\ci\Tau_{ii'})} \ar@{=>}[r]^(0.65){\id_{\bs f_{i''j}}*\Ka_{ii'i''}}
& \bs f_{i''j}\ci \Tau_{ii''} \ar@{=>}[r]^{\bs F_{ii''}^j} & *+[l]{\bs f_{ij}.\!} }
\end{equation*}
\item[(g)] The following commutes for all $i,i'\in I$ and $j,j'\in J$:
\begin{equation*}
\xymatrix@C=91pt@R=15pt{
*+[r]{(\Up_{jj'}\ci\bs f_{i'j})\ci\Tau_{ii'}} \ar@{=>}[d]^{\bs\al_{\Up_{jj'},\bs f_{i'j},\Tau_{ii'}}}
\ar@{=>}[rr]_(0.53){\bs F_{i'}^{jj'} *\id_{\Tau_{ii'}}} && *+[l]{\bs f_{i'j'}\ci\Tau_{ii'}} \ar@{=>}[d]_{\bs F_{ii'}^{j'}}  
\\
*+[r]{\Up_{jj'}\ci(\bs f_{i'j}\ci\Tau_{ii'})} \ar@{=>}[r]^(0.65){\id_{\Up_{jj'}}*\bs F_{ii'}^j}
& \Up_{jj'}\ci\bs f_{ij} \ar@{=>}[r]^{\bs F_i^{jj'}} & *+[l]{\bs f_{ij'}.\!} }
\end{equation*}
\item[(h)] The following commutes for all $i\in I$ and $j,j',j''\in J$:
\begin{equation*}
\xymatrix@C=89pt@R=15pt{
*+[r]{(\Up_{j'j''}\ci\Up_{jj'})\ci\bs f_{ij}} \ar@{=>}[d]^{\bs\al_{\Up_{j'j''},\Up_{jj'},\bs f_{ij}}}
\ar@{=>}[rr]_(0.53){\La_{jj'j''}*\id_{\bs f_{ij}}} && *+[l]{\Up_{jj''}\ci\bs f_{ij}} \ar@{=>}[d]_{\bs F_i^{jj''}}  
\\
*+[r]{\Up_{j'j''}\ci(\Up_{jj'}\ci\bs f_{ij})} \ar@{=>}[r]^(0.68){\id_{\Up_{j'j''}}*\bs F_i^{jj'}}
& \Up_{j'j''}\ci\bs f_{ij'} \ar@{=>}[r]^{\bs F_i^{j'j''}} & *+[l]{\bs f_{ij''}.\!} }
\end{equation*}
\end{itemize}

If $x\in\bX$ (i.e. $x\in X$), we will write $\bs f(x)=f(x)\in\bY$.

When $\bY=\bX$, define the {\it identity\/ $1$-morphism\/} $\bs\id_\bX:\bX\ra\bX$ by
\e
\bs\id_\bX=\bigl(\id_X,\Tau_{ij,\; i,j\in I},\; \Ka_{ii'j,\;i,i'\in I}^{\;\;\;\;\;\;\;\; j\in I},\; \Ka_{ijj',\;i\in I}^{\;\;\; j,j'\in I}\bigr),
\label{ku4eq16}
\e
Then Definition \ref{ku4def12}(h) implies that (f)--(h) above hold.
\label{ku4def13}
\end{dfn}

\begin{dfn} Let $\bX=(X,\cI)$ and $\bY=(Y,\cJ)$ be Kuranishi spaces, with notation as in \eq{ku4eq12}--\eq{ku4eq13}, and $\bs f,\bs g:\bX\ra\bY$ be 1-morphisms. Suppose the continuous maps $f,g:X\ra Y$ in $\bs f,\bs g$ satisfy $f=g$. A 2-{\it morphism of Kuranishi spaces\/} $\bs\eta:\bs f\Ra\bs g$ is data $\bs\eta=\bigl(\bs\eta_{ij,\; i\in I,\; j\in J}\bigr)$, where $\bs\eta_{ij}=[\dot P_{ij},\eta_{ij},\hat\eta_{ij}]:\bs f_{ij}\Ra\bs g_{ij}$ is a 2-morphism of Kuranishi neighbourhoods over $f=g$ (defined over $S=\Im\chi_i\cap f^{-1}(\Im\psi_j)$, as usual), satisfying the conditions:
\begin{itemize}
\setlength{\itemsep}{0pt}
\setlength{\parsep}{0pt}
\item[(a)] $\bs G_{ii'}^j\od(\bs\eta_{i'j}*\id_{\Tau_{ii'}})=\bs\eta_{ij}\od\bs F_{ii'}^j:\bs f_{i'j}\ci\Tau_{ii'}\Ra\bs g_{ij}$ for all $i,i'\in I$, $j\in J$. 
\item[(b)] $\bs G_i^{jj'}\od(\id_{\Up_{jj'}}*\bs\eta_{ij})=\bs\eta_{ij'}\od\bs F_i^{jj'}:\Up_{jj'}\ci\bs f_{ij}\Ra\bs g_{ij'}$ for all $i\in I$, $j,j'\in J$.\end{itemize}
Note that by definition, 2-morphisms $\bs\eta:\bs f\Ra\bs g$ only exist if $f=g$.

If $\bs f=\bs g$, the {\it identity\/ $2$-morphism\/} is $\bs\id_{\bs f}=\bigl(\id_{\bs f_{ij},\; i\in I,\; j\in J}\bigr):\bs f\Ra\bs f$.
\label{ku4def14}
\end{dfn}

Next we will define composition of 1-morphisms. For morphisms of $\mu$-Kuranishi spaces $\bs f:\bX\ra\bY$, $\bs g:\bY\ra\bZ$ in \S\ref{ku2}, Theorem \ref{ku2thm3} used the sheaf property of morphisms of $\mu$-Kuranishi neighbourhoods in Theorem \ref{ku2thm1} to construct the composition $\bs g\ci\bs f:\bX\ra\bZ$, and $\bs g\ci\bs f$ was unique. In the 2-category Kuranishi space version, things are more complicated: we must use the stack property in Theorem \ref{ku4thm1} to construct compositions of 1-morphisms $\bs g\ci\bs f:\bX\ra\bZ$, and $\bs g\ci\bs f$ is only unique up to 2-isomorphism.

In the next proposition, part (a) constructs candidates $\bs h$ for $\bs g\ci\bs f$, part (b) shows such $\bs h$ are unique up to canonical 2-isomorphism, and part (c) that $\bs g$ and $\bs f$ are allowed candidates for $\bs g\ci\bs\id_\bY,$ $\bs\id_\bY\ci\bs f$ respectively.

\begin{prop}{\bf(a)} Let\/ $\bX=(X,\cI),\bY=(Y,\cJ),\bZ=(Z,\cK)$ be Kuranishi spaces with notation {\rm\eq{ku4eq12}--\eq{ku4eq14},} and\/ $\bs f:\bX\ra\bY,$ $\bs g:\bY\ra\bZ$ be $1$-morphisms, with\/ $\bs f=\bigl(f,\bs f_{ij},\bs F_{ii'}^j,\bs F_i^{jj'}\bigr),$ $\bs g=\bigl(g,\bs g_{jk},\bs G_{jj'}^k,\bs G_j^{kk'}\bigr)$. Then there exists a $1$-morphism $\bs h:\bX\ra\bZ$ with\/ $\bs h=\bigl(h,\bs h_{ik},\bs H_{ii'}^k,\bs H_i^{kk'}\bigr),$ such that\/ $h=g\ci f:X\ra Z,$ and for all\/ $i\in I,$ $j\in J,$ $k\in K$ we have $2$-morphisms over $h$
\e
\Th_{ijk}:\bs g_{jk}\ci\bs f_{ij}\Longra\bs h_{ik},
\label{ku4eq17}
\e
where as usual\/ \eq{ku4eq17} holds over\/ $S=\Im\chi_i\cap f^{-1}(\Im\psi_j)\cap h^{-1}(\Im\om_k),$ and for all\/ $i,i'\in I,$ $j,j'\in J,$ $k,k'\in K$ the following commute:
\ea
\begin{gathered}
\xymatrix@C=85pt@R=15pt{
*+[r]{(\bs g_{jk}\ci\bs f_{i'j})\ci\Tau_{ii'}} \ar@{=>}[d]^{\bs\al_{\bs g_{jk},\bs f_{i'j},\Tau_{ii'}}}
\ar@{=>}[rr]_(0.53){\Th_{i'jk}*\id_{\Tau_{ii'}}} && *+[l]{\bs h_{i'k}\ci\Tau_{ii'}} \ar@{=>}[d]_{\bs H_{ii'}^k}  
\\
*+[r]{\bs g_{jk}\ci(\bs f_{i'j}\ci\Tau_{ii'})} \ar@{=>}[r]^(0.65){\id_{\bs g_{jk}}*\bs F_{ii'}^j}
& \bs g_{jk}\ci\bs f_{ij} \ar@{=>}[r]^{\Th_{ijk}} & *+[l]{\bs h_{ik},\!} }\end{gathered}
\label{ku4eq18}\\
\begin{gathered}
\xymatrix@C=85pt@R=15pt{
*+[r]{(\bs g_{j'k}\ci\Up_{jj'})\ci\bs f_{ij}} \ar@{=>}[d]^{\bs\al_{\bs g_{j'k},\Up_{jj'},\bs f_{ij}}}
\ar@{=>}[rr]_(0.53){\bs G_{jj'}^k*\id_{\bs f_{ij}}} && *+[l]{\bs g_{jk}\ci\bs f_{ij}} \ar@{=>}[d]_{\Th_{ijk}}  
\\
*+[r]{\bs g_{j'k}\ci(\Up_{jj'}\ci\bs f_{ij})} \ar@{=>}[r]^(0.65){\id_{\bs g_{j'k}}*\bs F_i^{jj'}}
& \bs g_{j'k}\ci\bs f_{ij'} \ar@{=>}[r]^{\Th_{ij'k}} & *+[l]{\bs h_{ik},\!} }
\end{gathered}
\label{ku4eq19}\\
\begin{gathered}
\xymatrix@C=85pt@R=15pt{
*+[r]{(\Phi_{kk'}\ci\bs g_{jk})\ci\bs f_{ij}} \ar@{=>}[d]^{\bs\al_{\Phi_{kk'},\bs g_{jk},\bs f_{ij}}}
\ar@{=>}[rr]_(0.53){\bs G_j^{kk'}*\id_{\bs f_{ij}}} && *+[l]{\bs g_{jk'}\ci\bs f_{ij}} \ar@{=>}[d]_{\Th_{ijk'}}  
\\
*+[r]{\Phi_{kk'}\ci(\bs g_{jk}\ci\bs f_{ij})} \ar@{=>}[r]^(0.65){\id_{\Phi_{kk'}}*\Th_{ijk}}
& \Phi_{kk'}\ci\bs h_{ik} \ar@{=>}[r]^{\bs H_i^{kk'}} & *+[l]{\bs h_{ik'}.\!} }
\end{gathered}
\label{ku4eq20}
\ea

\noindent{\bf(b)} If\/ $\bs{\ti h}=\bigl(h,\bs{\ti h}_{ik},\bs{\ti H}{}_{ii'}^k,\bs{\ti H}{}_i^{kk'}\bigr),\ti\Th_{ijk}$ are alternative choices for $\bs h,\Th_{ijk}$ in {\bf(a)\rm,} then there is a unique $2$-morphism of Kuranishi spaces $\bs\eta=(\bs\eta_{ik}):\bs h\Ra\bs{\ti h}$ satisfying $\bs\eta_{ik}\od\Th_{ijk}=\ti\Th_{ijk}:\bs g_{jk}\ci\bs f_{ij}\Ra\bs{\ti h}_{ik}$ for all\/ $i\in I,$ $j\in J,$ $k\in K$.
\smallskip

\noindent{\bf(c)} If\/ $\bX=\bY$ and\/ $\bs f=\bs\id_\bY$ in {\bf(a)\rm,} so that\/ $I=J,$ then a possible choice for $\bs h,\Th_{ijk}$ in {\bf(a)} is $\bs h=\bs g$ and\/~$\Th_{ijk}=\bs G_{ij}^k$.

Similarly, if\/ $\bZ=\bY$ and\/ $\bs g=\bs\id_\bY$ in {\bf(a)\rm,} so that\/ $K=J,$ then a possible choice for $\bs h,\Th_{ijk}$ in {\bf(a)} is $\bs h=\bs f$ and\/~$\Th_{ijk}=\bs F_i^{jk}$.
\label{ku4prop1}
\end{prop}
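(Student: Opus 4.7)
\medskip

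\noindent\textit{Proof proposal.} The strategy mirrors the proof of Theorem \ref{ku2thm3}, but replaces the sheaf property of morphisms of $\mu$-Kuranishi neighbourhoods (Theorem \ref{ku2thm1}) with the stack property of Theorem \ref{ku4thm1}(b). The essential difference is that in the 2-category setting, local data glues only up to canonical 2-isomorphism, which is precisely what forces parts (b) and (c) to be nontrivial.

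For part (a), set $h=g\ci f$, fix $i\in I,k\in K$ and put $S=\Im\chi_i\cap h^{-1}(\Im\om_k)$. For each $j\in J$ define a local candidate $\bs h_{ik}^j:=\bs g_{jk}\ci\bs f_{ij}$ on $S\cap f^{-1}(\Im\psi_j)$. On the overlap $S\cap f^{-1}(\Im\psi_j\cap\Im\psi_{j'})$, the composite
\begin{equation*}
\bs h_{ik}^j=\bs g_{jk}\ci\bs f_{ij}
\stackrel{(\bs G_{jj'}^k)^{-1}*\id}{\Longra}(\bs g_{j'k}\ci\Up_{jj'})\ci\bs f_{ij}
\stackrel{\bs\al,\,\id*\bs F_i^{jj'}}{\Longra}\bs g_{j'k}\ci\bs f_{ij'}=\bs h_{ik}^{j'}
\end{equation*}
gives a 2-isomorphism $\Xi_{jj'}^{ik}:\bs h_{ik}^j\Ra\bs h_{ik}^{j'}$. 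Using Definition \ref{ku4def13}(h) for $\bs g$, the analogous axiom for $\bs f$ encoded in Definition \ref{ku4def13}(h), and the pentagon/interchange coherence axioms of the weak 2-category $\Kur_S(X)$, a direct diagram chase shows that the $\Xi_{jj'}^{ik}$ satisfy the cocycle condition $\Xi_{j'j''}^{ik}\od\Xi_{jj'}^{ik}=\Xi_{jj''}^{ik}$ on triple overlaps. Theorem \ref{ku4thm1}(b) applied to the stack $\bcHom_h\bigl((U_i,D_i,\Be_i,r_i,\chi_i),(W_k,F_k,\De_k,t_k,\om_k)\bigr)$ over $S$ then yields a 1-morphism $\bs h_{ik}$ over $h$ on $S$, together with 2-isomorphisms $\Th_{ijk}:\bs h_{ik}^j\Ra\bs h_{ik}\vert_{\cdots}$ satisfying $\Th_{ij'k}\od\Xi_{jj'}^{ik}=\Th_{ijk}$ on overlaps; this last identity is exactly the commutativity of \eqref{ku4eq19}.

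To construct $\bs H_{ii'}^k:\bs h_{i'k}\ci\Tau_{ii'}\Ra\bs h_{ik}$, note that on $\Im\chi_i\cap\Im\chi_{i'}\cap f^{-1}(\Im\psi_j)\cap h^{-1}(\Im\om_k)$ we have a canonical 2-morphism
\begin{equation*}
\bs h_{i'k}\ci\Tau_{ii'}
\stackrel{\Th_{i'jk}^{-1}*\id}{\Longra}(\bs g_{jk}\ci\bs f_{i'j})\ci\Tau_{ii'}
\stackrel{\bs\al,\,\id*\bs F_{ii'}^j}{\Longra}\bs g_{jk}\ci\bs f_{ij}
\stackrel{\Th_{ijk}}{\Longra}\bs h_{ik};
\end{equation*}
using Definition \ref{ku4def13}(f) for both $\bs f$ and $\bs g$ one checks these agree on overlaps in $j$, and the stack property (this time for 2-morphisms, i.e.\ Theorem \ref{ku4thm1}(b) applied to $\bHom_h(\bs h_{i'k}\ci\Tau_{ii'},\bs h_{ik})$) produces a unique 2-morphism $\bs H_{ii'}^k$ on $\Im\chi_i\cap\Im\chi_{i'}\cap h^{-1}(\Im\om_k)$ whose restriction is the displayed composite. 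The diagram \eqref{ku4eq18} holds by construction. An entirely parallel argument using $\bs G_j^{kk'}$ produces $\bs H_i^{kk'}$ making \eqref{ku4eq20} commute. The axioms Definition \ref{ku4def13}(e)--(h) for $\bs h$ are then verified by the usual trick: each side, when pasted with suitable $\Th_{ijk}$ and restricted to a small enough open set, reduces to an identity of 2-morphisms in $\Kur_S(X)$ that follows from the corresponding axioms for $\bs f$ and $\bs g$ together with the coherence axioms of $\Kur_S(X)$; the uniqueness clause of the stack property then upgrades this to the global identity.

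For part (b), define a local 2-morphism $\bs\eta_{ik}^j:=\ti\Th_{ijk}\od\Th_{ijk}^{-1}:\bs h_{ik}\Ra\bs{\ti h}_{ik}$ on $S\cap f^{-1}(\Im\psi_j)$. Compatibility of $\Th$ and $\ti\Th$ with \eqref{ku4eq19} shows $\bs\eta_{ik}^j=\bs\eta_{ik}^{j'}$ on overlaps, so Theorem \ref{ku4thm1}(b) glues them to a unique $\bs\eta_{ik}$ on $S$ with $\bs\eta_{ik}\od\Th_{ijk}=\ti\Th_{ijk}$ for all $j$. Conditions (a),(b) of Definition \ref{ku4def14} for $\bs\eta=(\bs\eta_{ik})$ follow by another stack-uniqueness argument, pasting with $\Th_{ijk}$ and using \eqref{ku4eq18}--\eqref{ku4eq20} for both $\bs h,\bs{\ti h}$. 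For part (c), when $\bs f=\bs\id_\bY$ one has $\bs f_{ij}=\Tau_{ij}=\Up_{ij}$, $\bs F_{ii'}^j=\La_{ii'j}$, $\bs F_i^{jj'}=\La_{ijj'}$, and setting $\bs h=\bs g$, $\Th_{ijk}=\bs G_{ij}^k$, $\bs H_{ii'}^k=\bs G_{i'i}^k$, $\bs H_i^{kk'}=\bs G_i^{kk'}$ reduces \eqref{ku4eq18}--\eqref{ku4eq20} directly to Definition \ref{ku4def13}(f)--(h) for $\bs g$; the case $\bs g=\bs\id_\bY$ is symmetric.

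The main obstacle is the bookkeeping of coherence 2-morphisms in the cocycle check for the $\Xi_{jj'}^{ik}$ on triple overlaps: one must assemble the associators $\bs\al$, the axioms Definition \ref{ku4def13}(g),(h) for $\bs f$ and the analogue for $\bs g$, and the interchange law, into a single commuting hexagonal diagram whose outer boundary is the desired equality $\Xi_{j'j''}^{ik}\od\Xi_{jj'}^{ik}=\Xi_{jj''}^{ik}$. Once this is established, every subsequent construction and verification reduces by the stack property to a purely local identity that is either one of the hypotheses or a coherence axiom of $\Kur_S(X)$.
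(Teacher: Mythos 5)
Your proposal is correct and follows essentially the same route as the paper's proof: local candidates $\bs g_{jk}\ci\bs f_{ij}$, transition 2-morphisms built from $(\bs G_{jj'}^k)^{-1}$ and $\bs F_i^{jj'}$, a cocycle check via the coherence axioms of Definition \ref{ku4def13} (the paper uses (f) for $\bs g$ and (h) for $\bs f$ in its diagram \eq{ku4eq22}) plus weak 2-category coherence, gluing by the stack property Theorem \ref{ku4thm1}(b), and then the forced local formulas for $\bs H_{ii'}^k$, $\bs H_i^{kk'}$ and for $\bs\eta_{ik}$ in (b), upgraded to global statements by stack uniqueness. Parts (b) and (c) likewise match the paper, up to minor label slips (e.g.\ your $\bs G_{i'i}^k$ should read $\bs G_{ii'}^k$) that do not affect the argument.
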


\begin{proof} For (a), define $h=g\ci f:X\ra Z$. Let $i\in I$ and $k\in K$, and set 
$S=\Im\chi_i\cap h^{-1}(\Im\om_k)$, so that $S$ is open in $X$. We want to choose a 1-morphism $\bs h_{ik}:(U_i,D_i,\De_i,r_i,\chi_i)\ra (W_k,F_k,\De_k,t_k,\om_k)$ of Kuranishi neighbourhoods over $(S,h)$. Since $\{\Im\psi_j:j\in J\}$ is an open cover of $Y$ and $f$ is continuous, $\bigl\{S\cap f^{-1}(\Im\psi_j):j\in J\bigr\}$ is an open cover of $S$. For all $j,j'\in J$ we have a 2-morphism over $S\cap f^{-1}(\Im\psi_j\cap\Im\psi_{j'}),h$
\e
\begin{split}
&(\id_{\bs g_{j'k}}*\bs F_i^{jj'})\od(\bs G_{jj'}^k*\id_{\bs f_{ij}})^{-1}:\\
&\bs g_{jk}\ci\bs f_{ij}\vert_{S\cap f^{-1}(\Im\psi_j\cap\Im\psi_{j'})}
\Longra\bs g_{j'k}\ci\bs f_{ij'}\vert_{S\cap f^{-1}(\Im\psi_j\cap\Im\psi_{j'})}.
\end{split}
\label{ku4eq21}
\e
For $j,j',j''\in J$, consider the diagram of 2-morphisms over $S\cap f^{-1}(\Im\psi_j\cap\Im\psi_{j'}\cap\Im\psi_{j''}),h$
\e
\begin{gathered}
\text{\begin{small}$\displaystyle\xymatrix@!0@C=89pt@R=25pt{
&&& {\bs g_{jk}\!\ci\!\bs f_{ij}}
\\
\\
&&& {(\bs g_{j'k}\!\ci\!\Up_{jj'})\!\ci\!\bs f_{ij}} \ar[uu]_{\bs G_{jj'}^k*\id_{\bs f_{ij}}} \ar[dd]^{\bs\al_{\bs g_{j'k},\Up_{jj'},\bs f_{ij}}} 
\\
*+[r]{(\bs g_{j''k}\!\ci\!\Up_{jj''})\!\ci\!\bs f_{ij}} \ar[uuurrr]^{\bs G_{jj''}^k*\id_{\bs f_{ij}}\,\,\,\,} \ar[dddddd]^{\bs\al_{\bs g_{j''k},\Up_{jj''},\bs f_{ij}}} && {((\bs g_{j''k}\!\ci\!\Up_{j'j''})\!\ci\!\Up_{jj'})\!\ci\!\bs f_{ij}} \ar[ddl]^{\bs\al_{\cdots}} \ar[ur]^(0.3){(\bs G_{j'j''}^k*\id_{\Up_{jj'}})*\id_{\bs f_{ij}}} \ar[ddd]^(0.25){\bs\al_{\cdots}}
\\
&&& {\bs g_{j'k}\!\ci\!(\Up_{jj'}\!\ci\!\bs f_{ij})} \ar[dd]^{\id_{\bs g_{j'k}}*\bs F_i^{jj'}} 
\\
& (\bs g_{j''k}\!\ci\!(\Up_{j'j''}\!\ci\!\Up_{jj'}))\!\ci\!\bs f_{ij} \ar[dd]^{\bs\al_{\cdots}} \ar[uul]_(0.4){(\id_{\bs g_{j''k}}*\La_{jj'j''})*\id_{\bs f_{ij}}} 
\\
&& {(\bs g_{j''k}\!\ci\!\Up_{j'j''})\!\ci\!(\Up_{jj'}\!\ci\!\bs f_{ij})} 
\ar[ddd]^(0.75){\bs\al_{\cdots}}  \ar[ddr]_{(\id_{\bs g_{j''k}}*\id_{\Up_{j'j''}})*\bs F_i^{jj'}} \ar[uur]^{\bs G_{j'j''}^k*(\id_{\Up_{jj'}}*\id_{\bs f_{ij}})} & {\bs g_{j'k}\!\ci\!\bs f_{ij'}} 
\\
& \bs g_{j''k}\!\ci\!((\Up_{j'j''}\!\ci\!\Up_{jj'})\!\ci\!\bs f_{ij}) \ar[ddr]^{\bs\al_{\cdots}} \ar[ddl]^(0.4){\id_{\bs g_{j''k}}*(\La_{jj'j''}*\id_{\bs f_{ij}})}
\\
&&& {(\bs g_{j''k}\!\ci\!\Up_{j'j''})\!\ci\! \bs f_{ij'}} \ar[uu]_{\bs G_{j'j''}^k*\id_{\bs f_{ij'}}} \ar[dd]^{\bs\al_{\bs g_{j''k},\Up_{j'j''},\bs f_{ij'}}} 
\\
*+[r]{\bs g_{j''k}\!\ci\!(\Up_{jj''}\!\ci\!\bs f_{ij})} \ar[dddrrr]_{\id_{\bs g_{j''k}}*\bs F_i^{jj''}\,\,\,\,} && {\bs g_{j''k}\!\ci\!(\Up_{j'j''}\!\ci\!(\Up_{jj'}\!\ci\!\bs f_{ij}))} \ar[dr]_(0.3){\id_{\bs g_{j''k}}*(\id_{\Up_{j'j''}}*\bs F_i^{jj'})} 
\\
&&& {\bs g_{j''k}\!\ci\!(\Up_{j'j''}\!\ci\! \bs f_{ij'})}  \ar[dd]^{\id_{\bs g_{j''k}}*\bs F_i^{j'j''}} 
\\
\\
&&& {\bs g_{j''k}\!\ci\!\bs f_{ij''}.\!\!} }\!\!\!\!\!\!\!\!\!\!\!\!\!\!\!\!\!\!\!\!\!\!\!\!\!{}$\end{small}}
\end{gathered}
\label{ku4eq22}
\e

Here the top pentagon of \eq{ku4eq22} commutes by Definition \ref{ku4def13}(f) for $\bs g$ composed with $\id_{\bs f_{ij}}$, the bottom pentagon commutes by Definition \ref{ku4def13}(h) for $\bs f$ composed with $\id_{\bs g_{j''k}}$, and the rest commutes by properties of weak 2-categories. Thus \eq{ku4eq22} commutes. This implies that
\e
\begin{split}
\bigl(&(\id_{\bs g_{j''k}}*\bs F_i^{j'j''})\od\bs\al_{\bs g_{j''k},\Up_{j'j''},\bs f_{ij'}}\od(\bs G_{j'j''}^k*\id_{\bs f_{ij'}})^{-1}\bigr)\od{}\\
\bigl(&(\id_{\bs g_{j'k}}*\bs F_i^{jj'})\od\bs\al_{\bs g_{j'k},\Up_{jj'},\bs f_{ij}}\od(\bs G_{jj'}^k*\id_{\bs f_{ij}})^{-1}\bigr)\\
&=(\id_{\bs g_{j''k}}*\bs F_i^{jj''})\od\bs\al_{\bs g_{j''k},\Up_{jj''},\bs f_{ij}}\od(\bs G_{jj''}^k*\id_{\bs f_{ij}})^{-1}.
\end{split}
\label{ku4eq23}
\e

Now Theorem \ref{ku4thm1}(b) says that 1- and 2-morphisms from $(U_i,D_i,\Be_i,r_i,\chi_i)$ to $(W_k,F_k,\De_k,t_k,\om_k)$ over $h$ form a stack on $S$, so applying Definition \ref{ku4def11}(v) to the open cover $\bigl\{S\cap f^{-1}(\Im\psi_j):j\in J\bigr\}$ of $S$ with $\bs g_{jk}\ci\bs f_{ij}$ in place of $A_j$, \eq{ku4eq21} in place of $\al_{jj'}$, and \eq{ku4eq23}, shows that there exist a 1-morphism $\bs h_{ik}:(U_i,D_i,\Be_i,r_i,\chi_i)\ra(W_k,F_k,\De_k,t_k,\om_k)$ over $(S,h)$, and 2-morphisms
\begin{equation*}
\Th_{ijk}:\bs g_{jk}\ci\bs f_{ij}\vert_{S\cap f^{-1}(\Im\psi_j)}\Longra \bs h_{ik}\vert_{S\cap f^{-1}(\Im\psi_j)}
\end{equation*}
for all $j\in J$, satisfying for all $j,j'\in J$
\e
\begin{split}
&\Th_{ijk}\vert_{S\cap f^{-1}(\Im\psi_j\cap\Im\psi_{j'})}\\
&\quad=\Th_{ij'k}\od(\id_{\bs g_{j'k}}*\bs F_i^{jj'})\od\bs\al_{\bs g_{j'k},\Up_{jj'},\bs f_{ij}}\od(\bs G_{jj'}^k*\id_{\bs f_{ij}})^{-1}.
\end{split}
\label{ku4eq24}
\e
Observe that \eq{ku4eq24} is equivalent to equation \eq{ku4eq19} in the proposition.

So far we have chosen the data $h,\bs h_{ik}$ for all $i,k$ in $\bs h=\bigl(h,\bs h_{ik},\bs H_{ii'}^k,\bs H_i^{kk'}\bigr)$, where $\bs h_{ik}$ involved an arbitrary choice. To define $\bs H_{ii'}^k$ for $i,i'\in I$ and $k\in K$, note that for each $j\in J$, equation \eq{ku4eq18} of the proposition implies that
\e
\begin{split}
\bs H_{ii'}^k&\vert_{\Im\chi_i\cap\Im\chi_{i'}\cap f^{-1}(\Im\psi_j)\cap h^{-1}(\Im\om_k)}\\
&=\Th_{ijk}\od(\id_{\bs g_{jk}}*\bs F_{ii'}^j)\od\bs\al_{\bs g_{jk},\bs f_{i'j},\Tau_{ii'}}\od\bigl(\Th_{i'jk}*\id_{\Tau_{ii'}}\bigr){}^{-1}.
\end{split}
\label{ku4eq25}
\e
Using \eq{ku4eq24} for $i,i'$ and a similar commutative diagram to \eq{ku4eq22}, we can show that the prescribed values \eq{ku4eq25} for $j,j'\in J$ agree when restricted to $\Im\chi_i\cap\Im\chi_{i'}\cap f^{-1}(\Im\psi_j\cap\Im\psi_{j'})\cap h^{-1}(\Im\om_k)$. Therefore the stack property Theorem \ref{ku4thm1}(b) and Definition \ref{ku4def11}(iii),(iv) show that there is a unique 2-morphism $\bs H_{ii'}^k:\bs h_{i'k}\ci\Tau_{ii'}\Ra \bs h_{ik}$ over $h$ satisfying \eq{ku4eq25} for all $j\in J$, or equivalently, satisfying \eq{ku4eq18} for all $j\in J$. Similarly, there is a unique 2-morphism $\bs H_i^{kk'}:\Phi_{kk'}\ci\bs h_{ik}\Ra\bs h_{ik'}$ over $h$ satisfying \eq{ku4eq20} for all~$j\in J$.

We now claim that $\bs h=\bigl(h,\bs h_{ik},\bs H_{ii'}^k,\bs H_i^{kk'}\bigr)$ is a 1-morphism $\bs h:\bX\ra\bZ$. It remains to show Definition \ref{ku4def13}(f)--(h) hold for $\bs h$. To prove this, we first fix $j\in J$ and prove the restrictions of (f)--(h) to the intersections of their domains with $f^{-1}(\Im\psi_j)$. For instance, for part (f), for $i,i',i''\in I$ and $k\in K$ we have
\begin{align*}
&\bigl(\bs H_{ii''}^k\od(\id_{\bs h_{i''k}}*\Ka_{ii'i''})\od\bs\al_{\bs h_{i''k},\Tau_{i'i''},\Tau_{ii'}}\bigr)\vert_{\Im\chi_i\cap\cdots\cap h^{-1}(\Im\om_k)}\\
&=\bigl[\Th_{ijk}\od(\id_{\bs g_{jk}}*\bs F_{ii''}^j)\od\bs\al_{\bs g_{jk},\bs f_{i''j},\Tau_{ii''}}\od\bigl(\Th_{i''jk}*\id_{\Tau_{ii''}}\bigr){}^{-1}\bigr]\\ 
&\qquad\od(\id_{\bs h_{i''k}}*\Ka_{ii'i''})\od\bs\al_{\bs h_{i''k},\Tau_{i'i''},\Tau_{ii'}}\\
&=\Th_{ijk}\od\bigl(\id_{\bs g_{jk}}*(\bs F_{ii''}^j\od(\id_{\bs f_{ij''}}*\Ka_{ii'i''})\od\bs\al_{\bs f_{i''j},\Tau_{i'i''},\Tau_{ii'}})\bigr)\\
&\quad \od\bs\al_{\bs g_{jk},\bs f_{i''j}\ci\Tau_{i'i''},\Tau_{ii'}}\od(\bs\al_{\bs g_{jk},\bs f_{i''j},\Tau_{i'i''}}*\id_{\Tau_{ii'}})
\od\bigl((\Th_{i''jk}^{-1}*\id_{\Tau_{i'i''}})*\id_{\Tau_{ii'}}\bigr)\\
&=\Th_{ijk}\od\bigl(\id_{\bs g_{jk}}*(\bs F_{ii'}^j\od(\bs F_{i'i''}^j*\id_{\Tau_{ii'}}))\bigr)\\
&\quad \od\bs\al_{\bs g_{jk},\bs f_{i''j}\ci\Tau_{i'i''},\Tau_{ii'}}\od(\bs\al_{\bs g_{jk},\bs f_{i''j},\Tau_{i'i''}}*\id_{\Tau_{ii'}})
\od\bigl((\Th_{i''jk}^{-1}*\id_{\Tau_{i'i''}})*\id_{\Tau_{ii'}}\bigr)\\
&=\bigl[\Th_{ijk}\od(\id_{\bs g_{jk}}*\bs F_{ii'}^j)\od\bs\al_{\bs g_{jk},\bs f_{i'j},\Tau_{ii'}}\od\bigl(\Th_{i'jk}*\id_{\Tau_{ii'}}\bigr){}^{-1}\bigr]\\
&\quad\od\bigl(\bigl[
\bigl(\Th_{i'jk}\od(\id_{\bs g_{jk}}*\bs F_{i'i''}^j)\bigr)\od\bs\al_{\bs g_{jk},\bs f_{i''j},\Tau_{i'i''}}\od\bigl(\Th_{i''jk}*\id_{\Tau_{i'i''}}\bigr){}^{-1}\bigr]*\id_{\Tau_{ii'}}\bigr)\\
&=\bigl(\bs H_{ii'}^k\od(\bs H_{i'i''}^k*\id_{\Tau_{ii'}})\bigr)\vert_{\Im\chi_i\cap\Im\chi_{i'}\cap\Im\chi_{i''}\cap f^{-1}(\Im\psi_j)\cap h^{-1}(\Im\om_k)},
\end{align*}
using \eq{ku4eq25} in the first and fifth steps, Definition \ref{ku4def13}(f) for $\bs f$ in the third, and properties of weak 2-categories. Then we use the stack property Theorem \ref{ku4thm1}(b) and Definition \ref{ku4def11}(iii) to deduce that as Definition \ref{ku4def13}(f)--(h) for $\bs h$ hold on the sets of an open cover, they hold globally. Therefore $\bs h:\bX\ra\bZ$ is a 1-morphism of Kuranishi spaces satisfying \eq{ku4eq18}--\eq{ku4eq20}, proving~(a).

For (b), if $\bs{\ti h},\ti\Th_{ijk}$ are alternatives, then $\bs h_{ik},\bs{\ti h}_{ik}$ are alternative solutions to the application of Theorem \ref{ku4thm1}(b) and Definition \ref{ku4def11}(v) above, for all $i\in I$ and $k\in K$. Thus, the last part of Definition \ref{ku4def11}(v) implies that there is a unique 2-morphism $\bs\eta_{ik}:\bs h_{ik}\Ra\bs{\ti h}_{ik}$ over $h$ such that for all $j\in J$ we have
\e
\bs\eta_{ik}\vert_{\Im\chi_i\cap f^{-1}(\Im\psi_j)\cap h^{-1}(\Im\om_k)}=\ti\Th_{ijk}\od\Th_{ijk}^{-1}.
\label{ku4eq26}
\e
This implies that $\bs\eta_{ik}\od\Th_{ijk}=\ti\Th_{ijk}$, as in (b). For each $j\in J$ we have
\begin{align*}
\bigl(&\bs{\ti H}{}_{ii'}^k\od(\bs\eta_{i'k}*\id_{\Tau_{ii'}})\bigr)\vert_{\Im\chi_i\cap\Im\chi_{i'}\cap f^{-1}(\Im\psi_j)\cap h^{-1}(\Im\om_k)}\\
&=\bigl[\ti\Th_{ijk}\od(\id_{\bs g_{jk}}*\bs F_{ii'}^j)\od\bs\al_{\bs g_{jk},\bs f_{i'j},\Tau_{ii'}}\od\bigl(\ti\Th_{i'jk}*\id_{\Tau_{ii'}}\bigr){}^{-1}\bigr]\\
&\qquad\od\bigl[\bigl(\ti\Th_{i'jk}\od\Th_{i'jk}^{-1}\bigr)*\id_{\Tau_{ii'}}\bigr]\\
&=\bigl[\ti\Th_{ijk}\od\Th_{ijk}^{-1}\bigr]\od\bigl[\Th_{ijk}\od(\id_{\bs g_{jk}}*\bs F_{ii'}^j)\od\bs\al_{\bs g_{jk},\bs f_{i'j},\Tau_{ii'}}\od\bigl(\Th_{i'jk}*\id_{\Tau_{ii'}}\bigr){}^{-1}\bigr]\\
&=\bigl(\bs\eta_{ik}\od\bs H_{ii'}^k\bigr)\vert_{\Im\chi_i\cap\Im\chi_{i'}\cap f^{-1}(\Im\psi_j)\cap h^{-1}(\Im\om_k)},
\end{align*}
using \eq{ku4eq25} and \eq{ku4eq26} in the first and third steps. So by Definition \ref{ku4def11}(iii) we deduce that $\bs{\ti H}{}_{ii'}^k\od(\bs\eta_{i'k}*\id_{\Tau_{ii'}})=\bs\eta_{ik}\od\bs H_{ii'}^k$, which is Definition \ref{ku4def14}(a) for $\bs\eta=(\bs\eta_{ik}):\bs h\Ra\bs{\ti h}$. Similarly Definition \ref{ku4def14}(b) holds, so $\bs\eta:\bs h\Ra\bs{\ti h}$ is a 2-morphism of Kuranishi spaces. This proves (b). Part (c) is immediate, using Definition \ref{ku4def13}(f)--(h) for $\bs f,\bs g$ to prove \eq{ku4eq18}--\eq{ku4eq20} hold for the given choices of $\bs h$ and $\Th_{ijk}$. This completes the proof of Proposition~\ref{ku4prop1}.
\end{proof}

Proposition \ref{ku4prop1}(a) gives possible values $\bs h$ for the composition $\bs g\ci\bs f:\bX\ra\bZ$. Since there is no distinguished choice, we choose $\bs g\ci\bs f$ arbitrarily.

\begin{dfn} For all pairs of 1-morphisms of Kuranishi spaces $\bs f:\bX\ra\bY$ and $\bs g:\bY\ra\bZ$, use the Axiom of Global Choice (see Remark \ref{ku4rem5}) to choose possible values of $\bs h:\bX\ra\bZ$ and $\Th_{ijk}$ in Proposition \ref{ku4prop1}(a), and write $\bs g\ci\bs f=\bs h$, and for $i\in I$, $j\in J$, $k\in K$
\begin{equation*}
\Th_{ijk}^{\bs g,\bs f}=\Th_{ijk}:\bs g_{jk}\ci\bs f_{ij}\Longra(\bs g\ci\bs f)_{ik}.
\end{equation*}
We call $\bs g\ci\bs f$ the {\it composition of\/ $1$-morphisms of Kuranishi spaces}.

For general $\bs f,\bs g$ we make these choices arbitrarily. However, if $\bX=\bY$ and $\bs f=\bs\id_\bY$ then we choose $\bs g\ci\bs\id_\bY=\bs g$ and $\Th_{jj'k}^{\bs g,\bs\id_\bY}=\bs G_{jj'}^k$, and if $\bZ=\bY$ and $\bs g=\bs\id_\bY$ then we choose $\bs\id_\bY\ci\bs f=\bs f$ and $\Th_{ijj'}^{\bs\id_\bY,\bs f}=\bs F_i^{jj'}$. This is allowed by Proposition~\ref{ku4prop1}(c).

The definition of a weak 2-category in Appendix \ref{kuB} includes 2-isomorphisms $\bs\be_{\bs f}:\bs f\ci\bs\id_\bX\Ra\bs f$ and $\bs\ga_{\bs f}:\bs\id_\bY\ci\bs f\Ra\bs f$ in \eq{kuBeq9}, since one does not require $\bs f\ci\bs\id_\bX=\bs f$ and $\bs\id_\bY\ci\bs f=\bs f$ in a general weak 2-category. We define
\e
\bs\be_{\bs f}=\bs\id_{\bs f}:\bs f\ci\bs\id_\bX\Longra\bs f,\quad
\bs\ga_{\bs f}=\bs\id_{\bs f}:\bs\id_\bY\ci\bs f\Longra\bs f.
\label{ku4eq27}
\e

\label{ku4def15}
\end{dfn}

\begin{rem} As in Shulman \cite[\S 7]{Shul} or Herrlick and Strecker \cite[\S 1.2]{HeSt}, the {\it Axiom of Global Choice}, or {\it Axiom of Choice for classes}, used in Definition \ref{ku4def15}, is a strong form of the Axiom of Choice. 

As in Jech \cite{Jech}, in Set Theory one distinguishes between sets, and `classes', which are like sets but may be larger. We are not allowed to consider things like `the set of all sets', or `the set of all manifolds', as this would lead to paradoxes such as `the set of all sets which are not members of themselves'. Instead sets, manifolds, \ldots\ form classes, upon which more restrictive operations are allowed.

The Axiom of Choice says that if $\{S_i:i\in I\}$ is a family of nonempty sets, with $I$ a set, then we can simultaneously choose an element $s_i\in S_i$ for all $i\in I$. The Axiom of Global Choice says the same thing, but allowing $I$ (and possibly also the $S_i$) to be classes rather than sets. As in \cite[\S 7]{Shul}, the Axiom of Global Choice follows from the axioms of von Neumann--Bernays--G\"odel Set Theory.

The Axiom of Global Choice is used, implicitly or explicitly, in the proofs of important results in category theory in their most general form, for example, Adjoint Functor Theorems, or that every category has a skeleton, or that every weak 2-category can be strictified.

We need to use the Axiom of Global Choice above because we make an arbitrary choice of $\bs g\ci\bs f$ for all $\bs f:\bX\ra\bY$ and $\bs g:\bY\ra\bZ$ in $\Kur$, and as we have defined things, the collection of all such $(\bs f,\bs g)$ is a proper class, not a set.

We could avoid this by arranging our foundations differently. For example, if we require that all topological spaces and manifolds in this book are subsets (or subspaces, or submanifolds) of some fixed $\R^\iy$, then the collection of all $(\bs f,\bs g)$ would be a set, and the usual Axiom of Choice would suffice. 

If we did not make arbitrary choices of compositions $\bs g\ci\bs f$ at all, then $\Kur$ would not be a weak 2-category in Theorem \ref{ku4thm3} below, since for 1-morphisms $\bs f:\bX\ra\bY$ and $\bs g:\bY\ra\bZ$ in $\Kur$ we would not be given a unique composition $\bs g\ci\bs f:\bX\ra\bZ$, but only a nonempty family of possible choices for $\bs g\ci\bs f$, which are all 2-isomorphic. Such structures appear in the theory of {\it quasi-categories}, as in Boardman and Vogt \cite{BoVo} or Joyal \cite{Joya}, which are a form of $\iy$-category, and $\Kur$ would be an example of a 3-coskeletal quasi-category. 

If we only wanted to define the homotopy category $\Ho(\Kur)$, there would be no need for the Axiom of Global Choice, as to define composition $[\bs g]\ci[\bs f]=[\bs g\ci\bs f]$ in $\Ho(\Kur)$ we only need to know the 2-isomorphism class of $\bs g\ci\bs f$ in~$\Kur$.
\label{ku4rem5}
\end{rem}

Since composition of 1-morphisms $\bs g\ci\bs f$ is natural only up to canonical 2-isomorphism, as in Proposition \ref{ku4prop1}(b), composition is associative only up to canonical 2-isomorphism. Note that the 2-isomorphisms $\bs\al_{\bs g,\bs f,\bs e}$ in \eq{ku4eq28} are part of the definition of a weak 2-category in Appendix \ref{kuB}, as in~\eq{kuBeq6}.

\begin{prop} Let\/ $\bs e:\bW\ra\bX,$ $\bs f:\bX\ra\bY,$ $\bs g:\bY\ra\bZ$ be $1$-morphisms of Kuranishi spaces, and define composition of\/ $1$-morphisms as in Definition\/ {\rm\ref{ku4def15}}. Then using notation\/ {\rm\eq{ku4eq11}--\eq{ku4eq14},} there is a unique $2$-morphism
\e
\bs\al_{\bs g,\bs f,\bs e}:(\bs g\ci\bs f)\ci\bs e\Longra\bs g\ci(\bs f\ci\bs e)
\label{ku4eq28}
\e
with the property that for all\/ $h\in H,$ $i\in I,$ $j\in J$ and\/ $k\in K$ we have
\e
(\bs\al_{\bs g,\bs f,\bs e})_{hk}\!\od\!\Th^{\bs g\ci\bs f,\bs e}_{hik}\!\od\! (\Th_{ijk}^{\bs g,\bs f}*\id_{\bs e_{hi}})\!=\!
\Th_{hjk}^{\bs g,\bs f\ci\bs e}\!\od\!(\id_{\bs g_{jk}}*\Th_{hij}^{\bs f,\bs e})\!\od\!\bs\al_{\bs g_{jk},\bs f_{ij},\bs e_{hi}}.
\label{ku4eq29}
\e

\label{ku4prop2}
\end{prop}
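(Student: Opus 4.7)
The plan is to first construct, for each $h\in H$ and $k\in K$, the component 2-morphism $(\bs\al_{\bs g,\bs f,\bs e})_{hk}$ of Kuranishi neighbourhoods from $(T_h,C_h,\Al_h,q_h,\vp_h)$ to $(W_k,F_k,\De_k,t_k,\om_k)$ over $(S_{hk},g\ci f\ci e)$, where $S_{hk}=\Im\vp_h\cap(g\ci f\ci e)^{-1}(\Im\om_k)$, and then to verify that the collection $\bigl((\bs\al_{\bs g,\bs f,\bs e})_{hk}\bigr){}_{h\in H,\,k\in K}$ satisfies Definition \ref{ku4def14}(a),(b). Uniqueness will be automatic from the formula: once $(\bs\al_{\bs g,\bs f,\bs e})_{hk}$ is determined on each of the pieces $S_{hijk}:=S_{hk}\cap e^{-1}(\Im\chi_i)\cap (f\ci e)^{-1}(\Im\psi_j)$ by rearranging \eq{ku4eq29} (the trailing Kuranishi neighbourhood 2-morphisms on the left are invertible, so one obtains an explicit formula for $(\bs\al_{\bs g,\bs f,\bs e})_{hk}\vert_{S_{hijk}}$), the stack property Theorem \ref{ku4thm1}(b) together with Definition \ref{ku4def11}(iii) forces uniqueness on all of~$S_{hk}$.

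For existence, fix $h,k$ and observe that $\{S_{hijk}:i\in I,\,j\in J\}$ is an open cover of $S_{hk}$, since $\{\Im\chi_i\}_{i\in I}$ covers $X$ and $\{\Im\psi_j\}_{j\in J}$ covers $Y$. On each $S_{hijk}$ prescribe
\[
(\bs\al_{\bs g,\bs f,\bs e})_{hk}\vert_{S_{hijk}}=\Th_{hjk}^{\bs g,\bs f\ci\bs e}\od(\id_{\bs g_{jk}}*\Th_{hij}^{\bs f,\bs e})\od\bs\al_{\bs g_{jk},\bs f_{ij},\bs e_{hi}}\od(\Th_{ijk}^{\bs g,\bs f}*\id_{\bs e_{hi}})^{-1}\od(\Th_{hik}^{\bs g\ci\bs f,\bs e})^{-1}.
\]
The crux is to show that on each overlap $S_{hijk}\cap S_{hi'j'k}$ the two prescribed values agree. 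This will follow from a large commutative diagram, built in analogy with \eq{ku4eq22} in the proof of Proposition \ref{ku4prop1}(a), whose constituent subdiagrams are the relations \eq{ku4eq18}--\eq{ku4eq20} applied to each of the four pair compositions $(\bs e,\bs f)$, $(\bs f,\bs g)$, $(\bs e,\bs g\ci\bs f)$ and $(\bs f\ci\bs e,\bs g)$; the naturality and pentagon coherence of the weak 2-category associators $\bs\al_{\cdot,\cdot,\cdot}$; and Definition \ref{ku4def13}(f)--(h) for $\bs e,\bs f,\bs g$ themselves. Once overlap compatibility is established, the stack property Theorem \ref{ku4thm1}(b) together with Definition \ref{ku4def11}(iv),(v) supplies the global 2-morphism~$(\bs\al_{\bs g,\bs f,\bs e})_{hk}$.

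To finish, Definition \ref{ku4def14}(a),(b) must be checked for the collection $\bigl((\bs\al_{\bs g,\bs f,\bs e})_{hk}\bigr){}_{h,k}$. Both identities are equalities of 2-morphisms of Kuranishi neighbourhoods, so by the stack property it suffices to verify them after restriction to $S_{hijk}\cap S_{h'ijk}$ for (a) (varying $h,h'\in H$) and to $S_{hijk}\cap S_{hijk'}$ for (b) (varying $k,k'\in K$). On each such piece every term can be expanded using the prescribed formula above together with the analog of \eq{ku4eq18} for the two triple compositions $(\bs g\ci\bs f)\ci\bs e$ and $\bs g\ci(\bs f\ci\bs e)$ (for (a)), or the analog of \eq{ku4eq20} (for (b)), and the required identity reduces after cancellation to Definition \ref{ku4def13}(f), respectively (h), for $\bs e$ in combination with the pentagon coherence for the associators~$\bs\al_{\cdot,\cdot,\cdot}$. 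This yields the desired $\bs\al_{\bs g,\bs f,\bs e}$ satisfying~\eq{ku4eq29}.

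The hardest step will be the overlap compatibility in the second paragraph: the relevant commutative diagram has roughly twice as many cells as \eq{ku4eq22}, since it must simultaneously accommodate a change of index in $I$ (involving the 2-morphism components of $\bs e$ and $\bs f$ together with the associator $\Ka_{ii'i''}$ for $\bX$ and the $\bs H$-type components of both triple compositions) and in $J$ (involving the 2-morphism components of $\bs f$ and $\bs g$ together with $\La_{jj'j''}$ for $\bY$). Identifying the correct pentagon or hexagon identity to invoke at each stage is routine but requires patient bookkeeping; however, no genuinely new idea beyond the pattern of \eq{ku4eq22} is needed, so no essentially new obstacle arises beyond that already overcome in the proof of Proposition~\ref{ku4prop1}(a).
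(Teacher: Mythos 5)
Your proposal follows essentially the same route as the paper's proof: rearrange \eq{ku4eq29} into an explicit formula for $(\bs\al_{\bs g,\bs f,\bs e})_{hk}$ on each $\Im\vp_h\cap e^{-1}(\Im\chi_i)\cap(f\ci e)^{-1}(\Im\psi_j)\cap(g\ci f\ci e)^{-1}(\Im\om_k)$, check overlap agreement using \eq{ku4eq18}--\eq{ku4eq20} by a diagram in the style of \eq{ku4eq22}, glue with the stack property, and then verify Definition \ref{ku4def14}(a),(b) by the same restrict-and-glue argument. The only slip is citing Definition \ref{ku4def11}(iv),(v) for the gluing: since you are gluing 2-morphisms between the fixed 1-morphisms $((\bs g\ci\bs f)\ci\bs e)_{hk}$ and $(\bs g\ci(\bs f\ci\bs e))_{hk}$, the relevant parts are (iii),(iv), not (v).
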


\begin{proof} The proof uses similar ideas to that of Proposition \ref{ku4prop1}, so we will be brief. Note that for $h\in H,$ $i\in I,$ $j\in J,$ $k\in K$, equation \eq{ku4eq29} implies that
\e
\begin{split}
&(\bs\al_{\bs g,\bs f,\bs e})_{hk}\vert_{\Im\vp_h\cap e^{-1}(\Im\chi_i)\cap (f\ci e)^{-1}(\Im\psi_j)\cap (g\ci f\ci e)^{-1}(\Im\om_k)}\\
&=\Th_{hjk}^{\bs g,\bs f\ci\bs e}\!\od\!(\id_{\bs g_{jk}}*\Th_{hij}^{\bs f,\bs e})\!\od\!\bs\al_{\bs g_{jk},\bs f_{ij},\bs e_{hi}}\!\od\!(\Th_{ijk}^{\bs g,\bs f}*\id_{\bs e_{hi}})^{-1}\!\od\!(\Th^{\bs g\ci\bs f,\bs e}_{hik})^{-1}.
\end{split}
\label{ku4eq30}
\e
We show that for $i'\in I$, $j'\in J$, the right hand sides of \eq{ku4eq30} for $h,i,j,k$ and for $h,i',j',k$ agree on the overlap of their domains, using the properties \eq{ku4eq18}--\eq{ku4eq20} of the $\Th_{ijk}^{\bs g,\bs f}$. Then we use the stack property Theorem \ref{ku4thm1}(b) and Definition \ref{ku4def11}(iii),(iv) to deduce that there is a unique 2-morphism $(\bs\al_{\bs g,\bs f,\bs e})_{hk}$ satisfying \eq{ku4eq30} for all $i\in I$, $j\in J$. 

We prove the restriction of Definition \ref{ku4def14}(a),(b) for $\bs\al_{\bs g,\bs f,\bs e}=((\bs\al_{\bs g,\bs f,\bs e})_{hk})$ to the intersection of their domains with $e^{-1}(\Im\chi_i)\cap (f\ci e)^{-1}(\Im\psi_j)$, for all $i\in I$ and $j\in J$, using \eq{ku4eq30} and properties of the $\Th_{ijk}^{\bs g,\bs f}$. Since these intersections form an open cover of the domains, Theorem \ref{ku4thm1}(b) and Definition \ref{ku4def11}(iii) imply that Definition \ref{ku4def14}(a),(b) for $\bs\al_{\bs g,\bs f,\bs e}$ hold on the correct domains, so $\bs\al_{\bs g,\bs f,\bs e}$ is a 2-morphism, as in \eq{ku4eq28}. Uniqueness follows from uniqueness of $(\bs\al_{\bs g,\bs f,\bs e})_{hk}$ above. This completes the proof.
\end{proof}

We define vertical and horizontal composition of 2-morphisms:

\begin{dfn} Let $\bs f,\bs g,\bs h:\bX\ra\bY$ be 1-morphisms of Kuranishi spaces, using notation \eq{ku4eq12}--\eq{ku4eq13}, and $\bs\eta\!=\!(\bs\eta_{ij}):\bs f\!\Ra\!\bs g$, $\bs\ze\!=\!(\bs\ze_{ij}):\bs g\!\Ra\!\bs h$ be 2-mor\-phisms. Define the {\it vertical composition of\/ $2$-morphisms\/} $\bs\ze\!\od\!\bs\eta:\bs f\!\Ra\!\bs h$ by
\e
\bs\ze\od\bs\eta=\bigl(\bs\ze_{ij}\od\bs\eta_{ij},\; i\in I,\; j\in J\bigr).
\label{ku4eq31}
\e
To see that $\bs\ze\od\bs\eta$ satisfies Definition \ref{ku4def14}(a),(b), for (a) note that for all $i,i'\in I$ and $j\in J$, by Definition \ref{ku4def14}(a) for $\bs\eta,\bs\ze$ we have
\begin{align*}
\bs H_{ii'}^j&\od((\bs\ze_{i'j}\od\bs\eta_{i'j})*\id_{\Tau_{ii'}})
=\bs H_{ii'}^j\od(\bs\ze_{i'j}*\id_{\Tau_{ii'}})\od(\bs\eta_{i'j}*\id_{\Tau_{ii'}})\\
&=\bs\ze_{ij}\od\bs G_{ii'}^j\od(\bs\eta_{i'j}*\id_{\Tau_{ii'}})
=(\bs\ze_{ij}\od\bs\eta_{ij})\od\bs F_{ii'}^j,
\end{align*}
and Definition \ref{ku4def14}(b) for $\bs\ze\od\bs\eta$ is proved similarly.

Clearly, vertical composition of 2-morphisms of Kuranishi spaces is associative, $(\bs\th\od\bs\ze)\od\bs\eta=\bs\th\od(\bs\ze\od\bs\eta)$, since vertical composition of 2-morphisms of Kuranishi neighbourhoods is associative.

If $\bs g=\bs h$ and $\bs\ze=\bs\id_{\bs g}$ then $\bs\id_{\bs g}\od\bs\eta=(\id_{\bs g_{ij}}\od\bs\eta_{ij})=(\bs\eta_{ij})=\bs\eta$, and similarly $\bs\ze\od\bs\id_{\bs g}=\bs\ze$, so identity 2-morphisms behave as expected under~$\od$.

If $\bs\eta=(\bs\eta_{ij}):\bs f\Ra\bs g$ is a 2-morphism of Kuranishi spaces, then as 2-morphisms $\bs\eta_{ij}$ of Kuranishi neighbourhoods are invertible, we may define $\bs\eta^{-1}=(\bs\eta_{ij}^{-1}):\bs g\Ra\bs f$. It is easy to check that $\bs\eta^{-1}$ is a 2-morphism, and $\bs\eta^{-1}\od\bs\eta=\bs\id_{\bs f}$, $\bs\eta\od\bs\eta^{-1}=\bs\id_{\bs g}$. Thus, all 2-morphisms of Kuranishi spaces are 2-isomorphisms.

\label{ku4def16}
\end{dfn}

\begin{dfn} Let $\bs e,\bs f:\bX\ra\bY$ and $\bs g,\bs h:\bY\ra\bZ$ be 1-morphisms of Kuranishi spaces, using notation \eq{ku4eq12}--\eq{ku4eq14}, and $\bs\eta=(\bs\eta_{ij}):\bs e\Ra\bs f$, $\bs\ze=(\bs\ze_{jk}):\bs g\Ra\bs h$ be 2-morphisms. We claim there is a unique 2-morphism $\bs\th=(\bs\th_{ik}):\bs g\ci\bs e\Ra\bs h\ci\bs f$, such that for all $i\in I$, $j\in J$, $k\in K$, we have
\e
\bs\th_{ik}\vert_{\Im\chi_i\cap e^{-1}(\Im\psi_j)\cap (g\ci e)^{-1}(\Im\om_k)}=
\Th_{ijk}^{\bs h,\bs f}\od(\bs\ze_{jk}*\bs\eta_{ij})\od(\Th_{ijk}^{\bs g,\bs e})^{-1}.
\label{ku4eq32}
\e

To prove this, suppose $j,j'\in J$, and consider the diagram of 2-morphisms over $\Im\chi_i\cap e^{-1}(\Im\psi_j\cap\Im\psi_{j'})\cap (g\ci e)^{-1}(\Im\om_k)$:
\e
\begin{gathered}
\xymatrix@!0@C=70pt@R=15pt{
& \bs g_{jk}\ci\bs e_{ij} \ar[rr]_{\bs\ze_{jk}*\bs\eta_{ij}} && \bs h_{jk}\ci\bs f_{ij} \ar@/^1pc/[dddr]^(0.55){\Th_{ijk}^{\bs h,\bs f}}
\\
\\
& (\bs g_{j'k}\ci\Up_{jj'})\ci\bs e_{ij} \ar[rr]^{(\bs\ze_{j'k}*\id_{\Up_{jj'}})*\bs\eta_{ij}} \ar@<3ex>[uu]_{\bs G_{jj'}^k*\id_{\bs e_{ij}}}  \ar@<-3ex>[dd]^{\bs\al_{\bs g_{j'k},\Up_{jj'},\bs e_{ij}}} && (\bs h_{j'k}\ci\Up_{jj'})\ci\bs f_{ij} \ar@<-3ex>[uu]^{\bs H_{jj'}^k*\id_{\bs f_{ij}}} \ar@<3ex>[dd]_{\bs\al_{\bs h_{j'k},\Up_{jj'},\bs f_{ij}}} \\
*+[r]{(\bs g\ci\bs e)_{ik}} \ar@/^1pc/[uuur]^(0.45){(\Th_{ijk}^{\bs g,\bs e})^{-1}}
 \ar@/_1pc/[dddr]_(0.45){(\Th_{ij'k}^{\bs g,\bs e})^{-1}} &&&& *+[l]{(\bs h\ci\bs f)_{ik}} \\
& \bs g_{j'k}\ci(\Up_{jj'}\ci\bs e_{ij}) \ar[rr]^{\bs\ze_{j'k}*(\id_{\Up_{jj'}}*\bs\eta_{ij})}  \ar@<-3ex>[dd]^{\id_{\bs g_{j'k}}*\bs E_i^{jj'}} && \bs h_{j'k}\ci(\Up_{jj'}\ci\bs f_{ij})  \ar@<3ex>[dd]_{\id_{\bs h_{j'k}}*\bs F_i^{jj'}} \\
\\
& \bs g_{j'k}\ci\bs e_{ij'} \ar[rr]^{\bs\ze_{j'k}*\bs\eta_{ij'}} && \bs h_{j'k}\ci\bs f_{ij'} \ar@/_1pc/[uuur]_(0.55){\Th_{ij'k}^{\bs h,\bs f}} }
\end{gathered}
\label{ku4eq33}
\e
Here the left and right polygons commute by \eq{ku4eq19}, the top and bottom rectangles commute by Definition \ref{ku4def14}(a),(b) for $\bs\ze,\bs\eta$, and the central rectangle commutes by properties of weak 2-categories. Hence \eq{ku4eq33} commutes.

The two routes round the outside of \eq{ku4eq33} imply that the prescribed values \eq{ku4eq32} for $\bs\th_{ik}$ agree on overlaps between open sets for $j,j'$. As the $\Im\chi_i\cap e^{-1}(\Im\psi_j)\cap (g\ci e)^{-1}(\Im\om_k)$ for $j\in J$ form an open cover of the correct domain $\Im\chi_i\cap (g\ci e)^{-1}(\Im\om_k)$, by Theorem \ref{ku4thm1}(b) and Definition \ref{ku4def11}(iii),(iv), there is a unique 2-morphism $\bs\th_{ik}:(\bs g\ci\bs e)_{ik}\Ra(\bs h\ci\bs f)_{ik}$ satisfying \eq{ku4eq32} for all~$j\in J$.

To show $\bs\th=(\bs\th_{ik}):\bs g\ci\bs e\Ra\bs h\ci\bs f$ is a 2-morphism, we must verify Definition \ref{ku4def14}(a),(b) for $\bs\th$. We do this by first showing that (a),(b) hold on the intersections of their domains with $e^{-1}(\Im\psi_j)$ for $j\in J$ using \eq{ku4eq18}, \eq{ku4eq20}, \eq{ku4eq32}, and Definition \ref{ku4def14} for $\bs\eta,\bs\ze$, and then use Theorem \ref{ku4thm1}(b) and Definition \ref{ku4def11}(iii) to deduce that Definition \ref{ku4def14}(a),(b) for $\bs\th$ hold on their whole domains. So $\bs\th$ is a 2-morphism of Kuranishi spaces.

Define the {\it horizontal composition of\/ $2$-morphisms\/} $\bs\ze*\bs\eta:\bs g\ci\bs e\Ra\bs h\ci\bs f$ to be $\bs\ze*\bs\eta=\bs\th$. By \eq{ku4eq32}, for all $i\in I$, $j\in J$, $k\in K$ we have
\e
(\bs\ze*\bs\eta)_{ik}\od\Th_{ijk}^{\bs g,\bs e}=
\Th_{ijk}^{\bs h,\bs f}\od(\bs\ze_{jk}*\bs\eta_{ij}),
\label{ku4eq34}
\e
and this characterizes $\bs\ze*\bs\eta$ uniquely.
\label{ku4def17}
\end{dfn}

We have now defined all the structures of a {\it weak\/ $2$-category of Kuranishi spaces\/} $\Kur$, as in Appendix \ref{kuB}: objects $\bX,\bY$, 1-morphisms $\bs f,\bs g:\bX\ra\bY$, 2-morphisms $\bs\eta:\bs f\Ra\bs g$, identity 1- and 2-morphisms, composition of 1-morphisms, vertical and horizontal composition of 2-morphisms, 2-iso\-mor\-ph\-isms $\bs\al_{\bs g,\bs f,\bs e}$ in \eq{ku4eq28} for associativity of 1-morphisms, and $\bs\be_{\bs f},\bs\ga_{\bs f}$ in \eq{ku4eq27} for identity 1-morphisms. To show that $\Kur$ is a weak 2-category, it remains only to prove the 2-morphism identities \eq{kuBeq5}, \eq{kuBeq7}, \eq{kuBeq8}, \eq{kuBeq10} and \eq{kuBeq11}. Of these, \eq{kuBeq10}--\eq{kuBeq11} are easy as $\bs\be_{\bs f}=\bs\ga_{\bs f}=\bs\id_{\bs f}$, and we leave them as an exercise. The next three propositions prove \eq{kuBeq5}, \eq{kuBeq7} and \eq{kuBeq8} hold.

\begin{prop} Let\/ $\bs f,\bs{\dot f},\bs{\ddot f}:\bX\ra\bY,$ $\bs g,\bs{\dot g},\bs{\ddot g}:\bY\ra\bZ$ be $1$-morphisms of Kuranishi spaces, and\/ $\bs\eta:\bs f\Ra\bs{\dot f},$ $\bs{\dot\eta}:\bs{\dot f}\Ra\bs{\ddot f},$ $\bs\ze:\bs g\Ra\bs{\dot g},$ $\bs{\dot\ze}:\bs{\dot g}\Ra\bs{\ddot g}$ be $2$-morphisms. Then
\e
(\bs{\dot\ze}\od\bs\ze)*(\bs{\dot\eta}\od\bs\eta)=(\bs{\dot\ze}*\bs{\dot\eta})\od(\bs\ze*\bs\eta):\bs g\ci\bs f\Longra\bs{\ddot g}\ci\bs{\ddot f}.
\label{ku4eq35}
\e

\label{ku4prop3}
\end{prop}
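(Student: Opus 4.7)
The plan is to reduce the identity to the interchange law for $2$-morphisms of Kuranishi neighbourhoods, which holds in each weak $2$-category $\Kur_S(X)$ (as noted after Definition \ref{ku4def7}), by using the characterization \eq{ku4eq34} of horizontal composition and the stack property of Theorem \ref{ku4thm1}(b). Both sides of \eq{ku4eq35} are $2$-morphisms $\bs g\ci\bs f\Ra\bs{\ddot g}\ci\bs{\ddot f}$ built from families of $2$-morphisms of Kuranishi neighbourhoods indexed by $(i,k)\in I\t K$; by Definition \ref{ku4def11}(iii) applied to the stack $\bcHom_{g\ci f}$ and the open cover $\bigl\{\Im\chi_i\cap f^{-1}(\Im\psi_j)\cap(g\ci f)^{-1}(\Im\om_k):j\in J\bigr\}$ of $\Im\chi_i\cap(g\ci f)^{-1}(\Im\om_k)$, it is enough to verify equality of the $(i,k)$-components after restriction to each open set in this cover, for each $i\in I$ and $k\in K$.

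First I would compute the right hand side. Fix $i\in I$, $j\in J$, $k\in K$, and apply \eq{ku4eq34} twice (once to $\bs\ze*\bs\eta$ and once to $\bs{\dot\ze}*\bs{\dot\eta}$) to obtain
\e
\begin{split}
\bigl((\bs{\dot\ze}*\bs{\dot\eta})\od(\bs\ze*\bs\eta)\bigr)_{ik}\od\Th_{ijk}^{\bs g,\bs f}
&=(\bs{\dot\ze}*\bs{\dot\eta})_{ik}\od\Th_{ijk}^{\bs{\dot g},\bs{\dot f}}\od(\bs\ze_{jk}*\bs\eta_{ij})\\
&=\Th_{ijk}^{\bs{\ddot g},\bs{\ddot f}}\od(\bs{\dot\ze}_{jk}*\bs{\dot\eta}_{ij})\od(\bs\ze_{jk}*\bs\eta_{ij})
\end{split}
\label{ku4eq36}
\e
on $\Im\chi_i\cap f^{-1}(\Im\psi_j)\cap(g\ci f)^{-1}(\Im\om_k)$. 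For the left hand side, the definitions of vertical composition \eq{ku4eq31} and of horizontal composition via \eq{ku4eq34} give
\e
\bigl((\bs{\dot\ze}\od\bs\ze)*(\bs{\dot\eta}\od\bs\eta)\bigr)_{ik}\od\Th_{ijk}^{\bs g,\bs f}
=\Th_{ijk}^{\bs{\ddot g},\bs{\ddot f}}\od\bigl((\bs{\dot\ze}_{jk}\od\bs\ze_{jk})*(\bs{\dot\eta}_{ij}\od\bs\eta_{ij})\bigr).
\label{ku4eq37}
\e

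The key step is then the interchange law
\e
(\bs{\dot\ze}_{jk}*\bs{\dot\eta}_{ij})\od(\bs\ze_{jk}*\bs\eta_{ij})
=(\bs{\dot\ze}_{jk}\od\bs\ze_{jk})*(\bs{\dot\eta}_{ij}\od\bs\eta_{ij})
\label{ku4eq38}
\e
for $2$-morphisms of Kuranishi neighbourhoods over $g\ci f$, which is one of the weak $2$-category axioms (\eq{kuBeq5}) for $\Kur_S(X)$; it can be verified directly by choosing representatives for the $\approx$-equivalence classes of $\bs\eta_{ij},\bs{\dot\eta}_{ij},\bs\ze_{jk},\bs{\dot\ze}_{jk}$ and comparing the formulas for $\od$ and $*$ in Definitions \ref{ku4def5} and \ref{ku4def6}, using only that horizontal composition at the level of triples $(\dot P,\la,\hat\la)$ is bilinear in $\hat\la$ modulo $O(\pi^*(s_i))$. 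Combining \eq{ku4eq36}, \eq{ku4eq37} and \eq{ku4eq38} and cancelling the $2$-isomorphism $\Th_{ijk}^{\bs g,\bs f}$ on the right yields
\begin{equation*}
\bigl((\bs{\dot\ze}*\bs{\dot\eta})\od(\bs\ze*\bs\eta)\bigr)_{ik}\big\vert_{\cdots}
=\bigl((\bs{\dot\ze}\od\bs\ze)*(\bs{\dot\eta}\od\bs\eta)\bigr)_{ik}\big\vert_{\cdots}
\end{equation*}
on each set of the open cover. The stack property then forces equality of the $(i,k)$-components on the full domain $\Im\chi_i\cap(g\ci f)^{-1}(\Im\om_k)$, for every $i,k$, which gives \eq{ku4eq35}. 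The main conceptual obstacle is purely bookkeeping: one must check that the interchange law \eq{ku4eq38} at the Kuranishi-neighbourhood level really holds on the nose (not merely up to $\approx$), but this is automatic once representatives are chosen consistently, since both sides of \eq{ku4eq38} are constructed from the same pullback-and-fibre-product diagram over $P_{ij}\t_{V_j}Q_{jk}$.
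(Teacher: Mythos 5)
Your proposal is correct and follows essentially the same route as the paper's own proof: restrict the $(i,k)$-components to the open sets $\Im\chi_i\cap f^{-1}(\Im\psi_j)\cap(g\ci f)^{-1}(\Im\om_k)$, rewrite both sides via \eq{ku4eq31} and \eq{ku4eq34}, invoke compatibility of vertical and horizontal composition for $2$-morphisms of Kuranishi neighbourhoods, and glue with Theorem \ref{ku4thm1}(b) and Definition \ref{ku4def11}(iii). The only cosmetic point is that the neighbourhood-level interchange law is an equality of $\approx$-equivalence classes (a $2$-category axiom of $\Kur_S(X)$ already noted in Definition \ref{ku4def7}), so it need not hold on the nose for representatives, and your displayed equations should not reuse the paper's labels.
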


\begin{proof} Use notation \eq{ku4eq12}--\eq{ku4eq14} for $\bX,\bY,\bZ$. For $i\in I$, $j\in J$, $k\in K$ we have
\begin{align*}
\bigl[&(\bs{\dot\ze}\od\bs\ze)*(\bs{\dot\eta}\od\bs\eta)\bigr]{}_{ik}\big\vert_{\Im\chi_i\cap f^{-1}(\Im\psi_j)\cap (g\ci f)^{-1}(\Im\om_k)}\\
&=\Th_{ijk}^{\bs{\ddot g},\bs{\ddot f}}\od\bigl((\bs{\dot\ze}_{jk}\od\bs\ze_{jk})*(\bs{\dot\eta}_{ij}\od\bs\eta_{ij})\bigr)\od(\Th_{ijk}^{\bs g,\bs f})^{-1}\\
&=\Th_{ijk}^{\bs{\ddot g},\bs{\ddot f}}\od\bigl((\bs{\dot\ze}_{jk}*\bs{\dot\eta}_{ij})\od(\bs\ze_{jk}*\bs\eta_{ij})\bigr)\od(\Th_{ijk}^{\bs g,\bs f})^{-1}\\
&=\bigl[\Th_{ijk}^{\bs{\ddot g},\bs{\ddot f}}\od(\bs{\dot\ze}_{jk}*\bs{\dot\eta}_{ij})\od(\Th_{ijk}^{\bs{\dot g},\bs{\dot f}})^{-1}\bigr]\od
\bigl[\Th_{ijk}^{\bs{\dot g},\bs{\dot f}}\od(\bs\ze_{jk}*\bs\eta_{ij})\od(\Th_{ijk}^{\bs g,\bs f})^{-1}\bigr]\\
&=\bigl[(\bs{\dot\ze}*\bs{\dot\eta})\od(\bs\ze*\bs\eta)\bigr]{}_{ik}\vert_{\Im\chi_i\cap f^{-1}(\Im\psi_j)\cap (g\ci f)^{-1}(\Im\om_k)},
\end{align*}
using \eq{ku4eq31} and \eq{ku4eq34} in the first and fourth steps, and compatibility of vertical and horizontal composition for 2-morphisms of Kuranishi neighbourhoods in the second. Since the $\Im\chi_i\cap f^{-1}(\Im\psi_j)\cap (g\ci f)^{-1}(\Im\om_k)$ for all $j\in J$ form an open cover of the domain $\Im\chi_i\cap (g\ci f)^{-1}(\Im\om_k)$, Theorem \ref{ku4thm1}(b) and Definition \ref{ku4def11}(iii) imply that $\bigl[(\bs{\dot\ze}\od\bs\ze)*(\bs{\dot\eta}\od\bs\eta)\bigr]{}_{ik}=\bigl[(\bs{\dot\ze}*\bs{\dot\eta})\od(\bs\ze*\bs\eta)\bigr]{}_{ik}$. As this holds for all $i\in I$ and $k\in K$, equation \eq{ku4eq35} follows.
\end{proof}

\begin{prop} Suppose $\bs e,\bs{\dot e}:\bW\ra\bX,$ $\bs f,\bs{\dot f}:\bX\ra\bY,$ $\bs g,\bs{\dot g}:\bY\ra\bZ$ are $1$-morphisms of Kuranishi spaces, and\/ $\bs\ep:\bs e\Ra\bs{\dot e},$ $\bs\eta:\bs f\Ra\bs{\dot f},$ $\bs\ze:\bs g\Ra\bs{\dot g}$ are $2$-morphisms. Then the following diagram of\/ $2$-morphisms commutes:
\e
\begin{gathered}
\xymatrix@C=150pt@R=14pt{ *+[r]{(\bs g\ci\bs f)\ci\bs e} \ar@{=>}[r]_{\bs\al_{\bs g,\bs f,\bs e}} \ar@{=>}[d]^{(\bs\ze*\bs\eta)*\bs\ep} & *+[l]{\bs g\ci(\bs f\ci\bs e)} \ar@{=>}[d]_{\bs\ze*(\bs\eta*\bs\ep)} \\
*+[r]{(\bs{\dot g}\ci\bs{\dot f})\ci\bs{\dot e}} \ar@{=>}[r]^{\bs\al_{\bs{\dot g},\bs{\dot f},\bs{\dot e}}} & *+[l]{\bs{\dot g}\ci(\bs{\dot f}\ci\bs{\dot e}).\!\!} }
\end{gathered}
\label{ku4eq36}
\e

\label{ku4prop4}
\end{prop}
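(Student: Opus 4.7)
The approach follows the same template as Propositions \ref{ku4prop2} and \ref{ku4prop3}: I would prove equality of two $2$-morphisms of Kuranishi spaces by using the stack property of Theorem \ref{ku4thm1}(b) to reduce to equality on a suitable open cover, on each piece of which the identity collapses to a coherence equation already available in the weak $2$-category $\GKur$ of global Kuranishi neighbourhoods from Definition \ref{ku4def7}.

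More precisely, using notation \eq{ku4eq11}--\eq{ku4eq14}, both composites in \eq{ku4eq36} are $2$-morphisms $(\bs g\ci\bs f)\ci\bs e\Ra\bs{\dot g}\ci(\bs{\dot f}\ci\bs{\dot e})$ of Kuranishi spaces and hence, by Definition \ref{ku4def14}, are each a family of $2$-morphisms of Kuranishi neighbourhoods indexed by $(h,k)\in H\t K$, with $(h,k)$-component defined over $\Im\vp_h\cap(g\ci f\ci e)^{-1}(\Im\om_k)$. Fix $h\in H$ and $k\in K$. The sets
\begin{equation*}
S_{ij}:=\Im\vp_h\cap e^{-1}(\Im\chi_i)\cap(f\ci e)^{-1}(\Im\psi_j)\cap(g\ci f\ci e)^{-1}(\Im\om_k),\qquad (i,j)\in I\t J,
\end{equation*}
form an open cover of $\Im\vp_h\cap(g\ci f\ci e)^{-1}(\Im\om_k)$ by continuity of $e,f,g$, so by Theorem \ref{ku4thm1}(b) combined with Definition \ref{ku4def11}(iii) it suffices to prove that the $(h,k)$-components of $\bs\al_{\bs{\dot g},\bs{\dot f},\bs{\dot e}}\od((\bs\ze*\bs\eta)*\bs\ep)$ and $(\bs\ze*(\bs\eta*\bs\ep))\od\bs\al_{\bs g,\bs f,\bs e}$ coincide after restriction to each $S_{ij}$.

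On $S_{ij}$, the plan is to use equation \eq{ku4eq29} of Proposition \ref{ku4prop2} to replace $(\bs\al_{\bs g,\bs f,\bs e})_{hk}$ and $(\bs\al_{\bs{\dot g},\bs{\dot f},\bs{\dot e}})_{hk}$ by explicit composites involving the $\Th$-factors $\Th^{\bs g\ci\bs f,\bs e}_{hik}$, $\Th^{\bs g,\bs f}_{ijk}$, $\Th^{\bs g,\bs f\ci\bs e}_{hjk}$, $\Th^{\bs f,\bs e}_{hij}$ and the Kuranishi-neighbourhood associator $\bs\al_{\bs g_{jk},\bs f_{ij},\bs e_{hi}}$ (and their dotted analogues), and then to apply \eq{ku4eq34} twice to each of $((\bs\ze*\bs\eta)*\bs\ep)_{hk}\vert_{S_{ij}}$ and $(\bs\ze*(\bs\eta*\bs\ep))_{hk}\vert_{S_{ij}}$, rewriting both in terms of the triple horizontal composites $\bs\ze_{jk}*\bs\eta_{ij}*\bs\ep_{hi}$ of neighbourhood $2$-morphisms, conjugated by matching $\Th$-factors. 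After these substitutions the $\Th$-terms appear symmetrically on both sides and cancel pairwise, leaving the identity
\begin{equation*}
\bs\al_{\bs{\dot g}_{jk},\bs{\dot f}_{ij},\bs{\dot e}_{hi}}\od\bigl((\bs\ze_{jk}*\bs\eta_{ij})*\bs\ep_{hi}\bigr)=\bigl(\bs\ze_{jk}*(\bs\eta_{ij}*\bs\ep_{hi})\bigr)\od\bs\al_{\bs g_{jk},\bs f_{ij},\bs e_{hi}},
\end{equation*}
which is precisely the naturality of the associator in the weak $2$-category $\GKur$, one of the axioms already verified in Definition \ref{ku4def7}. The stack property then glues these local equalities into the required global identity of $(h,k)$-components, completing the proof.

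The main obstacle is the bookkeeping in the second step: correctly expanding $((\bs\ze*\bs\eta)*\bs\ep)_{hk}\vert_{S_{ij}}$ requires applying \eq{ku4eq34} first to $\bs\ze*\bs\eta$ over $e^{-1}(\Im\chi_i)\cap(f\ci e)^{-1}(\Im\psi_j)$ and then to the outer horizontal composition on $S_{ij}$, and the analogous double expansion must be carried out for $\bs\ze*(\bs\eta*\bs\ep)$; one must verify that the resulting $\Th$-conjugations assemble precisely into the shape demanded by \eq{ku4eq29} for $\bs\al_{\bs g,\bs f,\bs e}$ and $\bs\al_{\bs{\dot g},\bs{\dot f},\bs{\dot e}}$ before the cancellations become visible. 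Once this combinatorial step is in hand, the interchange law built into Definition \ref{ku4def6} and the $\GKur$ axioms close the argument.
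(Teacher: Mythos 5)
Your plan is correct and matches the paper's proof essentially step for step: the paper likewise restricts the $(h,k)$-component to the open sets $\Im\vp_h\cap e^{-1}(\Im\chi_i)\cap(f\ci e)^{-1}(\Im\psi_j)\cap(g\ci f\ci e)^{-1}(\Im\om_k)$, expands both composites using \eq{ku4eq29} and \eq{ku4eq34}, cancels the $\Th$-factors via naturality of $\bs\al_{\bs g_{jk},\bs f_{ij},\bs e_{hi}}$ and the interchange law, and then glues with Theorem \ref{ku4thm1}(b) and Definition \ref{ku4def11}(iii). One small nuance: the local coherence you invoke is for 1- and 2-morphisms of Kuranishi neighbourhoods over the maps $e,f,g$ as set up in \S\ref{ku41}, which the paper cites simply as ``properties of weak 2-categories'' rather than as axioms of $\GKur$ (whose objects have full footprints), but the content is the same.
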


\begin{proof} Use notation \eq{ku4eq11}--\eq{ku4eq14} for $\bW,\bX,\bY,\bZ$. For $h\in H$, $i\in I$, $j\in J$, $k\in K$ we have
\begin{align*}
\bigl[&(\bs\ze*(\bs\eta*\bs\ep))\od\bs\al_{\bs g,\bs f,\bs e}\bigr]{}_{hk}\big\vert{}_{\Im\vp_h\cap e^{-1}(\Im\chi_i)\cap (f\ci e)^{-1}(\Im\psi_j)\cap (g\ci f\ci e)^{-1}(\Im\om_k)}\\
&=\bigl[\Th_{hjk}^{\bs{\dot g},\bs{\dot f}\ci\bs{\dot e}}\od \bigl[\bs\ze_{jk}*\bigl(\Th_{hij}^{\bs{\dot f},\bs{\dot e}}\od(\bs\eta_{ij}*\bs\ep_{hi})\od(\Th_{hij}^{\bs f,\bs e})^{-1}\bigr)\bigr]\od (\Th_{hjk}^{\bs g,\bs f\ci\bs e})^{-1}\bigr]\\
&\quad \od \bigl[\Th_{hjk}^{\bs g,\bs f\ci\bs e}\od(\id_{\bs g_{jk}}*\Th_{hij}^{\bs f,\bs e})\od\bs\al_{\bs g_{jk},\bs f_{ij},\bs e_{hi}}\od(\Th_{ijk}^{\bs g,\bs f}*\id_{\bs e_{hi}})^{-1}
\od(\Th^{\bs g\ci\bs f,\bs e}_{hik})^{-1}\bigr]\\
&=\Th_{hjk}^{\bs{\dot g},\bs{\dot f}\ci\bs{\dot e}}\od 
(\id_{\bs{\dot g}_{jk}}\!*\!\Th_{hij}^{\bs{\dot f},\bs{\dot e}})\od
(\bs\ze_{jk}*(\bs\eta_{ij}*\bs\ep_{hi}))\\
&\quad\od\bs\al_{\bs g_{jk},\bs f_{ij},\bs e_{hi}}
\od(\Th_{ijk}^{\bs g,\bs f}*\id_{\bs e_{hi}})^{-1}
\od(\Th^{\bs g\ci\bs f,\bs e}_{hik})^{-1}\\
&=\bigl[\Th_{hjk}^{\bs{\dot g},\bs{\dot f}\ci\bs{\dot e}}\od(\id_{\bs{\dot g}_{jk}}*\Th_{hij}^{\bs{\dot f},\bs{\dot e}})\od\bs\al_{\bs{\dot g}_{jk},\bs{\dot f}_{ij},\bs{\dot e}_{hi}}\od(\Th_{ijk}^{\bs{\dot g},\bs{\dot f}}*\id_{\bs{\dot e}_{hi}})^{-1}
\od(\Th^{\bs{\dot g}\ci\bs{\dot f},\bs{\dot e}}_{hik})^{-1}\bigr]\\
&\quad \od 
\bigl[\Th_{hik}^{\bs{\dot g}\ci\bs{\dot f},\bs{\dot e}}\od
\bigl[\bigl(\Th_{ijk}^{\bs{\dot g},\bs{\dot f}}\od(\bs\ze_{jk}*\bs\eta_{ij})\od(\Th_{ijk}^{\bs g,\bs f})^{-1}\bigr)*\bs\ep_{hi}\bigr]
\od (\Th_{hik}^{\bs g\ci\bs f,\bs e})^{-1}\bigr]\\
&=\bigl[\bs\al_{\bs{\dot g},\bs{\dot f},\bs{\dot e}}\od((\bs\ze*\bs\eta)*\bs\ep)\bigr]{}_{hk}\big\vert{}_{\Im\vp_h\cap e^{-1}(\Im\chi_i)\cap (f\ci e)^{-1}(\Im\psi_j)\cap (g\ci f\ci e)^{-1}(\Im\om_k)},
\end{align*}
using \eq{ku4eq29} and \eq{ku4eq34} in the first and fourth steps, and properties of weak 2-categories in the second and third. This proves the restriction of the `$hk$' component of \eq{ku4eq36} to $\Im\vp_h\cap e^{-1}(\Im\chi_i)\cap (f\ci e)^{-1}(\Im\psi_j)\cap (g\ci f\ci e)^{-1}(\Im\om_k)$ commutes. Since these subsets for all $i,j$ form an open cover of the domain, Theorem \ref{ku4thm1}(b) and Definition \ref{ku4def11}(iii) imply that the `$hk$' component of \eq{ku4eq36} commutes for all $h\in H$, $k\in K$, so \eq{ku4eq36} commutes.
\end{proof}

\begin{prop} Let\/ $\bs d:\bV\ra\bW,$ $\bs e:\bW\ra\bX,$ $\bs f:\bX\ra\bY,$ $\bs g:\bY\ra\bZ$ be $1$-morphisms of Kuranishi spaces. Then in $2$-morphisms we have
\e
\begin{split}
\bs\al_{\bs g,\bs f,\bs e\ci\bs d}&\od\bs\al_{\bs g\ci\bs f,\bs e,\bs d}=(\bs\id_{\bs g}*\bs\al_{\bs f,\bs e,\bs d})\od\bs\al_{\bs g,\bs f\ci\bs e,\bs d}\od(\bs\al_{\bs g,\bs f,\bs e}*\bs\id_{\bs d}):\\
&((\bs g\ci\bs f)\ci\bs e)\ci\bs d\Longra\bs g\ci(\bs f\ci(\bs e\ci\bs d)).
\end{split}
\label{ku4eq37}
\e

\label{ku4prop5}
\end{prop}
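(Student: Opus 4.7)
The plan is to use the characterization of the associator given by equation \eq{ku4eq29} to reduce the pentagon identity \eq{ku4eq37} to the corresponding pentagon identity for the weak 2-category $\Kur_S(X)$ of Kuranishi neighbourhoods, and then glue via the stack property of Theorem \ref{ku4thm1}(b).

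Add a fifth Kuranishi space $\bV=(V,\cG)$ with $\cG=\bigl(G,(S_g,B_g,\Al_g,q_g,\vp_g)_{g\in G},\ldots\bigr)$ to the notation \eq{ku4eq11}--\eq{ku4eq14}, so that $\bs d:\bV\ra\bW$ is written $\bs d=(d,\bs d_{gh},\bs D_{gg'}^h,\bs D_g^{hh'})$. Fix indices $g\in G,$ $h\in H,$ $i\in I,$ $j\in J,$ $k\in K$, and set $T=\Im\vp_g\cap d^{-1}(\Im\chi_i)\cap(e\ci d)^{-1}(\Im\vp_h)\cap(f\ci e\ci d)^{-1}(\Im\psi_j)\cap(g\ci f\ci e\ci d)^{-1}(\Im\om_k)$. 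The plan is to compute the $gk$-component of each side of \eq{ku4eq37} restricted to $T$ entirely in terms of the $\Th$-maps from Definition \ref{ku4def15} and the Kuranishi-neighbourhood associators $\bs\al_{\bs g_{jk},\bs f_{ij},\bs e_{hi},\bs d_{gh}}$, and then show the two resulting expressions agree.

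First I would use \eq{ku4eq29} four times on the left hand side: twice to expand $\bs\al_{\bs g\ci\bs f,\bs e,\bs d}$ and $\bs\al_{\bs g,\bs f,\bs e\ci\bs d}$ directly (at indices $g,h,i,k$ and $g,h,j,k$), and each such expansion still contains composed $\Th^{\bs g\ci\bs f,\bs e}_{hik}$ or $\Th^{\bs g,\bs f\ci\bs e}_{hjk}$ terms which I then eliminate using \eq{ku4eq29} again, applied to $\bs\al_{\bs g,\bs f,\bs e}$ for the triples $(i,j,k)$ and $(h,i,j)$. Running the same procedure on the right hand side, where the outer $\bs\al_{\bs g,\bs f,\bs e}*\bs\id_{\bs d}$, $\bs\al_{\bs g,\bs f\ci\bs e,\bs d}$, $\bs\id_{\bs g}*\bs\al_{\bs f,\bs e,\bs d}$ are each expanded using \eq{ku4eq29} and \eq{ku4eq34}, both sides become a zigzag of Kuranishi-neighbourhood 2-morphisms built from $\Th^{\bs f,\bs e}_{hij}$, $\Th^{\bs g,\bs f}_{ijk}$, $\Th^{\bs e,\bs d}_{ghi}$, various $\Th^{\bs g\ci\bs f,\bs e},\Th^{\bs g,\bs f\ci\bs e}$ etc., the coherence 2-isomorphisms $\bs\be,\bs\ga$ of Kuranishi neighbourhoods (which are identities in our convention, cf.\ \eq{ku4eq27}), and an innermost quadruple associator. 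After cancelling the $\Th$-terms, which appear symmetrically on both sides, the identity reduces to the pentagon coherence for the weak 2-category $\Kur_S(X)$ applied to the four composable 1-morphisms $\bs d_{gh},\bs e_{hi},\bs f_{ij},\bs g_{jk}$ of Kuranishi neighbourhoods over $(T,g\ci f\ci e\ci d)$.

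This last pentagon identity for Kuranishi neighbourhoods holds because $\Kur_S(X)$ is a weak 2-category: concretely, the associator $\bs\al_{\Phi_{kl},\Phi_{jk},\Phi_{ij}}$ of \eq{ku4eq4} is represented by the tautological reassociation map $\la_{ijkl}$ of fibre products in \eq{ku4eq3}, and the pentagon identity then reduces to the obvious commutativity of a diagram of reassociation isomorphisms between iterated quotient fibre products $(P_{ij}\t_{V_j}P_{jk}\t_{V_k}P_{kl}\t_{V_l}P_{lm})/(\Ga_j\t\Ga_k\t\Ga_l)$. Having established the pentagon identity on the open set $T$ for every choice of $h,i,j$, I then apply Theorem \ref{ku4thm1}(b) and Definition \ref{ku4def11}(iii) to the open cover $\{T\}_{h,i,j}$ of $\Im\vp_g\cap(g\ci f\ci e\ci d)^{-1}(\Im\om_k)$ to conclude that the $gk$-components of \eq{ku4eq37} agree on their correct domain, and since this holds for all $g\in G$ and $k\in K$, equation \eq{ku4eq37} follows.

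The principal obstacle is purely combinatorial bookkeeping: producing the two zigzag expressions and verifying they differ only by the Kuranishi-neighbourhood pentagon, with no hidden asymmetry in the way the $\Th$-terms are inserted. The safest way to organize this is to label each edge of the pentagon \eq{ku4eq37} by its defining instance of \eq{ku4eq29} or \eq{ku4eq34}, draw the resulting large commutative polygon of Kuranishi-neighbourhood 2-morphisms, and recognize that the top and bottom boundaries are the two sides of \eq{ku4eq37} while the interior tiles are, respectively, instances of \eq{ku4eq29}, the definition \eq{ku4eq34} of horizontal composition, trivial identities from \eq{ku4eq27}, and exactly one copy of the pentagon in $\Kur_T(V)$.
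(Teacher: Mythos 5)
Your proposal follows essentially the same route as the paper's proof: restrict the $gk$-component to the open sets indexed by $h,i,j$, expand both sides of \eq{ku4eq37} using the defining property \eq{ku4eq29} of the associators and the characterization \eq{ku4eq34} of horizontal composition, reduce the comparison to coherence (the pentagon plus naturality) in the weak $2$-category of Kuranishi neighbourhoods, and then glue over the open cover via Theorem \ref{ku4thm1}(b) and Definition \ref{ku4def11}(iii). This matches the paper's argument, which performs exactly this expansion and invokes ``properties of weak $2$-categories'' for the middle steps before applying the stack property.
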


\begin{proof} Use notation \eq{ku4eq11}--\eq{ku4eq14} for $\bW,\bX,\bY,\bZ$, and take $G$ to be the indexing set for $\bV$. Then for $g\in G$, $h\in H$, $i\in I$, $j\in J$, $k\in K$, on $\Im\up_g\cap\ab
d^{-1}(\Im\vp_h)\ab\cap (e\ci d)^{-1}(\Im\chi_i)\cap (f\ci e\ci d)^{-1}(\Im\psi_j)\cap (g\ci f\ci e\ci d)^{-1}(\Im\om_k)$ we have
\begin{align*}
&\bigl[(\bs\al_{\bs g,\bs f,\bs e\ci\bs d})\od(\bs\al_{\bs g\ci\bs f,\bs e,\bs d})\bigr]{}_{gk}\big\vert{}_{{\ldots}}\\
&=\bigl\{\Th_{gjk}^{\bs g,\bs f\ci(\bs e\ci\bs d)}\od(\id_{\bs g_{jk}}*\Th_{gij}^{\bs f,\bs e\ci\bs d})\od\al_{\bs g_{jk},\bs f_{ij},(\bs e\ci\bs d)_{gi}}\od(\Th_{ijk}^{\bs g,\bs f}*\id_{(\bs e\ci\bs d)_{gi}})^{-1}\\
&\quad\od(\Th^{\bs g\ci\bs f,\bs e\ci\bs d}_{gik})^{-1}\bigr\}\od
\bigl\{\Th_{gik}^{\bs g\ci\bs f,\bs e\ci\bs d}\od(\id_{(\bs g\ci\bs f)_{ik}}*\Th_{ghi}^{\bs e,\bs d})\od\al_{(\bs g\ci\bs f)_{ik},\bs e_{hi},d_{gh}}\\
&\quad\od(\Th_{hik}^{\bs g\ci\bs f,\bs e}*\id_{\bs d_{gh}})^{-1}\od(\Th^{(\bs g\ci\bs f)\ci\bs e,\bs d}_{ghk})^{-1}\bigr\}\\
&=\Th_{gjk}^{\bs g,\bs f\ci(\bs e\ci\bs d)}\od(\id_{\bs g_{jk}}*\Th_{gij}^{\bs f,\bs e\ci\bs d})\od\al_{\bs g_{jk},\bs f_{ij},(\bs e\ci\bs d)_{gi}}
\od((\Th_{ijk}^{\bs g,\bs f})^{-1}*\Th_{ghi}^{\bs e,\bs d})\\
&\quad\od\al_{(\bs g\ci\bs f)_{ik},\bs e_{hi},\bs d_{gh}}
\od(\Th_{hik}^{\bs g\ci\bs f,\bs e}*\id_{\bs d_{gh}})^{-1}\od(\Th^{(\bs g\ci\bs f)\ci\bs e,\bs d}_{ghk})^{-1}\\
&=\bigl\{\Th_{gjk}^{\bs g,\bs f\ci(\bs e\ci\bs d)}\od\bigl(\id_{\bs g_{jk}}*\bigl[ 
\Th_{gij}^{\bs f,\bs e\ci\bs d}\od(\id_{\bs f_{ij}}*\Th_{ghi}^{\bs e,\bs d})\od\al_{\bs f_{ij},\bs e_{hi},\bs d_{gh}}\\
&\quad\od(\Th_{hij}^{\bs f,\bs e}*\id_{\bs d_{gh}})^{-1}\od(\Th^{\bs f\ci\bs e,\bs d}_{ghj})^{-1}\bigr]\bigr)\od(\Th_{gjk}^{\bs g,(\bs f\ci\bs e)\ci\bs d})^{-1}\bigr\}\od
\bigl\{
\Th_{gjk}^{\bs g,(\bs f\ci\bs e)\ci\bs d}\\
&\quad\od(\id_{\bs g_{jk}}*\Th_{ghj}^{\bs f\ci\bs e,\bs d})\od\al_{\bs g_{jk},(\bs f\ci\bs e)_{hj},\bs d_{gh}}
\od(\Th_{hjk}^{\bs g,\bs f\ci\bs e}*\id_{\bs d_{gh}})^{-1}\od(\Th^{\bs g\ci(\bs f\ci\bs e),\bs d}_{ghk})^{-1}\bigr\}\\
&\quad\od
\bigl\{\Th^{\bs g\ci(\bs f\ci\bs e),\bs d}_{ghk})\od\bigl(\bigl[
\Th_{hjk}^{\bs g,\bs f\ci\bs e}\od(\id_{\bs g_{jk}}*\Th_{hij}^{\bs f,\bs e})
\od\al_{\bs g_{jk},\bs f_{ij},\bs e_{hi}}\od(\Th_{ijk}^{\bs g,\bs f}*\id_{\bs e_{hi}})^{-1}\\
&\quad\od(\Th^{\bs g\ci\bs f,\bs e}_{hik})^{-1}
\bigr]*\id_{\bs d_{gh}}\bigr)\od(\Th^{(\bs g\ci\bs f)\ci\bs e,\bs d}_{ghk})^{-1}\bigr\}\\
&=\bigl[(\bs\id_{\bs g}*\bs\al_{\bs f,\bs e,\bs d})\od\bs\al_{\bs g,\bs f\ci\bs e,\bs d}\od(\bs\al_{\bs g,\bs f,\bs e}*\bs\id_{\bs d})\bigr]{}_{gk}\big\vert{}_{\ldots},
\end{align*}
using \eq{ku4eq29} and \eq{ku4eq34} in the first and fourth steps, and properties of weak 2-categories in the second and third. This proves the restriction of the `$gk$' component of \eq{ku4eq37} to the subset $\Im\up_g\cap d^{-1}(\Im\vp_h)\cap (e\ci d)^{-1}(\Im\chi_i)\cap (f\ci e\ci d)^{-1}(\Im\psi_j)\cap (g\ci f\ci e\ci d)^{-1}(\Im\om_k)$. Since these subsets for all $h,i,j$ form an open cover of the domain, Theorem \ref{ku4thm1}(b) and Definition \ref{ku4def11}(iii) imply that the `$gk$' component of \eq{ku4eq37} commutes for all $g\in G$ and $k\in K$, so \eq{ku4eq37} commutes.
\end{proof}

We summarize the work of this section in the following:

\begin{thm} The definitions and propositions above define a \begin{bfseries}weak\/ $2$-cat\-eg\-ory of Kuranishi spaces\end{bfseries}~$\Kur$.
\label{ku4thm3}
\end{thm}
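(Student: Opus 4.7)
The bulk of the work has been done in the preceding definitions and propositions; the theorem amounts to assembling these into a weak 2-category and verifying the few remaining axioms of Appendix \ref{kuB}. My plan is therefore to list the data and then check each of the 2-category axioms \eqref{kuBeq5}, \eqref{kuBeq7}, \eqref{kuBeq8}, \eqref{kuBeq10}, \eqref{kuBeq11} in turn, pointing to the result that supplies it.

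First, I would recall the data already constructed: objects are Kuranishi spaces $\bX=(X,\cK)$ as in Definition \ref{ku4def12}; for each pair $\bX,\bY$ the 1-morphisms $\bs f:\bX\ra\bY$ of Definition \ref{ku4def13} and the 2-morphisms $\bs\eta:\bs f\Ra\bs g$ of Definition \ref{ku4def14} form a category $\bHom(\bX,\bY)$ under vertical composition $\od$ of Definition \ref{ku4def16} (associativity of $\od$ and unitality of $\bs\id_{\bs f}$ are immediate, as they are inherited componentwise from the $\bs\eta_{ij}$, for which the corresponding statements hold in $\Kur_S(X)$). Composition of 1-morphisms $\bs g\ci\bs f$ was fixed in Definition \ref{ku4def15} via the Axiom of Global Choice from the data supplied by Proposition \ref{ku4prop1}(a), and the strict choices $\bs g\ci\bs\id_\bX=\bs g$, $\bs\id_\bY\ci\bs f=\bs f$ there justify taking the unitors $\bs\be_{\bs f}=\bs\ga_{\bs f}=\bs\id_{\bs f}$ in \eqref{ku4eq27}. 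Horizontal composition $*$ of 2-morphisms is Definition \ref{ku4def17}, and the associator $\bs\al_{\bs g,\bs f,\bs e}$ is Proposition \ref{ku4prop2}; both are characterized by their restrictions to the open cover $\{\Im\chi_i\cap f^{-1}(\Im\psi_j)\cap\ldots\}$ via \eqref{ku4eq32} and \eqref{ku4eq29} together with the stack property Theorem \ref{ku4thm1}(b).

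Next I would verify the axioms. The interchange law \eqref{kuBeq5},
\[
(\bs{\dot\ze}\od\bs\ze)*(\bs{\dot\eta}\od\bs\eta)=(\bs{\dot\ze}*\bs{\dot\eta})\od(\bs\ze*\bs\eta),
\]
is precisely Proposition \ref{ku4prop3}. Naturality of $\bs\al$ with respect to 2-morphisms, axiom \eqref{kuBeq7}, is precisely Proposition \ref{ku4prop4}, and the pentagon axiom \eqref{kuBeq8} is precisely Proposition \ref{ku4prop5}. The triangle identities \eqref{kuBeq10} and \eqref{kuBeq11}, which assert
\[
\bs\al_{\bs g,\bs\id_\bY,\bs f}=\bs\ga_{\bs g}^{-1}*\bs\be_{\bs f}\quad\text{(in the appropriate form)},
\]
become trivial in our setting: by the choices made in Definition \ref{ku4def15}, both $\bs g\ci\bs\id_\bY$ and $\bs\id_\bY\ci\bs f$ equal $\bs g$ and $\bs f$ on the nose, with $\Th^{\bs g,\bs\id_\bY}_{jj'k}=\bs G^k_{jj'}$ and $\Th^{\bs\id_\bY,\bs f}_{ijj'}=\bs F^{jj'}_i$. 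Inserting these into the characterization \eqref{ku4eq29} for $\bs\al_{\bs g,\bs\id_\bY,\bs f}$ and using Definition \ref{ku4def13}(g) (the pentagon for $\bs f$ and $\bs g$ involving $\Up_{jj'}$) shows $(\bs\al_{\bs g,\bs\id_\bY,\bs f})_{ik}=\bs\id_{(\bs g\ci\bs f)_{ik}}$ on each overlap; Theorem \ref{ku4thm1}(b) then gives $\bs\al_{\bs g,\bs\id_\bY,\bs f}=\bs\id_{\bs g\ci\bs f}$, which together with $\bs\be_{\bs f}=\bs\ga_{\bs g}=\bs\id$ yields the triangle identities.

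The main obstacle throughout is purely bookkeeping: the structure maps $\bs g\ci\bs f$, $\bs\al_{\bs g,\bs f,\bs e}$, and $\bs\ze*\bs\eta$ are each defined only up to the data of a comparison 2-morphism $\Th^{\bs g,\bs f}_{ijk}$ on each patch of an open cover, and every axiom must be reduced to an identity between composable 2-morphisms of Kuranishi neighbourhoods on such a patch and then glued by the stack property. Propositions \ref{ku4prop3}--\ref{ku4prop5} already execute exactly this pattern, and the only remaining verifications (the triangle identities and the trivial associativity/unitality of $\od$) follow the same template; no new ingredient beyond Theorem \ref{ku4thm1}(b) and the weak 2-category axioms already available in each $\Kur_S(X)$ is required.
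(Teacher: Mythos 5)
Your proposal is correct and follows essentially the same route as the paper: the theorem is the assembly of Definitions \ref{ku4def12}--\ref{ku4def17} and Propositions \ref{ku4prop1}--\ref{ku4prop2}, with Propositions \ref{ku4prop3}, \ref{ku4prop4}, \ref{ku4prop5} supplying \eq{kuBeq5}, \eq{kuBeq7}, \eq{kuBeq8}, and \eq{kuBeq10}--\eq{kuBeq11} noted to be easy (the paper leaves them as an exercise) since $\bs\be_{\bs f}=\bs\ga_{\bs f}=\bs\id_{\bs f}$. Your sketch of the triangle identity is sound, except that the input making $(\bs\al_{\bs g,\bs\id_\bY,\bs f})_{ik}$ the identity is equation \eq{ku4eq19} (the defining compatibility of the chosen $\Th^{\bs g,\bs f}_{ijk}$ from Proposition \ref{ku4prop1}(a)) together with the uniqueness in \eq{ku4eq29} and Theorem \ref{ku4thm1}(b), rather than Definition \ref{ku4def13}(g).
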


\begin{rem}{\bf(a)} We proved in \S\ref{ku41} that Kuranishi neighbourhoods over $S\subseteq X$ form a weak 2-category $\Kur_S(X)$, and now we have shown that Kuranishi spaces also form a weak 2-category $\Kur$. But morally, $\Kur_S(X)$ is closer to being a strict 2-category. In $\Kur_S(X)$ there is a natural notion of composition of 1-morphisms $\Phi_{jk}\ci\Phi_{ij}$, but it just fails to be strictly associative, as the canonical isomorphism of fibre products $\la_{ijkl}$ in \eq{ku4eq3} is not the identity. The analogue $\mKur_S(X)$ for m-Kuranishi spaces in \S\ref{ku47} is a strict 2-category.

In $\Kur$, there is no natural notion of composition of 1-morphisms $\bs g\ci\bs f$, so as in Definition \ref{ku4def15} we have to choose $\bs g\ci\bs f$ using the Axiom of Global Choice, and composition of 1-morphisms in $\Kur$ is far from being strictly associative.
\smallskip

\noindent{\bf(b)} The 2-category $\GKur$ of global Kuranishi neighbourhoods in Definition \ref{ku4def7} is equivalent to the full 2-subcategory of $\Kur$ with objects $(X,\cK)\in\Kur$ whose Kuranishi structure $\cK$ contains only one Kuranishi neighbourhood.
\label{ku4rem6}
\end{rem}

In a similar way to Definition \ref{ku2def14}, we can define a (weak 2-)functor from manifolds to Kuranishi spaces. Weak 2-functors are defined in \S\ref{kuB2}. Since $\Kur$ is a weak 2-category, no other kind of functor to $\Kur$ makes sense.

\begin{dfn} We will define a weak 2-functor $F_\Man^\Kur:\Man\ra\Kur$. If $X$ is a manifold, define a Kuranishi space $F_\Man^\Kur(X)=\bX=(X,\cK)$ with topological space $X$ and Kuranishi structure $\cK=\bigl(\{0\},(V_0,E_0,\Ga_0,s_0,\psi_0),\ab\Phi_{00},\ab\La_{000}\bigr)$, with indexing set $I=\{0\}$, one Kuranishi neighbourhood $(V_0,E_0,\Ga_0,s_0,\psi_0)$ with $V_0=X$, $E_0\ra V_0$ the zero vector bundle, $\Ga_0=\{1\}$, $s_0=0$, and $\psi_0=\id_X$, one coordinate change $\Phi_{00}=\id_{(V_0,E_0,\Ga_0,s_0,\psi_0)}$, and one 2-morphism~$\La_{000}=\id_{\Phi_{00}}$.

On (1-)morphisms, if $f:X\ra Y$ is a smooth map of manifolds and $\bX=F_\Man^\Kur(X)$, $\bY=F_\Man^\Kur(Y)$, define a 1-morphism $F_\Man^\Kur(f)=\bs f:\bX\ra\bY$ by $\bs f=(f,\bs f_{00},\bs F_{00}^0,\bs F_0^{00})$, where $\bs f_{00}=(P_{00},\pi_{00},f_{00},\hat f_{00})$ with $P_{00}=X$, $\pi_{00}=\id_X$, $f_{00}=f$, and $\hat f_{00}$ is the zero map on zero vector bundles, and~$\bs F_{00}^0=\bs F_0^{00}=\id_{\bs f_{00}}$.

On 2-morphisms, regarding $\Man$ as a 2-category, the only 2-morphisms are identity morphisms $\id_f:f\Ra f$ for (1-)morphisms $f:X\ra Y$ in $\Man$. We define~$F_\Man^\Kur(\id_f)=\id_{F_\Man^\Kur(f)}$.

Suppose $f:X\ra Y$, $g:Y\ra Z$ are (1-)morphisms in $\Man$, and write $\bX,\bY,\bZ,\bs f,\bs g$ for the images of $X,Y,Z,f,g$ under $F_\Man^\Kur$. As in \eq{ku4eq12}--\eq{ku4eq14}, write $(U_0,D_0,\Be_0,r_0,\chi_0),(V_0,E_0,\Ga_0,s_0,\psi_0),(W_0,F_0,\De_0,t_0,\om_0)$ for the Kuranishi neighbourhoods on $\bX,\bY,\bZ$. Let $\bs h=\bs g\ci\bs f$ be the composition in $\Kur$ from Definition \ref{ku4def15}, where $\bs h=\bigl(g\ci f,\bs h_{00},\bs H_{00}^0,\bs H_0^{00}\bigr)$ with $\bs h_{00}=(Q_{00},\pi_{00},h_{00},\hat h_{00})$. Then $\pi_{00}:Q_{00}\ra U_{00}$ is a principal $\De_0$-bundle over an open neighbourhood $U_{00}$ of $r_0^{-1}(0)$ in $U_0=X$. Since $r_0=0$ this forces $U_{00}=X$, so $\pi_{00}:Q_{00}\ra X$ is a diffeomorphism as $\De_0=\{1\}$. Also $h_{00}=g\ci f\ci\pi_{00}:Q_{00}\ra W_0=Z$ and $\hat h_{00}=0$, and $\bs H_{00}^0,\bs H_0^{00}$ are identity 2-morphisms. Then
\begin{equation*}
(F_\Man^\Kur)_{g,f}:=\bigl([Q_{00},\pi_{00},0]_{00}\bigr):F_\Man^\Kur(g)\ci F_\Man^\Kur(f)\Longra F_\Man^\Kur(g\ci f)
\end{equation*}
is a 2-morphism in $\Kur$. For any object $X$ in $\Man$, define
\begin{equation*}
(F_\Man^\Kur)_X:=\bs\id_{\bs\id_\bX}:F_\Man^\Kur(\id_X)=\bs\id_\bX\Longra\bs\id_\bX=\bs\id_{F_\Man^\Kur(X)}.
\end{equation*}

We have now defined all the data of a weak 2-functor $F_\Man^\Kur:\Man\ra\Kur$ in Definition \ref{kuBdef2}. It is easy to check that $F_\Man^\Kur$ is a weak 2-functor, which is full and faithful. Applying $F_\Man^\Kur$, we can regard manifolds $X$ as special examples of Kuranishi spaces. We will usually write manifolds as $X,Y,Z,\ldots,$ rather than in bold $\bX,\bY,\bZ,\ldots,$ even when we regard them as Kuranishi spaces, so we write $F_\Man^\Kur(X)$ as $X$ rather than~$\bX$.

We say that a Kuranishi space $\bX$ {\it is a manifold\/} if $\bX\simeq F_\Man^\Kur(X')$ in $\Kur$, for some manifold~$X'$.
\label{ku4def18}
\end{dfn}

As in Example \ref{ku2ex4}, if $\bX,\bY$ are Kuranishi spaces with notation \eq{ku4eq12}--\eq{ku4eq13}, we can define an explicit {\it product Kuranishi space\/} $\bX\t\bY=(X\t Y,\cK)$, where $\cK$ has indexing set $I\t J$ and Kuranishi neighbourhoods
\begin{align*}
&(W_{(i,j)},F_{(i,j)},\De_{(i,j)},t_{(i,j)},\om_{(i,j)})=\\
&\bigl(U_i\t V_j,\pi_{U_i}^*(D_i)\op \pi_{V_j}^*(E_j),\Be_i\t\Ga_j,\pi_{U_i}^*(r_i)\op \pi_{V_j}^*(s_j),\chi_i\t\psi_j\bigr),
\end{align*}
with $\vdim(\bX\t\bY)=\vdim\bX+\vdim\bY$, and projection 1-morphisms $\bs\pi_\bX:\bX\t\bY\ra\bX$, $\bs\pi_\bY:\bX\t\bY\ra\bY$. It will follow from results in \cite{Joyc12} that these satisfy the universal property of products in a weak 2-category.

\subsection{Kuranishi neighbourhoods on Kuranishi spaces}
\label{ku44}

Here is the analogue of Definition \ref{ku2def15} in~\S\ref{ku24}.

\begin{dfn} Suppose $\bX=(X,\cK)$ is a Kuranishi space, where $\cK=\bigl(I,(V_i,E_i,\Ga_i,s_i,\psi_i)_{i\in I}$, $\Phi_{ij,\;i,j\in I}$, $\La_{ijk,\; i,j,k\in I}\bigr)$. A {\it Kuranishi neighbourhood on the Kuranishi space\/} $\bX$ is data $(V_a,E_a,\Ga_a,s_a,\psi_a)$, $\Phi_{ai,\; i\in I}$ and $\La_{aij,\; i,j\in I}$ where $(V_a,E_a,\Ga_a,s_a,\psi_a)$ is a Kuranishi neighbourhood on the topological space $X$ in the sense of Definition \ref{ku4def1}, and $\Phi_{ai}:(V_a,E_a,\Ga_a,s_a,\psi_a)\ra(V_i,E_i,\Ga_i,s_i,\psi_i)$ is a coordinate change for each $i\in I$ (over $S=\Im\psi_a\cap\Im\psi_i$, as usual) as in Definition \ref{ku4def8}, and $\La_{aij}:\Phi_{ij}\ci\Phi_{ai}\Ra\Phi_{aj}$ is a 2-morphism (over $S=\Im\psi_a\cap\Im\psi_i\cap\Im\psi_j$, as usual) as in Definition \ref{ku4def3} for all $i,j\in I$, such that $\La_{aii}=\id_{\Phi_{ai}}$ for all $i\in I$, and as in Definition \ref{ku4def12}(h), for all $i,j,k\in I$ we have
\e
\begin{split}
\La_{ajk}\od(\id_{\Phi_{jk}}*\La_{aij})&\od\bs\al_{\Phi_{jk},\Phi_{ij},\Phi_{ai}}=\La_{aik}\od(\La_{ijk}*\id_{\Phi_{ai}}):\\
&(\Phi_{jk}\ci\Phi_{ij})\ci\Phi_{ai}\Longra\Phi_{ak},
\end{split}
\label{ku4eq38}
\e
where \eq{ku4eq38} holds over $S=\Im\psi_a\cap\Im\psi_i\cap\Im\psi_j\cap\Im\psi_k$ by our usual convention.

Here the subscript `$a$' in $(V_a,E_a,\Ga_a,s_a,\psi_a)$ is just a label used to distinguish Kuranishi neighbourhoods, generally not in $I$. If we omit $a$ we will write `$*$' in place of `$a$' in $\Phi_{ai},\La_{aij}$, giving $\Phi_{*i}:(V,E,\Ga,s,\psi)\ra(V_i,E_i,\Ga_i,s_i,\psi_i)$ and~$\La_{*ij}:\Phi_{ij}\ci\Phi_{*i}\Ra\Phi_{*j}$.

We will usually just say $(V_a,E_a,\Ga_a,s_a,\psi_a)$ or $(V,E,\Ga,s,\psi)$ {\it is a Kuranishi neighbourhood on\/} $\bX$, leaving the data $\Phi_{ai},\La_{aij}$ or $\Phi_{*i},\La_{*ij}$ implicit. We call such a $(V,E,\Ga,s,\psi)$ a {\it global\/} Kuranishi neighbourhood on $\bX$ if~$\Im\psi=X$. 
\label{ku4def19}
\end{dfn}

\begin{dfn} Using the same notation, suppose $(V_a,E_a,\Ga_a,s_a,\psi_a), \Phi_{ai,\; i\in I}$, $\La_{aij,\; i,j\in I}$ and $(V_b,E_b,\Ga_b,s_b,\psi_b)$, $\Phi_{bi,\; i\in I}$, $\La_{bij,\; i,j\in I}$ are Kuranishi neighbourhoods on $\bX$, and $S\subseteq \Im\psi_a\cap\Im\psi_b$ is open. A {\it coordinate change from $(V_a,E_a,\Ga_a,s_a,\psi_a)$ to $(V_b,E_b,\Ga_b,s_b,\psi_b)$ over $S$ on the Kuranishi space\/} $\bX$ is data $\Phi_{ab}$, $\La_{abi,\; i\in I}$, where $\Phi_{ab}:(V_a,E_a,\Ga_a,s_a,\psi_a)\ab\ra (V_b,E_b,\Ga_b,s_b,\psi_b)$ is a coordinate change over $S$ as in Definition \ref{ku4def8}, and $\La_{abi}:\Phi_{bi}\ci\Phi_{ab}\Ra\Phi_{ai}$ is a 2-morphism over $S\cap\Im\psi_i$ as in Definition \ref{ku4def3} for each $i\in I$, such that for all $i,j\in I$ we have
\e
\begin{split}
\La_{aij}\od(\id_{\Phi_{ij}}*\La_{abi})&\od\bs\al_{\Phi_{ij},\Phi_{bi},\Phi_{ab}}=\La_{abj}\od(\La_{bij}*\id_{\Phi_{ab}}):\\
&(\Phi_{ij}\ci\Phi_{bi})\ci\Phi_{ab}\Longra\Phi_{aj},
\end{split}
\label{ku4eq39}
\e
where \eq{ku4eq39} holds over $S\cap\Im\psi_i\cap\Im\psi_j$.

We will usually just say that $\Phi_{ab}:(V_a,E_a,\Ga_a,s_a,\psi_a)\ra (V_b,E_b,\Ga_b,s_b,\psi_b)$ {\it is a coordinate change over\/ $S$ on\/} $\bX$, leaving the data $\La_{abi,\; i\in I}$ implicit.

If we do not specify $S$, we mean that $S$ is as large as possible, that is,~$S=\Im\psi_a\cap\Im\psi_b$.

Suppose $\Phi_{ab}:(V_a,E_a,\Ga_a,s_a,\psi_a)\ra (V_b,E_b,\Ga_b,s_b,\psi_b)$, $\La_{abi,\; i\in I}$ and $\Phi_{bc}:(V_b,E_b,\Ga_b,s_b,\psi_b)\ra (V_c,E_c,\Ga_c,s_c,\psi_c)$, $\La_{bci,\; i\in I}$ are such coordinate changes over $S\subseteq\Im\psi_a\cap\Im\psi_b\cap\Im\psi_c$. Define $\Phi_{ac}=\Phi_{bc}\ci\Phi_{ab}:(V_a,E_a,\Ga_a,s_a,\psi_a)\ra (V_c,E_c,\Ga_c,s_c,\psi_c)$ and $\La_{aci}=\La_{abi}\od(\La_{bci}*\id_{\Phi_{ab}})\od\bs\al_{\Phi_{ci},\Phi_{bc},\Phi_{ab}}^{-1}:\Phi_{ci}\ci\Phi_{ac}\Ra\Phi_{ai}$ for all $i\in I$. It is easy to show that $\Phi_{ac}=\Phi_{bc}\ci\Phi_{ab}$, $\La_{aci,\; i\in I}$ is a coordinate change from $(V_a,E_a,\Ga_a,s_a,\psi_a)$ to $(V_c,E_c,\Ga_c,s_c,\psi_c)$ over $S$ on $\bX$. We call this {\it composition of coordinate changes}.
\label{ku4def20}
\end{dfn}

\begin{dfn} Let $\bs f:\bX\ra\bY$ be a 1-morphism of Kuranishi spaces, and use notation \eq{ku4eq12}--\eq{ku4eq13} for $\bX,\bY$, and \eq{ku4eq15} for $\bs f$. Suppose $(U_a,D_a,\Be_a,r_a,\chi_a)$, $\Tau_{ai,\; i\in I}$, $\Ka_{aii',\; i,i'\in I}$ is a Kuranishi neighbourhood on $\bX$, and $(V_b,E_b,\Ga_b,s_b,\psi_b)$, $\Up_{bj,\; j\in J}$, $\La_{bjj',\; j,j'\in J}$ a Kuranishi neighbourhood on $\bY$, as in Definition \ref{ku4def19}. Let $S\subseteq \Im\chi_a\cap f^{-1}(\Im\psi_b)$ be open. A 1-{\it morphism from $(U_a,D_a,\Be_a,r_a,\chi_a)$ to $(V_b,E_b,\Ga_b,s_b,\psi_b)$ over\/ $(S,\bs f)$ on the Kuranishi spaces\/} $\bX,\bY$ is data $\bs f_{ab}$, $\bs F_{ai,\; i\in I}^{bj,\; j\in J}$, where $\bs f_{ab}:(U_a,D_a,\Be_a,r_a,\chi_a)\ra (V_b,E_b,\Ga_b,s_b,\psi_b)$ is a 1-morphism of Kuranishi neighbourhoods over $(S,f)$ in the sense of Definition \ref{ku4def2}, and $\bs F_{ai}^{bj}:\Up_{bj}\ci\bs f_{ab}\Ra \bs f_{ij}\ci\Tau_{ai}$ is a 2-morphism over $S\cap\Im\chi_i\cap f^{-1}(\Im\psi_j),f$ as in Definition \ref{ku4def3} for all $i\in I$, $j\in J$, such that for all $i,i'\in I$, $j,j'\in J$ we have
\begin{align*}
(\bs F_{ai}^{bj})^{-1}\od(\bs F_{ii'}^j*\id_{\Tau_{ai}})&=(\bs F_{ai'}^{bj})^{-1}\od(\id_{\bs f_{i'j}}*\Ka_{aii'})\od\bs\al_{\bs f_{i'j},\Tau_{ii'},\Tau_{ai}}:\\
&(\bs f_{i'j}\ci\Tau_{ii'})\ci\Tau_{ai}\Longra\Up_{bj}\ci\bs f_{ab},\\
\bs F_{ai}^{bj'}\od(\La_{bjj'}*\id_{\bs f_{ab}})&=(\bs F_i^{jj'}*\id_{\Tau_{ai}})\od(\id_{\Up_{jj'}}*\bs F_{ai}^{bj})\od\bs\al_{\Up_{jj'},\Up_{bj},\bs f_{ab}}:\\
&(\Up_{jj'}\ci\Up_{bj})\ci\bs f_{ab}\Longra\bs f_{ij'}\ci\Tau_{ai}.
\end{align*}

We will usually just say that $\bs f_{ab}:(U_a,D_a,\Be_a,r_a,\chi_a)\ra (V_b,E_b,\Ga_b,s_b,\psi_b)$ {\it is a $1$-morphism of Kuranishi neighbourhoods over $(S,\bs f)$ on\/} $\bX,\bY$, leaving the data $\bs F_{ai,\; i\in I}^{bj,\; j\in J}$ implicit.

Suppose $\bs g:\bY\ra\bZ$ is another 1-morphism of Kuranishi spaces, using notation \eq{ku4eq14} for $\bZ$, and $(W_c,F_c,\De_c,t_c,\om_c)$ is a Kuranishi neighbourhood on $\bY$, and $T\subseteq \Im\psi_b\cap g^{-1}(\Im\om_c)$, $S\subseteq \Im\chi_a\cap f^{-1}(T)$ are open, $\bs f_{ab}:(U_a,D_a,\Be_a,r_a,\chi_a)\ra (V_b,E_b,\Ga_b,s_b,\psi_b)$ is a 1-morphism of Kuranishi neighbourhoods over $(S,\bs f)$ on $\bX,\bY$, and $\bs g_{bc}:(V_b,E_b,\Ga_b,s_b,\psi_b)\ra (W_c,\ab F_c,\ab\De_c,\ab t_c,\ab\om_c)$ is a 1-morphism of Kuranishi neighbourhoods over $(T,\bs g)$ on $\bY,\bZ$.

Define $\bs h=\bs g\ci\bs f:\bX\ra\bZ$, so that Definition \ref{ku4def15} gives 2-morphisms 
\begin{equation*}
\Th_{ijk}^{\bs g,\bs f}:\bs g_{jk}\ci\bs f_{ij}\Longra \bs h_{ik}
\end{equation*}
for all $i\in I$, $j\in J$ and $k\in K$. Set $\bs h_{ac}=\bs g_{bc}\ci\bs f_{ab}:(U_a,D_a,\Be_a,r_a,\chi_a)\ra(W_c,F_c,\De_c,t_c,\om_c)$. Using the stack property Theorem \ref{ku4thm1}(b), one can show that for all $i\in I$, $k\in K$ there is a unique 2-morphism $\bs H_{ai}^{ck}:\Phi_{ck}\ci\bs h_{ac}\Ra \bs h_{ik}\ci\Tau_{ai}$ over $S\cap\Im\chi_i\cap h^{-1}(\Im\om_k),h$, such that for all $j\in J$ we have
\begin{align*}
&\bs H_{ai}^{ck}\vert_{S\cap\Im\chi_i\cap f^{-1}(\Im\psi_j)\cap h^{-1}(\Im\om_k)}=(\Th_{ijk}^{\bs g,\bs f}*\id_{\Tau_{ai}})\od\bs\al_{\bs g_{jk},\bs f_{ij},\Tau_{ai}}^{-1}\\
&\qquad\od(\id_{\bs g_{jk}}*\bs F_{ai}^{bj})\od\bs\al_{\bs g_{jk},\Up_{bj},\bs f_{ab}}\od(\bs G_{bj}^{ck}*\id_{\bs f_{ab}})\od\bs\al_{\Phi_{ck},\bs g_{bc},\bs f_{ab}}^{-1}.
\end{align*}
It is then easy to prove that $\bs h_{ac}=\bs g_{bc}\ci\bs f_{ab}$, $\bs H_{ai,\; i\in I}^{ck,\; k\in K}$ is a 1-morphism from $(U_a,D_a,\Be_a,r_a,\chi_a)$ to $(W_c,F_c,\De_c,t_c,\om_c)$ over $(S,\bs h)$ on $\bX,\bZ$. We call this {\it composition of\/ $1$-morphisms}.
\label{ku4def21}
\end{dfn}

As for Theorem \ref{ku2thm4}, the next theorem can be proved using the stack property Theorem \ref{ku4thm1} by very similar methods to Propositions \ref{ku4prop1}, \ref{ku4prop2}, \ref{ku4prop3}, \ref{ku4prop4} and \ref{ku4prop5}, so we leave the proof as an exercise for the reader.

\begin{thm}{\bf(a)} Let\/ $\bX=(X,\cK)$ be a Kuranishi space, where $\cK=\bigl(I,\ab(V_i,\ab E_i,\ab\Ga_i,\ab s_i,\ab\psi_i)_{i\in I},$ $\Phi_{ij}$, $\La_{ijk}\bigr),$ and\/ $(V_a,E_a,\Ga_a,s_a,\psi_a),$ $(V_b,E_b,\Ga_b,s_b,\psi_b)$ be Kuranishi neighbourhoods on $\bX,$ in the sense of Definition\/ {\rm\ref{ku4def19},} and\/ $S\subseteq\Im\psi_a\cap\Im\psi_b$ be open. Then there exists a coordinate change $\Phi_{ab}:(V_a,E_a,\Ga_a,s_a,\psi_a)\ab\ra (V_b,E_b,\Ga_b,s_b,\psi_b),\La_{abi,\; i\in I}$ over $S$ on $\bX,$ in the sense of Definition\/ {\rm\ref{ku4def20}}. If\/ $\Phi_{ab},\ti\Phi_{ab}$ are two such coordinate changes, there is a unique\/ $2$-morphism $\Xi_{ab}:\Phi_{ab}\Ra\ti\Phi_{ab}$ over $S$ as in Definition\/ {\rm\ref{ku4def3},} such that for all\/ $i\in I$ we have
\e
\La_{abi}=\ti\La_{abi}\od(\id_{\Phi_{bi}}*\Xi_{ab}):\Phi_{bi}\ci\Phi_{ab}\Longra\Phi_{ai},
\label{ku4eq40}
\e
which holds over $S\cap\Im\psi_i$ by our usual convention.
\smallskip

\noindent{\bf(b)} Let\/ $\bs f:\bX\ra\bY$ be a $1$-morphism of Kuranishi spaces, and use notation {\rm\eq{ku4eq12}, \eq{ku4eq13}, \eq{ku4eq15}}. Let\/ $(U_a,D_a,\Be_a,r_a,\chi_a),$ $(V_b,E_b,\Ga_b,s_b,\psi_b)$ be Kuranishi neighbourhoods on $\bX,\bY$ respectively in the sense of Definition\/ {\rm\ref{ku4def19},} and let\/ $S\subseteq \Im\chi_a\cap f^{-1}(\Im\psi_b)$ be open. Then there exists a $1$-morphism $\bs f_{ab}:(U_a,D_a,\Be_a,r_a,\chi_a)\ra (V_b,E_b,\Ga_b,s_b,\psi_b)$ of Kuranishi neighbourhoods over\/ $(S,\bs f)$ on\/ $\bX,\bY,$ in the sense of Definition\/ {\rm\ref{ku4def21}}.

If\/ $\bs f_{ab},\bs{\ti f}_{ab}$ are two such\/ $1$-morphisms, there is a unique\/ $2$-morphism $\Xi_{ab}:\bs f_{ab}\Ra\bs{\ti f}_{ab}$ over $(S,f)$ as in Definition\/ {\rm\ref{ku4def3},} such that for\/ $i\in I,$ $j\in J$ we have
\begin{equation*}
\bs F_{ai}^{bj}=\bs{\ti F}{}_{ai}^{bj}\od(\id_{\Up_{bj}}*\Xi_{ab}):\Up_{bj}\ci\bs f_{ab}\Longra \bs f_{ij}\ci\Tau_{ai},
\end{equation*}
which holds over\/ $S\cap\Im\chi_i\cap f^{-1}(\Im\psi_j)$ by our usual convention.
\label{ku4thm4}
\end{thm}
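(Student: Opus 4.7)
The plan is to mirror the proofs of Propositions \ref{ku4prop1}--\ref{ku4prop5}: construct candidate $1$-morphisms (or $2$-morphisms) over the pieces of a natural open cover, verify compatibility on pairwise overlaps using the coherence data of $\bX,\bY$ and $\bs f$, check the cocycle condition on triple overlaps, and then invoke the stack property Theorem \ref{ku4thm1} to glue to a global object which is unique up to unique $2$-isomorphism.

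For part (a), for each $i\in I$ with $S_i:=S\cap\Im\psi_i\ne\es$, I would first choose a quasi-inverse $\Phi_{ib}^{(i)}:(V_i,E_i,\Ga_i,s_i,\psi_i)\to(V_b,E_b,\Ga_b,s_b,\psi_b)$ of $\Phi_{bi}$ together with unit and counit $2$-isomorphisms (possible since $\Phi_{bi}$ is an equivalence by Definition \ref{ku4def8}), and set $\Psi_i:=\Phi_{ib}^{(i)}\ci\Phi_{ai}$, a coordinate change over $S_i$. On each overlap $S_i\cap S_j$, a candidate $2$-isomorphism $\Xi_{ij}:\Psi_i\Ra\Psi_j$ is assembled from $\La_{aij},\La_{bij}$, the counit for $\Phi_{bi}$, and the associators; the cocycle condition $\Xi_{ik}=\Xi_{jk}\od\Xi_{ij}$ over $S_i\cap S_j\cap S_k$ is then a diagram chase of the same flavour as \eq{ku4eq22}, using the cocycle equation \eq{ku4eq38} for both $\La_{aij}$ and $\La_{bij}$ and the weak $2$-category axioms in $\Kur_{S_i\cap S_j\cap S_k}(X)$. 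Then Theorem \ref{ku4thm1}(a), applied to the substack $\bcEqu$ via Definition \ref{ku4def11}(v), produces a coordinate change $\Phi_{ab}$ over $S$ together with $2$-isomorphisms $\Phi_{ab}\vert_{S_i}\Ra\Psi_i$; composing these with the counit $\Phi_{bi}\ci\Phi_{ib}^{(i)}\Ra\id$ yields the required $\La_{abi}$, and their compatibility \eq{ku4eq39} is verified locally on each $S_i\cap\Im\psi_j$ using \eq{ku4eq38} for $\bX$ and the stack property Definition \ref{ku4def11}(iii). Uniqueness follows the same pattern: given two candidates $\Phi_{ab},\ti\Phi_{ab}$, the condition \eq{ku4eq40} together with the equivalence property of $\Phi_{bi}$ (composition with which induces an equivalence of groupoids of $1$-morphisms, by Theorem \ref{ku4thm1}(a)) forces a unique local $2$-morphism $\Phi_{ab}\vert_{S_i}\Ra\ti\Phi_{ab}\vert_{S_i}$ for each $i$; these agree on overlaps by the same forcing argument applied over $S_i\cap S_j$, and Definition \ref{ku4def11}(iii),(iv) glues them to the unique $\Xi_{ab}$.

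For part (b) the strategy is identical, now over the open cover $\{S\cap\Im\chi_i\cap f^{-1}(\Im\psi_j)\}_{i\in I,\,j\in J}$ of $S$, with local candidates $\bs f_{ab}^{ij}:=\Up_{jb}^{(j)}\ci\bs f_{ij}\ci\Tau_{ia}^{(i)}$ built from quasi-inverses of the coordinate changes $\Tau_{ai}$ and $\Up_{bj}$ (both equivalences). Comparison $2$-morphisms on pairwise overlaps are assembled from $\Ka_{aii'},\La_{bjj'},\bs F_{ii'}^j,\bs F_i^{jj'}$ and the chosen unit/counit data; the cocycle condition on triple overlaps follows by combining Definition \ref{ku4def13}(f)--(h) for $\bs f$ with \eq{ku4eq38} on $\bX$ and $\bY$; and Theorem \ref{ku4thm1}(b) glues to give $\bs f_{ab}$ over $(S,f)$. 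The $\bs F_{ai}^{bj}$ and their compatibility, and uniqueness up to unique $\Xi_{ab}$, are handled as in part~(a).

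The main obstacle is the cocycle verification on triple overlaps, in both parts. This is a large pasting diagram in $\Kur_S(X)$ whose commutativity needs: (i) the pentagon and triangle axioms of the weak $2$-category $\Kur_S(X)$; (ii) the cocycle equations \eq{ku4eq38} for the Kuranishi structures on $\bX$ and $\bY$; (iii) in part (b), the compatibility conditions Definition \ref{ku4def13}(f)--(h) for $\bs f$; and (iv) the triangle identities for the adjoint equivalences $\Phi_{ib}^{(i)}\dashv\Phi_{bi}$ and $\Up_{jb}^{(j)}\dashv\Up_{bj}$. Once this diagram is set up carefully, in the same style as the proofs of Propositions \ref{ku4prop4} and \ref{ku4prop5}, the verification is mechanical, and everything else is formal manipulation within the stack structure of Theorem~\ref{ku4thm1}.
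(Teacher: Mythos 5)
Your proposal is correct and is essentially the proof the paper intends: Theorem \ref{ku4thm4} is explicitly left as an exercise, to be done by gluing local candidates via the stack property of Theorem \ref{ku4thm1} in the style of Propositions \ref{ku4prop1}--\ref{ku4prop5}, with local models given by a quasi-inverse of $\Phi_{bi}$ composed with $\Phi_{ai}$ in (a), and a quasi-inverse of $\Up_{bj}$ composed with $\bs f_{ij}\ci\Tau_{ai}$ in (b), and with uniqueness coming from Definition \ref{ku4def11}(iii),(iv) together with the fact that composition with an equivalence is an equivalence on Hom-groupoids. The one slip is in your local candidate for (b): since $\Tau_{ai}$ already maps $(U_a,D_a,\Be_a,r_a,\chi_a)\ra(U_i,D_i,\Be_i,r_i,\chi_i)$, you should use $\Tau_{ai}$ itself rather than its quasi-inverse, so the candidate is (a quasi-inverse of) $\Up_{bj}$ composed with $\bs f_{ij}\ci\Tau_{ai}$; as written, $\bs f_{ij}\ci\Tau_{ia}^{(i)}$ does not compose, but correcting the index leaves the rest of your argument unchanged.
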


\begin{rem} Note that we make the (potentially confusing) distinction between {\it Kuranishi neighbourhoods\/ $(V_i,E_i,\Ga_i,s_i,\psi_i)$ on a topological space\/} $X$, as in Definition \ref{ku4def1}, and {\it Kuranishi neighbourhoods\/ $(V_a,E_a,\Ga_a,s_a,\psi_a)$ on a Kuranishi space\/} $\bX=(X,\cK)$, which are as in Definition \ref{ku4def19}, and come equipped with the extra implicit data $\Phi_{ai,\; i\in I}$, $\La_{aij,\; i,j\in I}$ giving the compatibility with the Kuranishi structure $\cK$ on~$X$. 

We also distinguish between {\it coordinate changes\/ $\Phi_{ij}:(V_i,E_i,\Ga_i,s_i,\psi_i)\ra(V_j,E_j,\Ga_j,s_j,\psi_j)$ between Kuranishi neighbourhoods on a topological space\/} $X$, which are as in Definition \ref{ku4def8} and for which there may be many choices or none, and {\it coordinate changes\/ $\Phi_{ab}:(V_a,E_a,s_a,\psi_a)\ra(V_b,E_b,s_b,\psi_b)$ between Kuranishi neighbourhoods on a Kuranishi space\/} $\bX$, which are as in Definition \ref{ku4def20}, and come equipped with implicit extra data $\La_{abi,\; i\in I}$, and which by Theorem \ref{ku4thm4}(a) always exist, and are unique up to unique 2-isomorphism.

Similarly, we distinguish between 1-{\it morphisms $\bs f_{ij}:(U_i,D_i,\Be_i,r_i,\chi_i)\ra(V_j,E_j,\Ga_j,s_j,\psi_j)$ of Kuranishi neighbourhoods over a continuous map of topological spaces\/} $f:X\ra Y$, which are as in Definition \ref{ku4def2} and for which there may be many choices or none, and 1-{\it morphisms $\bs f_{ab}:(U_a,D_a,\Be_a,r_a,\chi_a)\ra(V_b,E_b,\Ga_b,s_b,\psi_b)$ of Kuranishi neighbourhoods over a $1$-morphism of Kuranishi spaces\/} $\bs f:\bX\ra\bY$, which are as Definition \ref{ku4def21}, and come equipped with implicit extra data $\bs F_{ai,\; i\in I}^{bj,\; j\in J}$, and which by Theorem \ref{ku4thm4}(b) always exist, and are unique up to unique 2-isomorphism.
\label{ku4rem7}
\end{rem}

Here are the analogues of Proposition \ref{ku2prop} and Corollary \ref{ku2cor}. The proofs are similar but a little more complicated, and we leave them as an exercise.

\begin{prop} Let\/ $\bX=(X,\cK)$ be a Kuranishi space, and\/ $\bigl\{(V_a,\ab E_a,\ab\Ga_a,\ab s_a,\ab\psi_a):a\in A\bigr\}$ a family of Kuranishi neighbourhoods on $\bX$ with\/ $X=\bigcup_{a\in A}\Im\psi_a$. For all\/ $a,b\in A,$ let\/ $\Phi_{ab}:(V_a,E_a,\Ga_a,s_a,\psi_a)\ra(V_b,E_b,\Ga_b,s_b,\psi_b)$ be a coordinate change over $S=\Im\psi_a\cap\Im\psi_b$ on $\bX$ given by Theorem\/ {\rm\ref{ku4thm4}(a),} which is unique up to $2$-isomorphism. For all\/ $a,b,c\in A,$ both\/ $\Phi_{bc}\ci\Phi_{ab}\vert_S$ and\/ $\Phi_{ac}\vert_S$ are coordinate changes $(V_a,E_a,\Ga_a,s_a,\psi_a)\ra(V_c,E_c,\Ga_c,s_c,\psi_c)$ over $S=\Im\psi_a\cap\Im\psi_b\cap\Im\psi_c$ on $\bX,$ so Theorem\/ {\rm\ref{ku4thm4}(a)} gives a unique $2$-morphism $\La_{abc}:\Phi_{bc}\ci\Phi_{ab}\vert_S\Ra \Phi_{ac}\vert_S$. Then\/ $\cK'=\bigl(A,(V_a,E_a,\Ga_a,s_a,\psi_a)_{a\in A},\ab\Phi_{ab,\; a,b\in A},\ab\La_{abc,\; a,b,c\in A}\bigr)$ is a Kuranishi structure on $X,$ and\/ $\bX'=(X,\cK')$ is equivalent to\/ $\bX$ in\/ $\Kur,$ by an equivalence $\bs i:\bX\ra\bX'$ canonical up to $2$-isomorphism.
\label{ku4prop6}
\end{prop}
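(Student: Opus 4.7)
\medskip

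The plan is first to verify that $\cK'$ satisfies Definition \ref{ku4def12}(a)--(h), and then to construct the equivalence $\bs i:\bX\ra\bX'$ together with a quasi-inverse, relying throughout on the uniqueness-up-to-2-isomorphism statements of Theorem \ref{ku4thm4}. Conditions (a),(b),(d),(e) of Definition \ref{ku4def12} are immediate, and (c) holds because a coordinate change on the Kuranishi space $\bX$ is in particular a coordinate change on the underlying topological space $X$, as in Definition \ref{ku4def8}. For (f), since Theorem \ref{ku4thm4}(a) only determines $\Phi_{aa}$ up to unique 2-isomorphism, I will arrange the global choice so that $\Phi_{aa}=\id_{(V_a,E_a,\Ga_a,s_a,\psi_a)}$; the implicit data $\La_{aai}$ making this an identity coordinate change on $\bX$ is then forced, via the coherence 2-morphisms $\bs\be,\bs\ga$ of \eq{ku4eq6} and Definition \ref{ku4def20}, to equal $\bs\be_{\Phi_{ai}}$ up to canonical 2-isomorphism. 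For (g), both $\La_{aab}$ and $\bs\be_{\Phi_{ab}}$ are 2-morphisms $\Phi_{ab}\ci\Phi_{aa}\Ra\Phi_{ab}$ between coordinate changes on $\bX$; a direct unwinding of Definition \ref{ku4def20} shows that both satisfy the characterising equation \eq{ku4eq40} for their implicit data, so the uniqueness clause of Theorem \ref{ku4thm4}(a) identifies them, and similarly for $\La_{abb}=\bs\ga_{\Phi_{ab}}$. For the pentagon (h), both composites $\La_{abd}\od(\La_{bcd}*\id_{\Phi_{ab}})$ and $\La_{acd}\od(\id_{\Phi_{cd}}*\La_{abc})\od\bs\al_{\Phi_{cd},\Phi_{bc},\Phi_{ab}}$ are 2-morphisms $(\Phi_{cd}\ci\Phi_{bc})\ci\Phi_{ab}\Ra\Phi_{ad}$ between coordinate changes on $\bX$; by iterating Definition \ref{ku4def20} twice, the implicit data of the domain can be computed in both ways, and each composite is then seen to satisfy the characterising equation of Theorem \ref{ku4thm4}(a), forcing them to coincide.

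For the equivalence, observe that for every $i\in I$ and $a\in A$ both $(V_i,E_i,\Ga_i,s_i,\psi_i)$ (from the original structure $\cK$) and $(V_a,E_a,\Ga_a,s_a,\psi_a)$ are Kuranishi neighbourhoods on $\bX$, so Theorem \ref{ku4thm4}(a) supplies a coordinate change on $\bX$
\begin{equation*}
\bs i_{ia}:(V_i,E_i,\Ga_i,s_i,\psi_i)\longra (V_a,E_a,\Ga_a,s_a,\psi_a),
\end{equation*}
unique up to unique 2-isomorphism. I define $\bs i:\bX\ra\bX'$ by $\bs i=\bigl(\id_X,\bs i_{ia,\;i\in I,\;a\in A},\bs I_{ii'}^{a,\;i,i'\in I},\bs I_i^{aa',\;a,a'\in A}\bigr)$, where $\bs i_{ia}$ is regarded merely as a 1-morphism of Kuranishi neighbourhoods over $\id_X$, and the 2-morphisms $\bs I_{ii'}^a:\bs i_{i'a}\ci\Tau_{ii'}\Ra \bs i_{ia}$ and $\bs I_i^{aa'}:\Phi_{aa'}\ci \bs i_{ia}\Ra \bs i_{ia'}$ are the unique 2-isomorphisms between coordinate changes on $\bX$ provided by Theorem \ref{ku4thm4}(a). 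Definition \ref{ku4def13}(e)--(h) for $\bs i$ then each express an equality of two 2-morphisms between the same pair of coordinate changes on $\bX$: by the uniqueness clause of Theorem \ref{ku4thm4}(a), it is enough to check that both sides satisfy the same characterising equation relating them to the implicit data on the target, which is routine.

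Symmetrically, I construct $\bs j:\bX'\ra\bX$ using quasi-inverse coordinate changes $\bs j_{ai}:(V_a,\ldots)\ra(V_i,\ldots)$ on $\bX$, and the 2-isomorphisms $\bs j\ci\bs i\Ra\bs\id_\bX$ and $\bs i\ci\bs j\Ra\bs\id_{\bX'}$ are supplied componentwise by the same uniqueness clause, with the compatibility conditions of Definition \ref{ku4def14} following automatically from uniqueness. Canonicity of $\bs i$ up to 2-isomorphism holds because any two admissible choices of the $\bs i_{ia}$ differ by the unique 2-isomorphism of Theorem \ref{ku4thm4}(a), and these glue to a 2-isomorphism of 1-morphisms of Kuranishi spaces by the stack property, Theorem \ref{ku4thm1}. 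The main obstacle is bookkeeping: conditions Definition \ref{ku4def12}(h) for $\cK'$ and Definition \ref{ku4def13}(e)--(h) for $\bs i$ are large pasting diagrams of 2-morphisms whose commutativity must be reduced, each time, to the characterising equation of Theorem \ref{ku4thm4}(a) via a careful unwinding of Definitions \ref{ku4def19}--\ref{ku4def21} and the coherence axioms of the weak 2-category $\Kur_S(X)$; no new geometric input beyond Theorems \ref{ku4thm1} and \ref{ku4thm4} is required.
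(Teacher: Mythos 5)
Your proposal is correct and follows essentially the route the paper intends: the paper gives no written proof of Proposition \ref{ku4prop6}, stating only that it is the analogue of Proposition \ref{ku2prop} left as an exercise, and your argument is exactly that analogue, with the strict uniqueness of Theorem \ref{ku2thm4}(a) replaced by the uniqueness-up-to-unique-2-isomorphism of Theorem \ref{ku4thm4} plus the stack property of Theorem \ref{ku4thm1} to handle the coherence data $\La_{abc}$, $\bs I_{ii'}^a$, $\bs I_i^{aa'}$ and Definitions \ref{ku4def12}(f)--(h), \ref{ku4def13}(e)--(h), \ref{ku4def14}. The only cosmetic difference is that the paper's $\mu$-case builds the equivalence directly from the implicit data $\Phi_{ai}$ and their (quasi-)inverses, whereas you invoke Theorem \ref{ku4thm4}(a) afresh for the components $\bs i_{ia}$, $\bs j_{ai}$ — these choices agree up to the same canonical 2-isomorphisms, so nothing is lost; your explicit normalisation $\Phi_{aa}=\id$ is also the right way to handle Definition \ref{ku4def12}(f), which in the 2-categorical setting no longer holds automatically.
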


\begin{cor} Let\/ $\bX=(X,\cK)$ be a Kuranishi space with\/ $\cK=\bigl(I,(V_i,\ab E_i,\ab\Ga_i,\ab s_i,\ab\psi_i)_{i\in I},\Phi_{ij,\; i,j\in I},\La_{ijk,\;i,j,k\in I}\bigr)$. Suppose $J\subseteq I$ with $\bigcup_{j\in J}\Im\psi_j=X$. Then $\cK'=\bigl(J,(V_i,E_i,\Ga_i,s_i,\psi_i)_{i\in J},\Phi_{ij,\; i,j\in J},\La_{ijk,\;i,j,k\in J}\bigr)$ is a Kuranishi structure on $X,$ and\/ $\bX'=(X,\cK')$ is equivalent to $\bX$ in $\Kur$.
\label{ku4cor1}
\end{cor}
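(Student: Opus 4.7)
My plan is as follows. First I would verify that $\cK'=\bigl(J,(V_i,E_i,\Ga_i,s_i,\psi_i)_{i\in J},\Phi_{ij,\;i,j\in J},\La_{ijk,\;i,j,k\in J}\bigr)$ satisfies Definition \ref{ku4def12}(a)--(h). Every piece of data in $\cK'$ is a sub-collection of the corresponding data in $\cK$ indexed by $J\subseteq I$, so conditions (a)--(d) and (f)--(h) follow immediately from the same conditions on $\cK$. Condition (e) is exactly the hypothesis $\bigcup_{j\in J}\Im\psi_j = X$. Since $\dim V_i-\rank E_i=\vdim\bX$ is preserved, $\cK'$ is a Kuranishi structure on $X$ of the same virtual dimension.

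For the equivalence $\bX'\simeq\bX$, I would appeal to Proposition \ref{ku4prop6} applied to the family $\{(V_j,E_j,\Ga_j,s_j,\psi_j):j\in J\}$. Each $(V_j,E_j,\Ga_j,s_j,\psi_j)$ with $j\in J$ is, tautologically, a Kuranishi neighbourhood on the Kuranishi space $\bX$ in the sense of Definition \ref{ku4def19}, with implicit extra data $\Phi_{ji,\;i\in I}$ and $\La_{jik,\;i,k\in I}$ taken directly from $\cK$; the cocycle identity (\ref{ku4eq38}) required of this data is exactly Definition \ref{ku4def12}(h) for $\cK$ with the first index fixed at $j$. The hypothesis $\bigcup_{j\in J}\Im\psi_j=X$ ensures this family covers $X$, so Proposition \ref{ku4prop6} applies and yields a Kuranishi structure $\tilde{\cK}$ on $X$ together with a canonical equivalence $\bs i:\bX\to (X,\tilde{\cK})$ in $\Kur$.

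It remains to identify $\tilde{\cK}$ with $\cK'$. The structure $\tilde{\cK}$ is built out of coordinate changes $\hat\Phi_{jj'}:(V_j,E_j,\Ga_j,s_j,\psi_j)\to(V_{j'},E_{j'},\Ga_{j'},s_{j'},\psi_{j'})$ on $\bX$ supplied by Theorem \ref{ku4thm4}(a) (unique up to canonical 2-isomorphism) and 2-morphisms $\tilde\La_{jj'j''}$ uniquely characterised by (\ref{ku4eq40}). For $j,j'\in J$, the original $\Phi_{jj'}$ from $\cK$ is a valid choice for $\hat\Phi_{jj'}$: the required compatibility data of Definition \ref{ku4def20} is precisely the family $\La_{jj'i,\;i\in I}$ from $\cK$, and axiom (\ref{ku4eq39}) is Definition \ref{ku4def12}(h) for $\cK$ with indices $j,j',i,k$. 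A second application of Definition \ref{ku4def12}(h) shows that $\La_{jj'j''}$ satisfies the uniqueness characterisation of $\tilde\La_{jj'j''}$. Making these choices gives $\tilde{\cK}=\cK'$ on the nose, and the equivalence from Proposition \ref{ku4prop6} is the desired equivalence $\bs i:\bX\to\bX'$.

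The main obstacle, though mild, is the reconciliation in the third paragraph: one must carefully match up the axioms (\ref{ku4eq38})--(\ref{ku4eq40}) that Proposition \ref{ku4prop6} imposes on its input data with the Kuranishi-structure axioms of Definition \ref{ku4def12} already satisfied by $\cK$. Once this bookkeeping is done, everything follows from Proposition \ref{ku4prop6} and Theorem \ref{ku4thm4}(a) with no further computation. A purely direct alternative would be to exhibit $\bs i:\bX\to\bX'$ and a quasi-inverse explicitly in terms of the $\Phi_{ij}$ and $\La_{ijk}$ and verify Definition \ref{ku4def13}(f)--(h) by hand, but this reproduces essentially the content of Proposition \ref{ku4prop6} and is less efficient than invoking it.
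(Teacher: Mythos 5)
Your proposal is correct and follows essentially the route the paper intends: as with Corollary \ref{ku2cor} in the $\mu$-Kuranishi case, the paper leaves the proof as an exercise to be deduced from Proposition \ref{ku4prop6}, with the neighbourhoods of $\cK$ indexed by $J$ regarded tautologically as Kuranishi neighbourhoods on $\bX$. Your extra reconciliation step — checking via Definition \ref{ku4def12}(h) that the $\Phi_{jj'}$ with data $\La_{jj'i,\;i\in I}$ are admissible choices in Theorem \ref{ku4thm4}(a) and that the $\La_{jj'j''}$ satisfy the uniqueness characterisation \eq{ku4eq40}, so the output structure is $\cK'$ itself — is exactly the bookkeeping needed in the 2-categorical setting and is carried out correctly.
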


Just as in \S\ref{ku24}, we can now state our philosophy for working with Kuranishi spaces, but we will not explain it in detail.

\subsection{Orbifolds and Kuranishi spaces}
\label{ku45}

The next remark reviews different definitions of orbifolds in the literature.

\begin{rem} Orbifolds are generalizations of manifolds locally modelled on $\R^n/G$, for $G$ a finite group acting linearly on $\R^n$. They were introduced by Satake \cite{Sata}, who called them `V-manifolds'. Later they were studied by Thurston \cite[Ch.~13]{Thur} who gave them the name `orbifold'.

As for Kuranishi spaces, defining orbifolds $\fX,\fY$ and smooth maps $\ff:\fX\ra\fY$ was initially problematic, and early definitions of ordinary categories of orbifolds \cite{Sata,Thur} had some bad differential-geometric behaviour  (e.g. for some definitions, one cannot define pullbacks $\ff^*(\mathfrak E)$ of orbifold vector bundles $\mathfrak E\ra\fY$). It is now generally agreed that it is best to define orbifolds to be a 2-category. See Lerman \cite{Lerm} for a good overview of ways to define orbifolds.

There are three main definitions of ordinary categories of orbifolds:
\begin{itemize}
\setlength{\itemsep}{0pt}
\setlength{\parsep}{0pt}
\item[(a)] Satake \cite{Sata} and Thurston \cite{Thur} defined an orbifold $\fX$ to be a Hausdorff topological space $X$ with an atlas $\bigl\{(V_i,\Ga_i,\psi_i):i\in I\bigr\}$ of orbifold charts $(V_i,\Ga_i,\psi_i)$, where $V_i$ is a manifold, $\Ga_i$ a finite group acting smoothly (and locally effectively) on $V_i$, and $\psi_i:V_i/\Ga_i\ra X$ a homeomorphism with an open set in $X$, and pairs of charts $(V_i,\Ga_i,\phi_i),(V_j,\Ga_j,\phi_j)$ satisfy compatibility conditions on their overlaps in $X$. Smooth maps $\ff:\fX\ra\fY$ between orbifolds are continuous maps $f:X\ra Y$ of the underlying spaces, which lift locally to smooth maps on the charts, giving a category $\Orb_{\rm ST}$.
\item[(b)] Chen and Ruan \cite[\S 4]{ChRu} defined orbifolds $\fX$ in a similar way to \cite{Sata,Thur}, but using germs of orbifold charts $(V_p,\Ga_p,\psi_p)$ for $p\in X$. Their morphisms $\ff:\fX\ra\fY$ are called {\it good maps}, giving a category $\Orb_{\rm CR}$. 
\item[(c)] Moerdijk and Pronk \cite{Moer,MoPr} defined a category of orbifolds $\Orb_{\rm MP}$ as {\it proper \'etale Lie groupoids\/} in $\Man$. Their definition of smooth map $\ff:\fX\ra\fY$, called {\it strong maps\/} \cite[\S 5]{MoPr} is complicated: it is an equivalence class of diagrams $\smash{\fX\,{\buildrel\phi\over \longleftarrow}\,\fX'\,{\buildrel\psi\over\longra}\,\fY}$, where $\fX'$ is a third orbifold, and $\phi,\psi$ are morphisms of groupoids with $\phi$ an equivalence (loosely, a diffeomorphism).
\end{itemize}
A book on orbifolds in the sense of \cite{ChRu,Moer,MoPr} is Adem, Leida and Ruan~\cite{ALR}.

There are four main definitions of 2-categories of orbifolds: 
\begin{itemize}
\setlength{\itemsep}{0pt}
\setlength{\parsep}{0pt}
\item[(i)] Pronk \cite{Pron} defines a strict 2-category $\bf LieGpd$ of Lie groupoids in $\Man$ as in (c), with the obvious 1-morphisms of groupoids, and localizes by a class of weak equivalences $\cW$ to get a weak 2-category $\Orb_{\rm Pr}={\bf LieGpd}[\cW^{-1}]$.
\item[(ii)] Lerman \cite[\S 3.3]{Lerm} defines a weak 2-category $\Orb_{\rm Le}$ of Lie groupoids in $\Man$ as in (c), with a non-obvious notion of 1-morphism called `Hilsum--Skandalis morphisms' involving `bibundles', and does not need to localize.

Henriques and Metzler \cite{HeMe} also use Hilsum--Skandalis morphisms.
\item[(iii)] Behrend and Xu \cite[\S 2]{BeXu}, Lerman \cite[\S 4]{Lerm} and Metzler \cite[\S 3.5]{Metz} define a strict 2-category of orbifolds $\Orb_{\rm ManSta}$ as a class of Deligne--Mumford stacks on the site $(\Man,{\cal J}_\Man)$ of manifolds with Grothendieck topology ${\cal J}_\Man$ coming from open covers.
\item[(iv)] The author \cite{Joyc4} defines a strict 2-category of orbifolds $\Orb_{C^\iy{\rm Sta}}$ as a class of Deligne--Mumford stacks on the site $(\CSch,{\cal J}_\CSch)$ of $C^\iy$-schemes.
\end{itemize}

As in Behrend and Xu \cite[\S 2.6]{BeXu}, Lerman \cite{Lerm}, Pronk \cite{Pron}, and the author \cite[Th.~9.26]{Joyc4}, approaches (i)--(iv) give equivalent weak 2-categories $\Orb_{\rm Pr},\ab\Orb_{\rm Le},\ab\Orb_{\rm ManSta},\ab\Orb_{C^\iy{\rm Sta}}$. As they are equivalent, the differences between them are not of mathematical importance, but more a matter of convenience or taste. Properties of localization also imply that $\Orb_{\rm MP}\simeq \Ho(\Orb_{\rm Pr})$. Thus, all of (c) and (i)--(iv) are equivalent at the level of homotopy categories.
\label{ku4rem8}
\end{rem}

We now give a fifth definition of a weak 2-category of orbifolds, essentially as a full 2-subcategory $\OrbKur\subset\Kur$, and we will show that $\OrbKur$ is equivalent to $\Orb_{\rm Pr},\ab\Orb_{\rm Le},\ab\Orb_{\rm ManSta},\ab\Orb_{C^\iy{\rm Sta}}$ in (i)--(iv) above. This provides a convenient way to relate orbifolds and Kuranishi spaces. Fukaya et al.\ \cite[\S 9]{FOOO6} and McDuff \cite{McDu} also define (effective) orbifolds as special examples of their notions of Kuranishi space/Kuranishi atlas.

The basic idea is that orbifolds $\fX$ in $\OrbKur$ are just Kuranishi spaces $\bX=(X,\cK)$ with $\cK=\bigl(I,(V_i,E_i,\Ga_i,s_i,\psi_i)_{i\in I}$, $\Phi_{ij}=(P_{ij},\pi_{ij},\phi_{ij},\hat\phi_{ij})_{i,j\in I}$, $\La_{ijk}=[\dot P_{ijk},\la_{ijk},\hat\la_{ijk}]_{i,j,k\in I}\bigr)$, for which the obstruction bundles $E_i\ra V_i$ are zero for all $i\in I$, so that the sections $s_i$ are also zero. This allows us to simplify the notation a lot. Equations in \S\ref{ku41} involving error terms $O\bigl(\pi_{ij}^*(s_i)\bigr)$ or $O\bigl(\pi_{ij}^*(s_i)^2\bigr)$ become exact, as $s_i=0$.

As $E_i,s_i$ are zero we can take `orbifold charts' to be $(V_i,\Ga_i,\psi_i)$. As $\hat\phi_{ij}=0$ we can take coordinate changes to be $\Phi_{ij}=(P_{ij},\pi_{ij},\phi_{ij})$, and we can also take $V_{ij}=\pi_{ij}(P_{ij})$ to be equal to $\bar\psi_i^{-1}(S)$, rather than just an open neighbourhood of $\bar\psi_i^{-1}(S)$ in $V_i$, since $\bar\psi_i^{-1}(S)$ is open in $V_i$ when $s_i=0$. For 2-morphisms $\La_{ij}=[\dot P_{ij},\la_{ij},\hat\la_{ij}]:\Phi_{ij}\Ra\Phi_{ij}'$ in \S\ref{ku41}, we have $\hat\la_{ij}=0$, and we are forced to take $\dot P_{ij}=P_{ij}$, and the equivalence relation $\approx$ in Definition \ref{ku4def3} becomes trivial, so we can take 2-morphisms to be just $\la_{ij}$. Here are the analogues of Definitions \ref{ku4def1}--\ref{ku4def3} and \ref{ku4def4}--\ref{ku4def8} with these simplifications made:

\begin{dfn} Let $X$ be a topological space. An {\it orbifold chart\/} on $X$ is a triple $(V,\Ga,\psi)$, where $V$ is a smooth manifold, $\Ga$ a finite group with a smooth action on $V$, and $\psi$ a homeomorphism from $V/\Ga$ to an open subset $\Im\psi$ in $X$. We write $\bar\psi:V\ra X$ for the composition of $\psi$ with the projection $V\ra V/\Ga$.

We call an orbifold chart $(V,\Ga,\psi)$ {\it effective\/} if the action of $\Ga$ on $V$ is locally effective, that is, no nonempty open set $U\subseteq V$ is fixed by $1\ne\ga\in\Ga$.
\label{ku4def22}
\end{dfn}

\begin{dfn} Let $(V_i,\Ga_i,\psi_i),(V_j,\Ga_j,\psi_j)$ be orbifold charts on a topological space $X$, and $S\subseteq\Im\psi_i\cap\Im\psi_j$ be open. A {\it coordinate change\/ $\Phi_{ij}:(V_i,\Ga_i,\psi_i)\ra (V_j,\Ga_j,\psi_j)$ over\/} $S$ is a triple $\Phi_{ij}=(P_{ij},\pi_{ij},\phi_{ij})$ satisfying:
\begin{itemize}
\setlength{\itemsep}{0pt}
\setlength{\parsep}{0pt}
\item[(a)] $P_{ij}$ is a manifold, with commuting smooth, free actions of $\Ga_i,\Ga_j$.
\item[(b)] $\pi_{ij}:P_{ij}\ra V_i$ is a smooth map which is $\Ga_i$-equivariant, $\Ga_j$-invariant, and \'etale, with $\pi_{ij}(P_{ij})=\bar\psi_i^{-1}(S)$. The fibres $\pi_{ij}^{-1}(v)$ of $\pi_{ij}$ for $v\in\bar\psi_i^{-1}(S)$ are $\Ga_j$-orbits, so that $\pi_{ij}:P_{ij}\ra\bar\psi_i^{-1}(S)$ is a principal $\Ga_j$-bundle.
\item[(c)] $\phi_{ij}:P_{ij}\ra V_j$ is a smooth map which is $\Ga_i$-invariant, $\Ga_j$-equivariant, and \'etale, with $\phi_{ij}(P_{ij})=\bar\psi_j^{-1}(S)$. The fibres $\phi_{ij}^{-1}(v)$ of $\phi_{ij}$ for $v\in \bar\psi_j^{-1}(S)$ are $\Ga_i$-orbits, so that $\phi_{ij}:P_{ij}\ra\bar\psi_j^{-1}(S)$ is a principal $\Ga_i$-bundle.
\item[(d)] $\bar\psi_i\ci\pi_{ij}=\bar\psi_j\ci\phi_{ij}:P_{ij}\ra X$.
\end{itemize}

\label{ku4def23}
\end{dfn}

\begin{dfn} Let $\Phi_{ij},\Phi_{ij}':(V_i,\Ga_i,\psi_i)\ra(V_j,\Ga_j,\psi_j)$ be coordinate changes of orbifold charts over $S\subseteq\Im\psi_i\cap\Im\psi_j\subseteq X$, where $\Phi_{ij}=(P_{ij},\pi_{ij},\phi_{ij})$ and $\Phi_{ij}'=(P_{ij}',\pi_{ij}',\phi_{ij}')$. A 2-{\it morphism\/} $\la_{ij}:\Phi_{ij}\Ra\Phi_{ij}'$ is a $\Ga_i$- and $\Ga_j$-equivariant diffeomorphism $\la_{ij}:P_{ij}\ra P_{ij}'$ with $\pi_{ij}'\ci\la_{ij}=\pi_{ij}$ and $\phi_{ij}'\ci\la_{ij}=\phi_{ij}$. That is, 2-morphisms are just isomorphisms of coordinate changes preserving all the structure, in the most obvious way.
\label{ku4def24}
\end{dfn}

\begin{dfn} Suppose $\Phi_{ij}=(P_{ij},\pi_{ij},\phi_{ij}):(V_i,\Ga_i,\psi_i)\ra (V_j,\Ga_j,\psi_j)$ and $\Phi_{jk}=(P_{jk},\pi_{jk},\phi_{jk}):(V_j,\Ga_j,\psi_j)\ra (V_k,\Ga_k,\psi_k)$ are coordinate changes of orbifold charts over $S\subseteq\Im\psi_i\cap\Im\psi_j\cap\Im\psi_k\subseteq X$. As in Definition \ref{ku4def4}, $\Ga_j$ acts freely on the transverse fibre product $P_{ij}\t_{V_j}P_{jk}$, so $P_{ik}=(P_{ij}\t_{V_j}P_{jk})/\Ga_j$ is a manifold, with projection $\Pi:P_{ij}\t_{V_j}P_{jk}\ra P_{ik}$. There are unique smooth maps $\pi_{ik}:P_{ik}\ra V_i$ and $\phi_{ik}:P_{ik}\ra V_k$ such that $\pi_{ij}\ci\pi_{P_{ij}}=\pi_{ik}\ci\Pi$ and~$\phi_{jk}\ci\pi_{P_{jk}}=\phi_{ik}\ci\Pi$. Then $\Phi_{ik}=(P_{ik},\pi_{ik},\phi_{ik})$ is a coordinate change $(V_i,\Ga_i,\psi_i)\ra (V_k,\Ga_k,\psi_k)$ over $S$. We write $\Phi_{jk}\ci\Phi_{ij}=\Phi_{ik}$, and call it the {\it composition of coordinate changes}.

If $S\subseteq \Im\psi_i\subseteq X$ is open, we define the {\it identity coordinate change over\/}~$S$
\begin{equation*}
\id_{(V_i,\Ga_i,\psi_i)}=(V_i\t\Ga_i,\pi_{ii},\phi_{ii}):(V_i,\Ga_i,\psi_i)\longra (V_i,\Ga_i,\psi_i),
\end{equation*}
where $\pi_{ii},\phi_{ii}:V_i\t\Ga_i\ra V_i$ map $\pi_{ii}:(v,\ga)\mapsto v$ and $\phi_{ii}:(v,\ga)\mapsto\ga\cdot v$. 

As in \eq{ku4eq4} and \eq{ku4eq6}, there are canonical 2-morphisms
\begin{gather*}
\bs\al_{\Phi_{kl},\Phi_{jk},\Phi_{ij}}:(\Phi_{kl}\ci\Phi_{jk})\ci\Phi_{ij}\Longra\Phi_{kl}\ci(\Phi_{jk}\ci\Phi_{ij}),\\
\bs\be_{\Phi_{ij}}:\Phi_{ij}\ci\id_{(V_i,\Ga_i,\psi_i)}\Longra\Phi_{ij},\quad
\bs\ga_{\Phi_{ij}}:\id_{(V_j,\Ga_j,\psi_j)}\ci\Phi_{ij}\Longra\Phi_{ij}.
\end{gather*}

Given coordinate changes $\Phi_{ij},\Phi_{ij}',\Phi_{ij}'':(V_i,\Ga_i,\psi_i)\ra(V_j,\Ga_j,\psi_j)$ and 2-morphisms $\la_{ij}:\Phi_{ij}\Ra\Phi_{ij}'$, $\la_{ij}':\Phi_{ij}'\Ra\Phi_{ij}''$ over $S$, the {\it vertical composition\/} $\la_{ij}'\od\la_{ij}:\Phi_{ij}\Ra\Phi_{ij}''$ is just the composition~$\la_{ij}'\od\la_{ij}=\la_{ij}'\ci\la_{ij}:P_{ij}\ra P_{ij}''$.

Given coordinate changes $\Phi_{ij},\Phi_{ij}':(V_i,\Ga_i,\psi_i)\ra(V_j,\Ga_j,\psi_j)$, $\Phi_{jk},\Phi_{jk}':(V_j,\Ga_j,\psi_j)\ra(V_k,\Ga_k,\psi_k)$ and 2-morphisms $\la_{ij}:\Phi_{ij}\Ra\Phi_{ij}'$, $\la_{jk}:\Phi_{jk}\Ra\Phi_{jk}'$ over $S$, write $\la_{jk}\t_{V_j}\la_{ij}:P_{ij}\t_{V_j}P_{jk}\ra P_{ij}'\t_{V_j}P_{jk}'$ for the induced diffeomorphism of fibre products. It is $\Ga_j$-equivariant, and so induces a unique diffeomorphism $\la_{jk}*\la_{ij}:P_{ik}=(P_{ij}\t_{V_j}P_{jk})/\Ga_j\ra (P_{ij}'\t_{V_j}P_{jk}')/\Ga_j=P_{ik}'$. Then $\la_{jk}*\la_{ij}:\Phi_{jk}\ci\Phi_{ij}\Ra \Phi_{jk}'\ci\Phi_{ij}'$ is a 2-morphism, {\it horizontal composition}.

The {\it identity\/ $2$-morphism\/} $\id_{\Phi_{ij}}:\Phi_{ij}\Ra\Phi_{ij}$ is $\id_{\Phi_{ij}}=\id_{P_{ij}}:P_{ij}\ra P_{ij}$.

As for $\Kur_S(X)$ in \S\ref{ku41}, if $S\subseteq X$ is open we have now defined a weak 2-category $\mathop{\rm Orb}_S(X)$, with objects orbifold charts $(V_i,\Ga_i,\psi_i)$ on $X$ with $S\subseteq\Im\psi_i$, 1-morphisms coordinate changes $\Phi_{ij},\Phi_{ij}':(V_i,\Ga_i,\psi_i)\ra(V_j,\Ga_j,\psi_j)$, and 2-morphisms $\la_{ij}:\Phi_{ij}\Ra\Phi_{ij}'$ as above.
\label{ku4def25}
\end{dfn}

We can now follow \S\ref{ku41}--\S\ref{ku43} from Definition \ref{ku4def9} until Theorem \ref{ku4thm3}, taking the $E_i,s_i,\hat\phi_{ijk},\hat\la_{ijk}$ to be zero throughout. This gives:

\begin{thm} We can define a weak\/ $2$-category $\OrbKur$ of \begin{bfseries}Kuranishi orbifolds\end{bfseries}, or just \begin{bfseries}orbifolds\end{bfseries}. Objects of\/ $\OrbKur$ are $\fX=(X,\O)$ for $X$ a Hausdorff, second countable topological space and\/ $\O=\bigl(I,(V_i,\Ga_i,\psi_i)_{i\in I},$ $\Phi_{ij,\;i,j\in I},$ $\la_{ijk,\; i,j,k\in I}\bigr)$ an \begin{bfseries}orbifold structure on $X$ of dimension\end{bfseries} $n\in\N,$ defined as in {\rm\S\ref{ku43}} but using orbifold charts, coordinate changes and $2$-morphisms as above.

There is a natural full and faithful\/ $2$-functor $F_\OrbKur^\Kur:\OrbKur\hookra\Kur$ embedding $\OrbKur$ as a full\/ $2$-subcategory of\/ $\Kur,$ which on objects maps $F_\OrbKur^\Kur:(X,\O)\mapsto(X,\cK),$ where for $\O$ as above, $\cK=\bigl(I,(V_i,0,\Ga_i,0,\psi_i)_{i\in I},$ $(P_{ij},\pi_{ij},\phi_{ij},0)_{ij,\;i,j\in I},$ $[P_{ijk},\la_{ijk},0]_{i,j,k\in I}\bigr)$ is the Kuranishi structure obtained by taking all the obstruction bundle data $E_i,s_i,\hat\phi_{ijk},\hat\la_{ijk}$ to be zero.

We say that a Kuranishi space $\bX$ \begin{bfseries}is an orbifold\end{bfseries} if\/ $\bX\simeq F_\OrbKur^\Kur(\fX)$ in $\Kur$ for some~$\fX\in\OrbKur$.
\label{ku4thm5}
\end{thm}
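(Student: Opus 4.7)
\medskip

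\noindent\textbf{Proof proposal.} The plan is to run the constructions of \S\ref{ku41}--\S\ref{ku43} verbatim, but with the simplifications to orbifold charts, coordinate changes and 2-morphisms from Definitions \ref{ku4def22}--\ref{ku4def25}, checking that every step goes through and then verifying that the obvious ``insert zero obstruction data'' assignment is a full and faithful weak 2-functor.

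First I would verify that the structures set up in Definitions \ref{ku4def22}--\ref{ku4def25} really do assemble into a weak 2-category $\mathop{\rm Orb}_S(X)$ of orbifold charts over a fixed open $S\subseteq X$, analogous to $\Kur_S(X)$. All the axioms follow by inspection, since the $E_i,s_i$ and error-term complications have vanished: vertical composition of 2-morphisms is ordinary composition of equivariant diffeomorphisms and so is strictly associative; horizontal composition is induced on quotients of fibre products, and the coherence 2-isomorphisms $\bs\al,\bs\be,\bs\ga$ are forced to be the canonical identifications between iterated quotient fibre products, exactly as in \eq{ku4eq3} and Definition \ref{ku4def4}. Next I would prove the orbifold analogue of the stack property Theorem \ref{ku4thm1}: coordinate changes and 2-morphisms between two fixed orbifold charts form a stack on $\Im\psi_i\cap\Im\psi_j$. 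This is cleaner than the Kuranishi case because gluing principal $\Ga_j$-bundles over open covers (with equivariant maps to $V_j$) is a standard and strict descent problem, so no germ/equivalence-class subtleties arise.

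Having the stack property in hand, the constructions of Definitions \ref{ku4def12}--\ref{ku4def17} and Propositions \ref{ku4prop1}--\ref{ku4prop5} carry over with no essential changes: define orbifold structures $\O$ on $X$ exactly as in Definition \ref{ku4def12}(a)--(h), 1-morphisms $\ff:\fX\ra\fY$ as in Definition \ref{ku4def13}, 2-morphisms as in Definition \ref{ku4def14}, and use Proposition \ref{ku4prop1} together with the Axiom of Global Choice as in Definition \ref{ku4def15} to pick compositions. The proofs of coherence (the orbifold analogues of Propositions \ref{ku4prop2}--\ref{ku4prop5}) go through by the same diagram chases, which in fact simplify since all ``$O(s_i)$'' terms disappear and every 2-morphism is represented uniquely by an actual equivariant diffeomorphism rather than a germ class. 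This produces a weak 2-category $\OrbKur$.

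Now I would construct $F_\OrbKur^\Kur:\OrbKur\hookra\Kur$. On objects, send $(X,\O)$ with $\O=\bigl(I,(V_i,\Ga_i,\psi_i),\Phi_{ij}=(P_{ij},\pi_{ij},\phi_{ij}),\la_{ijk}\bigr)$ to $(X,\cK)$ with
\[
\cK=\bigl(I,(V_i,0,\Ga_i,0,\psi_i)_{i\in I},(P_{ij},\pi_{ij},\phi_{ij},0)_{i,j\in I},[P_{ijk},\la_{ijk},0]_{i,j,k\in I}\bigr),
\]
observing that the axioms of Definition \ref{ku4def12}(a)--(h) reduce exactly to those for $\O$ because when $E_i=0$ and $s_i=0$ the conditions Definition \ref{ku4def2}(d),(e) are vacuous and the equivalence relation $\approx$ of Definition \ref{ku4def3} collapses to strict equality on $P_{ij}$ (since $\hat\la_{ij}=0$ and the $O(\pi_{ij}^*(s_i))$ term is zero). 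On 1-morphisms $\ff$, set all $\hat f_{ij}=0$, $\hat F_{ii'}^j=0$, $\hat F_i^{jj'}=0$; on 2-morphisms, set all $\hat\eta_{ij}=0$. One then checks that composition of 1-morphisms and the coherence 2-isomorphisms $\bs\al,\bs\be,\bs\ga$ in $\Kur$ chosen via Definition \ref{ku4def15} may be aligned with the chosen compositions in $\OrbKur$, producing 2-isomorphisms $(F_\OrbKur^\Kur)_{\bs g,\bs f}$ and $(F_\OrbKur^\Kur)_\fX$ satisfying the weak 2-functor axioms of Definition \ref{kuBdef2}.

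Finally I would show $F_\OrbKur^\Kur$ is full and faithful, i.e.\ an equivalence on Hom-groupoids. Given orbifolds $\fX,\fY$ and a 1-morphism $\bs f:F_\OrbKur^\Kur(\fX)\ra F_\OrbKur^\Kur(\fY)$ in $\Kur$, the components $\bs f_{ij}=(P_{ij},\pi_{ij},f_{ij},\hat f_{ij})$ automatically satisfy $\hat f_{ij}(\pi_{ij}^*(s_i))=\phi_{ij}^*(s_j)+O(\pi_{ij}^*(s_i^2))=0+0$ trivially, and $\hat f_{ij}$ is a morphism of zero vector bundles, hence zero. Similarly every 2-morphism component $\bs\eta_{ij}=[\dot P_{ij},\eta_{ij},\hat\eta_{ij}]$ has $\hat\eta_{ij}$ forced to be zero and the $\approx$-relation collapses. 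This provides inverse assignments at the level of 1- and 2-morphisms, yielding full faithfulness. The main obstacle, and the one requiring the most care, will be organising the coherence diagrams for the weak 2-functor $F_\OrbKur^\Kur$ so that the (arbitrarily chosen) compositions in $\Kur$ are related to those in $\OrbKur$ by canonical 2-isomorphisms compatible with $\bs\al,\bs\be,\bs\ga$ --- this is exactly the kind of bookkeeping handled by Proposition \ref{ku4prop1}(b), applied systematically on both sides.
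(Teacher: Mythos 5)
Your proposal is correct and follows essentially the same route as the paper: the paper's proof of this theorem is precisely to rerun \S\ref{ku41}--\S\ref{ku43} (from Definition \ref{ku4def9} through Theorem \ref{ku4thm3}) with the simplified charts, coordinate changes and 2-morphisms of Definitions \ref{ku4def22}--\ref{ku4def25}, i.e.\ with all obstruction data $E_i,s_i,\hat\phi_{ij},\hat\la_{ijk}$ set to zero, and to observe that inserting zeros gives the full and faithful 2-functor $F_\OrbKur^\Kur$. The only point worth noting is that in your full-faithfulness step the Hom-categories in $\Kur$ between image objects are equivalent, not equal, to those of $\OrbKur$ (1-morphisms in $\Kur$ may have $\pi_{ij}(P_{ij})$ a strictly larger open neighbourhood of $\bar\psi_i^{-1}(S)$, and 2-morphisms are germ classes), but restricting to $\pi_{ij}^{-1}(\bar\psi_i^{-1}(S))$ handles this and full faithfulness for weak 2-functors only requires equivalences on Hom-groupoids.
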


We relate this to previous definitions of (2-)categories of orbifolds.

\begin{thm} The $2$-category of Kuranishi orbifolds $\OrbKur$ in Theorem\/ {\rm\ref{ku4thm5}} is equivalent as a weak\/ $2$-category to the $2$-categories of orbifolds $\Orb_{\rm Pr},$ $\Orb_{\rm Le},$ $\Orb_{\rm ManSta},$ $\Orb_{C^\iy{\rm Sta}}$ in {\rm\cite{BeXu,Joyc4,Lerm,Metz,Pron}} described in Remark\/ {\rm\ref{ku4rem8}}. Also there is an equivalence of categories $\Ho(\OrbKur)\simeq\Orb_{\rm MP},$ for $\Orb_{\rm MP}$ the category of orbifolds from  Moerdijk and Pronk\/ {\rm\cite{Moer,MoPr}}.
\label{ku4thm6}
\end{thm}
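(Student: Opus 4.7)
The plan is first to reduce the theorem to a single equivalence. Remark \ref{ku4rem8} records that the four target weak $2$-categories $\Orb_{\rm Pr}, \Orb_{\rm Le}, \Orb_{\rm ManSta}, \Orb_{C^\iy{\rm Sta}}$ are mutually equivalent, and that $\Orb_{\rm MP} \simeq \Ho(\Orb_{\rm Pr})$. So it suffices to exhibit one equivalence $F: \OrbKur \to \Orb_{\rm Le}$; the statement about $\Ho(\OrbKur)$ then follows because equivalences of $(2,1)$-categories descend to equivalences of homotopy categories. I choose Lerman's version as target because its $1$-morphisms are Hilsum--Skandalis bibundles, whose data match the triples $(P_{ij}, \pi_{ij}, \phi_{ij})$ appearing in our coordinate changes and $1$-morphisms almost verbatim.

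On objects, given $\fX = (X, \O)$ with $\O = \bigl(I, (V_i, \Ga_i, \psi_i)_{i \in I}, \Phi_{ij}, \la_{ijk}\bigr)$ and $\Phi_{ij} = (P_{ij}, \pi_{ij}, \phi_{ij})$, I assemble the proper \'etale Lie groupoid
\[
\mathcal{G}_\fX = \Bigl(\ts\coprod_{i,j \in I} P_{ij} \rightrightarrows \ts\coprod_{i \in I} V_i\Bigr),
\]
with source and target given by $\pi_{ij}, \phi_{ij}$, units from the identity coordinate changes (so that $P_{ii} = V_i \t \Ga_i$ contributes the action groupoid of $\Ga_i$), and composition $(P_{ij} \t_{V_j} P_{jk})/\Ga_j \to P_{ik}$ induced by $\la_{ijk}$; associativity follows from the pentagon Definition \ref{ku4def12}(h), properness from $|\Ga_i| < \infty$, and the \'etale property from $\pi_{ij}, \phi_{ij}$ being \'etale. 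A $1$-morphism $\bs f: \fX \to \fY$ with components $\bs f_{ij} = (P_{ij}^f, \pi_{ij}^f, f_{ij})$ and coherences $\bs F_{ii'}^j, \bs F_i^{jj'}$ assembles into a bibundle $\coprod_{i,j} P_{ij}^f: \mathcal{G}_\fX \to \mathcal{G}_\fY$, with the left $\mathcal{G}_\fX$-action furnished by the $\bs F_{ii'}^j$ (gluing over varying $i$) and the right $\mathcal{G}_\fY$-action by the $\bs F_i^{jj'}$ (gluing over varying $j$). A $2$-morphism $\bs\eta = (\bs\eta_{ij})$ becomes a bibundle isomorphism, and the weak $2$-functor compositors $F_{\bs g, \bs f}$ required by Definition \ref{kuBdef2} are produced from Proposition \ref{ku4prop1}(b) by comparing the choice of $\bs g \ci \bs f$ from Definition \ref{ku4def15} with the natural bibundle composition.

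To show $F$ is an equivalence, I verify the three standard criteria. Essential surjectivity on objects is the classical slice theorem for proper \'etale Lie groupoids: any such $\mathcal{G}$ is presented by local slices forming orbifold charts $(V_i, \Ga_i, \psi_i)$ with transitions built from the arrows of $\mathcal{G}$, giving $\fX \in \OrbKur$ with $F(\fX) \simeq \mathcal{G}$. Full and faithful on $2$-morphisms reduces to the stack property Theorem \ref{ku4thm1}(b) specialised to the case $E_i = 0$: both $\OrbKur$-$2$-morphisms and bibundle $2$-isomorphisms are determined by their restrictions to each pair of orbifold charts, and these restrictions carry the same data, namely equivariant diffeomorphisms $\la_{ij}: P_{ij}^f \to P_{ij}^g$ over $V_i$ and $V_j$. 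Essential surjectivity on $1$-morphisms up to $2$-isomorphism is obtained by refining atlases via Corollary \ref{ku4cor1}: given any bibundle $P: \mathcal{G}_\fX \to \mathcal{G}_\fY$, after adding sufficiently many orbifold charts to $\fX, \fY$ one trivialises $P$ over each pair of charts, extracting the data of a $1$-morphism $\bs f$ in $\OrbKur$ with $F(\bs f) \simeq P$.

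The main obstacle I anticipate is in the construction step: translating the coherence conditions Definition \ref{ku4def13}(f)--(h) on a $1$-morphism of Kuranishi orbifolds into the cocycle and equivariance axioms defining a Hilsum--Skandalis bibundle, and then verifying that composition on the two sides agrees up to canonical $2$-isomorphism. This compatibility is delicate because bibundle composition is itself a quotient-of-fibre-product construction parallel to Definition \ref{ku4def4}, so the argument reduces to a careful diagram chase matching the two constructions via the pentagon axioms available on each side, essentially the same kind of bookkeeping that already appears in the proof of Theorem \ref{ku4thm3}.
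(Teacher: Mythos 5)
Your proposal is correct and follows essentially the same route as the paper: both build the proper \'etale Lie groupoid $\coprod_{i,j}P_{ij}\rightrightarrows\coprod_iV_i$ from an object of $\OrbKur$, define a weak $2$-functor to $\Orb_{\rm Le}$ (bibundles), prove essential surjectivity on objects by the Moerdijk--Pronk/slice-theorem argument, and deduce the remaining equivalences and $\Ho(\OrbKur)\simeq\Orb_{\rm MP}$ from Remark \ref{ku4rem8}. The only difference is that the paper observes that Lerman's bibundle $1$- and $2$-morphisms between groupoids of this special form reduce \emph{exactly} to $\OrbKur$ $1$- and $2$-morphisms, so full faithfulness and surjectivity on $1$-morphisms are immediate and your atlas-refinement step is not needed, though it does no harm.
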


\begin{proof} Use the notation of Remark \ref{ku4rem8}. We will define a full and faithful weak 2-functor $F_\OrbKur^{\Orb_{\rm Le}}:\OrbKur\ra\Orb_{\rm Le}$, which is an equivalence of 2-categories. Given an orbifold $\fX=(X,\O)$ in our sense with $\O=\bigl(I,(V_i,\Ga_i,\psi_i)_{i\in I}$, $\Phi_{ij}=(P_{ij},\pi_{ij},\phi_{ij})_{i,j\in I}$, $\la_{ijk,\; i,j,k\in I}\bigr)$, we define a natural proper \'etale Lie groupoid $[V\rra U]=(U,V,s,t,u,i,m)$ in $\Man$ (that is, a groupoid-orbifold in the sense of \cite{Moer,MoPr,Pron} and \cite[\S 3.3]{Lerm}, as in Remark \ref{ku4rem8}(c),(i),(ii)) with $U=\coprod_{i\in I}V_i$, and $V=\coprod_{i,j\in I}P_{ij}$, and $s,t:V\ra U$ given by $s=\coprod_{i,j\in I}\pi_{ij}$ and $t=\coprod_{i,j\in I}\phi_{ij}$, where the data $\la_{ijk,\; i,j,k\in I}$ gives the multiplication map $m:V\t_UV\ra V$. We define~$F_\OrbKur^{\Orb_{\rm Le}}(\fX)=[V\rra U]$. 

By working through the definitions, it turns out that Lerman's definitions of 1- and 2-morphisms in $\Orb_{\rm Le}$ in terms of `bibundles', when applied to groupoids $[V\rra U]$ of the form $F_\OrbKur^{\Orb_{\rm Le}}(\fX)$, reduce exactly to 1- and 2-morphisms in $\OrbKur$ as above. Thus, the definition of $F_\OrbKur^{\Orb_{\rm Le}}$ on 1- and 2-morphisms, and that $F_\OrbKur^{\Orb_{\rm Le}}$ is full and faithful, are immediate. The rest of the weak 2-functor data and conditions are straightforward. To show $F_\OrbKur^{\Orb_{\rm Le}}$ is an equivalence, we need to show that every groupoid-orbifold $[V\rra U]$ is equivalent in $\Orb_{\rm Le}$ to $F_\OrbKur^{\Orb_{\rm Le}}(\fX)$ for some $\fX$ in $\OrbKur$. This can be done as in Moerdijk and Pronk~\cite[Proof of Th.~4.1]{MoPr}.

The discussion in Remark \ref{ku4rem8} now shows that our $\OrbKur$ is equivalent as a weak 2-category to $\Orb_{\rm Pr},\ab\Orb_{\rm Le},\ab\Orb_{\rm ManSta},\ab\Orb_{C^\iy{\rm Sta}}$, and also that $\Ho(\OrbKur)\simeq \Orb_{\rm MP}$ as categories.
\end{proof}

\subsection{Isotropy groups, and tangent and obstruction spaces}
\label{ku46}

In \S\ref{ku25}, for a $\mu$-Kuranishi space $\bX$, we defined a tangent space $T_x\bX$ and an obstruction space $O_x\bX$ for each $x\in\bX$, which were unique up to canonical isomorphism and behaved functorially under morphisms of $\mu$-Kuranishi spaces. We now generalize these ideas to Kuranishi spaces~$\bX$.

\begin{dfn} Let $\bX=(X,\cK)$ be a Kuranishi space, with $\cK=\bigl(I,(V_i,\ab E_i,\ab\Ga_i,\ab s_i,\ab\psi_i)_{i\in I}$, $\Phi_{ij,\;i,j\in I}$, $\La_{ijk,\; i,j,k\in I}\bigr)$, and let $x\in\bX$.

Choose an arbitrary $i\in I$ with $x\in\Im\psi_i$, and choose $v_i\in s_i^{-1}(0)\subseteq V_i$ with $\bar\psi_i(v_i)=x$. Our definitions of $G_x\bX,T_x\bX,O_x\bX$ will depend on these choices, and we explain shortly to what extent. 

Define a finite group $G_x\bX$ called the {\it isotropy group of $\bX$ at\/} $x$ by
\e
G_x\bX=\bigl\{\ga\in\Ga_i:\ga\cdot v_i=v_i\bigr\}=\Stab_{\Ga_i}(v_i),
\label{ku4eq41}
\e
as a subgroup of $\Ga_i$. Define finite-dimensional real vector spaces $T_x\bX$, the {\it tangent space of\/ $\bX$ at\/} $x$, and $O_x\bX$, the {\it obstruction space of\/ $\bX$ at\/} $x$, to be the kernel and cokernel of $\d s_i\vert_{v_i}$, so that as in \eq{ku2eq30} they fit into an exact sequence
\e
\xymatrix@C=20pt{ 0 \ar[r] & T_x\bX \ar[r] & T_{v_i}V_i \ar[rr]^{\d s_i\vert_{v_i}} && E_i\vert_{v_i} \ar[r] & O_x\bX \ar[r] & 0. }
\label{ku4eq42}
\e
The actions of $\Ga_i$ on $V_i,E_i$ induce linear actions of $G_x\bX$ on $T_x\bX,O_x\bX$, by the commutative diagram for each $\ga\in G_x\bX$:
\begin{equation*}
\xymatrix@C=23pt@R=15pt{ 0 \ar[r] & T_x\bX \ar[r] \ar@{.>}[d]^{\ga\cdot} & T_{v_i}V_i \ar[rr]_{\d s_i\vert_{v_i}} \ar[d]^{\d(\ga\cdot)} && E_i\vert_{v_i} \ar[d]^{\ga\cdot} \ar[r] & O_x\bX \ar@{.>}[d]^{\ga\cdot} \ar[r] & 0 \\
0 \ar[r] & T_x\bX \ar[r] & T_{v_i}V_i \ar[rr]^{\d s_i\vert_{v_i}} && E_i\vert_{v_i} \ar[r] & O_x\bX \ar[r] & 0.\!\! }
\end{equation*}
This makes $T_x\bX,O_x\bX$ into representations of $G_x\bX$. Definition \ref{ku4def12}(b) yields
\e
\dim T_x\bX-\dim O_x\bX=\vdim\bX.
\label{ku4eq43}
\e

The dual vector spaces of $T_x\bX,O_x\bX$ will be called the {\it cotangent space}, written $T_x^*\bX$, and the {\it coobstruction space}, written~$O_x^*\bX$.

We now explain how the triple $(G_x\bX,T_x\bX,O_x\bX)$ depends on the choice of $i,v_i$. Let $j,v_j$ be alternative choices, giving another triple $(G_x'\bX,T_x'\bX,O_x'\bX)$. Then we have a coordinate change $\Phi_{ij}=(P_{ij},\pi_{ij},\phi_{ij},\hat\phi_{ij})$. Consider the set
\e
S_x=\bigl\{p\in P_{ij}:\pi_{ij}(p)=v_i,\;\> \phi_{ij}=v_j\bigr\}.
\label{ku4eq44}
\e
Using Theorem \ref{ku4thm2}, we find that $S_x$ is invariant under the commuting actions of $G_x\bX=\Stab_{\Ga_i}(v_i)\subseteq\Ga_i$ and $G_x'\bX=\Stab_{\Ga_j}(v_j)\subseteq\Ga_j$ on $P_{ij}$ induced by the $\Ga_i,\Ga_j$-actions on $P_{ij}$, and $G_x\bX,G_x'\bX$ each act freely and transitively on~$S_x$.

Pick $p\in S_x$. Define isomorphisms $I^G_x:G_x\bX\ra G_x'\bX$, $I^T_x:T_x\bX\ra T_x'\bX$, $I^O_x:O_x\bX\ra O_x'\bX$ by $I^G_x(\ga)=\ga'$ if $\ga\cdot p=(\ga')^{-1}\cdot p$ in $S_x$, using the free transitive actions of $G_x\bX,G_x'\bX$ on $S_x$, and the commutative diagram
\begin{equation*}
\xymatrix@C=23pt@R=18pt{ 0 \ar[r] & T_x\bX \ar[r] \ar@{.>}[d]^{I^T_x} & T_{v_i}V_i \ar[rr]_{\d s_i\vert_{v_i}} \ar[d]^(0.49){\d\phi_{ij}\vert_p\ci(\d\pi_{ij}\vert_p)^{-1}} && E_i\vert_{v_i} \ar[d]^{\hat\phi_{ij}\vert_p} \ar[r] & O_x\bX \ar@{.>}[d]^{I^O_x} \ar[r] & 0 \\
0 \ar[r] & T_x'\bX \ar[r] & T_{v_j}V_j \ar[rr]^{\d s_j\vert_{v_j}} && E_j\vert_{v_j} \ar[r] & O_x'\bX \ar[r] & 0.\!\! }
\end{equation*}
Then $I^T_x,I^O_x$ are $I_G^x$-equivariant isomorphisms.

Suppose we instead picked $\ti p\in S_x$, yielding isomorphisms $\ti I{}^G_x,\ti I{}^T_x,\ti I{}^O_x$. Since $G_x'\bX$ acts freely transitively on $S_x$, there is a unique $\de\in G_x'\bX$ with $\de\cdot p=\ti p$. Then we can show that $\ti I^G_x(\ga)=\de I^G_x(\ga)\de^{-1}$, $\ti I^T_x(v)=\de\cdot I^T_x(v)$ and $\ti I^O_x(w)=\de\cdot I^O_x(w)$ for all $\ga\in G_x\bX$, $v\in T_x\bX$, and~$w\in O_x\bX$. 

If $k,v_k$ is a third choice for $i,v_i$, yielding a triple $(G_x''\bX,T_x''\bX,O_x''\bX)$, then as above by picking points $p\in S_x$ we can define isomorphisms of triples
\begin{align*}
(I^G_x,I^T_x,I^O_x)&:(G_x\bX,T_x\bX,O_x\bX)\longra(G_x'\bX,T_x'\bX,O_x'\bX),\\
(\dot I{}^G_x,\dot I{}^T_x,\dot I{}^O_x)&:(G_x'\bX,T_x'\bX,O_x'\bX)\longra(G_x''\bX,T_x''\bX,O_x''\bX),\\
(\ddot I{}^G_x,\ddot I{}^T_x,\ddot I{}^O_x)&:(G_x\bX,T_x\bX,O_x\bX)\longra(G_x''\bX,T_x''\bX,O_x''\bX).
\end{align*}
We can show that $(\dot I{}^G_x,\dot I{}^T_x,\dot I{}^O_x)\ci (I^G_x,I^T_x,I^O_x)$ and $(\ddot I{}^G_x,\ddot I{}^T_x,\ddot I{}^O_x)$ differ by the action of some canonical $\de\in G_x''\bX$, as for $(I^G_x,I^T_x,I^O_x),(\ti I{}^G_x,\ti I{}^T_x,\ti I{}^O_x)$ above.

To summarize: the triple $(G_x\bX,T_x\bX,O_x\bX)$ is independent of the choice of $i,v_i$ up to isomorphism, but not up to canonical isomorphism. There are isomorphisms $(I^G_x,I^T_x,I^O_x):(G_x\bX,T_x\bX,O_x\bX)\ra(G_x'\bX,T_x'\bX,O_x'\bX)$ between any two choices for $(G_x\bX,T_x\bX,O_x\bX)$, which are canonical up to conjugation by an element of $G_x'\bX$, and behave as expected under composition.
\label{ku4def26}
\end{dfn}

We discuss functoriality of the $G_x\bX,T_x\bX,O_x\bX$ under 1- and 2-morphisms.

\begin{dfn} Let $\bs f:\bX\ra\bY$ be a 1-morphism of Kuranishi spaces, with notation \eq{ku4eq12}--\eq{ku4eq13}, and let $x\in\bX$ with $\bs f(x)=y\in\bY$. Then Definition \ref{ku4def26} gives $G_x\bX,T_x\bX,O_x\bX$, defined using $i\in I$ and $u_i\in U_i$ with $\bar\chi_i(u_i)=x$, and $G_y\bY,T_y\bY,O_y\bY$, defined using $j\in J$ and $v_j\in V_j$ with $\bar\chi_j(v_j)=y$. In $\bs f$ we have a 1-morphism $\bs f_{ij}=(P_{ij},\pi_{ij},f_{ij},\hat f_{ij})$ over $f$. As in \eq{ku4eq44}, define
\e
S_{x,\bs f}=\bigl\{p\in P_{ij}:\pi_{ij}(p)=u_i,\;\> f_{ij}(p)=v_j\bigr\}.
\label{ku4eq45}
\e
As for $S_x$ in Definition \ref{ku4def26}, we find that $S_{x,\bs f}$ is invariant under the commuting actions of $G_x\bX=\Stab_{\Be_i}(u_i)\subseteq\Be_i$ and $G_y\bY=\Stab_{\Ga_j}(v_j)\subseteq\Ga_j$ on $P_{ij}$ induced by the $\Be_i,\Ga_j$-actions on $P_{ij}$. But this time, $G_y\bY$ acts freely and transitively on $S_{x,\bs f}$, but the action of $G_x\bX$ need not be free or transitive. Choose an arbitrary point $p_0\in S_{x,\bs f}$. Our definitions of $G_x\bs f,T_x\bs f,O_x\bs f$ will depend on this choice. 

Define morphisms $G_x\bs f:G_x\bX\ra G_y\bY$, $T_x\bs f:T_x\bX\ra T_y\bY$, $O_x\bs f:O_x\bX\ra O_y\bY$ by $G_x\bs f(\ga)=\ga'$ if $\ga\cdot p_0=(\ga')^{-1}\cdot p_0$ in $S_{x,\bs f}$, using the actions of $G_x\bX,G_y\bY$ on $S_{x,\bs f}$ with $G_y\bY$ free and transitive, and the commutative diagram
\e
\begin{gathered}
\xymatrix@C=23pt@R=19pt{ 0 \ar[r] & T_x\bX \ar[r] \ar@{.>}[d]^{T_x\bs f} & T_{u_i}U_i \ar[rr]_{\d r_i\vert_{u_i}} \ar[d]^(0.49){\d f_{ij}\vert_{p_0}\ci(\d\pi_{ij}\vert_{p_0})^{-1}} && D_i\vert_{u_i} \ar[d]^{\hat f_{ij}\vert_{p_0}} \ar[r] & O_x\bX \ar@{.>}[d]^{O_x\bs f} \ar[r] & 0 \\
0 \ar[r] & T_y\bY \ar[r] & T_{v_j}V_j \ar[rr]^{\d s_j\vert_{v_j}} && E_j\vert_{v_j} \ar[r] & O_y\bY \ar[r] & 0.\!\! }
\end{gathered}
\label{ku4eq46}
\e
Then $T_x\bs f,O_x\bs f$ are $G_x\bs f$-equivariant linear maps.

If $p_0'\in S_{x,\bs f}$ is an alternative choice for $p_0$, yielding $G'_x\bs f,T'_x\bs f,O'_x\bs f$, there is a unique $\de\in G_y\bY$ with $\de\cdot p_0=p_0'$, and then $G'_x\bs f(\ga)=\de (G_x\bs f(\ga))\de^{-1}$, $T'_x\bs f(v)=\de\cdot T_x\bs f(v)$, $O'_x\bs f(w)=\de\cdot O_x\bs f(w)$ for all $\ga\in G_x\bX$, $v\in T_x\bX$, and $w\in O_x\bX$. That is, the triple $(G_x\bs f,T_x\bs f,O_x\bs f)$ is canonical up to conjugation by an element of~$G_y\bY$.

Continuing with the same notation, suppose $\bs g:\bX\ra\bY$ is another 1-morphism and $\bs\eta:\bs f\Ra\bs g$ a 2-morphism in $\Kur$. Then above we define $G_x\bs g,T_x\bs g,O_x\bs g$ by choosing an arbitrary point $q_0\in S_{x,\bs g}$, where
\begin{equation*}
S_{x,\bs g}=\bigl\{q\in Q_{ij}:\pi_{ij}(q)=u_i,\;\> g_{ij}(q)=v_j\bigr\},
\end{equation*}
with $\bs g_{ij}=(Q_{ij},\pi_{ij},g_{ij},\hat g_{ij})$ in $\bs g$. In $\bs\eta$ we have $\bs\eta_{ij}=[\dot P_{ij},\la_{ij},\hat\la_{ij}]$ represented by $(\dot P_{ij},\la_{ij},\hat\la_{ij})$, where $\dot P_{ij}\subseteq P_{ij}$ and $\la_{ij}:\dot P_{ij}\ra Q_{ij}$. From the definitions we find that $S_{x,\bs f}\subseteq\dot P_{ij}$, and $\la_{ij}\vert_{S_{x,\bs f}}:S_{x,\bs f}\ra S_{x,\bs g}$ is a bijection. Since $G_y\bY$ acts freely and transitively on $S_{x,\bs g}$, there is a unique element $G_x\bs\eta\in G_y\bY$ with $G_x\bs\eta\cdot \la_{ij}(p_0)=q_0$. One can now check that
\begin{align*}
G_x\bs g(\ga)&=(G_x\bs\eta)(G_x\bs f(\ga))(G_x\bs\eta)^{-1},\quad T_x\bs g(v)=G_x\bs\eta\cdot T_x\bs f(v),\quad\text{and} \\ 
O_x\bs g(w)&=G_x\bs\eta\cdot O_x\bs f(w)\quad\text{for all $\ga\in G_x\bX$, $v\in T_x\bX$, and $w\in O_x\bX$.} 
\end{align*}
That is, $(G_x\bs g,T_x\bs g,O_x\bs g)$ is conjugate to $(G_x\bs f,T_x\bs f,O_x\bs f)$ under $G_x\bs\eta\in G_y\bY$, the same indeterminacy as in the definition of~$(G_x\bs f,T_x\bs f,O_x\bs f)$.

Suppose instead that $\bs g:\bY\ra\bZ$ is another 1-morphism of Kuranishi spaces and $\bs g(y)=z\in\bZ$. Then in a similar way we can show there is a canonical element $G_{x,\bs g,\bs f}\in G_z\bZ$ such that for all $\ga\in G_x\bX$, $v\in T_x\bX$, $w\in O_x\bX$ we have
\begin{align*}
G_x(\bs g\ci\bs f)(\ga)&=(G_{x,\bs g,\bs f})((G_y\bs g\ci G_x\bs f)(\ga))(G_{x,\bs g,\bs f})^{-1},\\ 
T_x(\bs g\ci\bs f)(v)&=G_{x,\bs g,\bs f}\cdot (T_y\bs g\ci T_x\bs f)(v), \\ 
O_x(\bs g\ci\bs f)(w)&=G_{x,\bs g,\bs f}\cdot (O_y\bs g\ci O_x\bs f)(w). 
\end{align*}
That is, $(G_x(\bs g\ci\bs f),T_x(\bs g\ci\bs f),O_x(\bs g\ci\bs f))$ is conjugate to $(G_y\bs g,T_y\bs g,O_y\bs g)\ci (G_x\bs f,T_x\bs f,O_x\bs f)$ under~$G_{x,\bs g,\bs f}\in G_z\bZ$.

Since 2-morphisms $\bs\eta:\bs f\Ra\bs g$ relate triples $(G_x\bs f,T_x\bs f,O_x\bs f)$ and $(G_x\bs g,\ab T_x\bs g,\ab O_x\bs g)$ by isomorphisms, if $\bs f:\bX\ra\bY$ is an equivalence in $\Kur$ then $G_x\bs f,T_x\bs f,O_x\bs f$ are isomorphisms for all~$x\in\bX$.
\label{ku4def27}
\end{dfn}

\begin{rem}{\bf(a)} The definitions of $G_x\bX,\ab T_x\bX,\ab O_x\bX,G_x\bs f,T_x\bs f,O_x\bs f$ above depend on arbitrary choices. We could use the Axiom of (Global) Choice as in Remark \ref{ku4rem5} to choose particular values for $G_x\bX,\ldots,O_x\bs f$ for all $\bX,x,\bs f$. But this is not really necessary, we can just bear the non-uniqueness in mind when working with them. All the definitions we make using $G_x\bX,\ldots,O_x\bs f$ will be independent of the arbitrary choices in Definitions \ref{ku4def26} and~\ref{ku4def27}.
\smallskip

\noindent{\bf(b)} If we do choose particular values for $G_x\bX,\ldots,O_x\bs f$, then we can express the naturality and functoriality of the $G_x\bX,\ldots,O_x\bs f$ succinctly as follows: we have defined a weak 2-functor $\Kur_{\bs *}\ra \mathop{\bf GVect}^2$ as in \S\ref{kuB2}, where $\Kur_{\bs *}$ is the weak 2-category of `pointed Kuranishi spaces' with objects $(\bX,x)$ for $\bX\in\Kur$ and $x\in\bX$, and $\mathop{\bf GVect}^2$ is the strict 2-category with objects triples $(G,T,O)$ for $G$ a finite group and $T,O$ finite-dimensional real $G$-representations, and 1-morphisms $(\rho,L,M):(G,T,O)\ra (G',T',O')$ where $\rho:G\ra G'$ is a group morphism and $L:T\ra T'$, $M:O\ra O'$ are $\rho$-equivariant linear maps, and 2-morphisms $\de:(\rho,L,M)\Ra(\ti\rho,\ti L,\ti M)$ of 1-morphisms $(\rho,L,M),(\ti\rho,\ti L,\ti M):(G,T,O)\ra (G',T',O')$ being elements $\de\in G'$ with $\ti\rho(\ga)=\de\rho(\ga)\de^{-1}$, $\ti L(v)=\de\cdot L(v)$, $\ti M(w)=\de\cdot M(w)$ for all $\ga\in G$, $v\in T$ and~$w\in O$.
\label{ku4rem9}
\end{rem}

As in \S\ref{ku25}, we quote from the sequel~\cite{Joyc12}.

\begin{dfn}[\!\!\cite{Joyc12}] Let $\bs f:\bX\ra\bY$ be a 1-morphism of Kuranishi spaces. 
\smallskip

\noindent{\bf(a)} Call $\bs f$ {\it representable\/} if $G_x\bs f:G_x\bX\ra G_{\bs f(x)}\bY$ is injective for all~$x\in\bX$.
\smallskip

\noindent{\bf(b)} Call $\bs f$ a {\it w-submersion\/} if $O_x\bs f:O_x\bX\ra O_{\bs f(x)}\bY$ is surjective for all~$x\in\bX$.

\smallskip

\noindent{\bf(c)} Call $\bs f$ a {\it submersion\/} if $T_x\bs f:T_x\bX\ra T_{\bs f(x)}\bY$ is surjective and $O_x\bs f:O_x\bX\ra O_{\bs f(x)}\bY$ is an isomorphism for all $x\in\bX$.
\smallskip

\noindent{\bf(d)} Call $\bs f$ a {\it w-immersion\/} if it is representable and $T_x\bs f:T_x\bX\ra T_{\bs f(x)}\bY$ is injective for all $x\in\bX$. Call $\bs f$ an {\it immersion\/} if it is a w-immersion and $O_x\bs f:O_x\bX\ra O_{\bs f(x)}\bY$ is surjective for all $x\in\bX$.
\smallskip

\noindent{\bf(e)} Call $\bs f$ a {\it w-embedding\/} (or an {\it embedding\/}) if it is a w-immersion (or an immersion, respectively), and $G_x\bs f:G_x\bX\ra G_{\bs f(x)}\bY$ is an isomorphism for all $x\in\bX$, and $f:X\ra f(X)$ is a homeomorphism.
\label{ku4def28}
\end{dfn}

\begin{dfn}[\!\!\cite{Joyc12}] Let $\bs g:\bX\ra\bZ$ and $\bs h:\bY\ra\bZ$ be 1-morphisms of Kuranishi spaces. Call $\bs g,\bs h$ {\it d-transverse\/} if for all $x\in\bX$, $y\in\bY$ with $\bs g(x)=\bs h(y)=z$ in $\bZ$, and all $\ga\in G_z\bZ$, then $O_x\bs g\op (\ga\cdot O_y\bs h):O_x\bX\op O_y\bY\ra O_z\bZ$ is surjective.

If $\bs g$ or $\bs h$ is a w-submersion, then $\bs g,\bs h$ are automatically d-transverse.
\label{ku4def29}
\end{dfn}

Here are some sample results:

\begin{ex}[\!\!\cite{Joyc12}] {\bf(i)} A Kuranishi space $\bX$ is an orbifold, in the sense of Theorem \ref{ku4thm5}, if and only if $O_x\bX=0$ for all $x\in\bX$. Also $\bX$ is a manifold, as in Definition \ref{ku4def18}, if and only if $G_x\bX=\{1\}$ and $O_x\bX=0$ for all~$x\in\bX$.
\smallskip

\noindent{\bf(ii)} A representable 1-morphism $\bs f:\bX\ra\bY$ in $\Kur$ is \'etale if and only if $T_x\bs f:T_x\bX\ra T_{\bs f(x)}\bY$ and $O_x\bs f:O_x\bX\ra O_{\bs f(x)}\bY$ are isomorphisms for all $x\in\bX$. Also $\bs f$ is an equivalence in $\Kur$ if and only if $\bs f$ is \'etale, and $G_x\bs f:G_x\bX\ra G_{\bs f(x)}\bY$ is an isomorphism for all $x\in\bX$, and $f:X\ra Y$ is a bijection.

\smallskip

\noindent{\bf(iii)} Let $\bs g:\bX\ra\bZ$ and $\bs h:\bY\ra\bZ$ be d-transverse 1-morphisms of Kuranishi spaces. Then the fibre product $\bW=\bX\t_{\bs g,\bZ,\bs h}\bY$ exists in the weak 2-category $\Kur$, in the sense of \S\ref{kuB3}, with $\vdim\bW=\vdim\bX+\vdim\bY-\vdim\bZ$. As sets we have
\begin{align*}
W\cong \bigl\{(x,y,C):\,&x\in \bX,\;\> y\in \bY,\;\> \bs g(x)=\bs h(y)=z\in Z,\\
&C\in G_x\bs g(G_x\bX) \big\backslash G_z\bZ \big/ G_y\bs h(G_y\bY)\bigr\}.
\end{align*}

If $\bs g$ is a submersion and $\bY$ is an orbifold, then $\bW$ is an orbifold.
\label{ku4ex2}
\end{ex}

\subsection{\texorpdfstring{m-Kuranishi spaces and $\mu$-Kuranishi spaces}{m-Kuranishi spaces and \textmu-Kuranishi spaces}}
\label{ku47}

In \S\ref{ku45} we defined a weak 2-category of orbifolds $\OrbKur$, essentially as the full 2-subcategory $\OrbKur\subset\Kur$ of Kuranishi spaces $\bX=(X,\cK)$ for which all the obstruction bundles $E_i$ in the Kuranishi neighbourhoods $(V_i,E_i,\Ga_i,s_i,\psi_i)$ in $\cK$ are zero. This allowed us to simplify our notation: we took orbifold charts to be $(V_i,\Ga_i,\psi_i)$ rather than $(V_i,E_i,\Ga_i,s_i,\psi_i)$, and coordinate changes to be $(P_{ij},\pi_{ij},\phi_{ij})$ rather than $(P_{ij},\pi_{ij},\phi_{ij},\hat\phi_{ij})$, and 2-morphisms to be $\la_{ij}$ rather than $[\dot P_{ij},\la_{ij},\hat\la_{ij}]$, as in the orbifold case $E_i,s_i,\hat\phi_{ij},\hat\la_{ij}$ are zero and $\dot P_{ij}=P_{ij}$.

In a similar way, we now define a weak 2-category of {\it m-Kuranishi spaces\/} $\mKur$, a kind of derived manifold, essentially as the full 2-subcategory $\mKur\subset\Kur$ of Kuranishi spaces $\bX$ with groups $\Ga_i=\{1\}$ for all $i\in I$. This allows us to simplify our notation. We take `m-Kuranishi neighbourhoods' to be $(V_i,E_i,s_i,\psi_i)$ rather than $(V_i,E_i,\Ga_i,s_i,\psi_i)$. In coordinate changes, $\pi_{ij}:P_{ij}\ra V_{ij}$ in \S\ref{ku41} is a diffeomorphism, since it is a principal $\Ga_j$-bundle for $\Ga_j=\{1\}$, so we can replace $P_{ij}$ by $V_{ij}$ and take `m-coordinate changes' to be $(V_{ij},\phi_{ij},\hat\phi_{ij})$, and 2-morphisms to be $[\dot V_{ij},\hat\la_{ij}]$ for open $\dot V_{ij}\subseteq V_{ij}\subseteq V_i$. The analogue $\mKur_S(X)$ of $\Kur_S(X)$ in \S\ref{ku41} is then a {\it strict\/} 2-category, as the canonical identification $\la_{ijkl}$ in \eq{ku4eq3} is replaced by the identity map.

Here are the simplified analogues of Definitions \ref{ku4def1}--\ref{ku4def3} and \ref{ku4def4}--\ref{ku4def8}. Note the strong similarity to $\mu$-Kuranishi neighbourhoods and morphisms in~\S\ref{ku21}.

\begin{dfn} Let $X$ be a topological space. An {\it m-Kuranishi neighbourhood\/ $(V,E,s,\psi)$ on\/} $X$ is a $\mu$-Kuranishi neighbourhood on $X$ from Definition~\ref{ku2def2}.

Let $(V_i,E_i,s_i,\psi_i)$, $(V_j,E_j,s_j,\psi_j)$ be m-Kuranishi neighbourhoods on a topological space $X$, and $S\subseteq\Im\psi_i\cap\Im\psi_j$ an open set. A 1-{\it morphism $\Phi_{ij}:(V_i,E_i,s_i,\psi_i)\ra (V_j,E_j,s_j,\psi_j)$ of m-Kuranishi neighbourhoods over\/} $S$ is a triple $\Phi_{ij}=(V_{ij},\phi_{ij},\hat\phi_{ij})$ satisfying:
\begin{itemize}
\setlength{\itemsep}{0pt}
\setlength{\parsep}{0pt}
\item[(a)] $V_{ij}$ is an open neighbourhood of $\psi_i^{-1}(S)$ in $V_i$. We do not require that $V_{ij}\cap s_i^{-1}(0)=\psi_i^{-1}(S)$, only that~$\psi_i^{-1}(S)\subseteq V_{ij}\cap s_i^{-1}(0)\subseteq V_{ij}$.
\item[(b)] $\phi_{ij}:V_{ij}\ra V_j$ is a smooth map.
\item[(c)] $\hat\phi_{ij}:E_i\vert_{V_{ij}}\ra\phi_{ij}^*(E_j)$ is a morphism of vector bundles on $V_{ij}$.
\item[(d)] $\hat\phi_{ij}(s_i\vert_{V_{ij}})=\phi_{ij}^*(s_j)+O(s_i^2)$.
\item[(e)] $\psi_i=\psi_j\ci\phi_{ij}$ on $s_i^{-1}(0)\cap V_{ij}$.
\end{itemize}

The {\it identity $1$-morphism\/} is~$\id_{(V_i,E_i,s_i,\psi_i)}=(V_i,\id_{V_i},\id_{E_i})$. 
\label{ku4def30}
\end{dfn}

\begin{dfn} Let $\Phi_{ij},\Phi_{ij}'\!:\!(V_i,E_i,s_i,\psi_i)\!\ra\!(V_j,E_j,s_j,\psi_j)$ be 1-morphisms of m-Kuranishi neighbourhoods over $S\subseteq\Im\psi_i\cap\Im\psi_j\subseteq X$, where $\Phi_{ij}=(V_{ij},\phi_{ij},\hat\phi_{ij})$ and~$\Phi_{ij}'=(V_{ij}',\phi_{ij}',\hat\phi_{ij}')$. Consider pairs $(\dot V_{ij},\hat\la_{ij})$ satisfying:
\begin{itemize}
\setlength{\itemsep}{0pt}
\setlength{\parsep}{0pt}
\item[(a)] $\dot V_{ij}$ is an open neighbourhood of $\psi_i^{-1}(S)$ in $V_{ij}\cap V_{ij}'$.
\item[(b)] $\hat\la_{ij}:E_i\vert_{\dot V_{ij}}\ra \phi_{ij}^*(TV_j)\vert_{\dot V_{ij}}$ is a vector bundle morphism on $\dot V_{ij}$ with
\end{itemize}
\e
\phi_{ij}'=\phi_{ij}+\hat\la_{ij}\cdot s_i+O(s_i^2)\;\>\text{and}\;\> \hat\phi_{ij}'=\hat\phi_{ij}+\hat\la_{ij}\cdot \phi_{ij}^*(\d s_j)+O(s_i)\;\> \text{on $\dot V_{ij}$.}
\label{ku4eq47}
\e

Define a binary relation $\approx$ on such pairs by $(\dot V_{ij},\hat\la_{ij})\approx(\dot V_{ij}',\hat\la_{ij}')$ if there exists an open neighbourhood $\ddot V_{ij}$ of $\psi_i^{-1}(S)$ in $\dot V_{ij}\cap \dot V_{ij}'$ with
\e
\hat\la_{ij}\vert_{\ddot V_{ij}}=\hat\la_{ij}'\vert_{\ddot V_{ij}}+O(s_i)\quad\text{on $\ddot V_{ij}$.}
\label{ku4eq48}
\e
Clearly $\approx$ is an equivalence relation. Write $[\dot V_{ij},\hat\la_{ij}]$ for the $\approx$-equivalence class of $(\dot V_{ij},\hat\la_{ij})$. We say that $[\dot V_{ij},\hat\la_{ij}]:\Phi_{ij}\Ra\Phi_{ij}'$ is a 2-{\it morphism of\/ $1$-morphisms of m-Kuranishi neighbourhoods on\/ $X$ over\/} $S$, or just a 2-{\it morphism over\/} $S$. We often write~$\La_{ij}=[\dot V_{ij},\hat\la_{ij}]$.

The {\it identity\/ $2$-morphism\/} of $\Phi_{ij}$ is~$\id_{\Phi_{ij}}=[V_{ij},0]:\Phi_{ij}\Ra\Phi_{ij}$. 
\label{ku4def31}
\end{dfn}

\begin{dfn} Suppose $\Phi_{ij}\!=\!(V_{ij},\phi_{ij},\hat\phi_{ij}):(V_i,E_i,s_i,\psi_i)\!\ra\! (V_j,E_j,s_j,\psi_j)$ and $\Phi_{jk}=(V_{jk},\phi_{jk},\hat\phi_{jk}):(V_j,E_j,s_j,\psi_j)\ra (V_k,E_k,\ab s_k,\ab\psi_k)$ are 1-morphisms of m-Kuranishi neighbourhoods over~$S\subseteq X$. Define the {\it composition\/} to be $\Phi_{jk}\ci\Phi_{ij}=(V_{ik},\phi_{ik},\hat\phi_{ik})$, where $V_{ik}=\phi_{ij}^{-1}(V_{jk})\subseteq V_{ij}\subseteq V_i$, and $\phi_{ik}:V_{ij}\ra V_k$ is $\phi_{ik}=\phi_{jk}\ci\phi_{ij}\vert_{V_{ik}}$, and $\hat\phi_{ik}:E_i\vert_{V_{ik}}\ra\phi_{ik}^*(E_k)$ is~$\hat\phi_{ik}=\phi_{ij}\vert_{V_{ik}}^*(\hat\phi_{jk})\ci\hat\phi_{ij}\vert_{V_{ik}}$. 

It is easy to check that $\Phi_{jk}\ci\Phi_{ij}:(V_i,E_i,s_i,\psi_i)\ra (V_k,E_k,s_k,\psi_k)$ is a 1-morphism of m-Kuranishi neighbourhoods over $S$. Also, composition of 1-morphisms is clearly {\it strictly associative}. This is a contrast with 1-morphisms of Kuranishi neighbourhoods in \S\ref{ku41}, where we had a 2-morphism $\bs\al_{\Phi_{kl},\Phi_{jk},\Phi_{ij}}:(\Phi_{kl}\ci\Phi_{jk})\ci\Phi_{ij}\Ra\Phi_{kl}\ci(\Phi_{jk}\ci\Phi_{ij})$ in \eq{ku4eq4}, generally not the identity.

Clearly $\Phi_{ij}\ci\id_{(V_i,E_i,s_i,\psi_i)}=\id_{(V_j,E_j,s_j,\psi_j)}\ci\Phi_{ij}=\Phi_{ij}$ for 1-morphisms $\Phi_{ij}:(V_i,E_i,s_i,\psi_i)\ra (V_j,E_j,s_j,\psi_j)$. This is a contrast with 1-morphisms of Kuranishi neighbourhoods in \S\ref{ku41}, where we had 2-morphisms $\bs\be_{\Phi_{ij}},\bs\ga_{\Phi_{ij}}$ in \eq{ku4eq6}, generally not the identities.
\label{ku4def32}
\end{dfn}

\begin{dfn} Let $\Phi_{ij},\Phi_{ij}',\Phi_{ij}'':(V_i,E_i,s_i,\psi_i)\ra (V_j,E_j,s_j,\psi_j)$ be 1-mor\-phisms of m-Kuranishi neighbourhoods over $S\subseteq X$ with $\Phi_{ij}=(V_{ij},\phi_{ij},\hat\phi_{ij})$, $\Phi_{ij}'=(V_{ij}',\phi_{ij}',\hat\phi_{ij}')$, $\Phi_{ij}''=(V_{ij}'',\phi_{ij}'',\hat\phi_{ij}'')$, and $\La_{ij}=[\dot V_{ij},\hat\la_{ij}]:\Phi_{ij}\Ra\Phi_{ij}'$ and $\La_{ij}'=[\dot V_{ij}',\hat\la_{ij}']:\Phi_{ij}'\Ra\Phi_{ij}''$ be 2-morphisms over $S$. Choose representatives $(\dot V_{ij},\hat\la_{ij}),(\dot V_{ij}',\hat\la_{ij}')$ in the $\approx$-equivalence classes $\La_{ij},\La_{ij}'$. Define $\dot V_{ij}''=\dot V_{ij}\cap \dot V_{ij}'\subseteq V_i$. Since $\phi_{ij}'\vert_{\dot V_{ij}''}=\phi_{ij}\vert_{\dot V_{ij}''}+O(s_i)$ by \eq{ku4eq47}, the discussion after Definition \ref{ku2def1}(vi) shows that there exists $\check\la'_{ij}:E_i\vert_{\dot V_{ij}''}\ra \phi_{ij}^*(TV_j)\vert_{\dot V_{ij}''}$ with
\begin{equation*}
\check\la_{ij}'=\hat\la_{ij}'\vert_{\dot V_{ij}''}+O(s_i).
\end{equation*}

Define $\hat\la_{ij}'':E_i\vert_{\dot V_{ij}''}\ra \phi_{ij}^*(TV_j)\vert_{\dot V_{ij}''}$ by $\hat\la_{ij}''=\hat\la_{ij}\vert_{\dot V_{ij}''}+\check\la_{ij}'$. Then $(\dot V_{ij}'',\hat\la_{ij}'')$ satisfies Definition \ref{ku4def31}(a),(b) for $\Phi_{ij},\Phi_{ij}''$. Hence $\La_{ij}''=[\dot V_{ij}'',\hat\la_{ij}'']:\Phi_{ij}\Ra\Phi_{ij}''$ is a 2-morphism over $S$. It is independent of choices. We define $\La_{ij}'\od\La_{ij}=\La_{ij}''$, and call this the {\it vertical composition of\/ $2$-morphisms over\/}~$S$.
\label{ku4def33}
\end{dfn}

\begin{dfn} Let $\Phi_{ij},\Phi_{ij}':(V_i,E_i,s_i,\psi_i)\ra (V_j,E_j,s_j,\psi_j)$ and $\Phi_{jk},\Phi_{jk}':(V_j,E_j,s_j,\ab\psi_j)\ab\ra (V_k,E_k,\ab s_k,\ab\psi_k)$ be 1-morphisms of m-Kuranishi neighbourhoods over $S\subseteq X$, and $\La_{ij}:\Phi_{ij}\Ra\Phi_{ij}'$, $\La_{jk}:\Phi_{jk}\Ra\Phi_{jk}'$ be 2-morphisms over $S$. 
Use our usual notation for $\Phi_{ij},\ldots,\La_{jk}$, and write $(V_{ik},\phi_{ik},\hat\phi_{ik})=\Phi_{jk}\ci\Phi_{ij}$, $(V_{ik}',\phi_{ik}',\hat\phi_{ik}')=\Phi_{jk}'\ci\Phi_{ij}'$, as in Definition \ref{ku4def32}. Choose representatives $(\dot V_{ij},\hat\la_{ij})$, $(\dot V_{jk},\hat\la_{jk})$ for~$\La_{ij},\La_{jk}$. 

Set $\dot V_{ik}=\dot V_{ij}\cap\phi_{ij}^{-1}(\dot V_{jk})\subseteq V_i$. Define a morphism of vector bundles on $\dot V_{ik}$
\begin{gather*}
\hat\la_{ik}:E_i\vert_{\dot V_{ik}}\longra \phi_{ik}^*(TV_k)\vert_{\dot V_{ik}}\\
\text{by}\;\>
\hat\la_{ik}=\phi_{ij}\vert_{\dot V_{ik}}^*(\d\phi_{jk})\ci(\hat\la_{ij})+
\phi_{ij}\vert_{\dot V_{ik}}^*(\hat\la_{jk})\ci \hat\phi_{ij}\vert_{\dot V_{ik}}.
\end{gather*}

One can now check that $(\dot V_{ik},\hat\la_{ik})$ satisfies  Definition \ref{ku4def31}(a),(b) for $\Phi_{ik},\Phi_{ik}'$, so $\La_{ik}=[\dot V_{ik},\hat\la_{ik}]$ is a 2-morphism, which is independent of choices. We define {\it horizontal composition of\/ $2$-morphisms over\/} $S$ to be $\La_{jk}*\La_{ij}=\La_{ik}$.

We have now defined all the structures of a strict 2-category: objects (m-Kuranishi neighbourhoods), 1- and 2-morphisms, their three kinds of composition, and two kinds of identities. As for $\Kur_S(X)$ in \S\ref{ku41}, the remaining 2-category axioms in \S\ref{kuB1} hold. Thus, we have defined the {\it strict\/ $2$-category $\mKur_S(X)$ of m-Kuranishi neighbourhoods over\/} $S$ to have objects m-Kuranishi neighbourhoods $(V_i,E_i,s_i,\psi_i)$ on $X$ with $S\subseteq\Im\psi_i$, and 1- and 2-morphisms $\Phi_{ij},\La_{ij}$ as above. All 2-morphisms in $\mKur_S(X)$ are 2-isomorphisms, that is, $\mKur_S(X)$ is a (2,1)-category.

We define a 1-morphism $\Phi_{ij}:(V_i,E_i,s_i,\psi_i)\ra (V_j,E_j,s_j,\psi_j)$ in $\mKur_S(X)$ to be an {\it m-coordinate change over\/} $S$ if it is an equivalence in~$\mKur_S(X)$.
\label{ku4def34}
\end{dfn}

We can now follow \S\ref{ku41}--\S\ref{ku43} from Definition \ref{ku4def9} until Theorem \ref{ku4thm3}, taking $\Ga_i=\{1\}$ throughout. This gives:

\begin{thm} We can define a weak\/ $2$-category $\mKur$ of \begin{bfseries}m-Kuranishi spaces\end{bfseries}. Objects of\/ $\mKur$ are $\fX=(X,\cK)$ where $X$ is a Hausdorff, second countable topological space and\/ $\cK=\bigl(I,$ $(V_i,E_i,s_i,\psi_i)_{i\in I},$ $(V_{ij},\phi_{ij},\hat\phi_{ij})_{i,j\in I},$ $[\dot V_{ijk},\hat\la_{ijk}]_{i,j,k\in I}\bigr)$ is an \begin{bfseries}m-Kuranishi structure on $X$ of virtual dimension\end{bfseries} $n\in\Z,$ defined as in {\rm\S\ref{ku43}} but using m-Kuranishi neighbourhoods, m-coordinate changes and $2$-morphisms as above.

There is a natural full and faithful weak\/ $2$-functor $F_\mKur^\Kur:\mKur\ra\Kur$ embedding $\mKur$ as a full\/ $2$-subcategory of\/ $\Kur,$ which on objects maps $F_\mKur^\Kur:(X,\cK)\mapsto(X,\cK'),$ where for $\cK$ as above, $\cK'=\bigl(I,(V_i,E_i,\{1\},s_i,\psi_i)_{i\in I},$ $(V_{ij},\ab\id_{V_{ij}},\ab\phi_{ij},\ab\hat\phi_{ij})_{i,j\in I},$ $[\dot V_{ijk},\id_{\dot V_{ijk}},\hat\la_{ijl}]_{i,j,k\in I}\bigr)_{i,j,k\in I}\bigr)$ is the Kuranishi structure obtained by taking all finite groups $\Ga_i$ to be~$\{1\}$.
\label{ku4thm7}
\end{thm}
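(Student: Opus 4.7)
The plan is to follow the construction of $\Kur$ in \S\ref{ku41}--\S\ref{ku43} verbatim, but with all isotropy groups $\Ga_i$ replaced by the trivial group. The simplifications noted in Remark \ref{ku4rem1} propagate all the way through: principal $\{1\}$-bundles $\pi_{ij}:P_{ij}\to V_{ij}\subseteq V_i$ are diffeomorphisms, so we can eliminate the $P_{ij}$ data and take m-Kuranishi 1-morphisms to be triples $(V_{ij},\phi_{ij},\hat\phi_{ij})$; the 2-morphism data $\la_{ij}$ reduces to an identification between open subsets of $V_i$, which we can eliminate by working with germs, leaving only $[\dot V_{ij},\hat\la_{ij}]$. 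First I would check, as already sketched in Definitions \ref{ku4def30}--\ref{ku4def34}, that with these simplifications the strict 2-category $\mKur_S(X)$ is well-defined, that composition of 1-morphisms is \emph{strictly} associative (because the associator \eqref{ku4eq3} collapses to the identity when all groups are trivial, so $\bs\al_{\Phi_{kl},\Phi_{jk},\Phi_{ij}}=\id$), and that $\bs\be_{\Phi_{ij}},\bs\ga_{\Phi_{ij}}$ also collapse to identities.

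Next I would establish the analog of the stack property Theorem \ref{ku4thm1} for m-Kuranishi neighbourhoods: that $\bcHom_f\bigl((V_i,E_i,s_i,\psi_i),(V_j,E_j,s_j,\psi_j)\bigr)$, defined using m-Kuranishi 1- and 2-morphisms over $(S,f)$, is a stack on $\Im\psi_i\cap f^{-1}(\Im\psi_j)$. Rather than proving this from scratch, I would deduce it from Theorem \ref{ku4thm1}(b) applied to $(V_i,E_i,\{1\},s_i,\psi_i)$ and $(V_j,E_j,\{1\},s_j,\psi_j)$, together with the observation that the passage from $(V_{ij},\phi_{ij},\hat\phi_{ij})$ to $(V_{ij},\id_{V_{ij}},\phi_{ij},\hat\phi_{ij})$ induces an equivalence of groupoids on each open $S$. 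With the stack property in hand, Propositions \ref{ku4prop1}--\ref{ku4prop5} and Theorem \ref{ku4thm3} go through mutatis mutandis to produce the weak 2-category $\mKur$; in fact, because no groups appear, the Axiom of Global Choice is still needed only to pick compositions $\bs g\circ\bs f$.

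For the weak 2-functor $F_\mKur^\Kur:\mKur\to\Kur$, I would define it on objects by the recipe given, on 1-morphisms of m-Kuranishi spaces by applying the corresponding substitution $(V_{ij},\phi_{ij},\hat\phi_{ij})\mapsto(V_{ij},\id_{V_{ij}},\phi_{ij},\hat\phi_{ij})$ to each $\bs f_{ij}$ in a 1-morphism $\bs f=(f,\bs f_{ij},\bs F_{ii'}^j,\bs F_i^{jj'})$, and similarly on 2-morphisms, replacing each $[\dot V_{ij},\hat\la_{ij}]$ by $[\dot V_{ij},\id_{\dot V_{ij}},\hat\la_{ij}]$. The coherence 2-isomorphisms $(F_\mKur^\Kur)_{\bs g,\bs f}$ arise from the discrepancy between strict composition in $\mKur$ and the chosen composition in $\Kur$ via Definition \ref{ku4def15}; they exist and are unique by Proposition \ref{ku4prop1}(b). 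Verifying the weak 2-functor axioms is then a matter of unwinding definitions and applying the stack property.

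The main obstacle will be proving \textbf{fullness on 1-morphisms up to 2-isomorphism}, which is what full faithfulness means for a weak 2-functor. Faithfulness on 2-morphisms is immediate: a 2-morphism $[\dot P_{ij},\la_{ij},\hat\la_{ij}]$ in $\Kur_S(X)$ between images of m-Kuranishi data has $\la_{ij}:\dot V_{ij}\to V_{ij}'$ satisfying $\id_{V_{ij}'}\ci\la_{ij}=\id_{V_{ij}}$, forcing $\la_{ij}=\id$, so the 2-morphism is determined by $\hat\la_{ij}$ alone. For fullness on 1-morphisms, given a Kuranishi 1-morphism $\Phi=(P,\pi,\phi,\hat\phi)$ between images $F_\mKur^\Kur(V_i,E_i,s_i,\psi_i)\to F_\mKur^\Kur(V_j,E_j,s_j,\psi_j)$, the map $\pi:P\to V_i$ is a principal $\{1\}$-bundle, hence a diffeomorphism onto its image $V_{ij}\subseteq V_i$; setting $\ti\phi=\phi\circ\pi^{-1}$ and $\ti{\hat\phi}=(\pi^{-1})^*(\hat\phi)$ produces an m-Kuranishi 1-morphism $\ti\Phi=(V_{ij},\ti\phi,\ti{\hat\phi})$, and the diffeomorphism $\pi$ itself provides a canonical 2-isomorphism $F_\mKur^\Kur(\ti\Phi)\Rightarrow\Phi$ in $\Kur_S(X)$. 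The analogous argument at the level of full 1-morphisms of Kuranishi spaces $\bs f:F_\mKur^\Kur(\fX)\to F_\mKur^\Kur(\fY)$ requires patching these local reparametrizations compatibly with the $\bs F_{ii'}^j,\bs F_i^{jj'}$ data, which again uses the stack property to glue the local choices into an m-Kuranishi 1-morphism $\bs{\ti f}:\fX\to\fY$ with $F_\mKur^\Kur(\bs{\ti f})$ 2-isomorphic to $\bs f$; this is exactly parallel to the proof of Proposition \ref{ku4prop1}, and is where the bulk of the bookkeeping lies.
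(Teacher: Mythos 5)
Your proposal is correct and takes essentially the same approach as the paper: the paper's entire proof of Theorem \ref{ku4thm7} is the observation that one follows \S\ref{ku41}--\S\ref{ku43} from Definition \ref{ku4def9} to Theorem \ref{ku4thm3} with all $\Ga_i=\{1\}$, in the simplified notation of Definitions \ref{ku4def30}--\ref{ku4def34}, which is exactly what you do. Your added details --- transferring the stack property from Theorem \ref{ku4thm1}(b) via the objectwise equivalence of Hom-groupoids, and using that $\pi_{ij}$ is a diffeomorphism onto its open image to obtain full faithfulness --- just fill in what the paper treats as immediate `by construction' (cf.\ the proof of Theorem \ref{ku4thm8}(b) in \S\ref{ku73}).
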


We can now explain how Kuranishi spaces relate to $\mu$-Kuranishi spaces in \S\ref{ku2}, and to Kuranishi spaces with trivial isotropy groups in~\S\ref{ku46}.

\begin{dfn} We will define a functor $F_\mKur^\muKur:\Ho(\mKur)\ra\muKur$, where $\Ho(\mKur)$ is the {\it homotopy category\/} of the weak 2-category $\mKur$, that is, the category with objects $\bX,\bY$ objects of $\mKur$, and morphisms $[\bs f]:\bX\ra\bY$ are 2-isomorphism classes $[\bs f]$ of 1-morphisms $\bs f:\bX\ra\bY$ in~$\mKur$.

Let $\bX=(X,\cK)$ be an object of $\mKur$, with $\cK=\bigl(I,$ $(V_i,E_i,s_i,\psi_i)_{i\in I},$ $(V_{ij},\phi_{ij},\hat\phi_{ij})_{i,j\in I},$ $[\dot V_{ijk},\hat\la_{ijk}]_{i,j,k\in I}\bigr)$. Then $(V_i,E_i,s_i,\psi_i)$ is a $\mu$-Kuranishi neighbourhood on $X$ for each $i\in I$, and $(V_{ij},\phi_{ij},\hat\phi_{ij})$ satisfies Definition \ref{ku2def3}(a)--(e) over $S=\Im\psi_i\cap\Im\psi_j$ by Definition \ref{ku4def30}(a)--(e), so taking the $\sim$-equivalence class $\Phi_{ij}'=[V_{ij},\phi_{ij},\hat\phi_{ij}]$ as in Definition \ref{ku2def3} gives a $\mu$-coordinate change $\Phi_{ij}':(V_i,E_i,s_i,\psi_i)\ra(V_j,E_j,s_j,\psi_j)$. Write $\cK'=\bigl(I,(V_i,E_i,s_i,\psi_i)_{i\in I}$, $\Phi_{ij,\;i,j\in I}'\bigr)$ and $\bX'=(X,\cK')$. Then Definition \ref{ku2def11}(d)--(f) follow from Definition \ref{ku4def12}(e),\ab (f),\ab (h), so $\bX'$ is a $\mu$-Kuranishi space. Define~$F_\mKur^\muKur(\bX)=\bX'$.

Next let $\bs f:\bX\ra\bY$ be a 1-morphism in $\mKur$, and set $\bX'=F_\mKur^\muKur(\bX)$ and $\bY'=F_\mKur^\muKur(\bY)$. Writing $\bs f=\bigl(f,\bs f_{ij,\;i\in I,\; j\in J}$, $\bs F_{ii',\;i,i'\in I}^{j,\; j\in J}$, $\bs F_{i,\;i\in I}^{jj',\; j,j'\in J}\bigr)$ as in \eq{ku4eq15}, with $\bs f_{ij}=(U_{ij},f_{ij},\hat f_{ij})$ for $i\in I$ and $j\in J$, we find that
\e
\bs f'=\bigl(f,[U_{ij},f_{ij},\hat f_{ij}]_{ij,\;i\in I,\; j\in J}\bigr):\bX'\ra\bY'
\label{ku4eq49}
\e
is a morphism in~$\muKur$.

Now suppose $\bs g:\bX\ra\bY$ is another 1-morphism and $\bs\eta:\bs f\Ra\bs g$ a 2-morphism in $\mKur$, so that $f=g:X\ra Y$, and let $\bs g$ include $\bs g_{ij}=(\ti U_{ij},g_{ij},\hat g_{ij})$ for $i\in I$ and $j\in J$. Then $\bs\eta_{ij}:(U_{ij},f_{ij},\hat f_{ij})\Ra(\ti U_{ij},g_{ij},\hat g_{ij})$ is a 2-morphism of m-Kuranishi neighbourhoods over $(S,f)$ for $S=\Im\chi_i\cap f^{-1}(\Im\psi_j)$, so comparing Definitions \ref{ku2def4} and \ref{ku4def31} shows that $[U_{ij},f_{ij},\hat f_{ij}]=[\ti U_{ij},g_{ij},\hat g_{ij}]$ in morphisms of $\mu$-Kuranishi neighbourhoods over $(S,f)$. Therefore $\bs f'$ in \eq{ku4eq49} is independent of the choice of representative $\bs f$ for the morphism $[\bs f]:\bX\ra\bY$ in $\Ho(\mKur)$. Define $F_\mKur^\muKur([\bs f])=\bs f'$. Comparing definitions of composition of (1-)morphisms and identity (1-)morphisms in \S\ref{ku23} and \S\ref{ku43}, we see that $F_\mKur^\muKur:\Ho(\mKur)\ra\muKur$ is a functor.

Define $\KurtrG$ to be the full 2-subcategory of $\Kur$ with objects $\bX$ with $G_x\bX=\{1\}$ for all $x\in\bX$. Observe that the 2-functor $F_\mKur^\Kur:\mKur\hookra\Kur$ in Theorem \ref{ku4thm7} maps to $\KurtrG\subset\Kur$, since if $\bX=(X,\cK)$ is a Kuranishi space with all groups $\Ga_i=\{1\}$ in $\cK$ then $G_x\bX=\{1\}$ for all $x\in\bX$, as $G_x\bX\subseteq\Ga_i$ for some $i\in I$ by \eq{ku4eq41}. 
\label{ku4def35}
\end{dfn}

The next (quite difficult) theorem will be proved in~\S\ref{ku73}. 

\begin{thm}{\bf(a)} The functor $F_\mKur^\muKur:\Ho(\mKur)\ra\muKur$ in Definition\/ {\rm\ref{ku4def35}} is an equivalence of categories.
\smallskip

\noindent{\bf(b)} The weak\/ $2$-functor $F_\mKur^\Kur:\mKur\ra\KurtrG$ from Theorem\/ {\rm\ref{ku4thm7}} is an equivalence of weak\/ $2$-categories.
\smallskip

Combining {\bf(a)\rm,\bf(b)} gives an equivalence of categories\/~$\Ho(\KurtrG)\!\simeq\!\muKur$.

\label{ku4thm8}
\end{thm}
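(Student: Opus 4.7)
The statement decomposes as: (a) the ordinary-category equivalence $F_\mKur^\muKur:\Ho(\mKur)\to\muKur$; (b) the weak 2-category equivalence $F_\mKur^\Kur:\mKur\to\KurtrG$; and the combined equivalence $\Ho(\KurtrG)\simeq\muKur$, which is then formal by passing (b) to homotopy categories and composing with a quasi-inverse of (a). For (a) I will check essential surjectivity, fullness, and faithfulness separately. For (b), Theorem \ref{ku4thm7} already asserts $F_\mKur^\Kur$ is full and faithful (i.e.\ induces equivalences on hom-categories), so the only remaining task is essential surjectivity onto $\KurtrG$.

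\noindent\textbf{Essential surjectivity for (b).} Given $\bX=(X,\cK)\in\KurtrG$ and a Kuranishi neighbourhood $(V_i,E_i,\Ga_i,s_i,\psi_i)$ in $\cK$, the hypothesis $G_x\bX=\{1\}$ together with \eqref{ku4eq41} forces $\Stab_{\Ga_i}(v_i)=\{1\}$ at every $v_i\in s_i^{-1}(0)$. Since $\Ga_i$ is finite and acts smoothly, it acts freely on some $\Ga_i$-invariant open neighbourhood $V_i^\circ\subseteq V_i$ of $s_i^{-1}(0)$. The quotient $(\tilde V_i,\tilde E_i,\{1\},\tilde s_i,\tilde\psi_i):=(V_i^\circ/\Ga_i,(E_i|_{V_i^\circ})/\Ga_i,\{1\},\tilde s_i,\tilde\psi_i)$ is then a Kuranishi neighbourhood on $X$ with trivial isotropy and the same footprint $\Im\psi_i$. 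By Theorem \ref{ku4thm4}(a) these refine, canonically up to 2-isomorphism, to Kuranishi neighbourhoods on the Kuranishi space $\bX$ itself, and Proposition \ref{ku4prop6} assembles them into a Kuranishi structure $\cK^\sharp$ on $X$ with a canonical equivalence $\bX\simeq\bX^\sharp=(X,\cK^\sharp)$ in $\Kur$. Every coordinate change $(P^\sharp_{ab},\pi^\sharp_{ab},\phi^\sharp_{ab},\hat\phi^\sharp_{ab})$ in $\cK^\sharp$ now has $\pi^\sharp_{ab}$ a principal $\{1\}$-bundle, hence a diffeomorphism, so $\bX^\sharp$ lies in the image of $F_\mKur^\Kur$ up to the canonical identification.

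\noindent\textbf{Proof of (a).} Write $\cK'=\bigl(I,(V_i,E_i,s_i,\psi_i),\Phi'_{ij}=[V_{ij},\phi_{ij},\hat\phi_{ij}]\bigr)$ for $\bX'\in\muKur$. For essential surjectivity, pick representatives of each $\Phi'_{ij}$ (choosing $(V_i,\id_{V_i},\id_{E_i})$ when $i=j$); each is a 1-morphism $\Phi_{ij}$ of m-Kuranishi neighbourhoods. Both $\Phi_{jk}\ci\Phi_{ij}$ computed in the strict 2-category $\mKur_S(X)$ and $\Phi_{ik}$ represent the same $\sim$-class $\Phi'_{ik}$, so Definition \ref{ku2def3} supplies data $(\dot V_{ijk},\hat\la_{ijk})$ defining a 2-morphism $\La_{ijk}:\Phi_{jk}\ci\Phi_{ij}\Ra\Phi_{ik}$ in $\mKur_S(X)$. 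This assembles into $\bX=(X,\cK)\in\mKur$ with $F_\mKur^\muKur(\bX)=\bX'$ by inspection. For fullness, given $\bs f':\bX'_1\to\bX'_2$ in $\muKur$, lift each $\bs f'_{ij}=[U_{ij},f_{ij},\hat f_{ij}]$ to a representative $\bs f_{ij}$, then extract the 2-morphisms $\bs F^j_{ii'}$, $\bs F^{jj'}_i$ as witnesses of the $\sim$-equivalences encoded by Definition \ref{ku2def13}(a),(b). For faithfulness, if $F_\mKur^\muKur([\bs f])=F_\mKur^\muKur([\bs g])$ then $\bs f_{ij}\sim\bs g_{ij}$ yields canonical 2-morphisms $\bs\eta_{ij}:\bs f_{ij}\Ra\bs g_{ij}$, which the stack property ensures satisfy Definition \ref{ku4def14}(a),(b) and therefore assemble into the required $\bs\eta:\bs f\Ra\bs g$.

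\noindent\textbf{Main obstacle.} The delicate work lies in verifying the coherence axioms for the structures constructed in (a): Definition \ref{ku4def12}(h) for $\La_{ijk}$, and Definition \ref{ku4def13}(f)--(h) for the 2-morphisms $\bs F^j_{ii'},\bs F_i^{jj'}$ constructed in fullness. Because $\mKur_S(X)$ is a strict 2-category, all associators $\bs\al$ are identities, so each pentagon reduces to an equality of the form $\hat\la=\hat\la'+O(s_i)$ on a neighbourhood of $\psi_i^{-1}(S)$, as in \eqref{ku4eq48}. Both sides arise from iterated $\sim$-moves \eqref{ku2eq1} applied to the same underlying $\mu$-data, and the detailed computations \eqref{ku2eq3}--\eqref{ku2eq6} already carried out for associativity of composition in $\muKur$ show that the two composite $\hat\la$-corrections differ by terms absorbed into the $O(s_i)$ ambiguity. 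Equivalently, the stack property (Theorem \ref{ku4thm1}(a), specialised to trivial isotropy, refining Theorem \ref{ku2thm1}) permits checking these identities locally, where they collapse to the same linear-algebraic equations in the derivatives $\d s_j$, $\d\phi_{ij}$, $\hat\phi_{ij}$. The analogous subtlety for (b), ensuring the local quotients $V_i^\circ/\Ga_i$ glue coherently, is absorbed by invoking Proposition \ref{ku4prop6}, designed precisely for such assembly.
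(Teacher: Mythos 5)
There is a genuine gap in your proof of part (a), and it is exactly where the paper's proof spends nearly all of its effort. When you extract $\La_{ijk}$, $\bs F^j_{ii'}$, $\bs F_i^{jj'}$ and $\bs\eta_{ij}$ as "witnesses" of $\sim$-equivalences via Definition \ref{ku2def3}, these witnesses are far from canonical: for fixed 1-morphisms the 2-morphisms $[\dot V_{ij},\hat\la_{ij}]$ form a torsor over $C^\iy\bigl(\Hom(E_i,\phi_{ij}^*(TV_j))\bigr)$ modulo $O(s_i)$, and two different witnesses $\hat\la,\hat\la'$ of the same $\sim$-relation need \emph{not} satisfy $\hat\la=\hat\la'+O(s_i)$ — only $(\hat\la-\hat\la')\cdot s_i=O(s_i^2)$ and $(\hat\la-\hat\la')\cdot\phi_{ij}^*(\d s_j)=O(s_i)$, which is vacuous, for instance, wherever $s_i$ and $s_j$ vanish identically. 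Consequently arbitrarily chosen $\La_{ijk}$ do not satisfy Definition \ref{ku4def12}(h), arbitrarily chosen $\bs F^j_{ii'},\bs F_i^{jj'}$ do not satisfy Definition \ref{ku4def13}(f)--(h), and arbitrarily chosen $\bs\eta_{ij}$ do not satisfy Definition \ref{ku4def14}(a),(b); your "main obstacle" paragraph, which claims the discrepancies are "absorbed into the $O(s_i)$ ambiguity" or can be checked locally via the stack property, is therefore not a proof — locality does not help because even locally the chosen witnesses simply fail the identities, and \eqref{ku2eq3}--\eqref{ku2eq6} only establish associativity in $\muKur$, not coherence of chosen lifts. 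The paper's proof fixes this by \emph{modifying} the arbitrary choices: Lemma \ref{ku7lem1} constructs partitions of unity $\{\eta_{i\ti\imath}\}$ on the charts $U_i$ compatible with all coordinate changes up to $O(r_i)$ (this itself requires $C^\iy$-scheme theory, since $X$ is not a manifold), Lemma \ref{ku7lem2} (whiskering by the equivalences $\Tau_{i\ti\imath}$, $\Up_{\ti\jmath j}$, $\Phi_{\ti\imath i}$) produces coherent local candidates indexed by a base chart, and the final 2-morphisms are defined by averaging these candidates against the partitions of unity as in \eqref{ku7eq55}, \eqref{ku7eq64}, \eqref{ku7eq72}; coherence of the averaged data is then verified by explicit diagrams. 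None of this machinery, nor any substitute for it, appears in your proposal, so surjectivity, fullness and faithfulness in (a) are all unproven as written.

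For part (b) your idea (restrict to the locus where $\Ga_i$ acts freely, which contains $s_i^{-1}(0)$ because $G_x\bX=\{1\}$, and pass to quotients) is the same as the paper's, but your shortcut through Theorem \ref{ku4thm4}(a) and Proposition \ref{ku4prop6} misquotes Theorem \ref{ku4thm4}: that theorem produces coordinate changes between neighbourhoods that are \emph{already} Kuranishi neighbourhoods on the Kuranishi space $\bX$, i.e.\ already equipped with the implicit data $\Phi_{ai},\La_{aij}$ of Definition \ref{ku4def19}; it does not upgrade a Kuranishi neighbourhood on the topological space $X$ to one on $\bX$. You must still construct that implicit data for the quotient charts — concretely, coordinate changes $(\ti V_i,\ti E_i,\{1\},\ti s_i,\ti\psi_i)\ra(V_j,E_j,\Ga_j,s_j,\psi_j)$ obtained from $V_i'\ra V_i'/\Ga_i$ together with the $\Phi_{ij}$, plus 2-morphisms verifying \eqref{ku4eq38} — which is essentially what the paper's explicit 1-morphisms $\bs g_{ji}$ (quotienting $P_{ji}'$ by $\Ga_j$ only) and quotient 2-morphisms provide before it checks the equivalence $\bX\simeq\bs{\ti X}$ directly using the stack property. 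This part is fixable with the paper's tools, but as stated it is not complete.
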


The 2-category structure on $\mKur$ or $\KurtrG$ encodes information which is forgotten by $\muKur\simeq\Ho(\mKur)\simeq\Ho(\KurtrG)$, but which is sometimes important. An example in which the 2-category structure matters is d-transverse `fibre products' $\bX\t_{\bs g,\bZ,\bs h}\bY$, as discussed in Example \ref{ku2ex6}(iv) and Example \ref{ku4ex2}(iii): these do not satisfy a universal property in the category $\muKur$, so are not fibre products in $\muKur$ in the sense of category theory, but they do satisfy a universal property (see \S\ref{kuB3}) in the weak 2-categories $\mKur,\KurtrG$ or $\Kur$, and so are genuine 2-category fibre products there.

This is similar to the question of whether orbifolds should be a considered to be a category $\Ho(\Orb)$ or a 2-category $\Orb$, as discussed in \S\ref{ku45}. However, the category $\muKur$ has one good property not shared by $\Ho(\Orb)$: morphisms $\bs f:\bX\ra\bY$ in $\muKur$ form a sheaf on $\bX$ (that is, given an open cover $\{\bU_i:i\in I\}$ of $\bX$ and morphisms $\bs f_i:\bU_i\ra\bY$ in $\muKur$ with $\bs f_i\vert_{\bU_i\cap\bU_j}=\bs f_j\vert_{\bU_i\cap\bU_j}$, there is a unique $\bs f:\bX\ra\bY$ with $\bs f\vert_{\bU_i}=\bs f_i$ for all $i\in I$), but morphisms $[\ff]:\fX\ra\fY$ in $\Ho(\Orb)$ do not form a sheaf if $\fY$ has nontrivial isotropy groups.

\subsection{Relation to other definitions of Kuranishi space}
\label{ku48}

Different notions of Kuranishi space, and related terms such as `Kuranishi structure', `Kuranishi atlas', and `good coordinate system', can be found in the literature in the work of Fukaya, Oh, Ohta and Ono \cite{Fuka,FOOO1,FOOO2,FOOO3, FOOO4,FOOO5,FOOO6,FOOO7,FOOO8,FuOn}, McDuff and Wehrheim \cite{McWe2,McWe3}, Yang \cite{Yang1,Yang2,Yang3}, and others. They are reviewed in Appendix~\ref{kuA}.

Our next definition of {\it fair coordinate system\/} is a kind of `least common denominator' for these other notions of Kuranishi space and related terms. As we show in Examples \ref{ku4ex3}--\ref{ku4ex6}, Kuranishi spaces in the sense of \S\ref{ku43}, and Fukaya--Oh--Ohta--Ono Kuranishi spaces \cite[\S A1]{FOOO1}, and topological spaces with Fukaya--Oh--Ohta--Ono weak good coordinate systems \cite[\S A1]{FOOO1}, and topological spaces with McDuff--Wehrheim weak Kuranishi atlases \cite{McWe3}, all carry natural fair coordinate systems. 

Theorem \ref{ku4thm9} shows that a topological space $X$ with a fair coordinate system $\cF$ can be made into a Kuranishi space $\bX$, uniquely up to equivalence in $\Kur$. This maps the spaces considered in \cite{Fuka,FOOO1,FOOO2,FOOO3, FOOO4,FOOO5,FOOO6,FOOO7,FOOO8,FuOn,McWe2,McWe3,Yang1,Yang2,Yang3} to our Kuranishi spaces.

\begin{dfn} Let $X$ be a Hausdorff, second countable topological space. A {\it fair coordinate system\/ $\cF$ on\/ $X,$ of virtual dimension\/} $n\in\Z$, is data $\cF=\bigl(A,(V_a,E_a,\Ga_a,s_a,\psi_a)_{a\in A}$, $S_{ab},\Phi_{ab,\;a,b\in A}$, $S_{abc},\La_{abc,\; a,b,c\in A}\bigr)$, where:
\begin{itemize}
\setlength{\itemsep}{0pt}
\setlength{\parsep}{0pt}
\item[(a)] $A$ is an indexing set (not necessarily finite).
\item[(b)] $(V_a,E_a,\Ga_a,s_a,\psi_a)$ is a Kuranishi neighbourhood on $X$ for each $a\in A$, with~$\dim V_a-\rank E_a=n$.
\item[(c)] $S_{ab}\subseteq \Im\psi_a\cap\Im\psi_b$ is an open set for all $a,b\in A$. (We can have $S_{ab}=\es$.)
\item[(d)] $\Phi_{ab}=(P_{ab},\pi_{ab},\phi_{ab},\hat\phi_{ab}):(V_a,E_a,\ab\Ga_a,\ab s_a,\ab\psi_a)\ra (V_b,E_b,\Ga_b,s_b,\psi_b)$ is a coordinate change over $S_{ab}$, for all $a,b\in A$.
\item[(e)] $S_{abc}\subseteq S_{ab}\cap S_{ac}\cap S_{bc}\subseteq\Im\psi_a\cap\Im\psi_b\cap\Im\psi_c$ is an open set for all $a,b,c\in A$. (We can have $S_{abc}=\es$.)
\item[(f)] $\La_{abc}=[\dot P_{abc},\la_{abc},\hat\la_{abc}]:\Phi_{bc}\ci\Phi_{ab}\Ra\Phi_{ac}$ is a 2-morphism for all $a,b,c\in A$, defined over $S_{abc}$. 
\item[(g)] $\bigcup_{a\in A}\Im\psi_a=X$. 
\item[(h)] $S_{aa}=\Im\psi_a$ and $\Phi_{aa}=\id_{(V_a,E_a,\Ga_a,s_a,\psi_a)}$ for all $a\in A$.
\item[(i)] $S_{aab}=S_{abb}=S_{ab}$ and $\La_{aab}=\bs\be_{\Phi_{ab}}$, $\La_{abb}=\bs\ga_{\Phi_{ab}}$ for all $a,b\in A$. 
\item[(j)] The following diagram of 2-morphisms over $S_{abc}\cap S_{abd}\cap S_{acd}\cap S_{bcd}$ commutes for all $a,b,c,d\in A$:
\begin{equation*}
\xymatrix@C=90pt@R=15pt{
*+[r]{(\Phi_{cd}\ci\Phi_{bc})\ci\Phi_{ab}} \ar@{=>}[d]^{\bs\al_{\Phi_{cd},\Phi_{bc},\Phi_{ab}}}
\ar@{=>}[rr]_(0.53){\La_{bcd}*\id_{\Phi_{ab}}} && *+[l]{\Phi_{bd}\ci\Phi_{ab}} \ar@{=>}[d]_{\La_{abd}}  
\\
*+[r]{\Phi_{cd}\ci(\Phi_{bc}\ci\Phi_{ab})} \ar@{=>}[r]^(0.65){\id_{\Phi_{cd}}*\La_{abc}}
& \Phi_{cd}\ci\Phi_{ac} \ar@{=>}[r]^{\La_{acd} } & *+[l]{\Phi_{ad}.\!} }
\end{equation*}
\end{itemize}
Also, {\it either\/} condition (k) {\it or\/} condition (k$)'$ below hold, or both, where:
\begin{itemize}
\setlength{\itemsep}{0pt}
\setlength{\parsep}{0pt}
\item[(k)] Suppose $B\subseteq A$ is finite and nonempty, and $x\in\bigcap_{b\in B}\Im\psi_b\subseteq X$. Then there exists $a\in A$ such that $x\in S_{ab}$ for all $b\in B$, and if $b,c\in B$ with $x\in S_{bc}$ then~$x\in S_{abc}$.
\end{itemize}
And: 
\begin{itemize}
\setlength{\itemsep}{0pt}
\setlength{\parsep}{0pt}
\item[(k$)'$] Suppose $B\subseteq A$ is finite and nonempty, and $x\in\bigcap_{b\in B}\Im\psi_b\subseteq X$. Then there exists $d\in A$ such that $x\in S_{bd}$ for all $b\in B$, and if $b,c\in B$ with $x\in S_{bc}$ then~$x\in S_{bcd}$.
\end{itemize}
\label{ku4def36}
\end{dfn}

The name `fair coordinate system' is intended to suggest something like the `good coordinate systems' in Appendix \ref{kuA}, but not as strong. Here (k),(k$)'$ are somewhat arbitrary, and one could substitute other conditions instead. What we are trying to achieve by these conditions on the $S_{ab},S_{abc}$ is roughly that: 
\begin{itemize}
\setlength{\itemsep}{0pt}
\setlength{\parsep}{0pt}
\item[(A)] If $x\in\Im\psi_b\cap\Im\psi_c$, one can map $(V_b,E_b,\Ga_b,s_b,\psi_b)\!\ra\!(V_c,E_c,\Ga_c,s_c,\psi_c)$ near $x$ by a finite chain of coordinate changes $\Phi_{ij}$ and their (quasi)inverses $\Phi_{ji}^{-1}$ --- for (k) by $\Phi_{ac}\ci\Phi_{ab}^{-1}$, and for (k$)'$ by $\Phi_{cd}^{-1}\ci\Phi_{bd}$.
\item[(B)] Any two such chains of $\Phi_{ij},\Phi_{ji}^{-1}$ near $x$ are canonically 2-isomorphic near $x$ using combinations of the 2-isomorphisms $\La_{ijk}$ and their inverses.
\end{itemize}
We chose (k),(k$)'$ as they hold in our examples, and there is a nice method to prove Theorem \ref{ku4thm9} using (k) or (k$)'$.

\begin{ex} Let $\bX=(X,\cK)$ be a Kuranishi space in the sense of \S\ref{ku43}, with $\cK=\bigl(I,(V_i,E_i,\Ga_i,s_i,\psi_i)_{i\in I}$, $\Phi_{ij,\;i,j\in I}$, $\La_{ijk,\; i,j,k\in I}\bigr)$. Set $S_{ij}=\Im\psi_i\cap\Im\psi_j$ for all $i,j\in I$, and $S_{ijk}=\Im\psi_i\cap\Im\psi_j\cap\Im\psi_k$ for all $i,j,k\in I$. Then $\cF=\bigl(I,(V_i,E_i,\Ga_i,s_i,\psi_i)_{i\in I}$, $S_{ij},\Phi_{ij,\;i,j\in I}$, $S_{ijk},\La_{ijk,\; i,j,k\in I}\bigr)$ is a fair coordinate system on $X$. Here Definition \ref{ku4def36}(a)--(j) are immediate from Definition \ref{ku4def12}(a)--(h), and both of Definition \ref{ku4def36}(k),(k$)'$ hold, where we can take $a\in B$ arbitrary in (k) and $d\in B$ arbitrary in (k$)'$.
\label{ku4ex3}
\end{ex}

\begin{ex} Using the notation of \S\ref{kuA1}, suppose $\bX=(X,\cK)$ is a FOOO Kuranishi space without boundary with $\vdim\bX=n$, in the sense of Definition \ref{kuAdef3}. Then $\cK$ gives a FOOO Kuranishi neighbourhood $(V_p,E_p,\Ga_p,s_p,\psi_p)$ for each $p\in X$, and for all $p,q\in X$ with $q\in\Im\psi_p$ it gives a FOOO coordinate change $\Phi_{qp}=(V_{qp},h_{qp},\vp_{qp},\hat\vp_{qp}):(V_q,E_q,\Ga_q,s_q,\psi_q)\ra(V_p,E_p,\Ga_p,s_p,\psi_p)$ defined on an open neighbourhood $S_{qp}$ of $q$ in $\Im\psi_q\cap\Im\psi_p$, and for all $p,q,r\in X$ with $q\in\Im\psi_p$ and $r\in S_{qp}$, Definition \ref{kuAdef3}(b) gives unique group elements $\ga_{rqp}^\al\in\Ga_p$ which relate $\Phi_{qp}\ci\Phi_{rq}$ to $\Phi_{rp}$ on~$S_{rqp}:=S_{qp}\cap S_{rp}\cap S_{rq}$. 

We will define a fair coordinate system $\cF$ on $X$. Take the indexing set $A$ to be $A=X$, and for each $p\in A$, let the Kuranishi neighbourhood $(V_p,E_p,\Ga_p,s_p,\psi_p)$ be as in $\cK$, regarded as a Kuranishi neighbourhood in the sense of \S\ref{ku41} as in Example \ref{kuAex1}. If $p\ne q\in A$ with $q\in\Im\psi_p$, define $S_{qp}\subseteq\Im\psi_q\cap\Im\psi_p$ to be the domain of the FOOO coordinate change $\Phi_{qp}$ in $\cK$. Define $\ti\Phi_{qp}:(V_q,E_q,\Ga_q,s_q,\psi_q)\ra(V_p,E_p,\Ga_p,s_p,\psi_p)$ to be the coordinate change over $S_{qp}$ in the sense of \S\ref{ku41} associated to the FOOO coordinate change $\Phi_{qp}$ in Example \ref{kuAex2}. Define $S_{pp}=\Im\psi_p$ and $\ti\Phi_{pp}=\id_{(V_p,E_p,\Ga_p,s_p,\psi_p)}$ for all $p\in A$. If $p\ne q\in A$ and $q\notin \Im\psi_p$, define $S_{qp}=\es$ and $\ti\Phi_{qp}=(\es,\es,\es,\es)$.

If $p\ne q\ne r\in A$ with $q\in\Im\psi_p$ and $r\in S_{qp}$, set $S_{rqp}=S_{qp}\cap S_{rp}\cap S_{rq}$, and define $\La_{rqp}:\ti\Phi_{qp}\ci\ti\Phi_{rq}\Ra\ti\Phi_{rp}$ to be the 2-morphism over $S_{rqp}$ defined in Example \ref{kuAex3}(ii) using the group elements $\ga_{rqp}^\al\in\Ga_p$ in Definition \ref{kuAdef3}(b). If $p\ne q\ne r\in A$ with $q\notin\Im\psi_p$ or $r\notin S_{qp}$, define $S_{rqp}=\es$ and $\La_{rqp}=[\es,\es,\es]$. Define $S_{qpp}=S_{qqp}=S_{qp}$ and $\La_{qqp}=\bs\be_{\ti\Phi_{qp}}$, $\La_{qpp}=\bs\ga_{\ti\Phi_{qp}}$ for all $p,q\in A$. This defines all the data in $\cF=\bigl(A,(V_p,E_p,\Ga_p,s_p,\psi_p)_{p\in A}$, $S_{qp},\ti\Phi_{qp,\;q,p\in A}$, $S_{rqp},\La_{rqp,\; r,q,p\in A}\bigr)$. We will show $\cF$ satisfies Definition~\ref{ku4def36}(a)--(k).

Parts (a)--(i) are immediate. For (j), if $p\ne q\ne r\ne s\in X$ with $q\in\Im\psi_p$ and $r\in S_{qp}$ and $s\in S_{rq}\cap S_{rp}$ then Definition \ref{kuAdef3}(b) gives elements $\ga_{rqp}^\al,\ga_{sqp}^{\al'},\ga_{srp}^{\al''}\in\Ga_p$ and $\ga_{srq}^{\al'''}\in\Ga_q$ satisfying \eq{kuAeq3}. Using \eq{kuAeq3} four times we see that
\e
\ga_{rqp}^{\al}\ga_{srp}^{\al''}\cdot\vp_{sp}
=\vp_{qp}\ci\vp_{rq}\ci\vp_{sr}=h_{qp}(\ga_{srq}^{\al'''})\ga_{sqp}^{\al''}\cdot\vp_{sp},
\label{ku4eq50}
\e
where \eq{ku4eq50} holds on the domain
\e
\begin{split}
&\vp_{sr}^{-1}\bigl((\vp_{rq}^{-1}(V_{qp})\cap V_{rq} \cap V_{rp})^\al\bigr)\cap(\vp_{sq}^{-1}(V_{qp})\cap V_{sq} \cap V_{sp})^{\al'}\cap \\
&(\vp_{sr}^{-1}(V_{rp})\cap V_{sr} \cap V_{sp})^{\al''}\cap
(\vp_{sr}^{-1}(V_{rq})\cap V_{sr} \cap V_{sq})^{\al'''}.
\end{split}
\label{ku4eq51}
\e
If \eq{ku4eq51} is nonempty, the argument of Example \ref{kuAex3}(iii) implies that $\ga_{rqp}^{\al}\ga_{srp}^{\al''}=h_{qp}(\ga_{srq}^{\al'''})\ga_{sqp}^{\al''}$. This is the condition required to verify $\La_{srp}\od(\La_{rqp}*\id_{\ti\Phi_{sr}})=\La_{sqp}\od(\id_{\ti\Phi_{qp}}*\La_{srq})\od\bs\al_{\ti\Phi_{qp},\ti\Phi_{rq},\ti\Phi_{sr}}$ on the component of $S_{srq}\cap S_{srp}\cap S_{sqp}\cap S_{rqp}$ corresponding to the connected components $\al,\al',\al'',\al'''$. 

This proves Definition \ref{ku4def36}(j) in this case. If $p=q$ then (j) becomes
\e
\begin{split}
\La_{srq}\od(\bs\ga_{\ti\Phi_{rq}}*\id_{\ti\Phi_{sr}})=\bs\ga_{\ti\Phi_{sq}}\od(\id_{\id_{(V_q,E_q,\Ga_q,s_q,\psi_q)}}*\La_{srq})&\\
{}\od\bs\al_{\id_{(V_q,E_q,\Ga_q,s_q,\psi_q)},\ti\Phi_{rq},\ti\Phi_{sr}}&,
\end{split}
\label{ku4eq52}
\e
which holds trivially, and the cases $q=r$, $r=s$ are similar. In the remaining cases one of $S_{srq},S_{srp},S_{sqp},S_{rqp}$ is empty, so (j) is vacuous. Thus (j) holds.

For (k), suppose $B\subseteq A$ is finite and nonempty, and $x\in\bigcap_{p\in B}\Im\psi_p\subseteq X$. Then $x\in S_{xp}$ for all $p\in B$, since $S_{xp}$ is an open neighbourhood of $x$ in $\Im\psi_x\cap\Im\psi_p$, and $x\in S_{xqp}$ for all $q,p\in B$ with $x\in S_{qp}$, since $S_{xqp}=S_{qp}\cap S_{xp}\cap S_{xq}$ in this case and $x\in S_{xp}$, $x\in S_{xq}$. Thus (k) holds with $a=x$, and $\cF$ is a fair coordinate system on~$X$.

If instead $X$ has a DY Kuranishi structure $\cK$ without boundary in the sense of Definition \ref{kuAdef12} (a minor variation on FOOO Kuranishi structures), exactly the same construction yields a fair coordinate system $\cF$ on~$X$.
\label{ku4ex4}
\end{ex}

\begin{ex} Let $\bigl((I,\pr),(V_i,E_i,\Ga_i,s_i,\psi_i)_{i\in I},\Phi_{ij,\; i\pr j}\bigr)$ be a FOOO weak good coordinate system without boundary of virtual dimension $n\in\Z$ on a compact, metrizable topological space $X$, in the sense of Definition~\ref{kuAdef7}. 

We will define a fair coordinate system $\cF$ on $X$. Take the indexing set $A$ to be $I$, and the (FOOO) Kuranishi neighbourhoods $(V_i,E_i,\Ga_i,s_i,\psi_i)$ for $i\in I$ to be as given. If $i\ne j\in I$ with $i\pr j$, define $S_{ij}=\Im\psi_i\cap\Im\psi_j$, and $\ti\Phi_{ij}:(V_i,E_i,\Ga_i,s_i,\psi_i)\ra(V_j,E_j,\Ga_j,s_j,\psi_j)$ to be the coordinate change over $S_{ij}$ in the sense of \S\ref{ku41} associated to the FOOO coordinate change $\Phi_{ij}$ in Example \ref{kuAex2}. Define $S_{ii}=\Im\psi_i$ and $\ti\Phi_{ii}=\id_{(V_i,E_i,\Ga_i,s_i,\psi_i)}$ for all $i\in I$. If $i\ne j\in I$ and $i\not\pr j$, define $S_{ij}=\es$ and~$\ti\Phi_{ij}=(\es,\es,\es,\es)$.

If $i\ne j\ne k\in I$ with $i\pr j\pr k$, set $S_{ijk}=\Im\psi_i\cap\Im\psi_j\cap\Im\psi_k$, and define $\La_{ijk}:\ti\Phi_{jk}\ci\ti\Phi_{ij}\Ra\ti\Phi_{ik}$ to be the 2-morphism over $S_{ijk}$ defined in Example \ref{kuAex3}(ii) using the unique group element $\ga_{ijk}\in\Ga_k$ in
Definition \ref{kuAdef7}(b). If $i\ne j\ne k\in I$ with $i\not\pr j$ or $j\not\pr k$, define $S_{ijk}=\es$ and $\La_{ijk}=[\es,\es,\es]$. Set $S_{iij}=S_{ijj}=\Im\psi_i\cap\Im\psi_j$ and $\La_{iij}=\bs\be_{\ti\Phi_{ij}}$, $\La_{ijj}=\bs\ga_{\ti\Phi_{ij}}$ for all $i,j\in I$. This defines all the data in $\cF=\bigl(I,(V_i,E_i,\Ga_i,s_i,\psi_i)_{i\in I}$, $S_{ij},\ti\Phi_{ij,\;i,j\in I}$, $S_{ijk},\La_{ijk,\; i,j,k\in I}\bigr)$. We shall show $\cF$ satisfies Definition~\ref{ku4def36}(a)--(k).

Parts (a)--(i) are immediate. For (j), if $i\ne j\ne k\ne l$ in $I$ with $i\pr j\pr k\pr l$ and $\Im\psi_i\cap\Im\psi_j\cap\Im\psi_k\cap\Im\psi_l\ne\es$ then the argument of \eq{ku4eq50}--\eq{ku4eq51} shows that $\ga_{jkl}\ga_{ijl}=h_{kl}(\ga_{ijk})\ga_{ikl}$, and so $\La_{ijl}\od(\La_{jkl}*\id_{\ti\Phi_{ij}})=\La_{ikl}\od(\id_{\ti\Phi_{kl}}*\La_{ijk})\od\bs\al_{\ti\Phi_{kl},\ti\Phi_{jk},\ti\Phi_{ij}}$ as we want. The cases $i=j$, $j=k$, $k=l$ hold as for \eq{ku4eq52}, and in the remaining cases one of $S_{ijk},S_{ijl},S_{ikl},S_{jkl}$ is empty, so (j) is vacuous. Thus (j) holds.

For (k) or (k$)'$, suppose $\es\ne B\subseteq I$ is finite and $x\in\bigcap_{b\in B}\Im\psi_b$. Then for all $b\ne c\in B$ we have $x\in\Im\psi_b\cap\Im\psi_c\ne\es$, so $b\pr c$ or $c\pr b$ by Definition \ref{kuAdef7}(a). Thus the partial order $\pr$ restricted to $B$ is a total order, and we may uniquely write $B=\{b_1,b_2,\ldots,b_m\}$ with $b_1\pr b_2\pr\cdots\pr b_m$. It is now easy to check that (k) holds with $a=b_1$, and also (k$)'$ holds with $d=b_m$. Therefore $\cF$ is a fair coordinate system on~$X$.
\label{ku4ex5}
\end{ex}

\begin{ex} Let $\bigl(A,I,(V_B,E_B,\Ga_B,s_B,\psi_B)_{B\in I},\Phi_{BC,\;B,C\in I,\; B\subsetneq C}\bigr)$ be an MW weak Kuranishi atlas without boundary of virtual dimension $n\in\Z$ on a compact, metrizable topological space $X$, in the sense of Definition~\ref{kuAdef11}.

We will define a fair coordinate system $\cF$ on $X$. Take the indexing set to be $I$ (not $A$) and the Kuranishi neighbourhoods $(V_B,E_B,\Ga_B,s_B,\psi_B)$ for $B\in I$ to be as given. If $B,C\in I$ with $B\subsetneq C$, define $S_{BC}=\Im\psi_B\cap\Im\psi_C$, and $\ti\Phi_{BC}:(V_B,E_B,\Ga_B,s_B,\psi_B)\ra(V_C,E_C,\Ga_C,s_C,\psi_C)$ to be the coordinate change over $S_{BC}$ in the sense of \S\ref{ku41} associated to the MW coordinate change $\Phi_{BC}$ in Example \ref{kuAex4}. Define $S_{BB}=\Im\psi_B$ and $\ti\Phi_{BB}=\id_{(V_B,E_B,\Ga_B,s_B,\psi_B)}$ for all $B\in I$. If $B\not\subseteq C$ in $I$, define $S_{BC}=\es$ and~$\ti\Phi_{BC}=(\es,\es,\es,\es)$.

If $B\subsetneq C\subsetneq D$ in $I$ then Definition \ref{kuAdef11}(b)--(d) say essentially that $\Phi_{CD}\ci\Phi_{BC}=\Phi_{BD}$ on the intersection of their domains. Example \ref{kuAex5} defines a canonical 2-isomorphism $\La_{BCD}:\ti\Phi_{CD}\ci\ti\Phi_{BC}\Ra\ti\Phi_{BD}$ on~$S_{BCD}:=\Im\psi_B\cap\Im\psi_C\cap\Im\psi_D$.

If $B\ne C\ne D\in I$ with $B\not\subset C$ or $C\not\subset D$, define $S_{BCD}=\es$ and $\La_{BCD}=[\es,\es,\es]$. Set $S_{BBC}=S_{BCC}=S_{BC}$ and $\La_{BBC}=\bs\be_{\ti\Phi_{BC}}$, $\La_{BCC}=\bs\ga_{\ti\Phi_{BC}}$ for all $B,C\in I$. This defines all the data in $\cF=\bigl(I,(V_B,E_B,\Ga_B,s_B,\psi_B)_{B\in I}$, $S_{BC},\ti\Phi_{BC,\;B,C\in I}$, $S_{BCD},\La_{BCD,\; B,C,D\in I}\bigr)$. We shall show $\cF$ satisfies Definition \ref{ku4def36}(a)--(j),(k$)'$.
 
Parts (a)--(i) are immediate. For (j), if $B\subsetneq C\subsetneq D\subsetneq E$ in $I$ then Definition \ref{kuAdef11}(b)--(d) basically imply that
\begin{equation*}
\Phi_{DE}\ci(\Phi_{CD}\ci\Phi_{BC})=\Phi_{BE}=(\Phi_{DE}\ci\Phi_{CD})\ci\Phi_{BC}
\end{equation*}
holds on the intersection of their domains, and from this we easily see that $\La_{BDE}\od(\La_{CDE}*\id_{\ti\Phi_{BC}})=\La_{BDE}\od(\id_{\ti\Phi_{DE}}*\La_{BCD})\od\bs\al_{\ti\Phi_{DE},\ti\Phi_{CD},\ti\Phi_{BC}}$, as we want. The remaining cases follow as in Examples \ref{ku4ex4} and \ref{ku4ex5}. Thus (j) holds.

For (k$)'$, suppose $\es\ne J\subseteq I$ is finite and $x\in\bigcap_{B\in J}\Im\psi_B\subseteq X$. Then Definition \ref{kuAdef11}(a) says that $D=\bigcup_{B\in J}B$ lies in $I$, and $x\in\bigcap_{B\in J}\Im\psi_B\ab\subseteq\Im\psi_D$. For any $B\in J$ we have $B\subseteq D$, so $S_{BD}=\Im\psi_B\cap\Im\psi_D\ni x$. If $B,C\in J$ with $x\in S_{BC}$ then $B\subseteq C$, as otherwise $S_{BC}=\es$, so $B\subseteq C\subseteq D$ and $S_{BCD}=\Im\psi_B\cap\Im\psi_C\cap\Im\psi_D\ni x$. Therefore (k$)'$ holds with $d=D$, and $\cF$ is a fair coordinate system on~$X$.
\label{ku4ex6}
\end{ex}

Here is the main result of this section, which will be proved in \S\ref{ku74}. When we say $(V_a,E_a,\Ga_a,s_a,\psi_a)$ `may be given the structure of a Kuranishi neighbourhood on the Kuranishi space $\bX$', we mean that as in \S\ref{ku44}, we can choose implicit extra data $\Phi_{ai,\; i\in I}$, $\La_{aij,\; i,j\in I}$ relating $(V_a,E_a,\Ga_a,s_a,\psi_a)$ to the Kuranishi structure $\cK$ on $X$, and similarly, by `$\Phi_{ab}$ may be given the structure of a coordinate change over $S_{ab}$ on the Kuranishi space $\bX$', we mean that we can choose implicit extra data $\La_{abi,\; i\in I}$ relating $\Phi_{ab}$ to~$\cK$.

\begin{thm} Suppose\/ $\cF=\bigl(A,(V_a,E_a,\Ga_a,s_a,\psi_a)_{a\in A},$ $S_{ab},\Phi_{ab,\;a,b\in A},$ $S_{abc},\ab\La_{abc,\; a,b,c\in A}\bigr)$ is a fair coordinate system of virtual dimension\/ $n\in\Z$ on a Hausdorff, second countable topological space $X,$ in the sense of Definition\/ {\rm\ref{ku4def36}}. Then we may make $X$ into a Kuranishi space $\bX=(X,\cK)$ in the sense of\/ {\rm\S\ref{ku43}} with\/ $\vdim\bX=n,$ such that\/ $(V_a,E_a,\Ga_a,s_a,\psi_a)$ may be given the structure of a Kuranishi neighbourhood on the Kuranishi space $\bX$ in the sense of\/ {\rm\S\ref{ku44}} for all\/ $a\in A,$ and\/ $\Phi_{ab}:(V_a,E_a,\Ga_a,s_a,\psi_a)\ra(V_b,E_b,\Ga_b,s_b,\psi_b)$ may be given the structure of a coordinate change over $S_{ab}$ on the Kuranishi space $\bX$ in the sense of\/ {\rm\S\ref{ku44}} for all\/ $a,b\in A,$ and\/ $\La_{abc}:\Phi_{bc}\ci\Phi_{ab}\Ra\Phi_{ac}$ is the unique $2$-morphism over $S_{abc}$ given by Theorem\/ {\rm\ref{ku4thm4}(a)} for all\/ $a,b,c\in A$. This $\bX$ is unique up to equivalence in the $2$-category~$\Kur$.
\label{ku4thm9}
\end{thm}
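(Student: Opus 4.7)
Plan: The main obstacle is that in a fair coordinate system each $\Phi_{ab}$ lives only over $S_{ab}\subseteq\Im\psi_a\cap\Im\psi_b$ and each $\La_{abc}$ only over $S_{abc}\subseteq S_{ab}\cap S_{ac}\cap S_{bc}$, whereas a Kuranishi structure requires coordinate changes and $2$-morphisms on the full pairwise and triple intersections. I plan to take $I:=A$, extend the data globally to $\hat\Phi_{ab}$ on $\Im\psi_a\cap\Im\psi_b$ and $\hat\La_{abc}$ on $\Im\psi_a\cap\Im\psi_b\cap\Im\psi_c$, and then promote the original $(V_a,E_a,\Ga_a,s_a,\psi_a)$ and $\Phi_{ab}$ to Kuranishi neighbourhoods and coordinate changes on the resulting $\bX$ using the relating data of $\S$\ref{ku44}. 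I will assume (k$)'$ throughout; the case of (k) is parallel after reversing directions.

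For the local construction, given $x\in\Im\psi_a\cap\Im\psi_b$ I apply (k$)'$ to $B=\{a,b\}$ to pick $d=d(x)\in A$ with $x\in S_{ad}\cap S_{bd}$. Since $\Phi_{bd}$ is an equivalence in $\Kur_{S_{bd}}(X)$ by Definition \ref{ku4def8}, I fix a quasi-inverse $\Phi_{bd}^{-1}$ and set
\begin{equation*}
\Phi_{ab}^{(d)}:=\Phi_{bd}^{-1}\ci\Phi_{ad}
\end{equation*}
on a neighbourhood of $x$ in $S_{ad}\cap S_{bd}$, which is a coordinate change as a composition of equivalences. For two choices $d,d'$, applying (k$)'$ to $B=\{a,b,d,d'\}$ yields a common refinement $e$ with the relevant $S_{\bu e}$ and $S_{\bu\bu e}$ all containing $x$ by the ``if''-clause; pasting the $2$-morphisms $\La_{ade},\La_{bde},\La_{ad'e},\La_{bd'e}$ with the unit coherences $\Phi_{de}\ci\Phi_{de}^{-1}\cong\id$, $\Phi_{d'e}\ci\Phi_{d'e}^{-1}\cong\id$ coming from the quasi-inverse data and the associators of $\Kur$ produces a canonical $2$-isomorphism $\Xi_{dd'}^x:\Phi_{ab}^{(d)}\Ra\Phi_{ab}^{(d')}$ near $x$. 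Independence from $e$ and the triangle cocycle $\Xi_{d'd''}\od\Xi_{dd'}=\Xi_{dd''}$ follow by taking further common refinements and invoking (j) repeatedly; the stack property Theorem \ref{ku4thm1}(a) combined with Definition \ref{ku4def11}(v) then glues the $\Phi_{ab}^{(d)}$ into a global coordinate change $\hat\Phi_{ab}$, unique up to unique $2$-isomorphism. Choosing $d:=a$ (permitted when $x\in S_{ab}$, since then $x\in S_{aab}=S_{ab}$ by (i)) yields a canonical $2$-isomorphism $\Xi_{ab}:\Phi_{ab}\Ra\hat\Phi_{ab}\vert_{S_{ab}}$ identifying the original and extended coordinate changes on $S_{ab}$.

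The $2$-morphisms $\hat\La_{abc}$ are built by the same template. For $x\in\Im\psi_a\cap\Im\psi_b\cap\Im\psi_c$, (k$)'$ applied to $B=\{a,b,c\}$ supplies $d$ with $x\in S_{ad}\cap S_{bd}\cap S_{cd}$; the local models $\hat\Phi_{ab}\simeq\Phi_{bd}^{-1}\ci\Phi_{ad}$, $\hat\Phi_{bc}\simeq\Phi_{cd}^{-1}\ci\Phi_{bd}$, $\hat\Phi_{ac}\simeq\Phi_{cd}^{-1}\ci\Phi_{ad}$ compose via the unit coherence $\Phi_{bd}\ci\Phi_{bd}^{-1}\cong\id$ and the $\Kur$-associators to give $\hat\La_{abc}^{(d)}:\hat\Phi_{bc}\ci\hat\Phi_{ab}\Ra\hat\Phi_{ac}$; no $\La_{\bu\bu\bu}$ from $\cF$ is needed at this stage, though (j) is essential to compare different choices of $d$. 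Gluing by the stack property delivers $\hat\La_{abc}$ on the full triple intersection. Definition \ref{ku4def12}(h) for a tuple $(a,b,c,\ell)$ reduces, on a small open set admitting a simultaneous refinement $d$, to a pasting identity that holds formally in $\Kur$ once the local models $\Phi_{\bu d}^{-1}\ci\Phi_{\bu d}$ are substituted. Setting $\bX:=(X,\cK)$ with $\cK=\bigl(A,(V_a,\ldots,\psi_a)_{a\in A},\hat\Phi_{ab},\hat\La_{abc}\bigr)$, each $(V_a,\ldots,\psi_a)$ is a Kuranishi neighbourhood on $\bX$ with tautological relating data, and each $\Phi_{ab}$ is a coordinate change on $\bX$ over $S_{ab}$ with relating data $\La_{abi}:=\hat\La_{abi}\od(\id_{\hat\Phi_{bi}}*\Xi_{ab})$, the compatibility \eq{ku4eq39} reducing via $\Xi_{ab}$ to (j). The identification of $\La_{abc}$ with the unique $2$-morphism from Theorem \ref{ku4thm4}(a) is then immediate from that uniqueness. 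Finally, uniqueness of $\bX$ up to equivalence in $\Kur$ follows from Theorem \ref{ku4thm4}(a) and Proposition \ref{ku4prop6}: any two such structures admit mutually inverse canonical equivalences determined by the agreement conditions.

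The genuine hard part is the coherent-gluing bookkeeping in the middle two steps: checking that the various $\Xi_{dd'}^x$ and local $\hat\La_{abc}^{(d)}$ are independent of the auxiliary refinements $e$ chosen via (k$)'$ and satisfy the triangle and pentagon cocycles needed for Theorem \ref{ku4thm1}(a) to apply. These are large pasting diagrams in $\Kur$ whose commutativity is driven entirely by Definition \ref{ku4def36}(j) together with the weak $2$-category coherence axioms established in $\S$\ref{ku43}; once the diagrams are organised, the outcome is dictated.
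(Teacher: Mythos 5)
Your construction of the Kuranishi structure itself follows essentially the same route as the paper: working under Definition \ref{ku4def36}(k$)'$ you form local models $\Phi_{bd}^{-1}\ci\Phi_{ad}$ near each point (the paper works under (k) with $\Phi_{cb}\ci\check\Phi_{ac}$ for quasi-inverses $\check\Phi_{ac}$), compare different choices of the auxiliary index using the $\La$'s of $\cF$ together with the unit/counit data of the quasi-inverses, verify the cocycle conditions via (j) after further refinements supplied by (k$)'$, glue with the stack property of Theorem \ref{ku4thm1}(a), and then build the global associativity 2-morphisms from the counits alone; this matches the paper's sequence of lemmas (extension of $\La_{abc}$ to $S_{ab}\cap S_{ac}\cap S_{bc}$, the comparison 2-morphisms $\Mu_{ab}^{cd}$, the glued $\Psi_{ab}$ and the $\Ka_{abc}$).

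The genuine gap is your final uniqueness claim. Proposition \ref{ku4prop6} compares a Kuranishi space with a structure assembled from Kuranishi neighbourhoods \emph{on that same space}; it cannot by itself relate two different Kuranishi structures $\cK,\cK'$ on $X$ that are each compatible with $\cF$. The compatibility conditions tie $\cK$ and $\cK'$ to the given $\Phi_{ab}$ only over $S_{ab}$, which may be a small subset of $\Im\psi_a\cap\Im\psi_b$, or even empty, whereas the coordinate changes of $\cK$ and $\cK'$ live over the full overlaps; so Theorem \ref{ku4thm4}(a) gives canonical comparisons only over the $S_{ab}$ and does not identify the two structures where it matters. To obtain the equivalence one must actually construct a 1-morphism $\bs f:\bX\ra\bX'$ covering $\id_X$, together with its coherence 2-morphisms, by another round of local comparison (e.g.\ over the sets $S_{ad}\cap S_{bd}$ furnished by (k$)'$) and stack-property gluing; the paper does exactly this, tracking the arbitrary choices made in the construction (the quasi-inverses $\check\Phi_{ba},\eta_{ab},\ze_{ab}$ and the glued $\Psi_{ab},\ep_{ab}^c$) and exhibiting explicit 2-morphisms $\bs\varrho:\bs g\ci\bs f\Ra\bs\id_\bX$ and $\bs\si:\bs f\ci\bs g\Ra\bs\id_{\bX'}$ via Theorem \ref{ku4thm1}(b) and Definition \ref{ku4def11}(iii),(iv). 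Relatedly, the claim that $\La_{abc}$ is the unique 2-morphism of Theorem \ref{ku4thm4}(a) is not ``immediate from uniqueness'': uniqueness only says there is one 2-morphism with the characterizing property \eq{ku4eq40}, and you still have to verify that your $\La_{abi}$'s make $\La_{abc}$ satisfy it, which is a further (doable, but nontrivial) pasting-diagram check of the kind the paper carries out explicitly.
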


The next proposition follows easily from Corollary \ref{ku4cor1} and Theorem~\ref{ku4thm9}.

\begin{prop} Let\/ $\cF=\bigl(A,(V_a,E_a,\Ga_a,s_a,\psi_a)_{a\in A},$ $S_{ab},\Phi_{ab,\;a,b\in A},$ $S_{abc},\ab\La_{abc,\; a,b,c\in A}\bigr)$ be a fair coordinate system on $X$. Suppose $\ti A\subseteq A$ with\/ $\bigcup_{a\in\ti A}\Im\psi_a\ab =X,$ and in Definition\/ {\rm\ref{ku4def36}(k),(k$)'$,} if\/ $B\subseteq\ti A\subseteq A$ then we can choose $a\in\ti A$ in {\rm(k)} and\/ $d\in\ti A$ in {\rm(k$)'$}. Then\/ $\ti{\cal F}=\bigl(\ti A,(V_a,E_a,\Ga_a,s_a,\psi_a)_{a\in\ti A},$ $S_{ab},\Phi_{ab,\;a,b\in\ti A},$ $S_{abc},\ab\La_{abc,\; a,b,c\in\ti A}\bigr)$ is also a fair coordinate system on $X$. Let\/ $\bX=(X,\cK)$ and\/ $\bs{\ti X}=(X,\ti{\cal K})$ be the Kuranishi spaces constructed from $\cF,\ti{\cal F}$ in Theorem\/ {\rm\ref{ku4thm9},} respectively. Then $\bX,\bs{\ti X}$ are canonically equivalent in the $2$-category $\Kur$.
\label{ku4prop7}
\end{prop}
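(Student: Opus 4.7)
The approach is to show that $\bX$ itself, arising from the fair coordinate system $\cF$ via Theorem \ref{ku4thm9}, already satisfies the existence characterization of Theorem \ref{ku4thm9} for the restricted fair coordinate system $\ti{\cF}$, and then to invoke the uniqueness clause of Theorem \ref{ku4thm9} to produce a canonical equivalence $\bX\simeq\bs{\ti X}$.

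First I would verify that $\ti{\cF}$ is a fair coordinate system. Conditions (a)--(f), (h), (i), (j) of Definition \ref{ku4def36} are immediate from the corresponding properties of $\cF$ upon restricting indices to $\ti A$. Condition (g), namely $\bigcup_{a\in\ti A}\Im\psi_a=X$, holds by the standing assumption on $\ti A$. Finally, either (k) or (k$)'$ holds for $\ti{\cF}$ because, by hypothesis, whenever $B\subseteq\ti A$ we may choose the witnessing index $a\in\ti A$ (respectively $d\in\ti A$) inside the smaller set.

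Next, I would apply Theorem \ref{ku4thm9} to $\cF$ to obtain $\bX=(X,\cK)$. That theorem guarantees that for every $a\in A$ the quintuple $(V_a,E_a,\Ga_a,s_a,\psi_a)$ can be given the structure of a Kuranishi neighbourhood on $\bX$ in the sense of \S\ref{ku44}, that for every $a,b\in A$ the datum $\Phi_{ab}$ can be given the structure of a coordinate change over $S_{ab}$ on $\bX$, and that $\La_{abc}$ is the unique 2-morphism furnished by Theorem \ref{ku4thm4}(a) for all $a,b,c\in A$. Restricting attention to indices in $\ti A\subseteq A$, exactly the same statements hold with $\ti A$ in place of $A$, and with the $\Phi_{ab}$ and $\La_{abc}$ of $\ti{\cF}$ (which are unchanged from those of $\cF$). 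Consequently $\bX$ is itself a Kuranishi space satisfying the existence part of Theorem \ref{ku4thm9} applied to $\ti{\cF}$, precisely the property that characterises $\bs{\ti X}$ up to equivalence.

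The uniqueness clause of Theorem \ref{ku4thm9} therefore produces an equivalence $\bs f:\bX\to\bs{\ti X}$ in $\Kur$. To see that $\bs f$ is canonical up to 2-isomorphism, I would construct it explicitly in the spirit of the proof of Proposition \ref{ku4prop6}: take the underlying continuous map to be $\id_X$, and for $a\in A$ and $\ti a\in\ti A$ use the 1-morphism $\Phi_{a\ti a}$ (with the coordinate-change structure on $\bX$ from the first application of Theorem \ref{ku4thm9}) as the component $\bs f_{a\ti a}$, with compatibility 2-morphisms built from the $\La_{abc}$ and the canonical 2-morphisms of Theorem \ref{ku4thm4}(a). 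The 1-morphism axioms of Definition \ref{ku4def13} are then verified using the cocycle condition Definition \ref{ku4def36}(j) together with the stack property Theorem \ref{ku4thm1}, and a symmetric construction produces a quasi-inverse $\bs g:\bs{\ti X}\to\bX$. The main obstacle I anticipate is not the existence of the equivalence, which follows abstractly, but the bookkeeping needed to establish its canonicity: one must show that the 2-isomorphism class of $\bs f$ is determined by the data and does not depend on the implicit choices of compatibility 2-morphisms $\Phi_{ai},\La_{aij}$ made in realising $(V_a,E_a,\Ga_a,s_a,\psi_a)$ as a Kuranishi neighbourhood on $\bX$. This is handled by the uniqueness-up-to-unique-2-isomorphism parts of Theorem \ref{ku4thm4}, which force any two candidate equivalences to be 2-isomorphic by a unique 2-morphism.
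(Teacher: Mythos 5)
Your overall strategy---check that $\ti{\cal F}$ is fair, observe that the compatibilities of Theorem \ref{ku4thm9} for $\cF$ restrict to the corresponding compatibilities for $\ti{\cal F}$, then invoke uniqueness---is close in spirit to the paper's (very short) argument, but there is a gap in the key step, and it is exactly the gap that the paper's citation of Corollary \ref{ku4cor1} is there to fill. The uniqueness actually established in the proof of Theorem \ref{ku4thm9} (\S\ref{ku74}) is independence of the arbitrary choices made \emph{in the construction}: it compares two outputs of the construction applied to the same fair coordinate system. Your $\bX$ is not an output of the construction applied to $\ti{\cal F}$---its Kuranishi structure $\cK$ is indexed by $A$, not $\ti A$---so "the uniqueness clause therefore produces an equivalence $\bX\simeq\bs{\ti X}$" does not follow directly; reading Theorem \ref{ku4thm9} as a pure characterization ("any Kuranishi space compatible with $\ti{\cal F}$ in the stated sense is equivalent to $\bs{\ti X}$") is a stronger statement than what \S\ref{ku74} proves, and justifying it needs Proposition \ref{ku4prop6}/Corollary \ref{ku4cor1} anyway. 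The paper's route is: restrict the Kuranishi structure to the sub-indexing set, $\cK\vert_{\ti A}$, which is legitimate as $\bigcup_{a\in\ti A}\Im\psi_a=X$, so Corollary \ref{ku4cor1} gives $(X,\cK\vert_{\ti A})$ canonically equivalent to $\bX$; then note that $\cK\vert_{\ti A}$ \emph{is} a possible output of the construction of Theorem \ref{ku4thm9} applied to $\ti{\cal F}$ (its $\Psi_{ab},\Ka_{abc}$ for $a,b,c\in\ti A$ come equipped with the 2-morphisms $\ep_{ab}^c$ for $c\in\ti A$ satisfying the gluing property \eq{ku7eq89} over the cover $\{\dot S_{c(ab)}:c\in\ti A\}$, which still covers $\Im\psi_a\cap\Im\psi_b$ by your hypothesis on (k),(k$)'$), so the choice-independence uniqueness of Theorem \ref{ku4thm9} identifies $(X,\cK\vert_{\ti A})$ with $\bs{\ti X}$. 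Composing the two equivalences gives the proposition.

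A smaller point: in your sketch of an explicit equivalence you take the components to be the $\Phi_{a\ti a}$ of $\cF$. These are only defined over $S_{a\ti a}$, which may be strictly smaller than the required domain $\Im\psi_a\cap\Im\psi_{\ti a}$ for a component of a 1-morphism of Kuranishi spaces; the correct components are the coordinate changes $\Psi_{a\ti a}$ produced by the construction (equivalently, those furnished by Theorem \ref{ku4thm4}), as in the proof of Proposition \ref{ku4prop6}. With the Corollary \ref{ku4cor1} route no explicit construction is needed, and canonicity follows from the canonicity in Proposition \ref{ku4prop6}/Corollary \ref{ku4cor1} together with the uniqueness, up to the unique 2-morphisms $\be_{ab}$, of the comparison equivalence in the uniqueness part of Theorem \ref{ku4thm9}.
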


We can now combine Theorem \ref{ku4thm9} with Examples \ref{ku4ex3}--\ref{ku4ex6}. For Example \ref{ku4ex3}, examining the proof in \S\ref{ku74} shows that a possible choice for the new Kuranishi structure $\cK'$ constructed from $\cF$ in Theorem \ref{ku4thm9} is for $\cK'$ to be equal to the old Kuranishi structure $\cK$ used to define $\cF$ in Example \ref{ku4ex3}. Thus, starting with a Kuranishi space $\bX$ and applying Example \ref{ku4ex3} to get $\cF$ and then Theorem \ref{ku4thm9} yields a Kuranishi space $\bX'$ equivalent to $\bX$ in $\Kur$, and we can choose $\bX'=\bX$. From Examples \ref{ku4ex4}--\ref{ku4ex6} we deduce:

\begin{thm} Suppose $\bX=(X,\cK)$ is a FOOO Kuranishi space without boundary, as in Definition\/ {\rm\ref{kuAdef3}}. Then we can construct a Kuranishi space $\bX'=(X,\cK')$ in the sense of\/ {\rm\S\ref{ku43}} with\/ $\vdim\bX'=\vdim\bX,$ with the same topological space $X,$ and\/ $\bX'$ is natural up to equivalence in the $2$-category\/~$\Kur$.\label{ku4thm10}
\end{thm}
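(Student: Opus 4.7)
The plan is to obtain $\bX'$ as a two-step construction, using the machinery already assembled earlier in the section. First I would invoke Example~\ref{ku4ex4}: starting from the FOOO Kuranishi space $\bX=(X,\cK)$ of Definition~\ref{kuAdef3}, that example produces a fair coordinate system
\begin{equation*}
\cF=\bigl(A,(V_p,E_p,\Ga_p,s_p,\psi_p)_{p\in A},\; S_{qp},\ti\Phi_{qp,\; q,p\in A},\; S_{rqp},\La_{rqp,\; r,q,p\in A}\bigr)
\end{equation*}
on the same topological space $X$ with indexing set $A=X$, where the Kuranishi neighbourhoods are the FOOO data regarded as Kuranishi neighbourhoods in the sense of \S\ref{ku41} (Example~\ref{kuAex1}), the $\ti\Phi_{qp}$ are the coordinate changes of \S\ref{ku41} associated to the FOOO coordinate changes $\Phi_{qp}$ (Example~\ref{kuAex2}), and the 2-morphisms $\La_{rqp}$ come from the group elements $\ga^\al_{rqp}\in\Ga_p$ of Definition~\ref{kuAdef3}(b) via Example~\ref{kuAex3}(ii). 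Example~\ref{ku4ex4} already verifies that this data satisfies Definition~\ref{ku4def36}(a)--(k), with condition (k) holding via $a=x$.

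Second, I would apply Theorem~\ref{ku4thm9} to $\cF$. This directly produces a Kuranishi space $\bX'=(X,\cK')$ in the sense of \S\ref{ku43} with the required topological space $X$, in which each $(V_p,E_p,\Ga_p,s_p,\psi_p)$ may be given the structure of a Kuranishi neighbourhood on $\bX'$, each $\ti\Phi_{qp}$ the structure of a coordinate change on $\bX'$, and each $\La_{rqp}$ the unique 2-morphism provided by Theorem~\ref{ku4thm4}(a); moreover $\bX'$ is unique up to equivalence in $\Kur$. The virtual dimension matches because $\dim V_p-\rank E_p=\vdim\bX$ for every FOOO Kuranishi neighbourhood in $\cK$, hence Definition~\ref{ku4def36}(b) gives $\vdim\cF=\vdim\bX$, and Theorem~\ref{ku4thm9} records $\vdim\bX'=\vdim\cF$.

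For naturality, I would point out that the construction $\bX\leadsto\cF$ in Example~\ref{ku4ex4} is canonical (no arbitrary choices are made beyond the data already present in $\cK$), and Theorem~\ref{ku4thm9} asserts uniqueness of $\bX'$ up to equivalence in $\Kur$. Proposition~\ref{ku4prop7} gives additional robustness: passing to any subset $\ti A\subseteq A=X$ with $\bigcup_{p\in\ti A}\Im\psi_p=X$ satisfying the two choice conditions produces an equivalent Kuranishi space, so the construction does not depend on retaining the entire index set $X$. Since Example~\ref{ku4ex4} and Theorem~\ref{ku4thm9} are both cited as already available, there is no substantial obstacle remaining; the main thing to check carefully is that no hidden choices are made in passing through Examples~\ref{kuAex1}--\ref{kuAex3} that would spoil well-definedness up to equivalence, and that the cocycle identity verified in Example~\ref{ku4ex4}(j) is exactly the input Theorem~\ref{ku4thm9} needs.
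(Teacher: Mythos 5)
Your proposal is correct and matches the paper's own argument exactly: Theorem \ref{ku4thm10} is deduced in the paper precisely by feeding the fair coordinate system of Example \ref{ku4ex4} into Theorem \ref{ku4thm9}, which supplies the Kuranishi structure and the uniqueness up to equivalence in $\Kur$. Your additional remarks on the virtual dimension and on Proposition \ref{ku4prop7} are consistent with the paper's treatment and require no changes.
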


\begin{thm} Suppose $X$ is a compact, metrizable topological space with a FOOO weak good coordinate system\/ $\bigl((I,\pr),(V_i,E_i,\Ga_i,s_i,\psi_i)_{i\in I},\Phi_{ij,\; i\pr j}\bigr)$ without boundary, of virtual dimension $n\in\Z,$ in the sense of Definition\/ {\rm\ref{kuAdef7}}. Then we can make $X$ into a Kuranishi space $\bX=(X,\cK)$ in the sense of\/ {\rm\S\ref{ku43}} with\/ $\vdim\bX=n,$ and\/ $\bX$ is natural up to equivalence in the $2$-category\/~$\Kur$.
\label{ku4thm11}
\end{thm}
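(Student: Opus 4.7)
The plan is to deduce Theorem \ref{ku4thm11} directly by composing the two constructions already set up earlier in the section: Example \ref{ku4ex5} (which converts a FOOO weak good coordinate system into a fair coordinate system in the sense of Definition \ref{ku4def36}) and Theorem \ref{ku4thm9} (which converts a fair coordinate system into a Kuranishi space in the sense of \S\ref{ku43}, uniquely up to equivalence in $\Kur$).

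First, starting from the data $\bigl((I,\pr),(V_i,E_i,\Ga_i,s_i,\psi_i)_{i\in I},\Phi_{ij,\; i\pr j}\bigr)$ of a FOOO weak good coordinate system on $X$ with $\vdim=n$, I apply the construction of Example \ref{ku4ex5} verbatim: keep the indexing set $I$ and the Kuranishi neighbourhoods $(V_i,E_i,\Ga_i,s_i,\psi_i)$; for $i\pr j$ convert each FOOO coordinate change $\Phi_{ij}$ into a coordinate change $\ti\Phi_{ij}$ in the sense of \S\ref{ku41} using Example \ref{kuAex2}; set $\ti\Phi_{ii}=\id$; for $i\ne j$ with $i\not\pr j$ set $S_{ij}=\es$ and $\ti\Phi_{ij}=(\es,\es,\es,\es)$; and build the 2-morphisms $\La_{ijk}$ out of the group elements $\ga_{ijk}\in\Ga_k$ from Definition \ref{kuAdef7}(b) using Example \ref{kuAex3}(ii). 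Example \ref{ku4ex5} already verifies Definition \ref{ku4def36}(a)--(j), as well as both (k) and (k$)'$, with $a=b_1$ the $\pr$-least and $d=b_m$ the $\pr$-greatest element of any finite $B\subseteq I$ with $\bigcap_{b\in B}\Im\psi_b\ne\es$. Thus we obtain a fair coordinate system $\cF$ on $X$ of virtual dimension $n$.

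Next, I apply Theorem \ref{ku4thm9} to $\cF$, which produces a Kuranishi space $\bX=(X,\cK)$ with $\vdim\bX=n$ on the same topological space, together with the additional data making each $(V_i,E_i,\Ga_i,s_i,\psi_i)$ a Kuranishi neighbourhood on $\bX$ and each $\ti\Phi_{ij}$ a coordinate change on $\bX$ in the sense of \S\ref{ku44}, with $\La_{ijk}$ being the unique 2-morphism furnished by Theorem \ref{ku4thm4}(a). Since $X$ is assumed compact and metrizable, it is in particular Hausdorff and second countable, as required by Definition \ref{ku4def12}; so the output of Theorem \ref{ku4thm9} is a Kuranishi space in our sense.

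For naturality, the uniqueness clause of Theorem \ref{ku4thm9} asserts that $\bX$ is determined up to equivalence in $\Kur$ by the fair coordinate system $\cF$, and $\cF$ is determined by the given FOOO weak good coordinate system via the canonical recipe of Example \ref{ku4ex5}. Hence $\bX$ is natural up to equivalence in $\Kur$, completing the proof. There is no real obstacle here: the work has already been done in Example \ref{ku4ex5} and Theorem \ref{ku4thm9}, and the only thing to check is that the topological hypotheses (compact and metrizable) are at least as strong as those needed by Definition \ref{ku4def12} (Hausdorff and second countable), which is immediate since compact plus metrizable implies second countable.
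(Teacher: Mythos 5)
Your proposal is correct and is exactly the paper's argument: Theorem \ref{ku4thm11} is deduced immediately by applying Example \ref{ku4ex5} to obtain a fair coordinate system and then Theorem \ref{ku4thm9} to produce the Kuranishi space, with naturality coming from the uniqueness-up-to-equivalence clause of Theorem \ref{ku4thm9}. Your check that compact plus metrizable gives Hausdorff and second countable matches the paper's remarks (Remark \ref{ku4rem4}) as well.
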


\begin{thm} Suppose $X$ is a compact, metrizable topological space with an MW weak Kuranishi atlas\/ $\cK$ without boundary, of virtual dimension $n\in\Z,$ in the sense of Definition\/ {\rm\ref{kuAdef11}}. Then we can make $X$ into a Kuranishi space $\bX'=(X,\cK')$ in the sense of\/ {\rm\S\ref{ku43}} with\/ $\vdim\bX'=n,$ and\/ $\bX'$ is natural up to equivalence in the $2$-category $\Kur$. Commensurate MW weak Kuranishi atlases $\cK,\ti\cK$ on $\bX$ yield equivalent Kuranishi spaces\/~$\bX{}',\ti\bX{}'$.
\label{ku4thm12}
\end{thm}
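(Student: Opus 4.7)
The plan is to deduce this theorem by combining the machinery already assembled in this section with minimal additional work. First I would apply Example \ref{ku4ex6} to the MW weak Kuranishi atlas $\cK$ to produce a fair coordinate system $\cF$ on $X$ with virtual dimension $n$: the indexing set is the poset $I$ from $\cK$, the Kuranishi neighbourhoods are those of $\cK$, and the coordinate changes and $2$-morphisms are constructed from the MW coordinate changes $\Phi_{BC}$ using the translations in Examples \ref{kuAex4} and \ref{kuAex5}; Example \ref{ku4ex6} verifies all of Definition \ref{ku4def36}(a)--(j),(k$)'$. Then applying Theorem \ref{ku4thm9} to $\cF$ produces a Kuranishi space $\bX'=(X,\cK')$ with $\vdim\bX'=n$, unique up to equivalence in $\Kur$. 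This yields the first part of the theorem.

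For the second part, I would unpack the definition from Appendix \ref{kuA3} that two MW weak Kuranishi atlases $\cK,\ti\cK$ on $X$ are commensurate if (after passing to suitable shrinkings) there is a common refining weak Kuranishi atlas $\bar\cK$ containing both $\cK$ and $\ti\cK$ as sub-atlases. Applying Example \ref{ku4ex6} to $\bar\cK$ gives a fair coordinate system $\bar\cF$ on $X$ whose indexing set $\bar I$ contains both $I$ and $\ti I$, and such that the Kuranishi neighbourhoods, open sets $S_{BC},S_{BCD}$, coordinate changes, and $2$-morphisms of $\cF$ and $\ti\cF$ are precisely the restrictions of those of $\bar\cF$ to $I$ and $\ti I$ respectively. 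The key point is that the subsets $I,\ti I\subseteq \bar I$ both satisfy the hypotheses of Proposition \ref{ku4prop7}: the footprints $\Im\psi_B$ for $B\in I$ (respectively $\ti I$) cover $X$ because $\cK$ (respectively $\ti\cK$) is itself a weak Kuranishi atlas, and the element $D=\bigcup_{B\in J}B$ used in Example \ref{ku4ex6} to verify Definition \ref{ku4def36}(k$)'$ already lies in $I$ (respectively $\ti I$) by closure of these atlases under taking unions of mutually overlapping charts. Proposition \ref{ku4prop7} then shows that the Kuranishi spaces $\bX'$ and $\ti\bX{}'$ produced from $\cF,\ti\cF$ are each canonically equivalent in $\Kur$ to the Kuranishi space $\bar\bX{}'$ produced from $\bar\cF$, and hence to each other.

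The main obstacle I anticipate is dealing with the ``shrinking'' equivalence relation that is built into the MW notion of commensurateness: the common refinement $\bar\cK$ is only required to exist after shrinking the footprints of $\cK,\ti\cK$, so strictly speaking $\cK,\ti\cK$ are not literally sub-atlases of $\bar\cK$ on the nose. To handle this cleanly I would first establish a small auxiliary lemma asserting that any shrinking of an MW weak Kuranishi atlas produces a Kuranishi space canonically equivalent to the original one; this lemma is itself an application of Proposition \ref{ku4prop7}, since shrinking replaces the Kuranishi neighbourhoods by open sub-neighbourhoods whose footprints still cover $X$, and the resulting fair coordinate system embeds into the original one in a way compatible with (k$)'$. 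Once this lemma is in place, the commensurability argument above goes through unchanged, and the remaining work is routine verification that the equivalences in $\Kur$ so obtained are independent of the auxiliary choices, which follows from the uniqueness clauses in Theorems \ref{ku4thm9} and \ref{ku4thm4}.
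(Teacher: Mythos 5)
Your proposal is correct and is essentially the paper's proof: Example \ref{ku4ex6} plus Theorem \ref{ku4thm9} give the first part, and Proposition \ref{ku4prop7} applied to inclusions of atlases gives the second, with your verification that $D=\bigcup_{B\in J}B$ already lies in $I$ (by Definition \ref{kuAdef11}(a)) supplying the detail the paper leaves implicit. The only adjustment: in Definition \ref{kuAdef11} commensurate means linked by a finite chain $\cK=\cK_0,\cK_1,\ldots,\cK_m=\ti\cK$ with each consecutive pair contained in a common atlas $\hat\cK_i$, and no shrinking is involved, so your auxiliary shrinking lemma is unnecessary; one simply applies the direct-commensuration argument to each inclusion $\cK_{i-1},\cK_i\subseteq\hat\cK_i$ and composes the resulting equivalences in $\Kur$.
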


\begin{proof} The first part is immediate from Example \ref{ku4ex6} and Theorem \ref{ku4thm9}. For the second part, note that as in Definition \ref{kuAdef11}, if $\cK,\ti\cK$ are commensurate then they are linked by a diagram of MW weak Kuranishi atlases
\e
\begin{gathered}
\xymatrix@!0@C=34pt@R=30pt{ *+[r]{\cK=\cK_0} \ar[dr] && \cK_1 \ar[dl] \ar[dr] && \cdots \ar[dl] \ar[dr] && \cK_{m-1} \ar[dl] \ar[dr] && *+[l]{\cK_m=\ti\cK} \ar[dl] \\
& \hat\cK_1 && \hat\cK_1 & \cdots & \hat\cK_{m-1} && \hat\cK_m, }
\end{gathered}
\label{ku4eq53}
\e
where each arrow is an inclusion of MW weak Kuranishi atlases.

By Proposition \ref{ku4prop7}, the construction of the first part applied to MW weak Kuranishi atlases $\cK,\hat\cK$ with $\cK\subseteq\hat\cK$ yields equivalent Kuranishi spaces, so \eq{ku4eq53} induces a corresponding diagram of equivalences in $\Kur$, and thus $\bX{}',\ti\bX{}'$ are equivalent in $\Kur$.
\end{proof}

\begin{thm} Suppose $X$ is a compact, metrizable topological space with a DY Kuranishi structure $\cK$ without boundary in the sense of Definition\/ {\rm\ref{kuAdef12}}. Then we can construct a Kuranishi space $\bX'=(X,\cK')$ in the sense of\/ {\rm\S\ref{ku43}} with\/ $\vdim\bX'=\vdim\bX,$ with the same topological space $X,$ and\/ $\bX'$ is natural up to equivalence in the $2$-category $\Kur$. R-equivalent DY Kuranishi structures $\cK,\ti\cK$ on $\bX$ yield equivalent Kuranishi spaces\/~$\bX{}',\ti\bX{}'$.
\label{ku4thm13}
\end{thm}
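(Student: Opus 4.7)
The plan is to deduce Theorem \ref{ku4thm13} from the machinery already assembled, in direct parallel with the proofs of Theorems \ref{ku4thm10}--\ref{ku4thm12}. For the first assertion, I would invoke the construction of Example \ref{ku4ex4}, whose final sentence explicitly states that the same recipe used for FOOO Kuranishi structures produces a fair coordinate system $\cF$ on $X$ out of any DY Kuranishi structure $\cK$ without boundary on $X$. Concretely, take $A=X$, let $(V_p,E_p,\Ga_p,s_p,\psi_p)$ be the DY Kuranishi neighbourhood at $p$ (reinterpreted as a Kuranishi neighbourhood in the sense of \S\ref{ku41} via Example \ref{kuAex1}), let $\ti\Phi_{qp}$ be the coordinate change in the sense of \S\ref{ku41} associated to the DY coordinate change $\Phi_{qp}$ via Example \ref{kuAex2}, and define $\La_{rqp}$ using the tuple of group elements $\ga_{rqp}^\al\in\Ga_p$ from Definition \ref{kuAdef12}(b) as in Example \ref{kuAex3}(ii). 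The verification of Definition \ref{ku4def36}(a)--(j),(k) is line for line identical to the FOOO case. Then applying Theorem \ref{ku4thm9} to $\cF$ produces a Kuranishi space $\bX'=(X,\cK')$ with $\vdim\bX'=\vdim\bX$ (since the dimension condition $\dim V_p-\rank E_p=\vdim\bX$ is part of the DY data), unique up to equivalence in $\Kur$.

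For the second assertion, my plan is to follow the template of Theorem \ref{ku4thm12}. Yang's R-equivalence relation on DY Kuranishi structures is generated, as reviewed in \S\ref{kuA4}, by certain elementary moves: essentially, one produces a zig-zag of DY Kuranishi structures linking $\cK$ and $\ti\cK$, in which each arrow is an elementary comparison move (an inclusion/refinement/shrinking of Kuranishi neighbourhoods, or a common embedding into a third DY Kuranishi structure). The strategy is to reduce to showing that each such elementary move between DY Kuranishi structures $\cK_0,\cK_1$ on $X$ yields Kuranishi spaces $\bX_0,\bX_1$ that are equivalent in $\Kur$, and then compose equivalences along the zig-zag to conclude $\bX'\simeq\bs{\ti X}{}'$.

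To handle a single elementary move, I would prove a DY analogue of Proposition \ref{ku4prop7}: if the fair coordinate systems $\cF_0,\cF_1$ arising from $\cK_0,\cK_1$ via Example \ref{ku4ex4} are related in the sense that $\cF_0$ is a sub-fair-coordinate-system of $\cF_1$ (or both sit inside a common third fair coordinate system $\cF'$), with the covering and axiom-(k) compatibility that Proposition \ref{ku4prop7} requires, then the resulting Kuranishi spaces are canonically equivalent in $\Kur$. This uses Corollary \ref{ku4cor1} (throwing away Kuranishi neighbourhoods does not change the equivalence class) together with Theorem \ref{ku4thm4}(a) to produce the required coordinate changes on the common space. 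I would then check case by case that each of Yang's elementary R-equivalence moves fits into this framework.

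The main obstacle, as expected, is the second part: not the formal combinatorics of the zig-zag, but the bookkeeping needed to translate each of Yang's concrete R-equivalence moves into a statement about inclusions or common refinements of the associated fair coordinate systems, and to check that the hypothesis of Proposition \ref{ku4prop7} (or its straightforward extension allowing common enlargements) is actually satisfied at each step. Once that translation is in place, assembling the equivalences in $\Kur$ is purely formal, using only that equivalences in a weak 2-category are closed under composition and inverses. The construction of $\bX'$ itself, by contrast, is immediate from the tools already proved.
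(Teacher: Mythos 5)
Your proposal is correct and follows essentially the same route as the paper: the first part is Example \ref{ku4ex4} plus Theorem \ref{ku4thm9}, and the second part handles each embedding $\ep:\cK_1\ra\cK_2$ in the R-equivalence zig-zag of Definition \ref{kuAdef13} by forming a combined fair coordinate system $\cF_{12}\supset\cF_1,\cF_2$ (Kuranishi neighbourhoods from both, coordinate changes from $\cK_1,\cK_2$ and $\ep$), applying Proposition \ref{ku4prop7} twice, and composing the resulting equivalences in $\Kur$. No new "DY analogue" of Proposition \ref{ku4prop7} is needed — the proposition as stated already does the job once $\cF_{12}$ is built.
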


\begin{proof} The first part is immediate from Example \ref{ku4ex4} and Theorem \ref{ku4thm9}. For the second part, note that as in Definition \ref{kuAdef13}, if $\cK,\ti\cK$ are R-equivalent then there is a diagram of embeddings of DY Kuranishi structures on $X$:
\e
\xymatrix@C=30pt@R=15pt{\cK  & \cK_1 \ar[l]_\sim \ar@{=>}[r] & \cK_2 & \cK_3 \ar@{=>}[l] \ar[r]^\sim & \ti\cK. }
\label{ku4eq54}
\e

If $\ep:\cK_1\ra\cK_2$ is an embedding of DY Kuranishi structures, then following Example \ref{ku4ex4} we can define three fair coordinate systems $\cF_1,\cF_2,\cF_{12}$ on $X$, where $\cF_1,\cF_2$ come from $\cK_1,\cK_2$, and $\cF_{12}$ contains the Kuranishi neighbourhoods from $\cK_1$ and $\cK_2$, and the coordinate changes from $\cK_1,\cK_2$ and $\ep$, so that $\cF_{12}$ contains $\cF_1$ and $\cF_2$. Theorem \ref{ku4thm9} then gives Kuranishi structures $\cK_1',\cK_2',\cK_{12}'$ on $X$. Since $\cF_1\subset\cF_{12}$, $\cF_2\subset\cF_{12}$, by Proposition \ref{ku4prop7} we have equivalences $(X,\cK_1')\ra(X,\cK_{12}')$, $(X,\cK_2')\ra(X,\cK_{12}')$ in $\Kur$, and hence an equivalence $(X,\cK_1')\ra(X,\cK_2')$ in $\Kur$. Therefore \eq{ku4eq54} induces a corresponding diagram of equivalences in $\Kur$, and thus $\bX{}',\ti\bX{}'$ are equivalent in~$\Kur$.
\end{proof}

Combining Theorem \ref{ku4thm13} with Yang's Theorem \ref{kuAthm3}, \cite[Th.~3.1.7]{Yang1} gives:

\begin{thm} Suppose we are given a `polyfold Fredholm structure'\/ $\cP$ on a compact metrizable topological space $X,$ that is, we write $X$ as the zeroes of a Fredholm section ${\mathfrak s}:\fV\ra\fE$ of a strong polyfold vector bundle $\fE\ra\fV$ over a polyfold\/ $\fV,$ where $\mathfrak s$ has constant Fredholm index $n\in\Z$. Then we can make $X$ into a Kuranishi space $\bX=(X,\cK)$ in the sense of\/ {\rm\S\ref{ku43}} with\/ $\vdim\bX=n,$ and\/ $\bX$ is natural up to equivalence in the $2$-category\/~$\Kur$.
\label{ku4thm14}
\end{thm}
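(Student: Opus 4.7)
The plan is to obtain $\bX$ as a two-step composition: first pass from the polyfold Fredholm structure $\cP$ to a DY Kuranishi structure on $X$ using Yang's Theorem \ref{kuAthm3}, and then apply Theorem \ref{ku4thm13} to turn the DY Kuranishi structure into a Kuranishi space $\bX=(X,\cK)$ in the sense of \S\ref{ku43}. Since Yang's theorem produces a DY Kuranishi structure $\cK^{\rm DY}$ of virtual dimension $n=\mathop{\rm ind}{\mathfrak s}$ on the compact metrizable space $X$ without boundary, Example \ref{ku4ex4} makes $X$ into a fair coordinate system $\cF$, and Theorem \ref{ku4thm9} then produces a Kuranishi space $\bX$ with $\vdim\bX=n$. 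The virtual dimension statement is automatic from the construction, since both intermediate steps preserve virtual dimension.

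First I would recall the content of Yang's construction \cite{Yang1}: starting from $\mathfrak s:\fV\ra\fE$ one chooses, in some neighbourhood of each $x\in X$, a local sc-smooth finite-dimensional reduction (a finite-rank subbundle transverse to $\mathfrak s$ together with its associated local uniformizer) to produce a FOOO-style Kuranishi neighbourhood $(V_x,E_x,\Ga_x,s_x,\psi_x)$ at $x$, and then assembles these neighbourhoods into a DY Kuranishi structure by constructing germ coordinate changes between them. Since this assembly process is exactly what is needed to fill in the hypotheses of Theorem \ref{ku4thm13}, once Yang's output is quoted verbatim the existence statement follows by direct application of Theorem \ref{ku4thm13}: pass from $\cK^{\rm DY}$ to a fair coordinate system $\cF$ via Example \ref{ku4ex4}, and then to $\bX=(X,\cK')$ via Theorem \ref{ku4thm9}, with $\vdim\bX=n$ as required.

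For the uniqueness clause, the point is that the polyfold Fredholm structure $\cP$ is fixed, but Yang's construction depends on many auxiliary choices: the covering of $X$ by polyfold uniformizers, the local finite-dimensional reductions, cut-off functions, etc. I would argue that any two DY Kuranishi structures $\cK^{\rm DY}, \ti\cK^{\rm DY}$ produced from the same $\cP$ by Yang's procedure are R-equivalent in the sense of Definition \ref{kuAdef13}; this is essentially already part of Yang's framework (R-equivalence being the natural equivalence relation he imposes precisely so that the polyfold-to-Kuranishi passage becomes well-defined). Given R-equivalence, the second half of Theorem \ref{ku4thm13} gives an equivalence $\bX\simeq\ti\bX$ in $\Kur$, proving naturality up to equivalence in the 2-category.

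The main obstacle is the uniqueness/naturality claim, in particular unpacking Yang's construction enough to certify that two different choices of finite-dimensional reductions for the same polyfold Fredholm section produce DY Kuranishi structures linked by the zigzag $\cK\xleftarrow{\sim}\cK_1\Rightarrow\cK_2\Leftarrow\cK_3\xrightarrow{\sim}\ti\cK$ of Definition \ref{kuAdef13}. The existence of this zigzag is plausible because, given two local reductions near a point $x\in X$, their direct sum provides a common refinement into which both embed, which should supply the middle Kuranishi structure $\cK_2$; however translating this informal picture into a globally coherent zigzag requires careful bookkeeping of the germ representatives of Yang's coordinate changes, and this is where I expect the real work to lie. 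Once this R-equivalence is established, the equivalence $\bX\simeq\ti\bX$ in $\Kur$ is a formal consequence of Theorem \ref{ku4thm13}, and the proof is complete.
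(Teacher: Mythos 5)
Your proposal follows exactly the paper's route: Theorem \ref{ku4thm14} is obtained there simply by combining Yang's Theorem \ref{kuAthm3} with Theorem \ref{ku4thm13}, so your two-step composition is precisely the intended proof. The ``main obstacle'' you identify in your final paragraph requires no extra work, since the independence of Yang's DY Kuranishi structure up to R-equivalence is already part of the statement of Theorem \ref{kuAthm3} as quoted, so the naturality clause follows formally from the second half of Theorem \ref{ku4thm13}.
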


Theorems \ref{ku4thm10}--\ref{ku4thm14} are important, as they show that the geometric structures on moduli spaces considered by Fukaya, Oh, Ohta and Ono \cite{Fuka,FOOO1,FOOO2,FOOO3,FOOO4,FOOO5,FOOO6,FOOO7,FOOO8}, McDuff and Wehrheim \cite{McWe2,McWe3}, Yang \cite{Yang1,Yang2,Yang3}, and Hofer, Wysocki and Zehnder \cite{Hofe,HWZ1,HWZ2,HWZ3,HWZ4,HWZ5,HWZ6}, can all be transformed to Kuranishi spaces in our sense. Thus, large parts of the symplectic geometry literature can now be interpreted in our framework. 

\section{Kuranishi spaces with boundary and corners}
\label{ku5}

Section \ref{ku2} defined the category of $\mu$-Kuranishi spaces $\muKur$, and \S\ref{ku3} explained how to modify this to define categories of $\mu$-Kuranishi spaces with boundary $\muKurb$, with corners $\muKurc$, and with g-corners $\muKurgc$. Section \ref{ku4} defined the weak 2-category of Kuranishi spaces $\Kur$. We now explain how to modify this to define weak 2-categories of {\it Kuranishi spaces with boundary\/} $\Kurb$, and {\it with corners\/} $\Kurc$, and {\it with g-corners\/}~$\Kurgc$.

Much of this is straightforward: we just apply the changes described in \S\ref{ku35}--\S\ref{ku38} to \S\ref{ku4} rather than \S\ref{ku2}. So we will be brief, except where new issues arise. We assume the material of \S\ref{ku31}--\S\ref{ku34} on manifolds with (g-)corners throughout. Sections \ref{ku51}--\ref{ku56} are the analogues of \S\ref{ku35}, \S\ref{ku36}, \S\ref{ku37}, \S\ref{ku47}, \S\ref{ku48}, and~\S\ref{ku38}.

\subsection{Defining Kuranishi spaces with boundary and corners}
\label{ku51}

We now extend \S\ref{ku41}--\S\ref{ku44} to the boundary and corners case.

\begin{dfn} Let $X$ be a topological space. A {\it Kuranishi neighbourhood $(V,E,\Ga,s,\psi)$ on\/ $X$ with boundary}, or {\it with corners}, is as in Definition \ref{ku4def1}, except that we take $V$ to be a manifold with boundary, or with corners, respectively.
\label{ku5def1}
\end{dfn}

\begin{dfn} Let $X,Y$ be topological spaces, $f:X\ra Y$ a continuous map, $(V_i,E_i,\Ga_i,s_i,\psi_i)$, $(V_j,E_j,\Ga_j,s_j,\psi_j)$ be Kuranishi neighbourhoods with boundary, or with corners, on $X,Y$ respectively, and $S\subseteq\Im\psi_i\cap f^{-1}(\Im\psi_j)\subseteq X$ be an open set. A 1-{\it morphism $\Phi_{ij}=(P_{ij},\pi_{ij},\phi_{ij},\hat\phi_{ij}):(V_i,E_i,\Ga_i,s_i,\psi_i)\ra (V_j,E_j,\Ga_j,s_j,\psi_j)$ of Kuranishi neighbourhoods with boundary or corners over\/} $(S,f)$ is as in Definition \ref{ku4def2}(a)--(f), except that in (a) we take $P_{ij}$ to be a manifold with boundary or corners, in (e) the notation $O(\pi_{ij}^*(s_i)^2)$ is interpreted as in Definition \ref{ku3def11} (in fact, all $O(s),O(s^2)$ notation in this chapter is interpreted as in Definition \ref{ku3def11}), and we impose the following extra condition (g), as in Definition~\ref{ku3def13}(ii):
\begin{itemize}
\setlength{\itemsep}{0pt}
\setlength{\parsep}{0pt}
\item[(g)] Let $C$ be a connected component of $S^k(P_{ij})$
for $k\ge 1$, and $\ov C$ be the closure of $C$ in $P_{ij}$. Then~$\ov C\cap \pi_{ij}^{-1}\bigl(\bar\psi_i^{-1}(S)\bigr)\ne\es$.
\end{itemize}

We impose (g) for the same reasons as in Remark \ref{ku3rem5}. Note that in the corners case, $\pi_{ij}:P_{ij}\ra V_i$ being \'etale (a local diffeomorphism) in Definition \ref{ku4def2}(b) implies that $\pi_{ij}$ is simple, in the sense of~\S\ref{ku31}.
\label{ku5def2}
\end{dfn}

\begin{dfn} Suppose $X,Y$ are topological spaces, $f:X\ra Y$ is continuous, $(V_i,E_i,\Ga_i,s_i,\psi_i)$, $(V_j,E_j,\Ga_j,s_j,\psi_j)$ are Kuranishi neighbourhoods with boundary or corners on $X,Y$, $S\subseteq\Im\psi_i\cap f^{-1}(\Im\psi_j)\subseteq X$ is open, and $\Phi_{ij},\Phi_{ij}':(V_i,E_i,\Ga_i,s_i,\psi_i)\ra (V_j,\ab E_j,\ab\Ga_j,\ab s_j,\ab\psi_j)$ are 1-morphisms over $(S,f)$, with $\Phi_{ij}=(P_{ij},\pi_{ij},\phi_{ij},\hat\phi_{ij})$ and $\Phi_{ij}'=(P_{ij}',\pi_{ij}',\phi_{ij}',\hat\phi_{ij}')$. Define {\it triples\/} $(\dot P_{ij},\la_{ij},\hat\la_{ij})$, the equivalence relation $\approx$ on triples, the $\approx$-equivalence class $[\dot P_{ij},\la_{ij},\hat\la_{ij}]$ of $(\dot P_{ij},\la_{ij},\hat\la_{ij})$, and 2-{\it morphisms\/ $\La_{ij}=[\dot P_{ij},\la_{ij},\hat\la_{ij}]:\Phi_{ij}\Ra\Phi_{ij}'$ over\/} $(S,f)$ as in Definition \ref{ku4def3}, but with the following differences, as in Definition~\ref{ku3def13}:
\begin{itemize}
\setlength{\itemsep}{0pt}
\setlength{\parsep}{0pt}
\item[(i)] $V_i,V_j,P_{ij},P_{ij}',\dot P_{ij},\ldots$ are now manifolds with corners, and $\pi_{ij},\phi_{ij},\ab\pi_{ij}',\ab\phi_{ij}',\ab\la_{ij},\ldots$ are smooth maps of manifolds with corners, in the sense of~\S\ref{ku31}. 
\item[(ii)] $\hat\la_{ij}$ maps $\pi_{ij}^*(E_i)\vert_{\dot P_{ij}}\ra \phi_{ij}^*({}^bTV_j)\vert_{\dot P_{ij}}$, where we have used the {\it b-tangent bundle\/} ${}^bTV_j$ of \S\ref{ku33} rather than the tangent bundle~$TV_j$.
\item[(iii)] The equation $\phi_{ij}'\ci\la_{ij}=\phi_{ij}\vert_{\dot P_{ij}}+\hat\la_{ij}\cdot \pi_{ij}^*(s_i)+O\bigl(\pi_{ij}^*(s_i)^2\bigr)$ in \eq{ku4eq1} is interpreted as in Definition \ref{ku3def11}(v$)'$. This makes sense by~(ii).
\end{itemize}
\label{ku5def3}
\end{dfn}

For Definitions \ref{ku4def4}--\ref{ku4def10}, we insert `with boundary' or `with corners' throughout. In Definition \ref{ku4def7} we write $\Kur^{\bf b}_S(X)\subset\Kur^{\bf c}_S(X)$ for the weak 2-categories of Kuranishi neighbourhoods over $S\subseteq X$ with boundary and with corners, and $\GKurb\subset\GKurc$ for the weak 2-categories of global Kuranishi neighbourhoods with boundary and with corners. 

For restrictions $\Phi_{ij}\vert_T$ of 1-morphisms $\Phi_{ij}=(P_{ij},\pi_{ij},\phi_{ij},\hat\phi_{ij})$ on $S$ to $T\subseteq S$ in Definitions \ref{ku4def9}, note that Definition \ref{ku5def2}(g) for $P_{ij},\pi_{ij},S$ does not imply Definition \ref{ku5def2}(g) for $P_{ij},\pi_{ij},T$. To deal with this, let $P_{ij}'\subseteq P_{ij}$ be the dense open subset obtained by deleting all connected components $C$ of $S^k(P_{ij})$ for $k\ge 1$ such that $\ov C\cap \pi_{ij}^{-1}\bigl(\bar\psi_i^{-1}(T)\bigr)=\es$, let $\pi_{ij}',\phi_{ij}',\hat\phi_{ij}'$ be the restrictions of $\pi_{ij},\phi_{ij},\hat\phi_{ij}$ to $P_{ij}'$, and define $\Phi_{ij}\vert_T=(P_{ij}',\pi_{ij}',\phi_{ij}',\hat\phi_{ij}')$. Otherwise there are no significant changes. This extends \S\ref{ku41} to the corners case.

As for Proposition \ref{ku3prop2}, using Definition \ref{ku5def2}(g), we may prove:

\begin{prop} Suppose\/ $\Phi_{ij}=(P_{ij},\pi_{ij},\phi_{ij},\hat\phi_{ij}):(V_i,E_i,\Ga_i,s_i,\psi_i)\ra (V_j,\ab E_j,\ab\Ga_j,\ab s_j,\ab\psi_j)$ is a coordinate change with corners over $S\subseteq X,$ that is, $\Phi_{ij}$ is an equivalence in the $2$-category $\Kur^{\bf c}_S(X)$. Then $\phi_{ij}:P_{ij}\ra V_j$ is a \begin{bfseries}simple map\end{bfseries}, in the sense of\/ {\rm\S\ref{ku31}}. (Also $\pi_{ij}:P_{ij}\ra V_i$ is simple, as it is \'etale.)
\label{ku5prop1}
\end{prop}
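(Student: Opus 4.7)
The plan is to adapt the proof of Proposition \ref{ku3prop2} (the $\mu$-Kuranishi case) to the present $2$-categorical setting, by reducing the claim to a local computation of exponents of boundary-defining functions, where the discrete data (in the sense of Remark \ref{ku3rem2}) is preserved by the $2$-isomorphism witnessing invertibility. First I would use that $\Phi_{ij}$ is an equivalence in $\Kur^{\bf c}_S(X)$ to choose a quasi-inverse $\Phi_{ji}=(P_{ji},\pi_{ji},\phi_{ji},\hat\phi_{ji})$ together with $2$-isomorphisms $\eta:\Phi_{ji}\ci\Phi_{ij}\Ra\id_{(V_i,E_i,\Ga_i,s_i,\psi_i)}$ and $\zeta:\Phi_{ij}\ci\Phi_{ji}\Ra\id_{(V_j,E_j,\Ga_j,s_j,\psi_j)}$. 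Unpacking Definition \ref{ku4def4} gives a composition model $P_{ii}^{\rm new}=(P_{ij}\t_{V_j}P_{ji})/\Ga_j$ together with maps $\pi_{ii}^{\rm new},\phi_{ii}^{\rm new}$, while the identity $1$-morphism is modelled on $V_i\t\Ga_i$ with $\pi_{ii}(v,\ga)=v$ and $\phi_{ii}(v,\ga)=\ga\cdot v$; unpacking Definition \ref{ku5def3} then yields a representative $(\dot P,\la,\hat\la)$ of $\eta$ with $\la:\dot P\to V_i\t\Ga_i$ satisfying $\pi_{ii}\ci\la=\pi_{ii}^{\rm new}$ and $\phi_{ii}\ci\la=\phi_{ii}^{\rm new}+\hat\la\cdot\pi^*(s_i)+O(s_i^2)$, in the b-tangent sense of Definition \ref{ku3def11}(v$)'$.

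Second I would fix an arbitrary $p\in P_{ij}$ with $v_i:=\pi_{ij}(p)\in\bar\psi_i^{-1}(S)$ and $v_j:=\phi_{ij}(p)$, and (using the free transitive action of $\Ga_i$ on the fibres of $\pi_{ji}$ together with the analogous data from $\zeta$) pick $q\in P_{ji}$ with $\pi_{ji}(q)=v_j$ and $\phi_{ji}(q)=v_i$, so that $[(p,q)]\in\dot P$ lies over $v_i$. Choose charts on $V_i$ centred at $v_i$ with local coordinates $(u_1,\ldots,u_m)\in\R^m_k$ and on $V_j$ centred at $v_j$ with $(w_1,\ldots,w_n)\in\R^n_l$, and lift along the \'etale maps $\pi_{ij},\pi_{ji}$ to compatible charts on $P_{ij},P_{ji}$ near $p,q$. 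Write $\phi_{ij},\phi_{ji}$ in these coordinates in the form of Definition \ref{ku3def1}(b), with exponent matrices $(a_{p,q})_{p=1,\,q=1}^{k,\,l}$ and $(b_{q,p})_{q=1,\,p=1}^{l,\,k}$ over $\N$ (also recording the degenerate possibilities $f_q\equiv 0$ or $g_p\equiv 0$).

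Third I would invoke Remark \ref{ku3rem4}(a): from $\phi_{ii}\ci\la=\phi_{ii}^{\rm new}+O(s_i)$, interpreted as in Definition \ref{ku3def11}(iii$)'$(B), the discrete data of $\phi_{ji}\ci\phi_{ij}$ (pulled through the fibre product and then the $\Ga_j$-quotient) agrees near $p$ with that of $v\mapsto\ga\cdot v$ on $V_i$, where $\ga\in\Ga_i$ is the constant value of the $\Ga_i$-component of $\la$ on the connected component of $\dot P$ containing $[(p,q)]$. Since $\ga$ acts diffeomorphically on $V_i$, its discrete data is that of the identity up to a permutation of the boundary-defining coordinates. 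The same computation as in the proof of Proposition \ref{ku3prop2} then forces $(a_{p,q})$ and $(b_{q,p})$ to be mutually inverse $\N$-valued matrices, hence permutation matrices with unit entries, in particular $k=l$; the degenerate cases $f_q\equiv 0$ or $g_p\equiv 0$ are ruled out. By Definition \ref{ku3def1}(f), $\phi_{ij}$ is simple near $p$. Since $p$ was arbitrary in $\pi_{ij}^{-1}(\bar\psi_i^{-1}(S))$, $\phi_{ij}$ is simple on a neighbourhood of this subset; and since simplicity is a discrete condition (Remark \ref{ku3rem2}), holding it on one point of a connected component of a stratum of $P_{ij}$ forces it on the whole component. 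Definition \ref{ku5def2}(g) guarantees that every connected component of every stratum meets that neighbourhood, so $\phi_{ij}$ is simple on all of $P_{ij}$.

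The main obstacle I anticipate is the bookkeeping of $\Ga_i$-equivariance and of the quotient by $\Ga_j$ in $P_{ii}^{\rm new}=(P_{ij}\t_{V_j}P_{ji})/\Ga_j$: one must lift the $2$-isomorphism $\eta$ from $\dot P$ to a $\Ga_j$-equivariant statement on $P_{ij}\t_{V_j}P_{ji}$ and reduce it to a pointwise identity between $\phi_{ji}\ci\phi_{ij}$ and the single diffeomorphism $\ga\cdot(\cdot)$ near $p$, and to verify that the correction term $\hat\la\cdot\pi^*(s_i)$, which lives in the b-tangent bundle $\phi_{ij}^*({}^bTV_j)$ rather than the ordinary tangent bundle, does not disturb the leading-order exponent computation imported from the $\mu$-version.
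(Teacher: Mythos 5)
Your proposal is correct and follows essentially the same route as the paper, which proves Proposition \ref{ku5prop1} simply by adapting the proof of Proposition \ref{ku3prop2} to the orbifold setting, with Definition \ref{ku5def2}(g) playing the role of Definition \ref{ku3def13}(ii). Your extra bookkeeping — extracting a representative of the 2-isomorphism, passing through $(P_{ij}\t_{V_j}P_{ji})/\Ga_j$, and comparing discrete data with the action of a fixed $\ga\in\Ga_i$ (whose exponent matrix is a permutation, so the inverse-matrix argument over $\N$ goes through unchanged) — is exactly the intended elaboration.
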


Here are the analogues of Theorems \ref{ku4thm1} and \ref{ku4thm2}, proved in \S\ref{ku75} and~\S\ref{ku76}.

\begin{thm} The analogue of Theorem\/ {\rm\ref{ku4thm1}} holds for Kuranishi neighbourhoods with boundary, and with corners.
\label{ku5thm1}
\end{thm}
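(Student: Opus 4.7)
The plan is to reduce Theorem \ref{ku5thm1} to Theorem \ref{ku4thm1} by a parallel argument, working through each part of the stack axioms (Definition \ref{ku4def11}(i)--(v)), and addressing only those points where the passage from manifolds without boundary to manifolds with (g-)corners introduces genuinely new issues. As in \S\ref{ku41}--\S\ref{ku42}, the structural reasoning (categorical gluing of 1- and 2-morphisms, checking naturality) is identical; the new content is entirely geometric.

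First I would handle the local-to-global gluing of the geometric data. Manifolds with corners form a sheaf-like category on the domain: open subsets, smooth maps between them, vector bundles, sections, and morphisms of vector bundles all glue on open covers, as noted at the end of Definition \ref{ku3def1}. This immediately gives the prestack axioms (i),(ii) of Definition \ref{ku4def11} and the separation axiom (iii), since a 2-morphism $[\dot P_{ij},\la_{ij},\hat\la_{ij}]$ is an $\approx$-equivalence class of a pair where $\la_{ij}$ is a smooth map of manifolds with corners and $\hat\la_{ij}$ is a smooth morphism into $\phi_{ij}^*({}^bTV_j)$; the b-tangent bundle ${}^bTV_j$ and its morphisms are perfectly well-behaved as sheaves on $V_j$ (cf.~\S\ref{ku33}), so the separation argument from the $\Ho{-}$free case of Theorem \ref{ku4thm1} transfers verbatim. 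For gluing of 2-morphisms (axiom (iv)), I would glue the representatives $(\dot P^a_{ij},\la^a_{ij},\hat\la^a_{ij})$ on the open cover using these sheaf properties, observe that equations \eq{ku4eq1} (interpreted via Definition \ref{ku3def11}(v$)'$) are local by the concluding remark of Definition \ref{ku3def11}, and pass to $\approx$-equivalence classes.

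The substantive new step is gluing of 1-morphisms (axiom (v)). Given 1-mor\-ph\-isms $\Phi^a_{ij}=(P^a_{ij},\pi^a_{ij},\phi^a_{ij},\hat\phi^a_{ij})$ over an open cover $\{T_a\}$ of $S$ and compatible 2-morphisms on overlaps, the manifolds $P^a_{ij}$ (with corners) glue over the overlaps along the 2-morphism data to form a candidate manifold-with-corners $P_{ij}$ carrying commuting $\Ga_i,\Ga_j$-actions, together with maps $\pi_{ij}$, $\phi_{ij}$ and a morphism $\hat\phi_{ij}$; \'etaleness of $\pi_{ij}$, freeness of the $\Ga_j$-action, and Definition \ref{ku4def2}(d),(e),(f) are local conditions preserved under gluing. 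The only new condition to verify is Definition \ref{ku5def2}(g), which is not automatic: after the gluing, $P_{ij}$ may carry connected components $C$ of $S^k(P_{ij})$ with $\overline C\cap\pi_{ij}^{-1}(\bar\psi_i^{-1}(S))=\es$. I would deal with this exactly as was done for the restriction functor in \S\ref{ku51}: delete such components from $P_{ij}$ and restrict $\pi_{ij},\phi_{ij},\hat\phi_{ij}$ accordingly. Because this deletion happens away from $\pi_{ij}^{-1}(\bar\psi_i^{-1}(S))$, the restriction of the result to each $T_a$ is $\approx$-equivalent to $\Phi^a_{ij}$, which is all that Definition \ref{ku4def11}(v) requires. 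The same deletion trick, together with Proposition \ref{ku5prop1}, guarantees preservation of simpleness of $\phi_{ij}$ at the coordinate-change level, giving the corresponding statement for the substack $\bcEqu$.

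The main obstacle I anticipate is bookkeeping the corner-stratum-and-$\approx$ combinatorics accurately: one must be careful that the gluing of the $P^a_{ij}$ over transition 2-morphisms $\La^{ab}$ does not create or destroy boundary/corner strata near $\bar\psi_i^{-1}(S)$ in a way that breaks compatibility with the restriction convention of \S\ref{ku51}, and that the b-tangent equations from Definition \ref{ku3def11}(v$)'$ (whose `discrete' content is constant along connected components of $S^k(P_{ij})$, cf.~Remarks \ref{ku3rem2}, \ref{ku3rem4}(a), \ref{ku3rem5}) are consistent on all overlaps before deleting stray components. Once this bookkeeping is in hand, the remainder of the proof of Theorem \ref{ku4thm1} carries over unchanged, simultaneously yielding parts (a) for $\bcHom$ and $\bcEqu$ and part (b) for $\bcHom_f$ in the boundary and corners cases. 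The g-corners variant needed later in \S\ref{ku56} is identical, with ${}^bTV_j$ interpreted via \S\ref{ku343} and Proposition \ref{ku5prop1} replaced by its g-corners analogue.
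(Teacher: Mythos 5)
There is a genuine gap, and it is exactly at the point your proposal passes over most quickly. You assert that the $P^a_{ij}$ ``glue over the overlaps along the 2-morphism data \ldots together with maps $\pi_{ij}$, $\phi_{ij}$ and a morphism $\hat\phi_{ij}$''. The bundle data $\pi^a_{ij}$ does glue (after shrinking to the locus where the cocycle $\la^{bc}_{ij}\ci\la^{ab}_{ij}=\la^{ac}_{ij}$ holds exactly, since the 2-morphism cocycle only holds up to $\approx$), but the maps $\phi^a_{ij}$ do \emph{not}: on overlaps they satisfy only $\phi^b_{ij}\ci\la^{ab}_{ij}=\phi^a_{ij}+\hat\la^{ab}_{ij}\cdot(\pi^a_{ij})^*(s_i)+O\bigl((\pi^a_{ij})^*(s_i)^2\bigr)$, as in \eq{ku4eq1}, so there is no well-defined glued map $\phi_{ij}$. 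In the paper's proof of Theorem \ref{ku4thm1} (see the paragraph containing \eq{ku7eq12}) one instead \emph{averages} the maps $\phi^a_{ij}\ci\la^a_{ij}$ with a partition of unity, using the maps $\Xi_n:W_n\ra V_j$ of Lemma \ref{ku6lem}. That is a nonlinear construction on $V_j$, and it is precisely the step that breaks when $V_j$ has boundary or corners: Lemma \ref{ku6lem} as stated is false already for $V_j=[0,\iy)$, and must be replaced by the b-metric version of \S\ref{ku63}, with parts (i),(ii) replaced by (i$)'$,(ii$)'$ and the estimates \eq{ku6eq43}--\eq{ku6eq45}. The modified construction works only because the 2-morphism data $\hat\la^{ab}_{ij}$ takes values in $\phi_{ij}^*({}^bTV_j)$ rather than $\phi_{ij}^*(TV_j)$, which yields boundary-coordinate estimates of the form $\md{x^a_l-x^b_l}=\md{x^a_l}\cdot O(s_i)$ compensating the $\md{x^a_l}^{-2}$ factors in \eq{ku6eq44}. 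The paper's proof of Theorem \ref{ku5thm1} in \S\ref{ku75} identifies this as \emph{the only nontrivial issue} in passing from Theorem \ref{ku4thm1} to the corners case, and your proposal omits it entirely.

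By contrast, the parts you do flag are comparatively minor. Axiom (iv) is fine, because there the gluing of $\hat\la_{ij}$ is a linear partition-of-unity sum into $\phi_{ij}^*({}^bTV_j)$ and needs no averaging lemma. The issue with Definition \ref{ku5def2}(g) (deleting connected components $C$ of $S^k(P_{ij})$ with $\ov C\cap\pi_{ij}^{-1}(\bar\psi_i^{-1}(S))=\es$) is handled as you say, by the same deletion used for restrictions in \S\ref{ku51}, but it is not the substantive new content. Also, for the substack statement for $\bcEqu$ you do not need Proposition \ref{ku5prop1}: as in \S\ref{ku712} this is a purely formal argument from quasi-inverses plus the already-established stack property of $\bcHom$, and it transfers unchanged. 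To repair your proof you should replace the claim that $\phi_{ij}$ glues with the partition-of-unity averaging of \S\ref{ku61}, and then carry out the corner modification of Lemma \ref{ku6lem} as in \S\ref{ku63}.
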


\begin{thm} Let\/ $\Phi_{ij}=(P_{ij},\pi_{ij},\phi_{ij},\hat\phi_{ij}):(V_i,E_i,\Ga_i,s_i,\psi_i)\ra (V_j,\ab E_j,\ab\Ga_j,\ab s_j,\ab\psi_j)$ be a $1$-morphism of Kuranishi neighbourhoods with corners over\/ $S\subseteq X$. Let\/ $p\in\pi_{ij}^{-1}(\bar\psi_i^{-1}(S))\subseteq P_{ij},$ and set\/ $v_i=\pi_{ij}(p)\in V_i$ and\/ $v_j=\phi_{ij}(p)\in V_j$.

Suppose $\phi_{ij}$ is interior near $p$ (this is implied by the condition $\phi_{ij}$ simple below). Consider the sequence of finite-dimensional real vector spaces:
\e
\xymatrix@C=12.5pt{ 0 \ar[r] & {}^bT_{v_i}V_i \ar[rrrrr]^(0.43){{}^b\d s_i\vert_{v_i}\op({}^b\d\phi_{ij}\vert_p\ci{}^b\d\pi_{ij}\vert_p^{-1})} &&&&& E_i\vert_{v_i} \!\op\!{}^bT_{v_j}V_j 
\ar[rrr]^(0.56){-\hat\phi_{ij}\vert_p\op {}^b\d s_j\vert_{v_j}} &&& E_j\vert_{v_j} \ar[r] & 0. }\!\!\!\!{}
\label{ku5eq1}
\e
Here ${}^b\d s_i\vert_{v_i}$ means the composition of the natural map ${}^bT_{v_i}V_i\ra T_{v_i}V_i$ and $\nabla s_i\vert_{v_i}:T_{v_i}V_i\ra E_i\vert_{v_i}$ for any connection $\nabla$ on $E_i\ra V_i,$ and is independent of the choice of\/ $\nabla$ as $s_i(v_i)=0$. Also ${}^b\d\pi_{ij}\vert_p:{}^bT_pP_{ij}\ra {}^bT_{v_i}V_i$ and\/ ${}^b\d\phi_{ij}\vert_p:{}^bT_pP_{ij}\ra {}^bT_{v_j}V_j$ are defined as in\/ {\rm\S\ref{ku33},} as $\pi_{ij},\phi_{ij}$ are interior near $p,$ and\/ ${}^b\d\pi_{ij}\vert_p$ is invertible as $\pi_{ij}$ is \'etale. Definition\/ {\rm\ref{ku4def2}(e)} implies that\/ \eq{ku5eq1} is a complex. Also consider the morphism of finite groups
\e
\begin{split}
&\rho_p:\bigl\{(\ga_i,\ga_j)\in\Ga_i\t\Ga_j:(\ga_i,\ga_j)\cdot p=p\bigr\}
\longra\bigl\{\ga_j\in\Ga_j:\ga_j\cdot v_j=v_j\bigr\},\\
&\rho_p:(\ga_i,\ga_j)\longmapsto \ga_j.
\end{split}
\label{ku5eq2}
\e

Then $\Phi_{ij}$ is a coordinate change over $S$ if and only if\/ $\phi_{ij}$ is simple, and for all\/ $p\in\pi_{ij}^{-1}(\bar\psi_i^{-1}(S))$ equation\/
\eq{ku5eq1} is exact and\/ \eq{ku5eq2} is an isomorphism.
\label{ku5thm2}
\end{thm}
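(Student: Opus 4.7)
The plan is to adapt the proof of Theorem \ref{ku4thm2} (to be given in \S\ref{ku72}) to the corners setting, incorporating the b-tangent bundle techniques from the proof of Theorem \ref{ku3thm3} (to be given in \S\ref{ku64}). Since Proposition \ref{ku5prop1} already supplies the \emph{simple} hypothesis on $\phi_{ij}$ in the ``only if'' direction, the task is to replace everywhere tangent bundles appear in \S\ref{ku72} by b-tangent bundles, justified by the fact that when $\phi_{ij}$ (and $\pi_{ij}$) are simple, the maps ${}^b\d\pi_{ij},{}^b\d\phi_{ij}$ of \S\ref{ku33} are defined and ${}^b\d\pi_{ij}$ is invertible, exactly paralleling the role of $\d\pi_{ij}$ in \eq{ku4eq9}.

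For the necessity direction, I would first quote Proposition \ref{ku5prop1} to obtain that $\phi_{ij}$ is simple. Given a quasi-inverse $\Phi_{ji}=(P_{ji},\pi_{ji},\phi_{ji},\hat\phi_{ji})$ and 2-isomorphisms $\Phi_{ji}\ci\Phi_{ij}\Ra\id$, $\Phi_{ij}\ci\Phi_{ji}\Ra\id$ in $\Kur^{\bf c}_S(X)$, the argument in \S\ref{ku72} for showing \eq{ku4eq10} is an isomorphism carries over verbatim to prove \eq{ku5eq2} is an isomorphism, since that argument only uses the set-theoretic/group-theoretic structure of the principal bundles $\pi_{ij},\pi_{ji}$ at the point $p$ and is insensitive to boundary. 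For exactness of \eq{ku5eq1}, I would apply $({}^b\d-)$ to the 2-isomorphism equations \eq{ku4eq1} at $p$, obtaining diagrams relating the b-tangent complexes for $\Phi_{ij}$ and $\Phi_{ji}$ that force each to be exact, exactly as in the tangent-bundle argument for Theorem \ref{ku4thm2}, with ${}^b\d\phi_{ji},{}^b\d\phi_{ij}$ well-defined because $\phi_{ij},\phi_{ji}$ are simple hence interior.

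For the sufficiency direction, I would construct a quasi-inverse $\Phi_{ji}$ locally and glue using Theorem \ref{ku5thm1}. Near each $p\in\pi_{ij}^{-1}(\bar\psi_i^{-1}(S))$, the combination of: (a) $\phi_{ij}$ simple, so that near $p$ the map $\phi_{ij}\ci\pi_{ij}^{-1}$ looks in suitable local charts like an embedding preserving the stratification $(u_1,\ldots,u_m)\mapsto (u_1,\ldots,u_m,0,\ldots,0)$ of \S\ref{ku31}; (b) the b-tangent complex \eq{ku5eq1} being exact, which as in Example \ref{ku2ex1} and the proof of Theorem \ref{ku3thm3} lets one write $s_j$ locally as $\hat\phi_{ij}(s_i)$ plus a transversely vanishing section in the normal directions; and (c) the group isomorphism \eq{ku5eq2}, which identifies the local $\Gamma_i$-stabilizer with the $\Gamma_j$-stabilizer and so allows the inverse principal bundle structure to be built. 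These pieces assemble into a local 1-morphism $\Phi_{ji}$ and local 2-isomorphisms to identities, and Theorem \ref{ku5thm1} glues them into a global quasi-inverse.

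The main obstacle will be step (c) of the sufficiency direction combined with the compatibility of the local inverses under the $\Gamma_i,\Gamma_j$ actions and the principal $\Gamma_j$-bundle structure of $\pi_{ij}$: while the proofs of Theorems \ref{ku3thm3} and \ref{ku4thm2} handle the corners and the isotropy separately, in Theorem \ref{ku5thm2} we must verify that the normal direction inversion produced by exactness of \eq{ku5eq1} is equivariant for both $\Gamma_i$ and $\Gamma_j$ (which can be arranged by averaging, as is done for $\hat\la_{ij}$ in Definition \ref{ku4def5}) and that the resulting $P_{ji}$ carries a free $\Gamma_i$-action making $\pi_{ji}$ a principal $\Gamma_i$-bundle, which is where \eq{ku5eq2} is essential. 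Once the local quasi-inverse is constructed $\Gamma_i\times\Gamma_j$-equivariantly, the gluing via the stack property of Theorem \ref{ku5thm1} is formal, just as in Theorem \ref{ku4thm2}.
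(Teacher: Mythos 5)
Your proposal follows exactly the route the paper takes: Theorem \ref{ku5thm2} is proved by modifying the proof of Theorem \ref{ku4thm2} in \S\ref{ku72}, replacing $TV_i,TV_j,\d s_i,\d s_j,\d\phi_{ij},\ldots$ by their b-tangent analogues as in the passage from \S\ref{ku62} to \S\ref{ku64}, using Proposition \ref{ku5prop1} in the `only if' part and the stack property of Theorem \ref{ku5thm1} to glue the local quasi-inverses in the `if' part. Your treatment of the $\Ga_i\t\Ga_j$-equivariance and of the simple/stratification-preserving local form of $\phi_{ij}$ matches the details the paper leaves to the reader.
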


Here the condition $\phi_{ij}$ simple is necessary for $\Phi_{ij}$ to be a coordinate change, by Proposition~\ref{ku5prop1}.

Now as we explained for $\mu$-Kuranishi spaces in \S\ref{ku35}, in \S\ref{ku41}--\S\ref{ku42} we were doing differential geometry, so in Kuranishi neighbourhoods $(V_i,E_i,\Ga_i,s_i,\psi_i)$, it mattered that $V_i$ is a manifold (possibly with corners), $E_i\ra V_i$ a vector bundle, and so on. In \S\ref{ku43}--\S\ref{ku44}, the arguments were of a different character: we treated Kuranishi neighbourhoods $(V_i,E_i,\Ga_i,s_i,\psi_i)$ and their 1-morphisms $\Phi_{ij}$ and 2-morphisms $\La_{ij}$ just as objects and 1- and 2-morphisms in certain 2-categories $\Kur_S(X)$, without caring what they really are, and the proofs involved 2-categories and stacks on topological spaces, but no differential geometry.

Because of this, there really are {\it no significant issues\/} in extending \S\ref{ku43}--\S\ref{ku44} to the boundary and corners cases. We just insert `with boundary' or `with corners' throughout, use the definitions of Kuranishi neighbourhoods with corners and their 1- and 2-morphisms and coordinate changes explained above, apply Theorem \ref{ku5thm1} in place of Theorem \ref{ku4thm1}, and everything just works, exactly as it did before.

Thus, as in Definitions \ref{ku4def12}, \ref{ku4def13} and \ref{ku4def14} we can define {\it Kuranishi spaces with boundary\/} and {\it with corners\/} $\bX$ and their 1-morphisms $\bs f:\bX\ra\bY$ and 2-morphisms $\bs\eta:\bs f\Ra\bs g$, as in Definitions \ref{ku4def15}, \ref{ku4def16} and \ref{ku4def17} and Propositions \ref{ku4prop1}, \ref{ku4prop2}, \ref{ku4prop3}, \ref{ku4prop4} and \ref{ku4prop5} we can define the remaining structures of a weak 2-category, and so show as in Theorem \ref{ku4thm3} that Kuranishi spaces with boundary, and with corners, form weak 2-categories $\Kurb,\Kurc$.

We can regard Kuranishi spaces (without boundary) and their 1- and 2-morphisms, as in \S\ref{ku41}--\S\ref{ku43}, as special examples of Kuranishi spaces with corners in which the $V_i$ in $(V_i,E_i,\Ga_i,s_i,\psi_i)$ have $\pd V_i=\es$ for each $i\in I$. Hence, we have inclusions of full 2-subcategories~$\Kur\subset\Kurb\subset\Kurc$.

Also, as in Definition \ref{ku4def18}, we can regard manifolds with boundary or corners as examples of Kuranishi spaces with boundary or corners, and define weak 2-functors $F_\Manb^\Kurb:\Manb\ra\Kurb$ and $F_\Manc^\Kurc:\Manc\ra\Kurc$. As in Definitions \ref{ku4def19}-\ref{ku4def21}, we can define {\it Kuranishi neighbourhoods with boundary or corners\/ $(V_a,E_a,\Ga_a,s_a,\psi_a)$ on Kuranishi spaces with boundary or corners\/} $\bX$, and their coordinate changes and 1-morphisms, and the analogue of Theorem \ref{ku4thm4} holds.

Following the method of \S\ref{ku45} closely, we can define weak 2-categories of {\it orbifolds with boundary\/} $\Orbb$ and {\it orbifolds with corners\/} $\Orbc$, and full and faithful weak 2-functors $F_\Orbb^\Kurb:\Orbb\ra\Kurb$ and $F_\Orbc^\Kurc:\Orbc\ra\Kurc$, but we will not discuss this. The only foundational work on orbifolds with corners known to the author is \cite[\S 8]{Joyc8}, which uses $C^\iy$-stacks.

\subsection[Boundaries and corners of Kuranishi spaces with corners]{Boundaries and corners of Kuranishi spaces with \\ corners}
\label{ku52}

Section \ref{ku32} defined the boundary $\pd X$ and $k$-corners $C_k(X)$ of a manifold with corners $X$, and \S\ref{ku36} generalized these to $\mu$-Kuranishi spaces with corners. We now extend them to Kuranishi spaces with corners. 

\begin{dfn} Let $\bX=(X,\cK)$ be a Kuranishi space with corners, with $\cK=\bigl(I,(V_i,E_i,\Ga_i,s_i,\psi_i)_{i\in I}$, $\Phi_{ij,\; i,j\in I}$, $\La_{ijk,\; i,j,k\in I}\bigr)$, and fix $x\in\bX$ and $k\ge 0$. Generalizing \eq{ku3eq24}, for each $i\in I$ with $x\in\Im\bar\psi_i$ define
\begin{align*}
\Ga_{x,i}^k=\bigl\{(v_i,\ga):\,& v_i\in\bar\psi_i^{-1}(\{x\})\subseteq V_i,\\
&\text{$\ga$ is a local $k$-corner component of $V_i$ at $v_i$}\bigr\}/\Ga_i.
\end{align*}

Suppose $i,j\in I$ with $x\in\Im\bar\psi_i\cap\Im\bar\psi_j$. We have $\Phi_{ij}=(P_{ij},\pi_{ij},\phi_{ij},\hat\phi_{ij})$, where $\pi_{ij}:P_{ij}\ra V_i$, $\phi_{ij}:P_{ij}\ra V_j$, and $\Ga_i\t\Ga_j$ acts on $P_{ij}$, so we may define
\e
\begin{split}
\Ga_{x,ij}^k=\bigl\{&(p_{ij},\ga):p_{ij}\in\pi_{ij}^{-1}(\bar\psi_i^{-1}(\{x\}))\subseteq P_{ij},\\
&\text{$\ga$ is a local $k$-corner component of $P_{ij}$ at $p_{ij}$}\bigr\}/(\Ga_i\t\Ga_j).
\end{split}
\label{ku5eq3}
\e
Define projections $\Pi_{ij}^i:\Ga_{x,ij}^k\ra\Ga_{x,i}^k$ and $\Pi_{ij}^j:\Ga_{x,ij}^k\ra\Ga_{x,j}^k$ by
\e
\begin{split}
\Pi_{ij}^i&:(\Ga_i\t\Ga_j)\cdot (p_{ij},\ga)\longmapsto \Ga_i\cdot \bigl(\pi_{ij}(p_{ij}),(\pi_{ij})_*(\ga)\bigr),\\
\Pi_{ij}^j&:(\Ga_i\t\Ga_j)\cdot (p_{ij},\ga)\longmapsto \Ga_j\cdot \bigl(\phi_{ij}(p_{ij}),(\phi_{ij})_*(\ga)\bigr).
\end{split}
\label{ku5eq4}
\e
These are well-defined as $\pi_{ij}$ is $\Ga_j$-invariant and $\Ga_i$-equivariant, and $\phi_{ij}$ is $\Ga_i$-invariant and $\Ga_j$-invariant, and $\pi_{ij},\phi_{ij}$ are simple and so map local $k$-corner components $\ga$ to local $k$-corner components $(\pi_{ij})_*(\ga),(\phi_{ij})_*(\ga)$.

Using Definition \ref{ku4def3}(b), equation \eq{ku5eq2} an isomorphism by Theorem \ref{ku5thm2}, and $\pi_{ij},\phi_{ij}$ simple, we see that $\Pi_{ij}^i,\Pi_{ij}^j$ in \eq{ku5eq4} are bijections. Thus, in a similar way to Definition \ref{ku3def14}, we may define a bijection
\begin{equation*}
(\Phi_{ij})_*:\Pi_{ij}^j\ci(\Pi_{ij}^i)^{-1}:\Ga_{x,i}^k\longra \Ga_{x,j}^k.
\end{equation*}

If $h,i,j\in I$ with $x\in\Im\bar\psi_h\!\cap\!\Im\bar\psi_i\!\cap\!\Im\bar\psi_j$, using Definition \ref{ku4def4} we see that
\begin{equation*}
(\Phi_{ij}\ci\Phi_{hi})_*=(\Phi_{ij})_*\ci(\Phi_{hi})_*
:\Ga_{x,h}^k\longra \Ga_{x,j}^k.
\end{equation*}
And using $\La_{hij}=[\dot P_{hij},\la_{hij},\hat\la_{hij}]:\Phi_{ij}\ci\Phi_{hi}\Ra\Phi_{hj}$, we find that
\begin{equation*}
(\Phi_{ij}\ci\Phi_{hi})_*=(\Phi_{hk})_*:\Ga_{x,h}^k\longra\Ga_{x,j}^k,
\end{equation*}
since $\la_{hij}$ identifies the sets \eq{ku5eq3} for $\Phi_{ij}\ci\Phi_{hi}$ and $\Phi_{hj}$ compatibly with the projections to $\Ga_{x,h}^k,\Ga_{x,j}^k$. Thus as for \eq{ku3eq25} we have
\begin{equation*}
(\Phi_{hj})_*=(\Phi_{ij})_*\ci(\Phi_{hi})_*:\Ga_{x,h}^k\longra \Ga_{x,j}^k.
\end{equation*}
Also $(\Phi_{ii})_*=\id:\Ga_{x,i}^k\longra \Ga_{x,i}^k$.

Define a {\it local\/ $k$-corner component of\/ $\bX$ at\/} $x$ to be an element of the set
\begin{equation*}
\Ga_{x,\bX}^k=\bigl(\ts\coprod_{i\in I:x\in\Im\psi_i}\Ga_{x,i}^k\bigr)\big/\approx,
\end{equation*}
as in \eq{ku3eq26}, where $\approx$ is the binary relation $\ga_i\approx \ga_j$ if $\ga_i\in \Ga_{x,i}^k$ and $\ga_i\in \Ga_{x,i}^k$ with $(\Phi_{ij})_*(\ga_i)=\ga_j$. The discussion above implies that $\approx$ is an equivalence relation, and that there are canonical bijections $\Ga_{x,i}^k\cong\Ga_{x,\bX}^k$ for each $i\in I$ with $x\in\Im\bar\psi_i$. When $k=1$, we also call local 1-corner components of $\bX$ at $x$ {\it local boundary components of\/ $\bX$ at\/}~$x$.

For each $k=0,1,\ldots$ we will define a Kuranishi space with corners $C_k(\bX)=\bigl(C_k(X),\cK^{C_k}\bigr)$ called the $k$-{\it corners of\/} $\bX$, with virtual dimension $\vdim C_k(\bX)=\vdim\bX-k$, and a morphism $\bs i_\bX:C_k(\bX)\ra\bX$. When $k=1$, we also write $\pd\bX=C_1(\bX)$ and $\bs i_\bX:\pd\bX\ra\bX$, and call $\pd\bX$ the {\it boundary\/} of $\bX$. As in \eq{ku3eq27}, the underlying topological space $C_k(X)$ is given as a set by
\begin{equation*}
C_k(X)=\bigl\{(x,\ga):\text{$x\in X$, $\ga$ is a local $k$-corner 
component of $\bX$ at $x$}\bigr\}.
\end{equation*}
Note that the analogues of \eq{ku3eq5} and \eq{ku3eq28} may not hold in the orbifold case. The topology on $C_k(X)$ is determined uniquely by requiring $\psi_i^{C_k}$ in \eq{ku5eq5} below to be a homeomorphism with an open set in $C_k(X)$ for all~$i\in I$.

Define a Kuranishi structure with corners $\cK^{C_k}$ on $C_k(X)$ by
\e
\cK^{C_k}=\bigl(I,(V_i^{C_k},E_i^{C_k},\Ga_i^{C_k},s_i^{C_k},\psi_i^{C_k})_{i\in I},\;\Phi_{ij,\; i,j\in I}^{C_k},\;\La_{hij,\; h,i,j\in I}^{C_k}\bigr),
\label{ku5eq5}
\e
where for each $i\in I$ we have $V_i^{C_k}=C_k(V_i)$, $E_i^{C_k}=i_{V_i}^*(E_i)$ for $i_{V_i}:C_k(V_i)\ra V_i$ the projection, $\Ga_i^{C_k}=\Ga_i$, $s_i^{C_k}=i_{V_i}^*(s_i)$, and $\psi_i^{C_k}:(s_i^{C_k})^{-1}(0)/\Ga_i^{C_k}\ra C_k(X)$ maps $\psi_i^{C_k}:\Ga_i^{C_k}\cdot(v_i,\ga)\mapsto[\Ga_i\cdot(v_i,\ga)]$, writing $[\Ga_i\cdot(v_i,\ga)]\in\Ga_{x,\bX}^k$ for the $\approx$-equivalence class of $\Ga_i\cdot(v_i,\ga)\in \Ga_{x,i}^k$, with $x=\bar\psi_i(v_i)$. Since the map $\Ga_{x,i}^k\ra\Ga_{x,\bX}^k$ taking $\Ga_i\cdot(v_i,\ga)\mapsto[\Ga_i\cdot(v_i,\ga)]$ is bijective, we see that $\psi_i^{C_k}$ is injective.

If $i,j\in I$, define $\Phi_{ij}^{C_k}=\bigl(P_{ij}^{C_k},\pi_{ij}^{C_k},\phi_{ij}^{C_k},\hat\phi_{ij}^{C_k}\bigr)$, where $P_{ij}^{C_k}=C_k(P_{ij})$, and $\pi_{ij}^{C_k}=C_k(\pi_{ij}):P_{ij}^{C_k}=C_k(P_{ij})\ra C_k(V_i)=V_i^{C_k}$, and $\phi_{ij}^{C_k}=C_k(\phi_{ij}):P_{ij}^{C_k}=C_k(P_{ij})\ra C_k(V_j)=V_j^{C_k}$ (both defined by Proposition \ref{ku3prop1} as $\pi_{ij},\phi_{ij}$ are simple by Proposition \ref{ku5prop1}), and
\begin{align*}
\hat\phi_{ij}^{C_k}=i_{P_{ij}}^*&(\hat\phi_{ij}):(\pi_{ij}^{C_k})^*(E_i^{C_k})=C_k(\pi_{ij})^*\ci i_{V_i}^*(E_i)=i_{P_{ij}}^*\ci \pi_{ij}^*(E_i) \\
&\longra i_{P_{ij}}^*\ci\phi_{ij}^*(E_j)=C_k(\phi_{ij})^*\ci i_{V_j}^*(E_j)=(\phi_{ij}^{C_k})^*(E_j^{C_k}),
\end{align*}
where we use $\pi_{ij}\ci i_{P_{ij}}=i_{V_i}\ci C_k(\pi_{ij})$ and~$\phi_{ij}\ci i_{P_{ij}}=i_{V_j}\ci C_k(\phi_{ij})$.

If $h,i,j\in I$, write $\Phi_{ij}\ci\Phi_{hi}=(P_{hij},\pi_{hij},\phi_{hij},\hat\phi_{hij})$ and $\Phi_{ij}^{C_k}\ci\Phi_{hi}^{C_k}=(P_{hij}^{C_k},\pi_{hij}^{C_k},\phi_{hij}^{C_k},\hat\phi_{hij}^{C_k})$ so that $P_{hij}^{C_k}=C_k(P_{hij})$, $\pi_{hij}^{C_k}=C_k(\pi_{hij})$ and $\phi_{hij}^{C_k}=C_k(\phi_{hij})$. Let $(\dot P_{hij},\la_{hij},\hat\la_{hij})$ represent $\La_{hij}=[\dot P_{hij},\la_{hij},\hat\la_{hij}]:\Phi_{ij}\ci\Phi_{hi}\Ra\Phi_{hk}$, so that $\dot P_{hij}\subseteq P_{hij}$ is open. Define $\La_{hij}^{C_k}=[\dot P_{hij}^{C_k},\la_{hij}^{C_k},\hat\la_{hij}^{C_k}]:\Phi_{ij}^{C_k}\ci\Phi_{hi}^{C_k}\Ra\Phi_{hk}^{C_k}$, where $\dot P_{hij}^{C_k}=C_k(\dot P_{hij})$, $\la_{hij}^{C_k}=C_k(\la_{hij})$, and $\hat\la_{hij}^{C_k}$ is determined by the commutative diagram
\e
\begin{gathered}
{}\!\!\!\!\!\!\!\!\xymatrix@!0@C=289pt@R=55pt{ *+[r]{\,\,\,\,\begin{subarray}{l}\ts 
(\pi_{hij}^{C_k})^*(E_h^{C_k})\vert_{\dot P_{hij}^{C_k}}= \\
\ts C_k(\pi_{hij})^*\!\ci\! i_{V_h}^*(E_h)\vert_{\dot P_{hij}^{C_k}}\!=\!i_{\dot P_{hij}}^*\!\!\ci\!\pi_{hij}^*(E_h) \end{subarray}\!\!\!\!\!\!\!\!\!\!\!\!\!\!\!\!\!\! } \ar@<2.3ex>[r]^(0.55){\hat\la_{hij}^{C_k}} \ar[d]^(0.6){i_{\dot P_{hij}}^*(\hat\la_{hij})} & *+[l]{\begin{subarray}{l} \ts (\phi^{C_k}_{hij})^*({}^bTV^{C_k}_j)\vert_{\dot P_{hij}^{C_k}}= \\ \ts C_k(\phi_{hij})^*({}^bT(C_k(V_j)))\vert_{\dot P_{hij}^{C_k}} \end{subarray}\,\,\,\,} \\
*+[r]{i_{\dot P_{hij}}^*\ci \phi_{hij}^*({}^bTV_j)} \ar@{=}[r] & *+[l]{C_k(\phi_{hij})^*\ci i_{V_j}^*({}^bTV_j),\!\!} \ar[u]^(0.4){C_k(\phi_{hij})^*(\Pi)} }\!\!\!\!\!\!\!\!\!{}
\end{gathered}
\label{ku5eq6}
\e

where $\Pi:i_{V_j}^*({}^bTV_j)\ra {}^bT(C_k(V_j))$ is the natural projection. One can show that $\La_{hij}^{C_k}$ is well-defined, and independent of the choice of $(\dot P_{hij},\la_{hij},\hat\la_{hij})$.

This defines all the data in $\cK^{C_k}$ in \eq{ku5eq5}. As in \S\ref{ku36}, it is now straightforward to check that $\cK^{C_k}$ is a Kuranishi structure with corners on $C_k(X)$ (with its induced topology), so that $C_k(\bX)=\bigl(C_k(X),\cK^{C_k}\bigr)$ is a Kuranishi space with corners with $\vdim C_k(\bX)=\vdim\bX-k$, and $\pd\bX=C_1(\bX)$ when~$k=1$.

Define a 1-morphism $\bs i_\bX:C_k(\bX)\ra\bX$ by 
\begin{equation*}
\bs i_\bX=\bigl(i_\bX,(\bs i_\bX)_{ij,\;i\in I,\; j\in I},\; (\bs I_\bX)_{ii',\;i,i'\in I}^{j,\; j\in I},\; (\bs I_\bX)_{i,\;i\in I}^{jj',\; j,j'\in I}\bigr),
\end{equation*}
where $i_\bX:(x,\ga)\mapsto x$, and $(\bs i_\bX)_{ij}=\bigl(C_k(P_{ij}),C_k(\pi_{ij}),\phi_{ij}\ci i_{P_{ij}},\hat\phi_{ij}\ci i_{P_{ij}}^*\bigr)$, and $(\bs I_\bX)_{ii'}^j=\bigl[C_k(\dot P_{ii'j}),C_k(\la_{ii'j}),\hat\la_{ii'}^{C_k\,j}\bigr]$, $(\bs I_\bX)_i^{jj'}=\bigl[C_k(\dot P_{ijj'}),C_k(\la_{ijj'}),\hat\la_i^{C_k\,jj}\bigr]$, where $(\dot P_{ii'j},\la_{ii'j},\hat\la_{ii'j})$ represents $\La_{ii'j}$, and $(\dot P_{ijj'},\la_{ijj'},\hat\la_{ijj'})$ represents $\La_{ijj'}$, and $\hat\la_{ii'}^{C_k\,j},\hat\la_i^{C_k\,jj}$ are defined using $\hat\la_{ii'j},\hat\la_{ijj'}$ in a similar way to \eq{ku5eq6}. It is not difficult to show that $(\bs I_\bX)_{ii'}^j,(\bs I_\bX)_i^{jj'}$ are independent of the choice of representatives for $\La_{ii'j},\La_{ijj'}$, and $\bs i_\bX:C_k(\bX)\ra\bX$ is a 1-morphism in~$\Kur$.

\label{ku5def4}
\end{dfn}

Here are the analogues of Definitions \ref{ku3def15}, \ref{ku3def16} and \ref{ku3def17}:

\begin{dfn} Define a {\it Kuranishi space with corners $\bX=(X,\cK)$ of mixed dimension\/} with $\cK=\bigl(I,(V_i,E_i,\Ga_i,s_i,\psi_i)_{i\in I}$, $\Phi_{ij,\; i,j\in I}$, $\La_{ijk,\; i,j,k\in I}\bigr)$ as for Kuranishi spaces with corners in \S\ref{ku43} and \S\ref{ku51}, but omitting the condition $\dim V_i-\rank E_i=n$ in Definition \ref{ku4def12}(b), allowing $\dim V_i-\rank E_i$ to take any value.

Write $I_m=\{i\in I:\dim V_i-\rank E_i=m\}$ for $m\in\Z$, so that $I=
\coprod_{m\in\Z}I_m$, and $X_m=\bigcup_{i\in I_m}\Im\psi_i$, so that $X_m$ is open and closed in $X$ with $X\!=\!\coprod_{m\in\Z}X_m$, allowing $I_m\!=\!X_m\!=\!\es$. Set $\cK_m\!=\!\bigl(I_m,(V_i,E_i,\Ga_i,s_i,\psi_i)_{i\in I_m}$, $\Phi_{ij,\; i,j\in I_m}$, $\La_{ijk,\; i,j,k\in I_m}\bigr)$ for $m\in\Z$. Then $\bX_m=(X_m,\cK_m)$ is a Kuranishi space with corners with $\vdim\bX_m=m$ for $m\in\Z$, and $\bX=\coprod_{m\in\Z}\bX_m$.

Define 1-{\it morphisms\/} $\bs f:\bX\ra\bY$ and 2-{\it morphisms\/} $\bs\eta:\bs f\Ra\bs g$ of Kuranishi spaces with corners of mixed dimension, and all the other structures of a weak 2-category, as for Kuranishi spaces with corners in \S\ref{ku43} and \S\ref{ku51}, without change. Then Kuranishi spaces with corners of mixed dimension form a weak 2-category $\cKurc$, and $\Kurc\subset\cKurc$ is a full 2-subcategory.
\label{ku5def5}
\end{dfn}

\begin{dfn} We will define a weak 2-functor $C:\Kurc\ra\cKurc$, called the {\it corner functor}. On objects $\bX\!=\!(X,\cK)$ with $\cK=\bigl(I,(V_i,E_i,\Ga_i,s_i,\psi_i)_{i\in I}$, $\Phi_{ij,\; i,j\in I}$, $\La_{ijk,\; i,j,k\in I}\bigr)$ in $\Kurc$, define
\begin{equation*}
C(\bX)=\ts\coprod_{k=0}^\iy C_k(\bX),
\end{equation*}
regarded as an object in $\cKurc$. Write the indexing set for the $\mu$-Kuranishi structure on $C(\bX)$ as $I\t\N$, where $(i,k)\in I\t\N$ corresponds to the $\mu$-Kuranishi neighbourhood $(V^{C_k}_i,E_i^{C_k},\Ga_i^{C_k},s_i^{C_k},\psi_i^{C_k})$ on $C_k(X)$ in \eq{ku5eq5}.

In Definition \ref{ku3def5}, given a smooth map $f:X\ra Y$ of manifolds with corners, a point $x\in X$ with $f(x)\in Y$, and a local $k$-corner component $\ga$ of $X$ at $x$, we defined a local $l$-corner component $f_*(\ga)$ of $Y$ at $y$ for some $l\ge 0$. We will now define the analogue for 1-morphisms of Kuranishi spaces with corners.

Let $\bs f:\bX\ra\bY$ be a 1-morphism in $\Kurc$. Use notation \eq{ku4eq12}--\eq{ku4eq13} for $\bX,\bY$ and \eq{ku4eq15} for $\bs f$, where $\bs f_{ij}=(P_{ij},\pi_{ij},f_{ij},\hat f_{ij})$, and $(\dot P_{ii'}^j,F_{ii'}^j,\hat F_{ii'}^j)$, $(\dot P_i^{jj'},F_i^{jj'},\hat F_i^{jj'})$ are representatives for $\bs F_{ii'}^j,\bs F_i^{jj'}$, as in Definition \ref{ku4def13}.

Let $x\in\bX$ with $y=\bs f(x)\in\bY$. Choose $i\in I$, $j\in J$ with $x\in\Im\bar\chi_i\subseteq X$ and $y\in\Im\bar\psi_j\subseteq Y$. Then by Definition \ref{ku5def4} we have $\Ga_{x,\bX}^k\cong\Ga_{x,i}^k$ and $\Ga_{y,\bY}^l\cong\Ga_{y,j}^l$ for $k,l\ge 0$. As for \eq{ku5eq3}--\eq{ku5eq4}, for $k\ge 0$ we define
\begin{align*}
\Ga_{x,\bs f,ij}^k=\bigl\{&(p_{ij},\ga):p_{ij}\in\pi_{ij}^{-1}(\bar\chi_i^{-1}(\{x\}))\subseteq P_{ij},\\
&\text{$\ga$ is a local $k$-corner component of $P_{ij}$ at $p_{ij}$}\bigr\}/(\Be_i\t\Ga_j).
\end{align*}
Define maps $\Pi_{ij}^i:\Ga_{x,\bs f,ij}^k\ra\Ga_{x,i}^k$ and $\Pi_{ij}^j:\Ga_{x,\bs f,ij}^k\ra\coprod_{l\ge 0}\Ga_{y,j}^l$ by
\begin{align*}
\Pi_{ij}^i&:(\Be_i\t\Ga_j)\cdot (p_{ij},\ga)\longmapsto \Be_i\cdot \bigl(\pi_{ij}(p_{ij}),(\pi_{ij})_*(\ga)\bigr),\\
\Pi_{ij}^j&:(\Be_i\t\Ga_j)\cdot (p_{ij},\ga)\longmapsto \Ga_j\cdot \bigl(f_{ij}(p_{ij}),(f_{ij})_*(\ga)\bigr).
\end{align*}
As in Definition \ref{ku5def4}, $\Pi_{ij}^i$ is a bijection, since $\pi_{ij}:P_{ij}\ra U_i$ is a principal $\Ga_j$-bundle over an open neighbourhood of $\bar\chi_i^{-1}(x)$. However, as $\bs f_{ij}$ need not be a coordinate change, $f_{ij}$ need not be simple, so $(f_{ij})_*(\ga)$ can be a local $l$-corner component of $V_j$ for any $l\ge 0$, and $\Pi_{ij}^j$ need not be a bijection.

Define a map $\Ga_{x,\bX}^k\ra\coprod_{l\ge 0}\Ga_{y,\bY}^l$, written $\ga\mapsto\bs f_*(\ga)$, to be the composition
\begin{equation*}
\smash{\xymatrix@C=37pt{\Ga_{x,\bX}^k \ar[r]^\cong & \Ga_{x,i}^k \ar[r]^(0.45){(\Pi_{ij}^i)^{-1}} & \Ga_{x,\bs f,ij}^k \ar[r]^(0.45){\Pi_{ij}^j} & \coprod_{l\ge 0}\Ga_{y,j}^l \ar[r]^(0.45)\cong & \coprod_{l\ge 0}\Ga_{y,\bY}^l. }}
\end{equation*}
The arguments of Definition \ref{ku5def4} imply this is independent of the choice of $i,j$. Now define a map of topological spaces $C(f):C(X)\ra C(Y)$ by $C(f):(x,\ga)\mapsto (\bs f(x),\bs f_*(\ga))$. Since the topologies on $C(X),C(Y)$ are induced by their Kuranishi neighbourhoods, and we will give 1-morphisms $C(\bs f)_{(i,k)(j,l)}$ of these Kuranishi neighbourhoods below compatible with $C(f)$, this $C(f)$ is continuous. 

For all $(i,k)\in I\t\N$ and $(j,l)\in J\t\N$, define a 1-morphism of Kuranishi neighbourhoods $C(\bs f)_{(i,k)(j,l)}:(U^{C_k}_i,D_i^{C_k},\Be_i^{C_k},r_i^{C_k},\chi_i^{C_k})\ra(V^{C_l}_j,E_j^{C_l},\ab\Ga_j^{C_l},\ab s_j^{C_l},\ab\psi_j^{C_l})$ over $C(f)$ by 
\begin{equation*}
C(\bs f)_{(i,k)(j,l)}=\bigl(C_k(P_{ij})\cap C(f_{ij})^{-1}(C_l(V_j)),C(\pi_{ij})\vert_{\cdots},C(f_{ij})\vert_{\cdots},i_{P_{ij}}^*(\hat f_{ij})\vert_{\cdots}\bigr).
\end{equation*}
Define 2-morphisms $C(\bs F)_{(i,k)(i',k)}^{(j,l)}:C(\bs f)_{(i',k)(j,l)}\ci\Tau_{ii}^{C_k}\Ra C(\bs f)_{(i,k)(j,l)}$ and $C(\bs F)_{(i,k)}^{(j,l)(j',l)}:\Up_{jj'}^{C_l}\ci C(\bs f)_{(i,k)(j,l)}\Ra C(\bs f)_{(i,k)(j',l)}$ over $C(f)$ by
\begin{align*}
C(\bs F)_{(i,k)(i',k)}^{(j,l)}=\bigl[C_k(\dot P_{ii'}^j)\cap C(f_{ii'j})^{-1}(C_l(V_j)),C(F_{ii'}^j)\vert_{\cdots}&,\\
C(f_{ii'j})^*(\Pi)\ci i_{\dot P_{ii'}^j}^*(\hat F_{ii'}^j)\vert_{\cdots}& \bigr],\\
C(\bs F)_{(i,k)}^{(j,l)(j',l)}=\bigl[C_k(\dot P_i^{jj'})\cap C(f_{ijj'})^{-1}(C_l(V_{j'})),C(F_i^{jj'})\vert_{\cdots}&,\\
C(f_{ijj'})^*(\Pi)\ci i_{\dot P_i^{jj'}}^*(\hat F_i^{jj'})\vert_{\cdots}& \bigr].
\end{align*}
The 2-morphisms $\bs F_{(i,k)(i',k')}^{(j,l)}$ for $k\ne k'$ are trivial as the footprints of $(U^{C_k}_i,\ab D_i^{C_k},\ab\Be_i^{C_k},\ab r_i^{C_k},\ab\chi_i^{C_k})$ and $(U^{C_{k'}}_{i'},D_{i'}^{C_{k'}},\Be_{i'}^{C_{k'}},r_{i'}^{C_{k'}},\chi_{i'}^{C_{k'}})$ do not intersect in $C(\bX)$, and similarly for $\bs F_{(i,k)}^{(j,l)(j',l')}$ for $l\ne l'$. Using the functoriality of $C:\Manc\ra\cManc$, one can now show that
\begin{align*}
C(\bs f)=\bigl(&C(f),C(\bs f)_{(i,k)(j,l),\;(i,k)\in I\t\N,\; (j,l)\in J\t\N},\\ 
&C(\bs F)_{(i,k)(i',k'),\;(i,k),(i',k')\in I\t\N}^{(j,l),\; (j,l)\in J\t\N},\; C(\bs F)_{(i,k),\;(i,k)\in I\t\N}^{(j,l)(j',l'),\; (j,l),(j',l')\in J\t\N}\bigr)
\end{align*}
is a 1-morphism $C(\bs f):C(\bX)\ra C(\bY)$ in~$\cKurc$.

Similarly, if $\bs f,\bs g:\bX\ra\bY$ are 1-morphisms and $\bs\eta:\bs f\Ra\bs g$ a 2-morphism in $\Kurc$, one can show that $C(f)=C(g):C(X)\ra C(Y)$ on topological spaces, and define a natural 2-morphism $C(\bs\eta):C(\bs f)\Ra C(\bs g)$ in $\cKurc$. One can also define the remaining structures of a weak 2-functor $C:\Kurc\ra\cKurc$, which we call the {\it corner $2$-functor}. Verifying this is tedious but straightforward, and we leave the details to the reader.
\label{ku5def6}
\end{dfn}

\begin{dfn} Let $\bs f:\bX\ra\bY$ be a 1-morphism of Kuranishi spaces with corners. Use notation \eq{ku4eq12}--\eq{ku4eq13} for $\bX,\bY$ and \eq{ku4eq15} for $\bs f$, where $\bs f_{ij}=(P_{ij},\pi_{ij},f_{ij},\hat f_{ij})$. We say that $\bs f$ is {\it interior}, or {\it b-normal}, or {\it strongly smooth}, or {\it simple}, if for all $i\in I$ and $j\in J$, the smooth map $f_{ij}:P_{ij}\ra V_j$ of manifolds with corners is interior, \ldots, simple, respectively.

Note that as interior, b-normal, strongly smooth, and simple are {\it discrete\/} conditions on smooth maps of manifolds with corners in the sense of Remark \ref{ku3rem2}, Remarks \ref{ku3rem4}(a), \ref{ku3rem5} and Definition \ref{ku5def2}(g) imply that $f_{ij}$ is interior, \ldots, simple on all of $P_{ij}$ if and only if $f_{ij}$ is interior, \ldots, simple near each point $p_{ij}$ in $\pi_{ij}^{-1}(\bar\chi_i^{-1}(\Im\psi_j))\subseteq P_{ij}$. Furthermore, these are really local conditions near each $x\in\bX$, and are independent of the choice of $i\in I$ with $x\in\Im\bar\chi_i$, $j\in J$ with $\bs f(x)\in\Im\bar\psi_j$ and $p_{ij}\in\pi_{ij}^{-1}(\bar\chi_i^{-1}(x))$ that we check the conditions for.

The classes of interior, b-normal, strongly smooth, and simple 1-morphisms of Kuranishi spaces with corners are each closed under composition and contain identities, as this holds for the corresponding classes of morphisms of manifolds with corners. Thus, each class defines 2-subcategories of $\Kurc,\cKurc$. Write $\Kurcin,\cKurcin$ and $\Kurcsi,\cKurcsi$ for the 2-subcategories of $\Kurc,\ab\cKurc$ with interior 1-morphisms, and with simple 1-morphisms, respectively.
\label{ku5def7}
\end{dfn}

As for Propositions \ref{ku3prop1} and \ref{ku3prop3}, we now have:

\begin{prop} Let\/ $\bs f:\bX\ra\bY$ be a $1$-morphism in $\Kurc$. 
\begin{itemize}
\setlength{\itemsep}{0pt}
\setlength{\parsep}{0pt}
\item[{\bf(a)}] $C(\bs f):C(\bX)\ra C(\bY)$ is an interior\/ $1$-morphism of Kuranishi spaces with corners of mixed dimension, so $C$ is a weak\/ $2$-functor $C:\Kurc\ra\cKurcin$.
\item[{\bf(b)}] $\bs f$ is interior if and only if\/ $C(\bs f)$ maps $C_0(\bX)\ra C_0(\bY)$.
\item[{\bf(c)}] $\bs f$ is b-normal if and only if\/ $C(\bs f)$ maps $C_k(\bX)\ra \coprod_{l=0}^kC_l(\bY)$ for all\/~$k$.
\item[{\bf(d)}] If\/ $\bs f$ is simple then $C(\bs f)$ maps $C_k(\bX)\ra C_k(\bY)$ for all\/ $k\ge 0,$ and\/ $C_k(\bs f):=C(\bs f)\vert_{C_k(\bX)}:C_k(\bX)\ra C_k(\bY)$ is also a simple $1$-morphism.
\end{itemize}

Part\/ {\bf(d)} implies that we have a \begin{bfseries}boundary $2$-functor\end{bfseries} $\pd:\Kurcsi\!\ra\!\Kurcsi$ mapping $\bX\mapsto\pd\bX$ on objects, $\bs f\mapsto\pd\bs f:=C(\bs f)\vert_{C_1(\bX)}$ on (simple) $1$-morphisms $\bs f:\bX\ra\bY$ and\/ $\bs\eta\mapsto \pd\bs\eta:=C(\bs\eta)\vert_{C_1(\bX)}$ on $2$-morphisms $\bs\eta:\bs f\Ra\bs g$ of\/ $\bs f,\bs g:\bX\ra\bY$. Similarly, for all\/ $k\ge 0$ we have a \begin{bfseries}$k$-corner functor\end{bfseries} $C_k:\muKurcsi\ra\muKurcsi$ mapping $\bX\mapsto C_k(\bX)$ on objects, $\bs f\mapsto C_k(\bs f):=C(\bs f)\vert_{C_k(\bX)}$ on $1$-morphisms, and\/ $\bs\eta\mapsto C_k(\bs\eta):=C(\bs\eta)\vert_{C_k(\bX)}$ on $2$-morphisms.
\label{ku5prop2}
\end{prop}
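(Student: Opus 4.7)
The plan is to reduce each claim to the corresponding statement about smooth maps of manifolds with corners in Proposition \ref{ku3prop1}, applied componentwise to the 1-morphisms of Kuranishi neighbourhoods that make up $C(\bs f).$ Using notation \eq{ku4eq12}--\eq{ku4eq15} for $\bX,\bY,\bs f,$ write $\bs f_{ij}=(P_{ij},\pi_{ij},f_{ij},\hat f_{ij})$ for $i\in I,$ $j\in J.$ Definition \ref{ku5def6} then presents $C(\bs f)_{(i,k)(j,l)}$ as a 1-morphism whose underlying smooth map is the restriction of the corner-functor-image $C(f_{ij}):C(P_{ij})\ra C(V_j)$ to $C_k(P_{ij})\cap C(f_{ij})^{-1}(C_l(V_j))\ra C_l(V_j).$ By Definition \ref{ku5def7}, the properties interior/b-normal/strongly smooth/simple of a 1-morphism in $\Kurc$ or $\cKurc$ are precisely the corresponding properties of the constituent smooth maps $f_{ij}:P_{ij}\ra V_j,$ so they transfer through $C$ componentwise once the analogous statements for manifold maps are in hand.

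For part (a), Proposition \ref{ku3prop1}(a) shows each $C(f_{ij})$ is interior in $\cMancin.$ Since interiority is a local discrete condition on smooth maps, restricting $C(f_{ij})$ to any open piece of its domain preserves it; hence each $C(\bs f)_{(i,k)(j,l)}$ is an interior 1-morphism of Kuranishi neighbourhoods with corners of mixed dimension, and therefore $C(\bs f)$ is interior as a 1-morphism in $\cKurc.$ Combined with the fact that $C:\Kurc\ra\cKurc$ was already verified to be a weak 2-functor in Definition \ref{ku5def6}, this refines to a weak 2-functor $C:\Kurc\ra\cKurcin.$

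For parts (b)--(d), the canonical bijections $\Ga_{x,i}^k\cong\Ga_{x,\bX}^k$ from Definition \ref{ku5def4} and the analogous ones for $\bY$ imply that the underlying continuous map $C(f):C(X)\ra C(Y)$ sends a point of $C_k(X)$ into $C_l(Y)$ if and only if the local $k$-corner components on the $V_i$-side (equivalently, on the $P_{ij}$-side via the \'etale projection $\pi_{ij}$) are carried by $C(f_{ij})$ into local $l$-corner components on the $V_j$-side. Applying Proposition \ref{ku3prop1}(b),(c),(d) to each $f_{ij}$ therefore gives: $\bs f$ is interior $\Leftrightarrow$ each $f_{ij}$ is interior $\Leftrightarrow$ each $C(f_{ij})$ lands in $C_0(V_j)$ $\Leftrightarrow$ $C(\bs f)$ maps $C_0(\bX)\ra C_0(\bY);$ similarly for b-normal (sending $C_k$ into $\coprod_{l\le k}C_l$) and simple (sending $C_k$ into $C_k$). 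In the simple case, Proposition \ref{ku3prop1}(d) additionally provides that each $C_k(f_{ij})$ is itself simple, whence $C_k(\bs f)$ is a simple 1-morphism by Definition \ref{ku5def7}, and the functoriality of $C_k:\Kurcsi\ra\Kurcsi$ on objects and 1-morphisms (and its obvious extension to 2-morphisms) follows from functoriality of $C_k$ on $\Mancsi.$

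The only non-trivial obstacle is bookkeeping: one must confirm that the componentwise characterization of interior, b-normal, and simple in Definition \ref{ku5def7} really does govern the global behaviour of $C(\bs f)$ on corner strata of $C(\bX).$ This is legitimate because (i) the topology and Kuranishi structure on $C_k(X)$ are built in Definition \ref{ku5def4} from $C_k(V_i)$ via the homeomorphisms $\psi_i^{C_k},$ which identify $\Ga_{x,\bX}^k$ with the orbit of local $k$-corner components of $V_i$ over $x;$ (ii) the 2-morphisms $\bs F_{ii'}^j,\bs F_i^{jj'}$ in $\bs f,$ together with the already-verified well-definedness of $C(\bs f)$ in $\cKurc,$ ensure the discrete corner-tracking data of the various $f_{ij}$ are mutually consistent across overlaps; and (iii) Definition \ref{ku5def2}(g) guarantees that these discrete conditions, if they hold near $\pi_{ij}^{-1}(\bar\chi_i^{-1}(S)),$ hold on all of $P_{ij},$ as already exploited in the proof of Proposition \ref{ku5prop1}.
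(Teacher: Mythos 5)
Your proposal is correct and follows essentially the same route as the paper, which states Proposition \ref{ku5prop2} with no detailed argument beyond ``As for Propositions \ref{ku3prop1} and \ref{ku3prop3}'', i.e.\ precisely the componentwise reduction to the manifold statements via Definitions \ref{ku5def4}, \ref{ku5def6} and \ref{ku5def7} that you carry out. Your extra bookkeeping points (the bijections $\Ga_{x,i}^k\cong\Ga_{x,\bX}^k$, the \'etale projections $\pi_{ij}$, and Definition \ref{ku5def2}(g) upgrading local discrete conditions to all of $P_{ij}$) are exactly the ingredients the paper leaves implicit.
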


\subsection[Isotropy groups, and (b-)tangent and (b-)obstruction spaces]{Isotropy groups, and (b-)tangent and (b-)obstruction \\ spaces}
\label{ku53}

Section \ref{ku25} we defined tangent spaces $T_x\bX$ and obstruction spaces $O_x\bX$ of a $\mu$-Kuranishi space $\bX$, and \S\ref{ku37} extended this to (b-)tangent spaces $T_x\bX,{}^bT_x\bX$ and (b-)obstruction spaces $O_x\bX,{}^bO_x\bX$ of a $\mu$-Kuranishi space with corners $\bX$. Section \ref{ku46} defined isotropy groups $G_x\bX$, tangent spaces $T_x\bX$ and obstruction spaces $O_x\bX$ of a Kuranishi space~$\bX$.

For Kuranishi spaces with corners $\bX$, we define isotropy groups $G_x\bX$, \hbox{(b-)}\ab tan\-gent spaces $T_x\bX,{}^bT_x\bX$ and (b-)obstruction spaces $O_x\bX,{}^bO_x\bX$, with canonicality and functoriality properties as in \S\ref{ku46}. The ideas of \S\ref{ku25}, \S\ref{ku37} and \S\ref{ku46} are combined in such an obvious way that we will give few details.

\begin{dfn} Let $\bX=(X,\cK)$ be a Kuranishi space with corners, with $\cK=\bigl(I,(V_i,\ab E_i,\ab\Ga_i,\ab s_i,\ab\psi_i)_{i\in I}$, $\Phi_{ij,\;i,j\in I}$, $\La_{ijk,\; i,j,k\in I}\bigr)$, and let $x\in\bX$.

Choose an arbitrary $i\in I$ with $x\in\Im\psi_i$, and choose $v_i\in s_i^{-1}(0)\subseteq V_i$ with $\bar\psi_i(v_i)=x$. Our definitions of $G_x\bX,T_x\bX,O_x\bX,{}^bT_x\bX,{}^bO_x\bX$ will depend on these choices, and we explain shortly to what extent.

As in \eq{ku4eq41}, define a finite subgroup $G_x\bX$ of $\Ga_i$ by
\begin{equation*}
G_x\bX=\bigl\{\ga\in\Ga_i:\ga\cdot v_i=v_i\bigr\}=\Stab_{\Ga_i}(v_i).
\end{equation*}
As in \eq{ku2eq30}, \eq{ku3eq33}, \eq{ku3eq36}, \eq{ku3eq37} and \eq{ku4eq42}, define finite-dimensional real vector spaces $T_x\bX,O_x\bX,{}^bT_x\bX,{}^bO_x\bX$ and linear maps $I^T_{\bX,x}:{}^bT_x\bX\ra T_x\bX$ and $I^O_{\bX,x}:{}^bO_x\bX\ra O_x\bX$ by the commutative diagram with exact rows
\begin{equation*}
\xymatrix@C=20pt@R=13pt{ 0 \ar[r] & {}^bT_x\bX \ar[d]^{I^T_{\bX,x}} \ar[r] & {}^bT_{v_i}V_i \ar[rr]_{{}^b\d s_i\vert_{v_i}} \ar[d]^{I_{V_i}\vert_{v_i}} && E_i\vert_{v_i} \ar[d]^{\id} \ar[r] & {}^bO_x\bX \ar[d]^{I^O_{\bX,x}} \ar[r] & 0 \\
0 \ar[r] & T_x\bX \ar[r] & T_{v_i}V_i \ar[rr]^{\d s_i\vert_{v_i}} && E_i\vert_{v_i} \ar[r] & O_x\bX \ar[r] & 0.\!\!{} }
\end{equation*}
As in equations \eq{ku2eq29}, \eq{ku3eq32}, \eq{ku3eq35} and \eq{ku4eq43}, we have
\begin{equation*}
\dim T_x\bX-\dim O_x\bX=\dim {}^bT_x\bX-\dim {}^bO_x\bX=\vdim\bX.
\end{equation*}

As in \S\ref{ku46}, the quintuple $(G_x\bX,T_x\bX,O_x\bX,{}^bT_x\bX,{}^bO_x\bX)$ is independent of the choice of $i,v_i$ up to isomorphism, but not up to canonical isomorphism. If $(G_x'\bX,T_x'\bX,O_x'\bX,{}^bT_x'\bX,{}^bO_x'\bX)$ is an alternative choice, there are natural choices of isomorphisms $(I^G_x,I^T_x,I^O_x,{}^bI^T_x,{}^bI^O_x):(G_x\bX,T_x\bX,O_x\bX,\ab{}^bT_x\bX,\ab{}^bO_x\bX)\ab\ra\ab(G_x'\bX,T_x'\bX,O_x'\bX,{}^bT_x'\bX,{}^bO_x'\bX)$, and two such isomorphisms differ by conjugation by an element of $G_x'\bX$.

Let $\bs f:\bX\ra\bY$ be a 1-morphism of Kuranishi spaces with corners, and let $x\in\bX$ with $\bs f(x)=y\in\bY$. Then as in Definition \ref{ku4def27}, by choosing an arbitrary point $p_0$ in the $S_{x,\bs f}$ of \eq{ku4eq45}, we define morphisms $G_x\bs f:G_x\bX\ra G_y\bY$, $T_x\bs f:T_x\bX\ra T_y\bY$, $O_x\bs f:O_x\bX\ra O_y\bY$. An alternative choice of $p_0$ yields a triple $(G_x'\bs f,T_x'\bs f,O_x'\bs f)$ conjugate to $(G_x\bs f,T_x\bs f,O_x\bs f)$ by an element of~$G_y\bY$.

As in \S\ref{ku37}, to define analogues of $T_x\bs f,O_x\bs f$ for ${}^bT_x\bX,{}^bT_y\bY,{}^bO_x\bX,{}^bO_y\bY$, we must assume $\bs f:\bX\ra\bY$ is interior, as in Definition \ref{ku5def7}. Then, using the same point $p_0$ used to define $T_x\bs f,O_x\bs f$, as in \eq{ku3eq38} and \eq{ku4eq46} we obtain $G_x\bs f$-equivariant linear maps ${}^bT_x\bs f:{}^bT_x\bX\ra {}^bT_y\bY$ and ${}^bO_x\bs f:{}^bO_x\bX\ra {}^bO_y\bY$ making the following diagram commute:
\begin{equation*}
\xymatrix@C=23pt@R=15pt{ 0 \ar[r] & {}^bT_x\bX \ar[r] \ar[d]^{{}^bT_x\bs f} & {}^bT_{u_i}U_i \ar[rr]_(0.6){{}^b\d r_i\vert_{u_i}} \ar[d]^{{}^b\d f_{ij}\vert_{u_i}} && D_i\vert_{u_i} \ar[d]^{\hat f_{ij}\vert_{u_i}} \ar[r] & {}^bO_x\bX \ar[d]^{{}^bO_x\bs f} \ar[r] & 0 \\
0 \ar[r] & {}^bT_y\bY \ar[r] & {}^bT_{v_j}V_j \ar[rr]^(0.6){{}^b\d s_j\vert_{v_j}} && E_j\vert_{v_j} \ar[r] & {}^bO_y\bY \ar[r] & 0.\!\! }
\end{equation*}
The quintuple $(G_x\bs f,T_x\bs f,O_x\bs f,{}^bT_x\bs f,{}^bO_x\bs f)$ is independent of choices up to conjugation by an element of $G_y\bY$.

As in Definition \ref{ku3def18} we have $T_x\bs f\ci I_{\bX,x}^T=I_{\bY,y}^T\ci{}^bT_x\bs f:{}^bT_x\bX\ra T_y\bY$ and $O_x\bs f\ci I_{\bX,x}^O=I_{\bY,y}^O\ci{}^bO_x\bs f:{}^bO_x\bX\ra O_y\bY$.

As in \S\ref{ku46}, if $\bs g:\bX\ra\bY$ is another 1-morphism and $\bs\eta:\bs f\Ra\bs g$ a 2-morphism in $\Kurc$, then there is a canonical element $G_x\bs\eta\in G_y\bY$ such that $(G_x\bs f,T_x\bs f,O_x\bs f)$ and $(G_x\bs g,T_x\bs g,O_x\bs g)$ are conjugate under $G_x\bs\eta$ for general $\bs f,\bs g$, and $(G_x\bs f,T_x\bs f,O_x\bs f,{}^bT_x\bs f,{}^bO_x\bs f)$ and $(G_x\bs g,T_x\bs g,O_x\bs g,{}^bT_x\bs g,{}^bO_x\bs g)$ are conjugate under $G_x\bs\eta$ for interior~$\bs f,\bs g$.

As in \S\ref{ku46}, if instead $\bs g:\bY\ra\bZ$ is another 1-morphism in $\Kurc$ and $\bs g(y)=z\in\bZ$, there is a canonical $G_{x,\bs g,\bs f}\in G_z\bZ$ with $(G_x(\bs g\ci\bs f),T_x(\bs g\ci\bs f),O_x(\bs g\ci\bs f))$ conjugate to $(G_y\bs g,T_y\bs g,O_y\bs g)\ci (G_x\bs f,T_x\bs f,O_x\bs f)$ under $G_{x,\bs g,\bs f}$ for general $\bs f,\bs g$, and $(G_x(\bs g\ci\bs f),T_x(\bs g\ci\bs f),O_x(\bs g\ci\bs f),{}^bT_x(\bs g\ci\bs f),{}^bO_x(\bs g\ci\bs f))$ conjugate to $(G_y\bs g,T_y\bs g,O_y\bs g,{}^bT_y\bs g,{}^bO_y\bs g)\ci (G_x\bs f,T_x\bs f,O_x\bs f,{}^bT_x\bs f,{}^bO_x\bs f)$ under $G_{x,\bs g,\bs f}$ for interior~$\bs f,\bs g$.

If $\bs f:\bX\ra\bY$ is an equivalence in $\Kurc$ then $G_x\bs f,T_x\bs f,O_x\bs f,{}^bT_x\bs f,{}^bO_x\bs f$ are isomorphisms for all~$x\in\bX$.
\label{ku5def8}
\end{dfn}

\subsection[\texorpdfstring{Relating Kuranishi spaces and $\mu$-Kuranishi spaces with corners}{Relating Kuranishi spaces and \textmu-Kuranishi spaces with corners}]{Relating Kuranishi spaces and $\mu$-Kuranishi spaces \\ with corners}
\label{ku54}

The material of \S\ref{ku47} extends to the corners case essentially without change. As in Theorem \ref{ku4thm7} we define a weak 2-category $\mKurc$ of {\it m-Kuranishi spaces with corners}, which are basically Kuranishi spaces with corners $\bX=(X,\cK)$ with all groups $\Ga_i=\{1\}$ in $\cK$. There is a natural full and faithful 2-functor $F_\mKurc^\Kurc:\mKurc\ra\Kurc$. As in Definition \ref{ku4def35} we define a functor $F_\mKurc^\muKurc:\Ho(\mKurc)\ra\muKurc$, and a full 2-subcategory $\KurtrGc\subset\Kurc$ with objects $\bX\in\Kurc$ with $G_x\bX=\{1\}$ for all $x\in\bX$. As in Theorem \ref{ku4thm8} we show that $F_\mKurc^\muKurc:\Ho(\mKurc)\ra\muKurc$ is an equivalence of categories, and $F_\mKurc^\Kurc:\mKurc\ra\KurtrGc$ is an equivalence of weak 2-categories, so that we also have~$\Ho(\KurtrGc)\simeq\muKurc$.

\subsection{Relation to other definitions of Kuranishi space with corners}
\label{ku55}

Fukaya, Oh, Ohta and Ono\cite{Fuka,FOOO1,FOOO2,FOOO3, FOOO4,FOOO5,FOOO6,FOOO7,FOOO8} use (FOOO) Kuranishi spaces with corners in their work on Lagrangian Floer cohomology, since moduli spaces $\oM$ of $J$-holomorphic curves $\Si$ in a symplectic manifold $(M,\om)$ with boundary $\pd\Si$ in a Lagrangian $L\subset M$ have boundaries $\pd\oM$ where the curve $\Si$ has boundary nodes. So we should relate our notion of Kuranishi space with corners to theirs.

Most of the material of \S\ref{ku48} extends to the corners case with no significant changes: we just insert `with boundary' or `with corners' throughout, apply Theorem \ref{ku5thm1} in place of Theorem \ref{ku4thm1}, and everything just works. In particular, as in Definition \ref{ku4def36} we define {\it fair coordinate systems with corners}. As in Example \ref{ku4ex3}, we can show that a Kuranishi space with corners $\bX$, in the sense of \S\ref{ku51}, has a natural fair coordinate system with corners.

As in Example \ref{ku4ex4}, if $\bX=(X,\cK)$ is a FOOO Kuranishi space with corners, we can define a natural fair coordinate system with corners on $X$. Here as in Remark \ref{kuArem1}(b), it is important that the definition of embedding $\vp_{qp}:V_{qp}\hookra V_p$ of manifolds with corners used in FOOO coordinate changes $\Phi_{qp}=(V_{qp},h_{qp},\vp_{qp},\hat\vp_{qp})$ includes the condition that $\vp_{qp}$ be simple. This is needed when applying Theorem \ref{ku5thm2} to deduce that the 1-morphism $\ti\Phi_{qp}$ of Kuranishi neighbourhoods with corners induced from $\Phi_{qp}$ is a coordinate change.

As in Examples \ref{ku4ex5} and \ref{ku4ex6}, if $X$ is a compact, metrizable topological space equipped with a FOOO weak good coordinate system with corners, or an MW weak Kuranishi atlas with corners, where again as for Example \ref{ku4ex4} the embeddings $\vp_{ij}:V_{ij}\hookra V_j$ and $\ti V_{BC}\hookra V_C$ must be simple, then $X$ has a natural fair coordinate system with corners.

Theorem \ref{ku4thm9} and its proof in \S\ref{ku74} extend immediately to the corners case, and the analogues of Theorems \ref{ku4thm10}--\ref{ku4thm14} follow.

\subsection{Kuranishi spaces with generalized corners}
\label{ku56}

In \S\ref{ku34} we summarized the author's theory of {\it manifolds with generalized corners}, or {\it g-corners\/} \cite{Joyc9}, and we saw that most of \S\ref{ku31}--\S\ref{ku33} extends to manifolds with g-corners. Section \ref{ku38} explained that the theory of $\mu$-Kuranishi spaces with corners in \S\ref{ku35}--\S\ref{ku37} can be extended, with minor changes, to a theory of {\it $\mu$-Kuranishi spaces with g-corners}, by replacing manifolds with corners by manifolds with g-corners throughout.

In the same way, the material on Kuranishi spaces with corners in \S\ref{ku51}--\S\ref{ku55} can be extended, with minor changes, to a theory of Kuranishi spaces with g-corners. All the issues which come up in doing this already arise in extending $\mu$-Kuranishi spaces with corners to g-corners, and were discussed in~\S\ref{ku38}. 

Thus, in \S\ref{ku51} we define {\it Kuranishi neighbourhoods $(V,E,\Ga,s,\psi)$ on $X$ with g-corners\/} by taking $V,E$ to be manifolds with g-corners, and define their 1- and 2-morphisms and coordinate changes, and Proposition \ref{ku5prop1} and Theorems \ref{ku5thm1} and \ref{ku5thm2} hold with g-corners. So we define a weak 2-category $\Kurgc$ of {\it Kuranishi spaces with g-corners\/} $\bX$, containing $\Kurc\subset\Kurgc$ as a full 2-subcategory.

In \S\ref{ku52} we define {\it boundaries\/} $\pd\bX$, $k$-{\it corners\/} $C_k(\bX)$, {\it Kuranishi spaces with g-corners of mixed dimension\/} $\cKurgc$, and the {\it corner\/ $2$-functor\/} $C:\Kurgc\ra\cKurgc$. We define {\it interior}, {\it b-normal\/} and {\it simple\/} 1-morphisms in $\Kurgc$, and write $\Kurgcin,\cKurgcin,\Kurgcsi,\cKurgcsi$ for the 2-subcategories of $\Kurgc,\cKurgc$ with interior, and simple, 1-morphisms. Proposition \ref{ku5prop2} holds with g-corners.

In \S\ref{ku53} we define {\it isotropy groups\/} $G_x\bX$, {\it b-tangent spaces\/} ${}^bT_x\bX$, and {\it b-obstruction spaces\/} ${}^bO_x\bX$ for Kuranishi spaces with g-corners $\bX$, and we define $G_x\bs f:G_x\bX\ra G_y\bY$ for all 1-morphisms $\bs f:\bX\ra\bY$ in $\Kurgc$ and ${}^bT_x\bs f:{}^bT_x\bX\ra{}^bT_y\bY$, ${}^bO_x\bs f:{}^bO_x\bX\ra{}^bO_y\bY$ for interior $\bs f:\bX\ra\bY$. We do {\it not\/} define $T_x\bX,O_x\bX,T_x\bs f,O_x\bs f$. 

In \S\ref{ku54} we define a weak 2-category $\mKurgc$ of {\it m-Kuranishi spaces with g-corners}, and a full 2-subcategory $\KurtrGgc\subset\Kurgc$ of Kuranishi spaces with g-corners $\bX$ with trivial isotropy groups $G_x\bX$, and we define equivalences of (2-)categories $F_\mKurgc^\muKurgc:\Ho(\mKurgc)\ra\muKurgc$ and $F_\mKurgc^\KurtrGgc:\mKurgc\ra\KurtrGgc$, so that~$\Ho(\KurtrGgc)\simeq\muKurgc$.

In \S\ref{ku55} we can define {\it fair coordinate systems with g-corners\/} as in Definition \ref{ku4def36}, and Theorem \ref{ku4thm9} holds with g-corners, but the author knows of no definitions of Kuranishi space with g-corners in the literature to compare to~ours.

Kuranishi spaces with g-corners have better behaviour under fibre products than Kuranishi spaces with corners. For example, in \cite{Joyc12} we will prove the following theorem, related to Theorem~\ref{ku3thm1}:

\begin{thm} Suppose $\bX,\bY$ are Kuranishi spaces with g-corners, $Z$ is a manifold with g-corners, and\/ $\bs g:\bX\ra Z,$ $\bs h:\bY\ra Z$ are interior $1$-morphisms in $\Kurgc$. Then the fibre product\/ $\bW=\bX\t_{\bs g,Z,\bs h}\bY$ exists in the\/ $2$-category\/ $\Kurgcin$ of Kuranishi spaces with g-corners and interior\/ $1$-morphisms, with virtual dimension\/~$\vdim\bW=\vdim\bX+\vdim\bY-\dim Z$.
\label{ku5thm3}
\end{thm}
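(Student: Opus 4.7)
The plan is to construct $\bW=\bX\t_{\bs g,Z,\bs h}\bY$ as a ``derived fibre product'' at the level of Kuranishi neighbourhoods with g-corners, glue the local pieces via the stack property (Theorem \ref{ku5thm1}, extended to g-corners as in \S\ref{ku56}), and verify the 2-category universal property of \S\ref{kuB3}. The underlying topological space is $W=\{(x,y)\in X\t Y:\bs g(x)=\bs h(y)\}\subseteq X\t Y$, which is Hausdorff and second countable as a subspace. The key simplification over a fibre product with general Kuranishi target is that $Z$ carries no orbifold or obstruction data, so the isotropy groups on $\bW$ come entirely from those on $\bX$ and $\bY$, and the local pieces can be indexed simply by pairs $(i,j)\in I\t J$. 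Fix once and for all a b-metric on $Z$ as in \cite[\S 4.5.3]{Joyc9} to make sense of b-exponentials.

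For each pair $i\in I,j\in J$, write $(U_i,D_i,\Be_i,r_i,\chi_i)$ and $(V_j,E_j,\Ga_j,s_j,\psi_j)$ for the Kuranishi neighbourhoods on $\bX,\bY$, and use Theorem \ref{ku4thm4}(b) to produce 1-morphisms $\bs g_i=(P_i,\pi_i,g_i,0)$ and $\bs h_j=(Q_j,\rho_j,h_j,0)$ over $\bs g,\bs h$ to the trivial Kuranishi neighbourhood $(Z,0,\{1\},0,\id_Z)$; interiority of $\bs g,\bs h$ forces $g_i,h_j$ to be interior maps of manifolds with g-corners. Set $\tilde W_{ij}=P_i\t Q_j$ with the diagonal $\Be_i\t\Ga_j$-action, and define
\begin{equation*}
\tilde F_{ij}=\pi_{P_i}^*\pi_i^*(D_i)\op\pi_{Q_j}^*\rho_j^*(E_j)\op (g_i\ci\pi_{P_i})^*({}^bTZ).
\end{equation*}
Near any point $(p,q)\in\tilde W_{ij}$ with $g_i(p)=h_j(q)=z$, the chosen b-metric on $Z$ gives a b-exponential from which one reads off a canonical smooth section $\de_{ij}$ of $(g_i\ci\pi_{P_i})^*({}^bTZ)$ representing the ``b-difference'' $h_j-g_i$, vanishing exactly when $g_i=h_j$; this depends on the b-metric only up to $O(\de_{ij}^2)$ (hence up to 2-isomorphism). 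Set $\tilde t_{ij}=(\pi_{P_i}^*\pi_i^*r_i,\pi_{Q_j}^*\rho_j^*s_j,\de_{ij})$, choose a $\Be_i\t\Ga_j$-invariant open neighbourhood $W_{ij}\subseteq\tilde W_{ij}$ of the relevant part of $\tilde t_{ij}^{-1}(0)$, and obtain a Kuranishi neighbourhood $(W_{ij},F_{ij},\Be_i\t\Ga_j,t_{ij},\om_{ij})$ on $W$ with
\begin{equation*}
\dim W_{ij}-\rank F_{ij}=\vdim\bX+\vdim\bY-\dim Z.
\end{equation*}
Coordinate changes $\Phi^W_{(i,j)(i',j')}$ are built from $\Tau_{ii'}$, $\Up_{jj'}$ and the 1-morphism data of $\bs g,\bs h$; Theorem \ref{ku5thm2} (g-corner version) verifies they are coordinate changes because interiority of $\bs g_i,\bs h_j$ makes the complex \eq{ku5eq1} exact after a routine diagram chase. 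The 2-morphisms $\La^W_{(i,j)(i',j')(i'',j'')}$ and the cocycle Definition \ref{ku4def12}(h) descend from those of $\bX,\bY$ and the coherence data in $\bs g,\bs h$, with the stack property supplying existence and uniqueness.

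The main obstacle is the 2-category universal property. The projections $\bs\pi_\bX,\bs\pi_\bY$ and a 2-isomorphism $\bs\eta:\bs g\ci\bs\pi_\bX\Ra\bs h\ci\bs\pi_\bY$ are read off directly from the construction and are interior because $\tilde W_{ij}=P_i\t Q_j$ is a product of manifolds with g-corners with interior projections. Given any $\bQ\in\Kurgcin$ with interior 1-morphisms $\bs e:\bQ\ra\bX$, $\bs f:\bQ\ra\bY$ and a 2-isomorphism $\bs\ze:\bs g\ci\bs e\Ra\bs h\ci\bs f$, one must produce an essentially unique interior $(\bs e,\bs f)_{\bs\ze}:\bQ\ra\bW$ with $\bs\pi_\bX\ci(\bs e,\bs f)_{\bs\ze}\simeq\bs e$ and analogously for $\bs\pi_\bY$, matching $\bs\eta$ with $\bs\ze$ after whiskering. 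Locally on a Kuranishi neighbourhood of $\bQ$ one lifts the obvious pair into $P_i\t Q_j=\tilde W_{ij}$, and $\bs\ze$ supplies precisely the trivialisation of $\de_{ij}$ modulo the square of the source section needed to land in the obstruction locus; uniqueness up to 2-isomorphism and global existence follow from the stack property applied once more. Interiority of $\bs e,\bs f,\bs g,\bs h$ is essential throughout: without it the b-differentials and hence the b-exponential section $\de_{ij}$ would not be smooth, the coordinate-change criterion via \eq{ku5eq1} would be undefined, and the output 1-morphism $(\bs e,\bs f)_{\bs\ze}$ would fail to be interior.
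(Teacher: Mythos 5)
Note first that the paper itself offers no proof of Theorem \ref{ku5thm3} --- it is explicitly deferred to the sequel \cite{Joyc12} --- so your proposal can only be judged on its own terms, and on those terms the central construction fails. The gap is the local model: you take the chart domain to be the product $P_i\t Q_j$ and augment the obstruction bundle by $(g_i\ci\pi_{P_i})^*({}^bTZ)$ with a ``b-difference'' section $\de_{ij}$ built from a b-exponential. No such smooth section exists once the fibre product meets $\pd Z$ or deeper strata: as in \S\ref{ku63}, the b-exponential of a b-metric at $z\in S^k(Z)$ moves only within the stratum $S^k(Z)$, so it cannot compare $g_i(p)$ with $h_j(q)$ when these lie in different strata of $Z$ (which happens at points arbitrarily close to the locus $\{g_i=h_j\}$ whenever that locus touches $\pd Z$), and on the open set where both land in $Z^\ci$ the resulting expression (for $Z=[0,\iy)$ it is $\log(h_j/g_i)$) blows up as one approaches zero-locus points over $\pd Z$, so it has no smooth, or even continuous, extension over the neighbourhood you need.

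Worse, no smooth section on that chart can give the right answer, so the defect is in the chart domain itself, not just in your formula for $\de_{ij}$. Test case: $\bX=\bY=Z=[0,\iy)$, $\bs g=\bs h=\bs\id$. In any 2-category the fibre product of $\bs\id$ with itself is the object, so $\bW\simeq[0,\iy)$, with ${}^bT_0\bW\cong\R$ and ${}^bO_0\bW=0$. Your chart is $[0,\iy)^2$ with a rank-one obstruction bundle and some smooth section $t$ vanishing on the diagonal; but at the depth-two point $(0,0)$ the natural map $I_{[0,\iy)^2}:{}^bT_{(0,0)}[0,\iy)^2\ra T_{(0,0)}[0,\iy)^2$ is zero, hence ${}^b\d t\vert_{(0,0)}=\d t\vert_{(0,0)}\ci I=0$ for \emph{every} smooth $t$, and the model's b-tangent and b-obstruction spaces there are $\R^2$ and $\R$. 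An equivalence in $\Kurgc$ would have to identify these with $\R$ and $0$ (\S\ref{ku53}, \S\ref{ku56}), which is impossible; equivalently your $\bW$ acquires two local boundary components where it should have one. So your argument only establishes the classical case $\pd Z=\es$; the entire content of the theorem --- and the reason interiority and g-corners are hypotheses --- lies in the corner/monoid directions of $Z$, which must be built into the chart domains rather than into the obstruction bundle. The expected route is to first replace each chart map $g_i$ by a b-submersion, e.g.\ by passing to a neighbourhood of the zero section in $g_i^*({}^bTZ)$ with the tautological section added to the obstruction bundle and using the b-exponential only fibrewise, where it is smooth, and then to take as chart domains the b-transverse fibre products of manifolds with g-corners supplied by Theorem \ref{ku3thm1}, whose corner structure is the monoid fibre product --- not the product $P_i\t Q_j$. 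Your gluing and universal-property steps would then have to be redone on top of such a local model.
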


The analogue is false for Kuranishi spaces with (ordinary) corners, unless we impose complicated extra restrictions on $\bs g,\bs h$ over~$\pd^j\bX,\pd^k\bY,\pd^lZ$. 

Kuranishi spaces with g-corners will be important in future applications in symplectic geometry that the author is planning, for two reasons. Firstly, the author would like to develop an approach to moduli spaces of $J$-holomorphic curves using `representable 2-functors', modelled on Grothendieck's representable functors in algebraic geometry. It turns out that even if the moduli space is a Kuranishi space with (ordinary) corners, the definition of the moduli 2-functor near curves with boundary nodes involves fibre products which exist in $\Kurgc$, but not in $\Kurc$. So we need $\Kurgc$ to define moduli spaces using this method.

Secondly, some kinds of moduli spaces of $J$-holomorphic curves should actually have g-corners rather than ordinary corners, in particular the moduli spaces of `pseudoholomorphic quilts' of Ma'u, Wehrheim and Woodward \cite{Mau,MaWo,WeWo1,WeWo2,WeWo3}, which are used to define actions of Lagrangian correspondences on Lagrangian Floer cohomology and Fukaya categories. 

Ma'u and Woodward \cite{MaWo} define moduli spaces $\oM_{n,1}$ of `stable $n$-marked quilted discs'. As in \cite[\S 6]{MaWo}, for $n\ge 4$ these are not ordinary manifolds with corners, but have an exotic corner structure; in the language of \cite{Joyc9}, the $\oM_{n,1}$ are manifolds with g-corners. As in \cite[Ex.~6.3]{MaWo}, the first exotic example $\oM_{4,1}$ has a point locally modelled on $X_P$ near $\de_0$ in Example \ref{ku3ex2}. 

More generally, if one omits the simplifying monotonicity and genericity assumptions in \cite{Mau,WeWo1,WeWo2,WeWo3}, the moduli spaces of marked quilted $J$-holomorphic discs discussed in \cite{Mau,WeWo1,WeWo2,WeWo3} should be Kuranishi spaces with g-corners (though we do not claim to prove this), just as moduli spaces of marked $J$-holomorphic discs in Fukaya et al. \cite{FOOO1} are Kuranishi spaces with (ordinary) corners.

\section{Proofs of main theorems in \S\ref{ku2} and \S\ref{ku3}}
\label{ku6}

\subsection{Proof of Theorem \ref{ku2thm1}}
\label{ku61}

The proofs will involve partitions of unity, so we begin by defining them.

\begin{dfn} Let $V$ be a manifold (possibly with boundary or corners), and $\{V^a:a\in A\}$ an open cover of $V$. A {\it partition of unity $\{\eta^a:a\in A\}$ on $V$ subordinate to\/} $\{V^a:a\in A\}$ is a collection of smooth functions $\eta^a:V\ra\R$ with $\eta^a(V)\subseteq[0,1]$ for all $a\in A$ satisfying:
\begin{itemize}
\setlength{\itemsep}{0pt}
\setlength{\parsep}{0pt}
\item[(a)] Define the {\it open support\/} of $\eta^a$ to be $\supp^\ci\eta^a=\{v\in V:\eta^a(v)\ne 0\}$.

Define the {\it support\/} of $\eta^a$ to be $\supp\eta^a=\ov{\supp^\ci\eta^a}=\ov{\{v\in V:\eta^a(v)\ne 0\}}$, where `$\,\ov{\phantom{m}}\,$' denotes the closure in $V$. Then $\supp\eta^a\subseteq V^a$ for all~$a\in A$.
\item[(b)] Each $v\in V$ has an open neighbourhood $U_v$ in $V$ with $U_v\cap\supp\eta^a=\es$ for all but finitely many $a\in A$.
\item[(c)] $\sum_{a\in A}\eta^a=1$. Here the sum makes sense by (b), as near each $v\in V$ there are only finitely many nonzero terms (i.e.\ the sum is {\it locally finite\/}).
\end{itemize}

It is well known that such partitions of unity always exist. (Note that our manifolds $V$ are paracompact, by definition, which is needed for local finiteness.)

\label{ku6def}
\end{dfn}

\subsubsection{Theorem \ref{ku2thm1}(a): $\cHom\bigl((V_i,E_i,s_i,\psi_i),(V_j,E_j,s_j,\psi_j)\bigr)$ is a sheaf}
\label{ku611}

Let $X$ be a topological space, $(V_i,E_i,s_i,\psi_i),$ $(V_j,E_j,s_j,\psi_j)$ be $\mu$-Kuranishi neighbourhoods on $X$, and $\cHom\bigl((V_i,E_i,s_i,\psi_i),(V_j,E_j,s_j,\psi_j)\bigr)$ be as in Theorem \ref{ku2thm1}(a). We must show that $\cHom\bigl((V_i,E_i,s_i,\psi_i),(V_j,E_j,s_j,\psi_j)\bigr)$ is a sheaf of sets on $\Im\psi_i\cap\Im\psi_j$, that is, that it satisfies Definition \ref{ku2def9}(i)--(iv). Parts (i),(ii) are immediate from the definition of restriction $\vert_T$ in Definition \ref{ku2def7}.

For (iii), suppose $S\subseteq\Im\psi_i\cap\Im\psi_j$ is open, $\Phi_{ij},\Phi_{ij}':(V_i,E_i,s_i,\psi_i)\ra(V_j,E_j,s_j,\psi_j)$ are morphisms over $S$, and $\{T^a:a\in A\}$ is an open cover of $S$ such that $\Phi_{ij}\vert_{T^a}=\Phi_{ij}'\vert_{T^a}$ for all $a\in A$. We must show that $\Phi_{ij}=\Phi_{ij}'$. Let $(V_{ij},\phi_{ij},\hat\phi_{ij})$ and $(V_{ij}',\phi_{ij}',\hat\phi_{ij}')$ represent $\Phi_{ij},\Phi_{ij}'$, as in Definition \ref{ku2def3}. Then $\Phi_{ij}\vert_{T^a}=\Phi_{ij}'\vert_{T^a}$ means that for each $a\in A$ there exists an open neighbourhood $\dot V_{ij}^a$ of $\psi_i^{-1}(T^a)$ in $V_{ij}\cap V_{ij}'$ and a smooth morphism $\smash{\La^a:E_i\vert_{\dot V_{ij}^a}\ra \phi_{ij}^*(TV_j)\vert_{\dot V_{ij}^a}}$ of vector bundles on $\dot V_{ij}^a$ which by \eq{ku2eq1} satisfies 
\e
\phi_{ij}'=\phi_{ij}+\La^a\cdot s_i+O(s_i^2)\;\>\text{and}\;\> \hat\phi_{ij}'=\hat\phi_{ij}+\La^a\cdot \phi_{ij}^*(\d s_j)+O(s_i)\;\> \text{on $\dot V_{ij}^a$.}
\label{ku6eq1}
\e

Set $\dot V_{ij}=\bigcup_{a\in A}\dot V_{ij}^a$. Then $\dot V_{ij}$ is an open neighbourhood of $\psi_i^{-1}(S)$ in $V_{ij}\cap V_{ij}'$, since $S=\bigcup_{a\in A}T^a$. Since $\{\dot V_{ij}^a:a\in A\}$ is an open cover of $\dot V_{ij}$, as in Definition \ref{ku6def} we can choose a partition of unity $\{\eta^a:a\in A\}$ subordinate to $\{\dot V_{ij}^a:a\in A\}$. Define $\La:E_i\vert_{\dot V_{ij}}\ra\phi_{ij}^*(TV_j)\vert_{\dot V_{ij}}$ by
\e
\La=\ts\sum_{a\in A}\eta^a\La^a,
\label{ku6eq2}
\e
taking $\eta^a\La^a=0$ on $\dot V_{ij}\sm\dot V_{ij}^a$. Then multiplying \eq{ku6eq1} by $\eta^a$ and summing over $a\in A$ implies that \eq{ku2eq1} holds. 

In more detail, by Definition \ref{ku2def1}(v) the first equation of \eq{ku6eq1} means that if $h:V_j\ra\R$ is smooth, there exists $\al^a\in C^\iy(E_i^*\ot E_i^*\vert_{\dot V_{ij}^a})$ such that
\e
h\ci\phi_{ij}'=h\ci\phi_{ij}+\La^a\cdot(s_i\ot \phi_{ij}^*(\d h))+\al^a\cdot (s_i\ot s_i)\qquad\text{on $\dot V_{ij}^a$,}
\label{ku6eq3}
\e
and we multiply \eq{ku6eq3} by $\eta^a$ and sum over $a\in A$ to deduce the analogue of \eq{ku6eq3} for $\dot V_{ij},\La$ with $\al=\sum_{a\in A}\eta^a\al^a$, and the first equation of \eq{ku2eq1}. Therefore $(V_{ij},\phi_{ij},\hat\phi_{ij})\sim(V_{ij}',\phi_{ij}',\hat\phi_{ij}')$, and so $\Phi_{ij}=\Phi_{ij}'$. This proves Definition~\ref{ku2def9}(iii).

For (iv), suppose $S\subseteq\Im\psi_i\cap\Im\psi_j$ is open, $\{T^a:a\in A\}$ is an open cover of $S$, and $\Phi_{ij}^a:(V_i,E_i,s_i,\psi_i)\ra(V_j,E_j,s_j,\psi_j)$ is a morphism over $T^a$ for all $a\in A$, such that $\Phi_{ij}^a\vert_{T^a\cap T^b}=\Phi_{ij}^b\vert_{T^a\cap T^b}$ for all $a,b\in A$. Let $(V_{ij}^a,\phi_{ij}^a,\hat\phi_{ij}^a)$ represent $\Phi_{ij}^a$ for each $a\in A$. Making $V_{ij}^a\subseteq V_i$ smaller if necessary, we can suppose that $V_{ij}^a\cap s_i^{-1}(0)=\psi_i^{-1}(T^a)$. Then $\Phi_{ij}^a\vert_{T^a\cap T^b}=\Phi_{ij}^b\vert_{T^a\cap T^b}$ means that there exists an open neighbourhood $\dot V_{ij}^{ab}$ of $\psi_i^{-1}(T^a\cap T^b)$ in $V_{ij}^a\cap V_{ij}^b$ and a smooth morphism $\La^{ab}:E_i\vert_{\dot V_{ij}^{ab}}\ra (\phi_{ij}^a)^*(TV_j)\vert_{\dot V_{ij}^{ab}}$ of vector bundles on $\dot V_{ij}^{ab}$ satisfying 
\e
\phi_{ij}^b=\phi_{ij}^a+\La^{ab}\cdot s_i+O(s_i^2)\;\>\text{and}\;\> \hat\phi_{ij}^b=\hat\phi_{ij}^a+\La^{ab}\cdot (\phi_{ij}^a)^*(\d s_j)+O(s_i)\;\> \text{on $\dot V_{ij}^{ab}$.}
\label{ku6eq4}
\e

Since $\bigl\{\dot V_{ij}^{ab},(V_{ij}^a\cap V_{ij}^b)\sm\psi_i^{-1}(T^a\cap T^b)\bigr\}$ is an open cover of $V_{ij}^a\cap V_{ij}^b$, we can choose a subordinate partition of unity $\{\eta,1-\eta\}$. That is, $\eta:V_{ij}^a\cap V_{ij}^b\ra\R$ is smooth with $\supp\eta\subseteq\dot V_{ij}^{ab}$, and $\eta=1$ on an open neighbourhood of $\psi_i^{-1}(T^a\cap T^b)$ in $V_{ij}^a\cap V_{ij}^b$. Set $\ti\La^{ab}=\eta\La^{ab}$, which is defined on $V_{ij}^a\cap V_{ij}^b$. As the $O(s_i^2),O(s_i)$ conditions are trivial away from $s_i^{-1}(0)=\psi_i^{-1}(T^a\cap T^b)$ in $V_{ij}^a\cap V_{ij}^b$, equation \eq{ku6eq4} holds on $V_{ij}^a\cap V_{ij}^b$ with $\ti\La^{ab}$ in place of $\La^{ab}$. Thus, replacing $\La^{ab}$ by $\ti\La^{ab}$, we may take~$\dot V_{ij}^{ab}=V_{ij}^a\cap V_{ij}^b$.

We must construct a morphism $\Phi_{ij}=[V_{ij},\phi_{ij},\hat\phi_{ij}]:(V_i,E_i,s_i,\psi_i)\ra(V_j,\ab E_j,\ab s_j,\ab\psi_j)$ over $S$ such that $\Phi_{ij}\vert_{T^a}=\Phi_{ij}^a$ for all $a\in A$. Choose a partition of unity $\{\eta^a:a\in A\}$ on $\bigcup_{a\in A}V_{ij}^a\subseteq V_i$ subordinate to $\{V_{ij}^a:a\in A\}$. Let $V_{ij}$ be an open neighbourhood of $\psi_i^{-1}(S)$ in $\bigcup_{a\in A}V_{ij}^a\subseteq V_i$, to be chosen later. We want to define a smooth map $\phi_{ij}:V_{ij}\ra V_j$ by combining the maps $\phi_{ij}^a:V_{ij}^a\ra V_j$ using $\{\eta^a:a\in A\}$. Roughly speaking, we would like to define
\e
\phi_{ij}=\ts\sum_{a\in A}\bigl(\eta^a\cdot\phi^a_{ij}\bigr)\vert_{V_{ij}}.
\label{ku6eq5}
\e
However, \eq{ku6eq5} does not make sense, as $V_j$ is a manifold, not a vector space.

The next lemma will allow us to make sense of \eq{ku6eq5}, see~\eq{ku6eq10}.

\begin{lem} For each\/ $n=1,2,\ldots,$ write $\De_{n-1}$ for the $(n-1)$-simplex
\begin{equation*}
\De_{n-1}=\bigl\{(t^1,\ldots,t^n)\in[0,1]^n:t^1+\cdots+t^n=1\bigr\}.
\end{equation*}
Then we may choose $W_n\subseteq V_j^n\t\De_{n-1}$ and a smooth map $\Xi_n:W_n\ra V_j$ for all\/ $n\ge 1,$ such that:
\begin{itemize}
\setlength{\itemsep}{0pt}
\setlength{\parsep}{0pt}
\item[{\bf(i)}] $W_n$ is an open neighbourhood of\/ $\bigl\{(v,\ldots,v)\!:\!v\!\in\! V_j\bigr\}\!\t\!\De_{n-1}$ in\/~$V_j^n\!\t\!\De_{n-1}$.
\item[{\bf(ii)}] Let\/ $(x_1,\ldots,x_m)$ be local coordinates on an open set $U\subseteq V_j$. Writing points in these coordinates, we have
\ea
&\Xi_n\bigl((x_1^1,\ldots,x_m^1),\ldots,(x_1^n,\ldots,x_m^n),t^1,\ldots,t^n\bigr)=
\label{ku6eq6}\\
&\bigl(t^1x_1^1+\cdots+t^nx_1^n,\ldots,t^1x_m^1+\cdots+t^nx_m^n\bigr)+O\bigl(\ts\sum_{a,b,l}t^at^b(x^a_l-x^b_l)^2\bigr).
\nonumber
\ea
Here `$F=O(G)$' in \eq{ku6eq6} for functions $F,G$ means that $\md{F}\le C\cdot G$ for some continuous function $C:W_n\cap (U^n\t\De_{n-1})\ra[0,\iy),$ so that\/ $C$ is bounded on compact subsets of\/~$W_n\cap (U^n\t\De_{n-1})$.
\item[{\bf(iii)}] For all permutations $\si\in S_n,$ we have $(v^1,\ldots,v^n,t^1,\ldots,t^n)\in W_n$ if and only if\/ $(v^{\si(1)},\ldots,v^{\si(n)},t^{\si(1)},\ldots,t^{\si(n)})\in W_n,$ and then
\begin{equation*}
\Xi_n(v^1,\ab\ldots,\ab v^n,\ab t^1,\ab\ldots,\ab t^n)\ab=\Xi_n(v^{\si(1)},\ldots,v^{\si(n)},t^{\si(1)},\ldots,t^{\si(n)}).
\end{equation*}
\item[{\bf(iv)}] Whenever $(v^1,\ldots,v^n,t^1,\ldots,t^n)\in W_n$ and\/ $(v^1,\ldots,v^n,v^{n+1},t^1,\ldots,t^n,0)\ab\in W_{n+1}$ we have
\begin{equation*}
\Xi_n(v^1,\ldots,v^n,t^1,\ldots,t^n)=\Xi_{n+1}(v^1,\ldots,v^n,v^{n+1},t^1,\ldots,t^n,0).
\end{equation*}
\end{itemize}
\label{ku6lem}
\end{lem}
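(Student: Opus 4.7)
The plan is to construct $\Xi_n$ as the composition of a convex combination in an ambient Euclidean space with a tubular neighbourhood retraction, and then to read off (i)--(iv) by Taylor expansion. By the Whitney embedding theorem, choose a closed embedding $\iota:V_j\hookra\R^N,$ together with an open tubular neighbourhood $T\subseteq\R^N$ of $\iota(V_j)$ and a smooth retraction $\pi:T\ra V_j$ with $\pi\ci\iota=\id_{V_j}$. For each $n\ge 1$ define
\[
W_n=\bigl\{(v^1,\ldots,v^n,t^1,\ldots,t^n)\in V_j^n\t\De_{n-1}:\ts\sum_{a=1}^n t^a\iota(v^a)\in T\bigr\},
\]
which is open as the preimage of $T$ under a smooth map, and define the smooth map $\Xi_n:W_n\ra V_j$ by $\Xi_n(v^1,\ldots,v^n,t^1,\ldots,t^n)=\pi\bigl(\sum_{a=1}^n t^a\iota(v^a)\bigr)$.

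With these choices, parts (i), (iii) and (iv) are immediate. For (i), when $v^1=\cdots=v^n=v$ one has $\sum_a t^a\iota(v^a)=\iota(v)\in\iota(V_j)\subseteq T,$ so the diagonal lies in $W_n$ and $\Xi_n(v,\ldots,v,t^1,\ldots,t^n)=\pi(\iota(v))=v$. For (iii), the convex combination in $\R^N$ is invariant under simultaneous permutation of the pairs $(v^a,t^a),$ so $\Xi_n$ inherits the same symmetry. For (iv), the two hypotheses place $\sum_{a=1}^n t^a\iota(v^a)$ and $\sum_{a=1}^{n+1}t^a\iota(v^a)$ both inside $T,$ and these vectors are equal because the coefficient of $\iota(v^{n+1})$ is $0;$ hence $\Xi_{n+1}=\Xi_n$ at such points.

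The remaining step, which is the main technical one, is (ii). I would work in a coordinate chart $(x_1,\ldots,x_m)$ on an open $U\subseteq V_j,$ writing $x^a\in\R^m$ for the coordinates of $v^a$ and $\bar x=\sum_a t^a x^a$. Taylor's theorem applied to $\iota$ gives $\iota(x^a)=\iota(\bar x)+\d\iota(\bar x)(x^a-\bar x)+R_a$ with $\md{R_a}\le C_1\md{x^a-\bar x}^2$ locally uniformly. Weighting by $t^a$ and summing, the linear term vanishes because $\sum_a t^a(x^a-\bar x)=0,$ giving $\sum_a t^a\iota(x^a)=\iota(\bar x)+O(\sum_a t^a\md{x^a-\bar x}^2)$. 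Applying $\pi$ and Taylor expanding around $\iota(\bar x),$ using $\pi\ci\iota=\id_{V_j},$ yields in these coordinates
\[
\Xi_n(v^1,\ldots,v^n,t^1,\ldots,t^n)=\bar x+O\bigl(\ts\sum_a t^a\md{x^a-\bar x}^2\bigr),
\]
and the elementary identity $\sum_a t^a\md{x^a-\bar x}^2=\tfrac12\sum_{a,b}t^a t^b\md{x^a-x^b}^2$ rewrites this in the form~\eq{ku6eq6}.

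The main thing requiring care is that the implicit constants in the $O(\cdot)$ estimates should be continuous (hence locally bounded) as functions on $W_n,$ not merely existent; this follows from smoothness of $\iota,\pi$ combined with standard compactness arguments, but is the only part of the proof that is not purely formal. The symmetric, ambient-Euclidean construction is essentially forced by the need to satisfy (iii) and (iv) simultaneously, which rules out definitions via iterated geodesic interpolation from a distinguished basepoint.
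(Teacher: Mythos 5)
Your construction is exactly the paper's first proof (Method (A)): embed $V_j$ in $\R^N$, take a tubular neighbourhood retraction $\pi:T\ra V_j$, set $W_n$ to be where the convex combination $\sum_a t^a\iota(v^a)$ lands in $T$, and define $\Xi_n=\pi\ci(\text{convex combination})$, with (ii) checked by Taylor expansion about the diagonal. The argument is correct; the paper additionally records a second construction via Riemannian exponential maps (its Method (B)), chosen because it adapts more cleanly to the case where $V_j$ has corners in \S\ref{ku63}, where the tubular-neighbourhood approach breaks down near $\pd V_j$.
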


\begin{proof} We give two proofs, (A) and (B). Method (A) is simpler, but method (B) will extend more cleanly to $V_j$ a manifold with corners in~\S\ref{ku63}.
\smallskip

\noindent{\bf Method (A).} Choose an embedding $i:V_j\hookra\R^N$ for $N\gg 0$ ($N=2m+1$ will do, where $m=\dim V_j$). By the existence of tubular neighbourhoods, we can choose an open neighbourhood $T$ of $i(V_j)$ in $\R^N$ and a smooth map $\pi:T\ra V_j$ with $\pi\ci i=\id_{V_j}$. For all $n\ge 1$, define $W_n\subseteq V_j^n\t\De_{n-1}$ and $\Xi_n:W_n\ra V_j$ by
\ea
&W_n=\bigl\{(v^1,\ldots,v^n,t^1,\ldots,t^n)\in V_j^n\t\De_{n-1}:
t^1i(v^1)+\cdots+t^ni(v^n)\in T\bigr\},
\nonumber\\
&\text{and}\quad\Xi_n:(v^1,\ldots,v^n,t^1,\ldots,t^n)\longmapsto\pi\bigl(t^1i(v^1)+\cdots+t^ni(v^n)\bigr).
\label{ku6eq7}
\ea
Parts (i) and (iii)--(iv) are immediate. To prove (ii), observe that \eq{ku6eq6} is equivalent to prescribing $\Xi_n$ and its derivative $\d\Xi_n$ at $\bigl\{(v,\ldots,v):v\in V_j\bigr\}\t\De_{n-1}$ in $W_n$, where $\Xi_n(v,\ldots,v,t^1,\ldots,t^n)=v$, and one can easily check this $\Xi_n$ has the desired properties. 
\smallskip

\noindent{\bf Method (B).} Fix a Riemannian metric $g$ on $V_j$. Then for some continuous function $r:V_j\ra(0,\iy)$, for each $v\in V_j$ there is a smooth {\it exponential map\/} $\exp_v:B_{v,r(v)}\ra V_j$, where $B_{v,r(v)}$ is the open ball about 0 of radius $r(v)$ in $T_vV_j$, with the Euclidean metric $g\vert_v$, such that for each $\al\in B_{v,r(v)}$ there is a unique geodesic interval $\ga:[0,1]\ra V_j$ in $(V_j,g)$ of length $\md{\al}$, with $\ga(0)=v$, $\frac{\d\ga}{\d t}(0)=\al$, and $\ga(1)=\exp_v(\al)$. If $r(v)$ is smaller than the injectivity radius of $g$ at $v$ then $\exp_v$ is a diffeomorphism with a small open ball about $v$ in $V_j$.

Consider the submanifold $Y_n$ of $\bigop^nTV_j\t\De_{n-1}$ given by
\e
\begin{split}
Y_n=\bigl\{(v,\al^1,\ldots,\al^n,t^1&,\ldots,t^n):v\in V_j, \;\> \al^1,\ldots,\al^n\in B_{v,r(v)},\\ 
&(t^1,\ldots,t^n)\in\De_{n-1},\;\> t^1\al^1+\cdots+t^n\al^n=0\bigr\},
\end{split}
\label{ku6eq8}
\e
and smooth maps $\Pi:Y_n\ra V_j^n\t\De_{n-1}$ and $\pi:Y_n\ra V_j$ defined by
\e
\begin{split}
\Pi&:(v,\al^1,\ldots,\al^n,t^1,\ldots,t^n)\longmapsto\bigl(\exp_v(\al^1),\ldots,\exp_v(\al^n),t^1,\ldots,t^n),\\
\pi&:(v,\al^1,\ldots,\al^n,t^1,\ldots,t^n)\longmapsto v.
\end{split}
\label{ku6eq9}
\e
Computing the derivative of $\Pi$ at $(v,0,\ldots,0,t^1,\ldots,t^n)$, we see that $\Pi$ is a diffeomorphism near $\al^1=\cdots=\al^n=0$. Therefore, making the function $r:V_j\ra(0,\iy)$ smaller if necessary, we can suppose $\Pi$ is a diffeomorphism with its image $W_n:=\Im\Pi$, which is an open neighbourhood of $\bigl\{(v,\ldots,v):v\in V_j\bigr\}\t\De_{n-1}$ in $V_j^n\t\De_{n-1}$. Define $\Xi_n:W_n\ra V_j$ by $\Xi_n=\pi\ci\Pi^{-1}$. One can now check that (i)--(iv) hold.
\end{proof}

As above, let $V_{ij}$ be an open neighbourhood of $\psi_i^{-1}(S)$ in $\bigcup_{a\in A}V_{ij}^a\subseteq V_i$, to be chosen shortly. Define $\phi_{ij}:V_{ij}\ra V_j$ by 
\e
\begin{split}
&\phi_{ij}(v)=\Xi_n\bigl(\phi_{ij}^{a_1}(v),\ldots,\phi_{ij}^{a_n}(v),\eta^{a_1}(v),\ldots,\eta^{a_n}(v)\bigr),\\
&\text{where}\;\> \{a_1,\ldots,a_n\}=\bigl\{a\in A:v\in\supp\eta^a\bigr\}.
\end{split}
\label{ku6eq10}
\e
Here $\bigl\{a\in A:v\in\supp\eta^a\bigr\}$ is finite and nonempty by Definition \ref{ku6def}(b),(c), so we may write $\bigl\{a\in A:v\in\supp\eta^a\bigr\}=\{a_1,\ldots,a_n\}$ for $a_1,\ldots,a_n\in A$ distinct, uniquely up to the order of $a_1,\ldots,a_n$. By Lemma \ref{ku6lem}(iii), the r.h.s.\ of \eq{ku6eq10} is independent of this order. Thus $\phi_{ij}$ is well-defined provided $\bigl(\phi_{ij}^{a_1}(v),\ldots,\phi_{ij}^{a_n}(v),\eta^{a_1}(v),\ldots,\eta^{a_n}(v)\bigr)\in W_n$ for each $v\in V_{ij}$. 

Let us impose this condition at $v\in V_{ij}$ not just for $\{a_1,\ldots,a_n\}$ in \eq{ku6eq10}, but also for every finite nonempty subset of $\{a_1,\ldots,a_n\}$. This then becomes an open condition on $v\in V_{ij}$, since small deformations of $v$ in $V_{ij}$ can only reduce the finite collection of subsets $\{a_1,\ldots,a_n\}\subseteq A$ for which we require $\bigl(\phi_{ij}^{a_1}(v),\ldots,\phi_{ij}^{a_n}(v),\eta^{a_1}(v),\ldots,\eta^{a_n}(v)\bigr)\in W_n$. If $v\in\psi_i^{-1}(S)$ this condition holds by Lemma \ref{ku6lem}(i), since then $\phi_{ij}^{a_k}(v)=\psi_j^{-1}\ci\psi_i(v)$ for $k=1,\ldots,n$. Hence if $V_{ij}$ is a small enough open neighbourhood of $\psi_i^{-1}(S)$ in $\bigcup_{a\in A}V_{ij}^a\subseteq V_i$, then $\phi_{ij}$ is well-defined.

We claim that $\phi_{ij}:V_{ij}\ra V_j$ is smooth. Fix $v\in V_{ij}$, and let $\{a_1,\ldots,a_n\}$ be as in \eq{ku6eq10}. By Definition \ref{ku6def}(b), there is an open neighbourhood $U_v$ of $v$ in $V_{ij}$ such that there are only finitely many $a\in A\sm\{a_1,\ldots,a_n\}$ with $U_v\cap\supp\eta^a\ne\es$. Since $\supp\eta^a$ is closed with $v\notin\supp\eta^a$, by replacing $U_v$ by the complement of these $\supp\eta^a$ in $U_v$, we obtain an open neighbourhood $U_v$ of $v$ such that $U_v\cap\supp\eta^a=\es$ for all $a\in A\sm\{a_1,\ldots,a_n\}$. Making $U_v$ smaller, we can also suppose that for every subset $\{a_{k_1},\ldots,a_{k_{n'}}\}\subseteq\{a_1,\ldots,a_n\}$ of size $n'\ge 1$ and every $v'\in U_v$ we have $\bigl(\phi_{ij}^{a_{k_1}}(v),\ldots,\phi_{ij}^{a_{k_{n'}}}(v),\eta^{a_{k_1}}(v),\ldots,\eta^{a_{k_{n'}}}(v)\bigr)\in W_{n'}$, since this holds at $v$ by definition of $V_{ij}$, and is an open condition.

Thus, for all $v'\in U_v$, the subset $\{a'_1,\ldots,a'_{n'}\}$ in \eq{ku6eq10} corresponding to $v'$ is a subset of $\{a_1,\ldots,a_n\}$. For fixed $v'\in U_v$, we can reorder $a_1,\ldots,a_n$ (which does not change $\phi_{ij}$ by Lemma \ref{ku6lem}(iii)) so that the subset in \eq{ku6eq10} corresponding to $v'$ is $\{a_1,\ldots,a_{n'}\}\subseteq\{a_1,\ldots,a_n\}$, for $1\le n'\le n$. Then $\eta^{a_k}(v')=0$ for $n'<k\le n$. Therefore Lemma \ref{ku6lem}(iv) and induction over $n',\ldots,n$ gives
\begin{align*}
\phi_{ij}(v')&=\Xi_{n'}\bigl(\phi_{ij}^{a_1}(v),\ldots,\phi_{ij}^{a_{n'}}(v),\eta^{a_1}(v),\ldots,\eta^{a_{n'}}(v)\bigr)\\
&=\Xi_n\bigl(\phi_{ij}^{a_1}(v),\ldots,\phi_{ij}^{a_n}(v),\eta^{a_1}(v),\ldots,\eta^{a_{n'}}(v),0,\ldots,0\bigr)\\
&=\Xi_n\bigl(\phi_{ij}^{a_1}(v),\ldots,\phi_{ij}^{a_n}(v),\eta^{a_1}(v),\ldots,\eta^{a_n}(v)\bigr).
\end{align*}
That is, for $v'$ in the open neighbourhood $U_v$, we can define $\phi_{ij}(v')$ as in \eq{ku6eq10} but using the fixed subset $\{a_1,\ldots,a_n\}\subseteq A$, rather than using a subset $\{a'_1,\ldots,a'_{n'}\}$ depending on $v'$. So $\phi_{ij}$ is smooth on $U_v$, since $\phi_{ij}^{a_k},\eta^{a_k}$ and $\Xi_n$ are all smooth. Hence $\phi_{ij}:V_{ij}\ra V_j$ is smooth, as this holds for all~$v\in V_{ij}$.

Fix $a\in A$. Then for each $b\in A$ we have a morphism $\La^{ab}$ on $V_{ij}^a\cap V_{ij}^b$ satisfying \eq{ku6eq4} on $V_{ij}^a\cap V_{ij}^b$. As for \eq{ku6eq2}, define a morphism $\La^a:E_i\vert_{V_{ij}^a}\ra(\phi_{ij}^a)^*(TV_j)$ of vector bundles on $V_{ij}^a$ by
\e
\La^a=\ts\sum_{b\in A}\eta^b\vert_{V_{ij}^a}\cdot\La^{ab}.
\label{ku6eq11}
\e 
As in \eq{ku6eq3}, the first equation of \eq{ku6eq4} means that if $h:V_j\ra\R$ is smooth, there exists $\al^{ab}\in C^\iy(E_i^*\ot E_i^*\vert_{V_{ij}^a\cap V_{ij}^b})$ such that
\e
h\ci\phi_{ij}^b=h\ci\phi_{ij}^a+\La^{ab}\cdot(s_i\ot (\phi_{ij}^a)^*(\d h))+\al^{ab}\cdot (s_i\ot s_i)\quad\text{on $V_{ij}^a\cap V_{ij}^b$.}
\label{ku6eq12}
\e
Multiplying \eq{ku6eq12} by $\eta^b$, regarding it as an equation on $V_{ij}^a\cap V_{ij}$, summing over $b\in A$, and using Lemma \ref{ku6lem}(ii) and \eq{ku6eq11}, we deduce that
\e
\phi_{ij}=\phi_{ij}^a+\La^a\cdot s_i+O(s_i^2)\quad\text{on $V_{ij}^a\cap V_{ij}$.}
\label{ku6eq13}
\e
Here using Lemma \ref{ku6lem}(ii) we can show that $h\ci\phi_{ij}=\sum_{b\in A}\eta^b\cdot(h\ci\phi_{ij}^b)+O(s_i^2)$.

Equation \eq{ku6eq13} implies that $\phi_{ij}=\phi_{ij}^a+O(s_i)$ on $V_{ij}^a\cap V_{ij}$. Thus as in Definition \ref{ku2def1} there exists a morphism $\hat\phi_{ij}^{\prime a}:E_i\vert_{V_{ij}^a\cap V_{ij}}\ra \phi_{ij}\vert_{V_{ij}^a\cap V_{ij}}^*(E_j)$ with $\hat\phi_{ij}^{\prime a}=\hat\phi_{ij}^a+O(s_i)$ in the sense of Definition \ref{ku2def1}(vi), and $\hat\phi_{ij}^{\prime a}$ is unique up to $O(s_i)$. Define a morphism $\hat\phi_{ij}:E_i\vert_{V_{ij}}\ra\phi_{ij}\vert_{V_{ij}}^*(E_j)$ by
\e
\hat\phi_{ij}=\ts\sum_{a\in A}\eta^a\vert_{V_{ij}}\cdot \hat\phi_{ij}^{\prime a}.
\label{ku6eq14}
\e
Now for each $a\in A$, on $V_{ij}\cap V_{ij}^a$ we have
\e
\begin{split}
\hat\phi_{ij}&=\ts\sum_{b\in A}\eta^b\vert_{V_{ij}}\cdot \hat\phi_{ij}^{\prime b}\\
&=\ts\sum_{b\in A}\eta^b\vert_{V_{ij}}\cdot \bigl(
\hat\phi_{ij}^a+\La^{ab}\cdot (\phi_{ij}^a)^*(\d s_j)+O(s_i)\bigr)\\
&=\hat\phi_{ij}^a+\La^a\cdot (\phi_{ij}^a)^*(\d s_j)+O(s_i),
\end{split}
\label{ku6eq15}
\e
using \eq{ku6eq14} in the first step, the second equation of \eq{ku6eq4} in the second, and $\sum_{b\in A}\eta^b=1$ and \eq{ku6eq11} in the third.

Observe that \eq{ku6eq13} and \eq{ku6eq15} imply that $(V_{ij},\phi_{ij},\hat\phi_{ij})\sim(V_{ij}^a,\phi_{ij}^a,\hat\phi_{ij}^a)$ over $T^a$ in the sense of Definition \ref{ku2def3}. Since $(V_{ij}^a,\phi_{ij}^a,\hat\phi_{ij}^a)$ satisfies Definition \ref{ku2def3}(a)--(e) over $T^a$, this implies that $(V_{ij},\phi_{ij},\hat\phi_{ij})$ satisfies Definition \ref{ku2def3}(a)--(e) over $T^a$ for each $a\in A$, and as $\bigcup_{a\in A}T^a=S$, we see that $(V_{ij},\phi_{ij},\hat\phi_{ij})$ satisfies Definition \ref{ku2def3}(a)--(e) over $S$. Hence $\Phi_{ij}:=[V_{ij},\phi_{ij},\hat\phi_{ij}]:(V_i,E_i,s_i,\psi_i)\ra(V_j,E_j,s_j,\psi_j)$ is a morphism over $S$, and \eq{ku6eq15}, \eq{ku6eq15} imply that $\Phi_{ij}\vert_{T^a}=\Phi_{ij}^a$ for all $a\in A$. This completes Definition \ref{ku2def9}(iv). So $\cHom((V_i,E_i,s_i,\psi_i),(V_j,E_j,s_j,\psi_j))$ is a sheaf on $\Im\psi_i\cap\Im\psi_j$, proving the first part of Theorem~\ref{ku2thm1}(a).

\subsubsection{Theorem \ref{ku2thm1}(a): $\cIso\bigl((V_i,E_i,s_i,\psi_i),(V_j,E_j,s_j,\psi_j)\bigr)$ is a sheaf}
\label{ku612}

The second part, that $\mu$-coordinate changes $\cIso((V_i,\ab E_i,\ab s_i,\ab\psi_i),(V_j,\ab E_j,\ab s_j,\ab\psi_j))$ are a subsheaf of $\cHom((V_i,E_i,s_i,\psi_i),(V_j,E_j,s_j,\psi_j))$, is a purely formal consequence of the first part. Definition \ref{ku2def9}(i)--(iii) for $\cIso\bigl((V_i,E_i,s_i,\psi_i),\ab(V_j,\ab E_j,\ab s_j,\ab\psi_j)\bigr)$ are immediate. To prove (iv), we must show that in the last part of the proof in \S\ref{ku611}, if the $\Phi_{ij}^a$ for $a\in A$ are $\mu$-coordinate changes, then the $\Phi_{ij}$ we construct with $\Phi_{ij}\vert_{T^a}=\Phi_{ij}^a$ is also a $\mu$-coordinate change. 

To see this, note that $(\Phi_{ij}^a)^{-1}:(V_j,E_j,s_j,\psi_j)\ra(V_i,E_i,s_i,\psi_i)$ exists on $T^a$ as $\Phi_{ij}^a$ is a $\mu$-coordinate change, and $(\Phi_{ij}^a)^{-1}\vert_{T^a\cap T^b}=(\Phi_{ij}^b)^{-1}\vert_{T^a\cap T^b}$ as $\Phi_{ij}^a\vert_{T^a\cap T^b}=\Phi_{ij}^b\vert_{T^a\cap T^b}$, so the proof above gives $\Phi_{ji}:(V_j,E_j,s_j,\psi_j)\ra(V_i,\ab E_i,\ab s_i,\ab\psi_i)$ on $S$ with $\Phi_{ji}\vert_{T^a}=(\Phi_{ij}^a)^{-1}$ for all $a\in A$. Then we have
\begin{align*}
(\Phi_{ji}\ci\Phi_{ij})\vert_{T^a}&=(\Phi_{ij}^a)^{-1}\ci\Phi_{ij}^a\vert_{T^a}=\id_{(V_i,E_i,s_i,\psi_i)}\vert_{T^a},\\
(\Phi_{ij}\ci\Phi_{ji})\vert_{T^a}&=\Phi_{ij}^a\ci(\Phi_{ij}^a)^{-1}\vert_{T^a}=\id_{(V_j,E_j,s_j,\psi_j)}\vert_{T^a},
\end{align*}
so Definition \ref{ku2def9}(iii) for $\cHom((V_i,E_i,s_i,\psi_i),(V_j,E_j,s_j,\psi_j))$ yields $\Phi_{ji}\ci\Phi_{ij}=\id_{(V_i,E_i,s_i,\psi_i)}$ and $\Phi_{ij}\ci\Phi_{ji}=\id_{(V_j,E_j,s_j,\psi_j)}$. Therefore $\Phi_{ji}=\Phi_{ij}^{-1}$, and $\Phi_{ij}$ is a $\mu$-coordinate change. This completes Theorem~\ref{ku2thm1}(a). 

\subsubsection{Theorem \ref{ku2thm1}(b): $\cHom_f\bigl((U_i,\ab D_i,\ab r_i,\ab\chi_i),\ab (V_j,E_j,s_j,\psi_j)\bigr)$ is a sheaf}
\label{ku613}

The proof of Theorem \ref{ku2thm1}(b) is essentially the same as for the first part of Theorem \ref{ku2thm1}(a) in \S\ref{ku611}, but inserting `over $f$' throughout.

\subsection{Proof of Theorem \ref{ku2thm2}}
\label{ku62}

Theorem \ref{ku2thm2} gives a criterion for a morphism of $\mu$-Kuranishi neighbourhoods $\Phi_{ij}:(V_i,E_i,s_i,\psi_i)\ra (V_j,E_j,s_j,\psi_j)$ to be a coordinate change.

\subsubsection{The `only if' part of Theorem \ref{ku2thm2}}
\label{ku621}

Suppose $\Phi_{ij}:(V_i,E_i,s_i,\psi_i)\ra (V_j,E_j,s_j,\psi_j)$ is a $\mu$-coordinate change over $S\subseteq X$, and let $(V_{ij},\phi_{ij},\hat\phi_{ij})$ represent $\Phi_{ij}$. Let $x\in S,$ and set $v_i=\psi_i^{-1}(x)\in V_i$ and $v_j=\psi_j^{-1}(x)\in V_j$, so that $\phi_{ij}(v_i)=v_j$. We must show that \eq{ku2eq8} is exact.

As $\Phi_{ij}$ is a $\mu$-coordinate change, it has an inverse $\Phi_{ji}=\Phi_{ij}^{-1}$ over $S$, represented by $(V_{ji},\phi_{ji},\hat\phi_{ji})$, say. By Definition \ref{ku2def4}, 
\begin{equation*}
\bigl(\phi_{ij}^{-1}(V_{ji}),\phi_{ji}\ci\phi_{ij}\vert_{\phi_{ij}^{-1}(V_{ji})},\phi_{ij}\vert_{\phi_{ij}^{-1}(V_{ji})}^*(\hat\phi_{ji})\ci\hat\phi_{ij}\vert_{\phi_{ij}^{-1}(V_{ji})}\bigr)
\end{equation*}
represents $\Phi_{ji}\ci\Phi_{ij}=\id_{(V_i,E_i,s_i,\psi_i)}=[V_i,\id_{V_i},\id_{E_i}]$, so by Definition \ref{ku2def3}, there exists an open neighbourhood $\dot V_{ii}$ of $\psi_i^{-1}(S)$ in $\phi_{ij}^{-1}(V_{ji})\subseteq V_i$ and a vector bundle morphism $\La:E_i\vert_{\dot V_{ii}}\ra TV_i\vert_{\dot V_{ii}}$ satisfying
\e
\begin{split}
\phi_{ji}\ci\phi_{ij}&=\id_{V_i}+\La\cdot s_i+O(s_i^2)\;\>\text{and}\\
\phi_{ij}^*(\hat\phi_{ji})\ci\hat\phi_{ij}&=\id_{E_i}+\La\cdot\d s_i+O(s_i)\;\> \text{on $\dot V_{ii}$.}
\end{split}
\label{ku6eq16}
\e
Restricting the derivative of the first equation of \eq{ku6eq16}, and the second equation of \eq{ku6eq16}, to $v_i$ and using $s_i(v_i)=0$, yields
\e
\begin{split}
\d\phi_{ji}\vert_{v_j}\ci\d\phi_{ij}\vert_{v_i}&=\id_{T_{v_i}V_i}+\La\vert_{v_i}\ci\d s_i\vert_{v_i},\\
\hat\phi_{ji}\vert_{v_j}\ci\hat\phi_{ij}\vert_{v_i}&=\id_{E_i\vert_{v_i}}+\d s_i\vert_{v_i}\ci\La\vert_{v_i}.
\end{split}
\label{ku6eq17}
\e

Similarly, from $\Phi_{ij}\!\ci\!\Phi_{ji}\!=\!\id_{(V_j,E_j,s_j,\psi_j)}$ we obtain $\Mu:E_j\vert_{\dot V_{jj}}\!\ra\! TV_j\vert_{\dot V_{jj}}$ with
\e
\begin{split}
\d\phi_{ij}\vert_{v_i}\ci\d\phi_{ji}\vert_{v_j}&=\id_{T_{v_j}V_j}+\Mu\vert_{v_j}\ci\d s_j\vert_{v_j},\\
\hat\phi_{ij}\vert_{v_i}\ci\hat\phi_{ji}\vert_{v_j}&=\id_{E_j\vert_{v_j}}+\d s_j\vert_{v_j}\ci\Mu\vert_{v_j}.
\end{split}
\label{ku6eq18}
\e
Differentiating Definition \ref{ku2def3}(d) for $\Phi_{ij}$ yields 
\e
\hat\phi_{ji}\vert_{v_j}\ci\d s_j\vert_{v_j}=\d s_i\vert_{v_i}\ci\d\phi_{ji}\vert_{v_j}.
\label{ku6eq19}
\e
By a similar proof to Proposition \ref{kuBprop} we can also choose $\La,\Mu$ to satisfy
\e
\d\phi_{ij}\vert_{v_i}\ci\La\vert_{v_i}=\Mu\vert_{v_j}\ci\hat\phi_{ij}\vert_{v_i},
\label{ku6eq20}
\e
although this is not really needed for the proof.

Now consider the diagram of vector spaces
\e
\xymatrix@C=17pt{ 0 \ar@<.5ex>[r] & T_{v_i}V_i \ar@<.5ex>[rrr]^(0.39){\d s_i\vert_{v_i}\op\d\phi_{ij}\vert_{v_i}} \ar@<.5ex>[l] &&& E_i\vert_{v_i} \!\op\!T_{v_j}V_j 
\ar@<.5ex>[rrr]^(0.56){-\hat\phi_{ij}\vert_{v_i}\op \d s_j\vert_{v_j}} \ar@<.5ex>[lll]^(0.61){-\La\vert_{v_i}\op\d\phi_{ji}\vert_{v_j}} &&& E_j\vert_{v_j} \ar@<.5ex>[lll]^(0.44){-\hat\phi_{ji}\vert_{v_j}\op -\Mu\vert_{v_j}} \ar@<.5ex>[r] & 0. \ar@<.5ex>[l] }
\label{ku6eq21}
\e
The rightward morphisms are \eq{ku2eq8}. Equation \eq{ku6eq21} is of the form
\begin{equation*}
\xymatrix@C=25pt{ 0 \ar@<.5ex>[r] & E \ar@<.5ex>[rr]^\al \ar@<.5ex>[l] && F \ar@<.5ex>[rr]^\be \ar@<.5ex>[ll]^\ga && G \ar@<.5ex>[ll]^\de \ar@<.5ex>[r] & 0, \ar@<.5ex>[l] }
\end{equation*}
where $\ga\ci\al=\id_E$, $\al\ci\ga+\de\ci\be=\id_F$, $\be\ci\de=\id_G$ by \eq{ku6eq17}--\eq{ku6eq20}. Thus \eq{ku2eq8} is exact, by elementary linear algebra, proving the `only if' part of Theorem~\ref{ku2thm2}.

\subsubsection{The `if' part of Theorem \ref{ku2thm2}}
\label{ku622}

Let $\Phi_{ij}:(V_i,E_i,s_i,\psi_i)\ra(V_j,E_j,s_j,\psi_j)$ be a morphism of $\mu$-Kuranishi neighbourhoods over $S\subseteq X$, represented by $(V_{ij},\phi_{ij},\hat\phi_{ij})$, and suppose that \eq{ku2eq8} is exact for all $x\in S$. We must show $\Phi_{ij}$ is a $\mu$-coordinate change. Fix $x\in X$, and set $v_i=\psi_i^{-1}(x)\in V_i$ and $v_j=\psi_j^{-1}(x)\in V_j$. We will first show that $\Phi_{ij}\vert_{S^x}$ is invertible over $S^x$ for a small open neighbourhood $S^x$ of $x$ in $X$. The `if' part will then follow from the sheaf property of~$\cIso\bigl((V_i,E_i,s_i,\psi_i),(V_j,E_j,s_j,\psi_j)\bigr)$.

As in \eq{ku2eq26}, we form a commutative diagram with exact rows
\e
\begin{gathered}
\xymatrix@C=23pt@R=15pt{ 0 \ar[r] & K_i^x \ar[r] \ar[d]^{\ka^x_{ij}}_\cong & T_{v_i}V_i \ar[rr]_{\d s_i\vert_{v_i}} \ar[d]^{\d\phi_{ij}\vert_{v_i}} && E_i\vert_{v_i} \ar[d]^{\hat\phi_{ij}\vert_{v_i}} \ar[r] & C_i^x \ar[d]^{\ga^x_{ij}}_\cong \ar[r] & 0 \\
0 \ar[r] & K_j^x \ar[r] & T_{v_j}V_j \ar[rr]^{\d s_j\vert_{v_j}} && E_j\vert_{v_j} \ar[r] & C_j^x \ar[r] & 0,\!\! }
\end{gathered}
\label{ku6eq22}
\e
where $K_i^x,C_i^x$ and $K_j^x,C_j^x$ are the kernel and cokernel of $\d s_i\vert_{v_i}$ and $\d s_j\vert_{v_j}$. By elementary linear algebra, \eq{ku2eq8} being exact is equivalent to $\ka^x_{ij}:K_i^x\ra K_j^x$ and $\ga^x_{ij}:C_i^x\ra C_j^x$ both being isomorphisms.

Choose a small open neighbourhood $V_{ij}^x$ of $v_i$ in $V_{ij}\subseteq V_i$, and a vector subbundle $F_i$ of $E_i\vert_{V_{ij}^x}$ such that the projection $F_i\vert_x\ra C_i^x$ is an isomorphism. Several times in what follows we will make the open neighbourhood $\smash{V_{ij}^x}$ smaller. Write $E_i^x=E_i\vert_{V_{ij}^x}$, $s_i^x=s_i\vert_{V_{ij}^x}$, $\phi_{ij}^x=\phi_{ij}\vert_{V_{ij}^x}$ and~$\hat\phi_{ij}^x=\hat\phi_{ij}\vert_{V_{ij}^x}$. 

Now $E_i^x/F_i$ is a vector bundle on $V_{ij}^x$ and $s_i^x$ projects to a section $\ti u_i=s_i^x+F_i\in C^\iy(E_i^x/F_i)$, where $\ti u_i(v_i)=0$ and $\d\ti u_i\vert_{v_i}:T_{v_i}V_i\ra (E_i^x/F_i)\vert_{v_i}$ is surjective by \eq{ku6eq22}. Making $V_{ij}^x$ smaller, we can suppose $\d\ti u_i\vert_v:T_vV_i\ra (E_i^x/F_i)\vert_v$ is surjective for all $v\in V_{ij}^x$ with $\ti u_i(v)=0$. Then $W_i:=\bigl\{v\in V_{ij}^x:\ti u_i(v)=0\bigr\}$ is a submanifold of $V_{ij}^x$ containing $v_i$. Write $E_i'=E_i\vert_{W_i}$, $F_i'=F_i\vert_{W_i}$, and $t_i'=s_i\vert_{W_i}\in C^\iy(F_i')$. Write $S^x=\psi_i((s_i^x)^{-1}(0))$, so that $S^x$ is an open neighbourhood of $x$ in $S\subseteq X$. Then~$\psi_i^{-1}(S^x)=(t_i')^{-1}(0)\subseteq W_i$.

Choose a connection $\nabla^{E_i^x}$ on $E_i^x\ra V_{ij}^x$ which preserves the subbundle $F_i\subseteq E_i^x$. Choose a splitting $TV_i\vert_{W_i}=TW_i\op\nu_i$, where $\nu_i$ is the normal bundle of $W_i$ in $V_{ij}^x$. By definition of $W_i$, $\nabla^{E_i^x}s_i^x\vert_{\nu_i}:\nu_i\ra E_i'$ is an injective morphism of vector bundles on $W_i$, which projects to an isomorphism with $E_i'/F_i'$. Define $G_i'=(\nabla^{E_i^x}s_i^x)(\nu_i)$, a vector subbundle of $E_i'$. Then $E_i'=F_i'\op G_i'$. Choose a vector subbundle $G_i$ of $E_i^x$ such that $E_i^x=F_i\op G_i$ and $G_i\vert_{W_i}=G_i'$. Write $s_i^x=t_i\op u_i$ for $t_i\in C^\iy(F_i)$ and $u_i\in C^\iy(G_i)$. Write $\nabla^{F_i},\nabla^{G_i}$ for the connections on $F_i,G_i\ra V_{ij}^x$ induced from $\nabla^{E_i}$. Then the projection $G_i\ra E_i^x/F_i$ is an isomorphism mapping $u_i\mapsto\ti u_i$, so that $W_i=\bigl\{v\in V_{ij}^x:u_i(v)=0\bigr\}$, and $\nabla^{G_i}u_i\vert_{\nu_i}=\d u_i\vert_{\nu_i}:\nu_i\ra G_i'$ is an isomorphism. As $\d s_i\vert_{v_i}=0\op \d u_i\vert_{v_i}$ we have
\begin{equation*}
K_i^x=\Ker\d s_i\vert_{v_i}=\Ker\d u_i\vert_{v_i}=T_{v_i}W_i.
\end{equation*}

We see from \eq{ku6eq22} that $\d\phi_{ij}\vert_{v_i}:K_i^x\ra T_{v_j}V_j$ is injective. Since $K_i^x=T_{v_i}W_i$, this means that $\phi_{ij}\vert_{W_i}:W_i\ra V_j$ is an embedding near $v_i$, so making $V_{ij}^x,W_i$ smaller, we can suppose $\phi_{ij}\vert_{W_i}:W_i\ra V_j$ is an embedding. Thus $W_j:=\phi_{ij}(W_i)$ is a submanifold of $V_j$ containing $v_j$, and $\xi_{ij}:=\phi_{ij}\vert_{W_i}:W_i\ra W_j$ is a diffeomorphism, with inverse~$\xi_{ji}:=\xi_{ij}^{-1}:W_j\ra W_i$.

From \eq{ku6eq22} and $F_i\vert_{v_i}\ra C_i^x$ an isomorphism we see that $\hat\phi_{ij}\vert_{v_i}:F_i'\vert_{v_i}=F_i\vert_{v_i}\ra E_j\vert_{v_j}$ is injective. Thus making $V_{ij}^x,W_i,W_j$ smaller, we can suppose $\hat\phi_{ij}\vert_{F'_i}:F'_i\ra \xi_{ij}^*(E_j)$ is an injective morphism of vector bundles on $W_i$. Define a vector subbundle $F_j'$ of $E_j\vert_{W_j}$ by $F_j'=\xi_{ji}^*(\hat\phi_{ij}(F'_i))$. Then $\hat\xi_{ij}:=\hat\phi_{ij}\vert_{F'_i}:F'_i\ra \xi_{ij}^*(F'_j)$ is an isomorphism of vector bundles on $W_i$. So we may define $\hat\xi_{ji}:F'_j\ra \xi_{ji}^*(F'_i)$ by $\hat\xi_{ji}=\xi_{ji}^*(\hat\xi_{ij}^{-1})$, an isomorphism of vector bundles on~$W_j$.

Choose a small open neighbourhood $V_{ji}^x$ of $W_j$ (and hence $v_j$) in $V_j$, with $W_j$ closed in $V_{ji}^x$, and a vector subbundle $F_j$ of $E_j^x:=E_j\vert_{V_{ji}^x}$ with $F_j\vert_{W_j}=F_j'$. Write $s_j^x=s_j\vert_{V_{ji}^x}$. Since $\psi_i^{-1}(S^x)\subseteq W_i$, we have $\psi_j^{-1}(S^x)\subseteq W_j\subseteq V_{ji}^x$. As $\psi_j^{-1}(S^x)$ is open in $(s_j^x)^{-1}(0)$ and $W_j$ is closed in $V_{ji}^x$, making $V_{ji}^x$ smaller we can suppose that $(s_j^x)^{-1}(0)=\psi_j^{-1}(S^x)\subseteq W_j$.

Choose a connection $\nabla^{E_j^x}$ on $E_j^x\ra V_{ji}^x$ which preserves the subbundle $F_j\subseteq E_j^x$. Choose a splitting $TV_j\vert_{W_j}=TW_j\op\nu_j$, where $\nu_j$ is the normal bundle of $W_j$ in $V_{ji}^x$. Then $\nabla^{E_j^x}s_j^x\vert_{\nu_j}:\nu_j\ra E_j'$ is a morphism of vector bundles on $W_j$, which projects to an isomorphism on $E_j'/F_j'$ at $v_j$. So making $V_{ij}^x,W_i,V_{ji}^x,W_j$ smaller, we can suppose $\nabla^{E_j^x}s_j^x\vert_{\nu_j}:\nu_j\ra E_j'$ is an injective morphism of vector bundles on $W_j$, projecting to an isomorphism on $E_j'/F_j'$. Define $G_j'=(\nabla^{E_j^x}s_j^x)(\nu_j)$, a vector subbundle of $E_j'$. Then $E_j'=F_j'\op G_j'$. Write $s_j\vert_{W_j}=t_j'\op u_j'$ for $t_j'\in C^\iy(F_j')$, $u_j'\in C^\iy(G_j')$.

Choose a vector subbundle $G_j$ of $E_j^x$ such that $E_j^x=F_j\op G_j$ and $G_j\vert_{W_j}=G_j'$. Write $s_j^x=t_j\op u_j$ for $t_j\in C^\iy(F_j)$, $u_j\in C^\iy(G_j)$, and $\nabla^{F_j'},\nabla^{G_j'},\nabla^{F_j},\nabla^{G_j}$ for the connections on $F_j',G_j',F_j,G_j$ induced by $\nabla^{E_j}$. Making $V_{ji}^x$ smaller if necessary, choose a smooth map $\phi_{ji}^x:V_{ji}^x\ra V_i$ such that $\phi_{ji}^x\vert_{W_j}=\xi_{ji}:W_j\ra W_i\subseteq V_i$, and $\phi_{ji}^x$ is otherwise arbitrary. 

Define vector bundle morphisms $\al,\be,\ga,\de,\ep$ on $W_i$ by
\ea
\d\phi_{ij}^x\vert_{W_i}&=\begin{pmatrix} \d\xi_{ij} & \al \\ 0 & \be \end{pmatrix}: \begin{matrix} TW_i \\ \op\,\nu_i \end{matrix} \longra 
\begin{matrix} \xi_{ij}^*(TW_j) \\ \op\,\xi_{ij}^*(\nu_j), \end{matrix}
\label{ku6eq23}\\
\hat\phi_{ij}^x\vert_{W_i}&=\begin{pmatrix} \hat\xi_{ij} & \ga \\ 0 & \de \end{pmatrix}: \begin{matrix} F_i' \\ \op\, G_i' \end{matrix} \longra 
\begin{matrix} \xi_{ij}^*(F_j') \\ \op\,\xi_{ij}^*(G'_j), \end{matrix}
\label{ku6eq24}\\
\nabla^{E_i^x}s_i\vert_{W_i}&=\begin{pmatrix} \nabla^{F_i'}t_i' & 0 \\ 0 & \ep \end{pmatrix}: \begin{matrix} TW_i \\ \op\,\nu_i \end{matrix} \longra 
\begin{matrix} F'_i \\ \op\,G_i'. \end{matrix}
\label{ku6eq25}
\ea
Here by construction $\d\xi_{ij}:TW_i\ra\xi_{ij}^*(TW_j)$, $\hat\xi_{ij}:F_i'\ra\xi_{ij}^*(F_j')$ and $\ep:\nu_i\ra G_i'$ are isomorphisms. Define vector bundle morphisms $\eta,\ze,\th$ on $W_j$ by
\ea
\d\phi_{ji}^x\vert_{W_j}&=\begin{pmatrix} \d\xi_{ji} & \eta \\ 0 & \ze \end{pmatrix}: \begin{matrix} TW_j \\ \op\,\nu_j \end{matrix} \longra 
\begin{matrix} \xi_{ji}^*(TW_i) \\ \op\,\xi_{ji}^*(\nu_i), \end{matrix}
\label{ku6eq26}\\
\nabla^{E_j^x}s_j\vert_{W_j}&=\begin{pmatrix} \nabla^{F_j'}t_j' & 0 \\ \nabla^{G_j'}u_j' & \th \end{pmatrix}: \begin{matrix} TW_j \\ \op\,\nu_j \end{matrix} \longra 
\begin{matrix} F'_j \\ \op\,G_j'. \end{matrix}
\label{ku6eq27}
\ea
Here by construction $\d\xi_{ji}:TW_j\ra\xi_{ji}^*(TW_i)$ is an isomorphism, with inverse $\xi_{ji}^*(\d\xi_{ij})$, and $\th:\nu_j\ra G_j'$ is an isomorphism.

Definition \ref{ku2def3}(d) for $\Phi_{ij}$ gives $\hat\phi_{ij}(s_i\vert_{V_{ij}})=\phi_{ij}^*(s_j)+O(s_i^2)$. Restricting this to $W_i$ and using $s_i\vert_{W_i}=t_i'\op 0$, $s_j\vert_{W_j}=t_j'\op u_j'$ and \eq{ku6eq23}--\eq{ku6eq24} gives
\e
\hat\xi_{ij}(t_i')=\xi_{ij}^*(t_j')+O((t_i')^2) \quad\text{and}\quad
0=\xi_{ij}^*(u_j')+O((t_i')^2).
\label{ku6eq28}
\e
Since $\xi_{ji}=\xi_{ij}^{-1}$ and $\hat\xi_{ji}=\xi_{ji}^*(\hat\xi_{ij})^{-1}$, applying $\xi_{ji}^*$ to \eq{ku6eq28} implies that
\e
\hat\xi_{ji}(t_j')=\xi_{ji}^*(t_i')+O((t_j')^2) \quad\text{and}\quad
u_j'=O((t_j')^2).
\label{ku6eq29}
\e

Differentiating $\hat\phi_{ij}(s_i\vert_{V_{ij}})=\phi_{ij}^*(s_j)+O(s_i^2)$ and restricting to $W_i$ yields
\e
\hat\phi_{ij}^x\vert_{W_i}\!\ci\! \nabla^{E_i^x}s_i\vert_{W_i}\!=\!\xi_{ij}^*(\nabla^{E_j^x}s_j\vert_{W_j})\!\ci\! \d\phi_{ij}^x\vert_{W_i}\!+\!O(\nabla^{E_i^x}s_i\vert_{W_i}\cdot t_i')\!+\!O((t_i')^2).
\label{ku6eq30}
\e
Substituting in \eq{ku6eq23}, \eq{ku6eq24}, \eq{ku6eq25} and \eq{ku6eq27} yields
\ea
\hat\xi_{ij}\ci\nabla^{F_i'}t_i'&=\xi_{ij}^*(\nabla^{F_j'}t_j')\ci\d\xi_{ij}\!+\!O(\nabla^{E_i^x}s_i\vert_{W_i}\cdot t_i')\!+\!O((t_i')^2), 
\label{ku6eq31}\\ 
\ga\ci\ep&=\xi_{ij}^*(\nabla^{F_j'}t_j')\ci\al\!+\!O(\nabla^{E_i^x}s_i\vert_{W_i}\cdot t_i')\!+\!O((t_i')^2), 
\label{ku6eq32}\\
0&=\xi_{ij}^*(\nabla^{G_j'}u_j')\ci\d\xi_{ij}\!+\!O(\nabla^{E_i^x}s_i\vert_{W_i}\cdot t_i')\!+\!O((t_i')^2), 
\label{ku6eq33}\\
\de\ci\ep&=\xi_{ij}^*(\nabla^{G_j'}u_j')\ci\al\!+\!\xi_{ij}^*(\th)\ci\be\!+\!O(\nabla^{E_i^x}s_i\vert_{W_i}\cdot t_i')\!+\!O((t_i')^2).
\label{ku6eq34}
\ea

Choose a vector bundle morphism $\hat\phi_{ji}^x:E_j\ra(\phi_{ji}^x)^*(E_i)$ on $V_{ji}^x$ satisfying
\e
\hat\phi_{ji}^x\vert_{W_j}=\begin{pmatrix} \hat\xi_{ji} & \xi_{ji}^*(\nabla^{F_i'}t_i')\ci\eta\ci\th^{-1} \\ 0 & \xi_{ji}^*(\ep)\ci\ze\ci\th^{-1} \end{pmatrix}: \begin{matrix} F_j' \\ \op\, G_j' \end{matrix} \longra 
\begin{matrix} \xi_{ij}^*(F_j') \\ \op\,\xi_{ij}^*(G'_j), \end{matrix}
\label{ku6eq35}
\e
where $\hat\phi_{ji}^x$ is arbitrary except on $W_j$. We claim that
\e
\hat\phi_{ji}^x(s_j\vert_{V_{ji}^x})=(\phi_{ji}^x)^*(s_i)+O(s_j^2)\quad\text{on $V_{ji}^x$.}
\label{ku6eq36}
\e
To see this, first note that the restriction of \eq{ku6eq36} to $W_j$ is
\e
\begin{pmatrix} \d\xi_{ji} & \eta \\ 0 & \ze \end{pmatrix}\begin{pmatrix} t_j' \\ u_j' \end{pmatrix} = \begin{pmatrix} \xi_{ji}^*(t_i') \\ 0 \end{pmatrix} +O\bigl((t_j'\op u_j')^2\bigr),
\label{ku6eq37}
\e
which holds by \eq{ku6eq29}. As for \eq{ku6eq30}, the derivative of \eq{ku6eq36} restricted to $W_j$ is
\e
\hat\phi_{ji}^x\vert_{W_j}\!\ci\! \nabla^{E_j^x}s_j\vert_{W_j}\!=\!\xi_{ji}^*(\nabla^{E_i^x}s_i\vert_{W_i})\ci \d\phi_{ji}^x\vert_{W_j}\!+\!O(\nabla^{E_j^x}s_j\vert_{W_j}\!\cdot t_j')\!+\!O((t_j')^2),
\label{ku6eq38}
\e
which holds by \eq{ku6eq25}, \eq{ku6eq26}, \eq{ku6eq27}, \eq{ku6eq31}, \eq{ku6eq33}, and \eq{ku6eq35}. Write $I_{W_j}\subset C^\iy(V_j^x)$ for the ideal of smooth functions on $V_j^x$ vanishing on $W_j$, and $I_{s_j^x}\subset C^\iy(V_j^x)$ the ideal generated by $s_j^x$. Equation \eq{ku6eq37} implies that \eq{ku6eq36} holds up to $I_{W_j}$, the ideal of functions on $V_j^x$ vanishing on $W_j$. Together with this, \eq{ku6eq38} implies that \eq{ku6eq36} holds up to $I_{W_j}^2$. But $I_{W_j}\subseteq I_{s_j^x}$, so $I_{W_j}^2$ is contained in the $O(s_j^2)$ in \eq{ku6eq36}, and thus \eq{ku6eq36} holds. 

Equation \eq{ku6eq36} implies that $(V_{ji}^x,\phi_{ji}^x,\hat\phi_{ji}^x)$ satisfies Definition \ref{ku2def3}(a)--(e) over $S^x$, so that $\Phi_{ji}^x:=[V_{ji}^x,\phi_{ji}^x,\hat\phi_{ji}^x]:(V_j,E_j,s_j,\psi_j)\ra(V_i,E_i,s_i,\psi_i)$ is a morphism of $\mu$-Kuranishi neighbourhoods over~$x\in S^x\subseteq X$. 

Choose vector bundle morphisms $\La_{ii}:E_i^x\ra TV_{ij}^x$ on $V_{ij}^x$ and $\Mu_{jj}:E_j^x\ra TV_{ji}^x$ on $V_{ji}^x$ satisfying
\ea
\La_{ii}\vert_{W_i}&=\begin{pmatrix} 0 & \xi_{ij}^*(\d\xi_{ji})\ci\al\ci\ep^{-1}+\xi_{ij}^*(\eta)\ci\be\ci\ep^{-1} \\ 0 & \xi_{ij}^*(\ze)\ci\be\ci\ep^{-1}-\ep^{-1} \end{pmatrix}: \begin{matrix} F_i' \\ \op\, G_i' \end{matrix} \longra 
\begin{matrix} TW_i \\ \op\,\nu_i, \end{matrix}
\label{ku6eq39}\\
\Mu_{jj}\vert_{W_j}&=\begin{pmatrix} 0 & \xi_{ji}^*(\d\xi_{ij})\ci\eta\ci\th^{-1}+\xi_{ji}^*(\al)\ci\ze\ci\th^{-1} \\ 0 & \xi_{ji}^*(\be)\ci\ze\ci\th^{-1}-\th^{-1} \end{pmatrix}: \begin{matrix} F_j' \\ \op\, G_j' \end{matrix} \longra 
\begin{matrix} TW_j \\ \op\,\nu_j, \end{matrix}
\label{ku6eq40}
\ea
where $\La_{ii},\Mu_{jj}$ are arbitrary away from $W_i,W_j$. In a similar way to the proof of \eq{ku6eq36}, using \eq{ku6eq23}--\eq{ku6eq35} and \eq{ku6eq39}--\eq{ku6eq40} we can show that
\ea
\begin{split}
\phi_{ji}^x\ci\phi_{ij}^x&=\id_{V_{ij}^x}+\La_{ii}\cdot s_i+O(s_i^2)\;\>\text{and}\\
(\phi_{ij}^x)^*(\hat\phi_{ji}^x)\!\ci\!\hat\phi_{ij}^x&=\id_{E_i^x}\!+\!\La_{ii}\!\cdot\!\d s_i\!+\!O(s_i)\;\> \text{on $\dot V_{ii}\!=\!V_{ij}^x\!\cap\!(\phi_{ij}^x)^{-1}(V_{ji}^x)$,}
\end{split}
\label{ku6eq41}\\
\begin{split}
\phi_{ij}^x\ci\phi_{ji}^x&=\id_{V_{ji}^x}+\Mu_{jj}\cdot s_j+O(s_j^2)\;\>\text{and}\\
(\phi_{ji}^x)^*(\hat\phi_{ij}^x)\!\ci\!\hat\phi_{ji}^x&=\id_{E_j^x}\!+\!\Mu_{jj}\!\cdot\!\d s_j\!+\!O(s_j)\;\> \text{on $\dot V_{jj}\!=\!V_{ji}^x\!\cap\!(\phi_{ji}^x)^{-1}(V_{ij}^x)$.}
\end{split}
\label{ku6eq42}
\ea

As in \eq{ku6eq16}, equations \eq{ku6eq41}--\eq{ku6eq42} give $\Phi_{ji}^x\ci\Phi_{ij}\vert_{S^x}=\id_{(V_i,E_i,s_i,\psi_i)}\vert_{S^x}$ and $\Phi_{ij}\vert_{S^x}\ci\Phi_{ji}^x=\id_{(V_j,E_j,s_j,\psi_j)}\vert_{S^x}$. So $\Phi_{ij}\vert_{S^x}$ is invertible, and a $\mu$-coordinate change on $S^x$. We have shown that every $x\in S$ has an open neighbourhood $S^x$ in $S$ such that $\Phi_{ij}\vert_{S^x}$ is a $\mu$-coordinate change on $S^x$. Thus by the sheaf property of $\cIso\bigl((V_i,E_i,s_i,\psi_i),(V_j,E_j,s_j,\psi_j)\bigr)$ in Theorem \ref{ku2thm1}(a), proved in \S\ref{ku612}, $\Phi_{ij}$ is a $\mu$-coordinate change on $S$. This completes the `if' part, and so the whole of Theorem~\ref{ku2thm2}.

\subsection{Proof of Theorem \ref{ku3thm2}}
\label{ku63}

Next we explain how to generalize the proof in \S\ref{ku61} to $(V_i,\ab E_i,\ab s_i,\ab\psi_i),\ab(V_j,\ab E_j,\ab s_j,\ab\psi_j)$ $\mu$-Kuranishi neighbourhoods with corners, and so prove Theorem \ref{ku3thm2}. All of $V_i,V_j,V_{ij},V_{ij}',\dot V_{ij}^a,\ldots$ now become manifolds with corners, and we replace $TV_j$ by ${}^bTV_j$ in the definition of $\La,\La^a,\La^{ab},\ldots,$ as in~\S\ref{ku35}. 

Apart from these, the only nontrivial change is the proof in \S\ref{ku611} that $\cHom((V_i,E_i,s_i,\psi_i),(V_j,E_j,s_j,\psi_j))$ satisfies Definition \ref{ku2def9}(iv), where we construct $\phi_{ij}:V_{ij}\ra V_j$ by combining the maps $\phi_{ij}^a:V_{ij}^a\ra V_j$ using $\{\eta^a:a\in A\}$, as in \eq{ku6eq5}, \eq{ku6eq10}, and Lemma \ref{ku6lem}. The problem is that Lemma \ref{ku6lem} in the form above is {\it false\/} if $\pd V_j\ne\es$. (To see this, take $V_j=[0,\iy)$, and try and define $\Xi_2:W_2\ra[0,\iy)$ near $(0,0)\t\De_1$ in $W_2$ satisfying Lemma \ref{ku6lem}(ii),(iii).) Even if Lemma \ref{ku6lem} were true with corners, \eq{ku6eq6} is not what we need to prove \eq{ku6eq13} and \eq{ku6eq15}, given the new interpretation of $O(s_i),O(s_i^2)$ notation in Definition~\ref{ku3def11}.

Consider what happens if we try to take $V_j$ a manifold with corners in the proof of Lemma \ref{ku6lem}. The first proof, method (A), immediately runs into trouble when $\pd V_j\ne\es$: if we take $T$ an open neighbourhood of $i(V_j)$ in $\R^N$ then we cannot define smooth $\pi:T\ra V_j$ near $i(\pd V_j)$ with $\pi\ci i=\id$, and if we let $T\subset\R^N$ be an $N$-manifold with corners then $T$ is not open in $\R^N$, so $W_n$ in \eq{ku6eq7} is not open in $V_j^n\t\De_{n-1}$ (nor a manifold with corners, in general).

The second proof, method (B), can be adapted to the corners case, and in doing so we find out how to rewrite the statement of Lemma \ref{ku6lem}. The key, as in Principle \ref{ku3princ}, is to use the b-(co)tangent bundles ${}^bTV_j,{}^bT^*V_j$ in place of $TV_j,T^*V_j$ everywhere we can. 

On a manifold without boundary $V_j$, a Riemannian metric $g$ is a smooth section of $\Sym^2T^*V_j$ giving a positive definite, nondegenerate quadratic form on $TV_j$. When $V_j$ has corners, we instead take $g$ to be a smooth section of $\Sym^2({}^bT^*V_j)$ giving a positive definite, nondegenerate quadratic form on ${}^bTV_j$. This is what Melrose calls a {\it b-metric\/} \cite{Melr2,Melr3}. An example of a b-metric on $\R^n_k=[0,\iy)^k\t\R^{n-k}$ is $x_1^{-2}\d x_1^2+\cdots+x_k^{-2}\d x_k^2+\d x_{k+1}^2+\cdots+\d x_n^2$; note that it has poles on the boundary faces $x_i=0$ for $i=1,\ldots,k$, in conventional coordinates. The restriction of $g$ to the interior $V_j^\ci$ is an ordinary Riemannian metric, and with respect to this, the boundary $\pd V_j$ is at infinite distance from any point in~$V_j^\ci$.

Suppose $v\in S^k(V_j)\subseteq V_j$, for $S^k(V_j)$ the depth $k$ stratum of $V_j$ as in \S\ref{ku32}. Then $I_{V_j}\vert_v:{}^bT_vV_j\ra T_vV_j$ has image $T_vS^k(V_j)\subseteq T_vV_j$ of dimension $m-k$, where $m=\dim V_j$, and $g\vert_v$ induces a Euclidean metric on $T_vS^k(V_j)$ (though not on $T_vV_j$). Write $B_{v,r(v)}$ for the open ball about 0 of radius $r(v)>0$ in $T_vS^k(V_j)$. Then for a small continuous function $r:V_j\ra(0,\iy)$ we can again define the exponential map $\exp_v:B_{v,r(v)}\ra S^k(V_j)\subseteq V_j$, and $\exp_v$ is a diffeomorphism with a small open ball about $v$ in $S^k(V_j)$ (not $V_j$) for small $r(v)$.

We now define $Y_n\subseteq\bigop^nTV_j\t\De_{n-1}$ and $\Pi:Y_n\ra V_j^n\t\De_{n-1}$, $\pi:Y_n\ra V_j$ as in \eq{ku6eq8}--\eq{ku6eq9}. In general $Y_n$ is no longer a manifold, since the dimensions of the $B_{v,r(v)}$ vary with $v$. However, it is still true that $\Pi$ is injective provided $r:V_j\ra(0,\iy)$ is small enough, so we can again define $W_n=\Im\Pi$ and $\Xi_n:W_n\ra V_j$ by $\Xi_n=\pi\ci\Pi^{-1}$, as in method (B) in the proof of Lemma~\ref{ku6lem}.

Lemma \ref{ku6lem}(iii)--(iv) hold for these $W_n,\Xi_n$ as before, but Lemma \ref{ku6lem}(i),(ii) do not: for (i), $W_n$ is generally not open near $(v,\ldots,v)\t\De_{n-1}$ for $v\in S^k(V_j)\subset V_j$ with $k\ge 1$, and in (ii), equation \eq{ku6eq6} is false near $x^i_k=0$ for local coordinates $x_k$ in $(x_1,\ldots,x_m)$ taking values in $[0,\iy)$ rather than $\R$. By considering local models $\R^m_k$ for $V_j$, we find that Lemma \ref{ku6lem} holds in the corners case if we replace parts (i),(ii) by (i$)'$,(ii$)'$, where:
\begin{itemize}
\setlength{\itemsep}{0pt}
\setlength{\parsep}{0pt}
\item[(i$)'$] Let $v\in S^k(V_j)\subseteq V_j$, and choose local coordinates $(x_1,\ldots,x_m)$ on $V_j$ near $v$ with $v=(0,\ldots,0)$, where $x_1,\ldots,x_k\in[0,\iy)$ and $x_{k+1},\ldots,x_m\in\R$. Then there exists small $\ep>0$ such that writing points $v^a=(x_1^a,\ldots,x_m^a)$ near $v$ in coordinates in $V_j$, for all $n\ge 1$ we have
\e
\begin{split}
&\bigl\{\bigl((x_1^1,\ldots,x_m^1),\ldots,(x_1^n,\ldots,x_m^n),t^1,\ldots,t^n\bigr):x_1^a,\ldots,x_k^a\in[0,\iy),\\
&x_{k+1}^a,\ldots,x_m^a\in\R,\;\>
\md{x_l^a-x_l^b}<\ep\md{x_l^a},\;\> \md{x_{l'}^a}<\ep,\;\> a,b=1,\ldots,n,\\ &
l=1,\ldots,k, \;\> l'=1,\ldots,m,\;\>(t^1,\ldots,t^n)\in\De_{n-1}\bigr\}
\subseteq W_n.
\end{split}
\label{ku6eq43}
\e
\item[(ii$)'$] Let\/ $(x_1,\ldots,x_m)$ be local coordinates on an open set $U\subseteq V_j$, where $x_1,\ldots,x_k\in[0,\iy)$ and $x_{k+1},\ldots,x_m\in\R$. Then writing
\begin{align*}
\Xi_n\bigl((x_1^1,\ldots,x_m^1),\ldots,(x_1^n,\ldots,x_m^n),t^1,\ldots,t^n\bigr)&=(y_1,\ldots,y_m),\\
\bigl(t^1x_1^1+\cdots+t^nx_1^n,\ldots,t^1x_m^1+\cdots+t^nx_m^n\bigr)&=(z_1,\ldots,z_m),
\end{align*}
we have
\end{itemize}
\ea
&\md{y_{l'}-z_{l'}}=
\label{ku6eq44}\\
&\begin{cases} \md{y_{l'}}\cdot O\raisebox{-1.5pt}{$\displaystyle\biggl[$}\ts\sum\limits_{a,b=1}^n\sum\limits_{l=1}^kt^at^b\md{x^a_l}^{-2}(x^a_l\!-\!x^b_l)^2\!+\!\!\sum\limits_{a,b=1}^n\sum\limits_{l=k+1}^mt^at^b(x^a_l\!-\!x^b_l)^2\raisebox{-1.5pt}{$\displaystyle\biggr]$}, & l'\!\le\!k, \\
O\raisebox{-1.5pt}{$\displaystyle\biggl[$}\ts\sum\limits_{a,b=1}^n\sum\limits_{l=1}^kt^at^b\md{x^a_l}^{-2}(x^a_l\!-\!x^b_l)^2+\sum\limits_{a,b=1}^n\sum\limits_{l=k+1}^mt^at^b(x^a_l\!-\!x^b_l)^2\raisebox{-1.5pt}{$\displaystyle\biggr]$}, & l'\!>\!k.\end{cases}
\nonumber
\ea

Note that $\md{x_l^a-x_l^b}<\ep\md{x_l^a}$ in \eq{ku6eq43} is not an open condition, so the l.h.s.\ of \eq{ku6eq43} is not an open neighbourhood of $(v,\ldots,v)\t\De_{n-1}$. Also note the $\md{y_{l'}}$ and $\md{x^a_l}^{-2}$ factors in the $O[\cdots]$ term in \eq{ku6eq44}, which are not present in~\eq{ku6eq6}.

Now consider the remainder of the proof of Definition \ref{ku2def9}(iv) for $\cHom((V_i,\ab E_i,\ab s_i,\ab\psi_i),(V_j,E_j,s_j,\psi_j))$, from before \eq{ku6eq10} to after \eq{ku6eq15}. This all works in the corners case, with Lemma \ref{ku6lem} modified as above. The main points are these:
\begin{itemize}
\setlength{\itemsep}{0pt}
\setlength{\parsep}{0pt}
\item[(a)] We can still choose an open neighbourhood $V_{ij}$ of $\psi_i^{-1}(S)$ in $\bigcup_{a\in A}V_{ij}^a\subseteq V_i$ with $\bigl(\phi_{ij}^{a_1}(v),\ldots,\phi_{ij}^{a_n}(v),\eta^{a_1}(v),\ldots,\eta^{a_n}(v)\bigr)\in W_n$ for each $v\in V_{ij}$, despite the fact that $W_n$ may not be open near $(v,\ldots,v)\t\De_{n-1}$. 

This is because in the first equation of \eq{ku6eq4}, the fact that $\La^{ab}$ maps to ${}^bTV_j$ rather than $TV_j$ means that if $\phi_{ij}^a(v)=(x_1^a,\ldots,x_m^a)$ and $\phi_{ij}^b(v)=(x_1^b,\ldots,x_m^b)$ in local coordinates $(x_1,\ldots,x_m)$ as in (i$)'$, then the estimates we get from \eq{ku6eq4} are of the form
\e
\md{x_l^a-x_l^b}=\begin{cases} \md{x_l^a}\cdot O(s_i), & l=1,\ldots,k,\\
O(s_i), & l=k+1,\ldots,m,
\end{cases}
\label{ku6eq45}
\e
and the extra factor $\md{x_l^a}$ in the top line of \eq{ku6eq45} compensates for the $\md{x_l^a}$ in $\md{x_l^a-x_l^b}<\ep\md{x_l^a}$ in \eq{ku6eq43}. 
\item[(b)] The proof of \eq{ku6eq13} used \eq{ku6eq6}, which is now replaced by \eq{ku6eq44}. But we can still prove \eq{ku6eq13}, for the same reason as in (a), that is, the extra $\md{x^a_l}^{-2}$ in \eq{ku6eq44} are compensated for by the $\md{x_l^a}$ in \eq{ku6eq45}, squared.
\end{itemize}
This completes the proof of Theorem~\ref{ku3thm2}.

\subsection{Proof of Theorem \ref{ku3thm3}}
\label{ku64}

\subsubsection{The `only if' part of Theorem \ref{ku3thm3}}
\label{ku641}

Let $\Phi_{ij}:(V_i,E_i,s_i,\psi_i)\ra (V_j,E_j,s_j,\psi_j)$ be a $\mu$-coordinate change with corners over $S\subseteq X$, and let $(V_{ij},\phi_{ij},\hat\phi_{ij})$ represent $\Phi_{ij}$. Then Proposition \ref{ku3prop2} says that $\phi_{ij}:V_{ij}\ra V_j$ is simple. Let $x\in S,$ and set $v_i=\psi_i^{-1}(x)\in V_i$ and $v_j=\psi_j^{-1}(x)\in V_j$, so that $\phi_{ij}(v_i)=v_j$. We prove \eq{ku3eq23} is exact as for exactness of \eq{ku2eq8} in \S\ref{ku621}, but replacing $TV_i,TV_j,\d s_i,\d s_j,\d\phi_{ij}$ by ${}^bTV_i,\ab{}^bTV_j,\ab{}^b\d s_i,\ab{}^b\d s_j,\ab{}^b\d\phi_{ij}$ throughout. This proves the `only if' part of Theorem~\ref{ku3thm3}.

\subsubsection{The `if' part of Theorem \ref{ku3thm3}}
\label{ku642}

Let $\Phi_{ij}:(V_i,E_i,s_i,\psi_i)\ra(V_j,E_j,s_j,\psi_j)$ be a morphism of $\mu$-Kuranishi neighbourhoods with corners over $S\subseteq X$, represented by $(V_{ij},\phi_{ij},\hat\phi_{ij})$, and suppose that $\phi_{ij}$ is simple and \eq{ku3eq23} is exact for all $x\in S$.

We prove that for each $x\in S$, there exists an open neighbourhood $S^x$ of $x$ in $S$ such that $\Phi_{ij}\vert_{S^x}$ has an inverse $\Phi_{ji}^x$ over $S^x$. The `if' part will then follow from the sheaf property of $\cIso\bigl((V_i,E_i,s_i,\psi_i),(V_j,E_j,s_j,\psi_j)\bigr)$. The proof follows \S\ref{ku622}, with the following differences. We replace $TV_i,TV_j,\ab TW_i,\ab TW_j,\ab\d s_i,\ab\d s_j,\ab\d\phi_{ij},\ab\ldots$ by ${}^bTV_i,\ab{}^bTV_j,{}^bTW_i,{}^bTW_j,\ab{}^b\d s_i,\ab{}^b\d s_j,\ab{}^b\d\phi_{ij},\ldots$ throughout.

Apart from this, the one new issue is at the point in the paragraph before \eq{ku6eq23} when we choose a smooth map $\phi_{ji}^x:V_{ji}^x\ra V_i$, such that $\phi_{ji}^x\vert_{W_j}=\xi_{ji}:W_j\ra W_i\subseteq V_i$. As $V_{ji}^x,V_j,W_i,W_j$ are now manifolds with corners, we must justify that it is possible to choose $\phi_{ji}^x$, since this is not obvious.

With the notation of \S\ref{ku622}, write $m=\dim V_i$, $n=\dim V_j$, and let $v_i\in S^k(V_{ij})$ for $k=0,\ldots,m$, in the sense of \S\ref{ku32}. Since $\phi_{ij}:V_{ij}\ra V_j$ is simple, by assumption, and $\phi_{ij}(v_i)=v_j$, it follows that $v_j\in S^k(V_j)$. Thus $V_{ij}^x$ is locally modelled on $\R^m_k=[0,\iy)^k\t\R^{m-k}$ near $v_i$, and $V_{ji}^x$ locally modelled on $\R^n_k=[0,\iy)^k\t\R^{n-k}$ near~$v_j$.

Choose coordinates $(y_1,\ldots,y_n)\in\R^n_k$ on $V_j$ near $v_j$, with $v_j=(0,\ldots,0)$. Making $V_{ij}^x,W_i,V_{ji}^x,W_j$ smaller if necessary, we can suppose $V_{ji}^x$ is contained in the domain of $(y_1,\ldots,y_n)$. Then $y_1,\ldots,y_k$ are `boundary defining functions' for the $k$ local boundary components of $V_j$ at $v_j$. Since $\phi_{ij}$ is simple with $\phi_{ij}(v_i)=v_j$, it follows that $y_1\ci\phi_{ij},\ldots,y_k\ci\phi_{ij}$ are boundary defining functions for $V_{ij}$ at $v_i$. Therefore we can choose coordinates $(x_1,\ldots,x_m)\in\R^m_k$ on $V_{ij}$ near $v_i$, with $v_i=(0,\ldots,0)$, such that $\phi_{ij}$ maps the domain of $(x_1,\ldots,x_m)$ to the domain of $(y_1,\ldots,y_n)$, and 
$x_i=y_i\ci\phi_{ij}$ for $i=1,\ldots,k$. Making $V_{ij}^x,W_i,V_{ji}^x,W_j$ smaller if necessary, we can suppose $V_{ij}^x$ is contained in the domain of~$(x_1,\ldots,x_m)$.

The map $\phi_{ij}^x:V_{ij}^x\ra V_j$ may be written in coordinates as
\begin{align*}
\phi_{ij}^x:(x_1&,\ldots,x_m)\longmapsto(y_1,\ldots,y_n)\\
&=\bigl(x_1,\ldots,x_k,f_{k+1}(x_1,\ldots,x_m),\ldots,f_n(x_1,\ldots,x_m)\bigr).
\end{align*}
We can now choose $\phi_{ji}^x:V_{ji}^x\ra V_i$ to have the form
\begin{align*}
\phi_{ji}^x:(y_1&,\ldots,y_n)\longmapsto(x_1,\ldots,x_m)\\
&=\bigl(y_1,\ldots,y_k,g_{k+1}(y_1,\ldots,y_n),\ldots,g_m(y_1,\ldots,y_n)\bigr),
\end{align*}
for $g_{k+1},\ldots,g_m$ smooth functions of $y_1,\ldots,y_n$. This is consistent with $\phi_{ji}^x\vert_{W_j}=\xi_{ji}:W_j\ra W_i\subseteq V_i$, and yields a simple map of manifolds with corners. So it is possible to choose $\phi_{ji}^x$ with the properties we require. The rest of the proof in \S\ref{ku622} works without difficulty. This completes the proof of Theorem~\ref{ku3thm3}.

\section{Proofs of main theorems in \S\ref{ku4} and \S\ref{ku5}}
\label{ku7}

\subsection{Proof of Theorem \ref{ku4thm1}}
\label{ku71}

\subsubsection{\!\!\!Theorem \ref{ku4thm1}(a):$\,\bcHom((V_i,E_i,\Ga_i,s_i,\psi_i),(V_j,E_j,\Ga_j,s_j,\psi_j))$ a stack}
\label{ku711}

Let $X$ be a topological space, $(V_i,E_i,\Ga_i,s_i,\psi_i),$ $(V_j,E_j,\Ga_j,s_j,\psi_j)$ be Kuranishi neighbourhoods on $X$, and $\bcHom\bigl((V_i,E_i,\Ga_i,s_i,\psi_i),(V_j,E_j,\Ga_j,s_j,\psi_j)\bigr)$ be as in Theorem \ref{ku4thm1}(a). We must show $\bcHom\bigl((V_i,E_i,\Ga_i,s_i,\psi_i),(V_j,E_j,\Ga_j,s_j,\psi_j)\bigr)$ is a stack on $\Im\psi_i\cap\Im\psi_j$, that is, that it satisfies Definition \ref{ku4def11}(i)--(v). Parts (i),(ii) are immediate from the definition of restriction $\vert_T$ in Definition~\ref{ku4def9}.

\subsubsection*{Definition \ref{ku4def11}(iii) for $\bcHom((V_i,E_i,\Ga_i,s_i,\psi_i),(V_j,E_j,\Ga_j,s_j,\psi_j))$.}
For (iii), let $S\subseteq\Im\psi_i\cap\Im\psi_j$ be open, $\Phi_{ij}=(P_{ij},\pi_{ij},\phi_{ij},\hat\phi_{ij})$ and $\Phi_{ij}'=(P_{ij}',\pi_{ij}',\phi_{ij}',\hat\phi_{ij}')$ be 1-morphisms $(V_i,E_i,\Ga_i,s_i,\psi_i)\ra(V_j,E_j,\Ga_j,s_j,\psi_j)$ over $S$, and $\La_{ij},\La_{ij}':\Phi_{ij}\Ra\Phi_{ij}'$ be 2-morphisms over $S$. Suppose $\{T^a:a\in A\}$ is an open cover of $S$, such that $\La_{ij}\vert_{T^a}=\La_{ij}'\vert_{T^a}$ for all $a\in A$. Choose representatives $(\dot P_{ij},\la_{ij},\hat\la_{ij}),(\dot P_{ij}',\la_{ij}',\hat\la_{ij}')$ for $\La_{ij},\La_{ij}'$. Then $\La_{ij}\vert_{T^a}=\La_{ij}'\vert_{T^a}$ means as in \eq{ku4eq2} that there exists an open neighbourhood $\ddot P_{ij}^a$ of $\pi_{ij}^{-1}(\bar\psi_i^{-1}(T^a))$ in $\dot P_{ij}\cap \dot P_{ij}'$ with
\e
\la_{ij}\vert_{\ddot P_{ij}^a}=\la_{ij}'\vert_{\ddot P_{ij}^a}\quad\text{and}\quad \hat\la_{ij}\vert_{\ddot P_{ij}^a}=\hat\la_{ij}'\vert_{\ddot P_{ij}^a}+O\bigl(\pi_{ij}^*(s_i)\bigr)\quad\text{on $\ddot P_{ij}^a$.}
\label{ku7eq1}
\e
Set $\ddot P_{ij}=\bigcup_{a\in A}\ddot P_{ij}^a$, an open neighbourhood of $\pi_{ij}^{-1}(\bar\psi_i^{-1}(S))$ in $\dot P_{ij}\cap \dot P_{ij}'$. Then \eq{ku7eq1} for all $a\in A$ implies \eq{ku4eq2} on $\ddot P_{ij}$, so $\La_{ij}=\La_{ij}'$. This proves Definition \ref{ku4def11}(iii) for~$\bcHom\bigl((V_i,E_i,\Ga_i,s_i,\psi_i),(V_j,E_j,\Ga_j,s_j,\psi_j)\bigr)$.

\subsubsection*{Definition \ref{ku4def11}(iv) for $\bcHom((V_i,E_i,\Ga_i,s_i,\psi_i),(V_j,E_j,\Ga_j,s_j,\psi_j))$.}
For (iv), suppose $S,\Phi_{ij},\Phi_{ij}'$ are as above, $\{T^a:a\in A\}$ is an open cover of $S$, and $\La_{ij}^a:\Phi_{ij}\vert_{T^a}\Ra\Phi_{ij}'\vert_{T^a}$ are 2-morphisms over $T^a$ for $a\in A$ with $\La_{ij}^a\vert_{T^a\cap T^b}=\La_{ij}^b\vert_{T^a\cap T^b}$ for all $a,b\in A$. Choose representatives $(\dot P_{ij}^a,\la_{ij}^a,\hat\la_{ij}^a)$ for $\La_{ij}^a$ for $a\in A$, and making $\dot P_{ij}^a$ smaller if necessary, suppose that $\dot P_{ij}^a\cap\pi_{ij}^{-1}(s_i^{-1}(0))=\pi_{ij}^{-1}(\bar\psi_i^{-1}(T^a))$. Then $\La_{ij}^a\vert_{T^a\cap T^b}=\La_{ij}^b\vert_{T^a\cap T^b}$ means there exists an open neighbourhood $\ddot P_{ij}^{ab}$ of $\pi_{ij}^{-1}(\bar\psi_i^{-1}(T^a\cap T^b))$ in $\dot P_{ij}^a\cap \dot P_{ij}^b$ with
\e
\la_{ij}^a\vert_{\ddot P_{ij}^{ab}}=\la_{ij}^b\vert_{\ddot P_{ij}^{ab}}\quad\text{and}\quad \hat\la_{ij}^a\vert_{\ddot P_{ij}^{ab}}=\hat\la_{ij}^b\vert_{\ddot P_{ij}^{ab}}+O\bigl(\pi_{ij}^*(s_i)\bigr)\quad\text{on $\ddot P_{ij}^{ab}$.}
\label{ku7eq2}
\e
By the argument after \eq{ku6eq4}, we can suppose the second equation of \eq{ku7eq2} holds on~$\dot P_{ij}^a\cap \dot P_{ij}^b$.

Choose a partition of unity $\{\eta^a:a\in A\}$ on $\bigcup_{a\in A}\dot P_{ij}^a\subseteq P_{ij}$ subordinate to the open cover $\{\dot P_{ij}^a:a\in A\}$. By averaging the $\eta^a$ over the $\Ga_i\t\Ga_j$-action on $P_{ij}$, we suppose each $\eta^a$ is $\Ga_i$- and $\Ga_j$-invariant. Consider the subset $\dot P_{ij}\subseteq P_{ij}$ given by
\e
\begin{split}
\dot P_{ij}=\bigl\{\ts p\in \bigcup_{a\in A}\dot P_{ij}^a:\text{if $a,b\in A$ with $p\in\supp\eta^a\cap\supp\eta^b$}&\\
\text{then $\la_{ij}^a(p)=\la_{ij}^b(p)$}&\bigr\}.
\end{split}
\label{ku7eq3}
\e
We claim that $\dot P_{ij}$ is open in $P_{ij}$. To see this, note that $\dot P_{ij}$ is the complement in the open set $\bigcup_{a\in A}\dot P_{ij}^a\subseteq P_{ij}$ of the sets $S^{a,b}$ for all $a,b\in A$, where
\begin{equation*}
S^{a,b}=\bigl\{p\in\supp\eta^a\cap\supp\eta^b:\la_{ij}^a(p)\ne\la_{ij}^b(p)\bigr\}.
\end{equation*}
Now $\la_{ij}^a,\la_{ij}^b:\dot P_{ij}^a\cap\dot P_{ij}^b\ra P_{ij}'$ are smooth with $\pi_{ij}'\ci\la_{ij}^a=\pi_{ij}'\ci\la_{ij}^b$, where $\pi_{ij}':P_{ij}'\ra V_i$ is a principal $\Ga_j$-bundle over $V_{ij}'\subseteq V_i$. Thus the condition $\la_{ij}^a\ne\la_{ij}^b$ is open and closed in $\dot P_{ij}^a\cap\dot P_{ij}^b$, so $S^{a,b}$ is open and closed in $\supp\eta^a\cap\supp\eta^b$, and closed in $\bigcup_{a\in A}\dot P_{ij}^a$. Definition \ref{ku6def}(b) implies that only finitely many $S^{a,b}$ intersect a small open neighbourhood of each $p\in P_{ij}$. Therefore $\dot P_{ij}$ is open.

Next we claim that $\dot P_{ij}$ contains $\pi_{ij}^{-1}(\bar\psi_i^{-1}(S))$. Let $p\in\pi_{ij}^{-1}(\bar\psi_i^{-1}(S))$. Then $p\in \pi_{ij}^{-1}(\bar\psi_i^{-1}(T^{a'}))$ for some $a'\in A$ as $\bigcup_{a'\in A}T^{a'}=S$, so $p\in\dot P_{ij}^{a'}\subseteq\bigcup_{a\in A}\dot P_{ij}^a$. Suppose $a,b\in A$ with $p\in\supp\eta^a\cap\supp\eta^b$. Then $p\in\dot P_{ij}^a\cap\pi_{ij}^{-1}(s_i^{-1}(0))=\pi_{ij}^{-1}(\bar\psi_i^{-1}(T^a))$ as $\supp\eta^a\subseteq\dot P_{ij}^a$, and similarly $p\in\pi_{ij}^{-1}(\bar\psi_i^{-1}(T^b))$, so $p\in\pi_{ij}^{-1}(\bar\psi_i^{-1}(T^a\cap T^b))\subseteq\ddot P_{ij}^{ab}$, and the first equation of \eq{ku7eq2} gives $\la_{ij}^a(p)=\la_{ij}^b(p)$. Hence $p\in\dot P_{ij}$, proving the claim.

Define $\la_{ij}:\dot P_{ij}\ra P_{ij}'$ by
\e
\la_{ij}(p)=\la_{ij}^a(p)\quad\text{if $a\in A$ with $p\in\supp\eta^a$.}
\label{ku7eq4}
\e
This is well-defined by \eq{ku7eq3} as $\dot P_{ij}\subseteq\bigcup_{a\in A}\supp\eta^a$. As $\dot P_{ij}$ is covered by the open sets $\dot P_{ij}\cap\supp^\ci\eta^a$ for $a\in A$, and $\la_{ij}=\la_{ij}^a$ on $\dot P_{ij}\cap\supp^\ci\eta^a$ with $\la_{ij}^a$ smooth and \'etale, we see that $\la_{ij}$ is smooth and \'etale.

Define a vector bundle morphism $\hat\la_{ij}:\pi_{ij}^*(E_i)\vert_{\dot P_{ij}}\ra\phi_{ij}^*(TV_j)\vert_{\dot P_{ij}}$ by
\e
\hat\la_{ij}=\ts\sum_{a\in A}\eta^a\vert_{\dot P_{ij}}\cdot \hat\la_{ij}^a,
\label{ku7eq5}
\e
as in \eq{ku6eq11} and \eq{ku6eq14}, where $\hat\la_{ij}^a$ is only defined on $\dot P_{ij}\cap \dot P_{ij}^a$, but $\eta^a\cdot\hat\la_{ij}^a$ is well-defined and smooth on $\dot P_{ij}$, being zero outside~$\dot P_{ij}^a$.

For each $a\in A$, define $\ddot P_{ij}^a=\bigl\{p\in\dot P_{ij}\cap \dot P_{ij}^a:\la_{ij}(p)=\la_{ij}^a(p)\bigr\}$. As above this is open and closed in $\dot P_{ij}\cap\dot P_{ij}^a$ and so open in $\dot P_{ij}\cap \dot P_{ij}^a$, and contains $\pi_{ij}^{-1}(\bar\psi_i^{-1}(T^a))$, and by definition 
\e
\la_{ij}\vert_{\ddot P_{ij}^a}=\la_{ij}^a\vert_{\ddot P_{ij}^a}.
\label{ku7eq6}
\e
As for \eq{ku6eq15}, using \eq{ku7eq5} in the first step and the second equation of \eq{ku7eq2} (which holds on $\dot P_{ij}^a\cap \dot P_{ij}^b$) in the second, we have
\e
\begin{split}
\hat\la_{ij}\vert_{\ddot P_{ij}^a}&=\ts\sum_{b\in B}\eta^b\vert_{\ddot P_{ij}^a}\cdot \hat\la_{ij}^b=\ts\sum_{b\in B}\eta^b\vert_{\ddot P_{ij}^a}\cdot \bigl(\hat\la_{ij}^a+O(\pi_{ij}^*(s_i))\bigr)\\
&=\hat\la_{ij}^a\vert_{\ddot P_{ij}^a}+O(\pi_{ij}^*(s_i)).
\end{split}
\label{ku7eq7}
\e

We now claim that $(\dot P_{ij},\la_{ij},\hat\la_{ij})$ satisfies Definition \ref{ku4def3}(a)--(c) over $S$. The $\Ga_i,\Ga_j$-equivariance of $\dot P_{ij},\la_{ij},\hat\la_{ij}$ follows as the ingredients from which they are defined are $\Ga_i,\Ga_j$-equivariant. Equation \eq{ku4eq1} for $\dot P_{ij},\la_{ij},\hat\la_{ij}$ on $\dot P_{ij}\cap \dot P_{ij}^a$ follows from \eq{ku4eq1} for $\dot P_{ij}^a,\la_{ij}^a,\hat\la_{ij}^a$, equation \eq{ku7eq7}, and $\la_{ij}=\la_{ij}^a$ on $\ddot P_{ij}^a$, and the rest of (a)--(c) are already proved. Therefore $\La_{ij}:=[\dot P_{ij},\la_{ij},\hat\la_{ij}]$ is a 2-morphism $\Phi_{ij}\Ra\Phi_{ij}'$ over $S$. Equations \eq{ku7eq6}--\eq{ku7eq7} imply that $(\dot P_{ij},\la_{ij},\hat\la_{ij})\approx_{T^a}(\dot P_{ij}^a,\la_{ij}^a,\hat\la_{ij}^a)$ in the sense of Definition \ref{ku4def3}, so $\La_{ij}\vert_{T^a}=\La_{ij}^a$, for all $a\in A$. This proves Definition \ref{ku4def11}(iv) for~$\bcHom\bigl((V_i,E_i,\Ga_i,s_i,\psi_i),(V_j,E_j,\Ga_j,s_j,\psi_j)\bigr)$.

\subsubsection*{Definition \ref{ku4def11}(v) for $\bcHom((V_i,E_i,\Ga_i,s_i,\psi_i),(V_j,E_j,\Ga_j,s_j,\psi_j))$.}
For (v), let $S\subseteq\Im\psi_i\cap\Im\psi_j$ be open, $\{T^a:a\in A\}$ an open cover of $S$, $\Phi_{ij}^a=(P_{ij}^a,\pi_{ij}^a,\phi_{ij}^a,\hat\phi_{ij}^a):(V_i,E_i,\Ga_i,s_i,\psi_i)\ra(V_j,E_j,\Ga_j,s_j,\psi_j)$ a 1-morphism over $T^a$ for $a\in A$, and $\La_{ij}^{ab}:\Phi_{ij}^a\vert_{T^a\cap T^b}\Ra\Phi_{ij}^b\vert_{T^a\cap T^b}$ a 2-morphism over $T^a\cap T^b$ for all $a,b\in A$ such that $\La_{ij}^{bc}\od\La_{ij}^{ab}=\La_{ij}^{ac}$ over $T^a\cap T^b\cap T^c$ for all $a,b,c\in A$. Choose representatives $(\dot P_{ij}^{ab},\la_{ij}^{ab},\hat\la_{ij}^{ab})$ for $\La_{ij}^{ab}$ for $a,b\in A$, so that \eq{ku4eq1} gives
\e
\begin{split}
\phi_{ij}^b\ci\la_{ij}^{ab}&=\phi_{ij}^a\vert_{\dot P_{ij}^{ab}}+\hat\la_{ij}^{ab}\cdot (\pi_{ij}^a)^*(s_i)+O\bigl((\pi_{ij}^a)^*(s_i)^2\bigr)\;\>\text{and}\\ 
(\la_{ij}^{ab})^*(\hat\phi_{ij}^b)&=\hat\phi_{ij}^a\vert_{\dot P_{ij}^{ab}}+\hat\la_{ij}^{ab}\cdot (\phi_{ij}^a)^*(\d s_j)+O\bigl((\pi_{ij}^a)^*(s_i)\bigr)\;\> \text{on $\dot P_{ij}^{ab}$.}
\end{split}
\label{ku7eq8}
\e 

From \S\ref{ku41}, $\La_{ij}^{bc}\od\La_{ij}^{ab}=\La_{ij}^{ac}$ means there exists an open neighbourhood $\ddot P_{ij}^{abc}$ of $(\pi_{ij}^a)^{-1}(\bar\psi_i^{-1}(T^a\cap T^b\cap T^c))$ in $(\la_{ij}^{ab})^{-1}(\dot P_{ij}^{bc})\cap\dot P_{ij}^{ac}\subseteq P_{ij}^a$, such that
\e
\begin{split}
\la_{ij}^{bc}\ci\la_{ij}^{ab}\vert_{\ddot P_{ij}^{abc}}&=\la_{ij}^{ac}\vert_{\ddot P_{ij}^{abc}}\quad\text{and}\\ 
\hat\la_{ij}^{ab}\vert_{\ddot P_{ij}^{abc}}+\hat\la_{ij}^{\prime bc}\vert_{\ddot P_{ij}^{abc}}&=\hat\la_{ij}^{ac}\vert_{\ddot P_{ij}^{abc}}+O\bigl((\pi_{ij}^a)^*(s_i)\bigr)\quad\text{on $\ddot P_{ij}^{abc}$,}
\end{split}
\label{ku7eq9}
\e
where $\hat\la_{ij}^{\prime bc}:(\pi_{ij}^a)^*(E_i)\vert_{(\la_{ij}^{ab})^{-1}(\dot P_{ij}^{bc})}\ra(\phi_{ij}^a)^*(TV_j)\vert_{(\la_{ij}^{ab})^{-1}(\dot P_{ij}^{bc})}$ satisfies $\hat\la_{ij}^{\prime bc}=\la_{ij}^{ab}\vert_{(\la_{ij}^{ab})^{-1}(\dot P_{ij}^{bc})}^*(\hat\la_{ij}^{bc})+O((\pi_{ij}^a)^*(s_i))$, and $\hat\la_{ij}^{\prime bc}$ is unique up to $O((\pi_{ij}^a)^*(s_i))$.

Set $V_{ij}^a=\pi_{ij}^a(P_{ij}^a)$, so that $V_{ij}^a$ is an open neighbourhood of $\bar\psi_i^{-1}(T^a)$ in $V_i$ for $a\in A$. Choose a partition of unity $\{\eta^a:a\in A\}$ on $\bigcup_{a\in A}V_{ij}^a\subseteq V_i$ subordinate to the open cover $\{V_{ij}^a:a\in A\}$. In a similar way to \eq{ku7eq3}, define
\ea
V_{ij}=\bigl\{\ts v\in \bigcup_{a\in A}V_{ij}^a:\,&\text{if $a,b,c\in A$ with $v\in\supp\eta^a\cap\supp\eta^b\cap\supp\eta^c$}
\nonumber\\
&\text{then $\la_{ij}^{bc}\ci\la_{ij}^{ab}=\la_{ij}^{ac}$ on $(\pi_{ij}^a)^{-1}(v)$}\bigr\}.
\label{ku7eq10}
\ea
As for the argument between \eq{ku7eq3} and \eq{ku7eq4}, $V_{ij}$ is an open neighbourhood of $\bar\psi_i^{-1}(S)$ in $V_i$, and is $\Ga_i$-invariant as all the ingredients in \eq{ku7eq10} are. 

Define a set $P_{ij}$ by
\begin{equation*}
P_{ij}=\bigl(\ts\coprod_{a\in A}(\pi_{ij}^a)^{-1}(V_{ij}\cap\supp^\ci\eta^a)\bigr)\big/\sim,
\end{equation*}
where $(\pi_{ij}^a)^{-1}(V_{ij}\cap\supp^\ci\eta^a)\subseteq P_{ij}^a$ is open, and $\sim$ is the binary relation on $\coprod_{a\in A}(\pi_{ij}^a)^{-1}(V_{ij}\cap\supp^\ci\eta^a)$ given by $p^a\sim p^b$ if $p^a\in(\pi_{ij}^a)^{-1}(V_{ij}\cap\supp^\ci\eta^a)$ and $p^b\in(\pi_{ij}^b)^{-1}(V_{ij}\cap\supp^\ci\eta^b)$ for $a,b\in A$ with $p^b=\la_{ij}^{ab}(p^a)$. This is an equivalence relation by \eq{ku7eq10}. Write $[p^a]$ for the $\sim$-equivalence class of $p^a$. 

Define $\pi_{ij}:P_{ij}\ra V_{ij}\subseteq V_i$ by $\pi_{ij}:[p^a]\mapsto\pi_{ij}^a(p^a)$ for $p^a\in (\pi_{ij}^a)^{-1}(V_{ij}\cap\supp^\ci\eta^a)$. This is well-defined as if $[p^a]=[p^b]$ then $p^a\sim p^b$, so $p^b=\la_{ij}^{ab}(p^a)$, and $\pi_{ij}^a(p^a)=\pi_{ij}^b(p^b)$ as $\pi_{ij}^b\ci\la_{ij}^{ab}=\pi_{ij}^a$ by Definition \ref{ku4def3}(b). The $\Ga_i\t\Ga_j$-actions on $(\pi_{ij}^a)^{-1}(V_{ij}\cap\supp^\ci\eta^a)\subseteq P_{ij}^a$ induce a $\Ga_i\t\Ga_j$-action on $P_{ij}$, and $\pi_{ij}$ is $\Ga_i$-equivariant and~$\Ga_j$-invariant. 

Then $\pi_{ij}:P_{ij}\ra V_{ij}$ is a principal $\Ga_j$-bundle, because it is built by gluing the principal $\Ga_j$-bundles $\pi_{ij}^a:(\pi_{ij}^a)^{-1}(V_{ij}\cap\supp^\ci\eta^a)\ra V_{ij}\cap\supp^\ci\eta^a$ by the isomorphisms $\la_{ij}^{ab}$ on overlaps $V_{ij}\cap\supp^\ci\eta^a\cap\supp^\ci\eta^b$, where the isomorphisms $\la_{ij}^{ab}$ compose correctly by the definition \eq{ku7eq10} of $V_{ij}$. Therefore there is a unique manifold structure on $P_{ij}$ such that the inclusion $(\pi_{ij}^a)^{-1}(V_{ij}\cap\supp^\ci\eta^a)\hookra P_{ij}$ mapping $p^a\mapsto[p^a]$ is a diffeomorphism with an open set for each~$a\in A$.

For each $a\in A$, define $\dot P_{ij}^a\subseteq P_{ij}$ by
\e
\begin{split}
\dot P_{ij}^a=\pi_{ij}^{-1}\bigl(\bigl\{v\in V_{ij}\cap V_{ij}^a:\,&\text{if $b,c\in A$ with $v\in\supp\eta^b\cap\supp\eta^c$}\\
&\text{then $\la_{ij}^{ba}\ci\la_{ij}^{cb}=\la_{ij}^{ca}$ on $(\pi_{ij}^c)^{-1}(v)$}\bigr\}\bigr).
\end{split}
\label{ku7eq11}
\e
As for $V_{ij}$, using the first line of \eq{ku7eq9} we find that $\dot P_{ij}^a$ is a $\Ga_i$- and $\Ga_j$-invariant open neighbourhood of $(\pi_{ij})^{-1}(\bar\psi_i^{-1}(T^a))$ in $P_{ij}$. Also $\{\dot P_{ij}^a:a\in A\}$ is an open cover of $P_{ij}$, since \eq{ku7eq10}--\eq{ku7eq11} imply that $\pi_{ij}^{-1}(V_{ij}\cap\supp^\ci\eta^a)\subseteq\dot P_{ij}^a$, and the $\pi_{ij}^{-1}(V_{ij}\cap\supp^\ci\eta^a)$ cover $P_{ij}$. Define a map $\la_{ij}^a:\dot P_{ij}^a\ra P_{ij}^a$ by $\la_{ij}^a([p^b])=\la_{ij}^{ba}(p^b)$ if $b\in A$ and $p^b\in(\pi_{ij}^b)^{-1}(V_{ij}\cap\supp^\ci\eta^b)$ with $[p^b]\in\dot P_{ij}^a$. This is well-defined by \eq{ku7eq11}, and is a $\Ga_i$- and $\Ga_j$-equivariant smooth map with $\pi_{ij}^a\ci\la_{ij}^a=\pi_{ij}\vert_{\dot P_{ij}^a}$ since $\la_{ij}^{ba}$ is $\Ga_i$- and $\Ga_j$-equivariant and smooth with~$\pi_{ij}^a\ci\la_{ij}^{ba}=\pi_{ij}^b\vert_{\dot P_{ij}^{ba}}$.

To summarize our progress so far: we have defined a $\Ga_i$-invariant open neighbourhood $V_{ij}$ of $\bar\psi_i^{-1}(S)$ in $V_i$, a principal $\Ga_j$-bundle $\pi_{ij}:P_{ij}\ra V_{ij}$ with compatible $\Ga_i$-action, a $\Ga_i$- and $\Ga_j$-invariant open neighbourhood $\dot P_{ij}^a$ of $(\pi_{ij})^{-1}(\bar\psi_i^{-1}(T^a))$ in $P_{ij}$ for $a\in A$ such that $\{\dot P_{ij}^a:a\in A\}$ is an open cover of $P_{ij}$, and a $\Ga_i$- and $\Ga_j$-equivariant smooth map $\la_{ij}^a:\dot P_{ij}^a\ra P_{ij}^a$ with $\pi_{ij}^a\ci\la_{ij}^a=\pi_{ij}\vert_{\dot P_{ij}^a}$ for all~$a\in A$. 

We have smooth maps $\phi_{ij}^a\ci\la_{ij}^a:\dot P_{ij}^a\ra V_j$ and vector bundle morphisms $(\la_{ij}^a)^*(\hat\phi_{ij}^a):\pi_{ij}^*(E_i)\vert_{\dot P_{ij}^a}\ra (\phi_{ij}^a\ci\la_{ij}^a)^*(TV_j)$ for all $a\in A$, such that for all $a,b\in A$, applying $\ci\la_{ij}^a$ and $(\la_{ij}^a)^*$ to the equations of \eq{ku7eq8} gives
\begin{align*}
\phi_{ij}^b\ci\la_{ij}^b&=\phi_{ij}^a\ci\la_{ij}^a+(\la_{ij}^a)^*(\hat\la_{ij}^{ab})\cdot \pi_{ij}^*(s_i)+O\bigl(\pi_{ij}^*(s_i)^2\bigr)\;\>\text{and}\\ 
(\la_{ij}^b)^*(\hat\phi_{ij}^b)&=(\la_{ij}^a)^*(\hat\phi_{ij}^a)+(\la_{ij}^a)^*(\hat\la_{ij}^{ab})\cdot (\phi_{ij}^a\ci\la_{ij}^a)^*(\d s_j)+O\bigl(\pi_{ij}^*(s_i)\bigr),
\end{align*}
which hold on $\dot P_{ij}^a\cap\dot P_{ij}^b$ by the argument after~\eq{ku6eq4}.

We now use the method in \S\ref{ku61} between \eq{ku6eq5} and \eq{ku6eq11} to define a smooth map $\phi_{ij}:P_{ij}\ra V_j$ by combining the $\phi_{ij}^a\ci\la_{ij}^a:\dot P_{ij}^a\ra V_j$ using the partition of unity $\{\pi_{ij}^*(\eta^a):a\in A\}$. As in \eq{ku6eq5}, heuristically we want to write
\e
\phi_{ij}=\ts\sum_{a\in A}\pi_{ij}^*(\eta^a)\cdot(\phi_{ij}^a\ci\la_{ij}^a),
\label{ku7eq12}
\e
but the actual definition of $\phi_{ij}$ is an analogue of \eq{ku6eq10}, involving $W_n,\Xi_n$ from Lemma \ref{ku6lem}. We may take these $W_n,\Xi_n$ to be $\Ga_j$-equivariant, so that $\phi_{ij}$ is $\Ga_j$-equivariant, and also $\Ga_i$-invariant as all the ingredients are. 

As for \eq{ku6eq11}, define a morphism $\hat\la_{ij}^{\prime a}:\pi_{ij}^*(E_i)\vert_{\dot P_{ij}^a}\ra(\phi_{ij}^a\ci\la_{ij}^a)^*(TV_j)$ of vector bundles on $\dot P_{ij}^a$ for $a\in A$ by
\e
\hat\la_{ij}^{\prime a}=\ts\sum_{b\in A}\pi_{ij}^*(\eta^b)\vert_{\dot P_{ij}^a}\cdot(\la_{ij}^a)^*(\hat\la_{ij}^{ab}).
\label{ku7eq13}
\e 
Then as for \eq{ku6eq13} we deduce that
\e
\phi_{ij}=\phi_{ij}^a\ci\la_{ij}^a+\hat\la_{ij}^{\prime a}\cdot \pi_{ij}^*(s_i)+O(\pi_{ij}^*(s_i)^2)\quad\text{on $\dot P_{ij}^a$.}
\label{ku7eq14}
\e
Therefore $\phi_{ij}=\phi_{ij}^a\ci\la_{ij}^a+O(\pi_{ij}^*(s_i))$, so as in Definition \ref{ku2def1} we can choose $\hat\la_{ij}^a:\pi_{ij}^*(E_i)\vert_{\dot P_{ij}^a}\ra\phi_{ij}^*(TV_j)\vert_{\dot P_{ij}^a}$ with $\hat\la_{ij}^a=-\hat\la_{ij}^{\prime a}+O(\pi_{ij}^*(s_i))$, and $\hat\la_{ij}^a$ is unique up to $O(\pi_{ij}^*(s_i))$. Then \eq{ku7eq14} is equivalent to
\e
\phi_{ij}^a\ci\la_{ij}^a=\phi_{ij}+\hat\la_{ij}^a\cdot \pi_{ij}^*(s_i)+O(\pi_{ij}^*(s_i)^2)\quad\text{on $\dot P_{ij}^a$.}
\label{ku7eq15}
\e

Similarly, we choose $\Ga_i$- and $\Ga_j$-equivariant $\hat\phi_{ij}^{\prime a}:\pi_{ij}^*(E_i)\vert_{\dot P_{ij}^a}\ra \phi_{ij}^*(TV_j)\vert_{\dot P_{ij}^a}$ with $\hat\phi_{ij}^{\prime a}=(\la_{ij}^a)^*(\hat\phi_{ij}^a)+O(\pi_{ij}^*(s_i))$, uniquely up to $O(\pi_{ij}^*(s_i))$. As in \eq{ku6eq14}, define a $\Ga_i$- and $\Ga_j$-equivariant morphism $\hat\phi_{ij}:\pi_{ij}^*(E_i)\ra\phi_{ij}^*(E_j)$ by
\begin{equation*}
\hat\phi_{ij}=\ts\sum_{a\in A}\pi_{ij}^*(\eta^a)\cdot \hat\phi_{ij}^{\prime a}.
\end{equation*}
Then as for \eq{ku6eq15}, for each $a\in A$ we have
\e
(\la_{ij}^a)^*(\hat\phi_{ij}^a)=\hat\phi_{ij}+\hat\la_{ij}^a\cdot\phi_{ij}^*(\d s_j)+O(\pi_{ij}^*(s_i)).
\label{ku7eq16}
\e

We have already proved $\Phi_{ij}:=(P_{ij},\pi_{ij},\phi_{ij},\hat\phi_{ij})$ satisfies Definition \ref{ku4def2}(a)--(d). Parts (e),(f) hold on $\dot P_{ij}^a\subseteq P_{ij}$ by \eq{ku7eq15}, \eq{ku7eq16} and Definition \ref{ku4def2}(e),(f) for $\Phi_{ij}^a$, for each $a\in A$, so they hold on $\bigcup_{a\in A}\dot P_{ij}^a=P_{ij}$. Thus $\Phi_{ij}:(V_i,E_i,\Ga_i,s_i,\psi_i)\ab\ra(V_j,E_j,\Ga_j,s_j,\psi_j)$ is a 1-morphism over $S$. 

Equations \eq{ku7eq15} and \eq{ku7eq16} imply that $\La_{ij}^a:=[\dot P_{ij}^a,\la_{ij}^a,\hat\la_{ij}^a]$ is a 2-morphism $\Phi_{ij}\vert_{T^a}\Ra\Phi_{ij}^a$ over $T^a$. For $a,b\in A$, set $\ddot P_{ij}^{ab}=\dot P_{ij}^a\cap\dot P_{ij}^b\cap \pi_{ij}^{-1}(V_{ij}^{ab})$, an open neighbourhood of $\pi_{ij}^{-1}(\bar\psi_i^{-1}(T^a\cap T^b))$ in $P_{ij}$. Then from \eq{ku7eq9}, \eq{ku7eq11}, \eq{ku7eq13} and the definitions of $\la_{ij}^a,\hat\la_{ij}^a$ we deduce that
\e
\begin{split}
\la_{ij}^{ab}\ci\la_{ij}^a\vert_{\ddot P_{ij}^{ab}}&=\la_{ij}^b\vert_{\ddot P_{ij}^{ab}}\quad\text{and}\\ 
\hat\la_{ij}^a\vert_{\ddot P_{ij}^{ab}}+\hat\la_{ij}^{\prime ab}\vert_{\ddot P_{ij}^{ab}}&=\hat\la_{ij}^b\vert_{\ddot P_{ij}^{ab}}+O\bigl(\pi_{ij}^*(s_i)\bigr)\quad\text{on $\ddot P_{ij}^{ab}$,}
\end{split}
\label{ku7eq17}
\e
where $\hat\la_{ij}^{\prime ab}:\pi_{ij}^*(E_i)\ra\phi_{ij}^*(TV_j)$ on $(\la_{ij}^a)^{-1}(\dot P_{ij}^{ab})$ is unique up to $O(\pi_{ij}^*(s_i))$ and satisfies $\hat\la_{ij}^{\prime ab}=\la_{ij}^a\vert_{(\la_{ij}^a)^{-1}(\dot P_{ij}^{ab})}^*(\hat\la_{ij}^{ab})+O(\pi_{ij}^*(s_i))$. As in \eq{ku7eq9}, equation \eq{ku7eq17} implies that $\La_{ij}^b\vert_{T^a\cap T^b}=\La_{ij}^{ab}\od\La_{ij}^a\vert_{T^a\cap T^b}$. This proves Definition \ref{ku4def11}(v), showing that $\bcHom\bigl((V_i,E_i,\Ga_i,s_i,\psi_i),(V_j,E_j,\Ga_j,s_j,\psi_j)\bigr)$ is a stack on $\Im\psi_i\cap\Im\psi_j$, and completes the first part of Theorem~\ref{ku4thm1}(a).

\subsubsection{Theorem \ref{ku4thm1}(a): $\bcEqu((V_i,E_i,\Ga_i,s_i,\psi_i),(V_j,E_j,\Ga_j,s_j,\psi_j))$ is a \ab substack of $\bcHom\bigl((V_i,E_i,\Ga_i,s_i,\psi_i),(V_j,E_j,\Ga_j,s_j,\psi_j)\bigr)$}
\label{ku712}

In this subsection, we will by an abuse of notation treat the weak 2-category $\Kur_S(X)$ defined in \S\ref{ku41} as if it were a strict 2-category. That is, we will pretend the 2-morphisms $\bs\al_{\Phi_{kl},\Phi_{jk},\Phi_{ij}},\bs\be_{\Phi_{ij}},\bs\ga_{\Phi_{ij}}$ in \eq{ku4eq4} and \eq{ku4eq6} are identities or omit them, and we will omit brackets in compositions of 1-morphisms such as $\Phi_{kl}\ci\Phi_{jk}\ci\Phi_{ij}$. This is permissible as every weak 2-category can be strictified. We do it because otherwise diagrams such as Figure \ref{ku7fig1} would become too big.

The second part of Theorem \ref{ku4thm1}(a) is similar to \S\ref{ku612}. Definition \ref{ku4def11}(i)--(iv) for $\bcEqu((V_i,E_i,\Ga_i,s_i,\psi_i),(V_j,E_j,\Ga_j,s_j,\psi_j))$ are immediate. For (v), we must show that in the last part of the proof in \S\ref{ku711}, if the $\Phi_{ij}^a$ are coordinate changes over $T^a$ (i.e.\ equivalences in $\Kur_{T^a}(X)$), then the $\Phi_{ij}$ we construct with 2-morphisms $\La_{ij}^a:\Phi_{ij}\vert_{T^a}\Ra\Phi_{ij}^a$ for $a\in A$ is a coordinate change over~$S$.

Let $S,\{T^a:a\in A\},\Phi_{ij}^a,\La_{ij}^{ab},\Phi_{ij},\La_{ij}^a$ be as in the last part of \S\ref{ku711}, but with the $\Phi_{ij}^a$ coordinate changes. Since $\Phi_{ij}^a$ is an equivalence in $\Kur_{T^a}(X)$, we may choose a coordinate change $\Phi_{ji}^a:(V_j,E_j,\Ga_j,s_j,\psi_j)\ra (V_i,E_i,\Ga_i,s_i,\psi_i)$ over $T^a$ and 2-morphisms $\Io_i^a:\Phi_{ji}^a\ci\Phi_{ij}^a\Ra \id_{(V_i,E_i,\Ga_i,s_i,\psi_i)}$ and $\Ka_j^a:\Phi_{ij}^a\ci\Phi_{ji}^a\Ra\id_{(V_j,E_j,\Ga_j,s_j,\psi_j)}$ for all $a\in A$. By Proposition \ref{kuBprop} we can suppose these satisfy
\e
\id_{\Phi_{ij}^a}*\Io_i^a=\Ka_j^a*\id_{\Phi_{ij}^a}\quad\text{and}\quad \id_{\Phi_{ji}^a}*\Ka_j^a=\Io_i^a*\id_{\Phi_{ji}^a}.
\label{ku7eq18}
\e

Define 2-morphisms $\Mu_{ji}^{ab}:\Phi_{ji}^a\vert_{T^a\cap T^b}\Ra\Phi_{ji}^b\vert_{T^a\cap T^b}$ over $T^a\cap T^b$ for all $a,b\in A$ to be the vertical composition
\e
\xymatrix@C=18pt{
\Phi_{ji}^a\vert_{T^a\cap T^b} \ar@{=>}[r]^{\raisebox{10pt}{$\st\id_{\Phi_{ji}^a}*(\Ka_j^b)^{-1}$}} & \Phi_{ji}^a \!\ci\! \Phi_{ij}^b\!\ci\!\Phi_{ji}^b \ar@{=>}[rr]^{\raisebox{10pt}{$\st\id_{\Phi_{ji}^a}*(\La_{ij}^{ab})^{-1}*\id_{\Phi_{ji}^b}$}} && \Phi_{ji}^a\!\ci\!\Phi_{ij}^a\!\ci\!\Phi_{ji}^b \ar@{=>}[r]^{\raisebox{10pt}{$\st\Io_i^a*\id_{\Phi_{ji}^b}$}} & \Phi_{ji}^b\vert_{T^a\cap T^b}. }
\label{ku7eq19}
\e
For $a,b,c\in A$, consider the diagram Figure \ref{ku7fig1} of 2-morphisms over $T^a\cap T^b\cap T^c$. The three outer quadrilaterals commute by the definition \eq{ku7eq19} of $\Mu_{ji}^{ab}$. Eight inner quadrilaterals commute by compatibility of horizontal and vertical composition, a 2-gon commutes by \eq{ku7eq18}, and a triangle commutes as $\La_{ij}^{bc}\od\La_{ij}^{ab}=\La_{ij}^{ac}$. Hence Figure \ref{ku7fig1} commutes, which shows that $\Mu_{ji}^{bc}\od\Mu_{ji}^{ab}=\Mu_{ji}^{ac}$ over $T^a\cap T^b\cap T^c$ for all~$a,b,c\in A$. 

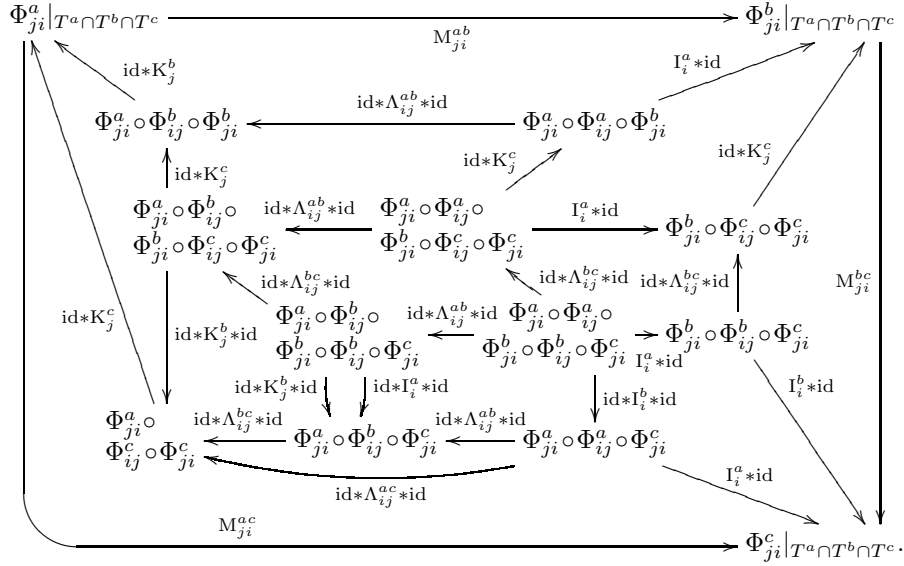
\begin{figure}[htb]
{$\displaystyle\xymatrix@!0@C=54pt@R=40pt{
*+[r]{\Phi_{ji}^a\vert_{T^a\cap T^b\cap T^c}} \ar[rrrrrr]_{\Mu_{ji}^{ab}} 
\ar`d/20pt[dddddr][dddddrrrrrr]^(0.2){\Mu_{ji}^{ac}} 
 &&&&&& *+[l]{\Phi_{ji}^b\vert_{T^a\cap T^b\cap T^c}} \ar[ddddd]_{\Mu_{ji}^{bc}} 
\\ 
& {\Phi_{ji}^a \!\ci\! \Phi_{ij}^b\!\ci\!\Phi_{ji}^b} \ar[ul]_(0.3){\id*\Ka_j^b} &&& {\Phi_{ji}^a\!\ci\!\Phi_{ij}^a\!\ci\!\Phi_{ji}^b} \ar[lll]_(0.45){\id*\La_{ij}^{ab}*\id} \ar[urr]^(0.4){\Io_i^a*\id}
\\
& *+[r]{\begin{subarray}{l}\ts \Phi_{ji}^a\!\ci\!\Phi_{ij}^b\ci \\ 
\ts\Phi_{ji}^b\!\ci\!\Phi_{ij}^c\!\ci\!\Phi_{ji}^c \end{subarray}} \ar[dd]^{\id*\Ka_j^b*\id} \ar[u]_{\id*\Ka_j^c} 
&&
{\begin{subarray}{l}\ts \Phi_{ji}^a\!\ci\!\Phi_{ij}^a\ci \\ 
\ts\Phi_{ji}^b\!\ci\!\Phi_{ij}^c\!\ci\!\Phi_{ji}^c \end{subarray}} \ar[ll]_{\id*\La_{ij}^{ab}*\id} \ar[rr]^{\Io_i^a*\id} \ar[ur]^(0.45){\id*\Ka_j^c}
&& {\Phi_{ji}^b\!\ci\!\Phi_{ij}^c\!\ci\!\Phi_{ji}^c} \ar[uur]^(0.3){\id*\Ka_j^c}
\\
&& *+[r]{\begin{subarray}{l}\ts \Phi_{ji}^a\!\ci\!\Phi_{ij}^b\ci \\ 
\ts\Phi_{ji}^b\!\ci\!\Phi_{ij}^b\!\ci\!\Phi_{ji}^c \end{subarray}} \ar[ul]_(0.3){\id*\La_{ij}^{bc}*\id} \ar@<2.8ex>@/_.5pc/[d]_{\id*\Ka_j^b*\id} \ar@<3.5ex>@/^.5pc/[d]^{\id*\Io_i^a*\id}
&&
*+[l]{\begin{subarray}{l}\ts {}\;\>\;\>\Phi_{ji}^a\!\ci\!\Phi_{ij}^a\ci \\ 
\ts\Phi_{ji}^b\!\ci\!\Phi_{ij}^b\!\ci\!\Phi_{ji}^c \end{subarray}} \ar[ll]_{\id*\La_{ij}^{ab}*\id} \ar[r]_(0.45){\raisebox{-9pt}{$\st\Io_i^a*\id$}} \ar[d]^(0.6){\id*\Io_i^b*\id} \ar[ul]_(0.35){\id*\La_{ij}^{bc}*\id} & {\Phi_{ji}^b\!\ci\!\Phi_{ij}^b\!\ci\!\Phi_{ji}^c} \ar[u]^{\id*\La_{ij}^{bc}*\id} \ar[ddr]^(0.3){\Io_i^b*\id}
\\
& *+[l]{\begin{subarray}{l}\ts \Phi_{ji}^a\ci\\ \ts\Phi_{ij}^c\!\ci\!\Phi_{ji}^c\end{subarray}} \ar[uuuul]^(0.3){\id*\Ka_j^c}
&
*+[r]{\Phi_{ji}^a\!\ci\!\Phi_{ij}^b\!\ci\!\Phi_{ji}^c} \ar[l]_{\id*\La_{ij}^{bc}*\id} && {\Phi_{ji}^a\!\ci\!\Phi_{ij}^a\!\ci\!\Phi_{ji}^c} \ar[ll]_(0.4){\id*\La_{ij}^{ab}*\id} \ar@<.5ex>@/^1pc/[lll]^{\id*\La_{ij}^{ac}*\id} \ar[drr]^{\Io_i^a*\id} &
\\
&&&&&& *+[l]{\Phi_{ji}^c\vert_{T^a\cap T^b\cap T^c}.\!\!} }$}
\caption{Proof that $\Mu_{ji}^{bc}\od\Mu_{ji}^{ab}=\Mu_{ji}^{ac}$}
\label{ku7fig1}
\end{figure}

Thus by Definition \ref{ku4def11}(v) for $\bcHom((V_j,E_j,\Ga_j,s_j,\psi_j),(V_i,E_i,\Ga_i,s_i,\psi_i))$, proved in \S\ref{ku711}, there exists a 1-morphism $\Phi_{ji}:(V_j,E_j,\Ga_j,s_j,\psi_j)\ra (V_i,\ab E_i,\ab\Ga_i,\ab s_i,\ab\psi_i)$ over $S$ and 2-morphisms $\Mu_{ji}^a:\Phi_{ji}\vert_{T^a}\Ra\Phi_{ji}^a$ over $T^a$ for $a\in A$, such that $\Mu_{ji}^b\vert_{T^a\cap T^b}=\Mu_{ji}^{ab}\od\Mu_{ji}^a\vert_{T^a\cap T^b}$ over $T^a\cap T^b$ for all $a,b\in A$.

For each $a\in A$, define a 2-morphism $\Nu_i^a:(\Phi_{ji}\ci\Phi_{ij})\vert_{T^a}\Ra\id_{(V_i,E_i,\Ga_i,s_i,\psi_i)}\vert_{T^a}$ by the vertical composition
\e
\smash{\xymatrix@C=35pt{
(\Phi_{ji}\ci\Phi_{ij})\vert_{T^a} \ar@{=>}[rr]^{\Mu_{ji}^a*\La_{ij}^a} && \Phi_{ji}^a\ci\Phi_{ij}^a \ar@{=>}[r]^(0.4){\Io_i^a} & \id_{(V_i,E_i,\Ga_i,s_i,\psi_i)}\vert_{T^a}. }}
\label{ku7eq20}
\e
Then the following diagram commutes by \eq{ku7eq18}, $\La_{ij}^b=\La_{ij}^{ab}\od\La_{ij}^a$, $\Mu_{ji}^b=\Mu_{ji}^{ab}\od\Mu_{ji}^a$, the definitions of $\Mu_{ji}^{ab},\Nu_i^a$ in \eq{ku7eq19} and \eq{ku7eq20}, and compatibility of horizontal and vertical composition:
\begin{equation*}
\xymatrix@!0@C=52pt@R=34pt{
*+[r]{\Phi_{ji}\ci\Phi_{ij}\vert_{T^a\cap T^b}} \ar`d/20pt[ddddr]
[ddddrrrrrr]^(0.2){\Nu_i^b\vert_{T^a\cap T^b}}
\ar`r/20pt[rrrrrrd]_(0.8){\Nu_i^a\vert_{T^a\cap T^b}}[rrrrrrdddd] 
\ar[rrrrrd]^(0.6){\Mu_{ji}^a*\La_{ij}^a} \ar[dddr]_(0.8){\Mu_{ji}^b*\La_{ji}^b} \ar[dr]^(0.7){\Mu_{ji}^a*\La_{ij}^b}
&&&&&& 
\\ 
& {\Phi_{ji}^a\ci\Phi_{ij}^b} \ar[dd]^(0.6){\Mu_{ji}^{ab}*\id} &&&&
{\Phi_{ji}^a\ci\Phi_{ij}^a} \ar[dddr]^(0.4){\Io_i^a} \ar[llll]^(0.35){\id*\La_{ij}^{ab}}
&
\\
&&& {\Phi_{ji}^a\ci\Phi_{ij}^b\ci\Phi_{ji}^b\ci\Phi_{ij}^b} 
\ar@<.3ex>@/_.5pc/[ull]_(0.3){\id*\Io_i^b} \ar@<1ex>@/.5pc/[ull]^{\id*\Ka_j^b*\id} &&&
\\
& {\Phi_{ji}^b\ci\Phi_{ij}^b} \ar[drrrrr]_(0.3){\Io_i^b}
&&& {\Phi_{ji}^a\ci\Phi_{ij}^a\ci\Phi_{ji}^b\ci\Phi_{ij}^b} \ar[lll]_(0.55){\Io_i^a*\id} \ar[uur]_{\id*\Io_i^b} \ar[ul]^(0.7){\id*\La_{ij}^{ab}*\id} &&
\\
&&&&&& *+[l]{\id_{(V_i,E_i,\Ga_i,s_i,\psi_i)}\vert_{T^a\cap T^b}.\!\!} }
\end{equation*}
Hence $\Nu_i^a\vert_{T^a\cap T^b}=\Nu_i^b\vert_{T^a\cap T^b}$ for all $a,b\in A$. Therefore by Definition \ref{ku4def11}(iv) for $\bcHom((V_i,E_i,\Ga_i,s_i,\psi_i),(V_i,E_i,\Ga_i,s_i,\psi_i))$, proved in \S\ref{ku711}, there exists a unique 2-morphism $\Nu_i:\Phi_{ji}\ci\Phi_{ij}\Ra\id_{(V_i,E_i,\Ga_i,s_i,\psi_i)}$ over $S$ with $\Nu_i\vert_{T^a}=\Nu_i^a$ for all $a\in A$. Similarly we may construct a 2-morphism ${\rm O}_j:\Phi_{ij}\ci\Phi_{ji}\Ra\id_{(V_j,E_j,\Ga_j,s_j,\psi_j)}$. These $\Phi_{ji},\Nu_i,{\rm O}_j$ imply that $\Phi_{ij}$ is an equivalence in $\Kur_S(X)$, and so a coordinate change over $S$. 

This proves Definition \ref{ku4def11}(v) for $\bcEqu((V_i,E_i,\Ga_i,s_i,\psi_i),(V_j,E_j,\Ga_j,s_j,\psi_j))$, which is thus a substack of $\bcHom((V_i,E_i,\Ga_i,s_i,\psi_i),(V_j,E_j,\Ga_j,s_j,\psi_j))$, completing the proof of Theorem~\ref{ku4thm1}(a).

\subsubsection{Theorem \ref{ku4thm1}(b): $\bcHom_f\bigl((U_i,D_i,\Be_i,r_i,\chi_i),(V_j,E_j,\Ga_j,s_j,\psi_j)\bigr)$ is a stack}
\label{ku713}

The proof of Theorem \ref{ku4thm1}(b) is essentially the same as for the first part of Theorem \ref{ku4thm1}(a) in \S\ref{ku711}, but inserting `over $f$' throughout.

\subsection{Proof of Theorem \ref{ku4thm2}}
\label{ku72}

\subsubsection{The `only if' part of Theorem \ref{ku4thm2}}
\label{ku721}

Let $\Phi_{ij}=(P_{ij},\pi_{ij},\phi_{ij},\hat\phi_{ij}):(V_i,E_i,\Ga_i,s_i,\psi_i)\ra (V_j,E_j,\Ga_j,s_j,\psi_j)$ be a coordinate change of Kuranishi neighbourhoods over $S\subseteq X$. Let $p\in\pi_{ij}^{-1}(\bar\psi_i^{-1}(S))\subseteq P_{ij},$ and set $v_i=\pi_{ij}(p)\in V_i$, $v_j=\phi_{ij}(p)\in V_j$, and $x=\bar\psi_i(v_i)=\bar\psi_j(v_j)\in S$. We must show that \eq{ku4eq9} is exact and \eq{ku4eq10} is an isomorphism.

As $\Phi_{ij}$ is a coordinate change over $S$, it is an equivalence in $\Kur_S(X)$, so there exists a 1-morphism $\Phi_{ji}=(P_{ji},\pi_{ji},\phi_{ji},\hat\phi_{ji}):(V_j,E_j,\Ga_j,s_j,\psi_j)\ra (V_i,E_i,\Ga_i,s_i,\psi_i)$ and 2-morphisms $\La_{ii}:\id_{(V_i,E_i,\Ga_i,s_i,\psi_i)}\Ra\Phi_{ji}\ci\Phi_{ij}$, $\Mu_{jj}:\id_{(V_j,E_j,\Ga_j,s_j,\psi_j)}\Ra\Phi_{ij}\ci\Phi_{ji}$ over $S$. By Proposition \ref{kuBprop} we choose these with
\e
\begin{split}
\Mu_{jj}*\id_{\Phi_{ij}}&=(\id_{\Phi_{ij}}*\La_{ii})\od\al_{\Phi_{ij},\Phi_{ji},\Phi_{ij}},\\
\La_{ii}*\id_{\Phi_{ji}}&=(\id_{\Phi_{ji}}*\Mu_{jj})\od\al_{\Phi_{ji},\Phi_{ij},\Phi_{ji}}.
\end{split}
\label{ku7eq21}
\e

Choose representatives $(\dot P_{ii},\la_{ii},\hat\la_{ii})$ and $(\dot P_{jj},\mu_{jj},\hat\mu_{jj})$ for $\La_{ii},\Mu_{jj}$. Define $V_{ij}=\pi_{ij}(P_{ij})$, $V_{ji}=\pi_{ji}(P_{ji})$, $\dot V_{ii}=\pi_{ii}(\dot P_{ii})$, $\dot V_{jj}=\pi_{jj}(\dot P_{jj})$, so that $v_i\in\dot V_{ii}\subseteq V_{ij}\subseteq V_i$ and $v_j\in\dot V_{jj}\subseteq V_{ji}\subseteq V_j$ are open, $\pi_{ii}:\dot P_{ii}\ra\dot V_{ii}$, $\pi_{ji}:P_{ji}\ra V_{ji}$ are principal $\Ga_i$-bundles, and $\pi_{jj}:\dot P_{jj}\ra\dot V_{jj}$, $\pi_{ij}:P_{ij}\ra V_{ij}$ are principal $\Ga_j$-bundles. By definition $\dot P_{ii}=\dot V_{ii}\t\Ga_i$, and $\la_{ii}:\dot P_{ii}\ra (P_{ij}\t_{\phi_{ij},V_j,\pi_{ji}}P_{ji})/\Ga_j$ is an isomorphism of principal $\Ga_i$-bundles over $\dot V_{ii}$. From this we can show that there is a unique point $q\in P_{ji}$ with $\pi_{ji}(q)=v_j\in V_j$, $\phi_{ji}(q)=v_i\in V_i$, and
\e
\la_{ii}(v_i,1)=\Ga_j\cdot (p,q).
\label{ku7eq22}
\e
From \eq{ku7eq21} we can then show that
\e
\mu_{jj}(v_j,1)=\Ga_i\cdot (q,p).
\label{ku7eq23}
\e

Define subgroups $\De_i\subseteq\Ga_i$ and $\De_j\subseteq\Ga_j$ by
\e
\begin{split}
\De_i=\Stab_{\Ga_i}(v_i)&=\bigl\{\ga_i\in\Ga_i:\ga_i\cdot v_i=v_i\bigr\},\\
\De_j=\Stab_{\Ga_j}(v_j)&=\bigl\{\ga_j\in\Ga_j:\ga_j\cdot v_j=v_j\bigr\}.
\end{split}
\label{ku7eq24}
\e
Choose a connected, simply-connected, $\De_i$-invariant open neighbourhood $V_{ij}^{v_i}$ of $v_i$ in $V_{ij}\subseteq V_i$, and a connected, simply-connected, $\De_j$-invariant open neighbourhood $V_{ji}^{v_j}$ of $v_j$ in $V_{ji}\subseteq V_j$. As $\pi_{ij}:P_{ij}\ra V_{ij}$ is a principal $\Ga_j$-bundle, $v_i\in V_{ij}^{v_i}\subseteq V_{ij}$ with $V_{ij}^{v_i}$ connected and simply-connected, and $p\in P_{ij}$ with $\pi_{ij}(p)=v_i$, there is a unique diffeomorphism
\e
\begin{split}
&P_{ij}^{v_i}:=\pi_{ij}^{-1}(V_{ij}^{v_i})\cong V_{ij}^{v_i}\t\Ga_j, \quad\text{with $p\cong (v_i,1)$,} \\
&\text{$\pi_{ij}:(v,\ga_j)\mapsto v$, and $\Ga_j$-action $\ga:(v,\ga_j)\longmapsto (v,\ga\ga_j)$.}
\end{split}
\label{ku7eq25}
\e
Since $V_{ij}^{v_i}$ is $\De_i$-invariant and $\pi_{ij}$ is $\Ga_i$-equivariant, $P_{ij}^{v_i}$ is also $\De_i$-invariant. As the $\Ga_i$- and $\Ga_j$-actions on $P_{ij}$ commute and $V_{ij}^{v_i}$ is connected, under the identification \eq{ku7eq25}, the $\De_i$-action on $V_{ij}^{v_i}\t\Ga_j$ must be of the form
\e
\de_i:(v,\ga_j)\longmapsto \bigl(\de_i\cdot v,\ga_j\rho(\de_i)^{-1}\bigr),\quad \de_i\in\De_i, \;\> v\in V_{ij}^{v_i},\;\> \ga_j\in\Ga_j,
\label{ku7eq26}
\e
for some map $\rho:\De_i\ra\Ga_j$, which is a group morphism as \eq{ku7eq26} is a group action. As $\phi_{ij}:P_{ij}\ra V_j$ is $\Ga_i$-invariant with $\phi_{ij}(v_i,1)=v_j$, we see from \eq{ku7eq25}--\eq{ku7eq26} that $\rho(\de_i)\cdot v_j=v_j$, so $\rho$ is a group morphism $\De_i\ra\De_j$.

Define a smooth map $\phi_{ij}^{v_i}:V_{ij}^{v_i}\ra V_j$ by $\phi_{ij}^{v_i}:v\mapsto \phi_{ij}(v,1)$, using the identification \eq{ku7eq25}. Since $\phi_{ij}$ is $\Ga_j$-equivariant, it follows that under the identification \eq{ku7eq25} we have
\e
\phi_{ij}\vert_{P_{ij}^{v_i}}:(v,\ga_j)\longmapsto \ga_j\cdot \phi_{ij}^{v_i}(v).
\label{ku7eq27}
\e
As $\phi_{ij}$ is $\Ga_i$-invariant, so that $\phi_{ij}\vert_{P_{ij}^{v_i}}$ is $\De_i$-invariant, equations \eq{ku7eq26}--\eq{ku7eq27} imply that $\phi_{ij}^{v_i}:V_{ij}^{v_i}\ra V_j$ is equivariant under $\rho:\De_i\ra\De_j$. 

Define $\hat\phi_{ij}^{v_i}:E_i\vert_{V_{ij}^{v_i}}\ra(\phi_{ij}^{v_i})^*(TV_j)$ by $\hat\phi_{ij}^{v_i}\vert_v=\hat\phi_{ij}\vert_{(v,1)}$, using \eq{ku7eq25}. Then $\hat\phi_{ij}^{v_i}$ is equivariant under $\rho:\De_i\ra\De_j$. Definition \ref{ku4def2}(e) for $\Phi_{ij}$ implies that $\hat\phi_{ij}^{v_i}(s_i\vert_{V_{ij}^{v_i}})=(\phi_{ij}^{v_i})^*(s_j)+O(s_i^2)$, as in Definition~\ref{ku2def3}(d).

Similarly, we get a unique diffeomorphism
\e
\begin{split}
&P_{ji}^{v_j}:=\pi_{ji}^{-1}(V_{ji}^{v_j})\cong V_{ji}^{v_j}\t\Ga_i, \quad\text{with $q\cong (v_j,1)$,} \\
&\text{$\pi_{ji}:(v,\ga_i)\mapsto v$, $\Ga_i$-action $\ga:(v,\ga_i)\longmapsto (v,\ga\ga_i)$,}\\
&\text{and $\De_j$-action $\de_j:(v,\ga_i)\longmapsto \bigl(\de_j\cdot v,\ga_i\si(\de_j)^{-1}\bigr)$,}
\end{split}
\label{ku7eq28}
\e
for $\si:\De_j\ra\Ga_i$ a group morphism, and defining $\phi_{ji}^{v_j}:V_{ji}^{v_j}\ra V_i$ and $\hat\phi_{ji}^{v_j}:E_j\vert_{V_{ji}^{v_j}}\ra(\phi_{ji}^{v_j})^*(TV_i)$ by $\phi_{ji}^{v_j}:v\mapsto \phi_{ji}(v,1)$ and $\hat\phi_{ji}^{v_j}\vert_v=\hat\phi_{ji}\vert_{(v,1)}$, using the identification \eq{ku7eq28}, then $\phi_{ji}^{v_j},\hat\phi_{ji}^{v_j}$ are $\si$-equivariant.

Consider $\La_{ii}=[\dot P_{ii},\la_{ii},\hat\la_{ii}]:\id_{(V_i,E_i,\Ga_i,s_i,\psi_i)}\Ra\Phi_{ji}\ci\Phi_{ij}$ near $v_i$ in this notation. We have $\la_{ii}:\dot P_{ii}=\dot V_{ii}\t\Ga_i\ra (P_{ij}\t_{\phi_{ij},V_j,\pi_{ji}}P_{ji})/\Ga_j$, an isomorphism of principal $\Ga_i$-bundles over $\dot V_{ii}$. Choose a connected open neighbourhood $\dot V_{ii}^{v_i}$ of $v_i$ in $\dot V_{ii}\cap V_{ij}^{v_i}\cap(\phi_{ij}^{v_i})^{-1}(V_{ji}^{v_j})$, and set $\dot P_{ii}^{v_i}=\dot V_{ii}^{v_i}\t\Ga_i\subseteq \dot P_{ii}$. Over $\dot V_{ii}^{v_i}$, the target $(P_{ij}\t_{\phi_{ij},V_j,\pi_{ji}}P_{ji})/\Ga_j$ has a natural trivialization by \eq{ku7eq25} and \eq{ku7eq28}, and $\la_{ii}(v_i,1)=(v_i,1)$ in this trivialization by \eq{ku7eq22}. Thus as $\dot V_{ii}^{v_i}$ is connected, $\la_{ii}$ must map $(v,\ga_i)\mapsto(v,\ga_i)$ on $\dot P_{ii}^{v_i}$. That is, for $(v,\ga_i)\in\dot P_{ii}^{v_i}$ we have
\e
\la_{ii}:(v,\ga_i)\longmapsto
\Ga_j\cdot \bigl[(v,1),(\phi_{ij}^{v_i}(v),\ga_i)\bigr]\in(P_{ij}\t_{\phi_{ij},V_j,\pi_{ji}}P_{ji})/\Ga_j,
\label{ku7eq29}
\e
using \eq{ku7eq25} and \eq{ku7eq28} to identify $(v,1),(\phi_{ij}^{v_i}(v),\ga_i)$ with points of $P_{ij},P_{ji}$.

Now $\la_{ii}$ is equivariant under two commuting actions of $\Ga_i$. The first acts on both sides of \eq{ku7eq29} by $\ga:\ga_i\mapsto\ga\ga_i$. Under the second, $\dot P_{ii}^{v_i}$ is only invariant under $\De_i\subseteq\Ga_i$, and $\de\in\De_i$ acts on the left of \eq{ku7eq29} by $\de:(v,\ga_i)\mapsto(\de\cdot v,\ga_i\de^{-1})$ and on the right by
\begin{align*}
\de:
\Ga_j\cdot \bigl[(v,1),(\phi_{ij}^{v_i}(v),\ga_i)\bigr]
&\longmapsto
\Ga_j\cdot \bigl[(\de\cdot v,\rho(\de)^{-1}),(\phi_{ij}^{v_i}(v),\ga_i)\bigr]\\
&=\Ga_j\cdot \bigl[(\de\cdot v,1),(\rho(\de)\cdot\phi_{ij}^{v_i}(v),\ga_i(\si\ci\rho(\de))^{-1})\bigr],
\end{align*}
where in the second step we use the $\Ga_j$-actions in \eq{ku7eq25} and \eq{ku7eq28} to multiply both points by $\rho(\de)$. Thus $\ga_{ii}$ being $\De_i$-equivariant means that
\e
\si\ci\rho(\de)=\de\quad\text{for all $\de\in\De_i$.}
\label{ku7eq30}
\e

We have a vector bundle morphism $\hat\la_{ii}:\pi_{ii}^*(E_i)\vert_{\dot P_{ii}}\ra \phi_{ii}^*(TV_i)\vert_{\dot P_{ii}}$ over $\dot P_{ii}=\dot V_{ii}\t\Ga_i$, for $\pi_{ii},\phi_{ii}$ as in \eq{ku4eq5}. Define a vector bundle morphism $\hat\la_{ii}^{v_i}:E_i\vert_{\dot V_{ii}^{v_i}}\ra TV_i\vert_{\dot V_{ii}^{v_i}}$ by $\hat\la_{ii}^{v_i}\vert_v=\hat\la_{ii}\vert_{(v,1)}$. Equation \eq{ku4eq1} for $\La_{ii}$ on $\dot P_{ii}$, restricted to $\dot V_{ii}^{v_i}\t\{1\}\subseteq \dot P_{ii}^{v_i}$, becomes
\e
\begin{aligned}
\phi_{ji}^{v_j}\ci\phi_{ij}^{v_i}\vert_{\dot V_{ii}^{v_i}}&=\id_{V_i}\vert_{\dot V_{ii}^{v_i}}+\hat\la_{ii}^{v_i}\cdot s_i+O(s_i^2)&&\text{and}\\ 
(\phi_{ij}^{v_i})^*(\hat\phi_{ji}^{v_j})\ci\hat\phi_{ij}^{v_i}\vert_{\dot V_{ii}^{v_i}}&=\id_{E_i}\vert_{\dot V_{ii}^{v_i}}+\hat\la_{ii}^{v_i}\cdot\d s_i+O(s_i)&&\text{on $\dot V_{ii}^{v_i}$.}
\end{aligned}
\label{ku7eq31}
\e

In the same way, from $\Mu_{jj}$ rather than $\La_{ii}$, using \eq{ku7eq23} we show that
\e
\rho\ci\si(\de)=\de\quad\text{for all $\de\in\De_j$,}
\label{ku7eq32}
\e
and we choose a connected open neighbourhood $\dot V_{jj}^{v_j}$ of $v_j$ in $\dot V_{jj}\cap V_{ji}^{v_j}\cap(\phi_{ji}^{v_i})^{-1}(V_{ij}^{v_i})$, and construct $\hat\mu_{jj}^{v_j}:E_j\vert_{\dot V_{jj}^{v_j}}\ra TV_j\vert_{\dot V_{jj}^{v_j}}$ satisfying
\e
\begin{split}
\phi_{ij}^{v_i}\ci\phi_{ji}^{v_j}\vert_{\dot V_{jj}^{v_j}}&=\id_{V_j}\vert_{\dot V_{jj}^{v_j}}+\hat\mu_{jj}^{v_j}\cdot s_j+O(s_j^2)\;\>\text{and}\\ 
(\phi_{ji}^{v_j})^*(\hat\phi_{ij}^{v_i})\ci\hat\phi_{ji}^{v_j}\vert_{\dot V_{jj}^{v_j}}&=\id_{E_j}\vert_{\dot V_{jj}^{v_j}}+\hat\mu_{jj}^{v_j}\cdot \d s_j+O(s_j)\;\> \text{on $\dot V_{jj}^{v_j}$.}
\end{split}
\label{ku7eq33}
\e

By equations \eq{ku7eq30} and \eq{ku7eq32}, $\rho:\De_i\ra\De_j$ and $\si:\De_j\ra\De_i$ are inverse, and so isomorphisms. But \eq{ku4eq10} is the projection
\e
\bigl\{(\de,\rho(\de)):\de\in\De_i\bigr\}\longra\De_j,\quad (\de,\rho(\de))\longmapsto\rho(\de),
\label{ku7eq34}
\e
so $\rho$ an isomorphism implies that \eq{ku4eq10} is an isomorphism, as we have to prove.

As for \eq{ku6eq17}, restricting the derivatives of the first equations of \eq{ku7eq31}, \eq{ku7eq33}, and the second equations of \eq{ku7eq31}, \eq{ku7eq33}, to $v_i,v_j$ yields
\ea
\begin{split}
\d\phi_{ji}^{v_j}\vert_{v_j}\ci\d\phi_{ij}^{v_i}\vert_{v_i}&=\id_{T_{v_i}V_i}+\hat\la_{ii}^{v_i}\vert_{v_i}\ci\d s_i\vert_{v_i},\\
\hat\phi_{ji}^{v_j}\vert_{v_j}\ci\hat\phi_{ij}^{v_i}\vert_{v_i}&=\id_{E_i\vert_{v_i}}+\d s_i\vert_{v_i}\ci\hat\la_{ii}^{v_i}\vert_{v_i},
\end{split}
\label{ku7eq35}\\
\begin{split}
\d\phi_{ij}^{v_i}\vert_{v_i}\ci\d\phi_{ij}^{v_j}\vert_{v_j}&=\id_{T_{v_j}V_j}+\hat\mu_{jj}^{v_i}\vert_{v_j}\ci\d s_j\vert_{v_j},\\
\hat\phi_{ij}^{v_i}\vert_{v_i}\ci\hat\phi_{ji}^{v_j}\vert_{v_j}&=\id_{E_j\vert_{v_j}}+\d s_j\vert_{v_j}\ci\hat\mu_{jj}^{v_j}\vert_{v_j}.
\end{split}
\label{ku7eq36}
\ea
Now $\d\phi_{ij}^{v_i}\vert_{v_i}=\d\phi_{ij}\vert_p\ci\d\pi_{ij}\vert_p^{-1}$ and $\hat\phi_{ij}^{v_i}\vert_{v_i}=\hat\phi_{ij}\vert_p$, so \eq{ku4eq9} may be rewritten as
\e
\begin{gathered}
\smash{\xymatrix@C=17pt{ 0 \ar[r] & T_{v_i}V_i \ar[rrr]^(0.39){\d s_i\vert_{v_i}\op\d\phi_{ij}^{v_i}\vert_{v_i}} &&& E_i\vert_{v_i} \!\op\!T_{v_j}V_j 
\ar[rrr]^(0.56){-\hat\phi_{ij}^{v_i}\vert_{v_i}\op \d s_j\vert_{v_j}} &&& E_j\vert_{v_j} \ar[r] & 0. }}
\end{gathered}
\label{ku7eq37}
\e
Therefore the proof in \S\ref{ku621} that \eq{ku2eq8} is exact implies that \eq{ku4eq9} is exact, using \eq{ku7eq35}--\eq{ku7eq36} in place of \eq{ku6eq17}--\eq{ku6eq18}. This proves the `only if' part of Theorem~\ref{ku4thm2}.

\subsubsection{The `if' part of Theorem \ref{ku4thm2}}
\label{ku722}

Let $\Phi_{ij}=(P_{ij},\pi_{ij},\phi_{ij},\hat\phi_{ij}):(V_i,E_i,\Ga_i,s_i,\psi_i)\ra (V_j,E_j,\Ga_j,s_j,\psi_j)$ be a 1-morphism of Kuranishi neighbourhoods over $S\subseteq X$. Suppose that for all $p\in\pi_{ij}^{-1}(\bar\psi_i^{-1}(S))\subseteq P_{ij}$ with $v_i=\pi_{ij}(p)\in V_i$, $v_j=\phi_{ij}(p)\in V_j$ and $x=\bar\psi_i(v_i)=\bar\psi_j(v_j)\in S$, equation \eq{ku4eq9} is exact and \eq{ku4eq10} is an isomorphism. We must show $\Phi_{ij}$ is a coordinate change over~$S$.

Fix $p\in\pi_{ij}^{-1}(\bar\psi_i^{-1}(S))\subseteq P_{ij}$, and set $v_i=\pi_{ij}(p)\in V_i$, $v_j=\phi_{ij}(p)\in V_j$, and $x=\bar\psi_i(v_i)=\bar\psi_j(v_j)\in S$. We can find such $p,v_i,v_j$ for any $x\in S$. In a similar way to \S\ref{ku622}, we will first show that there exists an open neighbourhood $S^x$ of $x$ in $S$, a 1-morphism $\Phi_{ji}^x:(V_j,E_j,\Ga_j,s_j,\psi_j)\ra(V_i,E_i,\Ga_i,s_i,\psi_i)$ over $S^x$, and 2-morphisms $\La_{ii}^x:\id_{(V_i,E_i,\Ga_i,s_i,\psi_i)}\Ra\Phi_{ji}^x\ci\Phi_{ij}\vert_{S^x}$, $\Mu_{jj}:\id_{(V_j,E_j,\Ga_j,s_j,\psi_j)}\Ra\Phi_{ij}\vert_{S^x}\ci\Phi_{ji}^x$ over $S^x$, so that $\Phi_{ij}\vert_{S^x}$ is a coordinate change over $S^x$. The `if' part will then follow from the stack property of~$\bcEqu\bigl((V_i,E_i,\Ga_i,s_i,\psi_i),(V_j,E_j,\Ga_j,s_j,\psi_j)\bigr)$.

As far as possible we will use the notation of \S\ref{ku721}. Define $\De_i\subseteq\Ga_i$ and $\De_j\subseteq\Ga_j$ by \eq{ku7eq24}, write $V_{ij}=\pi_{ij}(P_{ij})$, choose a connected, simply-connected, $\De_i$-invariant open neighbourhood $V_{ij}^{v_i}$ of $v_i$ in $V_{ij}\subseteq V_i$ such that $V_{ij}^{v_i}\cap\ga_i(V_{ij}^{v_i})=\es$ for all $\ga_i\in\Ga_i\sm\De_i$, identify $P_{ij}^{v_i}=\pi_{ij}^{-1}(V_{ij}^{v_i})\cong V_{ij}^{v_i}\t\Ga_j$ as in \eq{ku7eq25}, define a group morphism $\rho:\De_i\ra\De_j$ as in \eq{ku7eq26}, a $\rho$-equivariant smooth map $\phi_{ij}^{v_i}:V_{ij}^{v_i}\ra V_j$ by $\phi_{ij}^{v_i}:v\mapsto \phi_{ij}(v,1)$, and a $\rho$-equivariant vector bundle morphism $\hat\phi_{ij}^{v_i}:E_i\vert_{V_{ij}^{v_i}}\ra(\phi_{ij}^{v_i})^*(TV_j)$ by $\hat\phi_{ij}^{v_i}\vert_v=\hat\phi_{ij}\vert_{(v,1)}$, so that
$\hat\phi_{ij}^{v_i}(s_i\vert_{V_{ij}^{v_i}})=(\phi_{ij}^{v_i})^*(s_j)+O(s_i^2)$, all as in~\S\ref{ku721}.

By assumption \eq{ku4eq9} is exact, so \eq{ku7eq37} is exact, and \eq{ku4eq10} is an isomorphism, so \eq{ku7eq34} is an isomorphism, and $\rho:\De_i\ra\De_j$ is an isomorphism. Define~$\si=\rho^{-1}:\De_j\ra\De_i$.

We are now in almost exactly the same the situation as \S\ref{ku622}, but with extra group equivariance. That is, the triple $(V_{ij}^{v_i},\phi_{ij}^{v_j},\hat\phi_{ij}^{v_i})$ above corresponds to $(V_{ij}^x,\phi_{ij}^x,\hat\phi_{ij}^x)$ in \S\ref{ku622}, with $\hat\phi_{ij}^{v_i}(s_i\vert_{V_{ij}^{v_i}})=(\phi_{ij}^{v_i})^*(s_j)+O(s_i^2)$ and \eq{ku7eq37} exact corresponding to Definition \ref{ku2def3}(d) and \eq{ku2eq8} exact. The main difference is that we now have finite groups $\De_i$ and $\De_j$ acting on $V_{ij}^{v_i},E_i\vert_{V_{ij}^{v_i}}$ and $V_j,E_j$, and an isomorphism $\rho:\De_i\ra\De_j$ such that $\phi_{ij}^{v_j},\hat\phi_{ij}^{v_i}$ are $\rho$-equivariant.

We can now follow the argument of \S\ref{ku622}, but including equivariance under $\De_i,\De_j,\rho,\si$. Thus, possibly making $V_{ij}^{v_i}$ smaller, we construct a $\De_j$-invariant open neighbourhood $V_{ji}^{v_j}$ of $v_j$ in $V_j$, a $\si$-equivariant smooth map $\phi_{ji}^{v_j}:V_{ji}^{v_j}\ra V_i$ such that the following maps are inverse
\e
\xymatrix@C=100pt{
V_{ij}^{v_i}\cap s_i^{-1}(0) \ar@<.5ex>[r]^{ \phi_{ij}^{v_i}\vert_{s_i^{-1}(0)}} & V_{ji}^{v_j}\cap s_j^{-1}(0), \ar@<.5ex>[l]^{ \phi_{ji}^{v_j}\vert_{s_j^{-1}(0)}} }
\label{ku7eq38}
\e
a $\si$-equivariant vector bundle morphism $\hat\phi_{ji}^{v_j}:E_j\vert_{V_{ji}^{v_j}}\ra(\phi_{ji}^{v_j})^*(TV_i)$ with 
\e
\hat\phi_{ji}^{v_j}(s_j\vert_{V_{ji}^{v_j}})=(\phi_{ji}^{v_j})^*(s_i)+O(s_j^2)\quad\text{on $V_{ji}^{v_j}$,}
\label{ku7eq39}
\e
as in \eq{ku6eq36}, and a $\De_i$-equivariant vector bundle morphism $\hat\la_{ii}^{v_i}:E_i\vert_{\dot V_{ii}^{v_i}}\ra TV_i\vert_{\dot V_{ii}^{v_i}}$ on $\dot V_{ii}^{v_i}:=V_{ij}^{v_i}\cap(\phi_{ij}^{v_i})^{-1}(V_{ji}^{v_i})$ satisfying, as in \eq{ku6eq41} and \eq{ku7eq31}
\e
\begin{split}
\phi_{ji}^{v_j}\ci\phi_{ij}^{v_i}\vert_{\dot V_{ii}^{v_i}}&=\id_{V_i}\vert_{\dot V_{ii}^{v_i}}+\hat\la_{ii}^{v_i}\cdot s_i+O(s_i^2)\;\>\text{and}\\ 
(\phi_{ij}^{v_i})^*(\hat\phi_{ji}^{v_j})\ci\hat\phi_{ij}^{v_i}\vert_{\dot V_{ii}^{v_i}}&=\id_{E_i}\vert_{\dot V_{ii}^{v_i}}+\hat\la_{ii}^{v_i}\cdot\d s_i+O(s_i)\;\> \text{on $\dot V_{ii}^{v_i}$,}
\end{split}
\label{ku7eq40}
\e
and a $\De_j$-equivariant vector bundle morphism $\hat\mu_{jj}^{v_j}:E_j\vert_{\dot V_{jj}^{v_j}}\ra TV_j\vert_{\dot V_{jj}^{v_j}}$ on $\dot V_{jj}^{v_i}:=V_{ji}^{v_j}\cap(\phi_{ji}^{v_j})^{-1}(V_{ij}^{v_i})$ satisfying, as in \eq{ku6eq42} and \eq{ku7eq33}
\begin{align*}
\phi_{ij}^{v_i}\ci\phi_{ji}^{v_j}\vert_{\dot V_{jj}^{v_j}}&=\id_{V_j}\vert_{\dot V_{jj}^{v_j}}+\hat\mu_{jj}^{v_j}\cdot s_j+O(s_j^2)\;\>\text{and}\\ 
(\phi_{ji}^{v_j})^*(\hat\phi_{ij}^{v_i})\ci\hat\phi_{ji}^{v_j}\vert_{\dot V_{jj}^{v_j}}&=\id_{E_j}\vert_{\dot V_{jj}^{v_j}}+\hat\mu_{jj}^{v_j}\cdot \d s_j+O(s_j)\;\> \text{on $\dot V_{jj}^{v_j}$.}
\end{align*}
Including group invariance/equivariance in the proofs of \S\ref{ku622} is easy.

As $\De_j=\Stab_{\Ga_j}(v_j)$, making $V_{ji}^{v_j}$ smaller if necessary we may assume that $V_{ji}^{v_j}\cap\ga_j(V_{ji}^{v_j})=\es$ for all $\ga_j\in\Ga_j\sm\De_j$. Define a manifold $P_{ji}^x$ by
\e
\begin{split}
&P_{ji}^x=\bigl(V_{ji}^x\t\Ga_i\t\Ga_j\bigr)\big/\De_j,\quad\text{where}\\
&\text{$\De_j$ acts freely by}\quad
\de:(v,\ga_i,\ga_j)\longmapsto (\de\cdot v,\ga_i\si(\de)^{-1},\ga_j\de^{-1}).
\end{split}
\label{ku7eq41}
\e
Define commuting $\Ga_i$- and $\Ga_j$-actions on $P_{ji}^x$ by
\e
\begin{split}
\ga_i'&:\De_j\cdot (v,\ga_i,\ga_j)\longmapsto\De_j\cdot (v,\ga_i'\ga_i,\ga_j),\\
\ga_j'&:\De_j\cdot (v,\ga_i,\ga_j)\longmapsto\De_j\cdot (v,\ga_i,\ga_j'\ga_j),
\end{split}
\label{ku7eq42}
\e
which are well-defined as the $\Ga_i,\Ga_j$-actions on $V_{ji}^x\t\Ga_i\t\Ga_j$ commute with the $\De_j$-action. Define smooth maps $\pi_{ji}^x:P_{ji}^x\ra V_j$ and $\phi_{ji}^x:P_{ji}^x\ra V_i$ by
\e
\begin{split}
\pi_{ji}^x&:\De_j\cdot (v,\ga_i,\ga_j)\longmapsto \ga_j\cdot v,\\
\phi_{ji}^x&:\De_j\cdot (v,\ga_i,\ga_j)\longmapsto \ga_i\cdot \phi_{ji}^{v_j}(v),
\end{split}
\label{ku7eq43}
\e
which are well-defined as the maps $(v,\ga_i,\ga_j)\longmapsto \ga_j\cdot v$, $(v,\ga_i,\ga_j)\longmapsto \ga_i\cdot \phi_{ij}^{v_i}(v)$ are invariant under the $\De_j$-action on $V_{ji}^x\t\Ga_i\t\Ga_j$, as $\phi_{ji}^{v_j}$ is $\si$-equivariant.

Define $\hat\phi_{ji}^x:(\pi_{ji}^x)^*(E_j)\ra(\phi_{ji}^x)^*(E_i)$ by the commutative diagram
\begin{equation*}
\xymatrix@C=40pt@R=15pt{
*+[r]{(\pi_{ji}^x)^*(E_j)\vert_{\De_j\cdot (v,\ga_i,\ga_j)}} \ar[rrr]_{\hat\phi_{ji}^x\vert_{\De_j\cdot (v,\ga_i,\ga_j)}} \ar@<2ex>@{=}[d] &&& *+[l]{(\phi_{ji}^x)^*(E_i)\vert_{\De_j\cdot (v,\ga_i,\ga_j)}} \ar@<-2ex>@{=}[d] \\
*+[r]{ E_j\vert_{\ga_j\cdot v}} \ar[r]^{\ga_j^{-1}\cdot} & E_j\vert_v \ar[r]^{\hat\phi_{ji}^{v_j}\vert_v} & E_i\vert_{\phi_{ji}^{v_j}(v)} \ar[r]^(0.4){\ga_i\cdot} & *+[l]{E_i\vert_{\ga_i\cdot\phi_{ji}^{v_j}(v)}.\!} }
\end{equation*}
Define an open neighbourhood $S^x$ of $x$ in $S\subseteq X$ by
\e
S^x\!=\!\bar\psi_i\bigl(V_{ij}^{v_i}\!\cap\! s_i^{-1}(0)\bigr)\!=\!\bar\psi_j(V_{ji}^{v_j}\!\cap\! s_j^{-1}(0)\bigr)\!=\!\psi_j\bigl(((\Im\pi_{ij}^x)\!\cap\! s_j^{-1}(0))/\Ga_j\bigr),
\label{ku7eq44}
\e
where the second equality holds by \eq{ku7eq38} and the third by \eq{ku7eq41} and \eq{ku7eq43}. We now claim that $\Phi_{ji}^x:=(P_{ji}^x,\pi_{ji}^x,\phi_{ji}^x,\hat\phi_{ji}^x):(V_j,E_j,\Ga_j,s_j,\psi_j)\ra (V_i,E_i,\Ga_i,s_i,\psi_i)$ is a 1-morphism over $S^x$. To check this, note that of Definition \ref{ku4def2}(a)--(f), part (e) follows from \eq{ku7eq39}, and the rest are straightforward.

As above $\dot V_{ii}^{v_i}=V_{ij}^{v_i}\cap(\phi_{ij}^{v_i})^{-1}(V_{ji}^{v_i})$ is a $\De_i$-invariant open neighbourhood of $V_{ij}^{v_i}\cap s_i^{-1}(0)$ in $V_i$ on which $\hat\la_{ii}^{v_i}$ is defined, and $\dot V_{ii}^{v_i}\cap\ga_i(\dot V_{ii}^{v_i})=\es$ for $\ga_i\in\Ga_i\sm\De_i$ as this holds for $V_{ij}^{v_i}$. Define $\dot V_{ii}^x=\bigcup_{\ga_i\in\Ga_i}\ga_i(\dot V_{ii}^{v_i})$. Then $\dot V_{ii}^x$ is a $\Ga_i$-invariant open neighbourhood of $\bar\psi_i^{-1}(S^x)$, by \eq{ku7eq44}, and we can think of $\dot V_{ii}^x$ as $\md{\Ga_i}/\md{\De_i}$ disjoint copies of $\dot V_{ii}^{v_i}$. Set $\dot P_{ii}^x=\dot V_{ii}^x\t\Ga_i\subseteq V_i\t\Ga_i$. As in \eq{ku7eq29}, define $\la_{ii}^x:\dot P_{ii}^x\ra (P_{ij}\t_{\phi_{ij},V_j,\pi_{ji}^x}P_{ji}^x)/\Ga_j$ by
\begin{equation*}
\la_{ii}^x:(v_i,\ga_i)\longmapsto
\Ga_j\cdot \bigl[\ga_i'\cdot(v,1),\De_j\cdot(\phi_{ij}^{v_i}(v),\ga_i\ga_i',1)\bigr],
\end{equation*}
if $v_i=\ga_i'\cdot v$ for $v\in V_{ij}^{v_i}$ and $\ga_i'\in\Ga_i$, using \eq{ku7eq25} to identify $(v,1)\in V_{ij}^{v_i}\t\Ga_i$ with a point of $P_{ij}^{v_i}\subseteq P_{ij}$. Here the decomposition $v_i=\ga_i'\cdot v$ is unique up to $v_i=\ga_i'\cdot v=\ti\ga_i'\cdot\ti v$ with $\ti v=\de\cdot v$ and $\ti\ga_i'=\ga_i'\de^{-1}$ for $\de\in\De_i$. But
\begin{align*}
\Ga_j\cdot &\bigl[\ti\ga_i'\cdot(\ti v,1),\De_j\cdot(\phi_{ij}^{v_i}(\ti v),\ga_i\ti\ga_i',1)\bigr]\\
&=\Ga_j\cdot \bigl[\ga_i'\de^{-1}\cdot(\de\cdot v,1),\De_j\cdot(\phi_{ij}^{v_i}(\de\cdot v),\ga_i\ga_i'\de^{-1},1)\bigr]\\
&=\Ga_j\cdot \bigl[\ga_i'\cdot(\de^{-1}\de\cdot v,\rho(\de)),\De_j\cdot(\rho(\de)\phi_{ij}^{v_i}(v),\ga_i\ga_i'\de^{-1},1)\bigr]\\
&=\Ga_j\cdot \bigl[\ga_i'\cdot(v,\rho(\de)),\De_j\cdot(\phi_{ij}^{v_i}(v),\ga_i\ga_i'\de^{-1}\si(\rho(\de)),\rho(\de))\bigr]\\
&=\Ga_j\cdot \bigl[\ga_i'\cdot(v,1),\De_j\cdot(\phi_{ij}^{v_i}(v),\ga_i\ga_i',1)\bigr],
\end{align*}
using $\phi_{ij}^{v_i}$ $\rho$-equivariant and \eq{ku7eq26} in the second step, the action by $\rho(\de)^{-1}\in\De_j$ in \eq{ku7eq41} in the third, and $\si=\rho^{-1}$ and the actions of $\rho(\de)^{-1}\in\Ga_j$ in \eq{ku7eq25} and \eq{ku7eq42} in the fourth. So $\la_{ii}^x$ is well-defined.

Define a vector bundle morphism $\hat\la_{ii}^x:\pi_{ii}^*(E_i)\vert_{\dot P_{ii}^x}\ra \phi_{ii}^*(TV_i)\vert_{\dot P_{ii}^x}$ over $\dot P_{ii}^x$, for $\pi_{ii},\phi_{ii}$ as in \eq{ku4eq5}, by the commutative diagram for $(v,\ga_i)\in \dot P_{ii}^x=\dot V_{ii}^x\t\Ga_i$
\begin{equation*}
\xymatrix@C=40pt@R=15pt{
*+[r]{\pi_{ii}^*(E_i)\vert_{(v_i,\ga_i)}} \ar[rrr]_{\hat\la_{ii}^x\vert_{(v_i,\ga_i)}} \ar@<2ex>@{=}[d] &&& *+[l]{\phi_{ii}^*(TV_i)\vert_{(v_i,\ga_i)}} \ar@<-2ex>@{=}[d] \\
*+[r]{ E_i\vert_{v_i}} \ar[r]^{(\ga_i')^{-1}\cdot} & E_i\vert_v \ar[r]^{\hat\la_{ii}^{v_i}\vert_v} & T_vV_i \ar[r]^(0.4){\ga_i\ga_i'\cdot} & *+[l]{T_{\ga_i\cdot v_i}V_i} }
\end{equation*}
if $v_i=\ga_i'\cdot v$ for $v\in V_{ij}^{v_i}$ and $\ga_i'\in\Ga_i$. Again, the decomposition $v_i=\ga_i\cdot v$ is only unique up to $v_i=\ga_i'\cdot v=\ti\ga_i'\cdot\ti v$ with $\ti v=\de\cdot v$ and $\ti\ga_i'=\ga_i'\de^{-1}$ for $\de\in\De_i$, but $\hat\la_{ii}^x\vert_{(v_i,\ga_i)}$ is independent of this change by $\de$ as $\hat\la_{ii}^{v_i}$ is $\De_i$-equivariant. 

We now see that $(\dot P_{ii}^x,\la_{ii}^x,\hat\la_{ii}^x)$ satisfies the conditions Definition \ref{ku4def3}(a)--(c) for $\La_{ii}^x=[\dot P_{ii}^x,\la_{ii}^x,\hat\la_{ii}^x]:\id_{(V_i,E_i,\Ga_i,s_i,\psi_i)}\Ra\Phi_{ji}^x\ci\Phi_{ij}\vert_{S^x}$ to be a 2-morphism over $S^x$, since \eq{ku4eq1} follows from \eq{ku7eq40}, and the rest of (a)--(c) are straightforward.

In the same way, if we define $\dot V_{jj}^x=\bigcup_{\ga_j\in\Ga_j}\ga_j(\dot V_{jj}^{v_j})$, and $\dot P_{jj}^x=\dot V_{jj}^x\t\Ga_j\subseteq V_j\t\Ga_j$, and $\mu_{jj}^x:\dot P_{jj}^x\ra (P_{ji}^x\t_{\phi_{ji}^x,V_i,\pi_{ij}}P_{ij})/\Ga_i$ by
\begin{equation*}
\mu_{jj}^x:(v_j,\ga_j)\longmapsto
\Ga_i\cdot \bigl[\De_j\cdot(v,1,\ga_j'),(\phi_{ji}^{v_i}(v),\ga_j\ga_j')\bigr],
\end{equation*}
if $v_j=\ga_j'\cdot v$ for $v\in\dot V_{jj}^{v_j}$ and $\ga_j'\in\Ga_j$, and $\hat\mu_{jj}^x:\pi_{jj}^*(E_j)\vert_{\dot P_{jj}^x}\ra \phi_{jj}^*(TV_j)\vert_{\dot P_{jj}^x}$ by the commutative diagram
\begin{equation*}
\xymatrix@C=40pt@R=15pt{
*+[r]{\pi_{jj}^*(E_j)\vert_{(v_j,\ga_j)}} \ar[rrr]_{\hat\mu_{jj}^x\vert_{(v_j,\ga_j)}} \ar@<2ex>@{=}[d] &&& *+[l]{\phi_{jj}^*(TV_j)\vert_{(v_j,\ga_j)}} \ar@<-2ex>@{=}[d] \\
*+[r]{E_j\vert_{v_j}} \ar[r]^{(\ga_j')^{-1}\cdot} & E_j\vert_v \ar[r]^{\hat\mu_{jj}^{v_j}\vert_v} & T_vV_j \ar[r]^(0.4){\ga_j\ga_j'\cdot} & *+[l]{T_{\ga_j\cdot v_j}V_j} }
\end{equation*}
if $v_j=\ga_j'\cdot v$ for $v\in\dot V_{jj}^{v_j}$ and $\ga_j'\in\Ga_j$, then $\mu_{jj}^x,\hat\mu_{jj}^x$ are well-defined and $\Mu_{jj}^x=[\dot P_{jj}^x,\mu_{jj}^x,\hat\mu_{jj}^x]:\id_{(V_j,E_j,\Ga_j,s_j,\psi_j)}\Ra\Phi_{ij}\vert_{S^x}\ci\Phi_{ji}^x$ is a 2-morphism over~$S^x$.

For each $x\in S$, we have constructed an open neighbourhood $S^x$ of $x$ in $S$, a 1-morphism $\Phi_{ji}^x:(V_j,E_j,\Ga_j,s_j,\psi_j)\ra (V_i,E_i,\Ga_i,s_i,\psi_i)$ over $S^x$, and 2-morphisms $\La_{ii}^x:\id_{(V_i,E_i,\Ga_i,s_i,\psi_i)}\Ra\Phi_{ji}^x\ci\Phi_{ij}\vert_{S^x}$ and $\Mu_{jj}^x:\id_{(V_j,E_j,\Ga_j,s_j,\psi_j)}\Ra\Phi_{ij}\vert_{S^x}\ci\Phi_{ji}^x$ over $S^x$. Hence $\Phi_{ij}\vert_{S^x}$ is an equivalence in $\Kur_{S^x}(X)$, and a coordinate change over $S^x$. Thus by the stack property of $\bcEqu((V_i,E_i,\Ga_i,s_i,\psi_i),\ab(V_j,\ab E_j,\ab\Ga_j,\ab s_j,\ab\psi_j))$ in Theorem \ref{ku4thm1}(a), proved in \S\ref{ku712}, $\Phi_{ij}$ is a coordinate change on $S$. This completes the `if' part, and so the whole of Theorem~\ref{ku4thm2}.

\subsection{Proof of Theorem \ref{ku4thm8}}
\label{ku73}

\subsubsection{Theorem \ref{ku4thm8}(a): $F_\mKur^\muKur:\Ho(\mKur)\ra\muKur$ an equivalence}
\label{ku731}

Use the notation of Definition \ref{ku4def35}. To show $F_\mKur^\muKur:\Ho(\mKur)\ra\muKur$ is an equivalence of categories, we have to prove three things: that $F_\mKur^\muKur$ is faithful (injective on morphisms), and full (surjective on morphisms), and surjective on isomorphism classes of objects.

The proofs of these will involve gluing together 2-morphisms of m-Kuranishi neighbourhoods using families of partitions of unity, so we begin by showing that partitions of unity with the properties we need exist. 

\subsubsection*{A lemma on partitions of unity on $\bX$ in $\muKur$} 

Let $\bX=(X,\cI)$ be a $\mu$-Kuranishi space, with $\cI=\bigl(I,(U_i,D_i,r_i,\chi_i)_{i\in I}$, $\Tau_{ij}=[U_{ij},\tau_{ij},\hat\tau_{ij}]_{i,j\in I}\bigr)$, as in \eq{ku2eq14}. Then $\bigl\{\Im\chi_i:i\in I\bigr\}$ is an open cover of $X$, with $\chi_i:r_i^{-1}(0)\ra \Im\chi_i$ a homeomorphism for each $i\in I$.

Roughly speaking, we want to define a smooth partition of unity $\{\eta_i:i\in I\}$ on $X$ subordinate to $\bigl\{\Im\chi_i:i\in I\bigr\}$, so that $\eta_i:X\ra\R$ is smooth with $\eta_i(X)\subseteq[0,1]$ and $\sum_{i\in I}\eta_i=1$, as in Definition \ref{ku6def}. However, $X$ is not a manifold, so na\"\i vely `$\eta_i:X\ra\R$ is smooth' does not make sense.

In fact we will not work with `smooth functions' $\eta_i$ on $X$ directly, apart from in the proof of Lemma \ref{ku7lem1}. Instead, for each $i\in I$ we want a partition of unity $\{\eta_{ij}:j\in I\}$ on $U_i$, such that $\eta_{ij}\vert_{r_i^{-1}(0)}=\eta_j\ci\chi_i$ for each $j\in I$. Then $\eta_{ij}:U_i\ra\R$ is a smooth function on the manifold $U_i$ in the usual sense. The fact that $\eta_{ik}:U_i\ra\R$ and $\eta_{jk}:U_j\ra\R$ both come from the same $\eta_k:X\ra\R$ is expressed in the condition $\eta_{ik}=\eta_{jk}\ci\tau_{ij}+O(r_i)$ on $U_{ij}\subseteq U_i$ for all $i,j\in I$. So our result Lemma \ref{ku7lem1} is stated using only smooth functions on manifolds.

But to prove Lemma \ref{ku7lem1}, it is convenient to first choose a `smooth partition of unity' $\{\eta_i:i\in I\}$ on $X$ subordinate to $\bigl\{\Im\chi_i:i\in I\bigr\}$, so that $\{\eta_j\ci\chi_i:j\in I\}$ is a partition of unity on $r_i^{-1}(0)\subseteq U_i$, and then extend this from $r_i^{-1}(0)$ to $U_i$. To do this we have to interpret $X$ and $r_i^{-1}(0)$ as some kind of `smooth space'. We do this using $C^\iy$-{\it schemes\/} and $C^\iy$-{\it algebraic geometry}, as in \cite{Joyc4,Joyc5}, which are the foundation of the author's theory of d-manifolds and d-orbifolds in \cite{Joyc6,Joyc7,Joyc8}. Lemma \ref{ku7lem1} is the only result in this book that uses $C^\iy$-algebraic geometry.

\begin{lem} Let\/ $\bX=(X,\cI)$ be a $\mu$-Kuranishi space, with notation \eq{ku2eq14} for $\cI,$ and let\/ $(U_{ij},\tau_{ij},\hat\tau_{ij})$ represent $\Tau_{ij}$ for $i,j\in I,$ with\/ $(U_{ii},\tau_{ii},\hat\tau_{ij})=(U_i,\id_{U_i},\id_{D_i})$. Then for all\/ $i\in I$ we can choose a partition of unity $\{\eta_{ij}:j\in I\}$ on $U_i$ subordinate to the open cover $\{U_{ij}:j\in I\}$ of\/ $U_i,$ as in Definition\/ {\rm\ref{ku6def},} such that for all\/ $i,j,k\in I$ we have
\e
\eta_{ik}\vert_{U_{ij}}=\eta_{jk}\ci\tau_{ij}+O(r_i)\quad\text{on $U_{ij}\subseteq U_i$.}
\label{ku7eq45}
\e 

\label{ku7lem1}
\end{lem}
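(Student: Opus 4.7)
The plan is to produce the $\eta_{ij}$ by lifting a single smooth partition of unity $\{\eta_j:j\in I\}$ from a $C^\iy$-scheme structure on $X$ induced by $\cI$, so that the compatibility \eq{ku7eq45} follows automatically from the fact that each $\eta_j$ is globally defined on $X$. First I would assemble the $\mu$-Kuranishi neighbourhoods into a Hausdorff, paracompact $C^\iy$-scheme $\uX$ in the sense of \cite{Joyc4,Joyc5}: each $(U_i,D_i,r_i,\chi_i)$ gives an affine $C^\iy$-scheme $\uU_i=\mathop{\rm Spec}\nolimits^{C^\iy}\bigl(C^\iy(U_i)/I_{r_i}\bigr)$ with underlying space $\Im\chi_i$, where $I_{r_i}\subseteq C^\iy(U_i)$ is the $C^\iy$-ring ideal $\{\al\cdot r_i:\al\in C^\iy(D_i^*)\}$ of functions which are $O(r_i)$ in the sense of Definition~\ref{ku2def1}. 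Each $\mu$-coordinate change $\Tau_{ij}=[U_{ij},\tau_{ij},\hat\tau_{ij}]$ induces an isomorphism of $C^\iy$-schemes over $\Im\chi_i\cap\Im\chi_j$: Definition~\ref{ku2def3}(d) gives $\tau_{ij}^*(r_j)=\hat\tau_{ij}(r_i)+O(r_i^2)\in I_{r_i}|_{U_{ij}}$, so $\tau_{ij}^*$ descends to the quotient $C^\iy$-rings, and this descent is independent of the choice of representative of $\Tau_{ij}$. The cocycle identity in Definition~\ref{ku2def11}(f) then provides the gluing data, producing $\uX$ together with open embeddings $\uU_i\hookra\uX$.

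Since $X$ is Hausdorff and second countable, hence paracompact, standard $C^\iy$-algebraic geometry furnishes a smooth partition of unity $\{\eta_j:j\in I\}$ on $\uX$ subordinate to the cover $\{\Im\chi_j\}$, with $\eta_j\ge 0$, $\supp\eta_j\subseteq\Im\chi_j$, locally finite supports, and $\sum_j\eta_j=1$. For each $i,j\in I$, the restriction $\eta_j|_{\Im\chi_i}$ is represented by some $\tilde\eta_{ij}\in C^\iy(U_i)$; by modifying this representative within its $I_{r_i}$-equivalence class and multiplying by a suitable cutoff supported in $U_{ij}$, I can arrange that $\tilde\eta_{ij}\ge 0$, $\supp\tilde\eta_{ij}\subseteq U_{ij}$, and that the family $\{\supp\tilde\eta_{ij}\}_{j\in I}$ is locally finite in $U_i$. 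Because $\sum_j\eta_j=1$ on $\uX$, the sum $\Si_i:=\sum_j\tilde\eta_{ij}$ satisfies $\Si_i-1\in I_{r_i}$, i.e.\ $\Si_i=1+O(r_i)$, and in particular $\Si_i>0$ on an open neighbourhood $W_i\supseteq r_i^{-1}(0)$ in $U_i$. Choose a smooth cutoff $\la_i:U_i\to[0,1]$ with $\la_i=1$ on a neighbourhood of $r_i^{-1}(0)$ and $\supp\la_i\subseteq W_i$, and, using that the normalization $(U_{ii},\tau_{ii},\hat\tau_{ii})=(U_i,\id_{U_i},\id_{D_i})$ forces $U_{ii}=U_i$, define $\eta_{ij}:=\la_i\tilde\eta_{ij}/\Si_i$ for $j\ne i$ and $\eta_{ii}:=\la_i\tilde\eta_{ii}/\Si_i+(1-\la_i)$. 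Then $\sum_j\eta_{ij}=\la_i+(1-\la_i)=1$, each $\eta_{ij}\ge 0$, $\supp\eta_{ij}\subseteq U_{ij}$ (the leftover mass $1-\la_i$ is safely routed into $\eta_{ii}$, whose support constraint is $U_{ii}=U_i$), and the family $\{\supp\eta_{ij}\}_{j\in I}$ remains locally finite, so $\{\eta_{ij}:j\in I\}$ is a partition of unity on $U_i$ subordinate to $\{U_{ij}:j\in I\}$.

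For the compatibility \eq{ku7eq45}, both $\eta_{ik}|_{U_{ij}}$ and $\eta_{jk}\ci\tau_{ij}$ are smooth functions on $U_{ij}$ which represent the pullback of the same function $\eta_k\in C^\iy(\uX)$ to the quotient $C^\iy(U_{ij})/I_{r_i}|_{U_{ij}}$, via the two morphisms of $C^\iy$-schemes $\uU_{ij}\hookra\uU_i\hookra\uX$ and $\uU_{ij}\to\uU_j\hookra\uX$; these coincide by the very construction of $\uX$ as the gluing of the $\uU_i$ along the $\mu$-coordinate changes. Hence their difference lies in $I_{r_i}|_{U_{ij}}$, which by the definition of the $O(r_i)$ notation is precisely the required identity $\eta_{ik}|_{U_{ij}}=\eta_{jk}\ci\tau_{ij}+O(r_i)$. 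The main technical obstacle is the bookkeeping in the second paragraph: the cutoff multiplication and the division by $\Si_i$ must be shown not to spoil the $C^\iy$-scheme equivalence class of the lifted representative. This uses crucially that $\la_i$ and $\Si_i$ each agree with the constant function $1$ modulo $I_{r_i}$ on the whole open set where they matter (not merely pointwise on $r_i^{-1}(0)$), together with care in arranging non-negativity of the $\tilde\eta_{ij}$ while remaining in the correct equivalence class. Once these technical points are checked, the lemma follows.
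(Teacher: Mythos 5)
Your overall strategy is the same as the paper's: glue the zero-locus subschemes $\mathop{\rm Spec}\nolimits^{C^\iy}\bigl(C^\iy(U_i)/I_{r_i}\bigr)$ along the maps induced by the $\Tau_{ij}$ (Definition \ref{ku2def3}(d) makes $\tau_{ij}^*$ descend to the quotients, Definition \ref{ku2def11}(f) gives the cocycle condition) into a locally fair $C^\iy$-scheme $\uX$, use existence of partitions of unity on Hausdorff, paracompact, locally fair $C^\iy$-schemes, and transfer back to the $U_i$, with \eq{ku7eq45} following because $\eta_{ik}\vert_{U_{ij}}$ and $\eta_{jk}\ci\tau_{ij}$ represent the same global section modulo $I_{r_i}$. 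Your cutoff and normalization bookkeeping is also sound: multiplying by a cutoff equal to $1$ near $\chi_i^{-1}(\supp\eta_j\cap\Im\chi_i)$, dividing by $\Si_i$ (with $1/\Si_i=1+O(r_i)$ where $\Si_i>0$), and routing $1-\la_i$ into $\eta_{ii}$ using $U_{ii}=U_i$ all preserve the $I_{r_i}$-classes, by the locality of the $O(r_i)$ conditions noted at the end of Definition \ref{ku2def1}. The divergence from the paper is only in how the extension from $\ur_i^{-1}(0)$ to the manifold $U_i$ is performed: the paper quotes it from the \emph{proof} of existence of partitions of unity on $C^\iy$-schemes in \cite[\S 4.5]{Joyc4}, whereas you do it by hand.

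The one step that would fail as you state it is ``arranging non-negativity of the $\tilde\eta_{ij}$ while remaining in the correct equivalence class''. A class in $C^\iy(U_i)/I_{r_i}$ whose values on $r_i^{-1}(0)$ lie in $[0,1]$ need not have a non-negative representative: take $U_i=\R$, $D_i=\R$, $r_i(x)=x^2$, and the class of $h(x)=x$; every representative is $x\bigl(1+x\al(x)\bigr)$ for smooth $\al$, which changes sign at $0$. (Composing with a smooth squashing function does not help: if it fixes the class it must agree with the identity near the value $0$, and then it cannot be non-negative.) So from the bare existence statement for partitions of unity on $\uX$ you cannot conclude that non-negative, $[0,1]$-valued lifts exist, and without them your $\eta_{ij}$ satisfy the support, local finiteness, sum-to-one and compatibility conditions — which is what the applications in \S\ref{ku73} actually use — but not the condition $\eta_{ij}(U_i)\subseteq[0,1]$ required by Definition \ref{ku6def}, so the lemma as stated is not quite established. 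The repair is exactly the paper's move: appeal to the construction in \cite[\S 4.5]{Joyc4} (not merely its statement), which produces the sections $\ueta_j$ with genuine $[0,1]$-valued smooth local representatives and yields directly that a partition of unity on the closed subscheme $\ur_i^{-1}(0)\subseteq\uU_i$ extends to an honest partition of unity on the manifold $U_i$ subordinate to $\{U_{ij}:j\in I\}$.
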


\begin{proof} We use notation and results on $C^\iy$-schemes and $C^\iy$-algebraic geometry from \cite{Joyc4,Joyc5}, in which $C^\iy$-schemes are written $\uX=(X,\O_X)$ for $X$ a topological space and $\O_X$ a sheaf of $C^\iy$-rings on $X$, satisfying certain conditions.

For each $i\in I$, the manifold $U_i$ naturally becomes a $C^\iy$-scheme $\uU_i$, and $r_i^{-1}(0)\subseteq U_i$ becomes the closed $C^\iy$-subscheme $\ur_i^{-1}(0)$ in $\uU_i$ defined by the equation $r_i=0$. If $i,j\in I$ and $(U_{ij},\tau_{ij},\hat\tau_{ij})$ represents $\Tau_{ij}$, then $\hat\tau_{ij}(r_i\vert_{U_{ij}})=\tau_{ij}^*(r_j)+O(r_i^2)$ on $U_{ij}$ by Definition \ref{ku2def3}(d). This implies that $\utau_{ij}:\uU_{ij}\ra\uU_j$ restricts to an isomorphism of $C^\iy$-schemes
\e
\utau_{ij}\vert_{\uU_{ij}\cap \ur_i^{-1}(0)}:\uU_{ij}\cap \ur_i^{-1}(0)\ra \uU_{ji}\cap\ur_j^{-1}(0).
\label{ku7eq46}
\e

We now have a topological space $X$, an open cover $\{\Im\chi_i:i\in I\}$ on $X$, $C^\iy$-schemes $\ur_i^{-1}(0)$ with underlying topological spaces $r_i^{-1}(0)$ and homeomorphisms $\chi_i:r_i^{-1}(0)\ra \Im\chi_i\subseteq X$ for all $i\in I$, and isomorphisms of $C^\iy$-schemes \eq{ku7eq46} lifting the homeomorphisms $\chi_j^{-1}\ci\chi_i:U_{ij}\cap r_i^{-1}(0)\ra U_{ji}\cap r_j^{-1}(0)$ over double overlaps $\Im\chi_i\cap\Im\chi_j\subseteq X$. From $\Tau_{jk}\ci\Tau_{ij}=\Tau_{ik}$ in Definition \ref{ku2def11}(f), we deduce that the isomorphisms \eq{ku7eq46} have the obvious composition property $\utau_{jk}\vert_{\cdots}\ci\utau_{ij}\vert_{\cdots}=\utau_{ik}\vert_{\cdots}$ over triple overlaps~$\Im\chi_i\cap\Im\chi_j\cap\Im\chi_k\subseteq X$.

Standard results on schemes (actually, just the fact that sheaves of $C^\iy$-rings on $X$ form a stack on $X$) imply that $X$ may be made into a $C^\iy$-scheme $\uX$, uniquely up to unique isomorphism, and the homeomorphisms $\chi_i:r_i^{-1}(0)\ra \Im\chi_i\subseteq X$ upgraded to $C^\iy$-scheme morphisms $\uchi_i:\ur_i^{-1}(0)\ra\uX$ which are isomorphisms with open $C^\iy$-subschemes $\Im\uchi_i\subseteq\uX$ for $i\in I$, such that
\e
\uchi_j\ci\utau_{ij}\vert_{\uU_{ij}\cap \ur_i^{-1}(0)}=\uchi_i\vert_{\uU_{ij}\cap\ur_i^{-1}(0)}
\label{ku7eq47}
\e
for all $i,j\in I$. This $\uX$ is {\it locally fair\/} \cite[\S 4.5]{Joyc4}, \cite[\S 3.1]{Joyc5} (locally finite-dimensional) as the $\ur_i^{-1}(0)$ are, since they are $C^\iy$-subschemes of manifolds~$\uU_i$.

Now \cite[Prop.~4.35]{Joyc4} and \cite[Prop.~3.6]{Joyc5} say that if $\uX=(X,\O_X)$ is a locally fair $C^\iy$-scheme with $X$ Hausdorff and paracompact, and $\{\uX_i:i\in I\}$ is an open cover of $\uX$, then there exists a locally finite partition of unity $\{\ueta_i:i\in I\}$ on $\uX$ subordinate to $\{\uX_i\in I\}$, in the sense of \cite[Def.~4.34]{Joyc4}, \cite[Def.~3.5]{Joyc5}. By definition $X$ is Hausdorff and second countable, and it is locally compact as it has a cover by $\mu$-Kuranishi neighbourhoods, so it is paracompact. Thus we can choose a partition of unity $\{\ueta_i:i\in I\}$ on $\uX$ subordinate to~$\{\Im\uchi_i:i\in I\}$.

Then for each $i\in I$, $\{\ueta_j\ci\uchi_i:j\in I\}$ is a partition of unity on the $C^\iy$-scheme $\ur_i^{-1}(0)$ subordinate to the open cover $\bigl\{\uU_{ij}\cap \ur_i^{-1}(0):j\in J\bigr\}$. From the proof of the existence of partitions of unity on $C^\iy$-schemes \cite[\S 4.5]{Joyc4}, we see that a partition of unity on $\ur_i^{-1}(0)\subseteq\uU_i$ subordinate to $\bigl\{\uU_{ij}\cap \ur_i^{-1}(0):j\in J\bigr\}$ can be extended to a partition of unity on $\uU_i$ subordinate to $\bigl\{\uU_{ij}:j\in J\bigr\}$, which is a partition of unity on $U_i$ in the sense of Definition \ref{ku6def}, as $U_i$ is a manifold. 

Thus, for all $i\in I$ we can choose a partition of unity $\{\eta_{ij}:j\in I\}$ on $U_i$ subordinate to $\{U_{ij}:j\in I\}$, such that $\eta_{ij}\vert_{\ur_i^{-1}(0)}=\ueta_j\ci\uchi_i$ for all $j\in I$, in the sense of $C^\iy$-schemes. If $i,j,k\in I$ then
\begin{equation*}
\eta_{ik}\vert_{\uU_{ij}\cap\ur_i^{-1}(0)}\!=\!\ueta_k\ci\uchi_i\vert_{\uU_{ij}\cap\ur_i^{-1}(0)}\!=\!\ueta_k\ci\uchi_j\ci\utau_{ij}\vert_{\uU_{ij}\cap \ur_i^{-1}(0)}\!=\!\eta_{jk}\ci\tau_{ij}\vert_{\uU_{ij}\cap\ur_i^{-1}(0)},
\end{equation*} 
using \eq{ku7eq47}. But $f\vert_{\uU_{ij}\cap\ur_i^{-1}(0)}=g\vert_{\uU_{ij}\cap\ur_i^{-1}(0)}$ for smooth $f,g:U_i\ra\R$ is equivalent to $f=g+O(r_i)$ on $U_{ij}$, so equation \eq{ku7eq45} follows.
\end{proof}

\subsubsection*{$F_\mKur^\muKur$ is faithful} 

Let $\bs f,\bs g:\bX\ra\bY$ be 1-morphisms in $\mKur$, so that $[\bs f],[\bs g]:\bX\ra\bY$ are morphisms in $\Ho(\mKur)$. Write $\bX',\bY',\bs f',\bs g'$ for the images of $\bX,\bY,[\bs f],[\bs g]$ under $F_\mKur^\muKur$. Suppose $\bs f'=\bs g'$. We must show that $[\bs f]=[\bs g]$, that is, that there exists a 2-morphism $\bs\mu:\bs f\Ra\bs g$ in $\mKur$.

Use notation
\ea
\bX&=(X,\cI),& \cI&=\bigl(I,(U_i,D_i,r_i,\chi_i)_{i\in I},\;
\Tau_{ii'}=(U_{ii'},\tau_{ii'},\hat\tau_{ii'})_{i,i'\in I},\nonumber\\
&&& \qquad \Ka_{ii'i''}=[\dot U_{ii'i''},\hat\ka_{ii'i''}]_{i,i',i''\in I}\bigr),
\label{ku7eq48}\\
\bY&=(Y,\cJ),& \cJ&=\bigl(J,(V_j,E_j,s_j,\psi_j)_{j\in J},\; \Up_{jj'}=(V_{jj'},\up_{jj'},\hat\up_{jj'})_{j,j'\in J},\nonumber\\
&&& \qquad \La_{jj'j''}=[\dot V_{jj'j''},\hat\la_{jj'j''}]_{j,j',j''\in J}\bigr),
\label{ku7eq49}
\ea
and as in Definition \ref{ku4def13} write
\begin{align*}
\bs f&=\bigl(f,\bs f_{ij,\;i\in I,\; j\in J},\; \bs F_{ii',\;i,i'\in I}^{j,\; j\in J},\; \bs F_{i,\;i\in I}^{jj',\; j,j'\in J}\bigr)\;\>\text{with}\;\>\bs f_{ij}=(U_{ij},f_{ij},\hat f_{ij}),\\
\bs g&=\bigl(g,\bs g_{ij,\;i\in I,\; j\in J},\; \bs G_{ii',\;i,i'\in I}^{j,\; j\in J},\; \bs G_{i,\;i\in I}^{jj',\; j,j'\in J}\bigr)\;\>\text{with}\;\>\bs g_{ij}=(\ti U_{ij},g_{ij},\hat g_{ij}).
\end{align*}
Then $\bs f'=\bs g'$ means that $f=g$, and for all $i\in I$ and $j\in J$
\begin{equation*}
[U_{ij},f_{ij},\hat f_{ij}]=[\ti U_{ij},g_{ij},\hat g_{ij}]
\end{equation*}
as morphisms $(U_i,D_i,r_i,\chi_i)\ra (V_j,E_j,s_j,\psi_j)$ of $\mu$-Kuranishi neighbourhoods over $(S,f)$ in the sense of \S\ref{ku21}, where~$S=\Im\chi_i\cap f^{-1}(\Im\psi_j)$.

So by Definition \ref{ku2def3} there exists an open neighbourhood $\dot U_{ij}$ of $\chi_i^{-1}(S)$ in $\check U_{ij}:=U_{ij}\cap\ti U_{ij}$ and a morphism $\hat\la_{ij}:D_i\vert_{\dot U_{ij}}\ra f_{ij}^*(TV_j)\vert_{\dot U_{ij}}$ satisfying
\e
\begin{split}
g_{ij}&=f_{ij}+\hat\la_{ij}\cdot r_i+O(r_i^2)
\;\>\text{and}\\
\hat g_{ij}&=\hat f_{ij}+\hat\la_{ij}\cdot f_{ij}^*(\d s_j)+O(r_i)\;\> \text{on $\dot U_{ij}$.}
\end{split}
\label{ku7eq50}
\e
Definition \ref{ku4def31} now implies that
\e
\bs\la_{ij}=[\dot U_{ij},\hat\la_{ij}]:\bs f_{ij}\Longra\bs g_{ij}
\label{ku7eq51}
\e
is a 2-morphism of m-Kuranishi neighbourhoods over $(S,f)$ in the sense of \S\ref{ku47}.

We would like $\bs\la=\bigl(\bs\la_{ij,\; i\in I,\; j\in J}\bigr):\bs f\Ra\bs g$ to be a 2-morphism of m-Kuranishi spaces, but there is a problem: as the $\bs\la_{ij}$ are chosen arbitrarily, they have no compatibility with the $\bs F_{ii'}^j,\bs F_i^{jj'},\bs G_{ii'}^j,\bs G_i^{jj'}$, so Definition \ref{ku4def14}(a),(b) may not hold for $\bs\la$. We will define a modified version $\bs\mu=\bigl(\bs\mu_{ij,\; i\in I,\; j\in J}\bigr)$ of $\bs\la$ which does have the required compatibility.

For $i,\ti\imath\in I$ and $j,\ti\jmath\in I$, define $\bs\la_{i\ti\imath}^{\ti\jmath j}$ to be the horizontal composition of 2-morphisms over $S=\Im\chi_i\cap \Im\chi_{\ti\imath}\cap f^{-1}(\Im\psi_j\cap\Im\psi_{\ti\jmath})$ and $f:X\ra Y$
\e
\xymatrix@C=34pt{\bs f_{ij} \ar@{=>}[rr]^(0.45){(\bs F_i^{\ti\jmath j}\od(\id*\bs F_{i\ti\imath}^{\ti\jmath}))^{-1}}_(0.45){=(\bs F_{i\ti\imath}^{j}\od(\bs F_{\ti\imath}^{\ti\jmath j}*\id)^{-1}} && {\begin{subarray}{l}\ts\Up_{\ti\jmath j}\ci \\ \ts\bs f_{\ti\imath \ti\jmath}\!\ci\!\Tau_{i\ti\imath}\end{subarray}} \ar@{=>}[r]^(0.45){\id*\bs\la_{\ti\imath \ti\jmath}*\id} & {\begin{subarray}{l}\ts\Up_{\ti\jmath j}\ci \\ \ts\bs g_{\ti\imath \ti\jmath}\!\ci\!\Tau_{i\ti\imath}\end{subarray}} \ar@{=>}[rr]^(0.55){\bs G_i^{\ti\jmath j}\od(\id*\bs G_{i\ti\imath}^{\ti\jmath})}_(0.55){=\bs G_{i\ti\imath}^j\od(\bs G_{\ti\imath}^{\ti\jmath j} *\id))} && \bs g_{ij}, }\!\!\!{}
\label{ku7eq52}
\e
where the alternative expressions for the first and third 2-morphisms come from Definition \ref{ku4def13}(g), but omitting terms like $\bs\al_{\Up_{jj'},\bs f_{i'j},\Tau_{ii'}}$ since $\mKur_S(X)$ is a strict 2-category as in \S\ref{ku47}, rather than a weak 2-category like $\Kur_S(X)$ in~\S\ref{ku41}.

Define $\dot U_{i\ti\imath}^{\ti\jmath j}=\check U_{ij}\cap U_{i\ti\imath}\cap f_{ij}^{-1}(V_{j\ti\jmath})$. Then we may write
\e
\bs\la_{i\ti\imath}^{\ti\jmath j}\!=\!\bigl[\dot U_{i\ti\imath}^{\ti\jmath j},\hat\la_{i\ti\imath}^{\ti\jmath j}\bigr],
\label{ku7eq53}
\e
for $\hat\la_{i\ti\imath}^{\ti\jmath j}:D_i\vert_{\dot U_{i\ti\imath}^{\ti\jmath j}}\ra f_{ij}^*(TV_j)\vert_{\dot U_{i\ti\imath}^{\ti\jmath j}}$ a morphism satisfying the analogue of~\eq{ku7eq50}.

Apply Lemma \ref{ku7lem1} to $\bX'=F_\mKur^\muKur(\bX)$, using $(U_{ii'},\tau_{ii'},\hat\tau_{ii'})$ to represent $\Tau'_{ii'}$. This gives a partition of unity $\{\eta_{i\ti\imath}:\ti\imath\in I\}$ on $U_i$ subordinate to $\{U_{i\ti\imath}:\ti\imath\in I\}$ for each $i\in I$, such that for all $i,i',\ti\imath\in I$ we have
\e
\eta_{i\ti\imath}\vert_{U_{ii'}}=\eta_{i'\ti\imath}\ci\tau_{ii'}+O(r_i)\quad\text{on $U_{ii'}\subseteq U_i$.}
\label{ku7eq54}
\e 
Similarly, applying Lemma \ref{ku7lem1} to $\bY'=F_\mKur^\muKur(\bY)$ gives a partition of unity $\{\ze_{j\ti\jmath}:\ti\jmath\in J\}$ on $V_j$ subordinate to $\{V_{j\ti\jmath}:\ti\jmath\in J\}$ for each $j\in J$, such that for all $j,j',\ti\jmath\in J$ we have
\begin{equation*}
\ze_{j\ti\jmath}\vert_{V_{jj'}}=\ze_{j'\ti\jmath}\ci\up_{jj'}+O(s_j)\quad\text{on $V_{jj'}\subseteq V_j$.}
\end{equation*} 

For all $i\in I$ and $j\in J$, define $\hat\mu_{ij}:D_i\vert_{\check U_{ij}}\longra f_{ij}^*(TV_j)\vert_{\check U_{ij}}$ by
\e
\hat\mu_{ij}=\ts\sum_{\ti\imath\in I}\sum_{\ti\jmath\in J} \eta_{i\ti\imath}\vert_{\check U_{ij}}\cdot f_{ij}^*(\ze_{j\ti\jmath})\vert_{\check U_{ij}}\cdot \hat\la_{i\ti\imath}^{\ti\jmath j}.
\label{ku7eq55}
\e
Here $\eta_{i\ti\imath}\vert_{\check U_{ij}}\cdot f_{ij}^*(\ze_{j\ti\jmath})\vert_{\check U_{ij}}\cdot \hat\la_{i\ti\imath}^{\ti\jmath j}$ extends smoothly by zero from the domain $\dot U_{i\ti\imath}^{\ti\jmath j}=\check U_{ij}\cap U_{i\ti\imath}\cap f_{ij}^{-1}(V_{j\ti\jmath})$ of $\hat\la_{i\ti\imath}^{\ti\jmath j}$ to $\check U_{ij}$ as $\supp \eta_{i\ti\imath}\subseteq U_{i\ti\imath}$ and $\supp f_{ij}^*(\ze_{j\ti\jmath})\subseteq f_{ij}^{-1}(V_{j\ti\jmath})$, so each term in \eq{ku7eq55} makes sense, and \eq{ku7eq55} is a locally finite sum as $\sum_{\ti\imath\in I}\eta_{i\ti\imath}$, $\sum_{\ti\jmath\in J}\ze_{j\ti\jmath}$ are. Thus $\hat\mu_{ij}$ is well-defined.

We now claim that
\e
\bs\mu_{ij}:=[\check U_{ij},\hat\mu_{ij}]:\bs f_{ij}\Longra\bs g_{ij}
\label{ku7eq56}
\e
is a 2-morphism over $(S,f)$ in the sense of \S\ref{ku47} for $S=\Im\chi_i\cap f^{-1}(\Im\psi_j)$. To see this, note that $\hat\la_{i\ti\imath}^{\ti\jmath j}$ satisfies the analogue of \eq{ku7eq50} over $\dot U_{i\ti\imath}^{\ti\jmath j}$ with $\hat\la_{i\ti\imath}^{\ti\jmath j}$ in place of $\hat\la_{ij}$, and as for $\La$ in \eq{ku6eq2}--\eq{ku6eq3}, multiplying the analogue of \eq{ku7eq50} for $\hat\la_{i\ti\imath}^{\ti\jmath j}$ by $\eta_{i\ti\imath}\cdot f_{ij}^*(\ze_{j\ti\jmath})$ and summing over $\ti\imath\in I$ and $\ti\jmath\in J$ gives the analogue of \eq{ku7eq50} for $\hat\mu_{ij}$, which is equivalent in this case to \eq{ku4eq47} for~$\bs\mu_{ij}$.

We will show that $\bs\mu=\bigl(\bs\mu_{ij,\; i\in I,\; j\in J}\bigr):\bs f\Ra\bs g$ is a 2-morphism in $\mKur$. We must verify the analogues of Definition \ref{ku4def14}(a),(b). Part (a) says that for all $i,i'\in I$ and $j\in J$ the following diagram of 2-morphisms commutes:
\e
\begin{gathered}
\xymatrix@C=100pt@R=15pt{
*+[r]{\bs f_{i'j}\ci\Tau_{ii'}} \ar@{=>}[r]_{\bs F_{ii'}^j} \ar@{=>}[d]^{\bs \mu_{i'j}*\id} & *+[l]{\bs f_{ij}} \ar@{=>}[d]_{\bs\mu_{ij}} \\
*+[r]{\bs g_{i'j}\ci\Tau_{ii'}} \ar@{=>}[r]^{\bs G_{ii'}^j} & *+[l]{\bs g_{ij}.\!} }
\end{gathered}
\label{ku7eq57}
\e
To prove this, for $\ti\imath\in I$ and $\ti\jmath\in J$ consider the diagram
\e
\begin{gathered}
\xymatrix@!0@C=72pt@R=26pt{
*+[r]{\bs f_{i'j}\ci\Tau_{ii'}} \ar@{=>}[rrrr]_{\bs F_{ii'}^j} \ar@{=>}[ddddd]^{\bs \la_{i'\ti\imath}^{\ti\jmath j}*\id} &&&& *+[l]{\bs f_{ij}} \ar@{=>}[ddddd]_{\bs\la_{i\ti\imath}^{\ti\jmath j}} 
\\
& \bs f_{\ti\imath j}\!\ci\!\Tau_{i'\ti\imath}\!\ci\!\Tau_{ii'} \ar@{=>}[ul]_(0.3){\bs F_{i'\ti\imath}^j*\id} \ar@{=>}[rr]^{\id*\Ka_{ii'\ti\imath}} && \bs f_{\ti\imath j}\!\ci\!\Tau_{i\ti\imath} \ar@{=>}[ur]^(0.3){\bs F_{i\ti\imath}^j} 
\\
& \Up_{\ti\jmath j}\!\ci\!\bs f_{\ti\imath\ti\jmath}\!\ci\!\Tau_{i'\ti\imath}\!\ci\!\Tau_{ii'} \ar@{=>}[u]_{\bs F_{\ti\imath}^{\ti\jmath j}*\id} \ar@{=>}[d]^{\id*\bs\la_{\ti\imath\ti\jmath}*\id} \ar@{=>}[rr]^{\id*\Ka_{ii'\ti\imath}} && \Up_{\ti\jmath j}\!\ci\!\bs f_{\ti\imath\ti\jmath}\!\ci\!\Tau_{i\ti\imath} \ar@{=>}[d]_{\id*\bs\la_{\ti\imath\ti\jmath}*\id} \ar@{=>}[u]^{\bs F_{\ti\imath}^{\ti\jmath j}*\id} \\
& \Up_{\ti\jmath j}\!\ci\!\bs g_{\ti\imath\ti\jmath}\!\ci\!\Tau_{i'\ti\imath}\!\ci\!\Tau_{ii'} \ar@{=>}[d]^{\bs G_{\ti\imath}^{\ti\jmath j}*\id} \ar@{=>}[rr]^{\id*\Ka_{ii'\ti\imath}} && \Up_{\ti\jmath j}\!\ci\!\bs g_{\ti\imath\ti\jmath}\!\ci\!\Tau_{i\ti\imath} \ar@{=>}[d]_{\bs G_{\ti\imath}^{\ti\jmath j}*\id}
\\
& \bs g_{\ti\imath j}\!\ci\!\Tau_{i'\ti\imath}\!\ci\!\Tau_{ii'} \ar@{=>}[dl]^(0.3){\bs G_{i'\ti\imath}^j*\id} \ar@{=>}[rr]^{\id*\Ka_{ii'\ti\imath}} && \bs g_{\ti\imath j}\!\ci\!\Tau_{i\ti\imath} \ar@{=>}[dr]_(0.3){\bs G_{i\ti\imath}^j} 
\\*+[r]{\bs g_{i'j}\ci\Tau_{ii'}} \ar@{=>}[rrrr]^{\bs G_{ii'}^j} &&&& *+[l]{\bs g_{ij}.\!} }\!\!\!{}
\end{gathered}
\label{ku7eq58}
\e
Here the hexagons commute by the definition \eq{ku7eq52} of $\bs\la_{i\ti\imath}^{\ti\jmath j}$, the top and bottom quadrilaterals by Definition \ref{ku4def13}(f) for $\bs f,\bs g$, and the central rectangles by compatibility of horizontal and vertical composition. Thus \eq{ku7eq58} commutes.

As in \eq{ku7eq51} and \eq{ku7eq53}, write
\begin{equation*}
\bs F_{ii'}^j=[\dot U_{ii'}^j,\hat F_{ii'}^j],
\end{equation*}
where $\dot U_{ii'}^j$ is an open neighbourhood of $\chi_i^{-1}(\Im\chi_i\cap\Im\chi_{i'}\cap f^{-1}(\Im\psi_j))$ in $U_{ii'}\cap \tau_{ii'}^{-1}(U_{i'j})\cap U_{ij}\subseteq U_i$, and $\hat F_{ii'}^j:E_i\vert_{\dot U_{ii'}^j}\ra f_{i'j}\vert_{\dot U_{ii'}^j}^*(TV_j)$ is a morphism. Write $\bs G_{ii'}^j$ in the same way, using $\ddot U_{ii'}^j$ and~$\hat G_{ii'}^j:E_i\vert_{\ddot U_{ii'}^j}\ra g_{i'j}\vert_{\ddot U_{ii'}^j}^*(TV_j)$. 

Then by \eq{ku4eq48}, the outer rectangle of \eq{ku7eq58} commuting is equivalent to
\e
\hat\la_{i\ti\imath}^{\ti\jmath j}+\hat F_{ii'}^j=\hat G_{ii'}^j+\tau_{ii'}^*(\hat\la_{i'\ti\imath}^{\ti\jmath j})\ci\hat\tau_{ii'}+O(r_i)
\label{ku7eq59}
\e
on $\dot U_{i\ti\imath}^{\ti\jmath j}\cap \dot U_{ii'}^j\cap \ddot U_{ii'}^j\cap\tau_{ii'}^{-1}(\dot U_{i'\ti\imath}^{\ti\jmath j})$. Multiply \eq{ku7eq59} by $\eta_{i\ti\imath}\cdot f_{ij}^*(\ze_{j\ti\jmath})$ and sum over $\ti\imath\in I$, $\ti\jmath\in J$. Using \eq{ku7eq54} and \eq{ku7eq55} for $i,j$ and $i',j$ gives 
\e
\hat\mu_{ij}+\hat F_{ii'}^j=\hat G_{ii'}^j+\tau_{ii'}^*(\hat\mu_{i'j})\ci\hat\tau_{ii'}+O(r_i)
\label{ku7eq60}
\e
on $\check U_{ij}\cap\dot U_{ii'}^j\cap \ddot U_{ii'}^j\cap\tau_{ii'}^{-1}(\check U_{i'j})$. But by \eq{ku4eq48}, this is equivalent to \eq{ku7eq57} commuting. This proves Definition \ref{ku4def14}(a) for $\bs\mu$, and part (b) is similar. Hence $\bs\mu:\bs f\Ra\bs g$ is a 2-morphism, so $[\bs f]=[\bs g]$ as morphisms in $\Ho(\mKur)$, and $F_\mKur^\muKur$ is faithful, as we have to prove.

\subsubsection*{$F_\mKur^\muKur$ is full} 

Let $\bX,\bY$ be objects in $\mKur$, and write $\bX'=F_\mKur^\muKur(\bX)$, $\bY'=F_\mKur^\muKur(\bY)$. Suppose $\bs f':\bX'\ra\bY'$ is a morphism in $\muKur$. We must show that there exists a 1-morphism $\bs f:\bX\ra\bY$ in $\mKur$ with $F_\mKur^\muKur([\bs f])=\bs f'$.

Use notation \eq{ku7eq48}--\eq{ku7eq49} for $\bX,\bY$, as in \S\ref{ku23} write $\bs f'=\bigl(f,\bs f'_{ij,\;i\in I,\; j\in J}\bigr)$, and let $(U_{ij},f_{ij},\hat f_{ij})$ represent $\bs f_{ij}'$ for all $i\in I$ and $j\in J$, so we have a morphism of $\mu$-Kuranishi neighbourhoods over $f$, as in \S\ref{ku21}:
\begin{equation*}
\bs f_{ij}'=[U_{ij},f_{ij},\hat f_{ij}]:(U_i,D_i,r_i,\chi_i)\longra (V_j,E_j,s_j,\psi_j).
\end{equation*}
Define a 1-morphism of m-Kuranishi neighbourhoods over $f$, as in \S\ref{ku47}:
\e
\bs f_{ij}=(U_{ij},f_{ij},\hat f_{ij}):(U_i,D_i,r_i,\chi_i)\longra (V_j,E_j,s_j,\psi_j).
\label{ku7eq61}
\e

Definition \ref{ku2def13}(a),(b) for $\bs f'$ imply that $\bs f_{\ti\imath j}'\ci\Tau'_{i\ti\imath}=\bs f_{ij}'$ and $\Up'_{j\ti\jmath}\ci\bs f'_{ij}=\bs f'_{i\ti\jmath}$. Therefore $\Up'_{\ti\jmath j}\ci\bs f_{\ti\imath\ti\jmath}'\ci\Tau'_{i\ti\imath}=\bs f_{ij}'$. The argument constructing $\bs\la_{ij}$ in \eq{ku7eq51} now shows that there exist 2-morphisms of m-Kuranishi neighbourhoods over $f$ 
\begin{equation*}
\bs\la_{i\ti\imath}^{\ti\jmath j}:\Up_{\ti\jmath j}\ci\bs f_{\ti\imath\ti\jmath}\ci\Tau_{i\ti\imath}\Longra \bs f_{ij}
\end{equation*}
for all $i,\ti\imath\in I$ and $j,\ti\jmath\in J$. For $i,i',\ti\imath\in I$ and $j,j',\ti\jmath\in J$, define 2-morphisms $\bs F_{ii'(\ti\imath)}^{j(\ti\jmath)}:\bs f_{i'j}\ci\Tau_{ii'}\Ra\bs f_{ij}$ over $(S,f)$ for $S=\Im\chi_i\cap\ab\Im\chi_{i'}\cap\ab\Im\chi_{\ti\imath}\cap\ab f^{-1}(\Im\psi_j\cap\ab\Im\psi_{\ti\jmath})$ and $\bs F_{i(\ti\imath)}^{jj'(\ti\jmath)}:\Up_{jj'}\ci\bs f_{ij}\Ra\bs f_{ij'}$ over $(S,f)$ for $S=\Im\chi_i\cap\ab\Im\chi_{\ti\imath}\cap\ab f^{-1}(\Im\psi_j\cap\ab\Im\psi_{j'}\cap\ab\Im\psi_{\ti\jmath})$ by the commutative diagrams
\ea
\begin{gathered}
\xymatrix@C=190pt@R=17pt{
*+[r]{\bs f_{i'j}\ci\Tau_{ii'}} \ar@{=>}[r]_(0.68){\bs F_{ii'(\ti\imath)}^{j(\ti\jmath)}} \ar@{=>}[d]^{(\bs\la_{i'\ti\imath}^{\ti\jmath j})^{-1}*\id_{\Tau_{ii'}}} & *+[l]{\bs f_{ij}} \\
*+[r]{\Up_{\ti\jmath j}\ci\bs f_{\ti\imath\ti\jmath}\ci\Tau_{i'\ti\imath}\ci\Tau_{ii'}} 
\ar@{=>}[r]^{\id_{\Up_{\ti\jmath j}\ci\bs f_{\ti\imath\ti\jmath}}*\Ka_{ii'\ti\imath}} 
& *+[l]{\Up_{\ti\jmath j}\ci\bs f_{\ti\imath\ti\jmath}\ci\Tau_{i\ti\imath},} \ar@{=>}[u]^{\bs\la_{i\ti\imath}^{\ti\jmath j}} }
\end{gathered}
\label{ku7eq62}\\
\begin{gathered}
\xymatrix@C=190pt@R=15pt{
*+[r]{\Up_{jj'}\ci\bs f_{ij}} \ar@{=>}[r]_(0.68){\bs F_{i(\ti\imath)}^{jj'(\ti\jmath)}} \ar@{=>}[d]^{\id_{\Up_{jj'}}*(\bs\la_{i\ti\imath}^{\ti\jmath j'})^{-1}} & *+[l]{\bs f_{ij'}} \\
*+[r]{\Up_{jj'}\ci\Up_{\ti\jmath j}\ci\bs f_{\ti\imath\ti\jmath}\ci\Tau_{i\ti\imath}} 
\ar@{=>}[r]^{\La_{\ti\jmath jj'}*\id_{\bs f_{\ti\imath\ti\jmath}\ci\Tau_{i\ti\imath}}} 
& *+[l]{\Up_{\ti\jmath j'}\ci\bs f_{\ti\imath\ti\jmath}\ci\Tau_{i\ti\imath}.} \ar@{=>}[u]^{\bs\la_{i\ti\imath}^{\ti\jmath j'}} }
\end{gathered}
\nonumber
\ea

As in \eq{ku7eq51} and \eq{ku7eq53}, write
\begin{equation*}
\bs F_{ii'(\ti\imath)}^{j(\ti\jmath)}=[\dot U_{ii'(\ti\imath)}^{j(\ti\jmath)},\hat F_{ii'(\ti\imath)}^{j(\ti\jmath)}],
\end{equation*}
where $\dot U_{ii'(\ti\imath)}^{j(\ti\jmath)}=U_{ij}\cap U_{ii'}\cap \tau_{ii'}^{-1}(U_{i'j})\cap U_{i\ti\imath} \cap f_{ij}^{-1}(V_{j\ti\jmath})\subseteq U_i$, and $\hat F_{ii'(\ti\imath)}^{j(\ti\jmath)}:D_i\vert_{\dot U_{ii'(\ti\imath)}^{j(\ti\jmath)}}\ra (f_{i'j}\ci\tau_{ii'})^*(TV_j)\vert_{\dot U_{ii'(\ti\imath)}^{j(\ti\jmath)}}$ is a morphism, and
\begin{equation*}
\bs F_{i(\ti\imath)}^{jj'(\ti\jmath)}=[\dot U_{i(\ti\imath)}^{jj'(\ti\jmath)},\hat F_{i(\ti\imath)}^{jj'(\ti\jmath)}],
\end{equation*}
where $\dot U_{i(\ti\imath)}^{jj'(\ti\jmath)}\!=\!U_{ij}\!\cap\! U_{ij'}\!\cap\! f_{ij}^{-1}(V_{jj'})\!\cap\!\ab U_{i\ti\imath}\ab\!\cap\ab\!f_{ij'}^{-1}(V_{j'\ti\jmath})$, and $\hat F_{i(\ti\imath)}^{jj'(\ti\jmath)}:D_i\vert_{\dot U_{ii'(\ti\imath)}^{j(\ti\jmath)}}\!\ra\! (\up_{jj'}\ci f_{ij})^*(TV_{j'})\vert_{\dot U_{ii'(\ti\imath)}^{j(\ti\jmath)}}$.

Apply Lemma \ref{ku7lem1} to $\bX'$, using $(U_{ii'},\tau_{ii'},\hat\tau_{ii'})$ to represent $\Tau'_{ij}$. This gives a partition of unity $\{\eta_{i\ti\imath}:\ti\imath\in I\}$ on $U_i$ subordinate to $\{U_{i\ti\imath}:\ti\imath\in I\}$ for each $i\in I$, such that for all $i,i',\ti\imath\in I$ we have
\e
\eta_{i\ti\imath}\vert_{U_{i\ti\imath}}=\eta_{i'\ti\imath}\ci\tau_{ii'}+O(r_i)\quad\text{on $U_{ii'}\subseteq U_i$.}
\label{ku7eq63}
\e 
Similarly, applying Lemma \ref{ku7lem1} to $\bY'$ gives a partition of unity $\{\ze_{j\ti\jmath}:\ti\jmath\in J\}$ on $V_j$ subordinate to $\{V_{j\ti\jmath}:\ti\jmath\in J\}$ for $j\in J$, such that for all $j,j',\ti\jmath\in J$ we have
\begin{equation*}
\ze_{j\ti\jmath}\vert_{V_{jj'}}=\ze_{j'\ti\jmath}\ci\up_{jj'}+O(s_j)\quad\text{on $V_{jj'}\subseteq V_j$.}
\end{equation*} 

As in \eq{ku7eq55}, for all $i,i'\in I$ and $j,j'\in J$, define vector bundle morphisms $\hat F_{ii'}^{j}:D_i\vert_{\dot U_{ii'}^{j}}\ra (f_{i'j}\ci\tau_{ii'})^*(TV_j)\vert_{\dot U_{ii'}^{j}}$ over $\dot U_{ii'}^{j}=U_{ij}\cap U_{ii'}\cap \tau_{ii'}^{-1}(U_{i'j})\subseteq U_i$ and $\hat F_{i}^{jj'}:D_i\vert_{\dot U_{i}^{jj'}}\ra (\up_{jj'}\ci f_{ij})^*(TV_{j'})\vert_{\dot U_{i}^{jj'}}$ over $\dot U_i^{jj'}=U_{ij}\cap U_{ij'}\cap f_{ij}^{-1}(V_{jj'})\subseteq U_i$ by
\ea
\hat F_{ii'}^{j}&=\ts\sum_{\ti\imath\in I}\sum_{\ti\jmath\in J} \eta_{i\ti\imath}\vert_{\dot U_{ii'}^{j}}\cdot f_{ij}^*(\ze_{j\ti\jmath})\vert_{\dot U_{ii'}^{j}}\cdot \hat F_{ii'(\ti\imath)}^{j(\ti\jmath)},
\label{ku7eq64}\\
\hat F_{i}^{jj'}&=\ts\sum_{\ti\imath\in I}\sum_{\ti\jmath\in J} \eta_{i\ti\imath}\vert_{\dot U_{i}^{jj'}}\cdot f_{ij'}^*(\ze_{j'\ti\jmath})\vert_{\dot U_{i}^{jj'}}\cdot \hat F_{i(\ti\imath)}^{jj'(\ti\jmath)}.
\nonumber
\ea

Then as for $\bs\mu_{ij}$ in \eq{ku7eq56}, 
\e
\bs F_{ii'}^{j}=[\dot U_{ii'}^{j},\hat F_{ii'}^{j}]:\bs f_{i'j}\ci\Tau_{ii'}\Longra \bs f_{ij}
\label{ku7eq65}
\e
is a 2-morphism over $(S,f)$ for $S=\Im\chi_i\cap\Im\chi_{i'}\cap f^{-1}(\Im\psi_j)$, and 
\begin{equation*}
\bs F_{i}^{jj'}=[\dot U_{i}^{jj'},\hat F_{i}^{jj'}]:\Up_{jj'}\ci\bs f_{ij}\Longra\bs f_{ij'}
\end{equation*}
is a 2-morphism over $(S,f)$ for $S=\Im\chi_i\cap f^{-1}(\Im\psi_j\cap \Im\psi_{j'})$.

We now claim that $\bs f=\bigl(f,\bs f_{ij,\;i\in I,\; j\in J}$, $\bs F_{ii',\;i,i'\in I}^{j,\; j\in J}$, $\bs F_{i,\;i\in I}^{jj',\; j,j'\in J}\bigr)$ is a 1-morphism $\bs f:\bX\ra\bY$ in $\mKur$. We must verify Definition \ref{ku4def13}(a)--(h). Parts (a)--(d) are immediate. For (e), if $i=i'$ then $\bs F_{ii(\ti\imath)}^{j(\ti\jmath)}$ in \eq{ku7eq62} is $\id_{\bs f_{ij}}$, so $\hat F_{ii(\ti\imath)}^{j(\ti\jmath)}=O(r_i)$, giving $\hat F_{ii}^{j}=O(r_i)$ in \eq{ku7eq64}, and $\bs F_{ii}^j=\id_{\bs f_{ij}}$ in \eq{ku7eq65}. Similarly $\bs F_i^{jj}=\id_{\bs f_{ij}}$, proving Definition~\ref{ku4def13}(e).

Part (f) says the following diagram of 2-morphisms commutes:
\e
\begin{gathered}
\xymatrix@C=160pt@R=15pt{ *+[r]{\bs f_{i''j}\ci\Tau_{i'i''}\ci\Tau_{ii'}} \ar@{=>}[r]_{\bs F_{i'i''}^j*\id_{\Tau_{ii'}}} \ar@{=>}[d]^{\id_{\bs f_{i''j}}*\Ka_{ii'i''}} & *+[l]{\bs f_{i'j}\ci\Tau_{ii'}} \ar@{=>}[d]_{\bs F_{ii'}^j} \\
 *+[r]{\bs f_{i''j}\ci\Tau_{ii''}} \ar@{=>}[r]^{\bs F_{ii''}^j} & *+[l]{\bs f_{ij}.\!} }
\end{gathered}
\label{ku7eq66}
\e
To prove this, let $\ti\imath\in I$, $\ti\jmath\in J$ and consider the diagram
\e
\begin{gathered}
\xymatrix@!0@C=73pt@R=35pt{ *+[r]{\bs f_{i''j}\ci\Tau_{i'i''}\ci\Tau_{ii'}} \ar@{=>}[rrrr]_{\bs F_{i'i''(\ti\imath)}^{j(\ti\jmath)}*\id} \ar@{=>}[ddd]^{\id*\Ka_{ii'i''}} &&&& *+[l]{\bs f_{i'j}\ci\Tau_{ii'}} \ar@{=>}[ddd]_{\bs F_{ii'(\ti\imath)}^{j(\ti\jmath)}} \\
& \Up_{\ti\jmath j}\!\ci\!\bs f_{\ti\imath\ti\jmath}\!\ci\!\Tau_{i''\ti\imath}\!\ci\!\Tau_{i'i''}\!\ci\!\Tau_{ii'} \ar@{=>}[ul]_(0.35){\bs\la_{i''\ti\imath}^{\ti\jmath j}*\id} \ar@{=>}[rr]_(0.55){\raisebox{-9pt}{$\st\id*\Ka_{i'i''\ti\imath}*\id$}} \ar@{=>}[d]^{\id*\Ka_{ii'i''}} &&
\Up_{\ti\jmath j}\!\ci\!\bs f_{\ti\imath\ti\jmath}\!\ci\!\Tau_{i'\ti\imath}\!\ci\!\Tau_{ii'} \ar@{=>}[ur]^(0.35){\bs\la_{i'\ti\imath}^{\ti\jmath j}*\id} \ar@{=>}[d]_{\id*\Ka_{ii'\ti\imath}} \\
& \Up_{\ti\jmath j}\ci\bs f_{\ti\imath\ti\jmath}\ci\Tau_{i''\ti\imath}\ci\Tau_{ii''} \ar@{=>}[dl]^(0.35){\bs\la_{i''\ti\imath}^{\ti\jmath j}*\id} \ar@{=>}[rr]^{\id*\Ka_{ii''\ti\imath}} && \Up_{\ti\jmath j}\ci\bs f_{\ti\imath\ti\jmath}\ci\Tau_{i\ti\imath} \ar@{=>}[dr]_(0.35){\bs\la_{i\ti\imath}^{\ti\jmath j}} \\
 *+[r]{\bs f_{i''j}\ci\Tau_{ii''}} \ar@{=>}[rrrr]^{\bs F_{ii''(\ti\imath)}^{j(\ti\jmath)}} &&&& *+[l]{\bs f_{ij}.\!} }\!\!\!{}
\end{gathered}
\label{ku7eq67}
\e
Here the top, bottom and right quadrilaterals commute by \eq{ku7eq62}, the central rectangle by Definition \ref{ku4def12}(h) for $\bX$, and the left quadrilateral by compatibility of horizontal and vertical composition. Thus \eq{ku7eq67} commutes.

By \eq{ku4eq48}, the outer rectangle of \eq{ku7eq67} commuting is equivalent to
\e
\hat F_{ii''(\ti\imath)}^{j(\ti\jmath)}+\tau_{ii''}^*(\d f_{i''j})\ci\hat\ka_{ii'i''}=\hat F_{ii'(\ti\imath)}^{j(\ti\jmath)}+\tau_{ii'}^*(\hat F_{i'i''(\ti\imath)}^{j(\ti\jmath)}) \ci \hat\tau_{ii'}+O(r_i)
\label{ku7eq68}
\e
on $U_{ij}\cap U_{ii'}\cap U_{ii''}\cap \tau_{ii'}^{-1}(U_{i'j}\cap U_{i'i''})\cap \tau_{ii''}^{-1}(U_{i''j})\ab\cap\ab U_{i\ti\imath}\ab\cap \ab f_{ij}^{-1}(V_{j\ti\jmath})
$. Multiply \eq{ku7eq68} by $\eta_{i\ti\imath}\cdot f_{ij}^*(\ze_{j\ti\jmath})$ and sum over all $\ti\imath\in I$ and $\ti\jmath\in J$. Using \eq{ku7eq63} and \eq{ku7eq64} for $i,i',j$ and $i,i'',j$ and $i',i'',j$, we deduce that 
\begin{equation*}
\hat F_{ii''}^j+\tau_{ii''}^*(\d f_{i''j})\ci\hat\ka_{ii'i''}=\hat F_{ii'}^j+\tau_{ii'}^*(\hat F_{i'i''}^j) \ci \hat\tau_{ii'}+O(r_i)
\end{equation*}
on $U_{ij}\cap U_{ii'}\cap U_{ii''}\cap\tau_{ii'}^{-1}(U_{i'j}\cap U_{i'i''})\cap \tau_{ii''}^{-1}(U_{i''j})$. But by \eq{ku4eq48}, this is equivalent to \eq{ku7eq66} commuting. This proves Definition \ref{ku4def13}(f) for $\bs f$. Parts (g),(h) are similar. Hence $\bs f:\bX\ra\bY$ is a 1-morphism in $\mKur$. By construction $F_\mKur^\muKur([\bs f])=\bs f'$, so $F_\mKur^\muKur$ is full, as we have to prove.

\subsubsection*{$F_\mKur^\muKur$ is surjective on isomorphism classes} 

Let $\bX'=(X,\cK')$ be a $\mu$-Kuranishi space, with $\cK'=\bigl(I,(V_i,E_i,s_i,\psi_i)_{i\in I}$, $\Phi'_{ij,\;i,j\in I}\bigr)$. To show $F_\mKur^\muKur$ is surjective on isomorphism classes, we must construct an object $\bX$ in $\mKur$ with $F_\mKur^\muKur(\bX)\cong\bX'$ in $\muKur$. Actually we will arrange that~$F_\mKur^\muKur(\bX)=\bX'$.

Then $(V_i,E_i,s_i,\psi_i)$ is an m-Kuranishi neighbourhood on $X$ for $i\in I$. Let $(V_{ij},\phi_{ij},\hat\phi_{ij})$ represent $\Phi'_{ij}$ for $i,j\in I$, where as $\Phi_{ii}'=\id_{(V_i,E_i,s_i,\psi_i)}$ we take $(V_{ii},\phi_{ii},\hat\phi_{ii})=(V_i,\id_{V_i},\id_{E_i})$. As in \eq{ku7eq61}, for all $i,j\in I$ we have a 1-morphism of m-Kuranishi neighbourhoods over $S=\Im\psi_i\cap\Im\psi_j$
\e
\Phi_{ij}=(V_{ij},\phi_{ij},\hat\phi_{ij}):(V_i,E_i,s_i,\psi_i)\longra (V_j,E_j,s_j,\psi_j),
\label{ku7eq69}
\e
and $\Phi_{ii}=(V_i,\id_{V_i},\id_{E_i})=\id_{(V_i,E_i,s_i,\psi_i)}$.

As $\Phi_{jk}'\ci\Phi_{ij}'=\Phi_{ik}'$ for $i,j,k\in I$ by Definition \ref{ku2def11}(f) for $\bX'$, the argument defining $\bs\la_{ij}$ in \eq{ku7eq51} implies that there exists a 2-morphism
\begin{equation*}
\Ka_{ijk}:\Phi_{jk}\ci\Phi_{ij}\Longra\Phi_{ik}
\end{equation*}
over $S=\Im\psi_i\cap\Im\psi_j\cap\Im\psi_k$, where we choose $\Ka_{iij}=\Ka_{ijj}=\id_{\Phi_{ij}}$ for $i,j\in I$. Therefore $\Ka_{iji}:\Phi_{ji}\ci\Phi_{ij}\Ra\id_{(V_i,E_i,s_i,\psi_i)}$, $\Ka_{jij}:\Phi_{ij}\ci\Phi_{ji}\Ra\id_{(V_j,E_j,s_j,\psi_j)}$ imply that $\Phi_{ij}$ is an equivalence in $\Kur_S(X)$ for $S=\Im\psi_i\cap\Im\psi_j$, and so a coordinate change over $S$, for all~$i,j\in I$.

The next elementary lemma about 2-categories is easy to prove.

\begin{lem} Suppose $f:X\ra Y$ and\/ $g,h:Y\ra Z$ are $1$-morphisms in a (strict or weak)\/ $2$-category $\bs\cC,$ with $f$ an equivalence. Then the map $\eta\mapsto\eta*\id_f=\ze$ induces a $1$-$1$ correspondence between $2$-morphisms $\eta:g\Ra h$ and\/ $2$-morphisms $\ze:g\ci f\Ra h\ci f$ in\/~$\bs\cC$.
\label{ku7lem2}
\end{lem}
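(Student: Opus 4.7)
\smallskip

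\noindent\textbf{Proof plan for Lemma \ref{ku7lem2}.} The plan is to construct an explicit inverse map $\zeta\mapsto\eta$ using a quasi-inverse of $f$, and then verify the two composites are the identity by standard 2-category coherence arguments. As $f:X\ra Y$ is an equivalence in $\bs\cC$, we may choose a quasi-inverse $f':Y\ra X$ together with 2-isomorphisms $\iota:f'\ci f\Ra\id_X$ and $\kappa:f\ci f'\Ra\id_Y$; by Proposition \ref{kuBprop} (analogous to how this was used in \S\ref{ku721}), we may further arrange that $\id_f*\iota=\kappa*\id_f$ and $\id_{f'}*\kappa=\iota*\id_{f'}$, which is the standard triangle/zig-zag identity for an adjoint equivalence.

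Given a 2-morphism $\zeta:g\ci f\Ra h\ci f$, I would define $\eta:g\Ra h$ as the vertical composition
\[
g\buildrel{\bs\gamma_g^{-1}}\over\Longra g\ci\id_Y\buildrel{\id_g*\kappa^{-1}}\over{\Longra} g\ci(f\ci f')\buildrel{\bs\alpha_{g,f,f'}^{-1}}\over{\Longra}(g\ci f)\ci f'\buildrel{\zeta*\id_{f'}}\over\Longra (h\ci f)\ci f'\buildrel{\bs\alpha_{h,f,f'}}\over\Longra h\ci(f\ci f')\buildrel{\id_h*\kappa}\over\Longra h\ci\id_Y\buildrel{\bs\gamma_h}\over\Longra h,
\]
using the associators $\bs\alpha_{\bu,\bu,\bu}$ and unitors $\bs\gamma_\bu$ of the weak 2-category structure from \S\ref{kuB1} (omitting these entirely if $\bs\cC$ is strict). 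Write $\Psi(\zeta):=\eta$ and $\Phi(\eta):=\eta*\id_f$; the claim is that $\Psi$ and $\Phi$ are mutually inverse.

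For $\Phi\ci\Psi=\id$, I would compute $\Psi(\zeta)*\id_f$. Horizontally composing the seven-step formula above with $\id_f$ on the right, then using the exchange law (Proposition \ref{ku4prop3}) to move $*\id_f$ through each vertical step, one obtains a composite beginning with $\bs\gamma_g^{-1}*\id_f$ and ending with $\bs\gamma_h*\id_f$, in which the crucial middle piece reduces via the triangle identities on $f$ to $\zeta$ itself, while the remaining associator/unitor pieces collapse by the coherence axioms \eq{kuBeq5}, \eq{kuBeq7}, \eq{kuBeq10}, \eq{kuBeq11}. For $\Psi\ci\Phi=\id$, given $\eta:g\Ra h$ I would substitute $\zeta=\eta*\id_f$ into the formula for $\Psi(\zeta)$ and again apply the exchange law to pull $\id_{f'}$ past $\eta*\id_f$, yielding $\id_g*(\id_f*\id_{f'})$ followed by $\eta*\id_{\id_Y}$ followed by $\id_h*(\id_f*\id_{f'})$, up to associators; the flanking pieces then collapse using $\kappa$ and the unitors, leaving $\eta$.

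The main obstacle is bookkeeping with the coherence 2-isomorphisms $\bs\alpha,\bs\beta,\bs\gamma$: one must be careful to invoke the pentagon and triangle axioms of \S\ref{kuB1} in the right places, and to use the adjoint form of the quasi-inverse (so that $\iota,\kappa$ satisfy the zig-zag identities) rather than an arbitrary one. Once the adjoint form is in place, every step is a routine diagram chase, and in the strict case (e.g.\ $\bs\cC=\mKur_S(X)$ from \S\ref{ku47}) the verification reduces to just two applications of the exchange law and the identity $\kappa\od\kappa^{-1}=\id$.
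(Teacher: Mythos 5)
The paper does not actually supply a proof of this lemma --- it is stated with the remark that it is ``easy to prove'' --- so there is no argument of the paper's to compare yours against; what you propose is a correct and standard way to fill it in. Your construction of $\Psi$ and the verification $\Psi\ci\Phi=\id$ are fine, and in fact that direction needs only the exchange law \eq{kuBeq5} and naturality of $\bs\al$ and of the unitor \eq{kuBeq10}, not the zig-zag identities. The real content sits in the other direction, $\Psi(\ze)*\id_f=\ze$, and you correctly flag the one place where care is needed: with an arbitrary quasi-inverse the conjugating $2$-isomorphism $(w\ci f')\ci f\Ra w$ appearing after whiskering your seven-step formula with $\id_f$ is built from $\ka$ and is only defined via the decomposition $w=g\ci f$, so it is not obviously natural in $w$; after invoking Proposition \ref{kuBprop} it can be rewritten (pentagon, triangle, and the zig-zag identity) as $\be_w\od(\id_w*\io)\od\bs\al_{w,f',f}$, which \emph{is} natural in $w$, and naturality applied to the arbitrary $2$-morphism $\ze$ --- not just to those of the form $\eta*\id_f$ --- then yields $\Psi(\ze)*\id_f=\ze$. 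Saying the middle piece ``reduces via the triangle identities to $\ze$'' compresses exactly this naturality-in-$w$ step, so make it explicit when you write the proof out. Two smaller remarks: you can avoid the adjoint form altogether by noting that the left-inverse computation shows whiskering by \emph{any} equivalence is injective on $2$-morphisms, proving $(\Psi(\ze)*\id_f)*\id_{f'}=\ze*\id_{f'}$ by the same naturality trick and cancelling $\id_{f'}$ (equivalently: $-\ci f$ is an equivalence of the categories $\Hom(Y,Z)\ra\Hom(X,Z)$, hence fully faithful); and in the paper's conventions \eq{kuBeq9} the unitor $g\ci\id_Y\Ra g$ is $\bs\be_g$, not $\bs\ga_g$, so your labels for the unitors should be swapped.
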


Let $\ti\imath,i,j,k\in I$. Then Lemma \ref{ku7lem2} in the 2-category $\Kur_S(X)$ and $\Phi_{\ti\imath i}$ an equivalence implies that there is a unique 2-morphism
\begin{equation*}
\Ka_{ijk}^{(\ti\imath)}:\Phi_{jk}\ci\Phi_{ij}\Longra\Phi_{ik}
\end{equation*}
over $S=\Im\psi_{\ti\imath}\cap\Im\psi_i\cap\Im\psi_j\cap\Im\psi_k$ making the following diagram commute:
\e
\begin{gathered}
\xymatrix@C=150pt@R=15pt{
*+[r]{\Phi_{jk}\ci\Phi_{ij}\ci\Phi_{\ti\imath i}} \ar@{=>}[r]_{\Ka_{ijk}^{(\ti\imath)}*\id_{\Phi_{\ti\imath i}}} \ar@{=>}[d]^{\id_{\Phi_{jk}}*\Ka_{\ti\imath ij}} & *+[l]{\Phi_{ik}\ci\Phi_{\ti\imath i}} \\
 *+[r]{\Phi_{jk}\ci\Phi_{\ti\imath j}} \ar@{=>}[r]^{\Ka_{\ti\imath jk}} & *+[l]{\Phi_{\ti\imath k}.\!} \ar@{=>}[u]^{\Ka_{\ti\imath ik}^{-1}} }
\end{gathered}
\label{ku7eq70}
\e
As in \eq{ku7eq51} and \eq{ku7eq53}, we may write $\Ka_{ijk}^{(\ti\imath)}=[\dot V_{ijk}^{(\ti\imath)},\hat\ka_{ijk}^{(\ti\imath)}]$, where $\dot V_{ijk}^{(\ti\imath)}=V_{i\ti\imath}\cap V_{ij}\cap V_{ik}\cap \phi_{ij}^{-1}(V_{jk})$ and $\hat\ka_{ijk}^{(\ti\imath)}:D_i\vert_{\dot V_{ijk}^{(\ti\imath)}}\ra (\phi_{jk}\ci\phi_{ij})\vert_{\dot V_{ijk}^{(\ti\imath)}}^{-1}(TV_k)$ is a morphism. 

Apply Lemma \ref{ku7lem1} to $\bX'$, using $(V_{ij},\phi_{ij},\hat\phi_{ij})$ to represent $\Phi'_{ij}$. This gives a partition of unity $\{\eta_{i\ti\imath}:\ti\imath\in I\}$ on $V_i$ subordinate to $\{V_{i\ti\imath}:\ti\imath\in I\}$ for each $i\in I$, such that for all $\ti\imath,i,j\in I$ we have
\e
\eta_{i\ti\imath}\vert_{V_{ij}}=\eta_{j\ti\imath}\ci\phi_{ij}+O(s_i)\quad\text{on $V_{ij}\subseteq V_i$.}
\label{ku7eq71}
\e 

As in \eq{ku7eq55}, for all $i,j,k\in I$ define $\dot V_{ijk}=V_{ij}\cap V_{ik}\cap \phi_{ij}^{-1}(V_{jk})\subseteq V_i$ and a morphism $\hat\la_{ijk}:D_i\vert_{\dot V_{ijk}}\ra (\phi_{jk}\ci\phi_{ij})\vert_{\dot V_{ijk}}^{-1}(TV_k)$ by
\e
\hat\la_{ijk}=\ts\sum_{\ti\imath\in I} \eta_{i\ti\imath}\vert_{\dot V_{ijk}}\cdot\hat\ka_{ijk}^{(\ti\imath)}.
\label{ku7eq72}
\e
Then the argument for $\bs\mu_{ij}$ in \eq{ku7eq56} shows that
\begin{equation*}
\La_{ijk}:=[\dot V_{ijk},\hat\la_{ijk}]:\Phi_{jk}\ci\Phi_{ij}\Longra\Phi_{ik}
\end{equation*}
is a 2-morphism over $S=\Im\psi_i\cap\Im\psi_j\cap\Im\psi_k$.

Now let $\ti\imath,i,j,k,l\in I$. Consider the diagram of 2-morphisms \e
\begin{gathered}
\xymatrix@!0@C=73pt@R=30pt{ *+[r]{\Phi_{kl}\ci\Phi_{jk}\ci\Phi_{ij}\ci\Phi_{\ti\imath i}} \ar@{=>}[dr]^(0.65){\id*\Ka_{\ti\imath ij}} \ar@{=>}[rrrr]_{\Ka_{jkl}^{(\ti\imath)}*\id} \ar@{=>}[ddd]^(0.62){\id*\Ka_{ijk}^{(\ti\imath)}*\id} &&&& *+[l]{\Phi_{jl}\ci\Phi_{ij}\ci\Phi_{\ti\imath i}} \ar@{=>}[ddd]_(0.62){\Ka_{ijl}^{(\ti\imath)}*\id} \ar@{=>}[dl]_(0.65){\id*\Ka_{\ti\imath ij}} \\
& {\Phi_{kl}\ci\Phi_{jk}\ci\Phi_{\ti\imath j}} \ar@{=>}[rr]_{\Ka_{jkl}^{(\ti\imath)}*\id} \ar@{=>}[d]^{\id*\Ka_{\ti\imath jk}} &&
{\Phi_{jl}\ci\Phi_{\ti\imath j}} \ar@{=>}[d]_{\Ka_{\ti\imath jl}} \\
& {\Phi_{kl}\ci\Phi_{\ti\imath k}} \ar@{=>}[rr]^{\Ka_{\ti\imath kl}} && \Phi_{\ti\imath l} \\
*+[r]{\Phi_{kl}\ci\Phi_{ik}\ci\Phi_{\ti\imath i}} \ar@{=>}[ur]_(0.65){\id*\Ka_{\ti\imath ik}} \ar@{=>}[rrrr]^{\Ka_{ikl}^{(\ti\imath)}*\id} &&&& *+[l]{\Phi_{il}\ci\Phi_{\ti\imath i}} \ar@{=>}[ul]^(0.65){\Ka_{\ti\imath il}} }\!\!\!{}
\end{gathered}
\label{ku7eq73}
\e
over $S=\Im\psi_{\ti\imath}\cap\Im\psi_i\cap\Im\psi_j\cap\Im\psi_k\cap\Im\psi_l$. Here the top quadrilateral commutes by compatibility of horizontal and vertical composition, and the other four quadrilaterals commute by \eq{ku7eq70}. Hence \eq{ku7eq73} commutes.

Applying Lemma \ref{ku7lem2} to the outer rectangle of \eq{ku7eq73} and using $\Phi_{\ti\imath i}$ an equivalence shows that over $S=\Im\psi_{\ti\imath}\cap\Im\psi_i\cap\Im\psi_j\cap\Im\psi_k\cap\Im\psi_l$ we have
\e
\Ka^{(\ti\imath)}_{ikl}\od(\id_{\Phi_{kl}}*\Ka^{(\ti\imath)}_{ijk})=\Ka^{(\ti\imath)}_{ijl}\od(\Ka^{(\ti\imath)}_{jkl}*\id_{\Phi_{ij}}):\Phi_{kl}\ci\Phi_{jk}\ci\Phi_{ij}\Longra\Phi_{il}.
\label{ku7eq74}
\e
Write \eq{ku7eq74} as an equation in the form \eq{ku7eq59} involving the $\hat\ka^{(\ti\imath)}_{ijk}$. As for \eq{ku7eq60}, multiplying this equation by $\eta_{i\ti\imath}$, summing over $\ti\imath\in I$, and using \eq{ku7eq71} and \eq{ku7eq72} gives the equation implying that over $S=\Im\psi_i\cap\Im\psi_j\cap\Im\psi_k\cap\Im\psi_l$
\e
\La_{ikl}\od(\id_{\Phi_{kl}}*\La_{ijk})=\La_{ijl}\od(\La_{jkl}*\id_{\Phi_{ij}}):\Phi_{kl}\ci\Phi_{jk}\ci\Phi_{ij}\Longra\Phi_{il}.
\label{ku7eq75}
\e

Now define $\cK=\bigl(I,(V_i,E_i,s_i,\psi_i)_{i\in I}$, $\Phi_{ij,\;i,j\in I}$, $\La_{ijk,\; i,j,k\in I}\bigr)$. Definition \ref{ku4def12}(a)--(f) for $\cK$ are immediate. For (g), as $\Ka_{iij}=\Ka_{ijj}=\id_{\Phi_{ij}}$, equation \eq{ku7eq70} implies that $\Ka_{iij}^{(\ti\imath)}=\Ka_{ijj}^{(\ti\imath)}=\id_{\Phi_{ij}}$, so $\hat\ka_{iij}^{(\ti\imath)}=O(s_i)=\hat\ka_{ijj}^{(\ti\imath)}$. Thus \eq{ku7eq72} gives $\hat\la_{iij}=O(s_i)=\hat\la_{ijj}$, and $\La_{iij}=\La_{ijj}=\id_{\Phi_{ij}}$, as we have to prove. Definition \ref{ku4def12}(h) for $\cK$ is equation \eq{ku7eq75}. Hence $\cK$ is an m-Kuranishi structure on $X$, so that $\bX=(X,\cK)$ is an m-Kuranishi space. By construction $F_\mKur^\muKur(\bX)=\bX'$. Therefore $F_\mKur^\muKur$ is surjective on isomorphism classes. This completes the proof of Theorem~\ref{ku4thm8}(a).

\subsubsection{Theorem \ref{ku4thm8}(b): $F_\mKur^\Kur:\mKur\ra\KurtrG$ is an equivalence}
\label{ku732}

Theorem \ref{ku4thm7} defines a full and faithful 2-functor $F_\mKur^\Kur:\mKur\ra\KurtrG$. By construction, $F_\mKur^\Kur$ is an equivalence from $\mKur$ to the full 2-subcategory $\Kur_{\bf tr\Ga}\subset\KurtrG\subset\Kur$ of Kuranishi spaces $\bX=(X,\cK)$ such that all Kuranishi neighbourhoods $(V_i,E_i,\Ga_i,s_i,\psi_i)$ in $\cK$ have $\Ga_i=\{1\}$. Thus, to show that $F_\mKur^\Kur:\mKur\ra\KurtrG$ is an equivalence, it is enough to prove that the inclusion $\Kur_{\bf tr\Ga}\subset\KurtrG$ is an equivalence. That is, if $\bX$ is an object of $\KurtrG$, we must show that there exists $\bs{\ti X}$ in $\Kur_{\bf tr\Ga}$ with $\bs{\ti X}\simeq\bX$ in~$\KurtrG$.

Write $\bX=(X,\cK)$ with $\cK=\bigl(I,(V_i,E_i,\Ga_i,s_i,\psi_i)_{i\in I}$, $\Phi_{ij,\; i,j\in I}$, $\La_{ijk,\; i,j,k\in I}\bigr)$, and let $\Phi_{ij}=(P_{ij},\pi_{ij},\phi_{ij},\hat\phi_{ij})$ and $(\dot P_{ijk},\la_{ijk},\hat\la_{ijk})$ represent $\La_{ijk}$ for $i,j,k\in I$. 

Let $i\in I$, and define $V_i'=\bigl\{v\in V_i:\ga\cdot v\ne v$ for all $1\ne\ga\in\Ga_i\bigr\}$. Then $V_i'$ is the $\Ga_i$-invariant open subset of $V_i$ on which $\Ga_i$ acts freely. Define $E_i'=E_i\vert_{V_i'}$, with projection $\pi':E_i'\ra V_i'$, and $s_i'=s_i\vert_{V_i'}$. If $v\in s_i^{-1}(0)\subseteq V_i$ then the stabilizer group $\Stab_{\Ga_i}(v)$ of $v$ in $\Ga_i$ is $G_{\bar\psi_i(v)}\bX=\{1\}$ as $\bX\in\KurtrG$. Thus $s_i^{-1}(0)\subseteq V_i'$, and~$(s_i')^{-1}(0)=s_i^{-1}(0)$.

Since $\Ga_i$ is finite and acts freely on $V_i'$ and $E_i'$, the quotients $\ti V_i:=V_i'/\Ga_i$ and $\ti E_i:=E_i'/\Ga_i$ are manifolds, with \'etale projections $\pi_{\smash{\ti V_i}}:V_i'\ra\ti V_i$ and $\pi_{\smash{\ti E_i}}:E_i'\ra\ti E_i$. As $\pi':E_i'\ra V_i'$ and $s_i':V_i'\ra E_i'$ are $\Ga_i$-equivariant, they push down to unique smooth maps $\ti\pi:\ti E_i\ra\ti V_i$ and $\ti s_i:\ti V_i\ra\ti E_i$ with $\ti\pi\ci\pi_{\smash{\ti E_i}}=\pi_{\smash{\ti V_i}}\ci\pi'$ and $\ti s_i\ci\pi_{\smash{\ti V_i}}=\pi_{\smash{\ti E_i}}\ci s_i'$. Then $\ti\pi:\ti E_i\ra\ti V_i$ is a vector bundle, and $\ti s_i:\ti V_i\ra\ti E_i$ a smooth section. Set $\ti\Ga_i=\{1\}$. Then we have
\begin{equation*}
\ti s_i^{-1}(0)/\ti\Ga_i=\ti s_i^{-1}(0)=(s_i')^{-1}(0)/\Ga_i=s_i^{-1}(0)/\Ga_i,\end{equation*}
so $\ti\psi_i:=\psi_i:\ti s_i^{-1}(0)/\ti\Ga_i\ra X$ is a homeomorphism with the open set $\Im\psi_i\subseteq X$. Thus $(\ti V_i,\ti E_i,\ti\Ga_i,\ti s_i,\ti\psi_i)$ is a Kuranishi neighbourhood on $X$, with~$\Im\ti\psi_i=\Im\psi_i$.

Let $i,j\in I$. Define $P_{ij}'=\pi_{ij}^{-1}(V_i')\cap \phi_{ij}^{-1}(V_j')\subseteq P_{ij}$, so that $P_{ij}'\subseteq P_{ij}$ is open and $\Ga_i\t\Ga_j$-invariant. Set $\pi_{ij}':=\pi_{ij}\vert_{P_{ij}'}:P_{ij}'\ra V_i'$, $\phi_{ij}':=\phi_{ij}\vert_{P_{ij}'}:P_{ij}'\ra V_j'$, and $\hat\phi_{ij}':=\hat\phi_{ij}\vert_{P_{ij}'}:(\pi_{ij}')^*(E_i')\ra (\phi_{ij}')^*(E_j')$. Since $\bar\psi_i^{-1}(S)\subseteq V_i'$ and $\bar\psi_j^{-1}(S)\subseteq V_j'$ for $S=\Im\psi_i\cap\Im\psi_j$, the image $V_{ij}':=\pi_{ij}'(P_{ij}')$ is a $\Ga_i$-invariant open neighbourhood of $\bar\psi_i^{-1}(S)$ in~$V_i'\subseteq V_i$. 

Since $\pi_{ij}'\t\phi_{ij}':P_{ij}'\ra V_i'\t V_j'$ is $(\Ga_i\t\Ga_j)$-equivariant and $\Ga_i,\Ga_j$ act freely on $V_i',V_j'$, we see that $\Ga_i\t\Ga_j$ acts freely on $P_{ij}'$. Thus $\ti P_{ij}:=P_{ij}'/(\Ga_i\t\Ga_j)$ is a manifold, with \'etale projection $\pi_{\smash{\ti P_{ij}}}:P_{ij}'\ra\ti P_{ij}$, and $\pi_{ij}',\phi_{ij}',\hat\phi_{ij}'$ descend to unique $\ti\pi_{ij}:\ti P_{ij}\ra\ti V_i$, $\ti\phi_{ij}\ra\ti V_j$, $\hat{\ti\phi}_{ij}:\ti\pi_{ij}^*(\ti E_i)\ra \ti\phi_{ij}^*(\ti E_j)$, as for $\ti\pi,\ti s_i$. Clearly
\begin{equation*}
\ti\Phi_{ij}:(\ti V_i,\ti E_i,\ti\Ga_i,\ti s_i,\ti\psi_i)\longra(\ti V_j,\ti E_j,\ti\Ga_j,\ti s_j,\ti\psi_j)
\end{equation*}
is a 1-morphism over $S=\Im\psi_i\cap\Im\psi_j=\Im\ti\psi_i\cap\Im\ti\psi_j$.

Let $i,j,k\in I$. As $(\dot P_{ijk},\la_{ijk},\hat\la_{ijk})$ represents $\La_{ijk}:\Phi_{jk}\ci\Phi_{ij}\Ra\Phi_{ik}$, we have $\dot P_{ijk}\subseteq (P_{ij}\t_{\phi_{ij},V_j,\pi_{jk}}P_{jk})/\Ga_j$ open, and $\la_{ijk}:\dot P_{ijk}\ra P_{ik}$, and $\hat\la_{ijk}:(\pi_{ij}\ci\pi_{P_{ij}})^*(E_i)\ra (\phi_{jk}\ci\pi_{P_{jk}})^*(TV_k)$. Define
\begin{equation*}
\dot P_{ijk}'=\dot P_{ijk}\cap (\pi_{ij}\ci\pi_{P_{ij}})^{-1}(V_i')\cap (\phi_{ij}\ci\pi_{P_{ij}})^{-1}(V_j')\cap (\phi_{jk}\ci\pi_{P_{jk}})^{-1}(V_k'),
\end{equation*}
and set $\la_{ijk}':=\la_{ijk}\vert_{\smash{\dot P_{ijk}'}}$, $\hat\la_{ijk}':=\hat\la_{ijk}\vert_{\smash{\dot P_{ijk}'}}$. Then $\dot P_{ijk}'\subseteq (P'_{ij}\t_{\phi'_{ij},V'_j,\pi'_{jk}}P'_{jk})/\Ga_j$ is open, and $\la'_{ijk}:\dot P'_{ijk}\ra P'_{ik}$, and~$\hat\la'_{ijk}:(\pi'_{ij}\ci\pi_{P'_{ij}})^*(E'_i)\ra (\phi'_{jk}\ci\pi_{P'_{jk}})^*(TV'_k)$.

Since $\la'_{ijk}:\dot P'_{ijk}\ra P'_{ik}$ is $(\Ga_i\t\Ga_k)$-equivariant and $\Ga_i\t\Ga_k$ acts freely on $P'_{ik}$, we see that $\Ga_i\t\Ga_k$ acts freely on $\dot P'_{ijk}$. Thus $\dot{\ti P}_{ijk}:=\dot P'_{ijk}/(\Ga_i\t\Ga_k)$ is a manifold, with \'etale projection $\pi_{\smash{\dot{\ti P}_{ijk}}}:\dot P'_{ijk}\ra\dot{\ti P}_{ijk}$. We have
\begin{align*}
\dot{\ti P}_{ijk}&\subseteq \bigl[(P'_{ij}\t_{V'_j}P'_{jk})/\Ga_j\bigr]/(\Ga_i\t\Ga_k)\\
&=\bigl(P'_{ij}/(\Ga_i\t\Ga_j)\bigr)\t_{V_j'/\Ga_j}\bigl(P'_{jk}/(\Ga_j\t\Ga_k)\bigr)
=\ti P_{ij}\t_{\ti\phi_{ij},\ti V_j,\ti\pi_{jk}}\ti P_{jk}.
\end{align*}
Also $\la_{ijk}',\hat\la_{ijk}'$ descend to unique $\ti\la_{ijk}:\dot{\ti P}_{ijk}\ra \ti P_{ik}$ and $\hat{\ti\la}_{ijk}:(\ti\pi_{ij}\ci\pi_{\ti P_{ij}})^*(\ti E_i)\ra (\ti\phi_{jk}\ci\pi_{\ti P_{jk}})^*(T\ti V_k)$. It is now easy to check that $\ti\La_{ijk}:=[\dot{\ti P}_{ijk},\ti\la_{ijk},\hat{\ti\la}_{ijk}]$ is a 2-morphism $\ti\Phi_{jk}\ci\ti\Phi_{ij}\Ra\ti\Phi_{ik}$ over $S=\Im\ti\psi_i\cap\Im\ti\psi_j\cap\Im\ti\psi_k$. Therefore $\ti\La_{iji}:\ti\Phi_{ji}\ci\ti\Phi_{ij}\Ra\id_{\smash{(\ti V_i,\ti E_i,\ti\Ga_i,\ti s_i,\ti\psi_i)}}$, $\ti\La_{jij}:\ti\Phi_{ij}\ci\ti\Phi_{ji}\Ra\id_{\smash{(\ti V_j,\ti E_j,\ti\Ga_j,\ti s_j,\ti\psi_j)}}$ imply $\ti\Phi_{ij}$ is an equivalence in $\Kur_S(X)$ for $S=\Im\ti\psi_i\cap\Im\ti\psi_j$, and so a coordinate change over $S$, for all~$i,j\in I$.

Define $\ti\cK=\bigl(I,(\ti V_i,\ti E_i,\ti\Ga_i,\ti s_i,\ti\psi_i)_{i\in I}$, $\ti\Phi_{ij,\;i,j\in I}$, $\ti\La_{ijk,\; i,j,k\in I}\bigr)$. Then Definition \ref{ku4def12}(a)--(d) for $\ti\cK$ are immediate, and (e)--(h) follow from (e)--(h) for $\cK$. Hence $\bs{\ti X}=(X,\ti\cK)$ is a Kuranishi space, and $\bs{\ti X}\in\Kur_{\bf tr\Ga}$ as $\ti\Ga_i=\{1\}$ for all~$i\in I$.

We must show that $\bX$ and $\bs{\ti X}$ are equivalent in $\KurtrG$. Set $f=g=\id_X:X\ra X$. For $i,j\in I$, define a 1-morphism of Kuranishi neighbourhoods
\begin{equation*}
\bs f_{ij}:(V_i,E_i,\Ga_i,s_i,\psi_i)\longra (\ti V_j,\ti E_j,\ti\Ga_j,\ti s_j,\ti\psi_j)
\quad\text{over $(\Im\psi_i\cap\Im\ti\psi_j,f)$},
\end{equation*}
by $\bs f_{ij}=(\ddot P_{ij},\ddot\pi_{ij},f_{ij},\hat f_{ij})$, where $\ddot P_{ij}:=P_{ij}'/\Ga_j$ with \'etale projection $\pi_{\smash{\ddot P_{ij}}}:P_{ij}'\ra\ddot P_{ij}$, and $\pi_{ij}',\phi_{ij}',\hat\phi_{ij}'$ descend to unique $\ddot\pi_{ij},f_{ij},\hat f_{ij}$ through $\pi_{\smash{\ddot P_{ij}}}$. That is, we define $\bs f_{ij}$ from $\Phi_{ij}'$ as for $\ti\Phi_{ij}$, except that we divide by $\Ga_j$ rather than $\Ga_i\t\Ga_j$. Similarly, define a 1-morphism
\begin{equation*}
\bs g_{ji}:(\ti V_j,\ti E_j,\ti\Ga_j,\ti s_j,\ti\psi_j)\longra (V_i,E_i,\Ga_i,s_i,\psi_i)\quad\text{over $(\Im\psi_i\cap\Im\ti\psi_j,g)$},
\end{equation*}
by $\bs g_{ji}=(\dddot P_{ji},\dddot\pi_{ji},g_{ji},\hat g_{ji})$, where $\dddot P_{ji}:=P_{ji}'/\Ga_j$, dividing by $\Ga_j$ rather than $\Ga_j\t\Ga_i$, and $\pi_{ji}',\phi_{ji}',\hat\phi_{ji}'$ descend to $\dddot\pi_{ji},g_{ji},\hat g_{ji}$ through~$\pi_{\smash{\dddot P_{ji}}}:P_{ji}'\ra\dddot P_{ji}$.

For $i,i',j,j'\in I$, we now define 2-morphisms
\begin{align*}
\bs F_{ii'}^j&:\bs f_{i'j}\ci\Phi_{ii'}\Longra \bs f_{ij}, &
\bs F_i^{jj'}&:\ti\Phi_{jj'}\ci\bs f_{ij}\Longra \bs f_{ij'}, \\
\bs G_{jj'}^i&:\bs g_{j'i}\ci\ti\Phi_{jj'}\Longra \bs g_{ji}, &
\bs G_j^{ii'}&:\Phi_{ii'}\ci\bs g_{ji}\Longra \bs g_{ji'},
\end{align*}
where $\bs F_{ii'}^j,\bs F_i^{jj'},\bs G_{jj'}^i,\bs G_j^{ii'}$ are constructed as for $\ti\La_{ii'j},\ti\La_{ijj'},\ti\La_{jj'i},\ti\La_{jii'}$, except that $\dot P'_{ii'j},\dot P'_{ijj'},\dot P'_{jj'i},\dot P'_{jii'}$ are divided by $\Ga_j,\Ga_{j'},\Ga_j,\Ga_j$ rather than $\Ga_i\t\Ga_j,\Ga_i\t\Ga_{j'},\Ga_j\t\Ga_i,\Ga_j\t\Ga_{i'}$, respectively.

Set $\bs f=\bigl(f,\bs f_{ij,\;i\in I,\; j\in I}$, $\bs F_{ii',\;i,i'\in I}^{j,\; j\in I}$, $\bs F_{i,\;i\in I}^{jj',\; j,j'\in I}\bigr)$ and $\bs g=\bigl(g,\bs g_{ji,\;j\in I,\; i\in I}$, $\bs G_{jj',\;j,j'\in I}^{i,\; i\in I}$, $\bs G_{j,\;j\in I}^{ii',\; i,i'\in I}\bigr)$. Then Definition \ref{ku4def13}(a)--(d) for $\bs f,\bs g$ are immediate, and (e)--(h) are easily proved from Definition \ref{ku4def12}(g),(h). Hence $\bs f:\bX\ra\bs{\ti X}$ and $\bs g:\bs{\ti X}\ra\bX$ are 1-morphisms in $\KurtrG$. 

Similarly, for $i,i'\in I$ and $j,j'\in J$ we define 2-morphisms
\begin{equation*}
\bs\de_{iji'}:\bs g_{ji'}\ci\bs f_{ij}\Longra \Phi_{ii'},\quad \bs\ep_{jij'}:\bs f_{ij'}\ci\bs g_{ji}\Longra \ti\Phi_{jj'},
\end{equation*}
constructed as for $\ti\La_{iji'},\ti\La_{jij'}$ but dividing by $\{1\},\Ga_j\t\Ga_{j'}$. Then Definition \ref{ku4def12}(h) for $\bX$ implies that
\e
\begin{split}
\La_{ii'i''}\od(\id_{\Phi_{i'i''}}*\bs\de_{iji'})\od\bs\al_{\Phi_{i'i''},\bs g_{ji'},\bs f_{ij}}&=\bs\de_{iji''}\od(\bs G_j^{i'i''}*\id_{\bs f_{ij}}),\\
\bs\de_{iji''}\od(\id_{\bs g_{ji''}}*\bs F_{ii'}^j)\od\bs\al_{\bs g_{ji''},\bs f_{i'j},\Phi_{ii'}}&=\La_{ii'i''}\od(\bs\de_{i'ji''}*\id_{\Phi_{ii'}}),\\
\bs\de_{ij'i'}\od(\id_{\bs g_{j'i'}}*\bs F_i^{jj'})\od\bs\al_{\bs g_{j'i'},\ti\Phi_{jj'},\bs f_{ij}}&=\bs\de_{iji'}\od(\bs G_{jj'}^i*\id_{\bs f_{ij}}),\\
\ti\La_{jj'j''}\od(\id_{\ti\Phi_{j'j''}}*\bs\ep_{jij'})\od\bs\al_{\ti\Phi_{j'j''},\bs f_{ij'},\bs g_{ji}}&=\bs\ep_{jij''}\od(\bs F_i^{j'j''}*\id_{\bs g_{ji}}),\\
\bs\ep_{jij''}\od(\id_{\bs f_{ij''}}*\bs G_{jj'}^i)\od\bs\al_{\bs f_{ij''},\bs g_{j'i},\ti\Phi_{jj'}}&=\ti\La_{jj'j''}\od(\bs\ep_{j'ij''}*\id_{\ti\Phi_{jj'}}),\\
\bs\ep_{ji'j'}\od(\id_{\bs f_{i'j'}}*\bs G_j^{ii'})\od\bs\al_{\bs f_{i'j'},\Phi_{ii'},\bs g_{ji}}&=\bs\ep_{jij'}\od(\bs F_{ii'}^j*\id_{\bs g_{ji}}).
\end{split}
\label{ku7eq76}
\e

Definition \ref{ku4def15} defines compositions $\bs g\ci\bs f:\bX\ra\bX$ and $\bs f\ci\bs g:\bs{\ti X}\ra\bs{\ti X}$ in $\KurtrG\subset\Kur$ with 2-morphisms for all $i,i',j,j'\in I$
\begin{equation*}
\Th_{iji'}^{\bs g,\bs f}:\bs g_{ji'}\ci\bs f_{ij}\Longra(\bs g\ci\bs f)_{ii'},\quad
\Th_{jij'}^{\bs f,\bs g}:\bs f_{ij'}\ci\bs g_{ji}\Longra(\bs f\ci\bs g)_{jj'}.
\end{equation*}
We now claim that for all $i,i',j,j'\in I$ there are unique 2-morphisms
\begin{equation*}
\bs\eta_{ii'}:(\bs g\ci\bs f)_{ii'}\Longra\Phi_{ii'},\quad
\bs\ze_{jj'}:(\bs f\ci\bs g)_{jj'}\Longra\ti\Phi_{jj'},
\end{equation*}
such that for all $j\in I$ and $i\in I$ we have
\e
\begin{split}
\bs\eta_{ii'}\vert_{\Im\psi_i\cap\Im\psi_{i'}\cap\Im\ti\psi_j}&=\bs\de_{iji'}\od(\Th_{iji'}^{\bs g,\bs f})^{-1},\\
\bs\ze_{jj'}\vert_{\Im\ti\psi_j\cap\Im\ti\psi_{j'}\cap\Im\psi_i}&=\bs\ep_{jij'}\od(\Th_{jij'}^{\bs f,\bs g})^{-1}.
\end{split}
\label{ku7eq77}
\e
To prove this, for $\bs\eta_{ii'}$ we use the stack property Theorem \ref{ku4thm1}(a) and Definition \ref{ku4def11}(iv) for the open cover $\{\Im\psi_i\cap\Im\psi_{i'}\cap\Im\ti\psi_j:j\in I\}$ of the domain $\Im\psi_i\cap\Im\psi_{i'}$ of $\bs\eta_{ii'}$, using \eq{ku4eq18}--\eq{ku4eq20} for the $\Th_{iji'}^{\bs g,\bs f}$ and \eq{ku7eq76} to check that the prescribed values $\bs\eta_{ii'}\vert_{\Im\psi_i\cap\Im\psi_{i'}\cap\Im\ti\psi_j}$ in \eq{ku7eq77} satisfy the required compatibility on double overlaps. The $\bs\ze_{jj'}$ proof is similar.

We can now show that $\bs\eta=(\bs\eta_{ii'}:i,i'\in I):\bs g\ci\bs f\Ra\bs\id_\bX$ and $\bs\ze=(\bs\ze_{jj'}:j,j'\in I):\bs f\ci\bs g\Ra\bs\id_{\smash{\bs{\ti X}}}$ are 2-morphisms of Kuranishi spaces, using \eq{ku4eq18}--\eq{ku4eq20} for the $\Th_{iji'}^{\bs g,\bs f},\Th_{jij'}^{\bs f,\bs g}$, Definition \ref{ku4def12}(h) for $\bX,\bs{\ti X}$, equations \eq{ku7eq76} and \eq{ku7eq77}, and the stack property Theorem \ref{ku4thm1}(a) and Definition \ref{ku4def11}(iii) to verify Definition \ref{ku4def14}(a),(b). Therefore $\bs f,\bs g,\bs\eta,\bs\ze$ show that $\bX$ and $\bs{\ti X}$ are equivalent in $\KurtrG$. This completes the proof of Theorem~\ref{ku4thm8}.

\subsection{Proof of Theorem \ref{ku4thm9}}
\label{ku74}

In this section, as in \S\ref{ku712} we will again by an abuse of notation treat the weak 2-category $\Kur_S(X)$ defined in \S\ref{ku41} as if it were a strict 2-category, omitting 2-morphisms $\bs\al_{\Phi_{kl},\Phi_{jk},\Phi_{ij}},\bs\be_{\Phi_{ij}},\bs\ga_{\Phi_{ij}}$ in \eq{ku4eq4} and \eq{ku4eq6}, and omitting brackets in compositions of 1-morphisms $\Phi_{kl}\ci\Phi_{jk}\ci\Phi_{ij}$. We do this because otherwise diagrams such as \eq{ku7eq80}, \eq{ku7eq86}, \eq{ku7eq88}, \ldots\ would become too big.

Let $\cF=\bigl(A,(V_a,E_a,\Ga_a,s_a,\psi_a)_{a\in A},$ $S_{ab},\Phi_{ab,\;a,b\in A},$ $S_{abc},\ab\La_{abc,\; a,b,c\in A}\bigr)$ be a fair coordinate system of virtual dimension $n\in\Z$ on a Hausdorff, second countable topological space $X$, as in \S\ref{ku48}. Then $\cF$ satisfies either Definition \ref{ku4def36}(k) or (k$)'$. We will suppose $\cF$ satisfies Definition \ref{ku4def36}(k), and give the proof in this case. The proof for (k$)'$ is very similar, but the order of composition of 1-morphisms is reversed, and the order of horizontal composition of 2-morphisms is reversed (though vertical composition stays the same), and the order of subscripts $a,b,c,\ldots$ is reversed, so $\Phi_{ab},\La_{abc}$ are replaced by $\Phi_{ba},\La_{cba}$, and so on. We leave the details for case (k$)'$ to the interested reader.

Throughout the proof, we will use the following notation for multiple intersections of the open sets $S_{ab}$ in $X$. For $a_1,\ldots,a_k\in A$, $k\ge 3$, write
\begin{equation*}
\dot S_{a_1a_2\cdots a_k}=\bigcap\nolimits_{1\le i<j\le k}S_{a_ia_j}.
\end{equation*}
Mote generally, if we enclose a group of consecutive indices $a_la_{l+1}\cdots a_m$ in brackets, as in $\dot S_{a_1\cdots a_{l-1}(a_l\cdots a_m)a_{m+1}\cdots a_k}$, we omit from the intersection any $S_{a_ia_j}$ with both $a_i,a_j$ belonging to the bracketed group. So, for example
\begin{align*}
\dot S_{a(bc)}&=S_{ab}\cap S_{ac}, \qquad
\dot S_{(ab)(cd)}=S_{ac}\cap S_{ad}\cap S_{bc}\cap S_{bd},\\
\dot S_{a(bc)(de)}&=S_{ab}\cap S_{ac}\cap S_{ad}\cap S_{ae}\cap S_{bd}\cap S_{be}\cap S_{cd}\cap S_{ce}.
\end{align*}

In Definition \ref{ku4def36}, the 2-morphisms $\La_{abc}$ are defined on open sets $S_{abc}\subseteq S_{ab}\cap S_{ac}\cap S_{bc}\subseteq\Im\psi_a\cap\Im\psi_b\cap\Im\psi_c$. We begin by showing that we can extend the $\La_{abc}$ canonically to $\dot S_{abc}=S_{ab}\cap S_{ac}\cap S_{bc}$.

\begin{lem} There exist unique\/ $2$-morphisms $\ti\La_{abc}:\Phi_{bc}\ci\Phi_{ab}\Ra\Phi_{ac}$ defined over $\dot S_{abc}$ for all\/ $a,b,c\in A,$ such that\/ $\ti\La_{abc}\vert_{S_{abc}}=\La_{abc},$ and as in Definition\/ {\rm\ref{ku4def36}(j)} we have $\ti\La_{acd}\od(\id_{\Phi_{cd}}*\ti\La_{abc})=\ti\La_{abd}\od(\ti\La_{bcd}*\id_{\Phi_{ab}}):\Phi_{cd}\ci\Phi_{bc}\ci\Phi_{ab}\Ra\Phi_{ad}$ over\/ $\dot S_{abcd},$ for all\/~$a,b,c,d\in A$.
\label{ku7lem3}
\end{lem}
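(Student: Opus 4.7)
\smallskip\noindent\emph{Proof proposal.}
The plan is to construct $\ti\La_{abc}$ locally on $\dot S_{abc}$ using condition Definition~\ref{ku4def36}(k), glue via the stack property (Theorem~\ref{ku4thm1}(a)), and check compatibility with $\La_{abc}$ on $S_{abc}$ and the cocycle identity on $\dot S_{abcd}$. Throughout I use Lemma~\ref{ku7lem2} with the coordinate changes $\Phi_{ea}$ (which are equivalences in $\Kur_{S_{ea}}(X)$, hence qualify).

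Fix $a,b,c\in A$ and $x\in\dot S_{abc}$. Apply (k) to $B=\{a,b,c\}$ to obtain $e\in A$ with $x\in S_{ea}\cap S_{eb}\cap S_{ec}$ and $x\in S_{eab}\cap S_{eac}\cap S_{ebc}$; let $T^{e,x}\subseteq \dot S_{abc}$ be a small open neighbourhood of $x$ on which all six of these 2-morphisms $\La_{eab},\La_{eac},\La_{ebc}$ are defined. Over $T^{e,x}$ set
\e
\Xi_{abc}^{e,x}:=\La_{eac}^{-1}\od\La_{ebc}\od(\id_{\Phi_{bc}}*\La_{eab}):\Phi_{bc}\ci\Phi_{ab}\ci\Phi_{ea}\Longra\Phi_{ac}\ci\Phi_{ea},
\label{ku7eq78}
\e
and let $\ti\La_{abc}^{e,x}:\Phi_{bc}\ci\Phi_{ab}\Ra\Phi_{ac}$ be the unique 2-morphism over $T^{e,x}$ with $\ti\La_{abc}^{e,x}*\id_{\Phi_{ea}}=\Xi_{abc}^{e,x}$, given by Lemma~\ref{ku7lem2} applied to the equivalence $\Phi_{ea}$. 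To see that these local definitions agree on overlaps, suppose $x'\in T^{e,x}\cap T^{e',x'}$; apply (k) again to $B=\{a,b,c,e,e'\}$ to obtain $e''$ with $x'\in S_{e''e}\cap S_{e''e'}\cap S_{e''a}\cap\cdots$ and all of the triple $S_{e''\cdot\cdot}$'s needed. Then three applications of Definition~\ref{ku4def36}(j) to the $4$-tuples $(e'',e,a,c),(e'',e,b,c),(e'',e,a,b)$ rewrite $\La_{eab},\La_{eac},\La_{ebc}$ in terms of $\La_{e''e\cdot},\La_{e''\cdot\cdot}$, and a diagram chase identifies $(\ti\La_{abc}^{e,x}*\id_{\Phi_{ea}})*\id_{\Phi_{e''e}}$ with $(\ti\La_{abc}^{e',x'}*\id_{\Phi_{e'a}})*\id_{\Phi_{e''e'}}$. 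Two invocations of Lemma~\ref{ku7lem2} (stripping off $\Phi_{e''e}$ and then $\Phi_{ea}$) give $\ti\La_{abc}^{e,x}=\ti\La_{abc}^{e',x'}$ near $x'$. Since the $T^{e,x}$ cover $\dot S_{abc}$ and the local definitions agree on overlaps, the stack property Theorem~\ref{ku4thm1}(a) with Definition~\ref{ku4def11}(iv) yields a unique 2-morphism $\ti\La_{abc}$ over $\dot S_{abc}$ restricting to each $\ti\La_{abc}^{e,x}$.

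For compatibility with $\La_{abc}$ on $S_{abc}$, for $x\in S_{abc}$ one may take $e=a$ in the construction above: then $\La_{aab}=\bs\be_{\Phi_{ab}}$, $\La_{aac}=\bs\be_{\Phi_{ac}}$ by Definition~\ref{ku4def36}(i), and $\La_{abc}$ itself is available since $x\in S_{abc}$; in the strict-2-category shorthand of \S\ref{ku712} the composition \eq{ku7eq78} collapses to $\La_{abc}*\id_{\Phi_{aa}}$, so $\ti\La_{abc}^{a,x}=\La_{abc}$ near $x$. By Definition~\ref{ku4def11}(iii), $\ti\La_{abc}|_{S_{abc}}=\La_{abc}$. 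For the cocycle identity, fix $a,b,c,d\in A$ and $x\in\dot S_{abcd}$; apply (k) to $B=\{a,b,c,d\}$ to find $e$ with $x\in S_{e\cdot}\cap S_{e\cdot\cdot}$ for all required pairs. Over a neighbourhood of $x$, six instances of the defining relation \eq{ku7eq78} for the triples $(a,b,c),(a,c,d),(b,c,d),(a,b,d)$ together with Definition~\ref{ku4def36}(j) applied to the original $\La_{e\cdot\cdot}$'s assemble into the required identity after horizontal composition with $\id_{\Phi_{ea}}$; Lemma~\ref{ku7lem2} then cancels $\Phi_{ea}$. Since this holds near every $x\in\dot S_{abcd}$, Definition~\ref{ku4def11}(iii) gives the cocycle identity globally. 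Uniqueness of the whole system follows because any extension satisfying the cocycle identity must, on $T^{e,x}$, satisfy \eq{ku7eq78} (taking the $4$-tuple $(e,a,b,c)$ with the known $\La_{e\cdot\cdot}=\ti\La_{e\cdot\cdot}|_{S_{e\cdot\cdot}}$), and Lemma~\ref{ku7lem2} again pins it down.

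The main obstacle is the overlap step: verifying that $\ti\La_{abc}^{e,x}$ and $\ti\La_{abc}^{e',x'}$ agree on $T^{e,x}\cap T^{e',x'}$. This requires introducing a common refinement $e''$ via a second use of (k) and then a careful pentagon/hexagon diagram chase built from three instances of Definition~\ref{ku4def36}(j); once this is dispatched, all remaining verifications reduce to a single application of (k), Lemma~\ref{ku7lem2}, and the stack axioms~\ref{ku4def11}(iii),(iv).
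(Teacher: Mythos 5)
You are correct, and your construction is essentially the paper's: it defines local candidates over domains supplied by Definition \ref{ku4def36}(k) by the same formula $\La_{dac}^{-1}\od\La_{dbc}\od(\id_{\Phi_{bc}}*\La_{dab})$ and Lemma \ref{ku7lem2}, glues them with the stack property (Theorem \ref{ku4thm1}(a), Definition \ref{ku4def11}(iii),(iv)), recovers $\La_{abc}$ on $S_{abc}$ by taking the auxiliary index equal to $a$ together with Definition \ref{ku4def36}(h),(i), and proves the cocycle identity and uniqueness by applying (k) to $\{a,b,c,d\}$ and cancelling one coordinate change — the only cosmetic difference being that the paper indexes its local pieces by all $d\in A$ over the canonical sets $\ti S_{abc}^d=S_{dab}\cap S_{dac}\cap S_{dbc}$ rather than by points. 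One imprecision in your overlap step: the expressions $(\ti\La_{abc}^{e,x}*\id_{\Phi_{ea}})*\id_{\Phi_{e''e}}$ and $(\ti\La_{abc}^{e',x'}*\id_{\Phi_{e'a}})*\id_{\Phi_{e''e'}}$ are 2-morphisms between \emph{different} 1-morphisms, so they cannot literally be identified and you cannot "strip off $\Phi_{e''e}$ and then $\Phi_{ea}$" from both sides; the correct repair — which is exactly the paper's diagram \eq{ku7eq80} — is to compare both local 2-morphisms after horizontal composition with the single coordinate change $\Phi_{e''a}$ (the paper's $\Phi_{fa}$), mediating between $\Phi_{ea}\ci\Phi_{e''e}$, $\Phi_{e'a}\ci\Phi_{e''e'}$ and $\Phi_{e''a}$ via the 2-morphisms $\La_{e''ea}$, $\La_{e''e'a}$ and several uses of Definition \ref{ku4def36}(j), after which a single application of Lemma \ref{ku7lem2} suffices.
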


\begin{proof} Fix $a,b,c\in A$. We will construct a 2-morphism $\ti\La_{abc}:\Phi_{bc}\ci\Phi_{ab}\Ra\Phi_{ac}$ over $\dot S_{abc}$. For each $d\in A$, define
\e
\ti S_{abc}^d=S_{dab}\cap S_{dac}\cap S_{dbc}\subseteq\dot S_{abc}.
\label{ku7eq78}
\e
Then $\ti S_{abc}^d$ is open in $\dot S_{abc}$. Definition \ref{ku4def36}(k) with $B=\{a,b,c\}$ implies that for each $x\in\dot S_{abc}$, there exists $d\in A$ with $x\in\ti S_{abc}^d$. Thus, $\{\ti S_{abc}^d:d\in A\}$ is an open cover of~$\dot S_{abc}$.

Since $\Phi_{da}$ is an equivalence in the weak 2-category $\Kur_{\ti S_{abc}^d}(X)$ in Definition \ref{ku4def7}, as it is a coordinate change, Lemma \ref{ku7lem2} implies that for each $d\in A$ there is a unique 2-morphism
\e
\begin{split}
\ti\La_{abc}^d:\Phi_{bc}\ci\Phi_{ab}\Longra\Phi_{ac}\quad\text{over $\ti S_{abc}^d$, such that} \\
\ti\La_{abc}^d*\id_{\Phi_{da}}=
\La_{dac}^{-1}\od\La_{dbc}\od(\id_{\Phi_{bc}}*\La_{dab}).
\end{split}
\label{ku7eq79}
\e

For $d,e\in A$, we will show that $\ti\La_{abc}^d\vert_{\ti S_{abc}^d\cap \ti S_{abc}^e}=\ti\La_{abc}^e\vert_{\ti S_{abc}^d\cap \ti S_{abc}^e}$. Let $x\in \ti S_{abc}^d\cap \ti S_{abc}^e$. Then Definition \ref{ku4def36}(k) with $B=\{a,b,c,d,e\}$ gives $f\in A$ with $x\in S_{fab}\cap S_{fac}\cap S_{fbc}\cap S_{fda}\cap S_{fdb}\cap S_{fdc}\cap S_{fea}\cap S_{feb}\cap S_{fec}\cap \ti S_{abc}^d\cap \ti S_{abc}^e$. Consider the diagram of 2-morphisms on this intersection:
\e
\begin{gathered}
\xymatrix@!0@C=21pt@R=30pt{ &&&&&&& \Phi_{bc}\ci\Phi_{ab}\ci\Phi_{fa} \ar`l/20pt[dlllllll]`^r[dddddlllllll] [ddddd]^(0.2){\ti\La_{abc}^d*\id_{\Phi_{fa}}} 
\ar`r/20pt[drrrrrrr]`_l[dddddrrrrrrr] [ddddd]_(0.2){\ti\La_{abc}^e*\id_{\Phi_{fa}}} 
\ar[d]^(0.6){\id_{\Phi_{bc}}*\La_{fab}} &&&&&&&
\\
& *+[r]{\Phi_{bc}\!\ci\!\Phi_{ab}\!\ci\!\Phi_{da}\!\ci\!\Phi_{fd}} \ar[urrrrrr]^(0.3){\id_{\Phi_{bc}\ci\Phi_{ab}}*\La_{fda}} \ar[drrr]^(0.65){\id_{\Phi_{bc}}*\La_{dab}*\id_{\Phi_{fd}}} \ar[ddd]^{\begin{subarray}{l}\La_{abc}* \\ \id_{\Phi_{da}\ci\Phi_{fd}}\end{subarray}} &&&&&& \Phi_{bc}\!\ci\!\Phi_{fb} \ar[ddd]^(0.62){\La_{fbc}} &&&&&& *+[l]{\Phi_{bc}\!\ci\!\Phi_{ab}\!\ci\!\Phi_{ea}\!\ci\!\Phi_{fe}} \ar[dlll]_(0.65){\id_{\Phi_{bc}}*\La_{eab}*\id_{\Phi_{fe}}} \ar[ddd]_{\begin{subarray}{l}\La_{abc}* \\ \id_{\Phi_{ea}\ci\Phi_{fe}}\end{subarray}} \ar[ullllll]_(0.3){\id_{\Phi_{bc}\ci\Phi_{ab}}*\La_{fea}} &
\\
&&&& \Phi_{bc}\!\ci\!\Phi_{db}\!\ci\!\Phi_{fd} \ar[d]^{\La_{dbc}*\id_{\Phi_{fd}}} \ar[urrr]_(0.6){\begin{subarray}{l}\id_{\Phi_{bc}}* \\ \La_{fdb}\end{subarray}} &&&&&& \Phi_{bc}\!\ci\!\Phi_{eb}\!\ci\!\Phi_{fe} \ar[d]_{\La_{ebc}*\id_{\Phi_{fe}}} \ar[ulll]^(0.6){\begin{subarray}{l}\id_{\Phi_{bc}}* \\ \La_{feb}\end{subarray}}
\\
&&&& \Phi_{dc}\!\ci\!\Phi_{fd} \ar[drrr]^{\La_{fdc}} &&&&&& \Phi_{ec}\!\ci\!\Phi_{fe} \ar[dlll]_{\La_{fec}}
\\
& *+[r]{\Phi_{ac}\!\ci\!\Phi_{da}\!\ci\!\Phi_{fd}} \ar[drrrrrr]^(0.6){\id_{\Phi_{ac}}*\La_{fda}} \ar[urrr]_(0.65){\La_{dac}*\id_{\Phi_{fd}}} &&&&&& \Phi_{fc} &&&&&& *+[l]{\Phi_{ac}\!\ci\!\Phi_{ea}\!\ci\!\Phi_{fe}} \ar[dllllll]_(0.6){\id_{\Phi_{ac}}*\La_{fea}} \ar[ulll]^(0.65){\La_{eac}*\id_{\Phi_{fe}}} &
\\
&&&&&&& \Phi_{ac}\ci\Phi_{fa}. \ar[u]_{\La_{fac}} &&&&&&& }
\end{gathered}
\label{ku7eq80}
\e
Here the outer two quadrilaterals commute by \eq{ku7eq79}, and the inner eight quadrilaterals commute by Definition \ref{ku4def36}(j). So \eq{ku7eq80} commutes. 

Thus, for each $x\in \ti S_{abc}^d\cap \ti S_{abc}^e$, on an open neighbourhood of $x$ we have $\ti\La_{abc}^d*\id_{\Phi_{fa}}=\ti\La_{abc}^e*\id_{\Phi_{fa}}$, so that on an open neighbourhood of $x$ we have $\ti\La_{abc}^d=\ti\La_{abc}^e$ by Lemma \ref{ku7lem2}. Definition \ref{ku4def11}(iii) and Theorem \ref{ku4thm1}(a) now imply that $\ti\La_{abc}^d=\ti\La_{abc}^e$ on $\ti S_{abc}^d\cap \ti S_{abc}^e$. Since the $\ti S_{abc}^d$ for $d\in A$ cover $\dot S_{abc}$, Definition \ref{ku4def11}(iii),(iv) and Theorem \ref{ku4thm1}(a) show that there exists a unique 2-morphism $\ti\La_{abc}:\Phi_{bc}\ci\Phi_{ab}\Ra\Phi_{ac}$ over $\dot S_{abc}$ such that
\e
\ti\La_{abc}\vert_{\ti S_{abc}^d}=\ti\La_{abc}^d\quad\text{for all $d\in A$.}
\label{ku7eq81}
\e

When $d=a$, we see from \eq{ku7eq78}--\eq{ku7eq79} and Definition \ref{ku4def36}(h),(i) that $\ti S_{abc}^a=S_{abc}$ and $\ti\La_{abc}^a=\La_{abc}$. Hence $\ti\La_{abc}\vert_{S_{abc}}=\La_{abc}$, as we have to prove.

Suppose $a,b,c,d\in A$, and $x\in \dot S_{abcd}=S_{ab}\cap S_{ac}\cap S_{ad}\cap S_{bc}\cap S_{bd}\cap S_{cd}$. Definition \ref{ku4def36}(k) with $B=\{a,b,c,d\}$ gives $e\in A$ with $x\in\ti S_{abc}^e\cap \ti S_{abd}^e\cap\ti S_{acd}^e\cap\ti S_{bcd}^e$. So, in an open neighbourhood of $x$ we have
\begin{align*}
&\bigl[\ti\La_{acd}\od(\id_{\Phi_{cd}}*\ti\La_{abc})\bigr]*\id_{\Phi_{ea}}=(\ti\La_{acd}^e*\id_{\Phi_{ea}})\od(\id_{\Phi_{cd}}*\ti\La_{abc}^e*\id_{\Phi_{ea}})\\
&=\bigl(\La_{ead}^{-1}\od\La_{ecd}\od(\id_{\Phi_{cd}}*\La_{eac})\bigr)\\
&\qquad\od\bigl((\id_{\Phi_{cd}}*\La_{eac}^{-1})\od(\id_{\Phi_{cd}}*\La_{ebc})\od(\id_{\Phi_{cd}}*\id_{\Phi_{bc}}*\La_{eab})\bigr)\\
&=\La_{ead}^{-1}\od\La_{ebd}\od\bigl(\La_{ebd}^{-1}\od\La_{ecd}\od(\id_{\Phi_{cd}}*\La_{ebc})\bigr)\od(\id_{\Phi_{cd}\ci\Phi_{bc}}*\La_{eab})\\
&=\bigl(\La_{ead}^{-1}\od\La_{ebd}\od(\id_{\Phi_{bd}}*\La_{eab})\bigr)\\
&\quad\od\bigl((\id_{\Phi_{bd}}*\La_{eab}^{-1})\od(\ti\La_{bcd}^e*\id_{\Phi_{eb}})\od(\id_{\Phi_{cd}\ci\Phi_{bc}}*\La_{eab})\bigr)\\
&=(\ti\La_{abd}^e*\id_{\Phi_{ea}})\od(\ti\La_{bcd}^e*\id_{\Phi_{ab}}*\id_{\Phi_{ea}})=\bigl[\ti\La_{abd}\od(\ti\La_{bcd}*\id_{\Phi_{ab}})\bigr]*\id_{\Phi_{ea}},
\end{align*}
using \eq{ku7eq81} in the first, fourth and sixth steps, and \eq{ku7eq79} in the second, third and fifth. Lemma \ref{ku7lem2} now implies that $\ti\La_{acd}\od(\id_{\Phi_{cd}}*\ti\La_{abc})=\ti\La_{abd}\od(\ti\La_{bcd}*\id_{\Phi_{ab}})$ holds near $x$. Applying Definition \ref{ku4def11}(iii) and Theorem \ref{ku4thm1}(a) again shows it holds on the correct domain $\dot S_{abcd}$. This completes the lemma.
\end{proof}

Next, for all $a,b\in A$ we have a coordinate change $\Phi_{ab}:(V_a,E_a,\ab\Ga_a,\ab s_a,\ab\psi_a)\ra (V_b,E_b,\Ga_b,s_b,\psi_b)$ over $S_{ab}\subseteq\Im\psi_a\cap\Im\psi_b$. This is an equivalence in the 2-category $\Kur_{S_{ab}}(X)$ by Definition \ref{ku4def8}. Thus we may choose a quasi-inverse $\check\Phi_{ba}:(V_b,E_b,\Ga_b,s_b,\psi_b)\ra(V_a,E_a,\ab\Ga_a,\ab s_a,\ab\psi_a)$, which is also a coordinate change over $S_{ab}$, and 2-morphisms
\e
\eta_{ab}:\Phi_{ab}\ci\check\Phi_{ba}\Ra\id_{(V_a,E_a,\Ga_a,s_a,\psi_a)},\quad
\ze_{ab}:\check\Phi_{ba}\ci\Phi_{ab}\Ra\id_{(V_b,E_b,\Ga_b,s_b,\psi_b)}.
\label{ku7eq82}
\e
When $a=b$, so that $\Phi_{aa}=\id_{(V_a,E_a,\Ga_a,s_a,\psi_a)}$, we choose
\e
\check\Phi_{aa}=\id_{(V_a,E_a,\Ga_a,s_a,\psi_a)}\quad\text{and}\quad\eta_{aa}=\ze_{aa}=\id_{\id_{(V_a,E_a,\Ga_a,s_a,\psi_a)}}.
\label{ku7eq83}
\e

Now fix $a,b\in A$. For all $c\in A$, we have $\dot S_{c(ab)}=S_{ca}\cap S_{cb}\subseteq\Im\psi_a\cap\Im\psi_b$. From Definition \ref{ku4def36}(k) with $B=\{a,b\}$, we see that for each $x\in\Im\psi_a\cap\Im\psi_b$ there exists $c\in A$ with $x\in\dot S_{c(ab)}$, so $\{\dot S_{c(ab)}:c\in A\}$ is an open cover of $\Im\psi_a\cap\Im\psi_b$. For each $c\in A$, define a 1-morphism $\Psi_{ab}^c:(V_a,E_a,\Ga_a,s_a,\psi_a)\ra(V_b,E_b,\Ga_b,s_b,\psi_b)$ over $\dot S_{c(ab)}$ by $\Psi_{ab}^c=\Phi_{cb}\ci\check\Phi_{ac}$.

\begin{lem} For all\/ $a,b,c,d\in A,$ there is a unique $2$-morphism
\e
\Mu_{ab}^{cd}:\Psi_{ab}^c\Longra \Psi_{ab}^d\quad\text{over $\dot S_{(cd)(ab)}=\dot S_{c(ab)}\cap \dot S_{d(ab)},$}
\label{ku7eq84}
\e
such that for all\/ $e\in A,$ the following commutes on $\dot S_{e(cd)(ab)}\!:$
\e
\begin{gathered}
\xymatrix@!0@R=27pt@C=35pt{
*+[r]{\Phi_{cb}\ci\Phi_{ec}} \ar@{=>}[rrrr]_(0.6){\ti\La_{ecb}} \ar@{=>}[d]^{\id_{\Phi_{cb}}*\ze_{ca}^{-1}*\id_{\Phi_{ac}}} &&&&
\Phi_{eb} \ar@{=>}[rrrr]_(0.4){\ti\La_{edb}^{-1}} &&&& *+[l]{\Phi_{db}\ci\Phi_{ed}} \ar@{=>}[d]_{\id_{\Phi_{db}}*\ze_{da}^{-1}*\id_{\Phi_{ad}} } \\
*+[r]{\Phi_{cb}\ci\check\Phi_{ac}\ci\Phi_{ca}\ci\Phi_{ec}} \ar@{=>}[d]^{\id_{\Phi_{cb}\ci\check\Phi_{ac}}*\ti\La_{eca}} &&&&&&&& *+[l]{\Phi_{db}\ci\check\Phi_{ad}\ci\Phi_{da}\ci\Phi_{ed}} \ar@{=>}[d]_{\id_{\Phi_{db}\ci\check\Phi_{ad}}*\ti\La_{eda}} \\
*+[r]{\Phi_{cb}\ci\check\Phi_{ac}\ci\Phi_{ea}} \ar@{=}[rrr] &&& {\Psi_{ab}^c\ci\Phi_{ea}} \ar@{=>}[rr]^{\raisebox{6pt}{$\st \Mu_{ab}^{cd}*\id_{\Phi_{ea}}$}} && {\Psi_{ab}^d\ci\Phi_{ea}} \ar@{=}[rrr] &&& *+[l]{\Phi_{db}\ci\check\Phi_{ad}\ci\Phi_{ea}.\!\!} }
\end{gathered}
\label{ku7eq85}
\e 

\label{ku7lem4}
\end{lem}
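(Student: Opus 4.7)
The plan is to mimic the strategy of Lemma \ref{ku7lem3}. Fix $a,b,c,d\in A$. Definition \ref{ku4def36}(k) with $B=\{a,b,c,d\}$ shows that every $x\in\dot S_{(cd)(ab)}$ lies in some $\dot S_{e(cd)(ab)}$, so $\bigl\{\dot S_{e(cd)(ab)}:e\in A\bigr\}$ is an open cover of $\dot S_{(cd)(ab)}$. For each such $e$, the coordinate change $\Phi_{ea}$ is an equivalence in $\Kur_{\dot S_{e(cd)(ab)}}(X)$, so by Lemma \ref{ku7lem2} a 2-morphism $\Psi_{ab}^c\Ra\Psi_{ab}^d$ over $\dot S_{e(cd)(ab)}$ is determined uniquely by its horizontal composition with $\id_{\Phi_{ea}}$. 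I would therefore define $\Mu_{ab}^{cd,e}:\Psi_{ab}^c\Ra\Psi_{ab}^d$ over $\dot S_{e(cd)(ab)}$ to be the unique 2-morphism for which \eq{ku7eq85} commutes with the given $e$. For existence, I must check that the composition around the top and right of \eq{ku7eq85}, followed by $\id_{\Psi_{ab}^d}*\id_{\Phi_{ea}}$, agrees with the composition around the left and bottom, i.e.\ that the outer rectangle prescribes a well-defined 2-morphism $\Psi_{ab}^c\ci\Phi_{ea}\Ra\Psi_{ab}^d\ci\Phi_{ea}$; this reduces to the triangle identities \eq{ku7eq82}--\eq{ku7eq83} for $\eta_{ca},\ze_{ca},\eta_{da},\ze_{da}$ together with the cocycle identity of Lemma \ref{ku7lem3} applied to the chains $\ti\La_{eca},\ti\La_{eda}$ and $\ti\La_{ecb},\ti\La_{edb}$.

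The second step is to verify that these local choices agree on overlaps. Given $e,f\in A$ and $x\in\dot S_{e(cd)(ab)}\cap\dot S_{f(cd)(ab)}$, I would apply Definition \ref{ku4def36}(k) to $B=\{a,b,c,d,e,f\}$ to obtain $g\in A$ with $x\in\dot S_{g(cd)(ab)}$ and with $g$ comparable to each of $e,f$ via $\La$'s. Then, in analogy with \eq{ku7eq80}, I would draw a large commutative diagram whose outer boundary expresses $\Mu_{ab}^{cd,e}*\id_{\Phi_{ga}}$ along one route and $\Mu_{ab}^{cd,f}*\id_{\Phi_{ga}}$ along the other, with inner cells commuting by:
\begin{itemize}
\setlength{\itemsep}{0pt}
\setlength{\parsep}{0pt}
\item the defining identities \eq{ku7eq85} for $\Mu_{ab}^{cd,e},\Mu_{ab}^{cd,f}$ on the two outer wedges;
\item the cocycle relation of Lemma \ref{ku7lem3} applied to the $\ti\La$'s linking the indices $g,e,c,a$, then $g,e,d,a$, then $g,f,c,a$, etc., i.e.\ to chains $(g,c,a)$, $(g,d,a)$, $(g,c,b)$, $(g,d,b)$ factored through each of $e,f$;
\item compatibility of horizontal and vertical composition for the remaining interchange squares.
\end{itemize}
Lemma \ref{ku7lem2} then yields $\Mu_{ab}^{cd,e}=\Mu_{ab}^{cd,f}$ on an open neighbourhood of $x$, and Definition \ref{ku4def11}(iii) together with Theorem \ref{ku4thm1}(a) promote this to equality on the full overlap~$\dot S_{e(cd)(ab)}\cap\dot S_{f(cd)(ab)}$.

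The third step is routine: by Definition \ref{ku4def11}(iv) and Theorem \ref{ku4thm1}(a), the locally agreeing family $\bigl\{\Mu_{ab}^{cd,e}:e\in A\bigr\}$ glues to a unique 2-morphism $\Mu_{ab}^{cd}:\Psi_{ab}^c\Ra\Psi_{ab}^d$ over $\dot S_{(cd)(ab)}$ with $\Mu_{ab}^{cd}\vert_{\dot S_{e(cd)(ab)}}=\Mu_{ab}^{cd,e}$ for every $e$, which is precisely the commutativity of \eq{ku7eq85} for every $e\in A$. Uniqueness of the global $\Mu_{ab}^{cd}$ follows from the uniqueness clause of Lemma \ref{ku7lem2} restricted to any single $\dot S_{e(cd)(ab)}$ in the cover.

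The main obstacle will be the second step: unlike \eq{ku7eq80}, where each arm of the pentagon collapsed cleanly through a single $\La_{edc}$ or $\La_{ead}$, here the 1-morphisms $\Psi_{ab}^e=\Phi_{eb}\ci\check\Phi_{ae}$ and $\Psi_{ab}^f=\Phi_{fb}\ci\check\Phi_{af}$ contain the quasi-inverses $\check\Phi_{ae},\check\Phi_{af}$, and the defining rectangle \eq{ku7eq85} is a composite of six 2-morphisms rather than three. The verification amounts to threading the extra triangle identities \eq{ku7eq82} through a diagram roughly twice the size of \eq{ku7eq80}, and the careful bookkeeping of how $\eta_{ca},\ze_{ca},\eta_{da},\ze_{da}$ cancel against the images of $\ti\La_{eca},\ti\La_{eda}$ under horizontal composition is where almost all of the work lives.
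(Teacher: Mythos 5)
Your proposal follows the paper's proof essentially verbatim: define $(\Mu_{ab}^{cd})^e$ on $\dot S_{e(cd)(ab)}$ via Lemma \ref{ku7lem2}, prove $(\Mu_{ab}^{cd})^e=(\Mu_{ab}^{cd})^f$ on overlaps by horizontally composing with $\id_{\Phi_{ga}}$ for an auxiliary $g$ supplied by Definition \ref{ku4def36}(k) and chasing a large diagram whose cells commute by \eq{ku7eq85}, Lemma \ref{ku7lem3} and the interchange law, and then glue and get uniqueness from the stack property (Theorem \ref{ku4thm1}(a), Definition \ref{ku4def11}(iii),(iv)). One small remark: your local ``existence check'' is vacuous, since \eq{ku7eq85} contains exactly one unknown arrow and so simply prescribes $\Mu_{ab}^{cd}*\id_{\Phi_{ea}}$ as a composite of invertible $2$-morphisms, and in the overlap diagram only the $\ze$'s appear and they cancel by interchange, so neither the $\eta$'s nor the identities of Proposition \ref{kuBprop} are actually needed.
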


\begin{proof} Equation \eq{ku7eq85} determines $\Mu_{ab}^{cd}*\id_{\Phi_{ea}}$ over $\dot S_{e(cd)(ab)}$, and so by Lemma \ref{ku7lem2}, determines $\Mu_{ab}^{cd}$ over $\dot S_{e(cd)(ab)}$, as $\Phi_{ea}$ is an equivalence. Write $(\Mu_{ab}^{cd})^e$ for the value for $\Mu_{ab}^{cd}$ on $\dot S_{e(cd)(ab)}$ determined by \eq{ku7eq85}. Observe that Definition \ref{ku4def36}(k) with $B=\{a,b,c,d\}$ implies that the $\dot S_{e(cd)(ab)}$ for $e\in A$ form an open cover of~$\dot S_{(cd)(ab)}$.

Let $e,f\in A$, and $x\in\dot S_{(ef)(cd)(ab)}=\dot S_{e(cd)(ab)}\cap \dot S_{f(cd)(ab)}$. Applying Definition \ref{ku4def36}(k) with $B=\{a,b,c,d,e,f\}$ and this $x$ gives $g\in A$ such that all the 1- and 2-morphisms in the following diagram are defined on $x\in\dot S_{g(ef)(cd)(ab)}$:
\e
\begin{gathered}
\xymatrix@!0@C=21pt@R=15pt{ 
&&&&&&& \Psi_{ab}^c\ci\Phi_{ga} \ar[dllllll]^(0.3){\ti\La_{gea}^{-1}} \ar[drrrrrr]_(0.3){\ti\La_{gfa}^{-1}} \ar@<-1ex>[dd]^{\ti\La_{gca}^{-1}}
\ar`l/20pt[dlllllll]`^r[ddddddddddddddlllllll] [dddddddddddddd]_(0.2){(\Mu_{ab}^{cd})^e*\id_{\Phi_{ga}}} 
\ar`r/20pt[drrrrrrr]`_l[ddddddddddddddrrrrrrr] [dddddddddddddd]^(0.2){(\Mu_{ab}^{cd})^f*\id_{\Phi_{ga}}} &&&&&&&
\\
& *+[r]{\Phi_{cb}\!\ci\!\check\Phi_{ac}\!\ci\!\Phi_{ea}\!\ci\!\Phi_{ge}} \ar[dd]^{\ti\La_{eca}^{-1}} &&&&&&&&&&&& *+[l]{\Phi_{cb}\!\ci\!\check\Phi_{ac}\!\ci\!\Phi_{fa}\!\ci\!\Phi_{gf}} \ar[dd]_{\ti\La_{fca}^{-1}} & 
\\
&&&&&&& \Phi_{cb}\!\ci\!\check\Phi_{ac}\!\ci\!\Phi_{ca}\!\ci\!\Phi_{gc} \ar[dllllll]_(0.4){\ti\La_{gec}^{-1}} \ar[drrrrrr]^(0.4){\ti\La_{gfc}^{-1}} \ar@<-1ex>[dd]^{\ze_{ca}}
\\
& *+[r]{\Phi_{cb}\!\ci\!\check\Phi_{ac}\!\ci\!\Phi_{ca}\!\ci\!\Phi_{ec}\!\ci\!\Phi_{ge}}\ar[dd]^{\ze_{ca}} &&&&&&&&&&&& *+[l]{\Phi_{cb}\!\ci\!\check\Phi_{ac}\!\ci\!\Phi_{ca}\!\ci\!\Phi_{fc}\!\ci\!\Phi_{gf}}\ar[dd]_{\ze_{ca}} 
\\
&&&&&&& \Phi_{cb}\!\ci\!\Phi_{gc} \ar[dllllll]_(0.4){\ti\La_{gec}^{-1}} \ar[drrrrrr]^(0.4){\ti\La_{gfc}^{-1}} \ar@<-1ex>[dd]^{\ti\La_{gcb}}
\\
& *+[r]{\Phi_{cb}\!\ci\!\Phi_{ec}\!\ci\!\Phi_{ge}}\ar[dd]^{\ti\La_{ecb}} &&&&&&&&&&&& *+[l]{\Phi_{cb}\!\ci\!\Phi_{fc}\!\ci\!\Phi_{gf}}\ar[dd]_{\ti\La_{fcb}} 
\\
&&&&&&& \Phi_{gb} \ar[dllllll]_(0.4){\ti\La_{geb}^{-1}} \ar[drrrrrr]^(0.4){\ti\La_{gfb}^{-1}} \ar@<-1ex>[dd]^{\ti\La_{gdb}^{-1}}
\\
& *+[r]{\Phi_{eb}\!\ci\!\Phi_{ge}}\ar[dd]^{\ti\La_{edb}^{-1}} &&&&&&&&&&&& *+[l]{\Phi_{fb}\!\ci\!\Phi_{gf}} \ar[dd]_{\ti\La_{fdb}^{-1}} 
\\
&&&&&&& \Phi_{db}\!\ci\!\Phi_{gd} \ar[dllllll]_(0.4){\ti\La_{ged}^{-1}} \ar[drrrrrr]^(0.4){\ti\La_{gfd}^{-1}} \ar@<-1ex>[dd]^{\ze_{da}^{-1}}
\\
& *+[r]{\Phi_{db}\!\ci\!\Phi_{ed}\!\ci\!\Phi_{ge}} \ar[dd]^{\ze_{da}^{-1}} &&&&&&&&&&&& *+[l]{\Phi_{db}\!\ci\!\Phi_{fd}\!\ci\!\Phi_{gf}} \ar[dd]_{\ze_{da}^{-1}} 
\\
&&&&&&& {\Phi_{db}\!\ci\!\check\Phi_{ad}\!\ci\!\Phi_{da}\!\ci\!\Phi_{gd}} \ar[dllllll]_(0.45){\ti\La_{ged}^{-1}} \ar[drrrrrr]^(0.45){\ti\La_{gfd}^{-1}} \ar@<-1ex>[dddd]^{\ti\La_{gda}}
\\
& *+[r]{\Phi_{db}\!\ci\!\check\Phi_{ad}\!\ci\!\Phi_{da}\!\ci\!\Phi_{ed}\!\ci\!\Phi_{ge}} \ar[dd]^{\ti\La_{eda}} &&&&&&&&&&&& *+[l]{\Phi_{db}\!\ci\!\check\Phi_{ad}\!\ci\!\Phi_{da}\!\ci\!\Phi_{fd}\!\ci\!\Phi_{gf}} \ar[dd]_{\ti\La_{fda}} 
\\
\\
& *+[r]{\Phi_{db}\!\ci\!\check\Phi_{ad}\!\ci\!\Phi_{ea}\!\ci\!\Phi_{ge}}\ar[drrrrrr]^(0.7){\ti\La_{gea}} &&&&&&&&&&&& *+[l]{\Phi_{db}\!\ci\!\check\Phi_{ad}\!\ci\!\Phi_{fa}\!\ci\!\Phi_{gf}} \ar[dllllll]_(0.7){\ti\La_{gfa}} 
\\
&&&&&&& \Psi_{ab}^d\ci\Phi_{ga}.\!\! &&&&&&& }
\end{gathered}
\label{ku7eq86}
\e
Here for clarity we have omitted all `$\id_{\cdots}*$' and `$*\id_{\cdots}$' terms. The two outer nine-gons commute by \eq{ku7eq85}, eight small quadrilaterals commute by Lemma \ref{ku7lem3}, and four small quadrilaterals commute by compatibility of horizontal and vertical composition. Thus \eq{ku7eq86} commutes, and $(\Mu_{ab}^{cd})^e*\id_{\Phi_{ga}}=(\Mu_{ab}^{cd})^f*\id_{\Phi_{ga}}$ near $x$, so $(\Mu_{ab}^{cd})^e=(\Mu_{ab}^{cd})^f$ near $x$ by Lemma~\ref{ku7lem2}.

As this holds for all $x\in \dot S_{e(cd)(ab)}\cap \dot S_{f(cd)(ab)}$, Definition \ref{ku4def11}(iii) and Theorem \ref{ku4thm1}(a) show that $(\Mu_{ab}^{cd})^e=(\Mu_{ab}^{cd})^f$ on $\dot S_{e(cd)(ab)}\cap \dot S_{f(cd)(ab)}$. Since the $\dot S_{e(cd)(ab)}$ for $e\in A$ cover $\dot S_{(cd)(ab)}$, Definition \ref{ku4def11}(iii),(iv) and Theorem \ref{ku4thm1}(a) imply that there is a unique 2-morphism $\Mu_{ab}^{cd}$ as in \eq{ku7eq84} with $\Mu_{ab}^{cd}\vert_{\dot S_{e(cd)(ab)}}=(\Mu_{ab}^{cd})^e$. But by definition of $(\Mu_{ab}^{cd})^e$ this holds if and only if \eq{ku7eq85} commutes. This completes the lemma.
\end{proof}

\begin{lem} For all\/ $a,b,c,d,e\in A,$ we have
\e
\begin{split}
&\Mu_{ab}^{de}\od\Mu_{ab}^{cd}=\Mu_{ab}^{ce}:\Psi_{ab}^c\Longra \Psi_{ab}^e\\ 
&\text{over $\dot S_{(cde)(ab)}=\dot S_{(cd)(ab)}\cap \dot S_{(ce)(ab)}\cap\dot S_{(de)(ab)}$.}
\end{split}
\label{ku7eq87}
\e

\label{ku7lem5}
\end{lem}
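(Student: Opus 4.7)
\medskip

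\noindent\textbf{Proof plan for Lemma \ref{ku7lem5}.}
The strategy is to imitate the proof of Lemma \ref{ku7lem4}: both sides of \eqref{ku7eq87} are $2$-morphisms $\Psi_{ab}^c \Ra \Psi_{ab}^e$ over $\dot S_{(cde)(ab)}$, and Lemma \ref{ku7lem2} tells us it suffices to check equality after horizontal composition with $\id_{\Phi_{fa}}$ for a suitable $f \in A$. So first I would fix an arbitrary $x \in \dot S_{(cde)(ab)}$ and invoke Definition \ref{ku4def36}(k) with $B = \{a,b,c,d,e\}$ to produce $f \in A$ such that $x$ lies in $\dot S_{f(cde)(ab)}$ (shrinking to the open neighborhood where all the relevant $S_{\bullet\bullet\bullet}$ relations hold simultaneously). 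Then I would pass to the slice $\Phi_{fa}$-style test by horizontal composition on the right with $\id_{\Phi_{fa}}$.

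The main work is a single large diagram chase. The defining property \eqref{ku7eq85} of Lemma \ref{ku7lem4} gives formulas for $\Mu_{ab}^{cd} * \id_{\Phi_{fa}}$, $\Mu_{ab}^{de} * \id_{\Phi_{fa}}$, and $\Mu_{ab}^{ce} * \id_{\Phi_{fa}}$ as vertical pastings of $\ti\La_{f\bullet\bullet}$'s and the $\eta_{\bullet a}, \ze_{\bullet a}$'s for $\check\Phi_{a\bullet}$. I would stack the defining diagram for $\Mu_{ab}^{cd}$ on top of the defining diagram for $\Mu_{ab}^{de}$ (gluing along the common middle row $\Psi_{ab}^d \ci \Phi_{fa}$) and then compare the resulting outer rectangle with the defining diagram for $\Mu_{ab}^{ce}$. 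In the interior one finds two kinds of cells: cells of the form
\begin{equation*}
\ti\La_{fdb}^{-1} \od \ti\La_{fcb} = (\text{face going through $\Phi_{db}\ci\Phi_{cd}\ci\Phi_{fc}$})
\end{equation*}
which commute by the cocycle identity for $\ti\La$ of Lemma \ref{ku7lem3}, and cells expressing compatibility of $\ze_{\bullet a}$ with horizontal composition, which commute by the interchange law in the $2$-category $\Kur_S(X)$.

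Once the composite diagram is shown to commute, it yields exactly $(\Mu_{ab}^{de} \od \Mu_{ab}^{cd}) * \id_{\Phi_{fa}} = \Mu_{ab}^{ce} * \id_{\Phi_{fa}}$ near $x$, so by Lemma \ref{ku7lem2} equation \eqref{ku7eq87} holds on an open neighborhood of $x$ inside $\dot S_{f(cde)(ab)}$. Since such an $f$ exists for every $x \in \dot S_{(cde)(ab)}$, the open sets $\dot S_{f(cde)(ab)}$, $f \in A$, cover $\dot S_{(cde)(ab)}$, and Definition \ref{ku4def11}(iii) combined with Theorem \ref{ku4thm1}(a) (the stack property of $\bcHom$) upgrades the local equalities to equality on all of $\dot S_{(cde)(ab)}$, as required.

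The main obstacle will be the bookkeeping in the composite diagram: keeping track of where each $\ti\La_{f\bullet\bullet}$, $\ze_{\bullet a}$, and $\eta_{\bullet a}$ is defined, and matching the middle horizontal row of one copy of \eqref{ku7eq85} with the top horizontal row of the next, so that after cancellation of the four middle insertions of $\id*\ze_{da}^{-1}*\id$ with their inverses, one recovers precisely the defining composite for $\Mu_{ab}^{ce}$. Apart from this bookkeeping, all commutativity of the interior cells is provided by Lemma \ref{ku7lem3} and formal $2$-categorical identities; there is no new geometric input required beyond what was used to prove Lemma \ref{ku7lem4}.
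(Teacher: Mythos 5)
Your proposal is correct and follows essentially the same route as the paper: fix $x$, use Definition \ref{ku4def36}(k) with $B=\{a,b,c,d,e\}$ to get $f$, test against $\id_{\Phi_{fa}}$ via Lemma \ref{ku7lem2} by pasting the three defining diagrams \eqref{ku7eq85} for $\Mu_{ab}^{cd},\Mu_{ab}^{de},\Mu_{ab}^{ce}$, and then globalize with Definition \ref{ku4def11}(iii) and Theorem \ref{ku4thm1}(a). The only (harmless) difference is that in the paper's arrangement of the composite diagram the two inner and the outer septagons commute purely by \eqref{ku7eq85} together with the interchange law, so the extra appeal to the cocycle identity of Lemma \ref{ku7lem3} for interior cells is not actually needed.
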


\begin{proof} Let $x\in\dot S_{(cde)(ab)}$. Definition \ref{ku4def36}(k) with $B=\{a,b,c,d,e\}$ and this $x$ gives $f\in A$ such that all the 1- and 2-morphisms in the following diagram are defined on $x\in \dot S_{f(cde)(ab)}$:
\e
\begin{gathered}
\xymatrix@!0@R=35pt@C=132pt{
& \Phi_{fb} \ar@{=>}[dl]^(0.4){\ti\La_{fcb}^{-1}} \ar@<-1ex>@{=>}[d]^(0.4){\ti\La_{fdb}^{-1}} \ar@{=>}[dr]_(0.4){\ti\La_{feb}^{-1}}
 \\
*+[r]{\Phi_{cb}\!\ci\!\Phi_{fc}} \ar@{=>}[d]^{\id_{\Phi_{cb}}*\ze_{ca}^{-1}*\id_{\Phi_{ac}}} & {\Phi_{db}\!\ci\!\Phi_{fd}} \ar@<-1ex>@{=>}[d]^{\id_{\Phi_{db}}*\ze_{da}^{-1}*\id_{\Phi_{ad}} } 
& *+[l]{\Phi_{eb}\!\ci\!\Phi_{fe}} \ar@{=>}[d]_{\id_{\Phi_{eb}}*\ze_{ea}^{-1}*\id_{\Phi_{ae}} }
\\
*+[r]{\Phi_{cb}\!\ci\!\check\Phi_{ac}\!\ci\!\Phi_{ca}\!\ci\!\Phi_{fc}} \ar@{=>}[d]^{\id_{\Phi_{cb}\ci\check\Phi_{ac}}*\ti\La_{fca}} & {\Phi_{db}\!\ci\!\check\Phi_{ad}\!\ci\!\Phi_{da}\!\ci\!\Phi_{fd}} \ar@<-1ex>@{=>}[d]^(0.4){\id_{\Phi_{db}\ci\check\Phi_{ad}}*\ti\La_{fda}} & *+[l]{\Phi_{eb}\!\ci\!\check\Phi_{ae}\!\ci\!\Phi_{ea}\!\ci\!\Phi_{fe}} \ar@{=>}[d]_{\id_{\Phi_{eb}\ci\check\Phi_{ae}}*\ti\La_{fea}} 
\\
*+[r]{\Phi_{cb}\!\ci\!\check\Phi_{ac}\!\ci\!\Phi_{fa}} \ar@{=>}[r]^(0.6){\Mu_{ab}^{cd}*\id_{\Phi_{fa}}} \ar@<-.8ex>@/_.5pc/@{=>}[rr]_{\Mu_{ab}^{ce}*\id_{\Phi_{fa}}} & {\Phi_{db}\!\ci\!\check\Phi_{ad}\!\ci\!\Phi_{fa}} \ar@{=>}[r]^(0.4){\Mu_{ab}^{de}*\id_{\Phi_{fa}}} & *+[l]{\Phi_{eb}\!\ci\!\check\Phi_{ae}\!\ci\!\Phi_{fa}.\!\!} }
\end{gathered}
\label{ku7eq88}
\e 
Here the two inner and the outer septagons commute by \eq{ku7eq85}. Thus \eq{ku7eq88} commutes, and compatibility of horizontal and vertical composition gives
\begin{equation*}
(\Mu_{ab}^{de}\od\Mu_{ab}^{cd})*\id_{\Phi_{fa}}=(\Mu_{ab}^{de}*\id_{\Phi_{fa}})\od (\Mu_{ab}^{de}*\id_{\Phi_{fa}})=\Mu_{ab}^{ce}*\id_{\Phi_{fa}}
\end{equation*}
near $x$, so \eq{ku7eq87} holds near $x$ by Lemma \ref{ku7lem2}. As this is true for all $x\in\dot S_{(cde)(ab)}$, the lemma follows from Definition \ref{ku4def11}(iii) and Theorem~\ref{ku4thm1}(a).
\end{proof}

By Lemmas \ref{ku7lem4} and \ref{ku7lem5}, as $\{\dot S_{c(ab)}:c\in A\}$ is an open cover of $\Im\psi_a\cap\Im\psi_b$, we may now apply Definition \ref{ku4def11}(v) and Theorem \ref{ku4thm1}(a) to show that for all $a,b\in A$, there exists a coordinate change $\Psi_{ab}:(V_a,E_a,\Ga_a,s_a,\psi_a)\ra(V_b,E_b,\Ga_b,s_b,\psi_b)$ over $\Im\psi_a\cap\Im\psi_b$, and 2-morphisms $\ep_{ab}^c:\Psi_{ab}^c\Ra\Psi_{ab}$ over $\dot S_{c(ab)}$ for all $c\in A$, such that for all $c,d\in A$ we have 
\e
\ep_{ab}^d\od\Mu_{ab}^{cd}=\ep_{ab}^c:\Psi_{ab}^c\Longra\Psi_{ab}\quad
\quad\text{over $\dot S_{(cd)(ab)}=\dot S_{c(ab)}\cap \dot S_{d(ab)}.$}
\label{ku7eq89}
\e
Furthermore $\Psi_{ab}$ is unique up to 2-isomorphism.

In the case when $a=b$, we have $\Psi_{aa}^a=\Phi_{aa}=\check\Phi_{aa}=\id_{(V_a,E_a,\Ga_a,s_a,\psi_a)}$ and $\dot S_{a(aa)}=\Im\psi_a$, so $\ep_{aa}^a:\id_{(V_a,E_a,\Ga_a,s_a,\psi_a)}\Ra\Psi_{aa}$ is a 2-morphism over $\Im\psi_a$. As we can choose $\Psi_{aa}$ freely in its 2-isomorphism class, we choose
\e
\Psi_{aa}=\id_{(V_a,E_a,\Ga_a,s_a,\psi_a)}\;\>\text{and}\;\>
\ep_{aa}^a=\id_{\id_{(V_a,E_a,\Ga_a,s_a,\psi_a)}},\;\>\text{for all $a\in A$.}
\label{ku7eq90}
\e

\begin{lem} For all\/ $a,b,c\in A,$ there is a unique $2$-morphism
\begin{equation*}
\Ka_{abc}:\Psi_{bc}\ci\Psi_{ab}\Longra \Psi_{ac}\quad\text{over\/ $\Im\psi_a\cap\Im\psi_b\cap\Im\psi_c,$}
\end{equation*}
such that for all\/ $d\in A,$ the following commutes over $\dot S_{d(abc)}\!:$
\e
\begin{gathered}
\xymatrix@R=13pt@C=60pt{
*+[r]{\Phi_{dc}\ci\check\Phi_{bd}\ci\Phi_{db}\ci\check\Phi_{ad}} 
\ar@{=}[r] \ar@{=>}[d]^{\id_{\Phi_{dc}}*\ze_{db}*\id_{\check\Phi_{ad}} } & \Psi_{bc}^d\ci\Psi_{ab}^d \ar@{=>}[r]_(0.45){\ep_{bc}^d*\ep_{ab}^d} & *+[l]{\Psi_{bc}\ci\Psi_{ab}} \ar@{=>}[d]_(0.45){\Ka_{abc}} \\
*+[r]{\Phi_{dc}\ci\check\Phi_{ad}} \ar@{=}[r] & \Psi_{ac}^d \ar@{=>}[r]^(0.45){\ep_{ac}^d} & *+[l]{\Psi_{ac}.\!\!} }
\end{gathered}
\label{ku7eq91}
\e 

\label{ku7lem6}
\end{lem}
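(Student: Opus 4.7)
The strategy mirrors the construction of the $\ti\La_{abc}$ in Lemma~\ref{ku7lem3} and the $\Psi_{ab}$ via Lemmas~\ref{ku7lem4}--\ref{ku7lem5}: first define local candidates $\Ka_{abc}^d$ over each $\dot S_{d(abc)}$, then show they agree on pairwise overlaps, then glue using Theorem~\ref{ku4thm1}(a) and Definition~\ref{ku4def11}(iii),(iv). The plan is as follows.

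First, for each $d\in A$ with $\dot S_{d(abc)}\ne\es$, equation \eqref{ku7eq91} is an equation between $2$-morphisms $\Psi_{bc}\ci\Psi_{ab}\Ra\Psi_{ac}$ on $\dot S_{d(abc)}$ in which every factor except $\Ka_{abc}$ is given, and the $2$-morphisms $\ep_{bc}^d*\ep_{ab}^d$ and $\ep_{ac}^d$ are $2$-isomorphisms. Hence there is a unique $2$-morphism $\Ka_{abc}^d:\Psi_{bc}\ci\Psi_{ab}\Ra\Psi_{ac}$ over $\dot S_{d(abc)}$ for which \eqref{ku7eq91} commutes, namely
\[
\Ka_{abc}^d = \ep_{ac}^d\od\bigl(\id_{\Phi_{dc}}*\ze_{db}*\id_{\check\Phi_{ad}}\bigr)\od(\ep_{bc}^d*\ep_{ab}^d)^{-1}.
\]
By Definition~\ref{ku4def36}(k) applied with $B=\{a,b,c\}$, the open sets $\dot S_{d(abc)}$ for $d\in A$ cover $\Im\psi_a\cap\Im\psi_b\cap\Im\psi_c$, so it remains to check compatibility on overlaps.

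The main step will be to prove that $\Ka_{abc}^d\vert_{\dot S_{(de)(abc)}}=\Ka_{abc}^e\vert_{\dot S_{(de)(abc)}}$ for all $d,e\in A$. Using the cocycle relation $\ep_{xy}^d=\ep_{xy}^e\od\Mu_{xy}^{de}$ from \eqref{ku7eq89} applied to each of $(xy)=(ab),(bc),(ac)$, the desired equality reduces, after cancelling the isomorphism $\ep_{bc}^e*\ep_{ab}^e$, to the identity
\[
\Mu_{ac}^{de}\od(\id_{\Phi_{dc}}*\ze_{db}*\id_{\check\Phi_{ad}}) = (\id_{\Phi_{ec}}*\ze_{eb}*\id_{\check\Phi_{ae}})\od(\Mu_{bc}^{de}*\Mu_{ab}^{de})
\]
of $2$-morphisms $\Psi_{bc}^d\ci\Psi_{ab}^d\Ra\Psi_{ac}^e$ on $\dot S_{(de)(abc)}$. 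To verify this pointwise, fix $x$ in this set, use Definition~\ref{ku4def36}(k) with $B=\{a,b,c,d,e\}$ to produce $f\in A$ with $x\in \dot S_{f(abcde)}$, and horizontally compose the whole identity with $\id_{\Phi_{fa}}$. By Lemma~\ref{ku7lem2} it suffices to check the resulting identity, and by the characterisation \eqref{ku7eq85} each of $\Mu_{ab}^{de}*\id_{\Phi_{fa}}$, $\Mu_{bc}^{de}*\id_{\Phi_{fa}}$, $\Mu_{ac}^{de}*\id_{\Phi_{fa}}$ rewrites as a composite of $\ti\La$'s from Lemma~\ref{ku7lem3} and factors of $\ze_{\cdot\cdot}$. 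Repeated application of the cocycle relation $\ti\La_{\cdot\cdot\cdot}\od(\id*\ti\La_{\cdot\cdot\cdot})=\ti\La_{\cdot\cdot\cdot}\od(\ti\La_{\cdot\cdot\cdot}*\id)$ together with compatibility of horizontal and vertical composition and the triangle identities \eqref{ku7eq82} then collapses both sides to the same composite, in a diagram chase analogous to but larger than \eqref{ku7eq86} and \eqref{ku7eq88}.

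Granted this compatibility, Definition~\ref{ku4def11}(iii) and Theorem~\ref{ku4thm1}(a) imply $\Ka_{abc}^d=\Ka_{abc}^e$ on the full overlap $\dot S_{(de)(abc)}$; then Definition~\ref{ku4def11}(iv) and Theorem~\ref{ku4thm1}(a) produce a unique $2$-morphism $\Ka_{abc}:\Psi_{bc}\ci\Psi_{ab}\Ra\Psi_{ac}$ over $\Im\psi_a\cap\Im\psi_b\cap\Im\psi_c$ with $\Ka_{abc}\vert_{\dot S_{d(abc)}}=\Ka_{abc}^d$ for all $d\in A$, which is precisely the condition that \eqref{ku7eq91} commutes on each $\dot S_{d(abc)}$. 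Uniqueness of $\Ka_{abc}$ is immediate from the uniqueness of each $\Ka_{abc}^d$ together with Definition~\ref{ku4def11}(iii). The principal obstacle is the diagram chase in the middle paragraph: the bookkeeping of horizontal/vertical compositions involving three separate $\Mu^{de}$'s simultaneously is sizable, but it is forced by the defining equation \eqref{ku7eq85} and Lemma~\ref{ku7lem3}, with no genuinely new geometric input.
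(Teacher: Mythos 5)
Your proposal is correct and follows essentially the same route as the paper: you define the local $2$-morphisms $\Ka_{abc}^d$ forced by \eqref{ku7eq91}, prove agreement on overlaps $\dot S_{(de)(abc)}$ by choosing $f$ via Definition~\ref{ku4def36}(k), composing with $\id_{\Phi_{fa}}$ and chasing a diagram built from \eqref{ku7eq85}, \eqref{ku7eq89}, Lemma~\ref{ku7lem3} and the interchange law (the paper's Figure~\ref{ku7fig2}), and then glue by Theorem~\ref{ku4thm1}(a) and Definition~\ref{ku4def11}(iii),(iv). Your only deviations are cosmetic: you cancel the $\ep$'s via \eqref{ku7eq89} before the chase rather than inside one large diagram, and your appeal to ``triangle identities \eqref{ku7eq82}'' is unnecessary --- the $\ze$'s are only moved past other $2$-morphisms by compatibility of horizontal and vertical composition, which you already cite.
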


\begin{proof} Fix $a,b,c\in A$. If $x\in\Im\psi_a\cap\Im\psi_b\cap\Im\psi_c$, then Definition \ref{ku4def36}(k) with $B=\{a,b,c\}$ and this $x$ gives $d\in A$ with $x\in\dot S_{d(abc)}$. Hence $\bigl\{\dot S_{d(abc)}:d\in A\bigr\}$ is an open cover of~$\Im\psi_a\cap\Im\psi_b\cap\Im\psi_c$.

For each $d\in A$, write $\Ka_{abc}^d$ for the 2-morphism over $\dot S_{d(abc)}$ determined by \eq{ku7eq91} with $\Ka_{abc}^d$ in place of $\Ka_{abc}$. We have to show that there is a unique 2-morphism $\Ka_{abc}$ over $\Im\psi_a\cap\Im\psi_b\cap\Im\psi_c$ with~$\Ka_{abc}\vert_{\dot S_{d(abc)}}=\Ka_{abc}^d$. 

\begin{figure}[htb]
\text{\begin{footnotesize}$\displaystyle
\xymatrix@!0@C=24pt@R=18pt{ 
&&&&&&& {\Psi_{bc}\!\ci\!\Psi_{ab}\!\ci\!\Phi_{fa}} \ar[ddllllll]_(0.6){(\ep_{bc}^d)^{-1}*(\ep_{ab}^d)^{-1}} \ar[ddrrrrrr]^(0.6){(\ep_{bc}^e)^{-1}*(\ep_{ab}^e)^{-1}} 
\ar`l/20pt[ddlllllll]`^r[dddddddddddddddddlllllll] [ddddddddddddddddd]_(0.2){\Ka_{abc}^d*\id_{\Phi_{fa}}} 
\ar`r/20pt[ddrrrrrrr]`_l[dddddddddddddddddrrrrrrr] [ddddddddddddddddd]^(0.2){\Ka_{abc}^e*\id_{\Phi_{fa}}} &&&&&&&
\\
\\
& *+[r]{\Phi_{dc}\!\ci\!\check\Phi_{bd}\!\ci\!\Phi_{db}\!\ci\!\check\Phi_{ad}\!\ci\!\Phi_{fa}} \ar@<-1.5ex>@/_3.5ex/[ddddddddddddd]^{\!\ze_{db}}\ar[rrrrrrrrrrrr]^{M_{bc}^{de}*M_{ab}^{de}} \ar[dd]^{\ti\La_{fda}^{-1}} 
&&& {\phantom{PPPPP_{PPP}}} \ar@/^.5pc/[dddddrrr]^(0.4){\id_{\smash{\Psi_{bc}^d}}*\Mu_{ab}^{de}}
&&&&&&&&& *+[l]{\Phi_{ec}\!\ci\!\check\Phi_{be}\!\ci\!\Phi_{eb}\!\ci\!\check\Phi_{ae}\!\ci\!\Phi_{fa}} \ar[dd]_{\ze_{eb}} & 
\\
\\
& *+[r]{{}\;\;\begin{subarray}{l}\ts \Phi_{dc}\!\ci\!\check\Phi_{bd}\!\ci\!\Phi_{db}\ci \\ \ts\check\Phi_{ad}\!\ci\!\Phi_{da}\!\ci\!\Phi_{fd}\end{subarray}}
\ar[dd]^{\ze_{da}} &&&&&&&&&&&& *+[l]{\Phi_{ec}\!\ci\!\check\Phi_{ae}\!\ci\!\Phi_{fa}} \ar[dd]_{\ti\La_{edc}^{-1}} \ar@<1.5ex>@/^1.5ex/@{=}[ddddddddddd]
\\
\\
& *+[r]{{}\;\;\begin{subarray}{l}\ts \Phi_{dc}\!\ci\!\check\Phi_{bd}\ci\\ \ts\Phi_{db}\!\ci\!\Phi_{fd}\end{subarray}}
\ar[ddddd]^{\ze_{db}} \ar[drrr]_(0.6){\ti\La_{fdb}} &&&&&&&&&&&& 
*+[l]{\begin{subarray}{l}\ts \Phi_{dc}\!\ci\!\Phi_{ed}\ci\\ \ts \check\Phi_{ae}\!\ci\!\Phi_{fa}\end{subarray}\;\;} \ar[dlll]^(0.5){\ze_{db}^{-1}}
\\
&&&& {\begin{subarray}{l}\ts \Phi_{dc}\!\ci\!\check\Phi_{bd}\\ \ts\ci\Phi_{fb}\end{subarray}} \ar[dd]_{\ti\La_{feb}^{-1}} &&& 
{\!\!\!\!\!\!\begin{subarray}{l}\ts \Phi_{dc}\!\ci\!\check\Phi_{bd}\!\ci\!\Phi_{eb}\\ \\ \ts \ci\check\Phi_{ae}\!\ci\!\Phi_{fa}\end{subarray}}
&&& {\!\!\!\begin{subarray}{l}\ts \Phi_{dc}\!\ci\!\check\Phi_{bd}\!\ci\!\Phi_{db}\ci \\ \ts \Phi_{ed}\!\ci\!\check\Phi_{ae}\!\ci\!\Phi_{fa}\end{subarray}} \ar[lll]^(0.55){\ti\La_{edb}} 
\\
\\
&&&&
{\begin{subarray}{l}\ts \Phi_{dc}\!\ci\!\check\Phi_{bd}\ci\\ \ts\Phi_{eb}\!\ci\!\Phi_{fe}\end{subarray}\;\;} \ar[rrr]^(0.45){\ze_{ea}^{-1}}
&&& {\begin{subarray}{l}\ts \Phi_{dc}\!\ci\!\check\Phi_{bd}\!\ci\!\Phi_{eb}\ci\\ \ts \check\Phi_{ae}\!\ci\!\Phi_{ea}\!\ci\!\Phi_{fe}\end{subarray}} \ar[uu]_(0.5){\ti\La_{fea}} &&& {\begin{subarray}{l}\ts \Phi_{dc}\!\ci\!\check\Phi_{bd}\!\ci\!\Phi_{db}\!\ci\!\Phi_{ed}\\ \ts\ci\check\Phi_{ae}\!\ci\!\Phi_{ea}\!\ci\!\Phi_{fe}\end{subarray}\!\!\!\!\!\!\!\!\!\!\!\!\!\!\!\!\!\!\!\!\!\!\!\!\!\!} \ar[lll]_(0.42){\ti\La_{edb}} \ar[uu]_(0.5){\ti\La_{fea}} \ar[ddlll]^(0.45){\ze_{db}*\ze_{ea}}
\\
\\
& *+[r]{\Phi_{dc}\!\ci\!\Phi_{fd}} \ar[ddrrrrrr]^{\ti\La_{fdc}} \ar[dd]^{\ze_{da}^{-1}} &&&&&& {\Phi_{dc}\!\ci\!\Phi_{ed}\!\ci\!\Phi_{fe}} \ar[llllll]_{\ti\La_{fed}} \ar[rrrrrr]^(0.55){\ti\La_{edc}} &&&&&& *+[l]{\Phi_{ec}\!\ci\!\Phi_{fe}} \ar[ddllllll]_{\ti\La_{fec}} \ar[dd]_{\ze_{ea}^{-1}}
\\
\\
& *+[r]{\Phi_{dc}\!\ci\!\check\Phi_{ad}\!\ci\!\Phi_{da}\!\ci\!\Phi_{fd}}\ar[dd]^{\ti\La_{fda}} &&&&&& {\Phi_{fc}} &&&&&& *+[l]{\Phi_{ec}\!\ci\!\check\Phi_{ae}\!\ci\!\Phi_{ea}\!\ci\!\Phi_{fe}} \ar[dd]_{\ti\La_{fea}} 
\\
\\
& *+[r]{\Phi_{dc}\!\ci\!\check\Phi_{ad}\!\ci\!\Phi_{fa}} \ar[ddrrrrrr]^(0.7){\ep_{ac}^d} \ar[rrrrrrrrrrrr]^{M_{ac}^{de}} &&&&&&&&&&&& *+[l]{\Phi_{ec}\!\ci\!\check\Phi_{ae}\!\ci\!\Phi_{fa}} \ar[ddllllll]_(0.7){\ep_{ac}^e} 
\\
\\
&&&&&&& {\Psi_{ac}\!\ci\!\Phi_{fa}.\!\!} &&&&&&& }$\end{footnotesize}}
\caption{Proof that $\Ka_{abc}^d*\id_{\Phi_{fa}}=\Ka_{abc}^e*\id_{\Phi_{fa}}$}
\label{ku7fig2}
\end{figure}
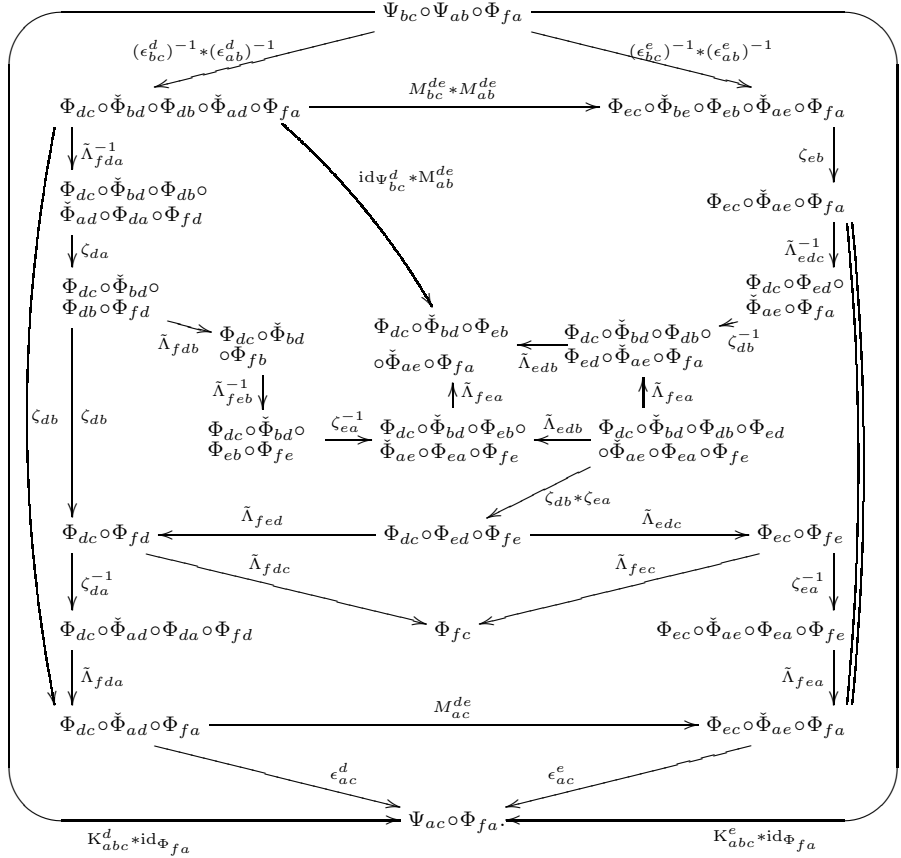

Let $d,e\in A$, and $x\in\dot S_{(de)(abc)}=\dot S_{d(abc)}\cap\dot S_{e(abc)}$. Definition \ref{ku4def36}(k) with $B=\{a,b,c,d,e\}$ and this $x$ gives $f\in A$ with $x\in\dot S_{f(de)(abc)}$. Consider the diagram of 1- and 2-morphisms Figure \ref{ku7fig2}. We have omitted most terms $*\id_{\cdots}$ and $\id_{\cdots}*$ in the 2-morphisms for clarity. The two outer crescent shapes are the definitions of $\Ka_{abc}^d,\Ka_{abc}^e$ in \eq{ku7eq91}, composed with $\Phi_{fa}$. The top and bottom triangles commute by \eq{ku7eq89}. In the interior of the figure, the three polygons with sides involving $M_{ab}^{de},M_{ac}^{de},M_{bc}^{de}$ commute by \eq{ku7eq85}. The remaining four polygons commute by Lemma \ref{ku7lem3} and compatibility of horizontal and vertical composition. Thus Figure \ref{ku7fig2} commutes, which proves that $\Ka_{abc}^d*\id_{\Phi_{fa}}=\Ka_{abc}^e*\id_{\Phi_{fa}}$ on $\dot S_{f(de)(abc)}$. Lemma \ref{ku7lem2} now shows that~$\Ka_{abc}^d=\Ka_{abc}^e$ on $\dot S_{f(de)(abc)}$. 

As the $\dot S_{f(de)(abc)}$ for $f\in A$ cover $\dot S_{d(abc)}\cap\dot S_{e(abc)}$, Definition \ref{ku4def11}(iii) and Theorem \ref{ku4thm1}(a) imply that $\Ka_{abc}^d=\Ka_{abc}^e$ on $\dot S_{d(abc)}\cap\dot S_{e(abc)}$. Since $\bigl\{\dot S_{d(abc)}:d\in A\bigr\}$ is an open cover of $\Im\psi_a\cap\Im\psi_b\cap\Im\psi_c$, Definition \ref{ku4def11}(iii),(iv) and Theorem \ref{ku4thm1}(a) show that there exists a unique 2-morphism $\Ka_{abc}$ over $\Im\psi_a\cap\Im\psi_b\cap\Im\psi_c$ such that $\Ka_{abc}\vert_{\dot S_{d(abc)}}=\Ka_{abc}^d$. Thus \eq{ku7eq91} commutes for all $d\in A$, by definition of $\Ka_{abc}^d$. This completes the proof.
\end{proof}

Putting $a,a,b,a$ in place of $a,b,c,d$ in \eq{ku7eq91} and using $\ep_{aa}^a,\ze_{aa}$ identities by \eq{ku7eq83}, \eq{ku7eq90}, and similarly putting $a,b,b,b$ in place of $a,b,c,d$ and using $\ep_{bb}^b,\ze_{bb}$ identities, yields
\e
\Ka_{aab}=\Ka_{abb}=\id_{\Psi_{ab}}.
\label{ku7eq92}
\e 

\begin{lem} For all\/ $a,b,c,d\in A$ we have $\Ka_{acd}\od(\id_{\Psi_{cd}}*\Ka_{abc})=\Ka_{abd}\od(\Ka_{bcd}*\id_{\Psi_{ab}}):\Psi_{cd}\ci\Psi_{bc}\ci\Psi_{ab}\Longra\Psi_{ad}$ over\/~$\Im\psi_a\cap\Im\psi_b\cap\Im\psi_c\cap\Im\psi_d$.
\label{ku7lem7}
\end{lem}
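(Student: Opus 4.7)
The plan is to follow exactly the strategy used in the proofs of Lemmas \ref{ku7lem4}, \ref{ku7lem5}, and \ref{ku7lem6}: reduce the global identity to a pointwise identity by localizing with a carefully chosen auxiliary index, compose with an equivalence $\Phi_{ea}$ so both sides can be unfolded via \eqref{ku7eq91}, verify the resulting diagram commutes using only Lemma \ref{ku7lem3} and compatibility of horizontal and vertical composition, then cancel $\id_{\Phi_{ea}}$ by Lemma \ref{ku7lem2} and glue by the stack property. Concretely, fix $a,b,c,d \in A$ and a point $x \in \Im\psi_a \cap \Im\psi_b \cap \Im\psi_c \cap \Im\psi_d$. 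Apply Definition \ref{ku4def36}(k) with $B = \{a,b,c,d\}$ to choose $e \in A$ with $x \in \dot S_{e(abcd)}$. Over this open set, each of $\Ka_{abc}, \Ka_{abd}, \Ka_{acd}, \Ka_{bcd}$ is characterized by the defining equation \eqref{ku7eq91} (applied with $e$ in the role of the auxiliary index $d$ there).

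Next, horizontally compose both sides of the desired equality with $\id_{\Phi_{ea}}$. Using \eqref{ku7eq91} four times, each side becomes a long vertical composition built from $\ep_{**}^e$, $\ti\La_{e**}$, the unit/counit 2-morphisms $\ze_{e*}$ of \eqref{ku7eq82}, and $\id$'s. The resulting assertion is that a single large diagram of 2-morphisms over $\dot S_{e(abcd)}$, with $\Psi_{cd}\ci\Psi_{bc}\ci\Psi_{ab}\ci\Phi_{ea}$ at one corner and $\Psi_{ad}\ci\Phi_{ea}$ at the other, commutes along the two routes that go through $\Psi_{ac}\ci\Psi_{ab}\ci\Phi_{ea}$ and $\Psi_{cd}\ci\Psi_{ab}\ci\Phi_{ea}$ respectively (after using \eqref{ku7eq91} to replace each $\Ka_{***}*\id_{\Phi_{ea}}$ by its unpacked form). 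This diagram decomposes into subregions of three types: (i) hexagons encoding \eqref{ku7eq91} for each of the four $\Ka$'s, (ii) triangles/pentagons coming from the cocycle identity $\ti\La_{e*d}\od(\id*\ti\La_{e**}) = \ti\La_{e**}\od(\ti\La_{**d}*\id)$ provided by Lemma \ref{ku7lem3}, and (iii) naturality squares expressing compatibility of horizontal and vertical composition (including $\ze_{e*}\ci \ti\La_{e*a}$ type interchanges, exactly as used in Figure \ref{ku7fig2}). Each piece commutes by the cited result, so the whole diagram commutes.

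Having established the identity composed with $\id_{\Phi_{ea}}$ on $\dot S_{e(abcd)}$, Lemma \ref{ku7lem2} applied to the coordinate change $\Phi_{ea}$ yields the desired equation on $\dot S_{e(abcd)}$. Since this holds for an $e$ chosen for each $x$, the sets $\dot S_{e(abcd)}$ (for $e\in A$) form an open cover of $\Im\psi_a\cap\Im\psi_b\cap\Im\psi_c\cap\Im\psi_d$. By Definition \ref{ku4def11}(iii) together with the stack property Theorem \ref{ku4thm1}(a), the two 2-morphisms $\Ka_{acd}\od(\id_{\Psi_{cd}}*\Ka_{abc})$ and $\Ka_{abd}\od(\Ka_{bcd}*\id_{\Psi_{ab}})$, which agree on the members of this cover, must be equal on the full intersection.

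The main obstacle is purely combinatorial bookkeeping: writing down the large commutative diagram for the associativity square composed with $\id_{\Phi_{ea}}$ is considerably more intricate than Figure \ref{ku7fig2}, because both sides already contain two nested uses of \eqref{ku7eq91} and one needs to rearrange them past the various $\ze_{e*}$ and $\ti\La_{e**}$ to reach a common center. However, no new conceptual ingredient appears beyond Lemma \ref{ku7lem3}, the triangle-style relations of \eqref{ku7eq82}--\eqref{ku7eq83}, and the standard 2-category interchange law, so once one commits to drawing the diagram the verification is mechanical; in particular, no further appeal to Definition \ref{ku4def36}(j) or (k) is needed beyond the single invocation of (k) used to produce $e$.
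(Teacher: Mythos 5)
Your proposal is correct, and its skeleton is the same as the paper's: fix $x$, invoke Definition \ref{ku4def36}(k) with $B=\{a,b,c,d\}$ to get $e$ with $x\in\dot S_{e(abcd)}$, unfold all four 2-morphisms $\Ka_{abc},\Ka_{bcd},\Ka_{abd},\Ka_{acd}$ via their defining property \eq{ku7eq91} with the common auxiliary index $e$, check that the resulting diagram commutes, and then glue over the cover $\{\dot S_{e(abcd)}:e\in A\}$ using Definition \ref{ku4def11}(iii) and Theorem \ref{ku4thm1}(a). Where you differ is in two pieces of machinery that the paper shows are unnecessary. First, you precompose with $\id_{\Phi_{ea}}$ and cancel it afterwards by Lemma \ref{ku7lem2}; that was essential in Lemmas \ref{ku7lem3}--\ref{ku7lem5}, because \eq{ku7eq79} and \eq{ku7eq85} only characterize $\ti\La_{abc}$ and $\Mu_{ab}^{cd}$ after composition with an equivalence, but \eq{ku7eq91} pins down $\Ka_{abc}$ directly, so in the paper's diagram \eq{ku7eq93} no $\Phi_{ea}$ appears and no cancellation is needed. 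Second, your list of ingredients includes the $\ti\La_{e**}$ and the cocycle identity of Lemma \ref{ku7lem3}; these do not in fact enter, since unfolding via \eq{ku7eq91} produces only $\ep$'s and $\ze$'s, and the sole interior cell of \eq{ku7eq93} is a single interchange square, commuting because $\ze_{eb}$ and $\ze_{ec}$ act on disjoint factors of $\Phi_{ed}\ci\check\Phi_{ce}\ci\Phi_{ec}\ci\check\Phi_{be}\ci\Phi_{eb}\ci\check\Phi_{ae}$. So your route works (the extra $\id_{\Phi_{ea}}$ and the spurious appeal to Lemma \ref{ku7lem3} are harmless), but it buys nothing and inflates the bookkeeping; the direct use of \eq{ku7eq91} gives a four-quadrilaterals-plus-one-square diagram and a shorter proof.
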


\begin{proof} Let $x\in\Im\psi_a\cap\Im\psi_b\cap\Im\psi_c\cap\Im\psi_d$. Definition \ref{ku4def36}(k) with $B=\{a,b,c,d\}$ and this $x$ gives $e\in A$ with $x\in\dot S_{e(abcd)}$. Consider the diagram
\e
\begin{gathered}
\xymatrix@!0@C=74pt@R=50pt{
*+[r]{\Psi_{cd}\ci\Psi_{bc}\ci\Psi_{ab}} \ar@{=>}[dr]^(0.5){\,\,\,(\ep_{cd}^e*\ep_{bc}^e*\ep_{ab}^e)^{-1}} \ar@{=>}[rrrr]_{\id_{\Psi_{cd}}*\Ka_{abc}} \ar@{=>}[ddd]^{\Ka_{bcd}*\id_{\Psi_{ab}}} &&&& *+[l]{\Psi_{cd}\ci\Psi_{ac}} \ar@{=>}[ddd]_{\Ka_{acd}} \ar@<.7ex>@{=>}[dl]^(0.65){\,\,\,(\ep_{cd}^e*\ep_{ac}^e)^{-1}} 
\\ 
& {\begin{subarray}{l}\ts \Phi_{ed}\!\ci\!\check\Phi_{ce}\!\ci\!\Phi_{ec}\ci\\ \ts\check\Phi_{be}\!\ci\!\Phi_{eb}\!\ci\!\check\Phi_{ae}\end{subarray}} \ar@{=>}[d]^(0.6){\id_{\Phi_{ed}}*\ze_{ec}*\id_{\check\Phi_{be}\ci\Phi_{eb}\ci\check\Phi_{ae}}} \ar@{=>}[rr]^(0.55){\begin{subarray}{l}\id_{\Phi_{ed}\ci\check\Phi_{ce}\ci\Phi_{ec}}* \\ \ze_{eb}*\id_{\check\Phi_{ae}}\end{subarray}} && {\begin{subarray}{l}\ts \Phi_{ed}\!\ci\!\check\Phi_{ce}\ci \\ \ts\Phi_{ec}\!\ci\!\check\Phi_{ae}\end{subarray}} \ar@{=>}[d]_(0.4){\id_{\Phi_{ed}}*\ze_{ec}*\id_{\check\Phi_{ae}}} \ar@<.7ex>@{=>}[ur]^(0.45){\ep_{cd}^e*\ep_{ac}^e\,}
\\
& {\begin{subarray}{l}\ts \Phi_{ed}\!\ci\!\check\Phi_{be}\ci \\ \ts \Phi_{eb}\!\ci\!\check\Phi_{ae}\end{subarray}} \ar@<-.7ex>@{=>}[dl]_(0.5){\ep_{bd}^e*\ep_{ab}^e\,\,} \ar@{=>}[rr]_{\id_{\Phi_{ed}}*\ze_{eb}*\id_{\check\Phi_{ae}}} && {\Phi_{ed}\!\ci\!\check\Phi_{ae}} \ar@{=>}[dr]^{\ep_{ad}^e} 
\\
*+[r]{\Psi_{bd}\ci\Psi_{ab}} \ar@<-.7ex>@{=>}[ur]_(0.5){\,\,\,(\ep_{bd}^e*\ep_{ab}^e)^{-1}} 
\ar@{=>}[rrrr]^{\Ka_{abd}} &&&& *+[l]{\Psi_{ad}.\!\!} }\!\!\!{} 
\end{gathered}
\label{ku7eq93}
\e
Here the four outer quadrilaterals commute by \eq{ku7eq91}, and the inner rectangle commutes by compatibility of horizontal and vertical multiplication. Thus \eq{ku7eq93} commutes, and the outer rectangle shows that $\Ka_{acd}\od(\id_{\Psi_{cd}}*\Ka_{abc})=\Ka_{abd}\od(\Ka_{bcd}*\id_{\Psi_{ab}})$ holds over $\dot S_{e(abcd)}$. Since the $\dot S_{e(abcd)}$ for all $e\in A$ cover $\Im\psi_a\cap\Im\psi_b\cap\Im\psi_c\cap\Im\psi_d$, the lemma follows from Definition \ref{ku4def11}(iii) and Theorem~\ref{ku4thm1}(a). 
\end{proof}

The definition of the $\Psi_{ab}$ after Lemma \ref{ku7lem5}, Lemmas \ref{ku7lem6}--\ref{ku7lem7}, and equations \eq{ku7eq90} and \eq{ku7eq92}, now imply that $\cK=\bigl(A,(V_a,E_a,\Ga_a,s_a,\psi_a)_{a\in A}$, $\Psi_{ab,\;a,b\in A}$, $\Ka_{abc,\; a,b,c\in A}\bigr)$ is a Kuranishi structure on $X$ in the sense of \S\ref{ku43}, so $\bX=(X,\cK)$ is a Kuranishi space with $\vdim\bX=n$, as we have to prove.

To give $(V_a,E_a,\Ga_a,s_a,\psi_a)$ the structure of a Kuranishi neighbourhood on the Kuranishi space $\bX$ in the sense of \S\ref{ku44} for $a\in A$, note that as $(V_a,\ab E_a,\ab\Ga_a,\ab s_a,\ab\psi_a)$ is already part of the Kuranishi structure $\cK$, we can take $\Psi_{ai,\; i\in A}$ and
$\Ka_{aij,\; i,j\in A}$ to be the implicit extra data $\Phi_{ai,\; i\in I}$, $\La_{aij,\; i,j\in I}$ in Definition~\ref{ku4def19}.

To give $\Phi_{ab}:(V_a,E_a,\Ga_a,s_a,\psi_a)\ra(V_b,E_b,\Ga_b,s_b,\psi_b)$ the structure of a coordinate change over $S_{ab}$ on the Kuranishi space $\bX$ as in \S\ref{ku44} for $a,b\in A$, we need to specify implicit extra data $\Io_{abi,\; i\in A}$ in place of $\La_{abi,\; i\in A}$ in Definition \ref{ku4def20}, where $\Io_{abi}:\Psi_{bi}\ci\Phi_{ab}\Ra\Psi_{ai}$ is a 2-morphism over $S_{ab}\cap\Im\psi_i$ for all $i\in A$ satisfying \eq{ku4eq39} over $S_{ab}\cap\Im\psi_i\cap\Im\psi_j$ for all $i,j\in A$, which becomes 
\e
\Ka_{aij}\od(\id_{\Psi_{ij}}*\Io_{abi})=\Io_{abj}\od(\Ka_{bij}*\id_{\Phi_{ab}}):\Psi_{ij}\ci\Psi_{bi}\ci\Phi_{ab}\Longra\Psi_{aj}.
\label{ku7eq94}
\e

Since $\check\Phi_{aa}=\id_{V_a,E_a,\Ga_a,s_a,\psi_a)}$ by \eq{ku7eq83} we have $\Psi_{ab}^a=\Phi_{ab}$, so the definition of $\Psi_{ab}$ gives a 2-morphism $\ep_{ab}^a:\Phi_{ab}\Ra\Psi_{ab}$ over $S_{ab}\subseteq\Im\psi_a\cap\Im\psi_b$. Define $\Io_{abi}=\Ka_{abi}\od(\id_{\Psi_{bi}}*(\ep_{ab}^a)^{-1})$. Then \eq{ku7eq94} follows from vertically composing $\id_{\Psi_{ij}\ci\Psi_{bi}}*(\ep_{ab}^a)^{-1}$ with Lemma \ref{ku7lem7} with $i,j$ in place of $c,d$. This makes $\Phi_{ab}$ into a coordinate change over $S_{ab}$ on $\bX$, as we want.

Now let $a,b,c\in A$. To show that $\La_{abc}:\Phi_{bc}\ci\Phi_{ab}\Ra\Phi_{ac}$ is the unique $2$-morphism over $S_{abc}$ given by Theorem \ref{ku4thm4}(a), we must prove that as in \eq{ku4eq40}, for all $i\in A$, over $S_{abc}\cap\Im\psi_i$ we have 
\e
\Io_{abi}\od(\Io_{bci}*\id_{\Phi_{ab}})=\Io_{aci}\od(\id_{\Psi_{ci}}*\La_{abc}):\Psi_{ci}\ci\Phi_{bc}\ci\Phi_{ab}\Longra\Psi_{ai}.
\label{ku7eq95}
\e
To prove \eq{ku7eq95}, consider the diagram of 2-morphisms over $S_{abc}\cap\Im\psi_i$:
\e
\begin{gathered}
\xymatrix@!0@C=37pt@R=25pt{
*+[r]{\Psi_{ci}\ci\Phi_{bc}\ci\Phi_{ab}} \ar@{=>}[dddrr]^(0.6){\id_{\Psi_{ci}}*\ep_{bc}^b*\ep_{ab}^a} \ar@{=>}[rrrrrrrr]_(0.5){\id_{\Psi_{ci}}*\La_{abc}=\id_{\Psi_{ci}}*\ti\La_{abc}} \ar@{=>}[dddddd]^(0.6){\Io_{bci}*\id_{\Phi_{ab}}} \ar@{=>}[drrrr]_{\id_{\Psi_{ci}}*M_{bc}^{ba}*\id_{\Phi_{ab}}} &&&&&&&& *+[l]{\Psi_{ci}\ci\Phi_{ac}} \ar@{=>}[dddddd]_{\Io_{aci}} \ar@{=>}[ddl]_(0.75){\id_{\Psi_{ci}}*\ep_{ac}^a} 
\\ 
&&&& \Psi_{ci}\ci\Phi_{ac}\ci\check\Phi_{ba}\ci\Phi_{ab} \ar@{=>}[ddll]^(0.4){\id_{\Psi_{ci}}*\ep_{bc}^a*\ep_{ab}^a} \ar@{=>}[urrrr]_(0.55){\id_{\Psi_{ci}\ci\Phi_{ac}}*\ze_{ab}} \\
&&&&&&& \Psi_{ci}\ci\Psi_{ac} \ar@{=>}[ddddr]_(0.4){\Ka_{aci}} 
\\
&& \Psi_{ci}\ci\Psi_{bc}\ci\Psi_{ab} \ar@{=>}[dd]^{\Ka_{bci}*\id_{\Psi_{ab}}} \ar@{=>}[urrrrr]_{\id_{\Psi_{ci}}*\Ka_{abc}}
\\
\\
&& \Psi_{bi}\ci\Psi_{ab} \ar@{=>}[drrrrrr]^{\Ka_{abi}} 
\\
*+[r]{\Psi_{bi}\ci\Phi_{ab}} \ar@{=>}[urr]^(0.5){\id_{\Psi_{bi}}*\ep_{ab}^a\,\,\,} 
\ar@{=>}[rrrrrrrr]^(0.4){\Io_{abi}} &&&&&&&& *+[l]{\Psi_{ai}.\!\!} }\!\!\!{} 
\end{gathered}
\label{ku7eq96}
\e
Here the bottom and rightmost triangles, and the leftmost quadrilateral, commute by definition of $\Io_{abi}$. The lower central quadrilateral commutes by Lemma \ref{ku7lem7}, the upper central quadrilateral by \eq{ku7eq91} with $d=a$, the upper left triangle by \eq{ku7eq89}, and the topmost triangle by \eq{ku7eq85} with $b,c,b,a,a$ in place of $a,b,c,d,e$, noting that of the seven morphisms in \eq{ku7eq85}, four are identities in this case, so we omit them. Also we use $\ti\La_{abc}\vert_{S_{abc}}=\La_{abc}$ from Lemma \ref{ku7lem3}. Thus \eq{ku7eq96} commutes, and the outer rectangle yields \eq{ku7eq95}. Hence $\La_{abc}:\Phi_{bc}\ci\Phi_{ab}\Ra\Phi_{ac}$ is the unique $2$-morphism over $S_{abc}$ given by Theorem \ref{ku4thm4}(a). This completes the proof of the first part of Theorem~\ref{ku4thm9}.

It remains to show that $\bX=(X,\cK)$ is unique up to equivalence in $\Kur$. To prove this, we have to consider where in the proof above we made arbitrary choices, and show that if we made different choices yielding $\bX'=(X,\cK')$, then $\bX$ and $\bX'$ are equivalent in $\Kur$. There are two places in the construction of $\bX$ where we made arbitrary choices: firstly the choice after Lemma \ref{ku7lem3} of a quasi-inverse $\check\Phi_{ba}$ for $\Phi_{ab}$ and 2-morphisms $\eta_{ab},\ze_{ab}$ in \eq{ku7eq82} (though in fact the $\eta_{ab}$ were never used in the definition of $\bX$), and secondly the choice after Lemma \ref{ku7lem5} of $\Psi_{ab}$ and 2-morphisms $\ep_{ab}^c$ satisfying~\eq{ku7eq89}.

For the first, if $\check\Phi_{ba}',\eta_{ab}',\ze_{ab}'$ are alternative choices for $\check\Phi_{ba},\eta_{ab},\ze_{ab}$, for all $a,b\in A$, then there exist unique 2-morphisms $\al_{ab}:\check\Phi_{ba}\Ra \check\Phi_{ba}'$ such that
\e
\ze_{ab}=\ze'_{ab}\od(\al_{ab}*\id_{\Phi_{ab}})\quad\text{for all $a,b\in A$,}
\label{ku7eq97}
\e
and $\al_{aa}=\id_{\id_{(V_a,E_a,\Ga_a,s_a,\psi_a)}}$. Then one can check that for the second choice we can keep $\Psi_{ab}$ unchanged and replace $\ep_{ab}^c$ by
\e
\ep_{ab}^{\prime c}=\ep_{ab}^c\od(\id_{\Phi_{cb}}*(\al_{ac})^{-1})\quad\text{for all $a,b,c\in A$.}
\label{ku7eq98}
\e
Using \eq{ku7eq97}--\eq{ku7eq98} to compare \eq{ku7eq91} for $\check\Phi_{ba},\eta_{ab},\ze_{ab},\ep_{ab}^c$ and $\check\Phi_{ba}',\eta_{ab}',\ab\ze_{ab}',\ab\ep_{ab}^{\prime c}$, we find that the two occurrences of $\al_{da}$ and of $\al_{db}$ cancel, so $\Ka_{abc}$ is unchanged. Thus, the family of possible outcomes for $\Psi_{ab},\Ka_{abc}$ and $\bX$ are independent of the first choice of $\check\Phi_{ba},\eta_{ab},\ze_{ab}$ for $a,b\in A$.

Next, regard the $\check\Phi_{ba},\eta_{ab},\ze_{ab}$ as fixed, and let $\Psi_{ab}',\ep_{ab}^{\prime c}$ be alternative possibilities for $\Psi_{ab},\ep_{ab}^c$ in the second choice, and $\Ka_{abc}'$ the corresponding 2-morphisms in Lemma \ref{ku7lem6}. Then by Theorem \ref{ku4thm1}(a) and the last part of Definition \ref{ku4def11}(v), there are unique 2-morphisms $\be_{ab}:\Psi_{ab}\Ra \Psi_{ab}'$ for all $a,b\in A$, such that
\e
\ep_{ab}^{\prime c}=\be_{ab}\od \ep_{ab}^c\quad\text{for all $a,b,c\in A$.}
\label{ku7eq99}
\e
Substituting \eq{ku7eq99} into \eq{ku7eq91} for $\Psi_{ab}',\ep_{ab}^{\prime c},\Ka_{abc}'$ and comparing with \eq{ku7eq91} for $\Psi_{ab},\ep_{ab}^c,\Ka_{abc}$, we see that
\begin{equation*}
\Ka'_{abc}=\be_{ac}\od\Ka_{abc}\od(\be_{bc}^{-1}*\be_{ab}^{-1}).
\end{equation*}

Define 1-morphisms $\bs f:\bX\ra\bX'$, $\bs g:\bX'\ra\bX$, in the notation of \eq{ku4eq15}, by
\begin{align*}
\bs f=\bigl(\id_X,\Psi_{ab,\;a\in A,\; b\in A},\; (\Ka_{aa'b})_{aa',\;a,a'\in A}^{b,\; b\in A},\; (\Ka_{abb'}\od(\be_{bb'}^{-1}*\id_{\Psi_{ab}}))_{a,\;a\in A}^{bb',\; b,b'\in A}\bigr),\\
\bs g=\bigl(\id_X,\Psi'_{ab,\;a\in A,\; b\in A},\; (\Ka'_{aa'b})_{aa',\;a,a'\in A}^{b,\; b\in A},\; (\Ka'_{abb'}\od(\be_{bb'}*\id_{\Psi'_{ab}}))_{a,\;a\in A}^{bb',\; b,b'\in A}\bigr).
\end{align*}
One can check these satisfy Definition \ref{ku4def13}(a)--(h), and so are 1-morphisms of Kuranishi spaces. Definition \ref{ku4def15} now gives a 1-morphism of Kuranishi spaces $\bs g\ci\bs f:\bX\ra\bX$, and 2-morphisms of Kuranishi neighbourhoods for all $a,b,c\in A$
\begin{equation*}
\Th_{abc}^{\bs g,\bs f}:\Psi'_{bc}\ci\Psi_{ab}\Longra(\bs g\ci\bs f)_{ac}.
\end{equation*}

We claim that there is a unique 2-morphism $\bs\varrho=(\bs\varrho_{ac,\;a,c\in A}):\bs g\ci\bs f\Ra\bs\id_\bX$ of Kuranishi spaces such that for all $a,b,c\in A$ the following diagram of 2-morphisms of Kuranishi neighbourhoods over $\Im\psi_a\cap\Im\psi_b\cap\Im\psi_c$ commutes:
\e
\begin{gathered}
\xymatrix@C=160pt@R=16pt{
*+[r]{\Psi_{bc}'\ci\Psi_{ab}} \ar@{=>}[r]_{(\be_{bc}')^{-1}*\id_{\Psi_{ab}}} \ar@{=>}[d]^{\Th_{abc}^{\bs g,\bs f}} & *+[l]{\Psi_{bc}\ci\Psi_{ab}} \ar@{=>}[d]_{\Ka_{abc}} \\
*+[r]{(\bs g\ci\bs f)_{ac}} \ar@{=>}[r]^{\bs\varrho_{ac}\vert_{\Im\psi_a\cap\Im\psi_b\cap\Im\psi_c}} & *+[l]{\Psi_{ac}=(\bs\id_\bX)_{ac}.\!\!} } 
\end{gathered}
\label{ku7eq100}
\e

To prove this, note that \eq{ku7eq100} determines $\bs\varrho_{ac}$ on the open subset $\Im\psi_a\cap\Im\psi_b\cap\Im\psi_c\subseteq \Im\psi_a\cap\Im\psi_c$. Using \eq{ku4eq18}--\eq{ku4eq20} for the $\Th_{abc}^{\bs g,\bs f}$ and Lemma \ref{ku7lem7} for the $\Ka_{abc}$, we prove that these prescribed values for $\bs\varrho_{ac}$ agree on overlaps between $\Im\psi_a\cap\Im\psi_b\cap\Im\psi_c$ and $\Im\psi_a\cap\Im\psi_{b'}\cap\Im\psi_c$, for all $b,b'\in A$. Thus, as the $\Im\psi_a\cap\Im\psi_b\cap\Im\psi_c$ for all $b\in A$ form an open cover of the correct domain $\Im\psi_a\cap\Im\psi_c$ for $a,c\in A$, Theorem \ref{ku4thm1}(a) and Definition \ref{ku4def11}(iii),(iv) imply that there is a unique 2-morphism $\bs\varrho_{ac}:(\bs g\ci\bs f)_{ac}\Ra(\bs\id_\bX)_{ac}$ such that \eq{ku7eq100} commutes for all~$b\in A$. 

We can then check that $\bs\varrho=(\bs\varrho_{ac,\;a,c\in A})$ satisfies Definition \ref{ku4def14}(a),(b), by proving that they hold on the restriction of their domains with $\Im\psi_b$ for each $b\in A$ using \eq{ku7eq100}, \eq{ku4eq18}--\eq{ku4eq20} for the $\Th_{abc}^{\bs g,\bs f}$ and Lemma \ref{ku7lem7} for the $\Ka_{abc}$, and then using Theorem \ref{ku4thm1}(a) and Definition \ref{ku4def11}(iii) to deduce that Definition \ref{ku4def14}(a),(b) hold on the correct domains. Therefore $\bs\varrho:\bs g\ci\bs f\Ra\bs\id_\bX$ is a 2-morphism of Kuranishi spaces. Similarly, exchanging $\bX,\bX'$ we construct a 2-morphism $\bs\si:\bs f\ci\bs g\Ra\bs\id_{\bX'}$. Hence $\bs f:\bX\ra\bX'$ is an equivalence, and $\bX,\bX'$ are equivalent in the 2-category $\Kur$. This completes the proof of Theorem~\ref{ku4thm9}.

\subsection{Proof of Theorem \ref{ku5thm1}}
\label{ku75}

The changes to \S\ref{ku71} to include corners, proving Theorem \ref{ku5thm1}, are very similar to \S\ref{ku63}. The only nontrivial issue is in the proof of Definition \ref{ku4def11}(v) for $\bcHom((V_i,E_i,\Ga_i,s_i,\psi_i),(V_j,E_j,\Ga_j,s_j,\psi_j))$, when we define $\phi_{ij}$ using Lemma \ref{ku6lem} in the paragraph of \eq{ku7eq12}, and this should be modified as in~\S\ref{ku63}.

\subsection{Proof of Theorem \ref{ku5thm2}}
\label{ku76}

Theorem \ref{ku5thm2} may be proved by modifying the proof of Theorem \ref{ku4thm2} in \S\ref{ku72}, following the method used to prove Theorem \ref{ku3thm3} in \S\ref{ku64} by modifying the proof of Theorem \ref{ku2thm2} in \S\ref{ku62}. We replace $TV_i,TV_j,\d s_i,\d s_j,\d\phi_{ij},\ldots$ by ${}^bTV_i,\ab{}^bTV_j,\ab{}^b\d s_i,\ab{}^b\d s_j,\ab{}^b\d\phi_{ij},\ldots$ throughout, and use Proposition \ref{ku5prop1} in place of Proposition \ref{ku3prop2} in the `only if' part in \S\ref{ku641}. We leave the details to the reader.

\appendix
\section{Previous definitions of Kuranishi space}
\label{kuA}

We discuss the various definitions of Kuranishi space, and of good coordinate system, in the work of Fukaya, Oh, Ohta and Ono \cite{Fuka,FOOO1, FOOO2,FOOO3,FOOO4,FOOO5,FOOO6,FOOO7,FOOO8,FuOn}, McDuff and Wehrheim \cite{McWe2,McWe3}, and Dingyu Yang \cite{Yang1,Yang2,Yang3}. To improve compatibility with \S\ref{ku2}--\S\ref{ku6}, we have made some small changes in notation compared to our sources, without changing the content. We hope the authors concerned will not mind this.

The author \cite{Joyc1,Joyc2} also previously attempted a definition of a category of Kuranishi spaces, but this was unsatisfactory because of problems with the use of germs, and we will not explain it. 

Examples \ref{kuAex1}, \ref{kuAex2}, \ldots\ explain the relationship between the material we explain, and some of the definitions of~\S\ref{ku4}.

\subsection{\texorpdfstring{Fukaya--Oh--Ohta--Ono's Kuranishi spaces}{Fukaya--Oh--Ohta--Ono\textquoteright s Kuranishi spaces}}
\label{kuA1}

Kuranishi spaces are used in the work of Fukaya, Oh, Ohta and Ono \cite{Fuka,FOOO1,FOOO2,FOOO3,FOOO4,FOOO5,FOOO6,FOOO7,FOOO8,FuOn} as the geometric structure on moduli spaces of $J$-holomorphic curves. Initially introduced by Fukaya and Ono \cite[\S 5]{FuOn} in 1999, the definition has changed several times as their work has evolved \cite{Fuka,FOOO1,FOOO2,FOOO3,FOOO4,FOOO5,FOOO6, FOOO7,FOOO8,FuOn}.

This section explains their most recent definition of Kuranishi space, taken from \cite[\S 4]{FOOO6}. As in the rest of our book `Kuranishi neighbourhood', `coordinate change' and `Kuranishi space' have a different meaning, we will use the terms `FOOO Kuranishi neighbourhood', `FOOO coordinate change' and `FOOO Kuranishi space' below to refer to concepts from \cite{FOOO6}.

For the next definitions, let $X$ be a compact, metrizable topological space. 

\begin{dfn} A {\it FOOO Kuranishi neighbourhood\/} on $X$ is a quintuple $(V,\ab E,\ab\Ga,\ab s,\ab\psi)$ such that:
\begin{itemize}
\setlength{\itemsep}{0pt}
\setlength{\parsep}{0pt}
\item[(a)] $V$ is a smooth manifold, which may or may not have boundary or corners.
\item[(b)] $E$ is a finite-dimensional real vector space.
\item[(c)] $\Ga$ is a finite group with a smooth, effective action on $V$, and a linear representation on $E$.
\item[(d)] $s:V\ra E$ is a $\Ga$-equivariant smooth map.
\item[(e)] $\psi$ is a homeomorphism from $s^{-1}(0)/\Ga$ to an open subset $\Im\psi$ in $X$, where $\Im\psi=\bigl\{\psi(x\Ga):x\in s^{-1}(0)\bigr\}$ is the image of $\psi$, and is called the {\it footprint\/} of~$(V,E,\Ga,s,\psi)$.
\end{itemize}
We will write $\bar\psi:s^{-1}(0)\ra\Im\psi\subseteq X$ for the composition of $\psi$ with the projection~$s^{-1}(0)\ra s^{-1}(0)/\Ga$.

Now let $p\in X$. A {\it FOOO Kuranishi neighbourhood of\/ $p$ in\/} $X$ is a FOOO Kuranishi neighbourhood $(V_p,E_p,\Ga_p,s_p,\psi_p)$ with a distinguished point $o_p\in V_p$ such that $o_p$ is fixed by $\Ga_p$, and $s_p(o_p)=0$, and $\psi_p([o_p])=p$. Then $o_p$ is unique.

\label{kuAdef1}
\end{dfn}

\begin{ex} For our Kuranishi neighbourhoods $(V',E',\Ga',s',\psi')$ in Definition \ref{ku4def1}, $\pi':E'\ra V'$ is a $\Ga'$-equivariant vector bundle, and $s':V'\ra E'$ a $\Ga'$-equivariant smooth section. Also $\Ga'$ is not required to act effectively on $V'$.

To make a FOOO Kuranishi neighbourhood $(V,E,\Ga,s,\psi)$ into one of our Kuranishi neighbourhoods $(V',E',\Ga',s',\psi')$, take $V'=V$, $\Ga'=\Ga,$ $\psi'=\psi$, let $\pi':E'\ra V'$ be the trivial vector bundle $\pi_V:V\t E\ra V$ with fibre $E$, and $s'=\id\t s:V\ra V\t E$. Thus, FOOO Kuranishi neighbourhoods correspond to special examples of our Kuranishi neighbourhoods $(V',E',\Ga',s',\psi')$, in which $\pi':E'\ra V'$ is a trivial vector bundle, and $\Ga'$ acts effectively on~$V'$.

By an abuse of notation, we will sometimes identify FOOO Kuranishi neighbourhoods with the corresponding Kuranishi neighbourhoods in \S\ref{ku41}. That is, we will use $E$ to denote both a vector space, and the corresponding trivial vector bundle over $V$, and $s$ to denote both a map, and a section of a trivial bundle. Fukaya et al.\ \cite[Def.~4.3(4)]{FOOO6} also make the same abuse of notation.
\label{kuAex1}
\end{ex}

\begin{dfn} Let $(V_i,E_i,\Ga_i,s_i,\psi_i)$, $(V_j,E_j,\Ga_j,s_j,\psi_j)$ be FOOO Kuranishi neighbourhoods on $X$. Suppose $S\subseteq \Im\psi_i\cap\Im\psi_j\subseteq X$ is an open subset of the intersection of the footprints $\Im\psi_i,\Im\psi_j\subseteq X$. We say a quadruple $\Phi_{ij}=(V_{ij},h_{ij},\vp_{ij},\hat\vp_{ij})$ is a {\it FOOO coordinate change from\/ $(V_i,E_i,\Ga_i,s_i,\psi_i)$ to $(V_j,\ab E_j,\ab\Ga_j,\ab s_j,\ab\psi_j)$ over\/} $S$ if:
\begin{itemize}
\setlength{\itemsep}{0pt}
\setlength{\parsep}{0pt}
\item[(a)] $V_{ij}$ is a $\Ga_i$-invariant open neighbourhood of $\bar\psi_i^{-1}(S)$ in~$V_i$.
\item[(b)] $h_{ij}:\Ga_i\ra\Ga_j$ is an injective group homomorphism.
\item[(c)] $\vp_{ij}:V_{ij}\hookra V_j$ is an $h_{ij}$-equivariant smooth embedding, such that the induced map $(\vp_{ij})_*: V_{ij}/\Ga_i\ra V_j/\Ga_j$ is injective. 
\item[(d)] $\hat\vp_{ij}:V_{ij}\t E_i\hookra V_j\t E_j$ is an $h_{ij}$-equivariant embedding of vector bundles over $\vp_{ij}:V_{ij}\hookra V_j$, viewing $V_{ij}\t E_i\ra V_{ij}$, $V_j\t E_j\ra V_j$ as trivial vector bundles.
\item[(e)] $\hat\vp_{ij}(s_i\vert_{V_{ij}})=\vp_{ij}^*(s_j)$, in sections of $\vp_{ij}^*(V_j\t E_j)\ra V_{ij}$.
\item[(f)] $\psi_i=\psi_j\ci(\vp_{ij})_*$ on $(s_i^{-1}(0)\cap V_{ij})/\Ga_i$.
\item[(g)] $h_{ij}$ restricts to an isomorphism $\Stab_{\Ga_i}(v)\ra \Stab_{\Ga_j}(\vp_{ij}(v))$ for all $v$ in $V_{ij}$, where $\Stab_{\Ga_i}(v)$ is the {\it stabilizer subgroup\/} $\bigl\{\ga\in\Ga_i:\ga(v)=v\bigr\}$.
\item[(h)] For each $v\in s_i^{-1}(0)\cap V_{ij}\subseteq V_{ij}\subseteq V_i$ we have a commutative diagram
\e
\begin{gathered}
\xymatrix@C=25pt@R=13pt{ 0 \ar[r] & T_vV_i \ar[rr]_{\d\vp_{ij}\vert_v} \ar[d]^{\d s_i\vert_v} && T_{\vp_{ij}(v)}V_j \ar[r] \ar[d]^{\d s_j\vert_{\vp_{ij}(v)}} & N_{ij}\vert_v \ar[r] \ar@{.>}[d]^{\d_{\rm fibre}s_j\vert_v} & 0 \\
0 \ar[r] & E_i\vert_v \ar[rr]^{\hat\vp_{ij}\vert_v} && E_j\vert_{\vp_{ij}(v)} \ar[r] & F_{ij}\vert_v \ar[r] & 0 
}
\end{gathered}
\label{kuAeq1}
\e
with exact rows, where $N_{ij}\ra V_{ij}$ is the normal bundle of $V_{ij}$ in $V_j$, and $F_{ij}=\vp_{ij}^*(E_j)/\hat\vp_{ij}(E_i\vert_{V_{ij}})$ the quotient bundle. We require that the induced morphism $\d_{\rm fibre}s_j\vert_v$ in \eq{kuAeq1} should be an isomorphism.
\end{itemize}

Note that $\d_{\rm fibre}s_j\vert_v$ an isomorphism in \eq{kuAeq1} is equivalent to the following complex being exact, which is how we write the analogous conditions in \S\ref{ku2}--\S\ref{ku6}:
\e
\begin{gathered}
\xymatrix@C=14.5pt{ 0 \ar[r] & T_vV_i \ar[rrr]^(0.35){\d s_i\vert_v\op\d\vp_{ij}\vert_v} &&& E_i\vert_v \!\op\!T_{\vp_{ij}(v)}V_j 
\ar[rrr]^(0.56){\hat\vp_{ij}\vert_v\op -\d s_j\vert_{\vp_{ij}(v)}} &&& E_j\vert_{\vp_{ij}(v)} \ar[r] & 0. }
\end{gathered}
\label{kuAeq2}
\e

If we do not specify $S$, but just say that $\Phi_{ij}$ is a {\it FOOO coordinate change from\/ $(V_i,E_i,\Ga_i,s_i,\psi_i)$ to\/} $(V_j,E_j,\Ga_j,s_j,\psi_j)$, we mean that~$S=\Im\psi_i\cap\Im\psi_j$.

Now let $(V_p,E_p,\Ga_p,s_p,\psi_p)$, $(V_q,E_q,\Ga_q,s_q,\psi_q)$ be FOOO Kuranishi neighbourhoods of $p\in X$ and $q\in\Im\psi_p\subseteq X$, respectively. We say a quadruple $\Phi_{qp}=(V_{qp},h_{qp},\vp_{qp},\hat\vp_{qp})$ is a {\it FOOO coordinate change\/} if it is a FOOO coordinate change from $(V_q,E_q,\Ga_q,s_q,\psi_q)$ to $(V_p,E_p,\Ga_p,s_p,\psi_p)$ on $S_{qp}$, where $S_{qp}$ is any open neighbourhood of $q$ in $\Im\psi_q\cap\Im\psi_p$.
\label{kuAdef2}
\end{dfn}

\begin{rem}{\bf(a)} We have changed notation slightly compared to \cite{FOOO6}, to improve compatibility with the rest of the book. Fukaya et al.\ \cite[\S 4]{FOOO6} write Kuranishi neighbourhoods as $(V,E,\Ga,\psi,s)$ rather than $(V,E,\Ga,s,\psi)$. Also, they write coordinate changes as $\Phi_{pq}=(\hat\vp_{pq},\vp_{pq},h_{pq})$, leaving $V_{pq}$ implicit, rather than as $\Phi_{qp}=(V_{qp},h_{qp},\vp_{qp},\hat\vp_{qp})$ as we do. Note that we have changed the order of $p,q$ in the subscripts compared to~\cite{FOOO6}.

Fukaya et al.\ do not require $\hat\vp_{ij}:V_{ij}\t E_i\hookra \vp_{ij}^*(V_j\t E_j)$ to come from an injective linear map of vector spaces $E_i\hookra E_j$. As in \S\ref{kuA3}, McDuff and Wehrheim do require this.

Fukaya et al.\ only impose Definition \ref{kuAdef2}(h) for Kuranishi spaces `with a tangent bundle' in the sense of \cite{FOOO1,FOOO6,FuOn}. As the author knows of no reason for considering Kuranishi spaces `without tangent bundles', and the notation appears to be merely historical, we will include `with a tangent bundle' in our definitions of FOOO coordinate changes and FOOO Kuranishi spaces.
\smallskip

\noindent{\bf(b)} Manifolds with corners were discussed in \S\ref{ku31}--\S\ref{ku33}. When we allow the $V_i$ in Kuranishi neighbourhoods $(V_i,E_i,\Ga_i,s_i,\psi_i)$ to be manifolds with boundary, or manifolds with corners, it is important that the definition of {\it embedding\/} of manifolds with corners $\vp_{ij}:V_{ij}\hookra V_j$ used in Definition \ref{kuAdef2}(c) includes the condition that $\vp_{ij}$ be {\it simple}, in the sense of \S\ref{ku31}. Here are some examples:
\begin{itemize}
\setlength{\itemsep}{0pt}
\setlength{\parsep}{0pt}
\item[(i)] Let $h:[0,\iy)\ra\R$ be smooth. Then $f:[0,\iy)\ra[0,\iy)\t\R$ mapping $f:x\mapsto\bigl(x,h(x)\bigr)$ is simple, and an embedding.
\item[(ii)] The inclusion $i:[0,\iy)\hookra\R$ is not simple, or an embedding.
\item[(ii)] The diagonal map $\De:[0,\iy)\ra[0,\iy)^2$ mapping $\De:x\mapsto(x,x)$ is not simple, or an embedding.
\end{itemize}
\label{kuArem1}
\end{rem}

We relate FOOO coordinate changes to coordinate changes in~\S\ref{ku41}.

\begin{ex} Let $\Phi_{ij}=(V_{ij},h_{ij},\vp_{ij},\hat\vp_{ij}):(V_i,E_i,\Ga_i,s_i,\psi_i)\ra(V_j,\ab E_j,\ab\Ga_j,\ab s_j,\ab\psi_j)$ be a FOOO coordinate change over $S$, as in Definition \ref{kuAdef2}. As in Example \ref{kuAex1}, regard the FOOO Kuranishi neighbourhoods $(V_i,E_i,\Ga_i,s_i,\psi_i),(V_j,\ab E_j,\ab\Ga_j,\ab s_j,\ab\psi_j)$ as examples of Kuranishi neighbourhoods in the sense of \S\ref{ku41}.

Set $P_{ij}=V_{ij}\t\Ga_j$. Let $\Ga_i$ act on $P_{ij}$ by $\ga_i:(v,\ga)\mapsto (\ga_i\cdot v,\ga h_{ij}(\ga_i)^{-1})$. Let $\Ga_j$ act on $P_{ij}$ by $\ga_j:(v,\ga)\mapsto (v,\ga_j\ga)$. Define $\pi_{ij}:P_{ij}\ra V_i$ and $\phi_{ij}:P_{ij}\ra V_j$ by $\pi_{ij}:(v,\ga)\mapsto v$ and $\phi_{ij}:(v,\ga)\mapsto\ga\cdot\vp_{ij}(v)$. Then $\pi_{ij}$ is $\Ga_i$-equivariant and $\Ga_j$-invariant. Since $\vp_{ij}$ is $h_{ij}$-equivariant, $\phi_{ij}$ is $\Ga_i$-invariant, and $\Ga_j$-equivariant.

We will define a vector bundle morphism $\hat\phi_{ij}:\pi_{ij}^*(E_i)\ra\phi_{ij}^*(E_j)$. At $(v,\ga)\in P_{ij}$, this $\hat\phi_{ij}$ must map $E_i\vert_v\ra E_j\vert_{\ga\cdot \vp_{ij}(v)}$. We define $\hat\phi_{ij}\vert_{(v,\ga)}$ to be the composition of $\hat\vp_{ij}\vert_v:E_i\vert_v\ra E_j\vert_{\vp_{ij}(v)}$ with $\ga\cdot :E_j\vert_{\vp_{ij}(v)}\ra E_j\vert_{\ga\cdot\vp_{ij}(v)}$ from the $\Ga_j$-action on $E_j$. That is, $\hat\phi_{ij}\vert_{V_{ij}\t\{\ga\}}=\ga\cdot \hat\vp_{ij}$ for each $\ga\in\Ga_j$.

It is now easy to see that $\ti\Phi_{ij}=(P_{ij},\pi_{ij},\phi_{ij},\hat\phi_{ij}):(V_i,E_i,\Ga_i,s_i,\psi_i)\ra (V_j,\ab E_j,\ab\Ga_j,\ab s_j,\ab\psi_j)$ is a 1-morphism over $S$, in the sense of \S\ref{ku41}. Theorem \ref{ku4thm2} shows that $\ti\Phi_{ij}$ is a coordinate change over $S$, in the sense of~\S\ref{ku41}.
\label{kuAex2}
\end{ex}

\begin{dfn} A {\it FOOO Kuranishi structure\/} $\cK$ on $X$ of {\it virtual dimension\/} $n\in\Z$ in the sense of \cite[\S 4]{FOOO6}, including the `with a tangent bundle' condition, assigns a FOOO Kuranishi neighbourhood $(V_p,E_p,\Ga_p,s_p,\psi_p)$ for each $p\in X$ and a FOOO coordinate change $\Phi_{qp}=(V_{qp},h_{qp},\vp_{qp},\hat\vp_{qp}):(V_q,E_q,\Ga_q,s_q,\psi_q)\ra(V_p,E_p,\Ga_p,s_p,\psi_p)$ for each $q\in \Im\psi_p$ such that the following holds:
\begin{itemize}
\setlength{\itemsep}{0pt}
\setlength{\parsep}{0pt}
\item[(a)] $\dim V_p-\rank E_p=n$ for all $p\in X$.
\item[(b)] If $q\in\Im\psi_p$, $r\in\psi_q((V_{qp}\cap s_q^{-1}(0))/\Ga_q)$, then for each connected component $(\vp_{rq}^{-1}(V_{qp})\cap V_{rp})^\al$ of $\vp_{rq}^{-1}(V_{qp})\cap V_{rp}$ there exists $\ga_{rqp}^\al\in \Ga_p$ with
\e
\begin{gathered}
h_{qp}\ci h_{rq}=\ga_{rqp}^{\al}\cdot h_{rp} \cdot (\ga_{rqp}^{\al})^{-1}, \qquad
\vp_{qp} \ci \vp_{rq} = \ga_{rqp}^{\al}
\cdot \vp_{rp}, \\
\text{and}\qquad \vp_{rq}^*(\hat\vp_{qp}) \ci \hat\vp_{rq} = \ga_{rqp}^{\al}\cdot
\hat\vp_{rp},
\end{gathered}
\label{kuAeq3}
\e
where the second and third equations hold on $(\vp_{rq}^{-1}(V_{qp})\cap V_{rp})^\al$.
\end{itemize}

The pair $\bX=(X,\cK)$ is called a {\it FOOO Kuranishi space}, of {\it virtual dimension\/} $n\in\Z$, written $\vdim\bX=n$.

If the $V_p$ for all $p\in X$ are manifolds without boundary, or with boundary, or with corners, then we call $\bX$ a {\it FOOO Kuranishi space without boundary}, or {\it with boundary}, or {\it with corners}, respectively.
\label{kuAdef3}
\end{dfn}

We prove in Theorem \ref{ku4thm10} above that a FOOO Kuranishi space $\bX$ can be made into a Kuranishi space $\bX'$ in the sense of~\S\ref{ku4}.

We will show that the elements $\ga_{rqp}^\al\in\Ga_p$ in Definition \ref{kuAdef3}(b) correspond in the setting of \S\ref{ku4} to a 2-morphism~$\La_{rqp}:\ti\Phi_{qp}\ci\ti\Phi_{rq}\Ra\ti\Phi_{rp}$.

\begin{ex}{\bf(i)} In the Fukaya--Oh--Ohta--Ono theory \cite{Fuka,FOOO1,FOOO2,FOOO3,FOOO4,FOOO5,FOOO6,FOOO7,FOOO8,FuOn}, one often relates two FOOO coordinate changes in the following way. Let $\Phi_{ij}=(V_{ij},h_{ij},\ab \vp_{ij},\ab\hat\vp_{ij}),\ab\Phi_{ij}'=(V_{ij}',\ab h_{ij}',\ab\vp_{ij}',\hat\vp_{ij}'):(V_i,E_i,\Ga_i,s_i,\psi_i)\ra(V_j,\ab E_j,\ab\Ga_j,\ab s_j,\ab\psi_j)$ be FOOO coordinate changes over $S$. Suppose there exists $\ga\in\Ga_j$ such that
\e
h_{ij}=\ga\cdot h_{ij}'\cdot\ga^{-1}, \quad
\phi_{ij}=\ga\cdot \phi_{ij}',\quad\text{and}\quad 
\hat\phi_{ij}=\ga\cdot\hat\phi_{ij}',
\label{kuAeq4}
\e
where the second and third equations hold on~$\dot V_{ij}:=V_{ij}\cap V_{ij}'$.

Let $\ti\Phi_{ij},\ti\Phi_{ij}':(V_i,E_i,\Ga_i,s_i,\psi_i)\ra (V_j,\ab E_j,\ab\Ga_j,\ab s_j,\ab\psi_j)$ be the 1-morphisms in the sense of \S\ref{ku41} corresponding to $\Phi_{ij},\Phi_{ij}'$ in Example \ref{kuAex2}. Set $\dot P_{ij}=\dot V_{ij}\t\Ga_j\subseteq P_{ij}$. Define $\la_{ij}:\dot P_{ij}=\dot V_{ij}\t\Ga_j\ra V_{ij}'\t\Ga_j=P_{ij}'$ by $\la_{ij}:(v,\ga')\mapsto(v,\ga'\ga)$, and $\hat\la_{ij}=0$. Then $(\dot P_{ij},\la_{ij},\hat\la_{ij})$ satisfies Definition \ref{ku4def3}(a)--(c), so we have defined a 2-morphism $\La_{ij}=[\dot P_{ij},\la_{ij},\hat\la_{ij}]:\ti\Phi_{ij}\Ra\ti\Phi_{ij}'$, in the sense of~\S\ref{ku41}.
\smallskip

\noindent{\bf(ii)} This enables us to interpret Definition \ref{kuAdef3}(b) in terms of a 2-morphism. In the situation of Definition \ref{kuAdef3}(b), the composition of the FOOO coordinate changes $\Phi_{rq},\Phi_{qp}$ is $\Phi_{qp}\ci\Phi_{qp}=\bigl(\vp_{rq}^{-1}(V_{qp}),h_{qp}\ci h_{rq},\vp_{qp} \ci \vp_{rq}\vert_{\vp_{rq}^{-1}(V_{qp})},\vp_{rq}^*(\hat\vp_{qp}) \ci \hat\vp_{rq}\vert_{\vp_{rq}^{-1}(V_{qp})}\bigr)$. Thus, \eq{kuAeq3} relates $\Phi_{qp}\ci\Phi_{rq}$ to $\Phi_{rp}$ in the same way that \eq{kuAeq4} relates $\Phi_{ij}$ to $\Phi_{ij}'$, except for allowing $\ga_{rqp}$ to vary on different connected components. Hence, if $\ti\Phi_{rq},\ti\Phi_{qp},\ti\Phi_{rp}$ are the coordinate changes in the sense of \S\ref{ku41} associated to $\Phi_{rq},\Phi_{qp},\Phi_{rp}$ in Example \ref{kuAex2}, then the method of (i) defines a 2-morphism $\La_{pqr}:\ti\Phi_{qp}\ci\ti\Phi_{rq}\Ra\ti\Phi_{rp}$, in the sense of~\S\ref{ku41}.
\smallskip

\noindent{\bf(iii)} In the situation of Definition \ref{kuAdef3}(b), suppose $v\in (\vp_{rq}^{-1}(V_{qp})\cap V_{rp})^\al$ is generic. Then $\Stab_{\Ga_r}(v)=\{1\}$, as $\Ga_r$ acts (locally) effectively on $V_r$ by Definition \ref{kuAdef1}(c). Hence $\Stab_{\Ga_p}(\vp_{rp}(v))=\{1\}$ by Definition \ref{kuAdef2}(g). Therefore the point $\ga_{rqp}^\al\cdot \vp_{rp}(v)=\vp_{qp}\ci\vp_{rq}(v)$ in $V_p$ determines $\ga_{rqp}^\al$ in $\Ga_p$. So the second equation of \eq{kuAeq3} determines $\ga_{rqp}^\al\in\Ga_p$ uniquely, provided it exists. Thus the 2-morphism $\La_{pqr}:\ti\Phi_{qp}\ci\ti\Phi_{rq}\Ra\ti\Phi_{rp}$ in (ii) is also determined uniquely.
\label{kuAex3}
\end{ex}

\begin{dfn} Let $\bX$ be a FOOO Kuranishi space. Then for each $p\in X$, $q\in\Im\psi_p$ and $v\in s_q^{-1}(0)\cap V_{qp}$, we have an exact sequence \eq{kuAeq2}. Taking top exterior powers in \eq{kuAeq2} yields an isomorphism \begin{equation*}
\bigl(\det T_vV_q\bigr)\ot \det\bigl(E_p\vert_{\vp_{qp}(v)}\bigr)\cong
\bigl(\det E_q\vert_v\bigr)\ot\bigl(T_{\vp_{qp}(v)}V_p\bigr),
\end{equation*}
where $\det W$ means $\La^{\dim W}W$, or equivalently, a canonical isomorphism
\e
\bigl(\det E_p^*\ot \det TV_p\bigr)\vert_{\vp_{qp}(v)}\cong 
\bigl(\det E_q^*\ot \det TV_q\bigr)\vert_v.
\label{kuAeq5}
\e
Defining the isomorphism \eq{kuAeq5} requires a suitable sign convention. Sign conventions are discussed in Fukaya et al. \cite[\S 8.2]{FOOO1} and McDuff and Wehrheim \cite[\S 8.1]{McWe2}. An {\it orientation\/} on $\bX$ is a choice of orientations on the line bundles 
\begin{equation*} 
\det E_p^* \ot \det TV_p\big\vert_{s_p^{-1}(0)} \longra s_p^{-1}(0)
\end{equation*}
for all $p\in X$, compatible with the isomorphisms~\eq{kuAeq5}.
\label{kuAdef5}
\end{dfn}

\begin{dfn} Let $\bX$ be a FOOO Kuranishi space, and $Y$ a manifold. A {\it smooth map\/} $\bs f:\bX\ra Y$ is $\bs f=(f_p:p\in X)$ where $f_p:V_p\ra Y$ is a $\Ga_p$-invariant smooth map for all $p\in X$ (that is, $f_p$ factors via $V_p\ra V_p/\Ga_p\ra Y$), and $f_p\ci\vp_{qp}=f_q\vert_{V_{qp}}:V_{qp}\ra Y$ for all $q\in\Im\psi_p$. This induces a unique continuous map $f:X\ra Y$ with $f_p\vert_{s_p^{-1}(0)}=f\ci\bar\psi_p$ for all $p\in X$. We call $\bs f$ {\it weakly submersive\/} if each $f_p$ is a submersion.

Suppose $\bX^1,\bX^2$ are FOOO Kuranishi spaces, $Y$ is a manifold, and $\bs f^1:\bX^1\ra Y$, $\bs f^2:\bX^2\ra Y$ are weakly submersive. Then as in \cite[\S A1.2]{FOOO1} one can define a `fibre product' Kuranishi space $\bW=\bX^1\t_Y\bX^2$, with topological space $W=\bigl\{(p^1,p^2)\in X^1\t X^2:f^1(p^1)=f^2(p^2)\bigr\}$, and FOOO Kuranishi neighbourhoods $(V_{p^1,p^2},E_{p^1,p^2},\Ga_{p^1,p^2},s_{p^1,p^2},\psi_{p^1,p^2})$ for $(p^1,p^2)\in W$, where $V_{p_1,p_2}=V_{p^1}^1\t_{f_{p^1}^1,Y,f_{p^2}^2}V_{p^2}^2$, $E_{p_1,p_2}=\pi_{V_{p^1}^1}^*(E^1_{p^1})\op \pi_{V_{p^2}^2}^*(E^2_{p^2})$, $\Ga_{p^1,p^2}=\Ga_{p^1}^1\t\Ga_{p^2}^2$, $s_{p_1,p_2}=\pi_{V_{p^1}^1}^*(s^1_{p^1})\op \pi_{V_{p^2}^2}^*(s^2_{p^2})$, and $\psi_{p^1,p^2}=\psi_{p^1}^1\ci(\pi_{V_{p_1}^1})_*\t\psi_{p^2}^2\ci(\pi_{V_{p_2}^2})_*$. The weakly submersive condition ensures that $V_{p_1,p_2}=V_{p^1}^1\t_YV_{p^2}^2$ is well-defined.
\label{kuAdef6}
\end{dfn}

\begin{rem}{\bf(i)} Note that Fukaya et al.\ \cite{Fuka,FOOO1,FOOO2,FOOO3,FOOO4,FOOO5,FOOO6,FOOO7,FOOO8,FuOn} {\it do not define morphisms between Kuranishi spaces}, but only $\bs f:\bX\ra Y$ from Kuranishi spaces $\bX$ to manifolds $Y$. Thus, Kuranishi spaces in \cite{Fuka,FOOO1,FOOO2,FOOO3,FOOO4,FOOO5,FOOO6,FOOO7,FOOO8,FuOn} {\it do not form a category}. 

Observe however that Fukaya \cite[\S 3, \S 5]{Fuka} (see also \cite[\S 4.2]{FOOO5}) works with a forgetful morphism $\mathfrak{forget}:\bs\cM_{l,1}(\be)\ra\bs\cM_{l,0}(\be)$, which is clearly intended to be some kind of morphism of Kuranishi spaces, without defining the concept.
\smallskip

\noindent{\bf(ii)} The `fibre product' $\bX^1\t_Y\bX^2$ in Definition \ref{kuAdef6} {\it is not a fibre product in the sense of category theory}, characterized by a universal property, since Fukaya et al.\ in\cite{Fuka,FOOO1,FOOO2,FOOO3,FOOO4,FOOO5,FOOO6,FOOO7,FOOO8,FuOn} do not have a category of FOOO Kuranishi spaces in which to state such a universal property (though we do, see \S\ref{ku4}). Their `fibre product' is really just an ad hoc construction.
\label{kuArem2}
\end{rem}

\subsection{\texorpdfstring{Fukaya--Oh--Ohta--Ono's good coordinate systems}{Fukaya--Oh--Ohta--Ono\textquoteright s good coordinate systems}}
\label{kuA2}

{\it Good coordinate systems\/} on Kuranishi spaces $\bX$ in the work of Fukaya, Oh, Ohta and Ono \cite{Fuka,FOOO1,FOOO2,FOOO3, FOOO4,FOOO5,FOOO6,FOOO7,FOOO8,FuOn} are an open cover of $\bX$ by FOOO Kuranishi neighbourhoods $(V_i,E_i,\Ga_i,s_i,\psi_i)$ for $i$ in a finite set $I$, with coordinate changes $\Phi_{ij}$ for $i,j\in I$, satisfying extra conditions. They are a tool for constructing virtual cycles for Kuranishi spaces using the method of `perturbation by multisections', and the extra conditions are included to make this virtual cycle construction work.

As with Kuranishi spaces, since its introduction in \cite[Def.~6.1]{FuOn} the definition of good coordinate system has changed several times during the evolution of \cite{Fuka,FOOO1,FOOO2,FOOO3,FOOO4,FOOO5, FOOO6,FOOO7,FOOO8,FuOn}, see in chronological order \cite[Def.~6.1]{FuOn}, \cite[Lem.~A1.11]{FOOO1}, \cite[\S 15]{FOOO2}, and \cite[\S 5]{FOOO6}. Of these, \cite{FuOn,FOOO6} work with Kuranishi neighbourhoods $(\fV_i,\fE_i,s_i,\psi_i)$ where $\fV_i$ is an orbifold (which we do not want to do), and \cite{FOOO1,FOOO2} with Kuranishi neighbourhoods $(V_i,E_i,\Ga_i,s_i,\psi_i)$ with $V_i$ a manifold.

The definition we give below is a hybrid of those in \cite{FOOO1,FOOO2,FOOO6,FOOO7}. Essentially our `FOOO weak good coordinate systems' follow the definitions in \cite{FOOO1,FOOO2}, and our `FOOO good coordinate systems' include extra conditions adapted from \cite{FOOO6,FOOO7}. We show in Theorem \ref{ku4thm11} above that given a FOOO weak good coordinate system on $X$, we can make $X$ into a Kuranishi space $\bX$ in the sense of~\S\ref{ku4}.

\begin{dfn} Let $X$ be a compact, metrizable topological space. A {\it FOOO weak good coordinate system\/ $\bigl((I,\pr),(V_i,E_i,\Ga_i,s_i,\psi_i)_{i\in I},\Phi_{\text{$ij$, $i\pr j$ in $I$}}\bigr)$ on\/ $X$ of virtual dimension\/} $n\in\Z$ consists of a finite indexing set $I$, a partial order $\pr$ on $I$, FOOO Kuranishi neighbourhoods $(V_i,E_i,\Ga_i,s_i,\psi_i)$ for $i\in I$ with $\dim V_i-\rank E_i=n$ and $X=\bigcup_{i\in I}\Im\psi_i$, and FOOO coordinate changes $\Phi_{ij}=(V_{ij},h_{ij},\vp_{ij},\hat\vp_{ij})$ from $(V_i,E_i,\Ga_i,s_i,\psi_i)$ to $(V_j,E_j,\Ga_j,s_j,\psi_j)$ on $S=\Im\psi_i\cap\Im\psi_j$ for all $i,j\in I$ with $i\pr j$ and $\Im\psi_i\cap\Im\psi_j\ne\es$, satisfying the two conditions:
\begin{itemize}
\setlength{\itemsep}{0pt}
\setlength{\parsep}{0pt}
\item[(a)] If $i\ne j\in I$ with $\Im\psi_i\cap\Im\psi_j\ne\es$ then either $i\pr j$ or $j\pr i$.
\item[(b)] If $i\pr j\pr k$ in $I$ with $\Im\psi_i\cap\Im\psi_j\cap\Im\psi_k\ne\es$ then there exists $\ga_{ijk}\in \Ga_k$ such that as in \eq{kuAeq3} we have
\e
\begin{gathered}
h_{jk} \ci h_{ij} = \ga_{ijk}
\cdot h_{ik} \cdot \ga_{ijk}^{-1}, \qquad
\vp_{jk} \ci \vp_{ij} = \ga_{ijk}
\cdot \vp_{ik}, \\
\text{and}\qquad \vp_{ij}^*(\hat\vp_{jk}) \ci \hat\vp_{ij} = \ga_{ijk}\cdot
\hat\vp_{ik},
\end{gathered}
\label{kuAeq6}
\e
where the second and third equations hold on $V_{ij}\cap V_{ik}\cap\vp_{ij}^{-1}(V_{jk})$. The $\ga_{ijk}$ are uniquely determined by \eq{kuAeq6} as in Example~\ref{kuAex3}(iii). 
\end{itemize}

We call $\bigl((I,\pr),(V_i,E_i,\Ga_i,s_i,\psi_i)_{i\in I},\Phi_{\text{$ij$, $i\pr j$ in $I$}}\bigr)$ a {\it FOOO good coordinate system on\/} $X$ if it also satisfies the extra conditions:
\begin{itemize}
\setlength{\itemsep}{0pt}
\setlength{\parsep}{0pt}
\item[(c)] If $i\pr j$ in $I$, $\Im\psi_i\cap\Im\psi_j\ne\es$ then $\psi_i\bigl((V_{ij}\cap s_i^{-1}(0))/\Ga_i\bigr)=\Im\psi_i\cap\Im\psi_j$.
\item[(d)] If $i\pr j$ in $I$ and $\Im\psi_i\cap\Im\psi_j\ne\es$ then $\mathop{\rm inc}\t\vp_{ij}:V_{ij}\ra V_i\t V_j$ is proper, where $\mathop{\rm inc}:V_{ij}\hookra V_i$ is the inclusion.
\item[(e)] If $i\pr j$, $i\pr k$ in $I$ for $j\ne k$ and $\Im\psi_i\cap\Im\psi_j\ne\es\ne\Im\psi_i\cap\Im\psi_k$, $V_{ij}\cap V_{ik}\ne\es$, then $\Im\psi_j\cap\Im\psi_k\ne\es$, and {\bf either} $j\pr k$ and $V_{ij}\cap V_{ik}=\vp_{ij}^{-1}(V_{jk})$, {\bf or} $k\pr j$ and $V_{ij}\cap V_{ik}=\vp_{ik}^{-1}(V_{kj})$.
\item[(f)] If $i\pr k$, $j\pr k$ in $I$ for $i\ne j$ and $\Im\psi_i\cap\Im\psi_k\ne\es\ne\Im\psi_j\cap\Im\psi_k$ and $v_i\in V_{ik}$, $v_j\in V_{jk}$,
$\de\in\Ga_k$ with $\vp_{jk}(v_j)=\de\cdot \vp_{ik}(v_i)$ in
$V_k$, then $\Im\psi_i\cap\Im\psi_j\ne\es$ and {\bf either} $i\pr j$, $v_i\in V_{ij}$, and there exists $\ga\in\Ga_j$ with $h_{jk}(\ga)=\de\,\ga_{ijk}$ and $v_j=\ga\cdot\vp_{ij}(v_i)$; {\bf or} $j\pr i$, $v_j\in V_{ji}$, and there exists $\ga\in\Ga_i$ with $h_{ik}(\ga)=\de^{-1}\,\ga_{jik}$ and $v_i=\ga\cdot\vp_{ji}(v_j)$, for $\ga_{ijk},\ga_{jik}$ as in~(b).
\end{itemize}

As in \cite{FOOO7}, parts (c)--(f) are equivalent to:
\begin{itemize}
\setlength{\itemsep}{0pt}
\setlength{\parsep}{0pt}
\item[(g)] Define a symmetric, reflexive binary relation $\sim$ on $\coprod_{i\in I}V_i/\Ga_i$ by $\Ga_i v\sim \Ga_j\vp_{ij}(v_i)$ if $i\pr j$, $\Im\psi_i\cap\Im\psi_j\ne\es$ and $v\in V_{ij}$. Then $\sim$ is an equivalence relation, and $\bigl(\coprod_{i\in I}V_i/\Ga_i\bigr)/\sim$ with the quotient topology is Hausdorff.
\end{itemize}

Now let $X$, $\bigl((I,\pr),(V_i,E_i,\Ga_i,s_i,\psi_i)_{i\in I},\Phi_{ij,\; i\pr j}\bigr)$ be as above (either weak or not), and $Y$ be a manifold. As in Definition \ref{kuAdef6}, a {\it smooth map $(f_i,$ $i\in I)$ from $X,$ $\bigl((I,\pr),(V_i,E_i,\Ga_i,s_i,\psi_i)_{i\in I},\Phi_{ij,\; i\pr j}\bigr)$ to\/} $Y$ is a $\Ga_i$-invariant smooth map $f_i:V_i\ra Y$ for $i\in I$, with $f_j\ci\vp_{ij}=f_i\vert_{V_{ij}}:V_{ij}\ra Y$ for all $i\pr j$ in $I$. This induces a unique continuous map $f:X\ra Y$ with $f_i\vert_{s_i^{-1}(0)}=f\ci\bar\psi_i$ for~$i\in I$.

If the $V_i$ for all $i\in I$ are manifolds without boundary, or with boundary, or with corners, then we call $\bigl((I,\pr),(V_i,E_i,\Ga_i,s_i,\psi_i)_{i\in I},\Phi_{ij,\; i\pr j}\bigr)$ a {\it FOOO\/} ({\it weak\/}) {\it good coordinate system without boundary}, or {\it with boundary}, or {\it with corners}, respectively.
\label{kuAdef7}
\end{dfn}

Using elementary topology, Fukaya, Oh, Ohta and Ono \cite{FOOO7} prove:

\begin{thm} Let\/ $\bigl((I,\pr),(V_i,E_i,\Ga_i,s_i,\psi_i)_{i\in I},\Phi_{\text{$ij$, $i\pr j$ in $I$}}\bigr)$ be a FOOO weak good coordinate system on $X$. Then we can construct a FOOO good coordinate system $\bigl((I',\pr),(V_i',E_i',\Ga_i',s_i',\psi_i')_{i\in I},\Phi'_{\text{$ij$, $i\pr j$ in $I$}}\bigr)$ on $X,$ where $I'\subseteq I,$ $V_i'\subseteq V_i,$ $V_{ij}'\subseteq V_{ij}$ are open, $\Ga_i'=\Ga_i,$ $h_{ij}'=h_{ij},$ and\/ $E_i',s_i',\psi_i',\vp_{ij}',\hat\vp_{ij}'$ are obtained from $E_i,\ldots,\hat\vp_{ij}$ by restricting from $V_i,V_{ij}$ to~$V_i',V_{ij}'$.
\label{kuAthm1}
\end{thm}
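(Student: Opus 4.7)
The plan is a careful shrinking argument exploiting the compactness of $X$ and the finiteness of $I$. Since $\{\Im\psi_i:i\in I\}$ is a finite open cover of the compact metrizable space $X$, the classical shrinking lemma applied iteratively yields a chain of open refinements $W_i^{(N)}\Subset W_i^{(N-1)}\Subset\cdots\Subset W_i^{(0)}:=\Im\psi_i$ in $X$, each covering $X$, where $N$ is chosen in advance large enough to accommodate the inductive conditions below. Each $W_i^{(k)}$ extends to a $\Ga_i$-invariant precompact open subset $V_i^{(k)}\subset V_i$ with $\bar\psi_i^{-1}(W_i^{(k)})\subseteq V_i^{(k)}$ and $\ov{V_i^{(k+1)}}\Subset V_i^{(k)}$, the extension being possible because $V_i$ is paracompact and admits $\Ga_i$-invariant smooth bump functions. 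I would then set $V_i':=V_i^{(N)}$, $V_{ij}':=V_i'\cap V_{ij}\cap\vp_{ij}^{-1}(V_j')$, $I':=\{i\in I:\Im\psi_i'\ne\es\}$, and take all other data by restriction, keeping $\Ga_i'=\Ga_i$ and $h_{ij}'=h_{ij}$.

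Property (c) is then immediate: if $x\in\Im\psi_i'\cap\Im\psi_j'$ and $v_i\in V_i'\cap s_i^{-1}(0)$ represents $x$, then $v_i\in V_{ij}$ by the footprint property, $\vp_{ij}(v_i)\in s_j^{-1}(0)$, and $\Ga_j$-invariance of $V_j'$ combined with injectivity of $\psi_j$ on $s_j^{-1}(0)/\Ga_j$ forces $\vp_{ij}(v_i)\in V_j'$, so $v_i\in V_{ij}'$. Property (d), i.e.\ properness of $\mathop{\rm inc}\t\vp_{ij}':V_{ij}'\to V_i'\t V_j'$, follows from precompactness: any sequence $v_n\in V_{ij}'$ with $v_n\to v$ in $V_i$ and $\vp_{ij}(v_n)\to w$ in $V_j$ has $v\in\ov{V_i'}\Subset V_i^{(N-1)}$ and $w=\vp_{ij}(v)\in\ov{V_j'}\Subset V_j^{(N-1)}$ by continuity, and a single further separation level together with the openness of $V_{ij}$ places the limit inside $V_{ij}'$ itself.

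The main obstacle will be property (e) for triples $i\pr j\pr k$, and the analogous property (f) for pairs dominated by a common third index; these require a further inductive shrinking, applied to all triples at once. For (e), given $v\in V_{ij}'\cap V_{ik}'$, Definition \ref{kuAdef7}(b) gives $\vp_{jk}\ci\vp_{ij}=\ga_{ijk}\cdot\vp_{ik}$ wherever the left side is defined, so $\vp_{jk}(\vp_{ij}(v))\in\ga_{ijk}\cdot V_k'=V_k'$ by $\Ga_k$-invariance whenever $\vp_{ij}(v)\in V_{jk}$. The subset $\vp_{ij}^{-1}(V_{jk})\subseteq V_{ij}\cap V_{ik}$ is open and contains $V_{ij}\cap V_{ik}\cap s_i^{-1}(0)$ by the footprint condition applied to $j,k$; hence, because $N$ was taken large enough to accommodate every triple, after replacing $V_i'$ by a sufficiently small further level in the chain one achieves $V_i^{(N)}\cap V_{ij}\cap V_{ik}\subseteq\vp_{ij}^{-1}(V_{jk})$, which delivers $V_{ij}'\cap V_{ik}'\subseteq\vp_{ij}^{-1}(V_{jk}')$ as required. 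Property (f) is handled analogously, using the $\Ga_k$-equivariance built into the relation \eqref{kuAeq6} to lift the given $\de\in\Ga_k$ back to $\Ga_i$ or $\Ga_j$, again after a triple-level shrinking.

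Finally, I would verify the equivalence of (c)--(f) with the single Hausdorff quotient condition (g) by direct definition-chasing: properties (e) and (f) together make the binary relation $\sim$ on $\coprod_iV_i'/\Ga_i$ transitive and symmetric, while property (d) (equivalently, closedness of the graphs of the coordinate changes) supplies the closedness needed to separate distinct equivalence classes in the quotient topology. The shrinkings must be performed in an order compatible with $\pr$ so that each triple $i\pr j\pr k$ is treated before any larger tuple containing it, and the footprint cover of $X$ must be preserved at every stage; since $I$ is finite, this recursion terminates after finitely many steps.
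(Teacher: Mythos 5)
A preliminary remark: the paper does not prove this statement at all. Theorem \ref{kuAthm1} is quoted from Fukaya--Oh--Ohta--Ono \cite{FOOO7} (``Using elementary topology, Fukaya, Oh, Ohta and Ono \cite{FOOO7} prove:''), who work at the level of the orbifolds $V_i/\Ga_i$, $V_{ij}/\Ga_i$; so your proposal cannot be compared with an in-paper argument, only judged on its own. Your overall strategy --- shrink the footprints by compactness and finiteness of $I$, lift to $\Ga_i$-invariant precompact shrinkings $V_i'\subset V_i$, and set $V_{ij}'=V_i'\cap V_{ij}\cap\vp_{ij}^{-1}(V_j')$ --- is the right general shape, and your verification of (c) is essentially correct. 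But the hard parts of the theorem are (d), (e), (f), and there your argument has genuine gaps.

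For (d), your limit argument is circular: from $v_n\in V_{ij}'$, $v_n\to v\in V_i'$, $\vp_{ij}(v_n)\to w\in V_j'$ you write $w=\vp_{ij}(v)$ ``by continuity'', but $\vp_{ij}$ is only defined on the open set $V_{ij}$, and the whole issue is that $v$ may lie in $\ov{V_{ij}}\sm V_{ij}$ while the images $\vp_{ij}(v_n)$ still converge inside $V_j'$; openness of $V_{ij}$ works against you here, and precompactness of $V_i',V_j'$ alone does not exclude this. One must choose $V_{ij}'$ so that (roughly) its closure in $V_i'$ is contained in $V_{ij}$, which your definition $V_{ij}'=V_i'\cap V_{ij}\cap\vp_{ij}^{-1}(V_j')$ does not ensure. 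For (e), Definition \ref{kuAdef7}(e) demands the \emph{equality} $V_{ij}'\cap V_{ik}'=(\vp_{ij}')^{-1}(V_{jk}')$; you only address the inclusion $\subseteq$. The reverse inclusion says that $v\in V_{ij}'$ with $\vp_{ij}(v)\in V_{jk}'$ forces $v\in V_{ik}'$, i.e.\ $v\in V_{ik}$ and $\vp_{ik}(v)\in V_k'$; equation \eq{kuAeq6} is only valid on $V_{ij}\cap V_{ik}\cap\vp_{ij}^{-1}(V_{jk})$, so it cannot be used to put $v$ into $V_{ik}$, and no amount of shrinking $V_i'$ alone achieves this --- it constrains $V_{ij}'$ and $V_{jk}'$ relative to $V_{ik}'$, and shrinking one domain can destroy the equality for another triple, so the choices interact and must be organised by a genuine induction (this, together with (f), is where the real content of \cite{FOOO7} lies). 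Finally, (f) is dismissed as ``analogous'', but it compares two charts $i,j$ through a third chart $k$ with $i,j$ not a priori $\pr$-comparable, and is essentially the Hausdorffness statement (g); your closing claim that (d)--(f) yield (g) by ``direct definition-chasing'' with ``closedness of graphs'' is likewise too thin. As it stands the proposal is a reasonable sketch of the known strategy, but not a proof.
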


In fact Fukaya et al.\ \cite{FOOO7} work at the level of orbifolds $V_i/\Ga_i$, $V_{ij}/\Ga_i$ rather than manifolds with finite group actions, but their result easily implies Theorem \ref{kuAthm1}. The next definition is based on Fukaya et al.\ \cite[Def.~7.2]{FOOO6}, but using $(V_i,\ab E_i,\ab\Ga_i,\ab s_i,\ab\psi_i)$ for $V_i$ a manifold, rather than $(\fV_i,\fE_i,{\mathfrak s}_i,\psi_i)$ for $\fV_i$ an orbifold.

\begin{dfn} Let $\bX\!=\!(X,\cK)$ be a FOOO Kuranishi space. A FOOO (weak) good coordinate system $\bigl((I,\pr),(V_i,E_i,\Ga_i,s_i,\psi_i)_{i\in I},\Phi_{\text{$ij$, $i\pr j$ in $I$}}\bigr)$ on the topological space $X$ is called {\it compatible with the FOOO Kuranishi structure $\cK$ on\/} $\bX$ if for each $i\in I$ and each $p\in\Im\psi_i\subseteq X$ there exists a FOOO coordinate change $\Phi_{pi}$ from $(V_p,E_p,\Ga_p,s_p,\psi_p)$ to $(V_i,E_i,\Ga_i,s_i,\psi_i)$ on an open neighbourhood $S_{pi}$ of $p$ in $\Im\psi_p\cap\Im\psi_i$ (where $(V_p,E_p,\Ga_p,s_p,\psi_p)$ comes from $\cK$ and $(V_i,E_i,\Ga_i,s_i,\psi_i)$ from the good coordinate system) such that
\begin{itemize}
\setlength{\itemsep}{0pt}
\setlength{\parsep}{0pt}
\item[(a)] If $q\in\Im\psi_p\cap\Im\psi_i$ then there exists $\ga_{qpi}\in \Ga_i$ such that 
\begin{gather*}
h_{pi} \ci h_{qp} = \ga_{qpi}
\cdot h_{qi} \cdot \ga_{qpi}^{-1}, \qquad
\vp_{pi} \ci \vp_{qp} = \ga_{qpi}
\cdot \vp_{qi}, \\
\text{and}\qquad \vp_{qp}^*(\hat\vp_{pi}) \ci \hat\vp_{qp} = \ga_{qpi}\cdot
\hat\vp_{qi},
\end{gather*}
where the second and third equations hold on~$\vp_{qp}^{-1}(V_{pi})\cap V_{qp} \cap V_{qi}$.
\item[(b)] If $i\pr j$ in $I$ with $p\in\Im\psi_i\cap\Im\psi_j$ then there exists $\ga_{pij}\in \Ga_j$ such that
\e
\begin{gathered}
h_{ij} \ci h_{pi} = \ga_{pij}
\cdot h_{pj} \cdot \ga_{pij}^{-1}, \qquad
\vp_{ij} \ci \vp_{pi} = \ga_{pij}
\cdot \vp_{pj}, \\
\text{and}\qquad \vp_{pi}^*(\hat\vp_{ij}) \ci \hat\vp_{pi} = \ga_{pij}\cdot
\hat\vp_{pj},
\end{gathered}
\label{kuAeq7}
\e
where the second and third equations hold on~$\vp_{pi}^{-1}(V_{ij})\cap V_{pi} \cap V_{pj}$.
\end{itemize}
\label{kuAdef8}
\end{dfn}

\begin{rem} For the programme of\cite{Fuka,FOOO1,FOOO2,FOOO3,FOOO4,FOOO5,FOOO6,FOOO7,FOOO8,FuOn}, one would like to show:
\begin{itemize}
\setlength{\itemsep}{0pt}
\setlength{\parsep}{0pt}
\item[(i)] Any (oriented) FOOO Kuranishi space $\bX$ (perhaps also with a smooth map $\bs f:\bX\ra Y$ to a manifold $Y$) admits a compatible (oriented) FOOO good coordinate system $\bigl((I,\pr),(V_i,E_i,\Ga_i,s_i,\psi_i)_{i\in I},\Phi_{\text{$ij$, $i\pr j$ in $I$}}\bigr)$ (perhaps also with a smooth map $(f_i$, $i\in I)$ to~$Y$).
\item[(ii)] Given a compact, metrizable topological space $X$ with an oriented FOOO good coordinate system $\bigl((I,\pr),(V_i,E_i,\Ga_i,s_i,\psi_i)_{i\in I},\Phi_{\text{$ij$, $i\pr j$ in $I$}}\bigr)$ (perhaps with a smooth map $(f_i$, $i\in I)$ to a manifold $Y$), we can construct a {\it virtual cycle\/} for $X$ (perhaps in the singular homology $H_*(Y;\Q)$ or de Rham cohomology $H^*_{\rm dR}(Y;\R)$ of~$Y$).
\end{itemize}
Producing such virtual cycles is, from the point of view of symplectic geometry, the sole reason for defining and studying Kuranishi spaces.

Statements (i), for various definitions of `Kuranishi space', `good coordinate system', and `compatible', can be found in \cite[Lem.~6.3]{FuOn} (with short proof), \cite[Lem.~A1.11]{FOOO1} (with no proof), and \cite[\S 7]{FOOO6} (with long proof). Constructions (ii), again for various definitions, can be found in \cite[\S 6]{FuOn}, \cite[\S A1.1]{FOOO1}, \cite[\S 12]{FOOO3} (using de Rham cohomology), and \cite[\S 6]{FOOO6} (with long proof).
\label{kuArem3}
\end{rem}

\subsection{\texorpdfstring{McDuff--Wehrheim's Kuranishi atlases}{McDuff--Wehrheim\textquoteright s Kuranishi atlases}}
\label{kuA3}

Next we discuss an approach to Kuranishi spaces developed by McDuff and Wehrheim \cite{McWe1,McWe2,McWe3}. Their main definition is that of a ({\it weak\/}) {\it Kuranishi atlas\/} on a topological space $X$. Here are~\cite[Def.s 2.2.2 \& 2.2.8]{McWe3}.

\begin{dfn} An {\it MW Kuranishi neighbourhood\/} $(V,E,\Ga,s,\psi)$ on a topological space $X$ is the same as a FOOO Kuranishi neighbourhood in Definition \ref{kuAdef1}, except that $\Ga$ is not required to act effectively on~$V$.

As in Example \ref{kuAex1}, by an abuse of notation we will regard MW Kuranishi neighbourhoods as examples of our Kuranishi neighbourhoods in~\S\ref{ku41}.
\label{kuAdef9}
\end{dfn}

\begin{dfn} Suppose $(V_B,E_B,\Ga_B,s_B,\psi_B)$, $(V_C,E_C,\Ga_C,s_C,\psi_C)$ are MW Kuranishi neighbourhoods on a topological space $X$, and $S\subseteq \Im\psi_B\cap\Im\psi_C\subseteq X$ is open. We say a quadruple $\Phi_{BC}=(\ti V_{BC},\rho_{BC},\varpi_{BC},\hat\vp_{BC})$ is an {\it MW coordinate change from\/ $(V_B,E_B,\Ga_B,s_B,\psi_B)$ to $(V_C,\ab E_C,\ab\Ga_C,\ab s_C,\ab\psi_C)$ over\/} $S$ if:
\begin{itemize}
\setlength{\itemsep}{0pt}
\setlength{\parsep}{0pt}
\item[(a)] $\ti V_{BC}$ is a $\Ga_C$-invariant embedded submanifold of $V_C$ containing $\bar\psi_C^{-1}(S)$.
\item[(b)] $\rho_{BC}:\Ga_C\ra\Ga_B$ is a surjective group morphism, with kernel $\De_{BC}\subseteq\Ga_C$.

There should exist an isomorphism $\Ga_C\cong\Ga_B\t\De_{BC}$ identifying $\rho_{BC}$ with the projection $\Ga_B\t\De_{BC}\ra\Ga_B$.
\item[(c)] $\varpi_{BC}:\ti V_{BC}\ra V_B$ is a $\rho_{BC}$-equivariant \'etale map, with image $V_{BC}=\varpi_{BC}(\ti V_{BC})$ a $\Ga_B$-invariant open neighbourhood of $\bar\psi_B^{-1}(S)$ in $V_B$, such that $\varpi_{BC}:\ti V_{BC}\ra V_{BC}$ is a principal $\De_{BC}$-bundle.
\item[(d)] $\hat\vp_{BC}:E_B\ra E_C$ is an injective $\Ga_C$-equivariant linear map, where the $\Ga_C$-action on $E_B$ is induced from the $\Ga_B$-action by $\rho_{BC}$, so in particular $\De_{BC}$ acts trivially on~$E_B$.
\item[(e)] $\hat\vp_{BC}\ci s_B\ci\varpi_{BC}=s_C\vert_{\ti V_{BC}}:\ti V_{BC}\ra E_C$.
\item[(f)] $\psi_B\ci(\varpi_{BC})_*=\psi_C$ on $(s_C^{-1}(0)\cap\ti V_{BC})/\Ga_C$.
\item[(g)] For each $v\in\ti V_{BC}$ we have a commutative diagram
\e
\begin{gathered}
\xymatrix@C=23pt@R=13pt{ 0 \ar[r] & T_v\ti V_{BC} \ar[rr]_(0.6){\subset} \ar[d]^{\d(\varpi_{BC}^*(s_B))\vert_v} && T_vV_C \ar[r] \ar[d]^{\d s_C\vert_v} & N_{BC}\vert_v \ar[r] \ar@{.>}[d]^{\d_{\rm fibre}s_C\vert_v} & 0 \\
0 \ar[r] & E_B \ar[rr]^(0.65){\hat\vp_{BC}} && E_C \ar[r] & E_C/\hat\vp_{BC}(E_B) \ar[r] & 0 }
\end{gathered}
\label{kuAeq8}
\e
with exact rows, where $N_{BC}$ is the normal bundle of $\ti V_{BC}$ in $V_C$. We require the induced morphism $\d_{\rm fibre}s_C\vert_v$ in \eq{kuAeq8} to be an isomorphism.
\end{itemize}
\label{kuAdef10}
\end{dfn}

We relate MW coordinate changes to coordinate changes in~\S\ref{ku41}.

\begin{ex} Let $\Phi_{BC}=(\ti V_{BC},\rho_{BC},\varpi_{BC},\hat\vp_{BC}):(V_B,E_B,\Ga_B,s_B,\psi_B)\ra(V_C,E_C,\Ga_C,s_C,\psi_C)$ be an MW coordinate change over $S$, as in Definition \ref{kuAdef10}. Regard $(V_B,E_B,\Ga_B,s_B,\psi_B),(V_C,E_C,\Ga_C,s_C,\psi_C)$ as Kuranishi neighbourhoods in the sense of \S\ref{ku41}, as in Example \ref{kuAex1}. 

Set $P_{BC}=\ti V_{BC}\t\Ga_B$. Let $\Ga_B$ act on $P_{BC}$ by $\ga_B:(v,\ga)\mapsto (v,\ga_B\ga)$. Let $\Ga_C$ act on $P_{BC}$ by $\ga_C:(v,\ga)\mapsto (\ga_C\cdot v,\ga\rho_{BC}(\ga_C)^{-1})$. Define $\pi_{BC}:P_{BC}\ra V_B$ and $\phi_{BC}:P_{BC}\ra V_C$ by $\pi_{BC}:(v,\ga)\mapsto\ga\cdot\varpi_{BC}(v)$ and $\phi_{BC}:(v,\ga)\mapsto v$. Then $\pi_{BC}$ is $\Ga_B$-equivariant and $\Ga_C$-invariant, and $\phi_{BC}$ is $\Ga_B$-invariant and $\Ga_C$-equivariant.

Define $\hat\phi_{BC}:\pi_{BC}^*(V_B\t E_B)\ra\phi_{BC}^*(V_C\t E_C)$, as a morphism of trivial vector bundles with fibres $E_B,E_C$ on $P_{BC}=\ti V_{BC}\t\Ga_B$, by $\hat\phi_{BC}\vert_{\ti V_{BC}\t\{\ga\}}=\hat\vp_{BC}\ci(\ga^{-1}\cdot -)$ for each $\ga\in\Ga_B$. It is easy to see that $\ti\Phi_{BC}=(P_{BC},\pi_{BC},\phi_{BC},\hat\phi_{BC}):(V_B,E_B,\Ga_B,s_B,\psi_B)\ra (V_C,\ab E_C,\ab\Ga_C,\ab s_C,\ab\psi_C)$ is a 1-morphism over $S$, in the sense of \S\ref{ku41}. Combining Definition \ref{kuAdef10}(g) and Theorem \ref{ku4thm2} shows that $\ti\Phi_{BC}$ is a coordinate change over $S$, in the sense of~\S\ref{ku41}.
\label{kuAex4}
\end{ex}
 
\begin{dfn} Let $X$ be a compact, metrizable topological space. An {\it MW weak Kuranishi atlas\/ $\cK=\bigl(A,I,(V_B,E_B,\Ga_B,s_B,\psi_B)_{B\in I},\Phi_{BC,\;B,C\in I,\; B\subsetneq C}\bigr)$ on\/ $X$ of virtual dimension\/} $n\in\Z$, as in \cite[Def.~2.3.1]{McWe3}, consists of a finite indexing set $A$, a set $I$ of nonempty subsets of $A$, MW Kuranishi neighbourhoods $(V_B,E_B,\Ga_B,s_B,\psi_B)$ on $X$ for all $B\in I$ with $\dim V_B-\rank E_B=n$ and $X=\bigcup_{B\in I}\Im\psi_B$, and MW coordinate changes $\Phi_{BC}=(\ti V_{BC},\rho_{BC},\varpi_{BC},\hat\vp_{BC})$ from $(V_B,\ab E_B,\ab\Ga_B,\ab s_B,\ab\psi_B)$ to $(V_C,E_C,\Ga_C,s_C,\psi_C)$ on $S=\Im\psi_B\cap\Im\psi_C$ for all $B,C\in I$ with $B\subsetneq C$, satisfying the four conditions:
\begin{itemize}
\setlength{\itemsep}{0pt}
\setlength{\parsep}{0pt}
\item[(a)] We have $\{a\}\in I$ for all $a\in A$, and $I=\bigl\{\es\ne B\subseteq A: \bigcap_{a\in B}\Im\psi_{\{a\}}\ne\es\bigr\}$. Also
$\Im\psi_B=\bigcap_{a\in B}\Im\psi_{\{a\}}$ for all $B\in I$.
\item[(b)] We have $\Ga_B=\prod_{a\in B}\Ga_{\{a\}}$ for all $B\in I$. If $B,C\in I$ with $B\subsetneq C$ then $\rho_{BC}:\Ga_C\ra\Ga_B$ is the obvious projection $\prod_{a\in C}\Ga_{\{a\}}\ra \prod_{a\in B}\Ga_{\{a\}}$, with kernel $\De_{BC}\cong \prod_{a\in C\sm B}\Ga_{\{a\}}$.
\item[(c)] We have $E_B=\prod_{a\in B}E_{\{a\}}$ for all $B\in I$, with the obvious representation of $\Ga_B=\prod_{a\in B}\Ga_{\{a\}}$. If $B\subsetneq C$ in $I$ then $\hat\vp_{BC}:E_B=\prod_{a\in B}E_{\{a\}}\ra E_C=\prod_{a\in C}E_{\{a\}}$ is $\id_{E_{\{a\}}}$ for $a\in B$, and maps to zero in $E_{\{a\}}$ for $a\in C\sm B$.
\item[(d)] If $B,C,D\in I$ with $B\subsetneq C\subsetneq D$ then $\varpi_{BC}\ci\varpi_{CD}=\varpi_{BD}$ on $\ti V_{BCD}:=\ti V_{BD}\cap \varpi_{CD}^{-1}(\ti V_{BC})$. One can show using (b),(c) and Definition \ref{kuAdef10} that $\ti V_{BD}$ and $\varpi_{CD}^{-1}(\ti V_{BC})$ are both open subsets in $s_D^{-1}(\hat\vp_{BD}(E_B))$, which is a submanifold of $V_D$, so $\ti V_{BCD}$ is a submanifold of~$V_D$.
\end{itemize}

We call $\cK=\bigl(A,I,(V_B,E_B,\Ga_B,s_B,\psi_B)_{B\in I},\Phi_{BC,\;B\subsetneq C}\bigr)$ an {\it MW Kuranishi atlas on\/} $X$, as in \cite[Def.~2.3.1]{McWe3}, if it also satisfies:
\begin{itemize}
\setlength{\itemsep}{0pt}
\setlength{\parsep}{0pt}
\item[(e)] If $B,C,D\in I$ with $B\subsetneq C\subsetneq D$ then $\varpi_{CD}^{-1}(\ti V_{BC})\subseteq\ti V_{BD}$.
\end{itemize}

McDuff and Wehrheim also define {\it orientations\/} on MW weak Kuranishi atlases, in a very similar way to Definition~\ref{kuAdef5}.

Two MW weak Kuranishi atlases $\cK,\cK'$ on $X$ are called {\it directly commensurate\/} if they are both contained in a third MW weak Kuranishi atlas $\cK''$. They are called {\it commensurate\/} if there exist MW weak Kuranishi atlases $\cK=\cK_0,\cK_1,\ldots,\cK_m=\cK'$ with $\cK_{i-1},\cK_i$ directly commensurate for $i=1,\ldots,m$. This is an equivalence relation on MW weak Kuranishi atlases on~$X$.
\label{kuAdef11}
\end{dfn}

We show in Theorem \ref{ku4thm12} above that given an MW weak Kuranishi atlas on $X$, we can make $X$ into a Kuranishi space $\bX$ in the sense of~\S\ref{ku4}.

McDuff and Wehrheim argue that their concept of MW weak Kuranishi atlas is a more natural, or more basic, idea than a FOOO Kuranishi space, since in analytic moduli problems such as $J$-holomorphic curve moduli spaces, one has to construct an MW weak Kuranishi atlas (or something close to it) first, and then define the FOOO Kuranishi structure using this.

When one constructs an MW weak Kuranishi atlas $\cK$ on a moduli space of $J$-holomorphic curves $\oM$, the construction involves many arbitrary choices, but McDuff and Wehrheim expect different choices $\cK,\cK'$ to be commensurate. They prove this \cite[Rem.~6.2.2]{McWe2} for their definition of MW weak Kuranishi atlases on moduli spaces of nonsingular genus zero Gromov--Witten curves in~\cite[\S 4.3]{McWe2}.

We relate Definition \ref{kuAdef11}(d) to 2-morphisms in~\S\ref{ku41}.

\begin{ex} In the situation of Definition \ref{kuAdef11}(d), let $\ti\Phi_{BC},\ti\Phi_{BD},\ti\Phi_{CD}$ be the coordinate changes in the sense of \S\ref{ku41} associated to the MW coordinate changes $\Phi_{BC},\Phi_{BD},\Phi_{CD}$ in Example \ref{kuAex4}. The composition coordinate change $\ti\Phi_{CD}\ci\ti\Phi_{BC}=(P_{BCD},\pi_{BCD},\phi_{BCD},\hat\phi_{BCD})$ from Definition \ref{ku4def4} has
\e
\begin{split}
P_{BCD}&=\bigl[(\ti V_{BC}\t\Ga_B)\t_{V_C}(\ti V_{CD}\t\Ga_C)\bigr]\big/\Ga_C\\
&\cong (\ti V_{BC}\t_{V_C}\ti V_{CD})\t\Ga_B \cong \varpi_{CD}^{-1}(\ti V_{BC})\t\Ga_B.
\end{split}
\label{kuAeq9}
\e
Define $\dot P_{BCD}$ to be the open subset of $P_{BCD}$ identified with $\ti V_{BCD}\t\Ga_B$ by \eq{kuAeq9}, and $\la_{BCD}:\dot P_{BCD}\ra P_{BD}=\ti V_{BD}\t\Ga_B$ to be the map identified by \eq{kuAeq9} with the inclusion $\ti V_{BCD}\t\Ga_B\hookra \ti V_{BD}\t\Ga_B$, and $\hat\la_{BCD}=0$. Then as in Example \ref{kuAex3}(i), we can show that $(\dot P_{BCD},\la_{BCD},\hat\la_{BCD})$ satisfies Definition \ref{ku4def3}(a)--(c), so we have defined a 2-morphism $\La_{BCD}=[\dot P_{BCD},\la_{BCD},\hat\la_{BCD}]:\ti\Phi_{CD}\ci\ti\Phi_{BC}\Ra\ti\Phi_{BD}$ on $S_{BCD}=\Im\psi_B\cap\Im\psi_C\cap\Im\psi_D$, in the sense of~\S\ref{ku41}.
\label{kuAex5}
\end{ex}

McDuff and Wehrheim prove \cite[Th.~B]{McWe2}, \cite[Th.~A]{McWe3}:

\begin{thm} Let\/ $\cK=\bigl(A,I,(V_B,E_B,\Ga_B,s_B,\psi_B)_{B\in I},\Phi_{BC,\;B,C\in I,\; B\subsetneq C}\bigr)$ be an oriented MW weak Kuranishi atlas without boundary of dimension $n$ on a compact, metrizable topological space $X$. Then $\cK$ determines:
\begin{itemize}
\setlength{\itemsep}{0pt}
\setlength{\parsep}{0pt}
\item[{\bf(a)}] A \begin{bfseries}virtual moduli cycle\end{bfseries} $[X]_{\rm vmc}$ in the cobordism group $\Om_n^{\rm SO,\Q}$ of compact, oriented, $n$-dimensional `$\,\Q$-weighted manifolds' in the sense of\/~{\rm\cite[\S A]{McWe3}}.
\item[{\bf(b)}] A \begin{bfseries}virtual fundamental class\end{bfseries} $[X]_{\rm vfc}$ in $\check H_n(X;\Q),$ where $\check H_*(-;\Q)$ is \v Cech homology over\/~$\Q$.
\end{itemize}

Any two commensurate MW weak Kuranishi atlases $\cK,\cK'$ on $X$ yield the same virtual moduli cycle and virtual fundamental class.

If\/ $\cK$ has trivial isotropy (that is, $\Ga_B=\{1\}$ for all\/ $B\in I$) then we may instead take $[X]_{\rm vmc}\in\Om_n^{\rm SO},$ where $\Om_*^{\rm SO}$ is the usual oriented cobordism group, and\/ $[X]_{\rm vfc}\in H_n^{\rm St}(X;\Z),$ where $H_*^{\rm St}(-;\Z)$ is Steenrod homology over\/~$\Z$.
\label{kuAthm2}
\end{thm}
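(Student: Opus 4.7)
The plan is to follow the McDuff--Wehrheim strategy, which turns the combinatorial atlas $\cK$ into a topological virtual neighbourhood carrying a Fredholm-type section whose transverse perturbed zero locus represents the cycle. First, starting from the weak atlas $\cK$, I would construct an associated ``tame'' atlas $\cK^{\rm tame}$ by shrinking each $\ti V_{BC}\subseteq V_C$ to ensure the cocycle condition Definition \ref{kuAdef11}(e) holds and the index-condition hypotheses of \cite[\S 3]{McWe3} are met; the index set $I$ and the data $(V_B,E_B,\Ga_B,s_B,\psi_B)$ stay the same, only the transition domains shrink. Then form the virtual neighbourhood
\e
|\cK^{\rm tame}|=\bigl(\ts\coprod_{B\in I}V_B\bigr)\big/\!\sim,
\label{kuAproofeq1}
\e
where $v\in V_B$ is identified with $\varpi_{BC}(\ti v)\in V_B$ whenever $B\subsetneq C$, $\ti v\in\ti V_{BC}$, $\varpi_{BC}(\ti v)=v$, together with the natural orbit-space quotient $|\cK^{\rm tame}|/\boldsymbol\Ga$. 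Tameness and the properness inherited from compactness of $X$ show, as in \cite[\S 3.3]{McWe3}, that $|\cK^{\rm tame}|$ is Hausdorff, locally compact, and metrizable, and that the sections $s_B$ assemble into a well-defined ``bundle'' $|E|\to|\cK^{\rm tame}|$ with a section $|\bs s|$ whose zero set is canonically homeomorphic to~$X$.

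Second, I would construct the perturbation. Choose a precompact ``reduction'' $\{Z_B\subset V_B\}$ as in \cite[\S 5]{McWe3}, i.e.\ open sets still covering $X$ but with strong nestedness properties. Using the product form $E_B=\prod_{a\in B}E_{\{a\}}$ from Definition \ref{kuAdef11}(c), one inductively chooses $\Ga_B$-equivariant \emph{multisection} perturbations $\nu_B:V_B\to E_B$ with $\supp\nu_B\subset Z_B$, compatible in the sense that $\hat\vp_{BC}\circ\nu_B\circ\varpi_{BC}=\nu_C|_{\ti V_{BC}}$ on overlaps, and such that $|\bs s+\bs\nu|\pitchfork 0$ on each stratum. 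Transversality plus the Fredholm index $n$ implies that the zero locus $Z=|\bs s+\bs\nu|^{-1}(0)/\boldsymbol\Ga$ is an $n$-dimensional ``$\Q$-weighted manifold'' (a branched manifold with rational weights coming from $1/|\Ga_B|$), compactly supported inside $|\cK^{\rm tame}|$. The orientation of $\cK$ (as in Definition \ref{kuAdef5}) orients $Z$, yielding the class $[X]_{\rm vmc}=[Z]\in\Omega_n^{{\rm SO},\Q}$. For the virtual fundamental class, one uses that $|\bs s+\bs\nu|^{-1}(0)$ lies in an arbitrarily small neighbourhood of $X\subset|\cK^{\rm tame}|$; the collection of such perturbations gives an inverse system of cycles converging, in \v Cech homology, to a class $[X]_{\rm vfc}\in\check H_n(X;\Q)$. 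Independence of the choice of reduction and perturbation follows by the standard cobordism argument applied to a generic homotopy.

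Third, invariance under commensurability reduces, by definition, to the direct case $\cK\subset\cK''$. Here any perturbation data for $\cK$ extends to perturbation data for $\cK''$ (adding zero perturbation on new charts and using the strict compatibility of transition maps in the larger atlas), so the perturbed zero loci agree up to bordism in $|\cK''^{\rm tame}|$. This shows both $[X]_{\rm vmc}$ and $[X]_{\rm vfc}$ are invariants of the commensurability class. Finally, in the trivial isotropy case $\Ga_B=\{1\}$ the multisections reduce to ordinary sections and the weights $1/|\Ga_B|$ become $1$, so $Z$ is an honest oriented manifold and one lands in $\Omega_n^{\rm SO}$ and in Steenrod homology $H_n^{\rm St}(X;\Z)$ after passing to the inverse limit, which agrees with \v Cech homology on compact metrizable spaces but retains integral coefficients.

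The main obstacle is the construction of the \emph{coherent transverse multisection} $\{\nu_B\}$ satisfying the compatibility $\hat\vp_{BC}\circ\nu_B\circ\varpi_{BC}=\nu_C|_{\ti V_{BC}}$ together with transversality on every intersection stratum, while retaining support inside the reduction. Because of the nontrivial isotropy and the need to avoid the ``fake boundary'' of $|\cK^{\rm tame}|$, one cannot simply apply Sard--Smale: one must construct $\nu_B$ inductively over $B\in I$ ordered by inclusion, extending compatibly from smaller to larger sets, and controlling the growth of support to stay inside the reduction. This is precisely the technical heart of \cite[\S 5--6]{McWe3}, and is where the role of the tame shrinking and of the nested reduction becomes essential; everything else is either formal bookkeeping or a standard cobordism argument.
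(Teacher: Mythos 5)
You should first be aware that the paper does not prove this statement at all: Theorem \ref{kuAthm2} is quoted verbatim from McDuff and Wehrheim \cite[Th.~B]{McWe2}, \cite[Th.~A]{McWe3}, and the only commentary the paper offers is that their proof passes through a `reduction' of a `tame, metrizable' Kuranishi atlas. Your outline --- taming, forming the virtual neighbourhood $|\cK^{\rm tame}|$, choosing a nested reduction, inductively building coherent transverse (multi)section perturbations over $I$ ordered by inclusion, and reading off the weighted branched zero locus --- is a faithful summary of the strategy of that cited proof, and you correctly identify the inductive construction of the coherent perturbation as the technical heart.

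There is, however, one step in your sketch that would fail as written: the commensurability invariance. If $\cK\subset\cK''$ and $\{\nu_B\}_{B\in I}$ is an admissible perturbation for $\cK$, you cannot extend it to $\cK''$ by ``adding zero perturbation on new charts.'' For a chart $D\in I''\sm I$ with $B\subsetneq D$, $B\in I$, the compatibility condition forces $\nu_D\vert_{\ti V_{BD}}=\hat\vp_{BD}\ci\nu_B\ci\varpi_{BD}$, which is generally nonzero; and even granting that, transversality of $s_D+\nu_D$ is only inherited on the submanifold $\ti V_{BD}\subseteq V_D$ (via the isomorphism $\d_{\rm fibre}s_D$ of Definition \ref{kuAdef10}(g)), not on the rest of a reduction of $V_D$, so a genuine further perturbation is needed there. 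The zero sets of the two perturbations therefore do not literally ``agree'' in $|\cK''^{\rm tame}|$. The repair, which is what McDuff and Wehrheim actually do, is to run the entire taming--reduction--perturbation construction for a Kuranishi \emph{cobordism} atlas over $X\t[0,1]$ interpolating between the two choices, and conclude that the two perturbed zero loci are cobordant as weighted branched manifolds; uniqueness of $[X]_{\rm vmc}$ and $[X]_{\rm vfc}$ then follows. This is not formal bookkeeping --- it requires a relative version of the inductive transversality argument with prescribed boundary data --- and your proof is incomplete without it.
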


In part (a), the author expects that $\Om_n^{\rm SO,\Q}\cong\Om_n^{\rm SO}\ot_\Z\Q$, so that $\Om_*^{\rm SO,\Q}\cong\Q[x_4,x_8,\ldots]$ by results of Thom.

Theorem \ref{kuAthm2} is McDuff and Wehrheim's solution to the issues discussed in Remark \ref{kuArem3}. As an intermediate step in the proof of Theorem \ref{kuAthm2}, they pass to a Kuranishi atlas with better properties (a `reduction' of a `tame, metrizable' Kuranishi atlas), which is similar to a FOOO good coordinate system.

\subsection{\texorpdfstring{Dingyu Yang's Kuranishi structures, and polyfolds}{Dingyu Yang\textquoteright s Kuranishi structures, and polyfolds}}
\label{kuA4}

As part of a project to define a truncation functor from polyfolds to Kuranishi spaces, Dingyu Yang \cite{Yang1,Yang2,Yang3} writes down his own theory of Kuranishi spaces:

\begin{dfn} Let $X$ be a compact, metrizable topological space. A {\it DY Kuranishi structure\/} $\cK$ on $X$ is a FOOO Kuranishi structure in the sense of Definition \ref{kuAdef3}, satisfying the additional conditions \cite[Def.~1.11]{Yang2}:
\begin{itemize}
\setlength{\itemsep}{0pt}
\setlength{\parsep}{0pt}
\item[(a)] the {\it maximality condition}, which is essentially Definition \ref{kuAdef7}(e),(f), but replacing $i\pr j$ by~$q\in\Im\psi_p$. 
\item[(b)] the {\it topological matching condition}, which is related to Definition \ref{kuAdef7}(d), but replacing $i\pr j$ by~$q\in\Im\psi_p$.
\end{itemize}
There are a few other small differences --- for instance, Yang does not require vector bundles $E_p$ in $(V_p,E_p,\Ga_p,s_p,\psi_p)$ to be trivial.
\label{kuAdef12}
\end{dfn}

We show in Theorem \ref{ku4thm13} above that given a DY Kuranishi structure on $X$, we can make $X$ into a Kuranishi space $\bX$ in the sense of~\S\ref{ku4}.

Yang also defines his own notion of DY good coordinate system \cite[Def.~2.4]{Yang2}, which is almost the same as a FOOO good coordinate system in \S\ref{kuA2}.

One reason for these modifications is that it simplifies the passage from Kuranishi spaces to good coordinate systems, as in Remark \ref{kuArem3}(i): Yang shows \cite[Th.~2.10]{Yang2} that given any DY Kuranishi space $\bX$, one can construct a DY good coordinate system $\bigl((I,\pr),(V_p',E_p',\Ga_p',s_p',\psi_p')_{p\in I},\Phi'_{\text{$qp$, $q\pr p$ in $I$}}\bigr)$ in which $I\subseteq X$ is a finite subset, $V_p'\subseteq V_p$ is a $\Ga_p$-invariant open subset, $\Ga_p'=\Ga_p$, and $E_p',s_p',\psi_p'$ are the restrictions of $E_p,s_p,\psi_p$ to $V_p'$ for each $p\in I$, and the coordinate changes $\Phi_{qp}'$ for $q\pr p$ are obtained either by restricting $\Phi_{qp}$ to an open $V_{qp}'\subseteq V_{qp}$ if $q\in\Im\psi_p$, or in a more complicated way otherwise.

The next definition comes from Yang \cite[\S 1.6]{Yang1}, \cite[\S 5]{Yang2}, \cite[\S 2.4]{Yang3}.

\begin{dfn} Let $\cK,\cK'$ be DY Kuranishi structures on a compact topological space $X$. An {\it embedding\/} $\ep:\cK\hookra\cK'$ is a choice of FOOO coordinate change $\ep_p:(V_p,E_p,\Ga_p,s_p,\psi_p)\ra(V_p',E_p',\Ga_p',s_p',\psi_p')$ with domain $V_p$ for all $p\in X$, commuting with the FOOO coordinate changes $\Phi_{qp},\Phi'_{qp}$ in $\cK,\cK'$ up to elements of $G_p'$. An embedding is a {\it chart refinement\/} if the $\ep_p$ come from inclusions of $G_p$-invariant open sets~$V_p\hookra V_p'$.

DY Kuranishi structures $\cK,\cK'$ on $X$ are called {\it R-equivalent\/} (or {\it equivalent\/}) if there is a diagram of DY Kuranishi structures on $X$
\begin{equation*}
\xymatrix@C=30pt@R=15pt{\cK  & \cK_1 \ar[l]_\sim \ar@{=>}[r] & \cK_2 & \cK_3 \ar@{=>}[l] \ar[r]^\sim & \cK', }
\end{equation*}
where arrows $\Longra$ are embeddings, and ${\buildrel\sim\over\longra}$ are chart refinements. Using facts about existence of good coordinate systems, Yang proves \cite[Th.~1.6.17]{Yang1}, \cite[\S 11.2]{Yang2} that R-equivalence is an equivalence relation on DY Kuranishi structures.

\label{kuAdef13}
\end{dfn}

Yang emphasizes the idea, which he calls {\it choice independence}, that when one constructs a (DY) Kuranishi structure $\cK$ on a moduli space $\oM$, it should be independent of choices up to R-equivalence. 

One major goal of Yang's work is to relate the Kuranishi space theory of Fukaya, Oh, Ohta and Ono \cite{Fuka,FOOO1,FOOO2,FOOO3,FOOO4,FOOO5,FOOO6,FOOO7,FOOO8,FuOn} to the polyfold theory of Hofer, Wysocki and Zehnder \cite{Hofe,HWZ1,HWZ2,HWZ3,HWZ4,HWZ5,HWZ6}. Here is a very brief introduction to this:
\begin{itemize}
\setlength{\itemsep}{0pt}
\setlength{\parsep}{0pt}
\item An {\it sc-Banach space\/} $\cV$ is a sequence $\cV=(\cV_0\supset\cV_1\supset\cV_2\supset\cdots),$ where the $\cV_i$ are Banach spaces, the inclusions $\cV_{i+1}\hookra\cV_i$ are compact, bounded linear maps, and $\cV_\iy=\bigcap_{i\ge 0}\cV_i$ is dense in every~$\cV_i$.

An example to bear in mind is if $M$ is a compact manifold, $E\ra M$ a smooth vector bundle, $\al\in(0,1)$, and $\cV_k=C^{k,\al}(E)$ for $k=0,1,\ldots.$ 
\item If $\cV=(\cV_0\supset\cV_1\supset\cdots)$ and $\cW=(\cW_0\supset\cW_1\supset\cdots)$ are sc-Banach spaces and $\cU\subseteq\cV_0$ is open, a map $f:\cU\ra\cW_0$ is called {\it sc-smooth\/} roughly speaking if $f(\cU\cap\cV_i)\subseteq\cW_i$ for all $i=0,1,\ldots$ and $f\vert_{\cU\cap\cV_{i+k}}:\cU\cap\cV_{i+k}\ra\cW_i$ is a $C^k$-map of Banach manifolds for all~$i,k\ge 0$.

(Actually, sc-smoothness is defined differently, but has the property that if $f$ is sc-smooth then $f\vert_{\cU\cap\cV_{i+k}}:\cU\cap\cV_{i+k}\ra\cW_i$ is $C^k$ for all $i,k$, and if $f\vert_{\cU\cap\cV_{i+k}}:\cU\cap\cV_{i+k}\ra\cW_i$ is $C^{k+1}$ for all $i,k$ then $f$ is sc-smooth.)
\item Let $\cV=(\cV_0\supset\cV_1\supset\cdots)$ be an sc-Banach space and $\cU\subseteq\cV_0$ be open. An {\it sc\/$^\iy$-retraction\/} is an sc-smooth map $r:\cU\ra\cU$ with $r\ci r=r$. Set $\O=\Im r\subset\cV$. We call $(\O,\cV)$ a {\it local sc-model}.

If $\cV$ is finite-dimensional then $\O$ is just a smooth manifold. But in infinite dimensions, new phenomena occur, and the tangent spaces $T_x\O$ can vary discontinuously with $x\in\O$. This is important for `gluing'.
\item An {\it M-polyfold chart\/} $(\O,\cV,\psi)$ on a topological space $Z$ is a local sc-model $(\O,\cV)$ and a homeomorphism $\psi:\O\ra\Im\psi$ with an open set~$\Im\psi\subset Z$. 
\item M-polyfold charts $(\O,\cV,\psi),(\ti\O,\ti\cV,\ti\psi)$ on $Z$ are {\it compatible\/} if $\ti\psi^{-1}\ci\psi\ci r:\cU\ra\ti\cV$ and $\psi^{-1}\ci\ti\psi\ci\ti r:\ti\cU\ra\cV$ are sc-smooth, where $\cU\subset\cV_0$, $\ti\cU\subset\ti\cV_0$ are open and $r:\cU\ra\cU$, $\ti r:\ti\cU\ra\ti\cU$ are sc-smooth with $r\ci r=r$, $\ti r\ci\ti r=\ti r$ and $\Im r=\psi^{-1}(\Im\ti\psi)\subseteq\O$, $\Im\ti r=\ti\psi^{-1}(\Im\psi)\subseteq\ti\O$.
\item An {\it M-polyfold\/} is roughly a metrizable topological space $Z$ with a maximal atlas of pairwise compatible M-polyfold charts.
\item {\it Polyfolds\/} are the orbifold version of M-polyfolds, proper \'etale groupoids in M-polyfolds.
\item A {\it polyfold Fredholm structure\/} $\cP$ on a metrizable topological space $X$ writes $X$ as the zeroes of an sc-Fredholm section ${\mathfrak s}:\fV\ra\fE$ of a strong polyfold vector bundle $\fE\ra\fV$ over a polyfold $\fV$. 
\end{itemize}
This is all rather complicated. The motivation for local sc-models $(\O,\cV)$ is that they can be used to describe functional-analytic problems involving `gluing', `bubbling', and `neck-stretching', including moduli spaces of $J$-holomorphic curves with singularities of various kinds.

The polyfold programme  \cite{Hofe,HWZ1,HWZ2,HWZ3,HWZ4,HWZ5,HWZ6} aims to show that moduli spaces of $J$-holomorphic curves in symplectic geometry may be given a polyfold Fredholm structure, and that compact spaces with oriented polyfold Fredholm structures have virtual chains and virtual classes. One can then use these virtual chains/classes to define big theories in symplectic geometry, such as Gromov--Witten invariants or Symplectic Field Theory. Constructing a polyfold Fredholm structure on a moduli space of $J$-holomorphic curves involves far fewer arbitrary choices than defining a Kuranishi structure.

Yang proves \cite[Th.~3.1.7]{Yang1} (see also \cite[\S 2.6]{Yang3}):

\begin{thm} Suppose we are given a `polyfold Fredholm structure'\/ $\cP$ on a compact metrizable topological space $X,$ that is, we write $X$ as the zeroes of an sc-Fredholm section ${\mathfrak s}:\fV\ra\fE$ of a strong polyfold vector bundle $\fE\ra\fV$ over a polyfold\/ $\fV,$ where $\mathfrak s$ has constant Fredholm index $n\in\Z$. Then we can construct a DY Kuranishi structure $\cK$ on $X,$ of virtual dimension $n,$ which is independent of choices up to R-equivalence.
\label{kuAthm3}
\end{thm}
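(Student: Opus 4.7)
The plan is to translate the local sc-Fredholm analysis of the polyfold $\fV$ into the finite-dimensional data of a DY Kuranishi structure point-by-point, verify the cocycle condition using uniqueness of local reductions, and then deduce R-equivalence from the fact that any two choices admit a common refinement.

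For the first main step, fix $p \in X$ with $\mathfrak{s}(p)=0$. Using that $\fV$ is a proper \'etale groupoid in M-polyfolds, I would pass to an M-polyfold chart $(\O_p, \cV_p, \psi_p^{\rm pf})$ around $p$ equipped with the action of the isotropy group $\Ga_p := \mathrm{Iso}_\fV(p)$, and correspondingly trivialize $\fE$ locally as a strong bundle $\cE_p \to \O_p$ with $\Ga_p$-equivariant sc-Fredholm section $\mathfrak{s}_p$. The standard sc-Fredholm stabilization procedure (adding a finite-dimensional, $\Ga_p$-equivariant subspace $E_p \subset (\cE_p)_\infty$ whose image together with $D\mathfrak{s}_p\vert_p$ spans the cokernel) cuts out a finite-dimensional manifold $V_p := \{v \in \O_p : \mathfrak{s}_p(v) \in E_p\}$ carrying a smooth $\Ga_p$-action and a $\Ga_p$-equivariant smooth section $s_p : V_p \to E_p$, with $s_p^{-1}(0) = \mathfrak{s}_p^{-1}(0)$ near $p$. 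Together with the induced homeomorphism $\psi_p : s_p^{-1}(0)/\Ga_p \to \Im\psi_p \subseteq X$, this yields a FOOO Kuranishi neighbourhood of $p$ with $\dim V_p - \rank E_p = n$. The key technical input here is the sc-Fredholm implicit function theorem of Hofer--Wysocki--Zehnder, which guarantees that $V_p$ is a genuine finite-dimensional smooth manifold (with boundary/corners if $\fV$ has boundary/corners).

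For the coordinate changes, suppose $q \in \Im\psi_p$. Shrinking $\O_q$ if needed, the polyfold structure provides a germ of \'etale morphisms relating the charts at $q$ and $p$, compatible with the isotropy groups, giving an injective group morphism $h_{qp} : \Ga_q \hookra \Ga_p$. I would then enlarge the obstruction space at $q$ to contain (the pullback of) $E_p$, which by standard `minimality up to stabilization' arguments in sc-calculus can be arranged canonically modulo the $\Ga_p$-action. This produces an open $V_{qp} \subseteq V_q$, an $h_{qp}$-equivariant embedding $\vp_{qp} : V_{qp} \hookra V_p$, and a compatible bundle embedding $\hat\vp_{qp}$ satisfying Definition~\ref{kuAdef2}(a)--(g); condition (h), the exactness/isomorphism at the fibre derivative, falls out of the sc-Fredholm linearization, since both stabilizations resolve the same finite-dimensional kernel/cokernel pair of $D\mathfrak{s}$. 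The cocycle elements $\ga_{rqp}^\al \in \Ga_p$ of Definition~\ref{kuAdef3}(b) arise from the fact that two different `routes' $\Phi_{qp} \ci \Phi_{rq}$ and $\Phi_{rp}$ both realize the universal finite-dimensional reduction of the same polyfold germ; they must therefore agree on each connected component up to an element of the isotropy $\Ga_p$ acting on the target, and Example~\ref{kuAex3}(iii) shows this element is uniquely determined component by component. Yang's additional maximality and topological matching axioms (Definition~\ref{kuAdef12}(a),(b)) are then verified directly from the fact that, by construction, the footprints $\Im\psi_p$ are genuine open sets in $X$ with the subspace topology inherited from $\fV/\sim$, and that we have enlarged domains to be maximal among those admitting compatible embeddings.

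For the R-equivalence statement, consider two DY Kuranishi structures $\cK, \cK'$ arising from different choices of polyfold charts, stabilizing subspaces, and group-equivariant trivializations. The key observation is that one can always form a third DY Kuranishi structure $\cK''$ whose obstruction spaces are direct sums $E_p \oplus E_p'$ and whose domains are the transverse fibre products of $V_p$ and $V_p'$ over the polyfold chart at $p$; this $\cK''$ admits canonical embeddings from both $\cK$ and $\cK'$ in the sense of Definition~\ref{kuAdef13}. After intersecting with chart refinements to arrange open subsets properly, one obtains exactly the zig-zag $\cK \xleftarrow{\sim} \cK_1 \Longra \cK'' \Longleftarrow \cK_3 \xrightarrow{\sim} \cK'$, giving R-equivalence.

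The hard part will be the second step: constructing the coordinate changes $\Phi_{qp}$ cleanly and canonically enough that the cocycle condition \eqref{kuAeq3} holds on the nose (up to the group element $\ga_{rqp}^\al$) rather than merely up to some larger identification. This requires careful bookkeeping of how the finite-dimensional reductions interact with the orbifold structure of $\fV$ at points of nontrivial isotropy, and how the sc-smooth gluing profiles of the polyfold affect the smoothness of $\vp_{qp}$ near boundary/corner strata; in particular one must ensure that the embeddings $\vp_{qp}$ are simple in the sense of Remark~\ref{kuArem1}(b), which is a nontrivial consequence of the tameness of the sc-Fredholm structure at corners. Once these analytic points are settled, the rest of the proof is a largely formal translation between the two languages.
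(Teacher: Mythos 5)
This theorem is not proved in the paper at all: it appears in Appendix \ref{kuA4} as a quotation of Dingyu Yang's result (\cite[Th.~3.1.7]{Yang1}, see also \cite[\S 2.6]{Yang3}), introduced by the sentence ``Yang proves \ldots''. So there is no in-paper proof to compare your proposal against; the paper simply imports the statement and later combines it with Theorem \ref{ku4thm13} to obtain Theorem \ref{ku4thm14}. Any honest assessment therefore has to measure your sketch against Yang's external proof rather than against anything in this text.

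On its own terms, your outline captures the standard strategy of the polyfold--Kuranishi correspondence: local finite-dimensional reduction of the sc-Fredholm section by an equivariant stabilizing subspace $E_p$, coordinate changes from germs of \'etale morphisms between charts, the cocycle elements $\ga_{rqp}^\al$ coming from uniqueness of the reduction up to isotropy as in Example \ref{kuAex3}(iii), and R-equivalence via a common stabilization $\cK''$ built from $E_p+E_p'$. But you should be aware that essentially all of the content of the theorem lives in the steps you defer: (i) that $V_p=\{v\in\O_p:\mathfrak s_p(v)\in E_p\}$ is a finite-dimensional manifold requires $E_p$ to be in ``good position'' in the sense of sc-calculus, not merely to span the cokernel at the single point $p$, and the resulting $V_p$ sits inside the dense subset $\cV_\iy$ of smooth points of a retract, not an open set of a Banach space; (ii) the construction of the embeddings $\vp_{qp}$ and the verification that they are simple at corners and satisfy the cocycle identity \eq{kuAeq3} exactly (rather than up to some weaker identification) is where Yang's maximality and topological matching axioms (Definition \ref{kuAdef12}(a),(b)) earn their keep, and cannot be dismissed as ``canonical modulo the $\Ga_p$-action''; and (iii) the R-equivalence zig-zag requires showing that the intermediate structure $\cK''$ genuinely satisfies the DY axioms, which again turns on the same maximality conditions. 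As a roadmap your proposal is consistent with what is known; as a proof it stops exactly where the difficulty begins, which you candidly acknowledge in your final paragraph.
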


In the survey \cite{Yang3}, Yang announces further results for which the proofs were not available at the time of writing. These include:
\begin{itemize}
\setlength{\itemsep}{0pt}
\setlength{\parsep}{0pt}
\item[(a)] Yang defines `R-equivalence' of polyfold Fredholm structures on $X$ \cite[Def.~2.14]{Yang3}, and claims \cite[\S 2.8]{Yang3} that Theorem \ref{kuAthm3} extends to a 1-1 correspondence between R-equivalence classes of polyfold Fredholm structures on $X$, and R-equivalence classes of DY Kuranishi structures $\cK$ on $X$.
\item[(b)] In \cite[\S 2.4]{Yang3}, Yang claims that R-equivalence extends as an equivalence relation to FOOO Kuranishi structures, and every R-equivalence class of FOOO Kuranishi structures contains a DY Kuranishi structure. Hence the 1-1 correspondence in (a) also extends to a 1-1 correspondence with R-equivalence classes of FOOO Kuranishi structures.
\item[(c)] Yang claims that virtual chains or virtual classes for polyfolds and for FOOO/DY Kuranishi spaces agree under (a),(b).
\item[(d)] Yang says \cite[p.~26, p.~46]{Yang3} that in future work he will make spaces with DY Kuranishi structures into a category~$\Kur_{\rm DY}$.
\end{itemize}

These results would enable a clean translation between the polyfold and Kuranishi space approaches to symplectic geometry. It seems likely that in (d) there will be an equivalence of categories $\Kur_{\rm DY}\simeq\Ho(\Kur)$, for $\Kur$ as in~\S\ref{ku4}.

\section{Basics of 2-categories}
\label{kuB}

Finally we discuss 2-categories, both strict and weak. Some references are Borceux \cite[\S 7]{Borc}, Kelly and Street \cite{KeSt}, and Behrend et al.~\cite[App.~B]{BEFF}.

\subsection{Strict and weak 2-categories}
\label{kuB1}

\begin{dfn} A {\it strict\/} 2-{\it category\/} $\bs\cC$ consists of
a class of {\it objects\/} $\Obj(\bs\cC)$, for all $X,Y\in\Obj(\bs\cC)$ an essentially small category $\Hom(X,Y)$, for all $X,Y,Z$ in $\Obj(\bs\cC)$ a functor $\mu_{X,Y,Z}:\Hom(X,Y)\t\Hom(Y,Z)\ra\Hom(X,Z)$ called {\it composition}, and for all $X$ in $\Obj(\bs\cC)$ an object $\id_X$ in $\Hom(X,X)$ called the {\it identity $1$-morphism}. These must satisfy
the {\it associativity property}, that
\e
\mu_{W,Y,Z}\ci(\mu_{W,X,Y}\t\id_{\Hom(Y,Z)})
=\mu_{W,X,Z}\ab\ci\ab(\id_{\Hom(W,X)}\ab\t\mu_{X,Y,Z})
\label{kuBeq1}
\e
as functors $\Hom(W,X)\t\Hom(X,Y)\t\Hom(Y,Z)\ra\Hom(W,X)$, and the
{\it identity property}, that
\e
\mu_{X,X,Y}(\id_X,-)=\mu_{X,Y,Y}(-,\id_Y)=\id_{\Hom(X,Y)}
\label{kuBeq2}
\e
as functors $\Hom(X,Y)\ra\Hom(X,Y)$.

Objects $f$ of $\Hom(X,Y)$ are called 1-{\it morphisms}, written
$f:X\ra Y$. For 1-morphisms $f,g:X\ra Y$, morphisms $\eta\in
\Hom_{\Hom(X,Y)}(f,g)$ are called 2-{\it morphisms}, written
$\eta:f\Ra g$. Thus, a 2-category has objects $X$, and two kinds of
morphisms: 1-morphisms $f:X\ra Y$ between objects, and 2-morphisms
$\eta:f\Ra g$ between 1-morphisms.

A {\it weak\/ $2$-category}, or {\it bicategory}, is like a strict 2-category, except that the equations of functors \eq{kuBeq1}, \eq{kuBeq2} are required to hold only up to specified natural isomorphisms. That is, a weak 2-category $\bs\cC$ consists of data $\Obj(\bs\cC),\Hom(X,Y),\mu_{X,Y,Z},\id_X$ as above, but in place of \eq{kuBeq1}, a natural isomorphism of functors 
\e
\al:\mu_{W,Y,Z}\!\ci\!(\mu_{W,X,Y}\!\t\!\id_{\Hom(Y,Z)})
\Longra\mu_{W,X,Z}\!\ci\!(\id_{\Hom(W,X)}\!\t\!\mu_{X,Y,Z}),
\label{kuBeq3}
\e
and in place of \eq{kuBeq2}, natural isomorphisms
\e
\be:\mu_{X,X,Y}(\id_X,-)\!\Longra\! \id_{\Hom(X,Y)},\;\>
\ga:\mu_{X,Y,Y}(-,\id_Y)\!\Longra\! \id_{\Hom(X,Y)}.
\label{kuBeq4}
\e
These $\al,\be,\ga$ must satisfy identities which we give below in \eq{kuBeq8} and~\eq{kuBeq11}.

A strict 2-category $\bs\cC$ can be regarded as an example of a weak 2-category, in which the natural isomorphisms $\al,\be,\ga$ in \eq{kuBeq3}--\eq{kuBeq4} are the identities.
\label{kuBdef1}
\end{dfn}

We now unpack Definition \ref{kuBdef1}, making it more explicit.

There are three kinds of composition in a 2-category, satisfying
various associativity relations. If $f:X\ra Y$ and $g:Y\ra Z$ are
1-morphisms then $\mu_{X,Y,Z}(f,g)$ is the {\it composition of\/ $1$-morphisms}, written $g\ci f:X\ra Z$. If $f,g,h:X\ra Y$ are 1-morphisms and $\eta:f\Ra g$, $\ze:g\Ra h$ are 2-morphisms then composition of $\eta,\ze$ in $\Hom(X,Y)$ gives the {\it vertical composition of\/ $2$-morphisms} of $\eta,\ze$, written $\ze\od\eta:f\Ra h$, as a diagram\vskip -15pt\begin{equation*}
\xymatrix@C=25pt{ X \rruppertwocell^f{\eta} \rrlowertwocell_h{\ze}
\ar[rr]_(0.35)g && Y & \ar@{~>}[r] && X
\rrtwocell^f_h{{}\,\,\,\,\ze\od\eta\!\!\!\!\!} && Y.}
\end{equation*}\vskip -10pt
\noindent Vertical composition is associative.

If $f,\dot f:X\ra Y$ and $g,\dot g:Y\ra Z$ are 1-morphisms and
$\eta:f\Ra\dot f$, $\ze:g\Ra\dot g$ are 2-morphisms then
$\mu_{X,Y,Z}(\eta,\ze)$ is the {\it horizontal composition of\/
$2$-morphisms}, written $\ze*\eta:g\ci f\Ra\dot g\ci\dot f$, as a
diagram\vskip -15pt\begin{equation*}
\xymatrix@C=20pt{ X \rrtwocell^f_{\dot f}{\eta} && Y
\rrtwocell^g_{\dot g}{\ze} && Z & \ar@{~>}[r] && X \rrtwocell^{g\ci
f}_{\dot g\ci\dot f}{{}\,\,\,\ze*\eta\!\!\!\!\!} && Z. }
\end{equation*}\vskip -5pt
\noindent As $\mu_{X,Y,Z}$ is a functor, these satisfy {\it compatibility of vertical and horizontal composition\/}: given a diagram of 1- and 2-morphisms\vskip -15pt\begin{equation*}
\xymatrix@C=20pt{ X \rruppertwocell^f{\eta} \rrlowertwocell_{\ddot f}{\dot\eta} \ar[rr]^(0.3){\dot f} && Y \rruppertwocell^g{\ze} \rrlowertwocell_{\ddot g}{\dot\ze} \ar[rr]^(0.3){\dot g} && Z, }
\end{equation*}\vskip -10pt
\noindent we have
\e
(\dot\ze\od\ze)*(\dot\eta\od\eta)=(\dot\ze*\dot\eta)\od(\ze*\eta):g\ci f\Longra \ddot g\ci\ddot f.
\label{kuBeq5}
\e
There are also two kinds of identity: {\it identity\/
$1$-morphisms\/} $\id_X:X\ra X$ and {\it identity\/
$2$-morphisms\/}~$\id_f:f\Ra f$.

In a strict 2-category $\bs\cC$, composition of 1-morphisms is strictly associative, $(g\ci f)\ci e=g\ci(f\ci e)$, and horizontal composition of 2-morphisms is strictly associative, $(\ze*\eta)*\ep=\ze*(\eta*\ep)$. In a weak 2-category $\bs\cC$, composition of 1-morphisms is associative up to specified 2-isomorphisms. That is, if $e:W\ra X$, $f:X\ra Y$, $g:Y\ra Z$ are 1-morphisms in $\bs\cC$ then the natural isomorphism $\al$ in \eq{kuBeq3} gives a 2-isomorphism
\e
\al_{g,f,e}:(g\ci f)\ci e\Longra g\ci(f\ci e).
\label{kuBeq6}
\e
As $\al$ is a natural isomorphism, given 1-morphisms $e,\dot e:W\ra X$, $f,\dot f:X\ra Y$, $g,\dot g:Y\ra Z$ and 2-morphisms $\ep:e\Ra\dot e$, $\eta:f\Ra\dot f$, $\ze:g\Ra\dot g$ in $\bs\cC$, the following diagram of 2-morphisms must commute:
\e
\begin{gathered}
\xymatrix@C=150pt@R=14pt{ *+[r]{(g\ci f)\ci e} \ar@{=>}[r]_{\al_{g,f,e}} \ar@{=>}[d]^{(\ze*\eta)*\ep} & *+[l]{g\ci (f\ci e)} \ar@{=>}[d]_{\ze*(\eta*\ep)} \\
*+[r]{(\dot g\ci\dot f)\ci\dot e} \ar@{=>}[r]^{\al_{\dot g,\dot f,\dot e}} & *+[l]{\dot g\ci(\dot f\ci\dot e).\!\!} }
\end{gathered}
\label{kuBeq7}
\e

The $\al_{g,f,e}$ must satisfy the {\it associativity coherence axiom\/}: if $d:V\ra W$ is another 1-morphism, then the following diagram of 2-morphisms must commute:
\e
\begin{gathered}
\xymatrix@C=70pt@R=14pt{ *+[r]{((g\ci f)\ci e)\ci d} \ar@{=>}[r]_(0.6){\al_{g,f,e}*\id_d} \ar@{=>}[d]^{\al_{g\ci f,e,d}} & (g\ci (f\ci e)) \ci d \ar@{=>}[r]_(0.4){\al_{g,f\ci e,d}} & *+[l]{g\ci ((f\ci e) \ci d)} \ar@{=>}[d]_{\id_g*\al_{f,e,d}} \\
*+[r]{(g\ci f)\ci (e \ci d)} \ar@{=>}[rr]^{\al_{g,f,d\ci e}} && *+[l]{g\ci (f\ci (e \ci d)).\!\!}}
\end{gathered}
\label{kuBeq8}
\e

In a strict 2-category $\bs\cC$, given a 1-morphism $f:X\ra Y$, the identity 1-morphisms $\id_X,\id_Y$ satisfy $f\ci\id_X=\id_Y\ci f=f$. In a weak 2-category $\bs\cC$, the natural isomorphisms $\be,\ga$ in \eq{kuBeq4} give 2-isomorphisms
\e
\be_f:f\ci\id_X\Longra f,\qquad \ga_f:\id_Y\ci f\Longra f.
\label{kuBeq9}
\e
As $\be,\ga$ are natural isomorphisms, if $\eta:f\Ra\dot f$ is a 2-morphism we must have
\e
\begin{split}
\eta\od\be_f&=\be_{\dot f}\od(\eta*\id_{\id_X}):f\ci\id_X\Ra\dot f,\\
\eta\od\ga_f&=\ga_{\dot f}\od(\id_{\id_Y}*\eta):\id_Y\ci f\Ra\dot f.
\end{split}
\label{kuBeq10}
\e

The $\be_f,\ga_f$ must satisfy the {\it identity coherence axiom\/}: if $g:Y\ra Z$ is another 1-morphism, then the following diagram of 2-morphisms must commute:
\e
\begin{gathered}
\xymatrix@C=120pt@R=-1pt{ *+[r]{(g\ci\id_Y)\ci f} \ar@{=>}[dr]^{\be_g*\id_f} \ar@{=>}[dd]^{\al_{g,\id_Y,f}} \\
& g\ci f. \\
*+[r]{g\ci(\id_Y\ci f)} \ar@{=>}[ur]_{\id_g*\al_f} }
\end{gathered}
\label{kuBeq11}
\e

A basic example of a strict 2-category is the {\it $2$-category of categories\/} $\mathfrak{Cat}$, with objects small categories $\cC$, 1-morphisms
functors $F:\cC\ra\cD$, and 2-morphisms natural transformations
$\eta:F\Ra G$ for functors $F,G:\cC\ra\cD$. Orbifolds naturally form
a 2-category (strict or weak, depending on the definition), as in \S\ref{ku45}, and so do stacks in algebraic geometry.

In a 2-category $\bs\cC$, there are three notions of when objects $X,Y$
in $\bs\cC$ are `the same': {\it equality\/} $X=Y$, and 1-{\it
isomorphism}, that is we have 1-morphisms $f:X\ra Y$, $g:Y\ra X$
with $g\ci f=\id_X$ and $f\ci g=\id_Y$, and {\it equivalence}, that
is we have 1-morphisms $f:X\ra Y$, $g:Y\ra X$ and 2-isomorphisms
$\eta:g\ci f\Ra\id_X$ and $\ze:f\ci g\Ra\id_Y$. Usually equivalence
is the correct notion. By \cite[Prop.~B.8]{BEFF}, we can also choose
$\eta,\ze$ to satisfy some extra identities:

\begin{prop} Let\/ $\bs\cC$ be a weak\/ $2$-category, and\/ $f:X\ra
Y$ be an equivalence in $\bs\cC$. Then there exist a $1$-morphism\/
$g:Y\ra X$ and\/ $2$-isomorphisms $\eta:g\ci f\Ra\id_X$ and\/
$\ze:f\ci g\Ra\id_Y$ with\/ $\ze*\id_f=(\id_f*\eta)\od\al_{f,g,f}$ as
$2$-isomorphisms $(f\ci g)\ci f\Ra f,$ and\/ $\eta*\id_g=(\id_g*\ze)\od\al_{g,f,g}$ as
$2$-isomorphisms~$(g\ci f)\ci g\Ra g$.
\label{kuBprop}
\end{prop}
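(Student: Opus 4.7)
The plan is to reduce the statement to the standard bicategorical fact that any equivalence can be promoted to an \emph{adjoint equivalence}, i.e.\ one whose unit and counit satisfy the two triangle identities. Start with the raw equivalence data provided by Definition \ref{kuBdef1}: a 1-morphism $g:Y\ra X$ together with 2-isomorphisms $\eta_0:g\ci f\Ra\id_X$ and $\ze_0:f\ci g\Ra\id_Y$. The strategy is to keep $g$ and set $\eta:=\eta_0$, but to modify $\ze_0$ to a new 2-isomorphism $\ze:f\ci g\Ra\id_Y$ that satisfies $\ze*\id_f=(\id_f*\eta)\od\al_{f,g,f}$ (where the equation is interpreted as an equality of 2-morphisms $(f\ci g)\ci f\Ra f$ after identifying the respective codomains $\id_Y\ci f$ and $f\ci\id_X$ with $f$ using $\ga_f$ and $\be_f$). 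The second triangle identity will then follow formally from the first.

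First I would construct the corrected $\ze$. The idea is to write $\ze=\ze_0\od\Xi$, where $\Xi:f\ci g\Ra f\ci g$ is the unique 2-automorphism chosen so that the first triangle identity holds; concretely $\Xi$ is built out of $\eta,\ze_0,\ze_0^{-1}$ and appropriate associators/unitors. Schematically, ignoring coherences, one sets $\ze:=\ze_0\od\bigl((\ze_0^{-1}*\id_{f\ci g})\od(\id_f*\eta^{-1}*\id_g)\od(\id_{f\ci g}*\ze_0)\bigr)$, and then decorates this with $\al_{-,-,-}$, $\be_-$, $\ga_-$ to make it type-check in the weak 2-category. The verification that this $\ze$ satisfies the first triangle identity is a direct diagram chase using the naturality equation \eq{kuBeq7} of $\al$, the pentagon axiom \eq{kuBeq8}, the unitor naturality \eq{kuBeq10}, and the triangle axiom \eq{kuBeq11}. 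The key point is that after whiskering with $f$ on the right, the two whiskered copies of $\ze_0$ cancel against the factor $\id_f*\eta^{-1}$ inserted between them, leaving precisely $(\id_f*\eta)\od\al_{f,g,f}$.

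Next I would deduce the second triangle identity from the first. Consider the composite 2-isomorphism
\[
\Th:=(\id_g*\ze)\od\al_{g,f,g}\od(\eta*\id_g)^{-1}:\id_X\ci g\Longra g\ci\id_Y,
\]
and use the unitors to regard it as a 2-automorphism $\bar\Th:g\Ra g$. The claim is that $\bar\Th=\id_g$, which is equivalent to the second triangle identity. To prove this, whisker on the right with $f$ to get $\bar\Th*\id_f:g\ci f\Ra g\ci f$; a diagram chase combining the already established first triangle identity with two instances of the pentagon \eq{kuBeq8} (applied to $(g,f,g,f)$ and $(f,g,f,g)$) shows that $\bar\Th*\id_f=\id_{g\ci f}$. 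Since $\eta:g\ci f\Ra\id_X$ is an invertible 2-morphism, conjugation by $\eta$ identifies 2-automorphisms of $g\ci f$ with 2-automorphisms of $\id_X$, and a second application of the first triangle identity shows this forces $\bar\Th=\id_g$.

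The main obstacle is not conceptual but notational: the entire argument is standard and short in a strict 2-category (it is the bicategorical analogue of promoting an equivalence of ordinary categories to an adjoint equivalence, as in Kelly--Street), but transcribing it in a weak 2-category requires constantly inserting, permuting and cancelling associators $\al_{-,-,-}$ and unitors $\be_-,\ga_-$ in the correct positions, and justifying each move by one of the coherence axioms \eq{kuBeq7}--\eq{kuBeq11}. Keeping track of the sources and targets of all the 2-cells — in particular disentangling whether a given 2-morphism lands in $\id_Y\ci f$, $f\ci\id_X$, or $f$ after the unitors are applied — will be where most of the bookkeeping effort goes.
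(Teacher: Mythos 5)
Your overall strategy is the standard one (and the one the paper implicitly relies on: the paper gives no proof of Proposition \ref{kuBprop}, it simply cites \cite[Prop.~B.8]{BEFF}): keep $g$ and $\eta:=\eta_0$, replace $\ze_0$ by a corrected $\ze$ so that the first triangle identity holds, then deduce the second. However, your central formula for $\ze$ is wrong, and not merely because coherence cells are missing: even in a strict $2$-category the composite $(\ze_0^{-1}*\id_{f\ci g})\od(\id_f*\eta^{-1}*\id_g)\od(\id_{f\ci g}*\ze_0)$ does not compose (the middle factor has source $f\ci g$ and target $f\ci g\ci f\ci g$, which matches neither neighbour), it uses $\eta^{-1}$ where the paper's orientation $\eta:g\ci f\Ra\id_X$ requires $\eta$ itself in the collapsing step $f\ci g\ci f\ci g\Ra f\ci g$, and it contains one whiskered copy of $\ze_0$ too many. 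The correct correction is, in strict shorthand, $\ze:=\ze_0\od(\id_f*\eta*\id_g)\od(\id_{f\ci g}*\ze_0^{-1})$, decorated with $\al,\be,\ga$; and the verification of $\ze*\id_f=(\id_f*\eta)\od\al_{f,g,f}$ is not a cancellation of ``two whiskered copies of $\ze_0$ against $\id_f*\eta^{-1}$'', but rests on two consequences of the interchange law \eq{kuBeq5}: first, expanding $\eta*\eta$ in its two vertical/horizontal factorisations and cancelling the invertible $\eta$ gives $\eta*\id_{g\ci f}=\id_{g\ci f}*\eta$ (up to unitors), and second, the analogous naturality of $\ze_0$ lets the single inserted $\ze_0^{-1}$ cancel against the outer $\ze_0$.

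The deduction of the second triangle identity has a similar soft spot. That $\bar\Th*\id_f=\id_{g\ci f}$ follows from the first triangle identity is correct, but the working ingredient is again the interchange identity $\eta*\id_{g\ci f}=\id_{g\ci f}*\eta$, not the pentagon \eq{kuBeq8}, which only handles bracketing. More importantly, your final step ``conjugation by $\eta$ identifies $2$-automorphisms of $g\ci f$ with those of $\id_X$, forcing $\bar\Th=\id_g$'' does not do the job: $\bar\Th$ is an automorphism of $g$, not of $g\ci f$, and conjugating the whiskered automorphism by $\eta$ does not remove the whisker $*\id_f$. What you need is that right-whiskering by $f$, i.e.\ the functor $(-)*\id_f:\Hom(Y,X)\ra\Hom(X,X)$, is faithful because $f$ is an equivalence (equivalently: whisker $\bar\Th*\id_f=\id$ further by $g$ and use the invertible $\ze_0$ and interchange to replace $f\ci g$ by $\id_Y$, then remove $\id_Y$ with the unitor). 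With the corrected formula for $\ze$ and these two repairs, your argument becomes the standard proof that every equivalence extends to an adjoint equivalence, which is exactly the content of the cited \cite[Prop.~B.8]{BEFF}.
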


When we say that objects $X,Y$ in a 2-category $\bs\cC$ are {\it canonically equivalent}, we mean that there is a nonempty distinguished class of equivalences $f:X\ra Y$ in $\bs\cC$, and given any two such equivalences $f,g:X\ra Y$ there is a 2-isomorphism $\eta:f\Ra g$. Often there is a distinguished choice of such~$\eta$.

{\it Commutative diagrams\/} in 2-categories should in general only
commute {\it up to (specified)\/ $2$-isomorphisms}, rather than
strictly. A simple example of a commutative diagram in a 2-category
$\bs\cC$ is
\begin{equation*}
\xymatrix@C=50pt@R=8pt{ & Y \ar[dr]^g \ar@{=>}[d]^\eta \\
X \ar[ur]^f \ar[rr]_h && Z, }
\end{equation*}
which means that $X,Y,Z$ are objects of $\bs\cC$, $f:X\ra Y$, $g:Y\ra
Z$ and $h:X\ra Z$ are 1-morphisms in $\bs\cC$, and $\eta:g\ci f\Ra h$
is a 2-isomorphism.

Let $\bs\cC$ be a 2-category. The {\it homotopy category\/} $\Ho(\bs\cC)$ of $\bs\cC$ is the category whose objects are objects of $\bs\cC$, and whose morphisms $[f]:X\ra Y$ are 2-isomorphism classes $[f]$ of 1-morphisms $f:X\ra Y$ in $\bs\cC$. Then equivalences in $\bs\cC$ become isomorphisms in $\Ho(\bs\cC)$, 2-commutative diagrams in $\bs\cC$ become commutative diagrams in $\Ho(\bs\cC)$, and so on.

\subsection[2-functors, 2-natural transformations, and modifications]{2-functors, 2-natural transformations, modifications}
\label{kuB2}

Next we discuss 2-functors between 2-categories, following Borceux \cite[\S 7.2, \S 7.5]{Borc} and Behrend et al.~\cite[\S B.4]{BEFF}.

\begin{dfn} Let $\bs\cC,\bs\cD$ be strict 2-categories. A {\it strict\/ $2$-functor\/} $F:\bs\cC\ra\bs\cD$ assigns an object $F(X)$ in $\bs\cD$ for each object $X$ in $\bs\cC$, a 1-morphism $F(f):F(X)\ra F(Y)$ in $\bs\cD$ for each 1-morphism $f:X\ra Y$ in $\bs\cC$, and a 2-morphism $F(\eta):F(f)\Ra F(g)$ in $\bs\cD$ for each 2-morphism $\eta:f\Ra g$ in $\bs\cC$, such that $F$ preserves all the structures on $\bs\cC,\bs\cD$, that is,
\ea
F(g\ci f)&=F(g)\ci F(f), & \!F(\id_X)&=\id_{F(X)}, & \!F(\ze*\eta)&=\!F(\ze)\!* \!F(\eta), 
\label{kuBeq12}\\
F(\ze\od\eta)&=F(\ze)\od F(\eta), & \!F(\id_f)&=\id_{F(f)}.
\label{kuBeq13}
\ea

Now let $\bs\cC,\bs\cD$ be weak 2-categories. Then strict 2-functors $F:\bs\cC\ra\bs\cD$ are not well-behaved. To fix this, we need to relax \eq{kuBeq12} to hold only up to specified 2-isomorphisms. A {\it weak\/ $2$-functor\/} (or {\it pseudofunctor\/}) $F:\bs\cC\ra\bs\cD$ assigns an object $F(X)$ in $\bs\cD$ for each object $X$ in $\bs\cC$, a 1-morphism $F(f):F(X)\ra F(Y)$ in $\bs\cD$ for each 1-morphism $f:X\ra Y$ in $\bs\cC$, a 2-morphism $F(\eta):F(f)\Ra F(g)$ in $\bs\cD$ for each 2-morphism $\eta:f\Ra g$ in $\bs\cC$, a 2-isomorphism $F_{g,f}:F(g)\ci F(f)\Ra F(g\ci f)$ in $\bs\cD$ for all 1-morphisms $f:X\ra Y$, $g:Y\ra Z$ in $\bs\cC$, and a 2-isomorphism $F_X:F(\id_X)\Ra \id_{F(X)}$ in $\bs\cD$ for all objects $X$ in $\bs\cC$ such that \eq{kuBeq13} holds, and for all $e:W\ra X$, $f:X\ra Y$, $g:Y\ra Z$ in $\bs\cC$ the following diagram of 2-isomorphisms commutes in $\bs\cD$:
\begin{equation*}
\xymatrix@C=87pt@R=15pt{ *+[r]{(F(g)\ci F(f))\ci F(e)} \ar@{=>}[d]^{\al_{F(g),F(f),F(e)}} \ar@{=>}[r]_(0.65){F_{g,f}*\id_{F(e)}} & F(g\ci f)\ci F(e) \ar@{=>}[r]_(0.4){F_{g\ci f,e}} & *+[l]{F((g\ci f)\ci e)} \ar@{=>}[d]_{F(\al_{g,f,e})} \\
*+[r]{F(g)\ci (F(f)\ci F(e))} \ar@{=>}[r]^(0.65){\id_{F(g)}*F_{f,e}} & F(g)\ci F(f\ci e) \ar@{=>}[r]^(0.4){F_{g,f\ci e}} &
*+[l]{F(g\ci (f\ci e)),\!\!} } 
\end{equation*}
and for all 1-morphisms $f:X\ra Y$ in $\bs\cC$, the following commute in $\bs\cD$:
\begin{equation*}
\xymatrix@!0@C=25pt@R=30pt{
*+[r]{F(f)\ci F(\id_X)} \ar@{=>}[rrrrrr]_(0.58){F_{f,\id_X}} \ar@{=>}[d]^{\id_{F(f)}*F_X} &&&&&& *+[l]{F(f\ci\id_X)} \ar@{=>}[d]_{F(\be_f)} & *+[r]{F(\id_Y)\ci F(f)} \ar@{=>}[rrrrrr]_(0.58){F_{\id_Y,f} } \ar@{=>}[d]^{F_Y*\id_{F(f)}} &&&&&& *+[l]{F(\id_Y\ci f)} \ar@{=>}[d]_{F(\ga_f)} \\
*+[r]{F(f)\ci \id_{F(X)}} \ar@{=>}[rrrrrr]^(0.6){\be_{F(f)}} &&&&&& *+[l]{F(f),\!\!} &
*+[r]{\id_{F(Y)}\ci F(f)} \ar@{=>}[rrrrrr]^(0.6){\ga_{F(f)}} &&&&&& *+[l]{F(f),\!\!} } 
\end{equation*}
and if $f,\dot f:X\ra Y$ and $g,\dot g:Y\ra Z$ are 1-morphisms and
$\eta:f\Ra\dot f$, $\ze:g\Ra\dot g$ are 2-morphisms in $\bs\cC$ then the following commutes in $\bs\cD$:
\begin{equation*}
\xymatrix@C=140pt@R=15pt{ *+[r]{F(g)\ci F(f)} \ar@{=>}[d]^{F(\ze)*F(\eta)} \ar@{=>}[r]_{F_{g,f}} & *+[l]{F(g\ci f)} \ar@{=>}[d]_{F(\ze*\eta)} \\
*+[r]{F(\dot g)\ci F(\dot f)} \ar@{=>}[r]^{F_{\dot g,\dot f}} & *+[l]{F(\dot g\ci\dot f).\!\!} } 
\end{equation*}

There are obvious notions of {\it composition\/} $G\ci F$ of strict and weak 2-functors $F:\bs\cC\ra\bs\cD$, $G:\bs\cD\ra\bs\cE$, {\it identity\/ $2$-functors\/} $\id_{\bs\cC}$, and so on.

If $\bs\cC,\bs\cD$ are strict 2-categories, then a strict $2$-functor $F:\bs\cC\ra\bs\cD$ can be made into a weak 2-functor by taking all $F_{g,f},F_X$ to be identity 2-morphisms. 
\label{kuBdef2}
\end{dfn}

Here is the 2-category analogue of natural transformations of functors:

\begin{dfn} Let $\bs\cC,\bs\cD$ be weak 2-categories and $F,G:\bs\cC\ra\bs\cD$ be weak 2-functors. A {\it weak\/ $2$-natural transformation\/} (or {\it pseudo-natural transformation\/}) $\Th:F\Ra G$ assigns a 1-morphism $\Th(X):F(X)\ra G(X)$ in $\bs\cD$ for all objects $X$ in $\bs\cC$ and a 2-isomorphism $\Th(f):\Th(Y)\ci F(f)\Ra G(f)\ci\Th(X)$ in $\bs\cD$ for all 1-morphisms $f:X\ra Y$ in $\bs\cC$, such that if $\eta:f\Ra g$ is a 2-morphism in $\bs\cC$ then
\begin{align*}
(G(\eta)*\id_{\Th(X)})\od\Th(f)&=\Th(g)\od(\id_{\Th(Y)}*F(\eta)):\\
&\Th(Y)\ci F(f)\longra G(g)\ci\Th(X),
\end{align*}
and if $f:X\ra Y$, $g:Y\ra Z$ are 1-morphisms in $\bs\cC$ then the following diagram of 2-isomorphisms commutes in $\bs\cD$:
\begin{equation*}
\xymatrix@C=60pt@R=17pt{ *+[r]{(\Th(Z)\ci F(g))\ci F(f)} \ar@{=>}[r]_(0.7){\raisebox{-8pt}{$\st\al_{\Th(Z),F(g),F(f)}$}} \ar@{=>}[d]^{\Th(g)*\id_{F(f)}} & {\Th(Z)\ci(F(g)\ci F(f))} \ar@{=>}[r]_(0.45){\raisebox{-8pt}{$\st\id_{\Th(Z)}*F_{g,f}$}} & *+[l]{\Th(Z)\ci(F(g\ci f))} \ar@{=>}[d]_{\Th(g\ci f)} \\
*+[r]{(G(g)\ci\Th(Y))\ci F(f)} \ar@{=>}[d]^{\al_{G(g),\Th(Y),F(f)}} && *+[l]{G(g\ci f)\ci\Th(X)}
\\
*+[r]{G(g)\ci(\Th(Y)\ci F(f))} \ar@{=>}[r]^(0.6){\raisebox{8pt}{$\st\id_{G(g)}*\Th(f)$}} & {G(g)\ci(G(f)\ci\Th(X))} \ar@{=>}[r]^(0.4){\raisebox{8pt}{$\st\al_{G(g),G(f),\Th(X)}^{-1}$}} & *+[l]{(G(g)\ci G(f))\ci\Th(X)),\!\!} \ar@{=>}[u]^{G_{g,f}*\id_{\Th(X)}} }
\end{equation*}
and if $X\in\bs\cC$ then the following diagram of 2-isomorphisms commutes in $\bs\cD$:
\begin{equation*}
\xymatrix@C=75pt@R=15pt{ *+[r]{\Th(X)\ci F(\id_X)} \ar@{=>}[r]_(0.6){\Th(\id_X)} \ar@{=>}[d]^{ \id_{\Th(X)}* F_X} & {G(\id_X)\ci \Th(X)} \ar@{=>}[r]_(0.45){G_X*\id_{\Th(X)}} & *+[l]{\id_{G(X)}\ci \Th(X)} \ar@{=>}[d]_{\ga_{\Th(X)}} \\
*+[r]{\Th(X)\ci\id_{F(X)}} \ar@{=>}[rr]^{\be_{\Th(X)}} && *+[l]{\Th(X).\!\!} }
\end{equation*}

\label{kuBdef3}
\end{dfn}

Just as the `category of (small) categories' is actually a (strict) 2-category, so the `category of (weak) 2-categories' is actually a 3-category (which we will not define). The 3-morphisms in this 3-category, morphisms between weak 2-natural transformations, are called {\it modifications}.

\begin{dfn} Let $\bs\cC,\bs\cD$ be weak 2-categories, $F,G:\bs\cC\ra\bs\cD$ be weak 2-functors, and $\Th,\Phi:F\Ra G$ be weak 2-natural transformations. A {\it modification\/} $\aleph:F\RRa G$ assigns a 2-isomorphism $\aleph(X):\Th(X)\Ra\Phi(X)$ in $\bs\cD$ for all objects $X$ in $\bs\cC$, such that for all 1-morphisms $f:X\ra Y$ in $\bs\cC$ we have
\begin{align*}
(\id_{G(f)}*\aleph(X))\od\Th(f)&=\Phi(f)\od(\aleph(Y)*\id_{F(f)}):\\
&\Th(Y)\ci F(f)\Longra \Phi(X)\ci G(Y).
\end{align*}
There are obvious notions of composition of modifications, identity modifications, and so on.

A weak 2-natural transformation $\Th:F\Ra G$ is called an {\it equivalence of\/ $2$-functors\/} if there exist a weak 2-natural transformation $\Phi:G\Ra F$ and modifications $\aleph:\Phi\ci\Th\RRa \id_F$ and $\beth:\Th\ci\Phi\RRa\id_G$. Equivalence of 2-functors is a good notion of when weak 2-functors $F,G:\bs\cC\ra\bs\cD$ are `the same'.

A weak 2-functor $F:\bs\cC\ra\bs\cD$ is called an {\it equivalence of weak\/ $2$-categories\/} if there exists a weak 2-functor $G:\bs\cD\ra\bs\cC$ and equivalences of 2-functors $\Th:G\ci F\Ra \id_{\bs\cC}$, $\Phi:F\ci G\Ra\id_{\bs\cD}$. Equivalence of weak 2-categories is a good notion of when weak 2-categories $\bs\cC,\bs\cD$ are `the same'.
\label{kuBdef4}
\end{dfn}

Here are some well-known facts about 2-categories:
\begin{itemize}
\setlength{\itemsep}{0pt}
\setlength{\parsep}{0pt}
\item[(i)] Every weak 2-category $\bs\cC$ is equivalent as a weak 2-category to a strict 2-category $\bs\cC'$, that is, weak 2-categories can always be strictified. Thus, in Theorem \ref{ku1thm}, it is not surprising that the weak 2-category of Kuranishi spaces $\Kur$ is equivalent to the strict 2-category of d-orbifolds $\dOrb$.
\item[(ii)] If $\bs\cC,\bs\cD$ are strict 2-categories, and $F:\bs\cC\ra\bs\cD$ is a weak 2-functor, it may not be true that $F$ is equivalent to a strict 2-functor $F':\bs\cC\ra\bs\cD$ (though this does hold if $\bs\cD=\mathfrak{Cat}$, the strict 2-category of categories). That is, weak 2-functors cannot necessarily be strictified. 

Even if one is working with strict 2-categories, weak 2-functors are often the correct notion of functor between them.
\item[(iii)] A weak 2-functor $F:\bs\cC\ra\bs\cD$ is an equivalence of weak 2-categories, as in Definition \ref{kuBdef4}, if and only if for all objects $X,Y$ in $\bs\cC$, the functor $F_{X,Y}:\Hom_{\bs\cC}(X,Y)\ra\Hom_{\bs\cD}(F(X),F(Y))$ is an equivalence of categories, and the map induced by $F$ from equivalence classes of objects in $\bs\cC$ to equivalence classes of objects in $\bs\cD$ is surjective (and hence a bijection).

\end{itemize}

\subsection{Fibre products in 2-categories}
\label{kuB3}

We define fibre products in 2-categories, following
\cite[Def.~B.13]{BEFF}.

\begin{dfn} Let $\bs\cC$ be a strict 2-category and $g:X\ra Z$, $h:Y\ra Z$ be 1-morphisms in $\bs\cC$. A {\it fibre product\/} $X\t_ZY$ in $\bs\cC$
consists of an object $W$, 1-morphisms $\pi_X:W\ra X$ and $\pi_Y:W\ra Y$ and a 2-isomorphism $\eta:g\ci\pi_X\Ra h\ci\pi_Y$ in $\bs\cC$ with the
following universal property: suppose $\pi_X':W'\ra X$ and
$\pi_Y':W'\ra Y$ are 1-morphisms and $\eta':g\ci\pi_X'\Ra
h\ci\pi_Y'$ is a 2-isomorphism in $\bs\cC$. Then there should exist a
1-morphism $b:W'\ra W$ and 2-isomorphisms $\ze_X:\pi_X\ci
b\Ra\pi_X'$, $\ze_Y:\pi_Y\ci b\Ra\pi_Y'$ such that the following
diagram of 2-isomorphisms commutes:
\e
\begin{gathered}
\xymatrix@C=100pt@R=14pt{ *+[r]{g\ci\pi_X\ci b} \ar@{=>}[r]_{\eta*\id_b}
\ar@{=>}[d]^{\,\id_g*\ze_X} & *+[l]{h\ci\pi_Y\ci b}
\ar@{=>}[d]_{\id_h*\ze_Y} \\ *+[r]{g\ci\pi_X'}
\ar@{=>}[r]^{\eta'} & *+[l]{h\ci\pi_Y'.\!\!} }
\end{gathered}
\label{kuBeq14}
\e
Furthermore, if $\ti b,\ti\ze_X,\ti\ze_Y$ are alternative choices of
$b,\ze_X,\ze_Y$ then there should exist a unique 2-isomorphism
$\th:\ti b\Ra b$ with
\begin{equation*}
\ti\ze_X=\ze_X\od(\id_{\pi_X}*\th)\quad\text{and}\quad
\ti\ze_Y=\ze_Y\od(\id_{\pi_Y}*\th).
\end{equation*}
If a fibre product $X\t_ZY$ in $\bs\cC$ exists then it is unique up to
canonical equivalence. That is, if $W,W'$ are two choices for $X\t_ZY$ then there is an equivalence $b:W\ra W'$ in $\bs\cC$, which is canonical up to 2-isomorphism.

If instead $\bs\cC$ is a weak 2-category, we must replace \eq{kuBeq14} by
\begin{equation*}
\xymatrix@C=75pt@R=14pt{ 
*+[r]{(g\ci\pi_X)\ci b} \ar@{=>}[d]^{\al_{g,\pi_X,b}}
\ar@{=>}[r]_(0.6){\eta*\id_b} & {(h\ci\pi_Y)\ci b}
\ar@{=>}[r]_(0.4){\al_{h,\pi_Y,b}} & *+[l]{h\ci(\pi_Y\ci b)\!\!}
\ar@{=>}[d]_{\id_h*\ze_Y} \\ 
*+[r]{g\ci(\pi_X\ci b)} \ar@{=>}[r]^(0.6){\id_g*\ze_X} &
g\ci\pi_X' \ar@{=>}[r]^(0.4){\eta'} & *+[l]{h\ci\pi_Y'.\!\!} }
\end{equation*}

\label{kuBdef5}
\end{dfn}

Orbifolds, and stacks in algebraic geometry, form 2-categories, and
Definition \ref{kuBdef5} is the right way to define fibre products
of orbifolds or stacks.

\medskip

\noindent{\small\sc The Mathematical Institute, Radcliffe
Observatory Quarter, Woodstock Road, Oxford, OX2 6GG, U.K.

\noindent E-mail: {\tt joyce@maths.ox.ac.uk.}}

\end{document}